\documentclass[a4paper, 11pt, cleardoublepage=empty,bibliography=totoc]{scrartcl}  
\usepackage[left=2.5cm, right=2.5cm, top=0.5cm, bottom=2.5cm]{geometry}
\usepackage[colorlinks,
pdfpagelabels,
pdfstartview = FitH,
bookmarksopen = true,
bookmarksnumbered = true,
linkcolor = black,
plainpages = false,
hypertexnames = false,
citecolor = black] {hyperref}
\usepackage{adjustbox}
\usepackage{amsmath,amssymb,mathrsfs,amsfonts}
\usepackage[dvipsnames]{xcolor}
\usepackage{hyperref}
\hypersetup{colorlinks,citecolor={NavyBlue}} 

\usepackage{bm}
\newcommand*{\B}[1]{\ifmmode\bm{#1}\else\textbf{#1}\fi}
\usepackage[english]{babel}
\usepackage{tikz-cd}
\usepackage{amstext}
\usepackage{pdfpages}
\usepackage{mathtools}
\usepackage{stmaryrd}
\usepackage{xcolor}
\usepackage{xfrac} 
\usepackage[utf8]{inputenc}
\usepackage{url} 
\usepackage{pgfplots}
\usepackage[autooneside=false,headsepline,markcase=noupper]{scrlayer-scrpage}
\usepackage{blindtext}
\usepackage{enumitem}
\usepackage{verbatim}
\usepackage{graphicx}
\usepackage{mathtools}
\usepackage{makeidx}
\usepackage{tikz}
\usetikzlibrary{babel}

\addto\captionsenglish{}

\usetikzlibrary{%
	matrix,%
	calc,%
	arrows%
}

\definecolor{mycolor}{RGB}{196,19,47}
\definecolor{mygray}{gray}{0.4}

\setkomafont{section}{\normalfont\LARGE\bfseries}
\setkomafont{subsection}{\normalfont\large\bfseries}
\setkomafont{pagehead}{\normalfont\bfseries}
\setkomafont{title}{\normalfont\bfseries\Large}

\usepackage{chngcntr}
\counterwithin{equation}{subsection}
\usepackage{amsthm}

\theoremstyle{remark}
\newtheorem{remark}[equation]{Remark}
\newtheorem*{remark_nn}{Remark}
\newtheorem*{bcov}{BCOV Theory}

\theoremstyle{definition}
\newtheorem{df}[equation]{Definition}
\newtheorem{cons}[equation]{Construction}
\newtheorem{obs}[equation]{Observation}
\newtheorem{Zusa}[equation]{Summary}
\newtheorem*{ass}{Assumption $(*)$}
\newtheorem{ctr}[equation]{Rule}

\newtheorem{ex}[equation]{Example}
\newtheorem{prop}[equation]{Proposition}
\newtheorem{lemma}[equation]{Lemma}

\theoremstyle{plain}

\newtheorem*{thm_nn}{Theorem}
\newtheorem{theorem}[equation]{Theorem}

\newtheorem{corollary}[equation]{Corollary}

\newtheorem{Conj}[equation]{Conjecture}
\newtheorem*{thm_nn_a}{Theorem A}

\newtheorem*{thm_nn_b}{Theorem B}

\newtheorem*{thm_nn_c}{Theorem C}
\newtheorem*{thm_nn_d}{Theorem D}
\newtheorem*{thm_nn_e}{Theorem E}
\newtheorem*{thm_nn_f}{Theorem F}
\newtheorem*{thm_nn_g1}{Theorem G.1}
\newtheorem*{thm_nn_g2}{Theorem G.2}

\newtheorem*{thm_nn_i}{Theorem I}
\newtheorem*{thm_nn_j}{Theorem J}
\newtheorem*{thm_nn_k}{Theorem K}
\newtheorem*{thm_nn_l}{Theorem L}

\newtheorem*{Claim}{Claim}

\newtheorem*{Conj_nn_h}{Conjecture H}

\clearpairofpagestyles 
\ohead{  \ifstr{\rightbotmark}{\leftmark}{}{\rightbotmark}}
\ihead{\leftmark}
\rohead*{\pagemark}
\rehead*{\pagemark}
\lehead{\headmark}
\lohead{\headmark}
\clearscrheadfoot
\cofoot[\pagemark]{\pagemark}

\makeatletter

\providecommand*{\rightbotmark}{\expandafter\@rightmark\botmark\@empty\@empty}
\makeatother
\makeindex

\usepackage{mathabx}

\usepackage{amssymb}

\begin{document}
\usetikzlibrary{arrows}
\tikzset{commutative diagrams/.cd,arrow style=tikz,diagrams={>=latex'}}
\pagenumbering{roman}

\begin{titlepage}
\centering
%\vspace*{0.7cm}
\includegraphics[scale=0.8]{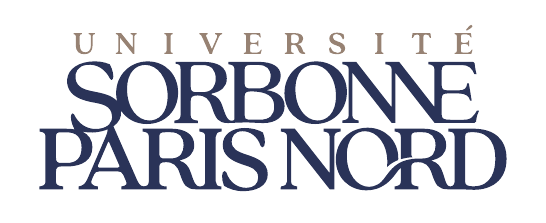}

\begin{center}
\vspace*{0.1cm}
\noindent {\Large {UNIVERSITÉ PARIS XIII -- SORBONNE PARIS NORD}} \\
\vspace*{0.2cm}
\noindent {\large \textsc {École doctorale Sciences, Technologies, Santé -- Galilée}} \\
\vspace*{1.2cm}
\hrulefill{}\\
\vspace*{0.5cm}
\noindent  {\huge\textbf{Théorie des champs de cordes à  partir \\[-5pt] des catégories de Calabi–Yau et \\ applications à la géométrie énumérative}}\\
\vspace*{0.3cm}
{\LARGE\textbf{Open-Closed String Field Theory from\\[-5pt] Calabi-Yau Categories and its\\ Applications to Enumerative Geometry}} \\

\vspace*{0.2cm}
\hrulefill{}\\
\vspace*{0.8cm}
\noindent {\large Thèse de Doctorat présentée par} \\
\vspace*{0.5cm}
\noindent {\huge  \textbf{Jakob Ulmer}} \\
\vspace*{0.5cm}
\noindent {\Large  \textbf{Laboratoire Analyse, Géométrie et Applications}} \\
\vspace*{1cm}
\noindent \large  {Pour l'obtention du grade de}\\
\vspace*{0.1cm}
\noindent \Large {\textbf{DOCTEUR EN MATHÉMATIQUES}}\\
\vspace*{1.1cm}
\noindent \large {Soutenue le 9 janvier 2026}\\
\vspace{0.1cm}
\noindent \large {Devant la commission d’examen formée de}\\
\vspace{0.4cm}

\large
\begin{tabular}{llll}
&CALAQUE Damien  & Université de Montpellier & Examinateur \\
&CĂLDĂRARU Andrei  & University of Wisconsin, Madison & Rapporteur \\
&GEORGIEVA Penka  & Sorbonne Université & Examinatrice \\ 
&GINOT Grégory  & Université Sorbonne Paris Nord & Directeur de thèse \\
&GWILLIAM Owen  & University of Massachusetts, Amherst  & Co-Directeur de thèse\\
&LI Si  & Tsinghua University  & Rapporteur \& Examinateur  \\
&OANCEA Alexandru  & Université de Strasbourg& Examinateur \\
&VALLETTE Bruno  & Université Sorbonne Paris Nord & Examinateur \\

\\ 
\end{tabular}
\end{center}

\end{titlepage}

\newpage
\titlepage

\pagestyle{plain}
\textbf{Abstract.} The overarching goal of this thesis is to develop categorical methods that connect enumerative geometry, as studied in mirror symmetry, with large $N$ gauge theories.
In the first part, we establish a relation between graph complexes, Calabi–Yau $A_\infty$-categories, and Kontsevich’s cocycle construction. The main result is a commutative square of shifted Poisson algebras, where one edge is given by the Loday–Quillen–Tsygan map, which we generalize to $A_\infty$-categories, and the other corners are constructed from graph complexes. We further describe a quantized version of this diagram using Beilinson–Drinfeld algebras. Our next main result is the construction of a formality $L_\infty$-morphism relating algebraic structures built from a Calabi–Yau category and one of its objects; this morphism depends on a splitting of the non-commutative Hodge filtration of given Calabi-Yau category. The involved algebraic structures conjecturally give a home to open-closed Gromov-Witten invariants when applied to the Fukaya category of a symplectic manifold and one of its Lagrangians, representing an object in this category. This generalizes the approach of categorical enumerative invariants from the closed to the open-closed setting, specifically the formality morphism used in the original construction. From a physics perspective, closed categorical enumerative invariants are encoded by the partition function of the associated closed string field theory (SFT). We explain how our open-closed morphism is an ingredient in quantizing the large $N$ open SFT associated to an object of a Calabi–Yau category. In the final part of this thesis, based on an algebraic approach to open and closed backreacted SFT, we propose ideas towards a categorical formulation of `Twisted Holography' at the level of partition functions, given as input a Calabi–Yau category and one of its objects. 
\\
\\
\\
\\
\\
\\
\textbf{Résumé.} Cette thèse a pour objectif de développer des méthodes catégoriques reliant la géométrie énumérative, telle qu’étudiée dans la symétrie miroir, aux théories de jauge à grand 
$N$. Dans une première partie, nous établissons une correspondance entre complexes de graphes, $A_\infty$-catégories de Calabi–Yau et construction de cocycles de Kontsevich. Le résultat principal est un carré commutatif d’algèbres de Poisson décalées, où une arête est donnée par l’application de Loday–Quillen–Tsygan, que nous avons
généralisé aux $A_\infty$-catégories, et les autres nœuds sont construits à partir de complexes de graphes. Nous démontrons également une version quantifiée de ce diagramme via les algèbres de Beilinson–Drinfeld.
Notre principal résultat suivant est la construction d’un $L_\infty$-morphisme de formalité reliant des structures algébriques construites à partir d’une catégorie de Calabi–Yau et de l’un de ses objets; ce morphisme dépend d’un choix de décomposition de la filtration de Hodge non commutative de la catégorie de Calabi–Yau considérée. Les structures algébriques impliquées sont conjecturalement liées aux invariants de Gromov–Witten ouverts-fermés, lorsqu’on les applique à la catégorie de Fukaya d’une variété symplectique et à l’un de ses lagrangiens, vu comme un objet de cette catégorie. Cela généralise l’approche des invariants énumératifs catégoriques du cadre fermé au cadre ouvert-fermés, en particulier le morphisme de formalité utilisé dans la construction d’origine. Du point de vue de la physique, les invariants énumératifs catégoriques fermés sont codés par la fonction de partition de la théorie des champs des cordes fermées (TCC) associée. Nous expliquons comment notre morphisme ouvert-fermés intervient dans la quantification de la TCC ouverte à grand $N$, associée à un objet d’une catégorie de Calabi–Yau.
Dans la dernière partie de cette thèse, en nous appuyant sur une approche algébrique de la TCC ouverte et fermée avec rétroaction, nous proposons des idées en vue d’une formulation catégorique de la `Twisted Holography', au niveau des fonctions de partition, en prenant comme données d’entrée une catégorie de Calabi–Yau et l’un de ses objets.
\newpage
\textbf{Zusammenfassung.} Ein übergeordnetes Ziel dieser Arbeit ist es kategorielle Methoden bereitzustellen, die enumerative Geometrie, wie in der Spiegel-Symmetrie untersucht, und $N\rightarrow \infty$ Eichtheorien  miteinander verbinden. Im ersten Teil dieser Dissertation stellen wir Zusammenhänge zwischen Graphkomplexen, Calabi-Yau $A_\infty$-Kategorien und der Kokettenkonstruktion von Kontsevich her. Das zentrale Ergebnis ist ein kommutatives Diagramm verschobener Poisson-Algebren, wobei eine seiner Kanten durch die Loday-Quillen-Tsygan-Abbildung, verallgemeinert auf $A_\infty$-Kategorien, gegeben ist und und die anderen Ecken aus Graphkomplexen konstruiert sind. Wir beschreiben eine quantisierte Version eines solchen Diagramms mittels Beilinson-Drinfeld Algebren. Unser nächstes zentrales Ergebnis ist die Konstruktion eines Formalitäts $L_\infty$-Morphismus, der algebraische Strukturen in Verbindung setzt, welche aus einer Calabi–Yau Kategorie und einem ihrer Objekte aufgebaut sind. Dieser Morphismus ist abhängig von einer Wahl einer Aufspaltung der nichtkommutativen Hodge-Filtration der gegebenen Calabi–Yau-Kategorie. Die erwähnten algebraischen Strukturen liefern vermutungsweise nach einen natürlichen Rahmen für offen-geschlossene Gromov–Witten-Invarianten wenn sie auf die Fukaya-Kategorie einer symplektischen Mannigfaltigkeit und eine ihrer Lagrangeschen Untermannigfaltigkeiten, die als Objekte in dieser Kategorie erscheinen, angewendet werden. Dies verallgemeinert die Idee der kategoriellen enumerative Invarianten vom rein geschlossenen zum offenen-geschlossenen Fall, insbesondere den ursprünglich verwendeten Formalitätsmorphismus.
Aus physikalischer Sicht werden geschlossene kategorielle enumerative Invarianten durch die Zustandssumme der zugehörigen geschlossenen Stringfeldtheorie (SFT) beschrieben. Wir zeigen, wie der von uns konstruierte offene-geschlossene Morphismus eine zentrale Rolle bei der Quantisierung der offenen $N\rightarrow \infty$ SFT spielt, die einem Objekt einer Calabi–Yau-Kategorie zugeordnet ist.
Im letzten Teil dieser Arbeit schlagen wir - ausgehend von einem algebraischen Zugang zur offenen und geschlossenen rückreagierten Stringfeldtheorie - einen Ansatz für eine kategoriellle Formulierung der `Twisted Holography' auf Ebene der Zustandssummen vor, wobei als Ausgangsdaten eine Calabi–Yau-Kategorie und eines ihrer Objekte dienen.

\newpage
\thispagestyle{empty}
\huge{\textbf{Acknowledgments}}
\vspace{0.5cm}

\normalsize{

I would like to express my gratitude to a number of persons who helped me in numerous ways to finish this PhD thesis.

\medskip
Thank you Grégory for welcoming me to Paris and agreeing to be my PhD supervisor, thank you for your positivity and the mathematical, administrative and emotional support these last three years. Merci!

\smallskip
Thank you Owen for the invitation to come to Amherst even before the start of my PhD, which was an important moment for me, your hospitality, the numerous discussions on zoom and during the other visits in Amherst, Paris and Bonn and your insistence on clarifying my ideas, which made me a better mathematician.   

\smallskip
I am very grateful to Andrei Căldăraru and Si Li to have agreed to referee this thesis and further to Damien Calaque, Penka Georgieva, Alexandru Oancea and Bruno Vallette for being part of the jury. 

\smallskip
I would like to thank the algebraic topology group of Paris 13 for their welcoming atmosphere, notably Christian Ausoni, Bruno Vallette and Geoffroy Horel and the administrative team of LAGA for their help, especially Yolande and her always friendly presence.

\smallskip
Next I would like to say thank you to Surya Raghavendran and Philsang Yoo for their early on interest in my work, which was a very valuable motivational factor, the support and the invitation to New Haven, Edinburgh, Beijing and Seoul. 

\smallskip
I am very grateful to Junwu Tu for the invitation to come to Shanghai in the summer of 2024, which has been a crucial moment for my work, and our other discussions in Paris, Stony Brook and online.

\smallskip
I would further like to thank Alastair Hamilton, Bruno Vallette, Lino Amorim and Vivek Shende for discussions and interest in my work, and Guillaume Laplante-Anfossi and Vivek Shende for the possibility to visit Odense and the hospitality.

\smallskip
I want to take the opportunity to thank the Fondation Sciences Mathématiques de Paris for cofunding my PhD through a Marie Sklodowska‑Curie Action. Furthermore I am thankful to the GDR Topologie Algébrique for hosting the nice yearly conferences in Nantes, Lille, Toulouse and Marseille, which allowed me to interact with the french homotopy theory community and to visit these cities. Generally I am very grateful for having had the opportunity to travel much and attend many conferences: I feel fortunate to been able to participate at the ‘Recent developments in higher genus curve counting' program at the Simons-Center in Stony Brook. I fondly remember the conferences in Luminy at CIRM, the winter schools in mathematical physics at Les Diablerets in the mountains of Switzerland and the CIME school in Cetraro in the south of Italy.

\smallskip
At a more practical level, I want to acknowledge, next to my nice desk in Paris 13 with the great view and wonderful tree next to it, all the libraries in Paris where I worked, at Sorbonne Nouvelle, at the École Normale Superiour, and at Institut Henri Poincaré, amongst others.

\medskip
On a more personal level: I had a very warm welcome to Paris in August 2022, which was mainly due to Jule, Verena and Paul. I hope that our paths will cross again many times! 

\smallskip
I keep in good memory  my first (freezing) colocation with Thibault and Pierre and am very thankful for our continued friendship.

\smallskip
At Paris 13 I found a kind community of PhD students and for that I am grateful to Elie, Neige, Wassim, Mohammed, Victor and Oisín and especially Hugo. The newer generation of PhD students are continuing this tradition. Thank you Nicolas, Dominik, Joëlle, Loth, David and all the others!

\smallskip
I am especially thankful to the people that began at LAGA at the same time as me and with whom I shared many great moments that made these last three years much better: thank you Jordan for your reliable optimism and our colocation in Boulevard de Magenta, thank you Francesca for your incredible social energy and warmth, thank you Noé for your political motivation and the parties in Montreuil. Thank you Maissâ for your deep kindness und alles Gute in Bielefeld! Thank you Guglielmo for your gentle presence and the visit in Pisa. Thank you Marie-Camille for all your ideas to make conferences much more fun; especially memorable is the vegetable soup in Schleiden. Thank you Abhigyan. Thank you Vincent for never giving up on our slightly awkward interactions! 

\thispagestyle{empty}
\smallskip
In Paris, outside of mathematics, I am very thankful for my friendship with Stefan, who was at times a much needed emotional pillar for me. Thank you Lili for our time together. Lieber Guillaume, ich hätte vor 10 Jahren niemals gedacht, dass ich einmal in Paris leben und promoviert haben würde! Ich bin sehr dankbar für unsere bisherige gemeinsame Zeit von Fulda, über Heidelberg, Berlin, Rom und Zürich nach Paris.   

\smallskip
I want to say thank you to my other friends scattered all around the world. Danke Céline für unsere schrullige Freundschaft. Danke Alex und Leon für die vielen Momente seit langer Zeit. Danke Simon für deine unbändige Energie und dass wir damals über string field theory sprachen, was rückblickend ganz wichtig für meine Arbeit war. Danke Greg für deine inspirierende Neugierde, danke Lukas, dass Du immer Zeit hast zu telefonieren und für mich da bist. Thanks Misha for our philosophical discussions and the hospitality in Amherst. Thank you Zimai and Atsu for the wonderful people you are. Dear Yangning, thank you for being here and being you. 

\smallskip
Danke Mama und Papa für Eure Fürsorge, Liebe und all das was Ihr mir mitgegeben habt. Danke Friedrich! Danke an meine ganze Familie, die mir immer einen Platz freihält.

\medskip
Finally, I am very thankful to have gotten the chance to live in Paris and to see it from banlieues to center, which made me understand a bit more about our world, more than mathematics. 

\bigskip
\raggedleft{Paris, 5th of January 2026}

}

\sloppy

%\newpage
%\thispagestyle{empty}
%\mbox{}
\newpage

\tableofcontents
\newpage
\section{Introduction and Summary}

\pagenumbering{arabic}
The results of this thesis are situated at the intersection of the areas of \begin{itemize}
    \item 
 homotopical algebra, which is the study of algebraic operations up to homotpy,
 \item mathematical physics, the study of ideas from physics - notably from string theory - using mathematical rigor, and
 \item enumerative geometry, or more narrowly the study of geometric object by probing them via maps from other objects, for instance from surfaces.
 \end{itemize}
 A prime example of such an intersection and a key motivation for this thesis lies in the subject of `\textbf{Enumerative and Homological Mirror Symmetry}' \cite{Kon94}, which will be the starting point from which on we situate the results obtained in this thesis. In the course of this introduction it will become clear that \textbf{string field theory} (SFT) respectively an algebraic avatar which applies to \textbf{Calabi-Yau categories} provides a good language to discuss these phenomena. In the last part, using this language, we try to make contact with `\textbf{Twisted Holography}' \cite{cg21}, a recent instance of an \textbf{intersection of ideas from physics and mathematics}. 
\subsection{Enumerative and Homological Mirror Symmetry}\label{eahms}

Physicists \cite{can91} observed, based on the study of two topological string theories, called the A-model and B-model, that there is a close relationship between enumerative invariants on the one hand obtained from a given symplectic manifold and on the other hand obtained from a complex manifold, which are additionally both Calabi-Yau manifolds and called mirror partners. In the symplectic case these numbers are called Gromov-Witten invariants. Physicists postulated that such a phenomena may hold in generally, ie. that such manifolds always come in pairs. This research area was from then referred to as \textit{(enumerative)} mirror symmetry. 

\medskip
Kontsevich's visionary approach \cite{Kon94} to mirror symmetry was:
 Let us not try to directly compare numbers obtained from these two geometric set-ups.
 Let us associate categories to the geometric context, concretely the so called Fukaya category associated to the symplectic manifold
 and the derived category of coherent sheaves associated to the complex manifold and then compare these categories.\footnote{Physically, the objects of these categories describe the set of branes on which open strings of the respective topological string theory can end.} The statement of \textit{homological} mirror symmetry is that these two categories should be equivalent in a suitable sense if the two geometric contexts are `mirror'.
 Further, the categories that we find are Calabi-Yau, which roughly means that their Hom-spaces are equipped with non-degenerate pairings. 

 \medskip
 
 However, now one may ask the question how it is possible to pass back from these categories to the numbers that we were originally interested in and \textbf{whether an analogue of Gromov-Witten invariants exists for general Calabi-Yau categories}.
$$\begin{tikzcd}
   \text{Enumerative Invariants}\arrow[bend left=40]{r}{\text{Kontsevich}}& \text{Calabi Yau Category}\arrow[bend left=40, swap, dotted]{l}{?}
\end{tikzcd}$$

A proposal to solve this question has been given in the works of Costello \cite{Cos05}, Căldăraru-Tu \cite{CaTu24}, inspired by Kontsevich's work. By now it is known that a Calabi-Yau category equivalently describes a topological string theory \cite{Kon94}, or in different terminology a topological conformal field theory \cite{Cos07a}, a version of a 2d topological field theory \cite{Lu09}. To recover actual numbers from a Calabi-Yau category we need an additional datum, a \emph{splitting of its non-commutative Hodge filtration}. A splitting of the Hodge filtration should roughly allow to extend the corresponding field theory to a natural compactification, inducing a cohomological field theory, compare~\cite{des22}.

\medskip
As an \textbf{upshot} \cite{Cos05, CaTu24} tell us that given a Calabi-Yau category $\mathcal{C}$ together with a splitting `s', which is canonically determined in the geometric contexts of mirror symmetry,\footnote{See sections 6.2 and 6.3 of \cite{AmTu22}. The fact that a splitting is canonically determined in the context of mirror symmetry may explain why their relevance was only discovered relatively late.} we recover an element called the \textbf{closed categorical Gromov-Witten potential} \begin{equation}\label{ccei}
\mathcal{Z}^{\mathcal{C},s}\in Sym\left(HH^*(\mathcal{C})\llbracket u\rrbracket \right)\llbracket\gamma\rrbracket.\end{equation}

We can understand $\mathcal{Z}^{\mathcal{C},s}$ just as a polynomial in variables given by powers of $u$ and the Hochschild cohomology classes of $\mathcal{C}$.
Roughly speaking\footnote{Indeed, to match the geometric invariants one may need to consider a family of categories and take a certain limit; see the comment just before remark 1.2 of \cite{CaTu24}. We ignore this point here. However see remark \ref{DepOnPar} related to the bulk-boundary deformation, which may address this.} the coefficients of $\mathcal{Z}^{\mathcal{C},s}$ in these variables should be the enumerative invariants that we are interested in.
The following conjecture by Amorim-Tu, likely going back to Kontsevich, and implicit in Costello \cite{Cos05}, phrases this expectations: 
\begin{Conj}[\cite{AmTu22}, conjecture 6.12]\label{catgeo}
Let $X$ be a compact symplectic manifold and denote $\mathcal{C}=Fuk(X)$ its Fukaya category.\footnote{We write  $\Lambda$ for its Novikov ring.}
Then there exist a canonical splitting s and an isomorphism $HH^*(\mathcal{C})\cong H^*(X,\Lambda)$ and for $\eta_i\in H^*(X,\Lambda)$ 
the coefficient of  $$(\eta_1u^{k_1})\cdots(\eta_nu^{k_n})\gamma^g\ \  \text{of}\ \ 
\mathcal{Z}^{\mathcal{C},s}\in Sym\left(HH^*(\mathcal{C})\llbracket u\rrbracket\right)\llbracket\gamma\rrbracket$$
is the Gromov-Witten invariant
$$\int_{\bar{\mathcal{M}}_{g,n}(X)}\psi^{k_1}ev_*(\eta_1)\dots \psi^{k_n}ev_*(\eta_n),$$
\end{Conj}
which `counts' $\bar{\mathcal{M}}_{g,n}(X)$, the space of stable pseudoholomorphic maps from a Riemann surfaces of genus $g$ with $n$ marked points to $X$, weighted by cohomology class and $\psi$-class insertions along the marked points. In fact for the case of $X=pt$ conjecture \ref{catgeo} is true:
\begin{theorem}[\cite{Tu21}]\label{JTT}
Considering the category with one object with endomorphisms given by the algebra of rational numbers, then we have $HH^*(\mathbb{Q})\cong \mathbb{Q}$,
there is a unique splitting $s$ and the coefficient of  $$u^{k_1}\cdots u^{k_n}\gamma^g\ \ \text{of}\ \ \mathcal{Z}^{\mathcal{C},s}\in Sym\left(\mathbb{Q}\llbracket u\rrbracket\right)\llbracket\gamma\rrbracket$$ is
$$\int_{\bar{\mathcal{M}}_{g,n}}\psi^{k_1}\dots \psi^{k_n},$$
\end{theorem}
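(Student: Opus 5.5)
The first two claims are immediate, so the work lies in the third. Since the Hochschild complex of $\mathbb{Q}$ has $C^n(\mathbb{Q},\mathbb{Q})=\mathrm{Hom}(\mathbb{Q}^{\otimes n},\mathbb{Q})\cong\mathbb{Q}$ with differentials alternating between $0$ and the identity, we get $HH^*(\mathbb{Q})\cong\mathbb{Q}$ in degree $0$. Similarly $HC^-_*(\mathbb{Q})\cong \mathbb{Q}\llbracket u\rrbracket$ with periodic cyclic homology $\mathbb{Q}(\!(u)\!)$. A splitting of the non-commutative Hodge filtration is a $u^{-1}$-linear section of $HP_*\to HC^-_*/u$, compatible with the pairing. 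Here it is determined by the image of the unit class $1\in HH_0(\mathbb{Q})$. Degree considerations force this image to be $1+\sum_{k\ge1} c_k u^{-k}$ with $c_k$ of nonzero degree, hence $c_k=0$. The splitting is therefore unique.

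For the comparison of coefficients, I would not compute the Feynman sum of \cite{Cos05, CaTu24} directly. Instead I would use the structural characterization of $\mathcal{Z}^{\mathcal{C},s}$. By construction, $\mathcal{Z}^{\mathcal{C},s}$ is the image, under the formality morphism induced by $s$, of the string vertex solution $\mathcal{V}$ of the quantum master equation in the Costello/Sen–Zwiebach model of $\bar{\mathcal{M}}_{g,n}$. The plan is to show that $\mathcal{Z}:=\mathcal{Z}^{\mathbb{Q},s}$ satisfies the same universal constraints as the Witten–Kontsevich generating function $\exp\bigl(\sum_g\gamma^g F_g\bigr)$, with $F_g=\sum\frac{1}{n!}\langle\tau_{k_1}\cdots\tau_{k_n}\rangle_g t_{k_1}\cdots t_{k_n}$, and that these constraints determine it uniquely. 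The variables $t_k$ correspond to $u^k$ in $Sym(\mathbb{Q}\llbracket u\rrbracket)$.

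The constraints I aim for, in order, are these.
\begin{enumerate}
\item The unstable terms: the genus $0$, $n=3$ coefficient equals $1$, coming from the trace pairing, and the genus $1$, $n=1$ coefficient equals $\tfrac{1}{24}$, coming from the Euler characteristic/trace of the identity.
\item The dilaton equation, via the action of $u\cdot 1$ (the $u$-shift of the unit) on the splitting data.
\item The string equation, from the unit of $\mathbb{Q}$ acting as a trivial insertion.
\item Most importantly, the Virasoro constraints $L_n\mathcal{Z}=0$ for $n\ge -1$.
\end{enumerate}
The Virasoro constraints should follow because the formality morphism intertwines the action of $\mathbb{Q}[u^{-1}]$-reparametrisations of the splitting with the $u$-connection. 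Infinitesimal changes of $s$ by $u^{-n}$ give differential operators on $Sym(\mathbb{Q}\llbracket u\rrbracket)\llbracket\gamma\rrbracket$ annihilating $\mathcal{Z}$, because $s$ is unique. These operators should be exactly the Givental quantizations of $u^{-n-1}\cdot(u\partial_u)$-type symplectic operators, that is, the Virasoro generators. Once $L_{-1}$ and $L_0$ (string/dilaton) and all $L_n$ hold, Witten's conjecture as proved by Kontsevich, together with the fact that the Virasoro constraints plus the initial conditions recursively determine every $\langle\tau_{k_1}\cdots\tau_{k_n}\rangle_g$, yields the equality of coefficients.

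The main obstacle is step 4. This means making precise that the higher-order operators produced by varying the splitting within the Căldăraru–Tu formalism coincide with the Virasoro operators, including the correct genus-$1$ constant $\tfrac1{16}$-type terms in $L_0$. If this fails directly, I would fall back on a computational check: evaluate $\mathcal{Z}$ in low genus and show that the $\gamma$-expansion obeys the DVV topological recursion, using the explicit combinatorics of the ribbon-graph sums for the one-dimensional algebra $\mathbb{Q}$.
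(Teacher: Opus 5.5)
Your first two claims are fine. The thesis gives no argument of its own here and simply quotes \cite{Tu21}. Since $s$ lands in $CH_*(\mathcal{C})\llbracket u\rrbracket$, the degree argument should be run on $s(1)=1+\sum_{k\ge1}c_ku^k$ rather than on negative powers, and it genuinely uses the $\mathbb{Z}$-grading.

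The gap is step 4, which is not a supporting step but the whole theorem. By Kontsevich--Witten and the DVV recursion, the constraints $L_n\mathcal{Z}=0$ for $n\ge -1$, with their built-in normalisations, already determine every $\langle\tau_{k_1}\cdots\tau_{k_n}\rangle_g$. So you have reduced the statement to one of equal strength, and the mechanism you offer for it does not work, for three reasons.
\begin{itemize}
\item Uniqueness of $s$ means the space of admissible splittings is a point, so it produces no operator at all. It says nothing about derivatives in directions such as $\delta s=u^{-n}$: these are not splittings, so the construction is not even defined along them, let alone invariant.
\item Along directions that do become admissible once the $\mathbb{Z}$-grading is dropped, namely $s'=R(u)s$ with $R(u)R(-u)=1$, the potential genuinely moves, by Givental's $\widehat{R}$-action with $R=s'\circ s^{-1}$. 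For instance $R=\exp\bigl(\sum_k\frac{B_{2k}}{2k(2k-1)}u^{2k-1}\bigr)$ (up to sign conventions) carries the Witten--Kontsevich function to the generating function of Hodge integrals (Mumford's formula, Faber--Pandharipande). Nothing is annihilated.
\item Changes of splitting act by $\mathbb{Q}\llbracket u\rrbracket$-linear multiplication operators. For $n\ge 0$ the Virasoro generators instead quantize $l_n=-u^{-1/2}(u\tfrac{d}{du}u)^{n+1}u^{-1/2}$, which are differential operators of order $n+1$ in $u$. The identification you propose cannot hold.
\end{itemize}
Steps 2 and 3 are likewise only asserted. Also, $\langle\tau_1\rangle_1=\tfrac1{24}$ is an invariant of the stable space $\bar{\mathcal{M}}_{1,1}$ that has to be computed, not an unstable term.

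More fundamentally, your outline uses no property of the closed string vertices $SV^c$ beyond their existence. Nothing in it connects $\mathcal{Z}^{\mathcal{C},s}$ with the compactified spaces $\bar{\mathcal{M}}_{g,n}$, yet that is where the content has to come from. For $\mathcal{C}=\mathbb{Q}$ one has $CH_*=\mathbb{Q}$ and $B=0$, so the only inputs are two: $SV^c$, which is pinned down up to homotopy by the master equation \eqref{csvmce} and its normalisation on $\mathcal{M}^{fr}_{0,3}$; and the propagator $H^{sym}_s$ of the unique splitting. The natural route has two parts. First, show that the representing map of Theorem \ref{cei} turns the equivariant parameter $u$ into $\psi$-classes (up to sign, the Euler classes of the circle bundles of framings). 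Second, show that the Feynman sum over stable graphs with propagator $H^{sym}_s$ accounts exactly for the boundary strata of $\bar{\mathcal{M}}_{g,n}$. Then the image of $SV^c$, a chain-level fundamental class of the open framed moduli spaces, integrates to $\int_{\bar{\mathcal{M}}_{g,n}}\psi^{k_1}\cdots\psi^{k_n}$. Your fallback does not repair the gap: low-genus checks cannot give all $g$, and deriving DVV from the graph combinatorics would be the entire proof, which you have not indicated how to carry out.
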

ie. exactly the intersection numbers on the moduli space of stable Riemann surfaces $\bar{\mathcal{M}}_{g,n}$, also known as the Gromov-Witten invariants of a point.

\subsection{Closed Categorical Enumerative Invariants}\label{cceis}
Let us now explain the ideas of Costello, Căldăraru-Tu in more detail. In the section afterwards we will begin to situate the results obtained in this thesis in relation to their work.

First we introduce another algebraic object, called a \textbf{Beilinson-Drinfeld} (BD) \textbf{algebra}.\footnote{A closely related notion are Batalin-Vilkovisky algebras.} Those are meant to capture the structure of the observables of a quantum field theory formulated in the BV formalism \cite{BV81} - we comment on a more precise connection of the BD algebras appearing here to physics in remark \ref{DRDEA} at the end of this paragraph. A Beilinson-Drinfeld algebra $$(W,\cdot,d,\{\_,\_\})$$ consists of a multiplication, a differential, and a Lie bracket on a graded vector space $W$; those operations have to be related in an interesting way; see definition \ref{BD}. Given a Calabi-Yau\footnote{In fact we need to work with a strictification of that notion, called cyclic.} $A_\infty$-category $\mathcal{C}$ we can construct an \textbf{algebraic Beilinson-Drinfeld} algebra in which the numbers from \eqref{ccei} have a natural home: There is a Beilinson-Drinfeld structure 
\begin{equation}\label{cabd}
\mathcal{F}^c(\mathcal{C}):=\Big(Sym(CH_*(\mathcal{C})[ u^{-1}])\llbracket\gamma\rrbracket,\cdot\ ,d_{hoch}+uB+\gamma\Delta_c,\{\_,\_\}_c\Big),
\end{equation}
on the symmetric algebra of the Hochschild chains of $\mathcal{C}$, adjoined a formal variable $u$, and all adjoined a formal, genus counting parameter $\gamma$. Here $\Delta_c$ and $\{\_,\_\}_c$ are induced from the residue pairing and Connes boundary operator $B$. The prior is given by a map
\begin{equation}\label{Res}
   \langle\_,\_\rangle_{res}: CH_*(\mathcal{C})((u))\otimes CH_*(\mathcal{C})((u))\rightarrow k,
\end{equation}
itself induced from the so called Mukai pairing.
\begin{remark}
In \cite{CaTu24} and \cite{AmTu25} this BD algebra is denoted $\mathfrak{h}_\mathcal{C}$. We chose the notation $\mathcal{F}^c(\mathcal{C})$ following the initial description in \cite{Cos05} from its origin as a Fock space and to stay compatible with the notations of the following chapters. We apologize for possible confusions.  
\end{remark}
\smallskip
Further we introduce the abelian Beilinson-Drinfeld algebra 
\begin{equation}\label{abBD}
\mathcal{F}^c(\mathcal{C})^{Triv}:=\Big(Sym(CH_*(\mathcal{C})[u^{-1}])\llbracket\gamma\rrbracket,\cdot\ ,d_{hoch},0\Big).
\end{equation}
There is also a \textbf{geometric BD algebra}, called $M^c$, whose underlying graded vector space is roughly
 \begin{equation}\label{cms}
    \bigoplus_{\text{stable}} C_*(\mathcal{M}_{g,n}^{fr}).\end{equation}
Here $\mathcal{M}_{g,n}^{fr}$ denotes the moduli space of genus $g$ Riemann surfaces (not necessarily connected) with $n$ framed boundaries. Denoting by $\partial$ the $S^1$-equivariant singular chain differential, where the $S^1$ action(s) comes from rotating the framings, we have more precisely
\begin{equation}\label{cgbd}
M^c:=\Big(\bigoplus_{\text{stable}} C_*(\mathcal{M}_{g,n}^{fr})_{hS^1}\llbracket\gamma\rrbracket,\cdot\ ,\partial+\gamma\Delta_c,\{\_,\_\}_c\Big),
\end{equation}
 where both  $\Delta_c$ and $\{\_,\_\}_c$ are induced by twist sewing along framed boundaries and the multiplication is induced from disjoint union.
 
 Note that a Beilinson-Drinfeld algebra is a dg shifted Lie algebra by forgetting the multiplication. Further we restrict to suitably finite categories, called smooth; see  definition \ref{smanpr}. Then one can connect the BD algebras $M^c$ and $\mathcal{F}^c(\mathcal{C})$ as follows, which is a first key step in defining the closed categorical Gromov-Witten potential \eqref{ccei}:
\begin{theorem}[\cite{Cos05, CaTu24}]\label{cei}
     There is a `\textbf{representing' map} of induced dg shifted Lie algebras given a smooth cyclic $A_\infty$-category $\mathcal{C}$
   \begin{equation}\label{omgkbm}
   \rho_{\mathcal{C}}:M^c\rightarrow \mathcal{F}^c(\mathcal{C}).
\end{equation}
\end{theorem}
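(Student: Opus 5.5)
The plan is to build $\rho_{\mathcal{C}}$ from Costello's theorem that a cyclic $A_\infty$-category is the same as an open topological conformal field theory, followed by his open-to-closed comparison. The result is a chain-level action of the framed moduli spaces on Hochschild chains, which is then upgraded to a map of Fock spaces. The steps are as follows.

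\textbf{Step 1: an open TCFT from $\mathcal{C}$.} Model the open moduli spaces by ribbon graphs, using the cellular chains of Kontsevich–Penner type. A smooth cyclic $A_\infty$-category gives an algebra over the dg PROP of open moduli spaces. Each ribbon graph is labeled by the higher products $m_k$ at its vertices and by the inverse of the cyclic pairing on its edges. This inverse is available because smoothness, together with the Calabi–Yau property, identifies $\mathcal{C}$ with its dual bimodule.

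\textbf{Step 2: induce to the closed sector.} Apply Costello's homotopy-universal extension from open to open-closed. The closed state space is then $CH_*(\mathcal{C})$, and each chain in $C_*(\mathcal{M}^{fr}_{g,n})$ gives an operation $CH_*(\mathcal{C})^{\otimes n_{in}}\to CH_*(\mathcal{C})^{\otimes n_{out}}$.

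\textbf{Step 3: pass to $S^1$-equivariance.} Take homotopy $S^1$-orbits. The rotation action of $S^1$ corresponds on Hochschild chains to Connes' $B$, so the equivariant chains $C_*(\mathcal{M}^{fr}_{g,n})_{hS^1}$ act on $CH_*(\mathcal{C})$ with $u^{-1}$-towers adjoined, and the differential $\partial$ is intertwined with $d_{hoch}+uB$.

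\textbf{Step 4: dualize outputs and assemble.} Dualize all outputs using the residue pairing \eqref{Res}, so that every surface with all boundaries treated as inputs gives an element of $Sym(CH_*(\mathcal{C})[u^{-1}])$. Define $\rho_{\mathcal{C}}$ on a chain of genus $g$ with $n$ boundaries by this element, weighted by $\gamma^g$, and extend multiplicatively over disjoint unions.

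It then remains to check three things.
\begin{itemize}
\item $\rho_{\mathcal{C}}$ is a chain map. The boundary of a chain in the framed moduli space has components given by degenerations. Under the TCFT these match the pieces $d_{hoch}+uB$, while self-gluing gives $\gamma\Delta_c$.
\item $\rho_{\mathcal{C}}$ intertwines the brackets. The bracket $\{\_,\_\}_c$ on $M^c$ is twist-sewing of boundaries from two different components. It maps to contraction with the residue pairing, because the inverse of the Mukai pairing is the propagator used in gluing.
\item Composition is preserved, which follows from the functoriality of the open-closed TCFT under sewing.
\end{itemize}

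\textbf{Main obstacle.} I expect the hardest part to be Steps 3–4: showing that, after $S^1$-equivariance, twist-sewing is sent exactly to $\Delta_c$ and $\{\_,\_\}_c$, which are built from the residue pairing on the $u$-adic complex. The difficulty is that the residue pairing involves the $u$-dependent Mukai pairing, whereas twisted gluing in equivariant chains involves integrating over the rotation circle. My first approach would be to write twist-sewing explicitly as the composite "glue, then average over $S^1$". I would then check, for a cylinder, that this average is the operator $\langle B\,\cdot,\cdot\rangle$ paired against $u^{-1}$ powers. From there, locality of sewing should propagate the identity to all surfaces. A further issue is that only an $L_\infty$ (homotopy) map may be available rather than a strict one; if so, I would rectify by working throughout with Costello's strictly cellular models so that the map is strict on the nose.
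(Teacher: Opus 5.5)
Your proposal has a genuine gap in Steps 2 and 4, at exactly the point where smoothness has to enter. Costello's homotopy-universal extension (Theorem \ref{KanExt}) only produces closed operations for surfaces with at least one \emph{incoming} closed boundary. The BD algebra $M^c$, however, is built from surfaces with no incoming and only outgoing framed boundaries; these are the ones that have to land in $Sym(CH_*(\mathcal{C})[u^{-1}])$, cf.\ the discussion before Proposition \ref{ssgb}. Remark \ref{dasistGG} points out that the closed TCFT of a cyclic $A_\infty$-category does in general \emph{not} extend to zero inputs.

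Dualizing in Step 4 does not get around this. Using the residue pairing \eqref{Res} to turn outputs into inputs produces functionals $CH_*(\mathcal{C})^{\otimes n}\to k$, not elements of $Sym(CH_*(\mathcal{C})[u^{-1}])$. Going the other way requires a chain-level copairing, and so does gluing two such inputs in your bracket bullet, where you use ``the inverse of the Mukai pairing'' as propagator. No such copairing exists: $CH_*(\mathcal{C})$ is infinite dimensional, and the Mukai pairing is non-degenerate only on homology, and only because $\mathcal{C}$ is smooth (and proper). Relatedly, in Step 1 you spend smoothness on inverting the cyclic pairing on Hom-spaces. That inverse is already part of the cyclic structure, and Theorem \ref{OTCFTcyc} needs no smoothness. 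As written, your argument never uses smoothness substantively, so it would produce $\rho_{\mathcal{C}}$ for every cyclic $A_\infty$-category, contradicting Remark \ref{dasistGG}.

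The missing idea is how smoothness supplies the zero-input operations, and it does so only up to homotopy. In \cite{Cos05, CaTu24} (see diagram \eqref{smooRes}) one passes to resolutions $M^c\xrightarrow{\simeq}\widehat{M^c}$ and $\mathcal{F}^c(\mathcal{C})\xrightarrow{\simeq}\widehat{\mathcal{F}^c}(\mathcal{C})$; the second is a quasi-isomorphism only for smooth $\mathcal{C}$. One then constructs an honest map $\widehat{M^c}\to\widehat{\mathcal{F}^c}(\mathcal{C})$ between the resolutions. So $\rho_{\mathcal{C}}$ is the resulting zig-zag, a morphism in the homotopy category of dg Lie algebras.

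Your fallback of making the map strict ``on the nose'' via cellular models therefore does not address the issue. The homotopical nature comes from the absence of a chain-level copairing on the algebraic side, not from the choice of chain model on the moduli side. Conversely, the step you flag as the main obstacle is essentially formal once an extension to zero inputs is available. That step is matching $S^1$-equivariant twist-sewing with $\Delta_c$ and $\{\_,\_\}_c$, and it is the content of Proposition \ref{ssgb}, via $Tot_{S^1}$ and $Fr^\gamma$.
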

\smallskip
The next fundamental ingredient in the approach of Costello, Căldăraru-Tu are the so called \textbf{closed string vertices}, which are the unique element $SV^c\in M^c$ which satisfies (\cite{SZ94}, \cite{Cos05})
\begin{equation}\label{csvmce}
    (\partial+\Delta_c)SV^c+\frac{1}{2}\{SV^c,SV^c\}_c=0,
\end{equation}
in other words it is a Maurer-Cartan element, and $SV^c|_{\mathcal{M}_{0,3}^{fr}}=\frac{1}{3!}[pt]$; recalling that $\mathcal{M}_{0,3}^{fr}=pt$.

\medskip

We remind that we were interested to extract actual numbers (conjecturally enumerative invariants), given a Calabi-Yau category $\mathcal{C}$. To do so we need additionally a \textbf{splitting of the non-commutative Hodge filtration}; see definition \ref{splitting}. Roughly speaking Kaledin proved \cite{kal17} that given a smooth and proper $\mathbb{Z}$-graded category the $S^1$-action on Hochschild chains is homotopy equivalent to zero.
A splitting of the non-commutative Hodge filtration specifies \emph{how} this action is trivial. Given such a splitting one can show that the algebraic BD algebra \eqref{cabd} is almost trivial, which is the next key step in extracting the closed categorical Gromov-Witten potential \eqref{ccei}: \begin{theorem}[\cite{CaTu24}]\label{splmap}
Given a smooth cyclic $A_\infty$-category $\mathcal{C}$ and a splitting $s$ of the non-commutative Hodge filtration there is an $L_\infty$-quasi-isomorphism of dg shifted Lie algebras
\begin{equation}\label{cltrmo}
\mathcal{K}_s:\mathcal{F}^c(\mathcal{C})\rightsquigarrow {\mathcal{F}^c(\mathcal{C})}^{Triv},\end{equation}\end{theorem}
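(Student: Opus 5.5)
The construction has two stages. First, the splitting $s$ trivializes the $u$-linear structure. Second, the remaining Lie bracket is removed by exponentiating a ``propagator'' coming from the failure of $s$ to be Lagrangian for the residue pairing.

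\textbf{Stage one: trivializing the $S^1$-action.} A splitting of the non-commutative Hodge filtration (definition \ref{splitting}) gives, via Kaledin's degeneration \cite{kal17}, a $k\llbracket u\rrbracket$-linear quasi-isomorphism
$$\Phi_s: \big(CH_*(\mathcal{C})\llbracket u\rrbracket, d_{hoch}+uB\big)\simeq \big(HH_*(\mathcal{C})\llbracket u\rrbracket,0\big).$$
I will extend $\Phi_s$ to $u^{-1}$-coefficients. The precise condition to use is that the image of $s: HH_*(\mathcal{C})\to HC^-_*(\mathcal{C})$, extended $u^{-1}$-linearly, is a Lagrangian complement to $HC^-_*$ inside periodic cyclic homology for the residue pairing \eqref{Res}.

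I then extend $\Phi_s$ multiplicatively to symmetric powers. This gives a map of commutative dg algebras from $Sym(CH_*(\mathcal{C})[u^{-1}])\llbracket\gamma\rrbracket$ with differential $d_{hoch}+uB$ to the same space with differential $d_{hoch}$. Concretely, I choose a homotopy $H$ contracting the $uB$-part, so that $d_{hoch}+uB$ is conjugated to $d_{hoch}$ by the automorphism $\exp$ of $Sym$ induced by the linear map $1+uH+\dots$. Only the BV operator $\gamma\Delta_c$ and the bracket $\{\_,\_\}_c$ then remain to be killed.

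\textbf{Stage two: removing the bracket.} Because $\Delta_c$ is a second-order operator induced by the pairing composed with $B$, after the transfer it becomes $\gamma\Delta_P$ for a symmetric tensor $P\in Sym^2$. This $P$ measures the pairing between the $s$-image and its complement. Lagrangianity of $s$ should let me write $\Delta_P=[d_{hoch},\partial_Q]$ for a contracting propagator $Q$. The standard homotopy-transfer argument for BD algebras then applies: conjugating by $\exp(\gamma\partial_Q)$ carries
$$d_{hoch}+\gamma\Delta_c \quad\text{to}\quad d_{hoch}.$$
Since $\exp(\gamma\partial_Q)$ is not multiplicative, it intertwines the Lie brackets only up to homotopy. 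Its Taylor coefficients, obtained by restricting the conjugated exponential to $Sym^{\geq 1}$ (the ``$\log$'' of the map), define the $L_\infty$-morphism $\mathcal{K}_s$. This is a sum over graphs with $Q$ on edges, in the manner of Costello's and Căldăraru–Tu's Feynman expansion. Its linear term is quasi-isomorphic to $\Phi_s$, so $\mathcal{K}_s$ is an $L_\infty$-quasi-isomorphism; this can be checked on the associated graded for the $\gamma$- and $Sym$-filtrations.

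\textbf{Main obstacle.} The hardest step is the precise compatibility between the splitting and the residue pairing. One must show that $s$ can be chosen, or normalized, so that the pairing restricted to its positive-$u$ part vanishes, which is what makes $Q$ well defined and $\gamma$-adically convergent. I would first verify that the residue pairing restricted to $HC^-_*$ vanishes, which holds by degree reasons in $u$. I would then impose Lagrangianity as part of the splitting data, as in \cite{CaTu24}.

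A second point to check is the $u^{-1}$-completion: the homotopy $H$ must extend to $CH_*(\mathcal{C})[u^{-1}]$. Here smoothness enters, guaranteeing finite-dimensional $HH_*$ and hence the finiteness needed to make the graph sums well defined.
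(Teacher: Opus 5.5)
Your architecture is the one recalled in Section~\ref{octosa} after \cite{CaTu24}, with the two stages in the opposite order. There, $\mathcal{K}_s$ is first built as a graph sum with the symmetric form $H^{sym}_s$ on the edges, landing in $\mathcal{F}^c(\mathcal{C})^{triv}$ (differential $d_{hoch}+uB$, zero bracket), and is then composed with the isomorphism $R$ of \eqref{R}. Conjugating $e^{\gamma\partial_H}$ by a multiplicative automorphism only transports $H$, so the order is immaterial.

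The genuine gap is in your Stage two. You obtain a \emph{symmetric} propagator $Q$ only by assuming $s$ is Lagrangian for the residue pairing, and you propose either to impose this or to normalize $s$. Neither is available. Definition~\ref{splitting} asks only for a chain map splitting $\pi$, and normalizing $s$ changes the morphism you are asked to build. The missing idea is symmetrization:
\begin{itemize}
\item Let $S=\sum_k s_k u^k$ ($s_0=\mathrm{id}$) be the $u$-linear extension of $s$. It preserves $uCH_*(\mathcal{C})\llbracket u\rrbracket$, so it induces an isomorphism $\bar S$ of $CH_*(\mathcal{C})[u^{-1}]\cong CH_*(\mathcal{C})(\!(u)\!)/uCH_*(\mathcal{C})\llbracket u\rrbracket$ intertwining $d_{hoch}$ with $d_{hoch}+uB$.
\item $S(CH_*(\mathcal{C})[u^{-1}])$ is a $(d_{hoch}+uB)$-invariant complement to $uCH_*(\mathcal{C})\llbracket u\rrbracket$. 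Writing it as a graph over the naive complement gives a bilinear form $H_s$, built from the $s_k$ with $k\geq 1$ and the residue pairing. Its coboundary is the form $\beta$ whose contraction is $\Delta_c$. In your transported picture, $P$ is the coboundary of $H_s\circ(\bar S\otimes\bar S)$.
\item $H_s$ is symmetric when that complement is isotropic, which is exactly your Lagrangian condition, but not in general.
\item However, $\beta$ (equivalently $P$) is graded symmetric, and the differential on bilinear forms commutes with the graded transposition $\tau$ of the two factors. Hence $\tfrac12(H_s+\tau H_s)$ is again a primitive, for every splitting. This is the role of $H^{sym}_s$ in Rule~\ref{Feynm}.
\end{itemize}
Lagrangianity has nothing to do with $Q$ being well defined or convergent.

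Three smaller corrections. First, Stage one needs neither Kaledin's theorem nor a passage to $HH_*(\mathcal{C})$: the chain-level isomorphism $\bar S$ (which is essentially what induces $R$) does it directly. Your $\Phi_s$, with target $(HH_*(\mathcal{C})\llbracket u\rrbracket,0)$, does not land in $\mathcal{F}^c(\mathcal{C})^{Triv}$, which is built on $CH_*(\mathcal{C})$ with $d_{hoch}$. There is also no separate $u^{-1}$-extension problem. Second, no finiteness is needed. $\Delta_c$ and $\partial_{H^{sym}_s}$ are contractions with bilinear forms, so no inverse pairing occurs. An input $y_i\in Sym^{n_i}$ fixes the valence of vertex $i$, so every Taylor coefficient is a finite graph sum. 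Smoothness alone would not make $HH_*$ finite-dimensional anyway; that needs properness too. Third, the linear term $e^{\gamma\partial_Q}\circ Sym(\bar S^{-1})$ is already an isomorphism, with inverse $Sym(\bar S)\circ e^{-\gamma\partial_Q}$, so no spectral sequence argument is required.
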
 
recalling the abelian BD algebra from \eqref{abBD}. 

\medskip
Summarizing, it follows that the image of the string vertices (and their exponential) under the maps from the two previous theorems defines a closed element, which finally leads us to the closed categorical Gromov-Witten potential \eqref{ccei}:
\begin{df} Given a smooth cyclic $A_\infty$ category $\mathcal{C}$ and a splitting $s$ we denote\footnote{In fact we should localize at $\gamma$, see just before lemma 6.11 of \cite{CaTu24}. We omit this here and in the following for simplicity of presentation.} the \textbf{\emph{ closed categorical Gromov-Witten potential}} of $(\mathcal{C},s)$, the class of the exponential of the image of the closed string vertices \eqref{csvmce} under the maps \eqref{cltrmo} and \eqref{omgkbm}, by
\begin{equation}\label{cgw}
Z_{\mathcal{C},s}^c:=[e^{\mathcal{K}_s\rho(SV^c)/\gamma}\ ]\in Sym (HH_*(\mathcal{C})[ u^{-1}])\rrbracket\gamma\rrbracket.
\end{equation}
\end{df}
The name `closed categorical Gromov-Witten potential' for the element $Z_{\mathcal{C},s}^c$ is motivated by conjecture \ref{catgeo}.\footnote{To be more precise the element $Z_{\mathcal{C},s}^c$ uniquely determines the element $Z^{\mathcal{C},s}$ from equation \eqref{ccei}, which we called closed categorical Gromov-Witten potential before. We ignored this point here as these elements uniquely determine each other; they are dual under the residue pairing \eqref{Res}; see section 1.1 of \cite{CaTu24}.} 
\begin{remark}\label{DRDEA}
From a physics perspective the BD algebra $\mathcal{F}^c(\mathcal{C})$ describes the quantum observables of the \emph{free closed string field theory} (SFT) associated to the closed TCFT induced by $\mathcal{C}$.\footnote{We recall in section \ref{2dop} Costello's theorem~A \cite{Cos07a} which explains how a Calabi-Yau category induces a closed TCFT.} We justify this terminology as follows: for $\mathcal{C}=D^b_{dg}(X),$ for $X$ some CY manifold $\mathcal{F}^c(\mathcal{C})$ is equivalent (proposition \ref{propprop}) to the free quantum observables of \textbf{BCOV theory}, also called Kodaira-Spencer gravity, formulated in the BV formalism, as studied in \cite{CL15} using analytical and obstruction theoretic methods. Kodaira-Spencer gravity, respectively BCOV theory had been identified as the closed SFT of the B-model by physicists in \cite{BCOV94}.\footnote{Physicists suggest that there is a general method to assign to a string theory a yet other theory, called its string field theory. Parts of the results recalled and obtained in this thesis may be views as making this precise for topological string theories, those encoded by Calabi-Yau categories.}
These ideas let us to define the notion of \emph{interacting closed string field theory} associated to a Calabi-Yau category, see definition \ref{cisft}. We conjecture that for $\mathcal{C}=D^b_{dg}(X),$ for $X$ some CY manifold the interacting closed string field theory associated to $\mathcal{C}$ is equivalent to interacting BCOV theory; see conjecture \ref{conj_fo}, where we also collect some evidence. It is under this (conjectural) perspective that we understand the element $Z_{\mathcal{C},s}^c$ as the partition function of the interacting closed string field theory associated to the Calabi-Yau category~$\mathcal{C}$. 
\end{remark}
 One main accomplishment of Căldăraru-Tu \cite{CaTu24} is to give an explicit formula from which one can in principle compute the closed categorical GW potential \eqref{cgw}; see theorem 1.3 of \cite{CaTu24} and see also \cite{CaTu20} for a computation in low genus for an elliptic curve, confirming predictions from mirror symmetry. However, general computations seem to remain very difficult; compare section 9.2 of~\cite{CaTu24}. As mentioned, there exists an a priori different, analytic approach to closed SFT of the B-model, that is BCOV theory, developed by Costello-Li \cite{coli12, CL15}. In these works the authors show that (in particular) for elliptic curves there is a unique quantization of interacting BCOV theory\footnote{Strictly speaking they do so only for a variant of BCOV theory, which may access only some enumerative invariants. However Li shows in \cite{Li16}, using different methods, that a quantization does exists.} that  satisfies the dilaton equation. Using the complex conjugate splitting one can obtain numerical invariants, which they prove coincide with the GW-invariants of the mirror dual of the input elliptic curve \cite{li11}. Assuming a comparison result of algebraic and analytic descriptions of closed SFT for the B-model (along the lines of proposition \ref{propprop}) one could prove that the quantization obtained through the closed string vertices \eqref{cgw} dequantizes to the BCOV interaction term (see our conjecture \ref{conj_fo}) and satisfies the dilaton equation (compare footnote on page 43 of \cite{CaTu24}) an identification of analytically and algebraically obtained quantizations should follow by the uniqueness result of \cite{coli12}. This could be of great advantage, allowing to combine the explicit formulae from \cite{CaTu24} with the analytic methods of \cite{coli12, CL15}; we hope to further explore this in the future.
\begin{remark}
We restrict ourselves to Calabi-Yau categories of \emph{odd} dimension $d$ in the following sections, because it is then that we understand the quantized LQT map (see \eqref{lqt} later), even though in this section that was not necessary. Further, the `fermion trick' makes this assumption relatively mild.\footnote{We recall the `fermion trick' from appendix B of \cite{AmTu22}: to any smooth cyclic $A_\infty$-category of even dimension we can associate such a category of odd dimension whose categorical GW invariants coincide with the original one.} We will not indicate the Calabi-Yau dimension each time in this introduction. Similarly we will omit some shifts and will be imprecise about some degrees. Further there are some choices how to set things up: the appearing BD and underlying shifted Poisson algebras built from a cyclic category of dimension $d$ most naturally either have a $(d-2)$ or $(2d-5)$ shifted bracket, depending on that choice. We refer the interested reader to the later chapters of this thesis; we have suppressed this subtlety in the introduction.
\end{remark}
\subsection{Results Obtained in this Thesis}
We begin to situate the results obtained in this thesis as part of an open-(closed) analogue of the previous section.

Recall that so far the motivation was provided by (re)-understanding \textbf{closed Gromov-Witten invariants}, which count pseudo-holomorphic maps from Riemann surfaces with some interior marked points to symplectic manifolds. Physically, one may think of such a pseudo-holomorphic map as the process of closed strings (corresponding to the image of the punctures) propagating in the symplectic manifold and merging with each other. In string theory one can also consider \emph{open strings}, whose boundaries in this specific example have to end on Lagrangian submanifolds, but which can also propagate in a given symplectic manifold and possibly merge with each other. Thus one may wonder whether there is a theory of \textbf{open Gromov-Witten invariants}, counting pseudoholomorphic maps from Riemann surfaces with boundary and boundary marked points to symplectic manifolds, such that the free boundaries lie in some fixed Lagrangian submanifold(s).

Indeed, that is what physicists and symplectic topologists have begun to study; see eg. \cite{psw08}. However, these invariants are subtle to define as the moduli space of such maps has an intricate boundary behavior \cite{Liu02}. Recently, in \cite{EkSh25}, the authors use exactly some algebraic structures induced from this boundary behavior to show that open Gromov Witten invariants have a natural home in the skein module, in spirit sort of related to the ideas in the previous section~\ref{cceis}.
Already earlier Fukaya \cite{Fuk11} had studied a priori different algebraic structures which similarly exhibit a home for such invariants in genus zero, a higher genus version was perhaps first described in \cite{mo99}. Those structures can be phrased in the language of \emph{involutive Lie bialgebras}, also appearing in string topology; see eg. \cite{NaWi19} for a recent work.

\medskip
In any case, one may wonder whether there is an `{open}' analogue of the `{closed}' theory outlined in the previous section \ref{cceis}. We propose to begin such a story as follows, directly inspired by the previously cited works \cite{mo99, Fuk11}:\footnote{The relationship to the work \cite{EkSh25} remains more unclear, however; though a relationship though large $N$ Chern-Simons theory to the skein module seems very suggestive.} fixing a set of objects $\Lambda$ of a cyclic $A_\infty$-category $\mathcal{C}$, having in mind a set of Lagrangian submanifolds as objects of the Fukaya category, the \textbf{open algebraic analogue} of the closed algebraic BD algebra \eqref{cabd} is the  Beilinson-Drinfeld algebra 
\begin{equation}\label{OBDa}
\mathcal{F}^o(\Lambda)=\Big(Sym\big(Cyc^*(\Lambda)[-1]\big) \llbracket\gamma\rrbracket,\cdot\ ,d+\nabla+\gamma\delta,\{\_,\_\}_o\Big),\end{equation}
built as the symmetric algebra on a central extension of cyclic cochains $Cyc^*(\Lambda)$ of the full subcategory on $\Lambda$, adjoined a genus counting parameter $\gamma$. The algebraic structure is induced from a central extension of an involutive Lie bialgebra structure on cyclic cochains
\begin{equation}\label{auihgj}
\nabla:Cyc^*(\Lambda)\rightarrow Cyc^*(\Lambda)\otimes Cyc^*(\Lambda),
\end{equation}
\begin{equation}\label{tzucvb}
\delta: Cyc^*(\Lambda)\otimes Cyc^*(\Lambda)\rightarrow Cyc^*(\Lambda);
\end{equation}
further $d$ indicates the differential induced from the underlying chain complexes (\emph{not} the differential computing cyclic cohomology, which is not compatible with the co-bracket \eqref{auihgj} in general.) We refer to definition \ref{nc_pq} for further details.  

\smallskip
To \emph{construct the `open' analogue to the universal geometric closed BD algebra} \eqref{cms} from the previous section \ref{cceis} we introduce the chain complex\footnote{To be precise we consider chains with values in a certain line bundle but ignore this point here. Compare~\cite{Cos07a}.} $$\big(C_*(\mathcal{M}_{g,b,m_1,\cdots,m_b}^{\Lambda}),\partial\big),$$ built from the moduli space of genus $g$ Riemann surfaces (not necessarily connected) with  $b$ (unframed) boundaries, each with $m_i\in\mathbb{Z}_{\geq 0}$ embedded open boundaries,\footnote{Up to homotopy we may consider marked points on the boundary instead of embedded intervals.
This replacement can be done in a way compatible with the operation of gluing marked points respectively embedded intervals. Compare proposition 6.1.5 of \cite{Cos07a}.} the free boundaries labeled by the set $\Lambda$. Out of it one obtains an \textbf{open geometric} Beilinson-Drinfeld algebra
\begin{equation}\label{ogbd}
M^{o}_\Lambda:=\Big(\bigoplus_{\text{stable}} C_*(\mathcal{M}_{g,b,m_1,\cdots,m_b}^{\Lambda})\llbracket\gamma\rrbracket,\cdot\ ,\partial+\Delta_o,\{\_,\_\}_o\Big),\end{equation} where $\Delta_o$ and $\{\_,\_\}_o$ are induced by gluing along open boundaries with matching free boundary labels, the multiplication is induced from disjoint union. See \cite{HVZ08}.

\bigskip
Before we continue to explain the analogies to the previous section \ref{cceis} and begin to present our results  we recall from \cite{GGHZ21} the quantized Loday-Quillen-Tsygan map which is a map of Beilinson-Drinfeld algebras 
\begin{equation}\label{lqtintro}
LQT_N:\mathcal{F}^o(\lambda)\rightarrow \mathcal{O}^{pq}\big(\mathfrak{gl}_NEnd_\mathcal{C}(\lambda)\big), \end{equation} connecting for $\Lambda=\{\lambda\}$ the open BD algebra \eqref{OBDa} and the right-hand side, built from Lie algebra cochains of the cyclic chain complex $\mathfrak{gl}_NEnd_\mathcal{C}(\lambda)$ with its standard BD algebra structure.\footnote{We recall this around equation \eqref{c_pq}. The appearing $V$ there would here be the underlying cyclic chain complex of the cyclic $L_\infty$-algebra $\mathfrak{gl}_NEnd_\mathcal{C}(\lambda)$.} We elaborate on its meaning:
\begin{remark}\label{DRDZA}
For $N\rightarrow\infty$ the LQT map becomes a quasi-isomorphism \cite{LQ84}\cite{Tsy83}. From a physics perspective the open BD algebra $\mathcal{F}^o(\lambda)$ thus describes the large $N$ observables of the \emph{open string gauge theory on a stack of $N$ branes of $\lambda$}. We elaborate a bit more around equation \eqref{copex}. Compare further \cite{CL15} where this is related to \textbf{holomorphic Chern-Simons theory} in the case of the B-model using analytical methods to deal with the appearing infinite dimensional spaces. Holomorphic Chern-Simons theory had been identified as the open string field theory of the B-model in~\cite{wi95}. 
\end{remark}
\medskip
We now present the results of this thesis, in comparison to the theory outlined so far.
\subsubsection{Kontsevich's Cocycle Construction and the Quantized Loday-Quillen-Tsygan Theorem}\label{gvnlmss}
 Following results are based on the preprint \cite{Ul25a} and focus on an `open' analogue of the \textbf{representing map} \eqref{omgkbm} from the closed geometric BD algebra to the closed algebraic BD algebra and on \textbf{open} analogues of the \textbf{Maurer-Cartan element}, the string vertices~\eqref{csvmce}.

\bigskip
The definition of the open geometric BD algebra \eqref{ogbd} is quite abstract and we would like to have a combinatorial description to study it better, ie. to potentially find interesting Maurer-Cartan elements. A first result obtained in this thesis is the following:
consider the ribbon graph complex with leafs, whose free boundaries are decorated by the set $\Lambda$; see definition \ref{Ddrgc}. After adjoining a formal variable we have: 
\begin{thm_nn_a}[theorem \ref{rbg_comp}]
  The ribbon graph complex  $\mathcal{RG}_{\Lambda}\llbracket\gamma\rrbracket$ decorated by $\Lambda$ can be equipped with structure of BD algebra, induced by gluing leafs.
\end{thm_nn_a}
We denote this BD algebra (by abuse of notation) by the same letter. Results in the literature indicate that $\mathcal{RG}_{\Lambda}\llbracket\gamma\rrbracket$ gives a combinatorial description of the \textbf{geometric open BD algebra}
\begin{equation}\label{comdes}
\mathcal{RG}_{\Lambda}\llbracket\gamma\rrbracket\simeq M^0_\Lambda,
\end{equation}
which we did not directly explore in detail in this thesis, however.\footnote{Up to signs and possible gradings, this claim should follow from section 6.2 of \cite{Cos07a}, see also \cite{Cos07b} and may be straightforward to experts.} Using this combinatorial description we state the next result in this thesis, the `open' analogue of theorem \ref{cei}, which connected the closed geometric BD algebra with the closed algebraic BD algebra: 
\begin{thm_nn_b}[theorem \ref{Kc2.1}]
Given an odd dimensional cyclic $A_\infty$-category $\Lambda$ there is a \textbf{representing map} of \textbf{open} BD algebras
\begin{equation}\label{ijijij}
\begin{tikzcd}
\mathcal{RG}_{\Lambda}\llbracket\gamma\rrbracket\arrow{r}{\rho_{\Lambda}}&\mathcal{F}^{o}(\Lambda).
    \end{tikzcd}
    \end{equation}
    \end{thm_nn_b}
    
    \begin{remark}\label{kccc}
We explain how theorem B is a generalization of Kontsevich's cocycle construction from \cite{Kon92b}: let us denote by $\mathcal{M}_{g,b}$ the moduli space of connected Riemann surfaces of genus $g$ with $b$ marked points. It is a standard result that 
$$\mathcal{RG}_{m=0}^{con.}\simeq \bigoplus_{stable}C_*( \mathcal{M}_{g,b}),$$
where $\mathcal{RG}_{m=0}^{con.}$ stands for connected ribbon graphs without leafs. 
Given a cyclic $A_\infty$-algebra $A$ we restrict the map $\rho_A$ from \eqref{ijijij} to finite sums of connected graphs without leaves and its $\gamma=0$ part, denoted $\rho|_{m=0}$. 
The composite  
$$\begin{tikzcd}
     \bigoplus_{stable}C_*( \mathcal{M}_{g,b})\simeq\mathcal{RG}_{m=0}^{con.}\arrow{r}{\rho|_{m=0}}&k[\nu,\gamma]\arrow{r}{\nu=\gamma=1}&k
\end{tikzcd}$$
 coincides with Kontsevich's construction \cite{Kon92b}, ie. produces a cocycle on the moduli space $\mathcal{M}_{g,b}$, which is also a corollary of theorem B above. We are actually additionally weighting Kontsevich's map by $\gamma$ to the power of the genus of the respective ribbon graph.
\end{remark}
\bigskip
A natural question is whether we can find a universal geometric source  for the target of the LQT map from equation~\eqref{lqtintro}  $$LQT_N:\mathcal{F}^o(\lambda)\rightarrow \mathcal{O}^{pq}\big(\mathfrak{gl}_NEnd_\mathcal{C}(\lambda)\big),$$ resulting in a commutative diagram of BD algebras. In fact a more general result is true:
using the graph complex from definition \ref{hfig} and the fact that any cyclic chain complex (in particular any cyclic $L_\infty$-algebra $L$) induces a BD algebra $\mathcal{O}^{pq}(L)$, built from Lie algebra cochains\footnote{but endowed only with the differential induced from the underlying chain complex of the $L_\infty$-algebra - see around theorem \ref{c_pq} for details.} we can state the answer.
\begin{thm_nn_c}[theorems \ref{rbg_comp} and \ref{Kc3.1}]
  The  graph complex  $\mathcal{G}\llbracket\hbar\rrbracket$  can be equipped with the structure of a BD algebra, induced from gluing leafs. Given an odd dimensional cyclic $L_\infty$-algebra $L$ there is a \textbf{representing map} of BD algebras
$$
{\theta_{L}}:\ \mathcal{G}\llbracket\hbar\rrbracket\rightarrow\mathcal{O}^{pq}(L).
$$
   \end{thm_nn_c}
    \begin{remark}
        This map generalizes Penkava's `commutative' cocycle construction on the graph complex \cite{pe96} similarly as explained in remark \ref{kccc} on Kontsevich's construction.
    \end{remark}
\bigskip
A next result in this thesis is generalizing\footnote{After the writing of the relevant paper had been completed the author became aware of \cite{zeng23}, which deals with closely related phenomena.} the original LQT theorem from $A_\infty$-algebras to $A_\infty$-categories. This is natural as we generalized its source to arbitrary $A_\infty$-categories $\Lambda$, for which we have in mind some full subcategory on a choice of Lagrangians of the Fukaya category of a symplectic manifold. We recall that the original version of the LQT map connects $(Cyc^*_+(A),d_{cyc})$, cyclic cochains of an $A_\infty$-algebra $A$ computing its cyclic cohomology, and Lie algebra cochains $\big(C_*(\mathfrak{gl}_NA),d_{CE}\big)$ of the Lie algebra $\mathfrak{gl}_NA$ of $N\times N$ matrices with values in $A$. 
\begin{thm_nn_d}[see theorems \ref{LQT_c} and \ref{LQT_conj}]
Given a small $A_\infty$-category $\Lambda$ there is a map of dg-algebras 
\begin{equation}\label{ndgmgh}
\begin{tikzcd}
LQT_{\Lambda}:\ \Big(Sym\big(Cyc^*_+(\Lambda)[-1]\big),d_{cyc}\Big)\arrow{r}& \Big(C^*(\mathfrak{gl}_NA_\Lambda),d_{CE}\Big),
\end{tikzcd}
\end{equation}
which is functorial in $\Lambda$ with respect to $A_\infty$-functors. Here $\mathfrak{gl}_NA_\Lambda$ is what we call the commutator $L_\infty$-algebra with values in $N\times N$-matrices associated to $\Lambda$. If $\Lambda$ is unital then for $N\rightarrow \infty$ this map becomes a quasi-isomorphism
   $$ \begin{tikzcd}
        LQT_{\Lambda}:\ Sym\big(Cyc^*_+(\Lambda)[-1]\big)\arrow{r}{\sim}& \underset{N}{\varprojlim}\ C^*(\mathfrak{gl}_NA_\Lambda).\end{tikzcd}$$
\end{thm_nn_d}
To state our main next theorem we want to understand how the LQT map from theorem D is compatible with the BD algebra structure that one can construct from some modifications of its domain in case the category is cyclic; see equation \eqref{OBDa}. In fact this BD algebra depends only on the underlying collection of cyclic chain complexes,\footnote{A collection of cyclic chain complexes
$$V_B:=\big(V_{ij}, \langle\_,\_\rangle_{ij}\big)_{i,j\in B}$$
consists of a set $B$ and chain complexes $V_{ij}$ indexed by $i,j\in B$ together with symmetric non-degenerate pairings $\langle\_,\_\rangle_{ij}:V_{ij}\otimes V_{ji}\rightarrow k[-d]$ .
Note that Kontsevich-Soibelman \cite{KoSo24} refer to such an object as a quiver.} denoted $V_\Lambda$, that is not on the (higher) compositions. To make this dependence more clear, which will also be useful for the rest of this section, we change notation and set
     $$\mathcal{F}^{pq}(V_\Lambda):=\mathcal{F}^o(\Lambda),$$
     where the superscript $pq$ stands for \textbf{prequantum}.\footnote{By what we have in mind a quantization of the free theory.}
We note that to equip some modification of the codomain of the quantized LQT map with a BD algebra structure we need to restrict to \emph{suitably finite} cyclic $A_\infty$-categories. Then we can construct a quantized LQT map for multi-objects, which we do in theorem \ref{LQT_pq}, generalizing theorem 4.9 of \cite{GGHZ21}: given $V_\Lambda$ a collection of odd dimensional cyclic chain complexes where $\Lambda$ is finite there is a 2-weighted map of BD algebras\footnote{The appearing BD algebras live over different base rings, over $k\llbracket\gamma\rrbracket$ and $k\llbracket\hbar\rrbracket$ respectively. Speaking of a 2-weighted map of BD algebras means by definition \ref{weimap} that ${LQT(\gamma)=\hbar^2}$.}
    \begin{equation}\label{omghiadamnm}
        LQT^{pq}_N:\ \mathcal{F}^{pq}(V_\Lambda)\rightarrow \mathcal{O}^{pq}(\mathfrak{gl}_NV_\Lambda),
        \end{equation}
      functorial with respect to strict cyclic $A_\infty$-morphisms. 
\bigskip

Using the multi-object BD LQT map \eqref{omghiadamnm} we come to a first main result of this thesis, which connects the universal geometric open BD algebras with the quantized LQT map and the generalized Kontsevich cocycle construction. 
\begin{thm_nn_e}[theorem \ref{main_compa}]
Given an essentially finite cyclic odd dimensional $A_\infty$-category $\Lambda$ there is a commutative diagram of BD algebras relating the quantized LQT map with Kontsevich's cocycle construction and the universal geometric BD algebras built from (ribbon) graphs.
$$\begin{tikzcd}
    \mathcal{F}^{pq}(\Lambda)\arrow{r}{LQT^{pq}_N}&\mathcal{O}^{pq}(\mathfrak{gl}_NA_\Lambda)\\
\mathcal{RG}_{\Lambda}\llbracket\gamma\rrbracket\arrow{u}{\rho_\Lambda}&\mathcal{G}\llbracket\hbar\rrbracket\arrow{l}\arrow{u}{\theta}.
\end{tikzcd}$$
Here the right vertical maps is given by theorem C applied to the cyclic $L_\infty$-algebra $\mathfrak{gl}_NA_\Lambda$ and the upper horizontal map is 2-weighted, whereas the lower horizontal map is dual to such a map.
\end{thm_nn_e}

\bigskip

In the previous section \ref{cceis} a crucial ingredient in constructing the closed categorical Gromov-Witten potential \eqref{cgw} were the closed string vertices $$SV^c\in MCE(M^c)$$ from equation \eqref{csvmce}. It thus seems a natural question whether there exist an \textbf{analogous Maurer-Cartan element} of $\mathcal{RG}_{\Lambda}\llbracket\gamma\rrbracket$ (and similarity for $\mathcal{G}\llbracket\hbar\rrbracket$). However, this cannot be the case.\footnote{This would imply that any cyclic $L_\infty$-algebra admits a canonical quantization, which we know is not the case using obstruction theoretic ideas \cite{Co11}.} To understand this situation better we turn to introducing and examining smaller and bigger cousins of the (ribbon) graph complex which do carry such Maurer-Cartan elements.

Before we do that we recall the notion of a \textbf{shifted Poisson algebra} (see definition \ref{shPoA}), consisting of a multiplication on a graded vector space $P$ and a differential graded shifted Lie algebra structure satisfying the Leibnitz rule  $$(P,\cdot,d,\{\_,\_\}),$$
which is a sort of \textbf{classical limit of a Beilinson-Drinfeld algebra} $W$. This idea is formalized by the notion of a dequantization map (see definition \ref{dq})
$$p: W\rightarrow P.$$
To find Maurer-Cartan elements we introduce the (ribbon) tree complexes (definitions \ref{Ddrgc} and \ref{tc}) and the stable (ribbon) graph complexes (definition \ref{Ddrgc} and \ref{sG}), originally discussed in \cite{Kon92a} and \cite{geKa96}. The stable ribbon graph complex has a close relationship with the Deligne-Mumford compactification of the moduli space of Riemann surfaces, see \cite{Ha07, Ba10}. We prove how these complexes can be equipped with algebraic structure, how they are interrelated and that there are certain canonical Maurer-Cartan elements:
\begin{thm_nn_f}[theorems \ref{rbg_comp}, \ref{ag_dq} and section \ref{vsgzg}, \ref{RS_mce}, theorems \ref{sRG_mce}, \ref{S_mce} and \ref{sG_mce}]
Given a set $\Lambda$ the ribbon tree complex $\mathcal{RT}_{\Lambda}$ can be equipped with a shifted Poisson algebra structure and the stable ribbon graph complex 
$s\mathcal{RG}_{\Lambda}\llbracket\gamma\rrbracket$ with a BD algebra structure, induced from gluing leafs.
There is a commutative diagram
    \begin{equation}\label{icnam} \begin{tikzcd}
\mathcal{RT}_{\Lambda}&\mathcal{RG}_{\Lambda}\llbracket\gamma\rrbracket\arrow[swap]{l}{p}\arrow{r}{v}&
s\mathcal{RG}_{\Lambda}\llbracket\gamma\rrbracket\arrow[bend left=25]{ll}{p}
    \end{tikzcd},    
    \end{equation}
    where the $p$ maps are dequantization maps and $v$ is a map of BD algebras. Analogously the tree complex $\mathcal{T}$ can be equipped with a shifted Poisson algebra structure and the stable  graph complex 
$s\mathcal{G}_{B}\llbracket\hbar\rrbracket$ with a BD algebra structure. We have a commutative diagram

 \begin{equation}\label{ag_dq1} \begin{tikzcd}
\mathcal{T}&\mathcal{G}\llbracket\hbar\rrbracket\arrow[swap]{l}{p}\arrow{r}{v}&
s\mathcal{G}\llbracket\hbar\rrbracket\arrow[bend left=25]{ll}{p}
    \end{tikzcd},
    \end{equation}
   where the $p$ maps are dequantization maps and $v$ is a map of BD algebras. Furthermore there are maps $\pi$ from the respective graph to ribbon graph complexes, which respect the algebraic structures and are compatible as follows:
\begin{equation}\label{yangningduscht}
\begin{tikzcd}
\mathcal{T}\arrow{d}{\pi}&\mathcal{G}\llbracket\hbar\rrbracket\arrow{d}{\pi}\arrow[swap]{l}\arrow{r}&
s\mathcal{G}\llbracket\hbar\rrbracket\arrow{d}{\pi}\arrow[bend right=20]{ll}\\
\mathcal{RT}_{\Lambda}&\mathcal{RG}_{\Lambda}\llbracket\gamma\rrbracket\arrow[swap]{l}\arrow{r}&
s\mathcal{RG}_{\Lambda}\llbracket\gamma\rrbracket\arrow[bend left=20]{ll}.
    \end{tikzcd}
    \end{equation}

There are explicit non-zero \textbf{Maurer-Cartan elements}
$$D\in MCE(\mathcal{RT}_{\Lambda})\ \text{and}\ T\in MCE(\mathcal{T})$$
in the respective shifted Poisson algebras; further there are explicit non-zero
$$S\in MCE(s\mathcal{RG}_{\Lambda}\llbracket\gamma\rrbracket)\ \text{and}\ G\in MCE(s\mathcal{G}\llbracket\hbar\rrbracket)$$
in the respective  BD algebras. Those are all compatible under the maps from diagram \eqref{yangningduscht}.
\medskip
\end{thm_nn_f}
To explain the relevance of these specific Maurer-Cartan elements and their relationship with \textbf{quantization} we need to elaborate:
given a collection of cyclic chain complexes $V_\Lambda$ we denote  $$\mathcal{F}^{fr}(V_\Lambda):=\Big(Sym(Cyc^*_+(V_\Lambda)[-1]),\cdot,d,\{\_,\_\}_o\Big),$$
the `\textbf{free open observables}',
which is a shifted Poisson algebra; see around equation \eqref{fP} and note the differences to the object from equation \eqref{OBDa}. Further there is 
a dequantization map 
 $$p:\mathcal{F}^{pq}(V_\Lambda)\rightarrow \mathcal{F}^{fr}(V_\Lambda)$$
from the `\textbf{prequantum open observables}' to the `\textbf{free open observables}'. A well-known result (see lemma \ref{Ham1}) is that an odd dimensional cyclic $A_\infty$-category $\Lambda$ with underlying collection of cyclic chain complexes $V_\Lambda$ induces a Maurer-Cartan element \begin{equation}\label{Hamint}
I\in MCE(\mathcal{F}^{fr}(V_\Lambda)),
\end{equation} 
also called hamiltonian of the cyclic $A_\infty$-category.
\begin{df}\label{incqcyc}
 A \emph{quantization} of a cyclic $A_\infty$-category $\Lambda$ is a Maurer-Cartan element \begin{equation}\label{uzwq}
    I^q \in\mathcal{F}^{pq}(V_\Lambda)\ \ \text{such that}\ \ p(I^q)=I.
    \end{equation}
    \end{df}
\begin{remark}
Quantizations of cyclic $A_\infty$-categories  are sometimes called quantum $A_\infty$-categories.
Equivalently they may be described as a possibly curved (involutive bi-Lie$)_\infty$-algebra structure on a central extension of cyclic cochains of this category \cite{Ca16, CiFuLa20}.
\end{remark}

We can provide an interpretation for the Maurer-Cartan elements from theorem F as being the geometric analogues of the notion of hamiltonian of a cyclic $A_\infty$-category and of their quantizations:
\begin{thm_nn_g1}[see theorem \ref{comp_rg_al} and lemmas \ref{comp_1} and \ref{comp_2}]
Let $\Lambda$ be an odd dimensional cyclic $A_\infty$-category with underlying collection of cyclic chain complexes $V_\Lambda$.
Denote by ${I\in MCE(\mathcal{F}^{fr}(V_\Lambda))}$ its associated hamiltonian.
This datum induces the two left vertical maps of shifted Poisson algebras respectively of Beilinson-Drinfeld algebras in the diagram below.
Given a quantization of $\Lambda$, denoted $I^q\in MCE(F^{pq}(V_\Lambda))$, we further get the right vertical map of BD algebras, compatible as follows:
\begin{equation}\label{tot_nc}
\begin{tikzcd}
\mathcal{F}^{fr}(V_\Lambda)&\mathcal{F}^{pq}(V_\Lambda)\arrow{l}{p}&
\mathcal{F}^{pq}(V_\Lambda)\arrow[equal]{l}\\
\mathcal{RT}_{\Lambda}\arrow{u}{\rho_{I}}&\mathcal{RG}_{\Lambda}\llbracket\gamma\rrbracket\arrow[swap]{l}\arrow{r}\arrow{u}{\rho_{I}}&
s\mathcal{RG}_{\Lambda}\llbracket\gamma\rrbracket\arrow[bend left=20]{ll}{p}\arrow{u}{\rho_{I^q}}.
    \end{tikzcd}
    \end{equation}
    Furthermore for the \textbf{Maurer-Cartan elements} from theorem F we have that $$\rho_I(D)=I\ \text{and }\ \rho_{I^q}(S)=I^q.$$
    
\end{thm_nn_g1}

\medskip

There is a totally parallel story for cyclic $L_\infty$-algebras: indeed for a cyclic $L_\infty$-algebra $L$ we can introduce the notion of hamiltonian, which is given by a Maurer-Cartan element $I$ in a shifted Poisson algebra $\mathcal{O}^{fr}(V)$, built from the underlying cyclic chain complex $V$ of $L$; see around equation \eqref{shPoC}. There is a dequantization map, see definition \ref{fc_dq},
$$p:\mathcal{O}^{pq}(V)\rightarrow \mathcal{O}^{fr}(V).$$
A \emph{quantization} of an odd dimensional cyclic $L_\infty$-algebra $L$ is a Maurer-Cartan element \begin{equation}\label{luzwq}
    I^q \in\mathcal{O}^{pq}(V)\ \ \text{such that}\ \ p(I^q)=I.
    \end{equation}
These notions provide an interpretation of the other Maurer-Cartan elements in theorem F as being the geometric analogues of the notion of hamiltonian of a cyclic $L_\infty$-algebra and of their quantizations:
\begin{thm_nn_g2}[see theorem \ref{comp_gr_al} and lemmas \ref{comp_3}, \ref{comp_4}] Let $L$ be a cyclic $L_\infty$-algebra with underlying cyclic chain complex $V$.
Denote by $I\in MCE(\mathcal{O}^{fr}(V))$ its associated hamiltonian.
This datum induces the two left vertical maps of shifted Poisson algebras respectively of Beilinson-Drinfeld algebras in the diagram below.
Given a quantization of $L$, denoted $I^q\in MCE(\mathcal{O}^{pq}(V))$, we further get the right vertical map of BD algebras, compatible as follows:
$$\begin{tikzcd}
\mathcal{O}^{fr}(V)&\mathcal{O}^{pq}(V)\arrow{l}{p}&
\mathcal{O}^{pq}(V)\arrow[equal]{l}\\
\mathcal{T}^d\arrow{u}{\theta_{I}}&\mathcal{G}^d\llbracket\hbar\rrbracket\arrow[swap]{l}\arrow{r}\arrow{u}{\theta_{I}}&
s\mathcal{G}^d\llbracket\hbar\rrbracket\arrow[bend left=20]{ll}\arrow{u}{\theta_{I^q}}.
    \end{tikzcd}$$
  Furthermore for the Maurer-Cartan elements from theorem F we have that $$\theta_I(T)=I\ \text{and }\ \theta_{I^q}(G)=I^q.$$
       
\end{thm_nn_g2}
\bigskip

Our next theorem provides a very satisfying characterization of the twisted (ribbon) tree complexes as universal sources of domain and target of the classical LQT map, that is cyclic cochains of a cyclic $A_\infty$-category respectively Lie algebra cochains of a cyclic $L_\infty$-algebras:
\begin{thm_nn_i}[see theorems \ref{Main}, \ref{fde}]\label{comdiacla}
Given an essentially finite odd dimensional cyclic $A_\infty$-category $\Lambda$ there is a commutative diagram of shifted Poisson algebras
$$\begin{tikzcd}
    \Big(Sym\big(Cyc^*_+(\Lambda)[-1]\big),d_{cyc},\{\_,\_\}\Big)\arrow{r}{LQT_{}}&\Big(C_+^*(\mathfrak{gl}_NA_\Lambda),d_{CE},\{\_,\_\}\Big)\\
    \mathcal{RT}^{tw}_{\Lambda}\arrow{u}{}&\mathcal{T}^{tw}\arrow{l}{}\arrow{u}{},
\end{tikzcd}$$
functorial with respect to strict cyclic $A_\infty$-morphisms. The (ribbon) tree complexes are now \textbf{twisted} by the Maurer-Cartan elements from theorem F. The vertical right map is defined also for cyclic $L_\infty$-algebra not induced by a cyclic $A_\infty$-categories.
\end{thm_nn_i}
 It would be interesting to explore whether these universal sources induce some canonical cohomology classes and to find a different interpretation of those.

\begin{remark}
A natural question is how far the construction from theorem I can be made homotopically coherent.
One may try to generalize theorem I to proper Calabi-Yau categories, but allowing shifted Poisson $\infty$-algebras and maps of those.\footnote{
It is to be expected that proper Calabi-Yau categories can be strictified to a cyclic one. See theorem 10.2.2 of \cite{KoSo24} and \cite{ChLe10} for a proof in the algebra case and theorem 2.34 of \cite{AmTu22} for the category case, but where additionally smoothness, see definition \ref{smanpr}, is required.}
\end{remark}
\smallskip
It is an immediate question to find a quantized version of theorem I, which we tackle now: we denote the left upper corner of the diagram in theorem I by $\mathcal{F}^{cl}(\Lambda)$ and the right upper corner by $\mathcal{O}^{cl}(\mathfrak{gl}_NA_\Lambda),$ the `\textbf{classical observables}'. Given a quantization of a cyclic $A_\infty$-category ${I^q \in\mathcal{F}^{pq}(V_\Lambda)}$ we denote the twist of $\mathcal{F}^{pq}(V_\Lambda)$ by that Maurer-Cartan element by $\mathcal{F}^q(\Lambda)$ and we denote the twist of $\mathcal{O}^{pq}(\mathfrak{gl}_NA_\Lambda)$ by the determined quantization of the cyclic $L_\infty$-algebra by $\mathcal{O}^q(\mathfrak{gl}_NA_\Lambda)$, the `\textbf{quantum observables}'. 

With this we present a quantized version of theorem I, which is a main result in this thesis:
\begin{thm_nn_j}[see theorem \ref{LQT_q} and theorem \ref{Main}] Given a quantization of an essentially finite odd dimensional cyclic $A_\infty$-category, there is a commutative diagram of $BD$ algebras
$$
\begin{tikzcd}
    \mathcal{F}^q(\Lambda)\arrow{r}{LQT^q_N}&\mathcal{O}^q(\mathfrak{gl}_NA_\Lambda)\\
s\mathcal{RG}^{tw}_{\Lambda}\llbracket\gamma\rrbracket\arrow{u}&s\mathcal{G}^{tw}\llbracket\hbar\rrbracket\arrow{l}\arrow{u}{},
\end{tikzcd}$$
functorial with respect to strict cyclic $A_\infty$-morphisms of quantizations of cyclic $A_\infty$-categories. Furthermore for $N\rightarrow\infty$ the map $LQT^q_N$ becomes a quasi-isomorphism. Lastly there are dequantization maps to theorem I as follows:
 $$ \begin{tikzcd}[row sep=scriptsize, column sep=scriptsize]
& \mathcal{F}^{cl}(\Lambda)  \arrow[rr,"LQT^{cl}"] & & \mathcal{O}^{cl}(\mathfrak{gl}_NA_\Lambda)   \\ \mathcal{F}^{q}(\Lambda)\arrow[rr, "LQT^q"{xshift=-7pt}]  \arrow[ur] & & \mathcal{O}^{q}(\mathfrak{gl}_NA_\Lambda)\arrow[ur] \\
& \mathcal{RT}^{tw}_\Lambda\arrow[uu,dashed, "\rho_{I}^{tw}" {yshift=-10pt}]   & & \mathcal{T}^{tw}\arrow[ll,dashed]\arrow[ uu,"\theta_{I_N}^{tw}" {yshift=-8pt}]  \\
 s\mathcal{RG}^{tw}_{\Lambda}\llbracket\gamma\rrbracket \arrow[uu,"\rho_{I^q}^{tw}" {yshift=5pt}]\arrow[ur] & & s\mathcal{G}^{tw}\llbracket\hbar\rrbracket\arrow[ll]\arrow[ur]\arrow[uu,"\theta_{I^q_N}^{tw}" {yshift=-10pt}],\\
\end{tikzcd}
$$
that is the 4 maps between the front and back face are dequantization maps. 
\end{thm_nn_j}

\subsubsection{Open-Closed Circle-Action Formality Morphism}\label{nmsendd}
In the previous section \ref{gvnlmss} we analyzed in great details the `open analogue' of  the representing map \eqref{omgkbm} connecting the closed universal geometric BD algebra \eqref{cgbd} with the closed algebraic BD algebra \eqref{cBD}. This closed representing map was crucial to define the categorical Gromov-Witten potential \eqref{cgw} in section \ref{cceis}, for which we further used the  canonical Maurer-Cartan element in the closed geometric BD algebra \eqref{cgbd}, the string vertices \eqref{csvmce}.

\smallskip
We explained in the previous section that there is \textbf{no analogous Maurer-Cartan element} in the open universal geometric BD algebra from theorem B, which we quantified in theorem G.1. 

\smallskip
As a step towards defining an \textbf{open categorical Gromov-Witten potential}, we ask if it is anyways possible to obtain a \textbf{quantization} of a cyclic $A_\infty$-category. Based on the preprint \cite{Ul25}, we propose to consider the combined open-closed theory to provide an answer.

\medskip
Given a cyclic $A_\infty$-category $\mathcal{C}$ and a full subcategory $\Lambda\subseteq\mathcal{C}$ the `\textbf{open-closed observables}' are given by the tensor product of the closed BD algebra \eqref{cabd} and the open BD algebra \eqref{OBDa},\footnote{which exists, see lemma \ref{tensorBD}.} 
\begin{equation}\label{ocSFTBD}
\mathcal{F}^c(\mathcal{C})\otimes \mathcal{F}^o(\Lambda).\end{equation}
Similarly we can consider
$$\mathcal{F}^c(\mathcal{C})^{Triv}\otimes \mathcal{F}^o(\Lambda),$$
recalling the abelian BD algebra from \eqref{abBD}.
The next main result in this thesis is following open-closed version of the closed \textbf{trivializing morphism} from equation \eqref{cltrmo}:
\begin{thm_nn_k}[corollary \ref{maco}]
Given a cyclic $A_\infty$-category $\mathcal{C}$, a set of objects $\Lambda\subset \mathcal{C}$ and a splitting $s$ of the non-commutative Hodge filtration. Then there is an $L_\infty$ quasi-isomorphism
\begin{equation}\label{otrmo}
\Psi_s^{oc}:\ \mathcal{F}^{c}(\mathcal{C})\otimes \mathcal{F}^{o}(\Lambda)\rightsquigarrow \mathcal{F}^{c}(\mathcal{C})^{Triv}\otimes \mathcal{F}^{o}(\Lambda).
\end{equation}
\end{thm_nn_k}
\begin{remark}
The proof of this theorem reduces to proving an identity (theorem \ref{BVinf}) for the Taylor components of the purely closed $L_\infty$-morphism \eqref{cltrmo}. It would be interesting to have an operadic interpretation of this identity, perhaps in terms of $BV_\infty$-algebras \cite{GTV12}. 
\end{remark}
\begin{remark}
As is typical in this area of mathematical physics the morphism \eqref{otrmo} is defined in terms of graphs. To verify theorem K we prove an interesting identity (equation \ref{wfegtffv}) about the `moduli space of graphs', which is about relating graphs with varying number of vertices to each other, with the additional choice of a partition of the half-edges of a given vertex.
\end{remark}
\bigskip
We now explain how theorem K provides a building stone in understanding when and how to obtain a quantization of a given \emph{smooth} cyclic $A_\infty$-category, or for simplicity a given {smooth} cyclic $A_\infty$-algebra $End_\mathcal{C}(\lambda)$. 

For that we introduce the map $$\nu\langle ch(\lambda),\_\rangle_{res}:\mathcal{F}^c(\mathcal{C})\otimes \mathcal{F}^o(\lambda)\rightarrow \mathcal{F}^c(\mathcal{C})\otimes \mathcal{F}^o(\lambda),$$ the derivation induced by pairing the non-commutative Chern character\footnote{defined as the class of the unit of $End(\lambda)$ in negative cyclic chains} $ch(\lambda)$ via the residue pairing \eqref{Res}, further weighted by $\nu$ (the basis of $Cyc^0(\lambda)[-1]$ cyclic cochains of length zero).

\medskip
Our following results are valid for tuples $(\mathcal{C},\lambda)$ of smooth cyclic categories $\mathcal{C}$ and $\lambda\in\mathcal{C}$ satisfying a assumption $(*)$. Forthcoming work \cite{AmTu25} combined with the fundamental work \cite{Zw98,HVZ08} about the open-closed string vertices will show that this assumption is  satisfied for all such tuples; see section \ref{FOCS} for some more context.

\begin{ass}
Given a smooth cyclic $A_\infty$-category $\mathcal{C}$ and an object ${\lambda\in \mathcal{C}}$ there is an element $S^{oc}\in \mathcal{F}^c(\mathcal{C})\otimes \mathcal{F}^o(\lambda)$ that is a Maurer-Cartan element up an extra term coming form the Chern character of $\lambda$
\begin{equation}\label{Ass}
(d_{hoch}+uB+\gamma\Delta_{c}+d+\gamma\delta+\nabla)S^{oc}+\frac{1}{2}\{S^{oc},S^{oc}\}_{o}+\frac{1}{2}\{S^{oc},S^{oc}\}_{c}+\nu\langle ch(\lambda),S^{oc}\rangle_{res}=0,
\end{equation}
and the open tree level part\footnote{That is the projection to $Sym^0(\cdots)\otimes Sym^1(Cyc^*(\lambda)[-1])\gamma^0$.} of $S^{oc}$ is the hamiltonian $I$ of the cyclic $A_\infty$-algebra $End_\mathcal{C}(\lambda),$
\begin{equation}\label{HamfurIn}
I:=\sum_{k=1}^\infty\langle m^k(\_,\cdots,\_),\_\rangle\in Cyc^*(\lambda)[-1].
\end{equation}
\end{ass}
\medskip
If we assume that the Chern character is homotopically trivial we can `gauge away' the last summand of equation \eqref{Ass}, which is our proposition \ref{ki}:
    given a smooth cyclic $A_\infty$-category $\mathcal{C}$ and ${\lambda\in \mathcal{C}}$ that satisfy assumption $(*)$ and such that $ch(\lambda)=(d_{hoch}+uB)h$. Denote
    $$S^{oc,q}(\lambda,h):=e^{\nu\langle h,-\rangle_{M}}S^{oc}$$ Then we have that \begin{equation}\label{aiwigfmf}
        S^{oc,q}(\lambda,h)\in QME\big(\mathcal{F}^c(\mathcal{C})\otimes \mathcal{F}^o(\lambda)\big).
        \end{equation}
    Further if we change $h$ by an exact term $S^{oc,q}(\lambda,h)$ changes up to homotopy.
\begin{df}\label{iwdsbv}
We call a \emph{package} the datum of a smooth cyclic $A_\infty$-category $\mathcal{C}$ together with the choice of an object $\lambda\in\mathcal{C}$ of that category, the choice of a trivialization $(d_{hoch}+uB)h=ch(\lambda)$ and a splitting $s$   \begin{equation}\label{P}
\mathcal{P}_{\mathcal{C},\lambda,h,s}:=\{\mathcal{C},\lambda,h,s\}. \end{equation}
\end{df}
A package allows us to find the quantization we searched for: consider the map of dg Lie algebras $$\pi:\mathcal{F}^{c}(\mathcal{C})^{Triv}\otimes \mathcal{F}^{o}(\Lambda)\rightarrow Sym^0(\cdot)\gamma^0\otimes \mathcal{F}^o(\lambda)\cong \mathcal{F}^o(\lambda)$$
and denote the image of the gauged element \eqref{aiwigfmf} under the open-closed trivializing morphism from theorem K and the map $\pi$ by  $$S^{o,q}(\mathcal{C},\lambda,h,s):=(\pi\circ\Psi_s^{oc})S^{oc,q}(\lambda,h).$$
Then we can find an \textbf{`open' quantization}:
\begin{thm_nn_l}[theorem \ref{oq}]
   Given a package $\mathcal{P}_{\mathcal{C},\lambda,h,s}$ from definition \ref{iwdsbv} such that assumption $(*)$ holds then $S^{o,q}(\mathcal{C},\lambda,h,s)$ is a quantization  of the cyclic $A_\infty$-algebra $End_\mathcal{C}(\lambda)$. 
\end{thm_nn_l}
In section \ref{cceis} we described how one can extract from a solution to the closed Maurer-Cartan equation the closed categorical Gromov-Witten potential, which in turn allowed to extract actual numbers. \emph{Theorem L tells us under which conditions we can find a solution to the open Quantum Master Equation.} However, we have not understood how to extract numbers from such an element at the moment. Geometrically, in open Gromov-Witten theory one imposes some extra conditions on the Lagrangians in order to extract actual numbers: eg. in \cite{Fuk11} it is required that the Lagrangian has the homology of a sphere and in \cite{psw08} that the Lagrangian is the locus of an involution. One could ponder whether similar constraints for the categorical set-up allow to extract actual numbers. 

\medskip

\textbf{From a physical point of view}, however, theorem $L$ tells us when we can find a quantization of the classical large $N$ open string field theory on a stack of $N$ branes of type $\lambda$, implied by the quantized LQT map; see remark \ref{DRDZA}. Elaborating a bit, the field content of the open string field theory on a stack of $N$ branes of type $\lambda$ is described by the cyclic $L_\infty$-algebra
\begin{equation}\label{copex}
End_\mathcal{C}(\lambda^{\oplus N}).\end{equation}
Note that the object \eqref{copex} only exists in categories $\mathcal{C}$ that are additive. In this case we can further identify the cyclic $L_\infty$-algebra \eqref{copex} (or better its Lie-algebra cochains) with the target of the LQT-map \eqref{lqtintro}:
\begin{equation}\label{qwertzuio}
End_\mathcal{C}(\lambda^{\oplus N})\cong \mathfrak{gl}_NEnd_\mathcal{C}(\lambda).
\end{equation}
Not all the categories we are interested in are additive, eg. Fukaya categories. In this case the isomorphism \eqref{qwertzuio} allows us to view the algebra of $N\times N$ matrices with values in $End_\mathcal{C}(\lambda)$, which also exists in non-additive categories, as a convenient replacement for $End_\mathcal{C}(\lambda^{\oplus N})$.

\smallskip
 The perspective advocated in \cite{CL15,Ba10b, GGHZ21} is that a lift of the classical interaction term \eqref{HamfurIn}, which is the large $N$ limit of the classical interaction terms associated to the cyclic $L_\infty$-algebras \eqref{copex}, to a solution to the open Quantum Master Equation describes, within the Batalin-Vilkovisky formalism from theoretical physics and under the quantized LQT map, the quantum partition function of a large $N$ matrix model (the finite dimensional cousins of large $N$ gauge theories).\footnote{See the exciting recent work \cite{ha25a, ha25b} which allows to include these infinite-dimensional cases.} This was worked out in \cite{GGHZ21} for the Gaussian matrix model.

\begin{remark}
    In \cite{CL15}, page 3 it is argued that open string field theory in general and (large $N$) holomorphic Chern-Simons theory on $\mathbb{C}^3$ in particular does not admit a quantization by itself, in apparent\footnote{Note that we can't apply theorem L as Dolbeault forms on $\mathbb{C}^3$ do not verify the properness assumption. See however the recent work \cite{ha25a,ha25b} and also \cite{CL15}, which may allow to loosen the properness restriction at the cost of introducing more analysis.} contrast to what we claim here. However note that the quantizations we produce in theorem L have `contributions' from the closed sector by stable graphs with zero leaves and thus do depend on both the open and closed sectors, which is actually similar in spirit to the main idea of \cite{CL15}. We would like to examine this further in the future.
\end{remark}
 In the context of theorem L and motivated by the physics discussion we call \textbf{\emph{partition function of the large $N$ open string gauge theory on a stack of $N$ $\lambda$-branes}} 
the exponential of the Maurer-Cartan element ${S^{o,q}(\mathcal{C},\lambda,h,s)\in QME(\mathcal{F}^o(\lambda))},$
\begin{equation}\label{offpart}
Z^o(\mathcal{P}_{\mathcal{C},\lambda,h,s}):=e^{S^{o,q}(\mathcal{C},\lambda,h,s)/\gamma},
\end{equation}
which is a closed element with respect to the differential of the BD algebra.\footnote{This is a general fact about BD algebras; we further should suitably localize to allow power series in $\gamma$.}

\subsubsection{Towards Twisted Holography from Calabi-Yau Categories}\label{twhocy}
In the last part of the thesis we build upon the physics perspective laid out around equation \ref{copex} to conjecture a \emph{relationship between the {partition function of the large $N$ open string gauge theory on a stack of $N$ $\lambda$-branes} and a deformation of the closed categorical Gromov-Witten potential}, which we will introduce in the following. It is useful to recall the interpretation of the GW potential as the partition function of closed string field theory; see remark~\ref{DRDEA}. Note that despite the physics name these partition functions should be encoding enumerative invariants and indeed \emph{our conjecture aims to give a new viewpoint on known and proposed deep relationships} \cite{Kon92a, GoVa99, cg21}, which we explain at the very end of this section.

\smallskip
The ideas presented in this section are still in an early stage and have not appeared in print; for instance some shifting conventions are yet to be fixed.

\medskip
We recall that our set-up consists of a smooth cyclic $A_\infty$-category $\mathcal{C}$ together with the choice of `a brane`, an object $\lambda\in\mathcal{C}$ of that category.\footnote{We will work with pairs $(\mathcal{C},\lambda)$ satisfying assumption $(*)$ from equation \eqref{Ass}. As it will be shown that \emph{all} smooth cyclic $A_\infty$-category $\mathcal{C}$ with a choice of  $\lambda\in\mathcal{C}$ satisfy $(*)$ in the forthcoming work \cite{AmTu25} we do not mention this assumption each time in this section.} We referred to such datum with additional choice of a trivialization $(d_{hoch}+uB)h=ch(\lambda)$ and a splitting $s$ of the non-commutative Hodge filtration (definition \ref{splitting}) as a package \begin{equation}\label{holographypack}
\mathcal{P}_{\mathcal{C},\lambda,h,s}:=\{\mathcal{C},\lambda,h,s\}. \end{equation}
We call \emph{holographic} a pair consisting of a smooth cyclic $A_\infty$-category $\mathcal{C}$ together with an object $\lambda\in\mathcal{C}$, provided a (for the moment relatively idealistic) condition, see definition \ref{HolDef}.
In this case we will then also call $\mathcal{P}_{\mathcal{C},\lambda,h,s}$  a \textbf{holography package}.
\begin{remark}
It seems suggestive that this condition or a possible weakening of it may be satisfied if the endomorphisms of the object have the rational homology of a sphere, but investigating this has to be left to future work.    
\end{remark}
Given a holography package an immediate consequence\footnote{We imagine that one could relax the condition considerably on what it means to be holographic in our sense, which would on the other hand make this consequence less immediate.}  is that there exists a closed element
\begin{equation}\label{brclgwp}
Z^{c,br}(\mathcal{P}_{\mathcal{C},\lambda,h,s})\in \mathcal{F}^{c}(\mathcal{C})^{Triv}\llbracket\nu\rrbracket,\end{equation}
which we call \textbf{\emph{partition function of the closed string field theory associated to $\mathcal{C}$ backreacted in the direction of $\lambda$}}, see definition \ref{yangning}. Here $\mathcal{F}^{c}(\mathcal{C})^{Triv}\llbracket\nu\rrbracket$ is the abelian BD algebra from definition \ref{abBD}, extended $\nu$-linearly. 
\begin{remark}
The variable $\nu$ is a placeholder for the number of branes $N$. Also in physics the backreacted closed string field theory depends highly non-trivial on $N$ \cite{Ma99, GoVa99, cg21}.\footnote{In more detail, we recall that $\nu$ is the basis of cyclic cochains of length zero from the open sector, keeping track of the number of free boundaries under the map from theorem B, see also around theorem \ref{ocRep}. Under the LQT map at rank $N$ $\nu$ gets mapped to $N$.} 
\end{remark}
\smallskip
Very roughly speaking the partition function $Z^{c,br}(\mathcal{P}_{\mathcal{C},\lambda,h,s})$ is determined by some deformation of the closed categorical Gromov-Witten potential from \eqref{cgw} associated to the category $\mathcal{C}$ in the direction of $\lambda$. 
More precisely the element $Z^{c,br}(\mathcal{P}_{\mathcal{C},\lambda,h,s})$ is the large $N$ quantum partition function of a classical field theory; its defining $L_\infty$-algebra is obtained by twisting a \emph{curved} $L_\infty$-algebra on cyclic cochains of $\mathcal{C}$, whose curvature is determined by the brane $\lambda$; see equation \eqref{Krumm} below.

\smallskip
This seems to exactly mimic the heuristic how physicists understand the \textbf{backreaction} phenomena of closed string field theory, compare the beginning of section \ref{mimsss}.
In the discussion in remark \ref{sihdrdnls} below we make this more precise and compare this description closely with the treatment of backreaction of the closed B-model in Costello-Gaiotto's approach to `Twisted Holography'~\cite{cg21}. 

\bigskip
We come to the mentioned conjecture: in theoretical physics there is the enormously influential idea of holography, going back to the seminal work \cite{Ma99}. Roughly speaking holography proposes to exactly match a gravitational theory living on some bulk spacetime with a gauge theory on the boundary.
Often these two sides can be realized as a closed backreacted string field theory in the direction of some brane, respectively as an large $N$ open string field theory on a stack of this brane.
Thus holography proposes that in particular the partition functions of these two theories should be identified. 

Having in mind the interpretation of $Z^{c,br}(\mathcal{P}_{\mathcal{C},\lambda,h,s})$ from equation \ref{brclgwp} as the partition function of the \textbf{closed string field theory backreacted in direction $\lambda$} and the interpretation of $Z^o(\mathcal{P}_{\mathcal{C},\lambda,h,s})$ from equation \ref{offpart} as the \textbf{large $N$ open string field theory on a stack of $N$ branes of type $\lambda$} it seems natural to conjecture a version of \textbf{holography} in our set-up (which is about topological string theories, more precisely those induced by Calabi-Yau categories):
\newpage
\begin{Conj_nn_h}\label{conj_q}
Given a holography package (definition \ref{holographypack}) the partition function of the backreacted closed string field theory is equal to the partition function of the large $N$ open string gauge theory on a stack of $N$ $\lambda$-branes; or more precisely the element \eqref{brclgwp} is homologous to the element \eqref{offpart}, that is $$\iota_c(Z^{c,br}(\mathcal{P}_{\mathcal{C},\lambda,h,s}))\simeq \iota_o(Z^o(\mathcal{P}_{\mathcal{C},\lambda,h,s})),$$
under the natural inclusion maps 
$$\begin{tikzcd}
   & \mathcal{F}^{c}(\mathcal{C})^{Triv}\otimes \mathcal{F}^{o}(\lambda)&\\
   \mathcal{F}^{c}(\mathcal{C})^{Triv}\llbracket\nu\rrbracket\arrow{ur}{\iota_c}&&\mathcal{F}^{o}(\lambda)\arrow{ul}[swap]{\iota_o}.
\end{tikzcd}$$
\end{Conj_nn_h}
We view this conjecture as a main accomplishments of this thesis. Its formulation is a step in understanding a plethora of \textbf{extremely rich and important relations in enumerative geometry} \cite{Kon92a, GoVa99, cg21} (and proposals thereof from physics) from a unified mathematical perspective. See also remark \ref{DepOnPar} for ideas on a refined version of this conjecture, allowing both sides to depend on natural parameters; a key step is a formality statement, see theorem \ref{sddgnngi}. We elaborate on the relation of mentioned works to conjecture H at the end of this introduction under `applications'.

\medskip
Before that, to support our interpretation of $Z^{c,br}(\mathcal{P}_{\mathcal{C},\lambda,h,s})$ as the partition function of closed backreacted SFT, we comment on the treatment of backreaction in Costello-Gaiotto's approach to `Twisted Holography', notably their treatment of BCOV theory in section 4 of \cite{cg21}. A reader not so familiar with these ideas may want to skip the long remark \ref{sihdrdnls} and go directly to the proposed applications of conjecture H.

\smallskip
\begin{remark}\label{sihdrdnls}
To further support our interpretation of $Z^{c,br}(\mathcal{P}_{\mathcal{C},\lambda,h,s})$ we first observe the following: the genus zero part of the element $Z^{c,br}(\mathcal{P}_{\mathcal{C},\lambda,h,s})$ describes (or rather its preimage under the $L_\infty$ quasi-isomorphism \eqref{otrmo}) for every $N\in\mathbb{N}$ an $L_\infty$-structure on the chain complex $CC^*(\mathcal{C})[1]$, cyclic cochains of $\mathcal{C}$, that is on the fields of the closed string field theory, by setting $\nu=N$. We denote this $L_\infty$-algebra by $L_N^{br}(\mathcal{C},\lambda,h)$; see definition \ref{brlinalsb} for details. The $L_\infty$-structure $L_N^{br}(\mathcal{C},\lambda,h)$ is roughly speaking obtained by twisting an explicit \emph{curved} $L_\infty$-structure $L_N(\mathcal{C},\lambda)$ on cyclic cochains by a primitive $h$ of the Chern character; see remark \ref{dbegaj} and its preceding discussion. We explain in lemma \ref{isidfrz} that the curvature of the mentioned curved $L_\infty$-algebra $L_N(\mathcal{C},\lambda)$ is given (to leading order in $N$) by  \begin{equation}\label{Krumm}
N\langle ch(\lambda),\_\rangle_{res}\in CC^*(\mathcal{C})[1].
\end{equation}
We come to Costello-Gaiotto's treatment of backreaction in the closed B-model:
\begin{bcov}
 Costello-Gaiotto explain how the dg-Lie algebra of classical interacting BCOV theory on $\mathbb{C}^3$ (the classical closed SFT of the B-model, see sections \ref{scibcov}, \ref{inclsfts}, conjecture \ref{conj_fo})
\begin{equation}
\big(PV_c^{\bullet,\bullet}(\mathbb{C}^3)\llbracket u\rrbracket,\bar{\partial}+u\partial,[\_,\_]\big)
\end{equation}
gets modified to the curved dg-Lie algebra
\begin{equation}\label{brcrdglie}
\big(PV_c^{\bullet,\bullet}(\mathbb{C}^3)\llbracket u\rrbracket,N\delta_\mathbb{C},\bar{\partial}+u\partial,\{\_,\_\}\big)
\end{equation}
when introducing the brane $\lambda\in \mathcal{D}^{b}_{dg}(\mathbb{C}^3)$ determined by 
$\mathbb{C}\subset \mathbb{C}^3$. Twisting by a (in this situation essentially unique) Maurer-Cartan element, a Beltrami differential, they recover an uncurved dg-Lie algebra. They claim that this (uncurved) dg-Lie algebra describes (deformations of the complex structure of) the \emph{deformed conifold} (see section 4.1 of \cite{cg21}), which is the backreacted geometry of the closed string field theory on $\mathbb{C}^3$ in the direction of the brane $\lambda$. 
\end{bcov}
Inspired by Costello-Gaiotto's work we view (respectively call) the non-curved $L_\infty$-structure $L_N^{br}(\mathcal{C},\lambda,h)$ as the \emph{classical backreacted closed string field theory in the direction of $\lambda$}. See also conjecture \ref{diaestf} on the \emph{existence of backreacted Calabi-Yau categories} and its surrounding discussion for more context.

Basically we would like to realize the dg Lie algebra of Costello-Gaiotto describing the deformed conifold as $L_N^{br}(\mathcal{C},\lambda,h)$ for  $\mathcal{C}=\mathcal{D}^{b}_{dg}(\mathbb{C}^3)$, $\lambda$ given by $\mathcal{O}(\mathbb{C})^{\oplus N}$ for $\mathbb{C}\subset \mathbb{C}^3$ and $h$ given by their Mauer-Cartan element. We run into the problem that $\mathcal{D}^{b}_{dg}(\mathbb{C}^3)$ is not proper nor cyclic, thus we cannot apply our theory.\footnote{Perhaps working with compactly supported polyvector fields could resolve this. In another direction, to define the structure $L_N^{br}(\mathcal{C},\lambda)$ we are only using the genus zero part of the open-closed TCFT associated to $\mathcal{C}$; compare section \ref{2dop}. Results in the literature \cite{brdy23} seem to suggest that already the structure of a smooth Calabi-Yau induces such a structure, which $\mathcal{D}^{b}_{dg}(\mathbb{C}^3)$ would satisfy.} One can identify the underlying chain complex of \eqref{brcrdglie} and thus the underlying chain complex of the dg Lie algebra describing the deformed conifold with $CC^*\big(\mathcal{D}^{b}_{dg}(\mathbb{C}^3)\big)$ using an HKR isomorphism (see page 36 of~\cite{CL15}). Further, under this isomorphism the curvature term of \eqref{brcrdglie} should be identified with the term \eqref{Krumm}.
In any case, even if we could apply our theory, see footnote in this remark, an identification (in the sense of some $L_\infty$ quasi-isomorphism) of the whole dg Lie structures could be difficult.  

The partition function of the closed backreacted string field theory
$$Z^{c,br}(\mathcal{P}_{\mathcal{C},\lambda,h,s})\in \mathcal{F}^{c}(\mathcal{C})^{Triv}\llbracket\nu\rrbracket$$
from \eqref{brclgwp} defines a quantization of the classical backreacted closed string field theor(ies), defined by $L_N^{br}(\mathcal{C},\lambda,h)$. We note the intricate $N$-dependence, encoded by the $\nu$-parameter, which is also present in physics~ \cite{Ma99,cg21}.
\end{remark}

\bigskip
\textbf{Applications:} we now elaborate on the mentioned relations and dualities in enumerative geometry. We imagine that those could be formulated in the framework of conjecture H;\footnote{or perhaps in a refined version of the conjecture, depending on certain parameters; see remark \ref{DepOnPar}.} thus on the one hand they provide evidence for the validity of conjecture H and on the other hand could possibly be re-proven if conjecture H would be shown to be true.
\begin{itemize}
    \item 
The finite versions of large $N$ open string field theories are large $N$ matrix models. The prime example of such comes from the work of Kontsevich \cite{Kon92a}.
He considered the following integral over hermitian $N\times N$ matrices and proved that when $N \rightarrow \infty$ it can be written, suitably normalized, as a rational polynomial
in variables $tr(\Lambda^{-2i+1}), i\in\mathbb{N} $, where $\Lambda$ is just some parameter matrix.
$$\lim_{N\rightarrow \infty}\int_{\mathfrak{h}_N}e^{itr(X^3)}e^{tr(X\Lambda X)}dX\ \in\ \mathbb{Q}[tr(\Lambda^{-1}),tr(\Lambda^{-3}),tr(\Lambda^{-5}),..].$$
He showed that the coefficients of this polynomial are exactly the intersection numbers on the moduli space of Riemann surfaces. We have seen in theorem \ref{JTT} that we can recover the intersection numbers as the partition function of the closed string field theory, as proved by Tu \cite{Tu21}, when taking the category $\mathcal{C}$ to have one object with endomorphisms being the rational numbers. Thus strikingly Kontsevich's work tells us that \emph{open and closed invariants coincide}, even though the backreaction phenomena seems not to be visible.
\end{itemize}
At this point we wish to remark that the large $N$ matrix model representation of the intersection numbers provided a \emph{major advantage}. Those are amenable by different means as well, for instance formal PDE methods, allowing Kontsevich to prove that the intersection numbers are a logarithm of the $\tau$-function of the KdV-hierarchy, thus verifying Witten's conjecture \cite{wi93}.
\begin{itemize}
  \item
An analogous instance of a large $N$ matrix model computes r-spin intersection numbers \cite{BCEGF23}. Those numbers, also known as the FJRW invariants of $\frac{x^r}{1+r}$ are (after identifying them with the B-model LG invariants, known as Givental-Saito theory, by applying LQ mirror symmetry, proved in \cite{he15}) also conjecturally encoded by the closed Gromov-Witten potential of the Calabi-Yau category of matrix factorizations, see \cite{CaLiTu18} for a verification in low genus. Again, we do not see any backreaction phenomena here. 
\end{itemize}

Interestingly there is no $N$-dependence on the closed side for the two previous examples contrary to conjecture H; further there was no backreaction phenomena visible. This is different with the next example.

\begin{itemize}
\item
An important inspiration for conjecture H is the work of \cite{GoVa99}, see also \cite{EkSh25} for a more recent approach.  Gopakumar-Vafa propose that the partition function of large $N$ Chern-Simons on $S^3$, which is an infinite dimensional matrix model (a gauge theory) exactly encodes the Gromov-Witten invariants of the resolved conifold, described as the backreacted geometry of $S^3$ in the direction of the zero-section. The zero section is a Lagrangian of $T^*S^3$, that is an object $\lambda$ of $Fuk(T^*S^3)$. On the other hand the observables of large $N$ Chern-Simons theory can be described \cite{GGHZ21, ha25a, ha25b}\footnote{ See also \cite{CiVo23}, \cite{CiFuLa20} with applications to string topology.} by applying the open BD algebra $\mathcal{F}^o(\lambda)$ to the `cyclic' differential graded algebra of de Rham form on $S^3$, which computes $End_{Fuk(T^*S^3)}(\lambda)$, compare \cite{ab11}. We should note that in the quantization of large $N$ Chern-Simons theory a so called framing anomaly is present  \cite{as92, Kon92b,ia10}. The treatment of \cite{ha25a}  (following \cite{co07}) is modulo constants and does not address this point. The choice of a trivialization of the Chern character in theorem L seems very suggestive in capturing this phenomena. Note, however, that the smoothness assumption is not satisfied for de Rham forms, a comparison for the obtained quantizations may anyways be possible.

\item As mentioned before, Costello-Gaiotto \cite{cg21} propose and verify in some aspects a duality between the so called large $N$ $\beta\gamma$-system and the B-model of the deformed conifold. The latter is described as the backreaction of the closed string field theory of the Calabi-Yau category of coherent sheaves on $\mathbb{C}^3$ in the direction of the brane $\mathbb{C}\subset \mathbb{C}^3$, inducing an object $\lambda\in \mathcal{D}^{b}_{dg}(\mathbb{C}^3)$ of the bounded category of coherent sheaf. The B-model, BCOV invariants of the deformed conifold, are conjecturally computed by $Z^{c,br}(\mathcal{P}_{\mathcal{C},\lambda,h,s})$ given the choice of the Beltrami-differential for the primitive of $ch(\lambda)$ (see lemma 1 of \cite{cg21}) and the splitting determined by the complex conjugate; compare \cite{CaTu24}, page 9. On the other hand the large $N$ $\beta\gamma$-system is the open string field theory on the brane $\mathbb{C}\subset \mathbb{C}^3$; see appendix E of \cite{cg21}.

Costello-Gaiotto focus on local aspects of their proposed duality; conjecture H would be about the partition functions of large $N$ $\beta\gamma$-system and the partition function of the closed SFT of the B-model of the deformed conifold.
\end{itemize}

Costello-Gaiotto conjecture that their proposed duality describes a subsector of the celebrated $AdS_5/CFT_4$ correspondence in physics \cite{Ma99}.
    
\medskip
Notably in the two last examples listed there is an extremely rich local structure present on both sides (mathematically this can be captured by the notion of factorization algebras \cite{cogw21}), which is supposed to be matched by holography as well. For instance in \cite{cg21} the open side carries the structure of a vertex operator algebra. We contrast this with conjecture H, which is about the global structure. It would be highly interesting to eventually combine these two threads together.

\bigskip
Given the impact of mirror symmetry in the field of mathematics and its origin in physics one can wonder whether  holography - phenomenally influential in physics - could also eventually become a rich source of inspiration for mathematics. See \cite{coli16,co17,Gai24,gai25} for deep developments in this direction.      
\newpage
\section{Overview over the Chapters}
In section 3 we put some of the results of this thesis in a broader context by reviewing related works.
 \begin{itemize}
\item In section 3.1 we remind the reader of Costello's work \cite{Cos07a} on topological conformal field theories, the notions of modular, $S^1$-framed modular operads and cyclic operads and of the notion of totalization. 
\item In section 3.2 we comment on Barannikov's work on modular operads \cite{Ba10}.
\end{itemize}
The main part of this thesis is organized as follows:
\begin{itemize}
\item In section 4 we introduce some algebraic preliminaries. Section 4.1 is dedicated to BD and shifted Poisson algebras, section 4.2 to (cyclic) $L_\infty$-algebras and section 4.3 to  $A_\infty$-categories, Hochschild and cyclic homology. In section 4.4 we introduce Calabi-Yau structures and in section 4.5 splittings of the non-commutative Hodge filtration.
\item In section 5.1 and 5.2 we recall the definition of free closed SFT associated to a smooth cyclic $A_\infty$-category and define the notion of interacting closed SFT. In section 5.3 and 5.4 we compare these notions with BCOV theory when applied to the B-model, for the interacting case in terms of  conjecture \ref{conj_fo}.
\item In section 6 we introduce and recall shifted Poisson and BD structures induced from cyclic $A_\infty$-categories and their quantizations. We study carefully the effect of restricting to essentially finite cyclic $A_\infty$-categories.
\item In section 7.1 and 7.2 we recall shifted Poisson and BD structures induced from cyclic $L_\infty$-algebras and their quantizations. In section 7.3 we study a functor from $A_\infty$-categories to $A_\infty$-algebras; in section 7.4 we specialize to finite cyclic $A_\infty$-categories.
\item Section 8.1 is about setting up the LQT map for $A_\infty$-categories and proving the large $N$ statement. We explain how to incorporate the shifted Poisson and BD algebra structures in section 8.2.
\item In section 9.1 we study shifted Poisson and BD algebras built from various ribbon graph complexes; in section 9.2 we do the same but for various (ordinary) graph complexes. Further we identify certain non-trivial Maurer-Cartan elements. In section 9.3 we connect the previous two subsections.
\item In section 10 we explain the generalized Kontsevich's cocycle construction, both in the commutative and non-commutative setting. It connects the (ribbon) graphs with the algebraic structures induced from cyclic $A_\infty$-categories and cyclic $L_\infty$-algebras.
\item In section 11 we show how the LQT map, graph complexes and generalized Kontsevich cocycle construction fit together, and how quantum and classical setting are connected. 
\item Section 12 is about the open-closed circle action formality morphism, depending on a splitting of the non-commutative Hodge filtration.
\item Section 13 is about ideas how to phrase a `Twisted Holography' duality \cite{cg21} at the level of partition functions, given as input a Calabi-Yau category and one of its objects. 

\end{itemize}
\newpage
\newpage

\newpage
\section{Some Related Works}
In this section we wish to recall some related works and begin by \cite{Cos07a}. Even though we do not directly use its results in this thesis it provides the backbone of much of the theory we study. In particular it is crucial in proving theorem \ref{cei} about the closed representing map, which we motivate and explain with proposition \ref{ssgb}. Similar this theory should give a different view on theorem B from the introduction about the open representing map, which we indicate around proposition \ref{dglsgnm}. To do this we recall the notions of cyclic and modular operads. In the last section we comment on the relation to Barannikov's work \cite{Ba10}.
\subsection{Topological Conformal Field Theories}\label{2dop}
We begin by recalling the work of \cite{Cos07a}. 
Here, a crucial input is the symmetric monoidal category, depending on a set $\Lambda$ of `branes', \begin{equation}\label{OpSyMo}
\mathscr{O}_\Lambda^d:=C_*(\mathcal{M}^{o}_\Lambda,\mathcal{L}^d),
\end{equation}
from \cite{Cos07a}, definition 3.0.2, which  is given as follows. Its objects are tuples of tuples of $\Lambda$ like $$(\lambda_1^a,\lambda_1^b),\cdots, (\lambda_n^a,\lambda_n^b)$$ for $\lambda_i^x\in \Lambda,$ the monoidal structure $\cup$ induced by disjoint union. The Hom spaces of $\mathscr{O}_\Lambda^d$ are built from chains on the moduli space of (not necessarily connected) stable Riemann surfaces with boundary
and \emph{ordered} intervals embedded into the boundary, the free boundaries decorated by objects of a set $\Lambda$ and with coefficients in a certain line bundle $\mathcal{L}^d$. One imposes the condition that there is at least one free boundary on each boundary component. The composition is induced by gluing together surfaces along open boundaries with matching boundary conditions, as shown in the figure below.
\begin{remark}
    We emphasize that here we are considering ordered embedded intervals, which is crucial for the symmetric monoidal structure. The open BD algebra \eqref{ogbd} that we considered before was built from the same moduli space, only that we considered \emph{unordered} embedded intervals. One goal of this section is to explain how the BD algebra \eqref{ogbd} is related to the category $\mathscr{O}_\Lambda^d$. 
\end{remark}

\begin{figure}[h]
\centering
\includegraphics[]{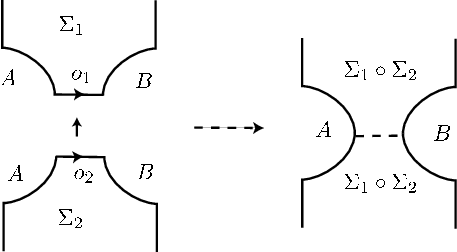}
\caption{An example of composition in $\mathcal{O}_\Lambda$, from \cite{Cos07a}}
\end{figure}
\begin{df}[\cite{Cos07a}, section 5]
An open topological conformal field theory (TCFT) is a symmetric monoidal functor 
$$F:(\mathscr{O}_\Lambda^d,\cup)\rightarrow (Ch,\otimes),$$
which is split.
\end{df}
\begin{theorem}[\cite{Cos07a}, theorem A.1]\label{OTCFTcyc}
There is an equivalence of categories between the category of dimension $d$ cyclic $A_\infty$-category $\mathcal{C}$ with set of objects $B$ and the category of open TCFT's 
$$F:(\mathscr{O}_\Lambda^d,\cup)\rightarrow (Ch,\otimes).$$

\end{theorem}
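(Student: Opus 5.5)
The plan follows Costello's strategy of presenting $\mathscr{O}^d_\Lambda$ by generators and relations up to quasi-isomorphism, so that a split symmetric monoidal functor is determined by where the generators go. The first step is to replace $\mathscr{O}^d_\Lambda$ by a combinatorial model. Using the ribbon graph (or Penner/Kontsevich cell) decomposition of the moduli spaces of surfaces with boundary and ordered embedded intervals, I would construct a cellular chain model whose cells are indexed by ribbon graphs with leaves labelled by open boundaries. I would check that this model is quasi-isomorphic to $C_*(\mathcal{M}^o_\Lambda,\mathcal{L}^d)$, compatibly with gluing and disjoint union, so that we obtain a quasi-equivalence of symmetric monoidal dg categories. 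The determinant line bundle $\mathcal{L}^d$ contributes the orientation and sign twists. Since functor categories out of quasi-equivalent categories into $Ch$ are equivalent (this is where the splitting hypothesis on functors enters), it suffices to classify open TCFTs on the cellular model.

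The second step shows that the cellular model is freely generated, as a symmetric monoidal category with gluing, by discs. Every ribbon graph decomposes into vertices, i.e. discs with ordered boundary intervals, glued along intervals. So the cellular model is the free symmetric monoidal category generated by the cyclic operad of (chains on) discs with $n\geq 1$ marked boundary intervals. This uses the modular envelope of that cyclic operad, in the sense of the cyclic and modular operads recalled in this section. The key fact is that the modular envelope of the cyclic $A_\infty$-operad is quasi-isomorphic to the open-surface modular operad. I would reduce this to showing that the moduli of discs $\mathcal{M}_{0,1,n}$ is contractible, with the associahedra giving the cellular chains that model $A_\infty$. Non-disc topologies then arise from gluing and self-gluing in the envelope, and self-gluing is exactly where $\mathcal{L}^d$ and the non-degenerate pairing of degree $d$ become necessary.

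The third step reads off the data. A symmetric monoidal functor out of the modular envelope of the $A_\infty$ cyclic operad amounts to the following. For each object $(\lambda^a,\lambda^b)$ we get a complex $F(\lambda^a,\lambda^b)=\mathrm{Hom}(\lambda^a,\lambda^b)$. Disc generators give the higher products $m_k$. The strip with two intervals, glued to itself, gives the pairing $\langle\_,\_\rangle$ of degree $d$, and the splitting condition forces it to be non-degenerate. Invariance of the disc cells under cyclic rotation of the intervals gives cyclicity of $\langle m_k(\dots),\_\rangle$. This yields the functor to cyclic $A_\infty$-categories with object set $\Lambda$. The inverse functor extends a cyclic $A_\infty$-category along the free modular envelope, using the pairing to contract pairs of half-edges (Kontsevich's graph sum). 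Functoriality and the fact that the two composites are naturally isomorphic follow from the universal property of the envelope.

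The main obstacle is the second step: proving that the open moduli spaces are, at chain level, the free modular operad on the $A_\infty$ cyclic operad, with the correct $\mathcal{L}^d$ twists. For this I would first use the ribbon graph cell structure to identify the cellular chains with the graph complex built from associahedral vertices, and then argue by a filtration on the number of internal edges that the comparison map is a quasi-isomorphism.
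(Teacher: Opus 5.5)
Your architecture is the one of Costello's proof, which the thesis only cites: a quasi-equivalent symmetric monoidal model of $\mathscr{O}^d_\Lambda$ that is free on discs, identification of the disc part with a cyclic $A_\infty$ operad, reading off the structure, and homotopy invariance of categories of split functors. However, the step you yourself flag as the main obstacle is not handled by what you propose.

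\textbf{The higher-genus step.} Contractibility of the disc moduli $\mathcal{M}_{0,1,n}$ only identifies the genus-zero, one-boundary part, i.e.\ the cyclic operad, with a cyclic $A_\infty$ operad. The claim that the open moduli spaces of all topological types form, compatibly with gluing, the modular envelope of this cyclic operad is a higher-genus geometric statement. Genus-zero data cannot give it to you. The closed analogue is in fact false: the derived modular envelope of the genus-zero framed little discs operad models handlebody mapping class groups, not moduli of Riemann surfaces.

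Your two proposed tools do not close the gap either.
\begin{itemize}
\item A filtration by the number of internal edges cannot show that the comparison map to $C_*(\mathcal{M}^o_\Lambda,\mathcal{L}^d)$ is a quasi-isomorphism. On the moduli side the associated graded pieces are chains on strata of every topological type, and their homotopy type is exactly what is at stake.
\item The Penner--Kontsevich decomposition is the wrong cell structure. Its cells are simplices of edge lengths rather than products of disc moduli, and its cellular differential contracts edges instead of splitting vertices. Gluing surfaces does not send cells to cells. So its cellular chains are not the free object on discs, and the compatibility with gluing you intend to ``check'' fails.
\end{itemize}

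\textbf{The missing ingredient} is Costello's dual ribbon graph decomposition, on which section 6 of \cite{Cos07a} rests. First replace embedded intervals by marked points, so that gluing becomes the formation of boundary nodes. Then the open moduli space is homotopy equivalent, compatibly with gluing, to the locus in its partial compactification of nodal surfaces all of whose components are discs. This locus has orbi-cells that are products of moduli of discs (open associahedra), indexed by ribbon graphs. Its cellular chains are then, by construction, the modular envelope of the cyclic $A_\infty$ operad.

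\textbf{The read-off step also needs correcting.}
\begin{itemize}
\item $\mathscr{O}^d_\Lambda$ contains the disc with a single open boundary, and this disc cannot be glued from discs with at least three intervals. Indeed, with $m$ the number of intervals, $m-2\chi$ is additive under gluing, equals $-1$ on this disc, and is positive on any gluing of discs with at least three intervals. So the disc operad with $n\geq 1$ intervals is a model for a \emph{unital} cyclic associative operad, not for the plain cyclic $A_\infty$ operad.
\item The disc with one outgoing interval gives a unit, and the disc with one incoming interval gives a trace. Open TCFTs therefore correspond to unital cyclic $A_\infty$-categories, which is how Costello formulates the result. For a non-unital category your inverse construction has nothing to assign to this morphism.
\item The pairing is the image of the disc with two incoming intervals; gluing the strip to itself gives an annulus, not the pairing. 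Non-degeneracy comes from the disc with two outgoing intervals, whose image lies in $F(\lambda,\mu)\otimes F(\mu,\lambda)$ because $F$ is split. Composing it with the pairing along one interval gives a strip, and the image of a strip is homotopic to the identity.
\item Since you pass through a quasi-equivalent model, you obtain a homotopy equivalence of categories, i.e.\ an equivalence after inverting quasi-isomorphisms. This is also what Costello proves. You do not get an equivalence whose composites are naturally isomorphic to the identities.
\end{itemize}
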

The open TCFT $F_\mathcal{C}$ corresponding under above theorem to a cyclic $A_\infty$-category $\mathcal{C}$ satisfies
\begin{equation}\label{tensf}
F_\mathcal{C}\big((\lambda_1^a,\lambda_1^b),\cdots, (\lambda_n^a,\lambda_n^b)\big)=Hom_\mathcal{C}(\lambda_1^a,\lambda_1^b)\otimes\cdots \otimes Hom_\mathcal{C}(\lambda_n^a,\lambda_n^b). \end{equation}
In a similar way Costello \cite{Cos07a} defines another symmetric monoidal category (definition~3.0.2)
 \begin{equation}\label{ClSyMo}
\mathscr{C}^d:=C_*(\mathcal{M}^{c},\mathcal{L}^d),
\end{equation}
its object given by finite sets (think a disjoint union of circles) and
its Hom-spaces  built from chains on the moduli space of stable Riemann surfaces with ordered framed boundary and with coefficients in a certain line bundle. One imposes the condition that there is always at least one incoming framed boundary.

Similarly one defines a symmetric monoidal category (definition 3.0.2 of \cite{Cos07a})
\begin{equation}\label{OpClSyMo}
\mathscr{OC}^d_\Lambda:=C_*(\mathcal{M}^{oc}_\Lambda,\mathcal{L}^d),
\end{equation}
by basically combining the previous ones (see section 1.2 of \cite{Cos07a}). Here one imposes that there is always at least one free boundary or one incoming closed boundary.
\begin{df}[\cite{Cos07a}, section 5]\label{octcft123}
A closed TCFT is a symmetric monoidal functor 
$$F:(\mathscr{C}^d,\cup)\rightarrow (Ch,\otimes)$$
which is split.
An open-closed topological conformal field theory TCFT is a symmetric monoidal functor 
$$F:(\mathscr{OC}_B^d,\cup)\rightarrow (Ch,\otimes)$$
which is split.
\end{df}
\begin{theorem}[theorem A.1 and A.2, \cite{Cos07a}]\label{KanExt}
Given an open TCFT, corresponding under theorem \ref{OTCFTcyc} to a cyclic $A_\infty$-category $\mathcal{C}$, there is an associated universal open-closed TCFT $\Phi$ such that given an object of $\mathscr{C}^d$ (ie. a set $C$) we have $$\Phi(C)\simeq CH_*(\mathcal{C})^{|C|},$$
where $CH_*(\mathcal{C})$ denotes the Hochschild chains of the category $\mathcal{C}.$
We also get an induced closed TCFT with the same property.
\end{theorem}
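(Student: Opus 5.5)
The plan is to build $\Phi$ as a derived (homotopy) left Kan extension of the open TCFT $F_\mathcal{C}$ along the inclusion $\iota:\mathscr{O}^d_\Lambda\hookrightarrow\mathscr{OC}^d_\Lambda$, and then to identify its value on closed boundaries by an explicit computation in the open-closed category.

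\textbf{Step 1: the Kan extension.} View a split symmetric monoidal functor out of $\mathscr{O}^d_\Lambda$ as a left module over this monoidal dg category. Define
\[
\Phi:=\mathscr{OC}^d_\Lambda\otimes^{\mathbb{L}}_{\mathscr{O}^d_\Lambda}F_\mathcal{C},
\]
computed by the two-sided bar construction
\[
B\bigl(\mathscr{OC}^d_\Lambda,\mathscr{O}^d_\Lambda,F_\mathcal{C}\bigr).
\]
Two properties have to be checked.
\begin{itemize}
\item \emph{Universality.} This follows formally from the adjunction between restriction along $\iota$ and derived induction.
\item \emph{Monoidality.} The Hom complexes of $\mathscr{OC}^d_\Lambda$ decompose over connected components of surfaces, i.e. they form a free symmetric monoidal envelope. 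Since the bar construction commutes with this decomposition, $\Phi$ is again split symmetric monoidal.
\end{itemize}
Because $\mathscr{O}^d_\Lambda\to\mathscr{OC}^d_\Lambda$ is fully faithful on open objects, the restriction of $\Phi$ recovers $F_\mathcal{C}$ up to quasi-isomorphism, so $\Phi$ is indeed an extension.

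\textbf{Step 2: the key computation.} By monoidality it suffices to compute $\Phi(S^1)$ for a single closed boundary. This reduces to understanding the right $\mathscr{O}^d_\Lambda$-module $\mathscr{OC}^d_\Lambda(\iota(-),S^1)$, whose value is built from chains on moduli of surfaces with open inputs and one outgoing closed boundary. The intended argument runs as follows.
\begin{enumerate}
\item A homotopy-cofinality argument reduces to annuli: one boundary is the outgoing framed circle, and the other carries $n$ open intervals labelled by $(\lambda_0,\lambda_1),\dots,(\lambda_{n-1},\lambda_0)$.
\item Costello's contractibility results (the Harer--Penner type decomposition of the moduli spaces) show that these annulus moduli spaces form a cyclic-bar-type object up to homotopy. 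The relevant spaces are circles with $n$ marked points modulo the $S^1$-framing, with face maps given by collisions of intervals.
\item Tensoring this object over $\mathscr{O}^d_\Lambda$ with $F_\mathcal{C}$, using \eqref{tensf}, yields the complex
\[
\bigoplus_{\lambda_0,\dots,\lambda_{n-1}}\mathrm{Hom}(\lambda_0,\lambda_1)\otimes\cdots\otimes\mathrm{Hom}(\lambda_{n-1},\lambda_0).
\]
Its differential comes from the $A_\infty$-products, because discs with several inputs realise the $m_k$, and the products wrap cyclically around the annulus. This is exactly $CH_*(\mathcal{C})$.
\item By monoidality, $\Phi(C)\simeq CH_*(\mathcal{C})^{\otimes|C|}$.
\item Restricting $\Phi$ to $\mathscr{C}^d$ gives the induced closed TCFT, with the same values.
\end{enumerate}

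\textbf{Main obstacle.} The hard step is Step 2. The derived tensor product has to be shown to collapse to the small cyclic bar model, which means proving that the relevant spaces of open-closed surfaces, relative to the open gluing, are homotopy equivalent to the cyclic nerve. The higher-genus and multi-component pieces must contribute only through free generation, so that in the end only discs and annuli matter. I would first try the cellular (ribbon graph) models of $\mathcal{M}^{oc}_\Lambda$ and filter by the number of closed boundaries. Alongside this, the signs and degree shifts coming from the line bundle $\mathcal{L}^d$ have to be kept under control.
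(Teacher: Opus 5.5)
Your route is Costello's: a homotopy left Kan extension along $\iota:\mathscr{O}^d_\Lambda\hookrightarrow\mathscr{OC}^d_\Lambda$, followed by his deformation retraction of the moduli spaces onto nodal surfaces built from discs and annuli. The paper itself gives no argument and only cites \cite{Cos07a}. Two steps, however, do not go through as you state them.

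\textbf{Monoidality is not formal.} The splitting of the Hom complexes of $\mathscr{OC}^d_\Lambda$ over connected components does not by itself make the Kan extension of a split functor split. A connected surface can carry several outgoing circles joined through its open and free boundary. Hence $\mathscr{OC}^d_\Lambda(\iota(-),Z_1\sqcup Z_2)$ is not induced from $\mathscr{OC}^d_\Lambda(\iota(-),Z_1)\boxtimes\mathscr{OC}^d_\Lambda(\iota(-),Z_2)$. For a general inclusion of free symmetric monoidal envelopes the conclusion really fails. If the only new operation has one open input and two closed outputs, the extension vanishes on a single closed object but not, in general, on two. What you need is the geometric statement that, as a right $\mathscr{O}^d_\Lambda$-module, $\mathscr{OC}^d_\Lambda(\iota(-),Z)$ is quasi-free on disjoint unions of annuli, one per outgoing circle. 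Splitness of $F_\mathcal{C}$ and the Yoneda lemma (which absorbs components with no closed output) then give $\Phi(Z_1\sqcup Z_2)\simeq\Phi(Z_1)\otimes\Phi(Z_2)$. This quasi-freeness, not a cofinality argument, is also what your item 1 actually uses, so it has to be established before you invoke monoidality.

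\textbf{The annulus computation in items 2--3 does not yield $CH_*(\mathcal{C})$ as stated.} Because the outgoing circle is framed, the space of annuli with $n$ intervals in a fixed cyclic order is homotopy equivalent to $S^1$, not contractible.
\begin{itemize}
\item If you divide by rotation, as ``modulo the $S^1$-framing'' suggests, the cells become contractible. The $\mathbb{Z}/n$-coinvariants then give Connes' complex: the tensors $\mathrm{Hom}(\lambda_0,\lambda_1)\otimes\cdots\otimes\mathrm{Hom}(\lambda_{n-1},\lambda_0)$ modulo $1-t$, with differential $b$. In characteristic zero this computes cyclic, not Hochschild, homology.
\item If you keep the framing, there are two kinds of cells.
\begin{itemize}
\item Cells with an interval sitting at the framing point give these tensors with the Hochschild differential $b$; the wrap-around terms come from collisions across the framing point.
\item Cells with the framing point strictly between two intervals give a second column with the bar differential $b'$, mapped to the first column by $1-t$ (up to sign).
\end{itemize}
\end{itemize}
To land exactly on $CH_*(\mathcal{C})$ you must use the unit, namely the disc with one outgoing and no incoming interval, which lies in $\mathscr{O}^d_\Lambda$. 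It can be used in one of two ways:
\begin{itemize}
\item to contract the $b'$-column; or
\item to recognize each cell of the second kind as a cell of the first kind with $1_{\lambda_0}$ inserted at the framing point, so that only cells of the first kind are free generators.
\end{itemize}
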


\subsection{Cyclic and Modular Operads}\label{cycmodop}
In this section 3.2 we work only with a $\mathbb{Z}/2\mathbb{Z}$-grading as we have not checked the degrees and some shifts carefully and take for simplicity $\Lambda=\{*\}$. We nevertheless wished to include this chapter as it provides a more conceptual explanation for eg. theorem B (as we explain at the end of section \ref{opensec}) and a heuristic  for theorem \ref{cei} (as we explain at the end of section \ref{cloSec}).

We recall from \cite{geKa96} the notion of cyclic and of modular operad and from \cite{DoShVaVa24}, definition 1.15 the notion of an odd shifted modular operad.
\begin{ex}
For $d$ odd, considering the connected parts of the Hom-spaces of $\mathscr{O}^d$ with zero outgoing intervals defines an odd shifted modular operad;
the partial composition and contraction maps are defined by gluing with the disc with two outgoing intervals, see section 6 of \cite{Cos07a} and also the treatment of \cite{HVZ08}. By abuse of notation we denote this odd shifted modular operad also by $\mathscr{O}^d$.
The genus zero part of $\mathscr{O}^d$ defines an odd shifted cyclic operads. 
\end{ex}
An open TCFT induces in particular an algebra $End(V)_n:=\big(V^\vee[-1]\big)^{\otimes n}$ over the odd shifted modular operad; the partial composition and contraction maps (recalling definition 1.15 of \cite{DoShVaVa24}) are defined by pairing with the inverse of the non-degenerate pairing coming from the image of the disk with two outgoing intervals; see lemma 7.3.4 of \cite{Cos07a} for comparison. 
\begin{remark}\label{dasistGG}
In fact the same statements are not true for    
$\mathscr{OC}^d$ and $\mathscr{C}^d$. If we consider a surface with two incoming and no outgoing framed boundaries nor free boundaries or embedded intervals and we glue together these two incoming framed boundaries we would land outside of $\mathscr{C}^d$, as we required positive number of inputs. For $\mathscr{O}^d$ the situation is different because gluing two open intervals always results in a free boundary. The requirement of positive input for closed TCFTs is crucial to prove theorem \ref{KanExt} above. The closed TCFT induced from a cyclic $A_\infty$-category can in general not be extended to zero inputs. However the results of \cite{Lu09} indicate that this should be possible if we additionally ask our category to be \emph{smooth}. See also later around diagram \ref{smooRes} later.
\end{remark}

Next, we recall from \cite{DoShVaVa24}, definition 2.1 the notion of odd shifted dg $\Delta$ Lie algebras:
\begin{df}
An \emph{odd shifted dg  $\Delta$-Lie algebra} consists of a chain complex $(\mathfrak{g},d)$, a degree $1$ linear operator 
$\Delta : \mathfrak{g} \rightarrow \mathfrak{g}$, and a degree $1$ symmetric product $\{\_ , \_\} : \mathfrak{g}^{\odot 2} \rightarrow \mathfrak{g}$
satisfying the following relations:
\[\Delta^2=0 \ , \qquad  d \Delta+\Delta d =0\ ,
\quad d \ \text{and} \ \Delta\ \ \text{are derivations of }\ \ \{\_ ,\_\}\ ,
\]
and we ask that the shifted Jacobi identity for  \ $\{\_ ,\_\}$ holds.
\end{df}

Further in lemma 2.8 of \cite{DoShVaVa24}  a functor is introduced
\begin{equation}\label{total}
Tot:\text{odd shifted modular operads}\rightarrow \text{odd shifted dg $\Delta$ Lie algebras}
\end{equation}
which restricts to 
\begin{equation}\label{cyctotal}
Tot_{cyc}:\text{odd shifted cyclic operads}\rightarrow \text{dg odd shifted Lie algebras.}
\end{equation}
In fact we could have factored some of our constructions in this thesis through shifted dg $\Delta$ Lie algebras by the following functor, as we will see momentarily.
 \begin{lemma}
     Given an odd shifted dg $\Delta$ Lie algebras
$(\mathfrak{g},d,\Delta ,\{\_ , \_\})$ defining
$$BD(\mathfrak{g}):=Sym(\mathfrak{g})\llbracket\gamma\rrbracket,$$
where $\gamma$ is a formal variable of even degree we equip $BD(\mathfrak{g})$ with a BD algebra structure  
$$\Big(BD(\mathfrak{g}),\cdot,d+\gamma\Delta+\gamma\delta_e,\{\_ , \_\} \Big)$$
    where $d$ and $\Delta$ denote the differentials extended as derivations and $\delta_e$ the Chevalley-Eilenberg differential of the Lie bracket, further $\{\_ , \_\}$ denoting the Lie bracket extended $\gamma$-linearly and according to the Leibniz rule. We find a functor
\begin{align}\label{BDfSL}
Fr^\gamma:\text{odd shifted dg $\Delta$ Lie algebras}&\rightarrow \text{BD algebras}\\
\mathfrak{g}&\mapsto BD(\mathfrak{g})\nonumber,
\end{align}
whose $\gamma=0$ part gives a functor
$$Fr:\text{odd shifted dg Lie algebras}\rightarrow \text{shifted Poisson algebras}.
    $$
  \end{lemma}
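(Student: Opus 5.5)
The approach is to verify the BD axioms directly on generators and then extend by the Leibniz rule, following the standard construction that $Sym$ of a shifted Lie algebra with the Chevalley--Eilenberg differential is a (shifted) Poisson/BV-type algebra. First, I extend the bracket $\{\_,\_\}$ from $\mathfrak{g}$ to $Sym(\mathfrak{g})\llbracket\gamma\rrbracket$ as a biderivation of the product, $\gamma$-linearly. Shifted antisymmetry and the shifted Jacobi identity on $Sym(\mathfrak{g})$ then follow from the corresponding identities on $\mathfrak{g}$. The point is that both sides of the Jacobi identity are triderivations, so it suffices to check them on generators.

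Next, I extend $d$ and $\Delta$ as derivations of the product. I define $\delta_e$ as the second-order operator
\[
\delta_e(x_1\cdots x_n)=\sum_{i<j}\pm\{x_i,x_j\}\,x_1\cdots\widehat{x_i}\cdots\widehat{x_j}\cdots x_n ,
\]
which is the Chevalley--Eilenberg differential of the shifted Lie algebra. By construction it has order two, and its failure to be a derivation is the bracket:
\[
\delta_e(ab)-\delta_e(a)b\mp a\delta_e(b)=\pm\{a,b\}.
\]
Since $d$ and $\Delta$ are derivations, the total operator $D=d+\gamma\Delta+\gamma\delta_e$ has deviation from being a derivation equal to $\gamma\{\_,\_\}$. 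This is exactly the BD compatibility from definition \ref{BD}. The same computation also shows that $D$ is a derivation of the bracket: $\delta_e$ is a derivation of its own deviation bracket by the standard seven-term identity for second-order operators, and $d$, $\Delta$ are derivations of $\{\_,\_\}$ on $\mathfrak{g}$, hence on $Sym$.

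The main step is $D^2=0$. Expanding gives
\[
D^2=d^2+\gamma(d\Delta+\Delta d)+\gamma(d\delta_e+\delta_e d)+\gamma^2(\Delta^2+\Delta\delta_e+\delta_e\Delta+\delta_e^2).
\]
I check each group as follows.
\begin{itemize}
\item $d^2$, $d\Delta+\Delta d$ and $\Delta^2$ are derivations of the product vanishing on generators, so they vanish by hypothesis.
\item $\delta_e^2=0$ is equivalent to the Jacobi identity. Both sides are operators of order at most three, so it is enough to check on cubic monomials, where it reduces to Jacobi.
\item $[d,\delta_e]$ and $[\Delta,\delta_e]$ are operators of order at most two that vanish on $Sym^{\le1}$. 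Their deviation brackets are $d\{a,b\}-\{da,b\}\mp\{a,db\}$, and similarly for $\Delta$. These vanish because $d$ and $\Delta$ are derivations of the bracket, so both commutators are zero.
\end{itemize}
The only real obstacle is sign bookkeeping for the odd shift. I would fix the Koszul conventions of \cite{DoShVaVa24} once and check signs on generators only.

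Finally, functoriality follows because a morphism of odd shifted dg $\Delta$ Lie algebras induces an algebra map $Sym(f)\llbracket\gamma\rrbracket$. This map commutes with $d$ and $\Delta$ since they are derivations determined on generators, and with the bracket and $\delta_e$ since these are determined by the bracket on generators. Setting $\gamma=0$ and forgetting $\Delta$ kills $\gamma\Delta+\gamma\delta_e$ and leaves $(Sym(\mathfrak{g}),\cdot,d,\{\_,\_\})$. This is a shifted Poisson algebra by the biderivation part of the argument above, which gives the functor $Fr$.
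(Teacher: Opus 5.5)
The paper states this lemma without proof (Section~3.2 explicitly works only $\mathbb{Z}/2\mathbb{Z}$-graded and does not check signs), so there is no argument to compare against. Your direct verification is correct and is the standard one: extend the bracket as a biderivation, treat $\delta_e$ as a second-order operator whose deviation is the bracket, and reduce $D^2=0$ to $d^2=[d,\Delta]=\Delta^2=0$, to $\delta_e^2=0$ via Jacobi, and to $[d,\delta_e]=[\Delta,\delta_e]=0$ via the derivation hypotheses.

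Two minor remarks.

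First, the fact that $\delta_e$ (or $D$) is a derivation of its own deviation bracket does not come from the seven-term identity alone. Write $\Phi_Q(a,b)=Q(ab)-Q(a)b-(-1)^{|Q||a|}aQ(b)$. For any odd operator $P$ one has
\[
\Phi_{P^2}(a,b)=P\Phi_P(a,b)+\Phi_P(Pa,b)+(-1)^{|a|}\Phi_P(a,Pb),
\]
so the derivation property is equivalent to $P^2$ being a derivation. It therefore follows from $\delta_e^2=0$, or from $D^2=0$ together with $\gamma$ being a non-zero-divisor. You prove this anyway, and Definition~\ref{BD} does not require it separately.

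Second, with the paper's literal BD relation the deviation is graded symmetric. This matches the ``degree $1$ symmetric product'' on $\mathfrak{g}$, so your undetermined signs $\pm$ resolve to $+$. Also, $d+\gamma\Delta+\gamma\delta_e$ is homogeneous with $\gamma$ merely even only because that section is graded mod $2$. Over $\mathbb{Z}$ you would need $|\gamma|=0$, consistent with $|\mu|=1-r$ for $r=1$.
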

  \begin{remark}

  Compare also \cite{djpp22}, where a functor
$$\text{odd shifted modular operads with connected sum}\rightarrow \text{BD algebras}$$
is constructed, which should be a finer construction than to just consider the free algebra on the totalization as in lemma \ref{BDfSL}.  
\end{remark}
\subsubsection{Open Sector\label{opensec}}
As remarked at the beginning of the last section given an open TCFT of odd dimension we get an algebra over the shifted modular operad $\mathscr{O}^d\rightarrow End(V)$. We can apply the totalization functor \ref{total}, followed by the functor from lemma \ref{BDfSL}, which results in a map of BD algebras
\begin{equation}\label{absthe}
Fr^\gamma(Tot(\mathscr{O}^d))\rightarrow Fr^\gamma(Tot(End(V)))
\end{equation}

It is easy to see that $$Fr^\gamma(Tot(\mathscr{O}^d))=M^0,$$
where $M^0$ denotes the BD algebra \eqref{ogbd} for $\Lambda=\{*\}.$
Further we can identify 
$$Tot(End(V))=\Big(Sym(V[-1]^\vee),d,\Delta_o,\{\_,\_\}_o\Big),$$
where $d$ is the derivation induced from the underlying differential of the chain complex $(V,d)$ and the BV Laplacian $\Delta$ and the shifted Poisson bracket $\{\_,\_\}$ are the standard ones on Lie algebra cochains of a complex with non-degenerate pairing; see around \ref{c_pq}. Then it follows that
$$Fr^\gamma(Tot(\mathscr{O}^d))=\Big(Sym(Sym(V[-1]^\vee))\llbracket\gamma\rrbracket,\cdot,d+\gamma\Delta+\gamma\delta_e,\{\_,\_\}_o\Big),$$
where $\delta_e$ denotes the Chevalley-Eilenberg differential associated to $\{\_,\_\}_o$. The multiplication gives a map of BD algebras
$$
\Big(Sym(Sym(V[-1]^\vee))\llbracket\gamma\rrbracket,\cdot,d+\gamma\Delta_o+\gamma\delta_e,\{\_,\_\}_o\Big)\rightarrow \Big(Sym(V[-1]^\vee)\llbracket\gamma\rrbracket,\cdot,d+\gamma\Delta_o,\{\_,\_\}_o\Big)\cong \mathcal{O}^{pq}(V).$$
Summarizing, the map \eqref{absthe} composed with multiplication induces\footnote{We ignored here the difference between the formal variables $\hbar$ and $\gamma$; compare section \ref{slqtq}. } a map, denoted  
\begin{equation}\label{wwdaf}
j:\ M^0\rightarrow \mathcal{O}^{pq}(V).
\end{equation}
It is not difficult to see that we can actually, by the LQT map of rank 1 (recalling the definition of its domain from \ref{OBDa})
$$LQT_1:\ \mathcal{F}^{pq}(V)\rightarrow \mathcal{O}^{pq}(V),$$
 factor the map \eqref{wwdaf} through a map 
$$
M^0\rightarrow \mathcal{F}^{pq}(V),
$$
since we get an induced symmetric ordering of cyclic orderings of the tensors in the image of the map $j$ from \eqref{wwdaf}, coming from that of symmetric ordering of cyclic orderings of the embedded intervals into the boundaries of a surface.

Precomposing with the quasi-isomorphism of BD algebras mentioned in~\eqref{comdes}
$$M^0\simeq \mathcal{RG}\llbracket\gamma\rrbracket$$
we have shown the following:
\begin{prop}\label{dglsgnm}
Given an open TCFT $F$, for simplicity with $\Lambda=\{*\}$, so that $F(*,*)=V$  there is an induced map of BD algebras
\begin{equation}\label{simonstehauf}\tilde{\rho}_F:\ \mathcal{RG}\llbracket\gamma\rrbracket\rightarrow\mathcal{F}^{pq}(V),
\end{equation}
induced from the totalization functor \ref{total}.
 \end{prop}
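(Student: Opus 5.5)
The map $\tilde\rho_F$ will be obtained by composing four maps that the text immediately preceding the statement essentially already provides. The plan is to check, one at a time, that each is a map of BD algebras, and then to justify the factorization through $\mathcal{F}^{pq}(V)$.

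\textbf{First step.} Start from the open TCFT $F$. By theorem \ref{OTCFTcyc} and the example above, $F$ restricted to the connected, zero-output part of $\mathscr{O}^d$ gives a morphism of odd shifted modular operads
\[
\mathscr{O}^d\rightarrow End(V).
\]
The point to verify is that $F$ intertwines gluing along intervals with contraction against the inverse pairing. This follows from the composition axiom of the symmetric monoidal functor, because both operations are realized by gluing in the disc with two outgoing intervals.

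\textbf{Second step.} Apply the totalization functor \eqref{total}, which yields a map of odd shifted dg $\Delta$-Lie algebras. Then apply the functor $Fr^\gamma$ of lemma \ref{BDfSL}; functoriality gives the BD map \eqref{absthe}. Next identify the source $Fr^\gamma(Tot(\mathscr{O}^d))$ with $M^0$, and $Tot(End(V))$ with $(Sym(V[-1]^\vee),d,\Delta_o,\{\_,\_\}_o)$. Both identifications should be direct, by comparing the generators and the gluing operations.

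\textbf{Third step.} Postcompose with the multiplication map $Sym(Sym(V[-1]^\vee))\llbracket\gamma\rrbracket\rightarrow \mathcal{O}^{pq}(V)$. One must check that this is a BD map. Multiplication is a map of commutative algebras, and it is compatible with $\{\_,\_\}_o$ and $\Delta_o$ by the Leibniz rule. The key identity to verify is that $\Delta_o$ applied to a product equals $\Delta_o$ on each factor plus brackets between factors. This is exactly the defining relation of a BD algebra, so the extra term $\gamma\delta_e$ on the source matches the cross terms of $\gamma\Delta_o$ on the target. This produces the map $j$ of \eqref{wwdaf}, after identifying $\gamma$ with $\hbar$ as in the footnote.

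\textbf{Fourth step and main obstacle.} The expected obstacle is lifting $j$ through $LQT_1:\mathcal{F}^{pq}(V)\rightarrow\mathcal{O}^{pq}(V)$. To construct the lift, define the map on a generator given by a surface $\Sigma$ with intervals embedded in its boundary circles. The image is the product over boundary components of the cyclic word in $Cyc^*(V)$ obtained from $F(\Sigma)$, read in the cyclic order of the intervals on that component; these words are multiplied symmetrically over components and over connected pieces. Applying $LQT_1$ to this element recovers $j$, because at rank $1$ the LQT map sends each cyclic word to the corresponding symmetric function.

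It then remains to check compatibility with the BD operations. Gluing two intervals on the same boundary component splits that cyclic word into two, which corresponds to $\nabla$. Gluing intervals on different boundary components merges two cyclic words, which corresponds to $\delta$ and $\{\_,\_\}_o$. These must be compared with definition \ref{nc_pq}. I expect the sign bookkeeping to be the most delicate part; here I would work only $\mathbb{Z}/2$-graded, as the section allows.

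Finally, precompose with the quasi-isomorphism $\mathcal{RG}\llbracket\gamma\rrbracket\simeq M^0$ of \eqref{comdes}. This gives $\tilde\rho_F$.
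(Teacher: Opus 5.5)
Your proposal reproduces the paper's own argument step for step: the algebra $\mathscr{O}^d\to End(V)$, totalization followed by $Fr^\gamma$, multiplication to obtain $j$, the lift through $LQT_1$ using the symmetric ordering of cyclic orderings of intervals on the boundary circles, and precomposition with $\mathcal{RG}\llbracket\gamma\rrbracket\simeq M^0$; your choice to verify the lift directly against $\nabla$, $\delta$ and $\{\_,\_\}_o$ is the right one, because $LQT_1$ is not injective and so the BD property of the lift cannot simply be read off from that of $j$. Like the paper (cf.\ its footnote on $\hbar$ versus $\gamma$), you leave the weights implicit, and two adjustments are needed for that check to go through: a connected $\Sigma$ of genus $g$ with $b$ boundary circles should go to the projection of $F(\Sigma)$ onto symmetric words in cyclic words (not a literal product of one word per circle, and with $\nu$ for circles carrying no interval) times $\gamma^{g}$, as $\rho$ does in theorem B; this weighting matches the $\mathcal{RG}\llbracket\gamma\rrbracket$-type differential $\nabla+\delta_i+\gamma\delta_e$ rather than the $\gamma\Delta$ produced by $Fr^\gamma$, and with it $LQT_1$ returns $j$ only up to the factor $\hbar^{2g+b-1}$.
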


\begin{Claim}[`Conjecture']\label{ibssm}
 The map \eqref{simonstehauf}, given an open TCFT, coincides with the map
$$\rho_A:\ \mathcal{RG}\llbracket\gamma\rrbracket\rightarrow \mathcal{F}^{pq}(V)$$
  from theorem B applied to the cyclic $A_\infty$-algebra $A$ corresponding to the open TCFT under theorem~\ref{OTCFTcyc}. 
\end{Claim}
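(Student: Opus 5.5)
The plan is to compare the two maps generator by generator. Both $\tilde{\rho}_F$ and $\rho_A$ are maps of BD algebras out of $\mathcal{RG}\llbracket\gamma\rrbracket$, and in particular both are multiplicative and $\gamma$-linear. It therefore suffices to check that they agree on connected ribbon graphs. Disjoint union of graphs corresponds to the product, and the genus weighting is recorded in $\gamma$ on both sides.

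\textbf{Step 1: evaluate $\rho_A$ on a connected ribbon graph.} Recall from the construction of theorem B that $\rho_A$ is the generalized Kontsevich cocycle construction. One places the cyclic tensor $\langle m^{k}(\_,\dots,\_),\_\rangle$ at each vertex of valence $k+1$. One contracts along internal edges with the inverse pairing. One then reads off, at each boundary cycle, the cyclic word formed by the leaves, which yields an element of $Sym(Cyc^*(V)[-1])$.

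\textbf{Step 2: evaluate $\tilde{\rho}_F$ on the same graph.} Under the quasi-isomorphism $M^0\simeq\mathcal{RG}\llbracket\gamma\rrbracket$ of \eqref{comdes}, which we take from section 6.2 of \cite{Cos07a}, a ribbon graph corresponds to a cell of the moduli space. This cell is obtained by gluing discs, one for each vertex, along open intervals, one for each internal edge. The map \eqref{absthe} is a map of modular operads, so it sends such a glued surface to the corresponding composite of the disc operations in $End(V)$. The partial compositions and contractions there are given by the inverse pairing, coming from the disc with two outgoing intervals, exactly as in lemma 7.3.4 of \cite{Cos07a}. The key input is Costello's description, in the proof of theorem~\ref{OTCFTcyc}, of the open TCFT attached to $A$. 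The chain represented by a disc with $k+1$ marked intervals (the top cell of the relevant associahedron-type cell complex, or equivalently the vertex of valence $k+1$) is sent to $\langle m^{k}(\_,\dots,\_),\_\rangle$. So vertex by vertex, and edge by edge, the two constructions use identical tensors.

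\textbf{Step 3: match the outputs and verify signs.} We then check that the factorization of $j$ through $LQT_1$ produces, on each boundary component, the same cyclic word as the one Kontsevich's construction reads off. Both are determined by the cyclic order of the intervals, equivalently the leaves, on that boundary.

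The main obstacle is the last verification: signs, orientation line bundle $\mathcal{L}^d$, and the normalizations (automorphism factors $1/|Aut|$ and symmetrization conventions) must agree, along with the precise cell decomposition used in \eqref{comdes}. Since \eqref{comdes} itself is only sketched, I would first fix it explicitly via Costello's cellular chains, use the orientation conventions of definition \ref{Ddrgc} to match $\det$ of edges with $\mathcal{L}^d$, and check agreement on low-complexity graphs (a single vertex, and a single edge with a self-contraction) before arguing by induction on the number of edges using compatibility with gluing.
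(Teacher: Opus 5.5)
Your outline is the argument the paper has in mind: the paper gives no proof, only a pointer to Costello's cellular model (Proposition 6.2.1 and section 6.2 of \cite{Cos07a}). In that model ribbon graphs index glued-disc cells, disc cells go to the cyclic tensors $\langle m^{k}(\_,\dots,\_),\_\rangle$, and internal edges go to contraction with the inverse pairing, so the identification would be checked vertex by vertex and edge by edge exactly as you propose. As in the paper, the decisive bookkeeping is left undone, namely signs, the $\mathcal{L}^d$ and orientation gradings, automorphism factors, and the $\gamma$-weights. For the last point, $Fr^\gamma$ puts $\gamma$ in front of the self-gluing $\Delta$, whereas $\mathcal{RG}\llbracket\gamma\rrbracket$ has $\nabla+\delta_i$ without it, and $LQT_1$ is $2$-weighted; so \eqref{comdes} and the lift through $LQT_1$ need explicit rescalings of connected generators before your assertion that both sides record the genus by $\gamma$ holds. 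That unfinished bookkeeping is precisely why the paper records the statement only as a conjecture, so what you have is a proof strategy, not a proof.
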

Up to signs and possible gradings, this claim should follow from Costello's work, eg. proposition 6.2.1 of \cite{Cos07a} and generally section 6.2 of \cite{Cos07a}, see also \cite{Cos07b} and may be straightforward to experts.
We did not check the details on signs and gradings and thus formulated the above as a conjecture.
\subsubsection{Closed Sector: $S^1$-framed modular operads}\label{cloSec}
We recall the notion of an $S^1$-framed modular operads from \cite{Tu19}, also called $S^1$ -equivariant modular operads, there. Using this notion, we explain in this section the origin of the fundamental closed representing map from the universal closed BD algebra to the algebraic closed BD algebra from theorem \ref{cei}.
\begin{lemma}[sections 2.4 and 5.1 of \cite{Tu19}]
There is a functor 
\begin{equation}\label{S1tot}
Tot_{S^1}:\ \text{$S^1$-framed modular operads}\rightarrow \text{dg $\Delta$ shifted Lie algebras},\end{equation}
whose underlying chain complex is given by taking the homotopy $S^1$-quotient with respect to an $S^1$-action; for the algebraic structure we refer to sections 2.4 and 5.1 of \cite{Tu19}).
\end{lemma}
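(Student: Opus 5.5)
I would prove the statement by building $Tot_{S^1}$ directly on objects and morphisms, in parallel with the functor $Tot$ of \eqref{total} from lemma 2.8 of \cite{DoShVaVa24}. The $S^1$ actions then have to be carried through each step. Let $\mathcal{P}$ be an $S^1$-framed modular operad: a collection of chain complexes $\mathcal{P}((g,n))$ with actions of $S_n$ and of the torus $(S^1)^n$, one circle factor per framed input. It comes with twisted partial compositions $\circ_{i,j}$ and contractions $\xi_{ij}$. First I would set the underlying complex to be
$$\mathfrak{g}_{\mathcal{P}}:=\prod_{g,n\ \text{stable}}\Big(\mathcal{P}((g,n))_{h(S^1)^n}\Big)_{S_n}.$$
For the homotopy quotient I would use an explicit model, namely $C_*(ES^1)\otimes_{C_*(S^1)}(\cdot)$ for each circle factor. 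Concretely this is $\mathcal{P}((g,n))\otimes k[u^{-1}]$ with differential $\partial+uB$, where $B$ is the rotation operator. This matches the shape $C_*(\mathcal{M}^{fr}_{g,n})_{hS^1}$ appearing in \eqref{cgbd}.

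Next I would define the operations. The bracket $\{\_,\_\}$ should be the sum over all pairs of legs of the twisted composition $\circ_{i,j}$ followed by symmetrization. The operator $\Delta$ should be the sum over all pairs of legs of the twisted contraction $\xi_{ij}$. Here ``twisted'' is the standard sewing recipe, as in the definition of $\Delta_c$: compose along legs $i,j$, then apply $B$ in the glued variable, so that the result passes to the homotopy quotient. The key point is that an operation of the form $(\text{glue})\circ(B_i\otimes 1)$ on $(S^1)$-coinvariants is only well defined after we pass to the $u^{-1}$-model. I would verify this by checking that the rotation difference $B_i-B_j$ is killed by the twisted gluing. This uses the axiom of \cite{Tu19} that gluing is $S^1$-equivariant for the antidiagonal action.

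With the operations in place, I would verify the axioms of a dg $\Delta$ shifted Lie algebra in three steps.
\begin{itemize}
\item $\partial+uB$ is a derivation of $\{\_,\_\}$ and anticommutes with $\Delta$. This follows from the equivariance of composition and from $B^2=0$.
\item $\Delta^2=0$ and $\Delta$ is a derivation of the bracket. These follow from the associativity axioms for commuting contractions and compositions. The odd shift supplies the sign that makes symmetric double sums cancel.
\item The shifted Jacobi identity follows from associativity of partial compositions, exactly as in the non-framed case.
\end{itemize}
The only new ingredient is that every identity now holds only after inserting $B$ at each glued seam. So I must check that two insertions of $B$ commute up to the Koszul sign, which holds because $B$ has odd degree.

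Finally, functoriality: a morphism of $S^1$-framed modular operads is equivariant and commutes with compositions. It therefore induces a chain map on homotopy quotients that preserves $\{\_,\_\}$ and $\Delta$. The main obstacle is the well-definedness of the twisted gluing on the homotopy quotient together with the consistency of its signs. Using the explicit $k[u^{-1}]$ model reduces this to the identity $B_i\circ_{i,j}=-B_j\circ_{i,j}$, which I would take from the definitions in sections 2.4 and 5.1 of \cite{Tu19}.
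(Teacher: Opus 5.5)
Your construction is correct and is essentially what the paper delegates to sections 2.4 and 5.1 of \cite{Tu19}, since the paper gives no argument beyond that citation. In outline: you take homotopy coinvariants in the $u^{-1}$-model, obtain twisted compositions and contractions by inserting $B$ at the seam and using equivariance for the antidiagonal circle, and deduce the odd-shifted identities from the modular operad axioms because the twisted operations are odd.

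One point should be stated precisely. The model needs one variable per framed leg, $\mathcal{P}((g,n))[u_1^{-1},\dots,u_n^{-1}]$ with differential $\partial+\sum_i u_iB_i$. The twisted gluing along legs $i,j$ is defined on the component where both $u_i$ and $u_j$ appear to power zero, and is zero otherwise. This is what makes $Tot_{S^1}$ of the endomorphism operad come out as $Sym(CH_*(\mathcal{C})[u^{-1}])$ with $\Delta_c,\{\_,\_\}_c$; a single diagonal $u$, as your formula with $\partial+uB$ literally suggests, would give a different complex on which the bracket is not well behaved.
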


Recalling the definitions from section 3.1, let us say extended closed (or extended open-closed) TCFT if we do not impose any positive input condition.
Not imposing the positive input condition we do recover an $S^1$-framed modular operad denoted $\mathscr{C}^d$ by collecting together the Hom-spaces of $\mathscr{C}^d$ with zero incoming and only outgoing framed boundaries, see example 2.3 of \cite{Tu19} (similarly for $\mathscr{OC}^d$ by collecting together the Hom-spaces of $\mathscr{OC}^d$ with zero incoming closed and zero outgoing open and only outgoing closed boundaries and incoming open boundaries); compare in contrast remark \ref{dasistGG}.
Applying the functors \eqref{S1tot} and \eqref{BDfSL} we have
$$Fr^\gamma(Tot_{S^1}(\mathscr{C}^d))=M^c,$$
the BD algebra from \eqref{cgbd}, see \cite{Tu19}. Further given an extended closed  TCFT we find an algebra over the $S^1$-framed modular operad $\mathscr{C}^d$. Let's assume that a closed TCFT induced from a cyclic $A_\infty$-category $\mathcal{C}$ by theorem \ref{KanExt} extends to an extended closed  TCFT, which we denote $\Phi$.
It then follows (basically by definition) that 
$$Tot_{S^1}(\Phi)=\Big(Sym(CH_*(\mathcal{C})[u^{-1}]),d_{Hoch}+uB,\Delta_c,\{\_ , \_\}_c\Big),$$
compare the notions from the BD algebra \ref{cabd}. Further it follows that
$$Fr^\gamma(Tot_{S^1}(\Phi))=\Big(Sym(Sym(CH_*(\mathcal{C})[u^{-1}]))\llbracket\gamma\rrbracket,\cdot,d_{Hoch}+uB+\gamma\Delta_c+\gamma\delta_e,\{\_ , \_\}_c\Big).$$
The multiplication induces a map of BD algebras
\begin{align*}
\Big(Sym(Sym(CH_*(\mathcal{C})[u^{-1}]))&\llbracket\gamma\rrbracket,\cdot,d_{Hoch}+uB+\gamma\Delta_c+\gamma\delta_e,\{\_ , \_\}_c\Big)\\
&\rightarrow \Big(Sym(CH_*(\mathcal{C})[u^{-1}])\llbracket\gamma\rrbracket,\cdot,d_{Hoch}+uB+\gamma\Delta_c,\{\_ , \_\}_c\Big)\cong\mathcal{F}^c(\mathcal{C}).
\end{align*}
Thus we have shown:
\begin{prop}\label{ssgb}
    Given a closed TCFT induced from a cyclic $A_\infty$-category $\mathcal{C}$ (by theorem \ref{KanExt}) which extends to zero inputs there is a map of BD algebras
    $$M^c\rightarrow \mathcal{F}^c(\mathcal{C}),$$
    induced from the functor \ref{S1tot}.
\end{prop}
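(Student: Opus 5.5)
The plan is to read off the map from the algebraic machinery already set up in this section, in the same way as for Proposition \ref{dglsgnm}, with the circle-equivariant totalization \eqref{S1tot} replacing the ordinary totalization \eqref{total}.

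\textbf{Step 1: extended TCFT as an algebra over $\mathscr{C}^d$.} Let $\Phi$ be the extended closed TCFT, i.e. the closed TCFT of Theorem \ref{KanExt} extended to zero inputs. Restrict $\Phi$ to the Hom-spaces of $\mathscr{C}^d$ with no incoming and only outgoing framed boundaries. This makes $\Phi$ an algebra over the $S^1$-framed modular operad $\mathscr{C}^d$, that is, a morphism of $S^1$-framed modular operads $\mathscr{C}^d\to End(CH_*(\mathcal{C}))$. Here the target is built from $CH_*(\mathcal{C})$, which carries:
\begin{itemize}
\item the circle action by Connes' operator $B$;
\item the pairing induced from the Mukai pairing, namely the image of the cylinder with two outgoing boundaries.
\end{itemize}
Contraction and partial composition are given by pairing with the inverse of this form, exactly as in the open case via lemma 7.3.4 of \cite{Cos07a}.

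\textbf{Step 2: apply the functors.} Functoriality of $Tot_{S^1}$ from \eqref{S1tot} gives a map of dg $\Delta$ shifted Lie algebras $Tot_{S^1}(\mathscr{C}^d)\to Tot_{S^1}(\Phi)$. Applying $Fr^\gamma$ from \eqref{BDfSL} then gives a map of BD algebras
\begin{equation*}
M^c=Fr^\gamma(Tot_{S^1}(\mathscr{C}^d))\to Fr^\gamma(Tot_{S^1}(\Phi)).
\end{equation*}
The identification of the source with $M^c$ is the statement cited from \cite{Tu19}.

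\textbf{Step 3: identify the target.} Taking homotopy $S^1$-orbits of $CH_*(\mathcal{C})$ with respect to $B$ gives $(CH_*(\mathcal{C})[u^{-1}],d_{Hoch}+uB)$. The operations $\Delta_c$ and $\{\_,\_\}_c$ are induced by the residue pairing \eqref{Res}, which is the $u$-linear extension of the Mukai pairing. So I will check that the operadic contraction and gluing of $End(CH_*(\mathcal{C}))$, after passing to homotopy orbits via sections 2.4 and 5.1 of \cite{Tu19}, reproduce the structure in \eqref{cabd}.

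\textbf{Step 4: collapse the double symmetric algebra.} Compose with the multiplication map
\begin{equation*}
Sym(Sym(CH_*(\mathcal{C})[u^{-1}]))\llbracket\gamma\rrbracket\to Sym(CH_*(\mathcal{C})[u^{-1}])\llbracket\gamma\rrbracket\cong \mathcal{F}^c(\mathcal{C}).
\end{equation*}
This is a map of BD algebras for the following reasons:
\begin{itemize}
\item the differential and $\Delta_c$ on $Sym(\mathfrak g)$ are second-order operators whose Leibniz defect is $\{\_,\_\}_c$;
\item after multiplication, the extra Chevalley–Eilenberg term $\gamma\delta_e$ on the outer $Sym$ matches exactly the bracket part of the BV operator $\gamma\Delta_c$ acting on products.
\end{itemize}
This is the same check as in the open case. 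Composing the maps of Steps 2 and 4 gives the asserted map $M^c\to\mathcal{F}^c(\mathcal{C})$.

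The main obstacle is Step 3, namely checking that the algebraic structure \cite{Tu19} puts on homotopy $S^1$-orbits agrees with $(\Delta_c,\{\_,\_\}_c)$. The subtle point is that the twisted sewing along framed boundaries inserts the operator $B$ (through $u$-powers) into the residue pairing. I would handle this by comparing the $S^1$-equivariant gluing formula in \cite{Tu19} directly with the residue pairing, working only up to the $\mathbb{Z}/2$-grading conventions stated at the start of this subsection.
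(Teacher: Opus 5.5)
Your proposal is correct and follows the paper's argument essentially step for step. Both regard the extended TCFT $\Phi$ as an algebra over the $S^1$-framed modular operad $\mathscr{C}^d$, apply $Tot_{S^1}$ and then $Fr^\gamma$ using $Fr^\gamma(Tot_{S^1}(\mathscr{C}^d))=M^c$, identify $Tot_{S^1}(\Phi)$ with $\big(Sym(CH_*(\mathcal{C})[u^{-1}]),d_{Hoch}+uB,\Delta_c,\{\_,\_\}_c\big)$, and compose with the multiplication map onto $\mathcal{F}^c(\mathcal{C})$. The only difference is emphasis: you justify why multiplication is a BD map, which the paper simply asserts, and you flag your Step 3 identification as the delicate point, which the paper treats as holding ``basically by definition'' via \cite{Tu19}.
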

Of course, the idea of proposition \ref{ssgb} is already well-known; see for instance the published version of \cite{Cos05}, section 3.4. As remarked before, the claim of Lurie \cite{Lu09} suggest that given a \emph{smooth} cyclic $A_\infty$-category one can extend the induced TCFT's from theorem \ref{KanExt} to zero inputs. As far as the author is aware, this has not yet been explicitly constructed. However, the idea is used to central effect in theorem \ref{cei} from the introduction, which we here spell out a bit more: 
\begin{thm_nn}[\cite{CaTu24}]
Given a smooth cyclic $A_\infty$-category $\mathcal{C}$ there is a map in the homotopy category of dg Lie algebra
$$M^c\rightarrow   \mathcal{F}^c(\mathcal{C}),$$
given by a zig-zag of dg Lie algebras
\begin{equation}\label{smooRes}
\begin{tikzcd}
 M^c\arrow{d}{\simeq}\arrow[dotted]{r}&   \mathcal{F}^c(\mathcal{C})\arrow{d}{\simeq}\\
 \widehat{M^c}\arrow{r}&\widehat{\mathcal{F}^c}(\mathcal{C}).
\end{tikzcd}
\end{equation}
\end{thm_nn}
The dg-Lie algebras $\widehat{M^c}$ and $\widehat{\mathcal{F}^c}(\mathcal{C})$ are resolutions of $M^c$ and  $\mathcal{F}^c(\mathcal{C})$ (the latter only if $\mathcal{C}$ is smooth), see section 6.1 of \cite{CaTu24} for details.

Amorim-Tu \cite{AmTu25} are currently writing up an open-closed version (theorem~\ref{ocRep}) of diagram \eqref{smooRes}.

\subsubsection{Barannikov's work}
Theorem 1 of \cite{Ba10}  (see also the very good exposition \cite{bra12}, section 8.2.1.) describes, given a modular operad $\mathcal{P}$, a bijection between: \begin{itemize}
    \item 
 algebras on a chain complex $V$ over an operad built from the Feynman transform of $\mathcal{P}$, 
 \item  Maurer-Cartan elements in a dg shifted Lie algebra built out of $V$ and $\mathcal{P}$.
 \end{itemize}
In section 10 of \cite{Ba10} the author constructs an explicit modular operad $\mathbb{S}[t]$.
In theorem 2 of loc.cit. he shows that the Feynman transform of $\mathbb{S}[t]$ is, after applying a totalization functor similar as in the previous subsection, isomorphic to the stable ribbon graph complex $s\mathcal{RG}$, at least when restricting to the part without leaves.
Furthermore, restricting to the cyclic operad part of  $\mathbb{S}[t]$ recovers in the similar way $\mathcal{RT}$, see eg. \cite{geKa96} 5.9 and section 9 of \cite{Ba10}.
The dg shifted Lie algebra he describes, associated to $\mathbb{S}[t]$ and a chain complex $V$, is basically $\mathcal{F}^{pq}(V)$ considered in this thesis and its cyclic part reduces to $\mathcal{F}^{fr}(V)$, see around equation 6.1 of loc.cit.
\begin{remark_nn}
One could expect that the outer part of diagram \eqref{tot_nc} can be obtained as a corollary of Barannikov's result after applying a totalization functor, analogous to the previous subsection.
It is interesting to note the existence of Maurer-Cartan elements in the induced dg-Lie algebras.
\end{remark_nn}
A similar construction may be available for the commutative side:
That is one may ponder the existence of a modular operad, analogous to $\mathbb{S}[t]$, which is a modular extension of $Com$ and whose Feynman transform gives $s\mathcal{G}$.
The LQT map may then be derived as induced from a Morita invariance map and a map from $\mathbb{S}[t]$ to this  modular generalization of $Com$, which would naturally explain our compatibility result, eg. theorem E.

\newpage
\section{Algebraic Preliminaries}
In this section we introduce the fundamental notions that we will work with: Shifted Poisson and Beilinson-Drinfeld algebras, (cyclic) $L_\infty$-algebras and their cohomology theories, (Calabi-Yau) $A_\infty$-categories, Hochschild and cyclic homology and splittings of the non-commutative Hodge filtration.

For the rest of this thesis we fix following conventions. We denote by $k$ a field of characteristic 0, unless remarked otherwise. Furthermore we work in the category of chain complexes over $k$, denoted $Ch$, and stick to the homological convention.
The suspension $[\_]$ of a chain complex $V$ is defined in such a way that if $V$ is concentrated in degree zero, then V[1] is concentrated in degree $-1$.
\subsection{Shifted Poisson Algebras and Beilinson-Drinfeld Algebras}
We follow section 1.4.2 of \cite{CPTTV17} for the following definition.
\begin{df}\label{shPoA}
An \emph{$r$-shifted Poisson differential graded algebra} is a differential graded, not necessarily unital, k-algebra such that 
\begin{itemize}
    \item 
 the differential has degree 1
 \item it has a Poisson bracket of degree $r$, that is it is a Lie bracket of degree $r$ that fulfills the Leibniz rule with respect to the product and is compatible with the differential.
 \end{itemize}

A \emph{map of shifted Poisson dg algebras} is both a map of the underlying dg-algebras and the shifted Lie algebras.    

We denote by $(Poiss_{r})$ the category whose objects are $r$-shifted Poisson algebras and whose morphisms are maps of $r$-shifted Poisson algebras.
\end{df}

 \begin{remark} 
 Note that by forgetting the multiplication a shifted Poisson dg algebra forgets to a dg shifted Lie algebra.
 When we say Maurer-Cartan element (MCE) of a shifted Poisson dg algebra we refer to a MCE of this dg shifted Lie algebra.
 We can \textit{twist} both this dg shifted Lie algebra as well as the overlying shifted Poisson algebra by such a MCE.
\end{remark}

We have in mind that $r$ denotes an odd integer\footnote{Since then we understand the quantized LQT map.} in the following, even though we don't need to require that at this point. We make following definitions:

\begin{df}\label{BD}
An \emph{$r$-twisted Beilinson-Drinfeld ($BD$) algebra} is a graded commutative unital algebra over the ring $k\llbracket \mu\rrbracket$ of formal power series in a variable $\mu$, which has cohomological degree $1-r$ such that 
\begin{itemize}
    \item it is endowed with a Poisson bracket $\{\_,\_\}$ of degree $r$, which is $k\llbracket \mu\rrbracket$-linear
    \item 
    it is endowed with an $k\llbracket\mu\rrbracket$-linear map $d$ of degree 1 that squares to zero 
    \item
    and such that
\begin{equation}\label{BDrel}
d(a\cdot b)=d(a)\cdot b+(-1)^{|a|}a\cdot d(b)+\mu \{a,b\}.\end{equation}
\end{itemize}
\end{df}

\begin{remark}
We will also encounter $r$-twisted $BD$ algebras whose multiplication has even degree $m$.
Here we require the formal variable to have degree $1-r+m$.
Note that we could always reduce to the previous definition by performing a shift.\footnote{We do not do so since then the quantized LQT map would have non zero degree.}
We do not want to introduce additional notation for this case. 
\end{remark}
     
\begin{remark}
In \cite{CPTTV17}, section 3.5.1, the notion of a $BD_d$ algebra is introduced.
It is not clear to the author what is the relationship of this notion and ours, in general.
However, for $r=1$ an $r$-twisted BD algebra is the same as a $BD_0$ algebra.
\end{remark}

\begin{remark}
Since in this thesis we will only deal with $r$-twisted $BD$-algebras we will sometimes refer to them as simply BD algebras as well. 
\end{remark}

\begin{remark}
Note that by forgetting the multiplication a BD algebra forgets to a dg shifted Lie algebra.
When we say Maurer-Cartan element of a BD algebra we refer to a MCE of this dg shifted Lie algebra.
Note that we can \textit{twist} not only this dg shifted Lie algebra, but also the overlying BD algebra by such a MCE.
\end{remark}
We will only see maps of BD algebras which have the same dimension.
However, we will encounter maps of BD algebras which do not directly respect the algebra structure, but do so up to a certain weighting and live over different base rings:
%\begin{df}[Weak Map of BD algebra]\label{wBD}
 %   Let A be a dimension s $BD$ algebra over a  $k$-algebra $R$ and formal variable $\gamma$ and B be a dimension $s$ $BD$ algebra over a  k-algebra $S$ and formal variable $\hbar$. A weak map of BD algebras is given by a degree 0 k-linear map $$f:A\rightarrow B$$ and a k-linear map $$g:R[\gamma]\rightarrow S[\hbar]$$ where $f$ is a map of the underlying shifted dg k-Lie algebras (but not necessarily a map of algebras over the bigger base rings!) and satisfies additionally : If $a,b\in A$ then \begin{equation}\label{wBD}g(\gamma)\cdot f\{a,b\}=f(\gamma\cdot \{a,b\}).\end{equation} 
%\end{df}
%\begin{remark}
 %   Equivalent condition, given that A and B are BD algebras, is that
 %   $$df(a\cdot b)=f(da\cdot b)+(-1)^{|a|}f(a\cdot db)+g(\gamma)\cdot\{f(a),f(b)\},$$
  %  that is, it intertwines the BD relation in an interesting way.
 %   Note that this equation is trivially satisfied if g is additionally a map of rings and $f: A\rightarrow g^*B$ is a map of $R[\gamma]$-algebras. We will encounter examples where these conditions are not met, but that are still weak maps of BD algebras.
%\end{remark}}
\begin{df}\label{weimap}
Let A be an $s$-twisted $BD$ algebra over $k\llbracket\gamma\rrbracket$ and B be an $s$-twisted $BD$ algebra over $k\llbracket\hbar\rrbracket$.
Given $t\in \mathbb{N}_{}$ we say that a \emph{$t$-weighted map of BD algebras} is given by a map of algebras
$$g:k\llbracket\gamma\rrbracket\rightarrow k\llbracket\hbar\rrbracket$$
such that 
\begin{equation}\label{pow}
    g(\gamma)=\hbar^t 
    \end{equation}
and a map of $k\llbracket\gamma\rrbracket$ dg shifted Lie algebras $$f:A\rightarrow g^*B$$ such that 
\begin{equation}\label{mult}
f(a_1\cdot \ldots \cdot a_k)=\hbar^{(t-1)(k-1)}f(a_1)\cdot \ldots \cdot f(a_k).\end{equation}
\end{df}
\begin{remark}
    For $t=1$ we recover the standard definition of a map of BD algebras.
    We indicate this by simply dropping the adjective weighted. 
\end{remark}
\begin{remark}
    Composition of a $t$-weighted map of BD algebras and an $s$-weighted map of BD algebras is an $s\cdot t$-weighted map of BD algebras. 
\end{remark}
\begin{df}
  Denote by $(\text{BD}^{r-tw})$ the category whose objects are $r$-twisted BD algebras and whose morphisms are maps of BD algebras.
\end{df}
\begin{remark}
Note that \eqref{pow} is a necessary condition allowing us to extend g to a map
$$\tilde{g}:k(\!(\gamma)\!)\rightarrow k(\!(\hbar)\!)$$
where $\tilde{g}(r\cdot \gamma^{-k})=g(r)g(\gamma)^{-k}.$
We further get an induced map
$$\tilde{f}:A(\!(\gamma)\!)\rightarrow \tilde{g}^*B(\!(\hbar)\!).$$
Condition \eqref{mult} implies that for any $x\in A$ we have 
  \begin{equation}\label{int_rel}
 \tilde{f}e^{x/\gamma}=\hbar^{1-t} e^{f(x)/\hbar}. 
 \end{equation} 
This guarantees commutativity of 
$$\begin{tikzcd}
MCE(A)\arrow{r}{f}\arrow{d}{exp(-/\gamma)}&MCE(B)\arrow{d}{exp(-/\hbar)}\\
Z(A(\!(\gamma)\!))\arrow{r}{\hbar^{t-1}\tilde{f}}&Z(B(\!(\hbar)\!))
\end{tikzcd},$$
where MCE refers to the Maurer-Cartan set of the respective dg shifted Lie algebra and Z refers to the cycles of the underlying chain complex.\footnote{As is well known, we can replace $Z$ by homology if we replace MCE by its quotient by the gauge group action. } 
For $t=1$ commutativity of the diagram is a standard fact about BD algebras, for general $t$ it follows from equation~\eqref{int_rel}.
\end{remark}
\begin{df}\label{dq}
Let $P$ be an r-shifted Poisson algebra and $R$ be an $r$-twisted BD algebra. 
We call 
$$p:R\rightarrow P$$
 a \emph{dequantization map} if it is a  degree zero map of the underlying chain complexes and of shifted Lie algebras.
\end{df}
\begin{remark}
    Note that we do not ask that this map respects the products, which is what we will see in our examples.
\end{remark}

\begin{lemma}\label{tensorBD}
    Given two $r$-twisted BD algebras $A$, $B$ the tensor product of their underlying graded vector spaces $A\otimes B$ can be equipped with the structure of an $r$-twisted BD algebra in a natural way, extending the initial ones.
\end{lemma}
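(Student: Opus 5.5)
The natural construction is the tensor product of the two structures, with Koszul signs throughout. Both $A$ and $B$ are $r$-twisted BD algebras over $k\llbracket\mu\rrbracket$, with the same formal variable of degree $1-r$. I will therefore take the tensor product over $k\llbracket\mu\rrbracket$, completed $\mu$-adically if needed, so that $\mu$ acts diagonally. The structure maps are:
\begin{itemize}
\item the graded commutative product $(a\otimes b)\cdot(a'\otimes b')=(-1)^{|b||a'|}aa'\otimes bb'$;
\item the differential $d(a\otimes b)=d_Aa\otimes b+(-1)^{|a|}a\otimes d_Bb$;
\item the bracket, defined on pure tensors by
\[
\{a\otimes b,a'\otimes b'\}=\pm\{a,a'\}_A\otimes bb' \pm aa'\otimes\{b,b'\}_B .
\]
\end{itemize}
In the bracket the signs are the Koszul signs for moving the degree-$r$ operation past $b$ (or past $a$ and $a'$), together with the permutation of $b$ and $a'$.

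Most checks are routine. The unit is $1\otimes 1$, and the product is graded commutative and associative. The bracket has degree $r$ and is $k\llbracket\mu\rrbracket$-linear. It is a derivation in each slot with respect to the product, because each summand is a derivation: $\{-,-\}_A$ acts on the $A$-factor and multiplication acts on the $B$-factor. The differential squares to zero, as for any tensor product of complexes. Shifted antisymmetry and the Jacobi identity hold because the bracket is the standard bracket on a tensor product of two (shifted) Poisson algebras. Concretely, I will expand the Jacobiator of three pure tensors into terms of types $AAA$, $BBB$ and mixed. The pure terms vanish by the Jacobi identities in $A$ and in $B$. The mixed terms cancel in pairs, using the Leibniz rule in each factor together with the commutativity of the products. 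None of this uses the BD relation.

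The one substantive point is the BD relation \eqref{BDrel} for the product algebra. I will verify it on pure tensors $x=a\otimes b$ and $y=a'\otimes b'$, since every term is bilinear. First expand $d(xy)$ using $d$ on $aa'$ and on $bb'$. Then apply \eqref{BDrel} in $A$ and in $B$. The terms without $\mu$ reassemble exactly into $d(x)\,y+(-1)^{|x|}x\,d(y)$; this is the usual computation for tensor products of dg algebras. What is left is
\[
\mu\,\{a,a'\}_A\otimes bb' \quad\text{and}\quad \mu\,aa'\otimes\{b,b'\}_B,
\]
with the Koszul signs picked up along the way, and with $\mu$ moved to the front. These are precisely $\mu$ times the two summands of $\{x,y\}$. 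This identity is what forces the bracket to have this shape, so it serves as the definition-check.

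A separate argument is needed for compatibility of $d$ with the bracket, meaning that $d$ is a derivation of $\{-,-\}$. It does not follow formally from the BD relation unless $\mu$ is not a zero divisor, but it can be checked directly. Each of $d_A$ and $d_B$ is a derivation of its own bracket and of its own product. Hence $d$ is a derivation of each summand of the bracket.

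Finally, "extending the initial ones" means the following. The maps $a\mapsto a\otimes 1$ and $b\mapsto 1\otimes b$ are maps of BD algebras. This holds because $\{1,-\}_B=0$ and $d1=0$, both of which follow from the BD relation applied to $1\cdot 1$ and from the Leibniz rule. Naturality follows because every structure map is built functorially from those of $A$ and $B$.

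The main obstacle is purely bookkeeping: fixing the sign convention for the degree-$r$ bracket so that the leftover $\mu$-terms in the BD relation match the chosen bracket on the nose. I would settle this first by working in the shifted convention in which the bracket is a degree-$0$ operation on the $r$-shifted spaces, and derive all the signs from there.
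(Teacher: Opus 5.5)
Your construction is the one the paper intends. The paper's proof only cites the standard tensor-product construction (from the references it gives), with the bracket \eqref{tenbra}. Tensoring over $k\llbracket\mu\rrbracket$, so that a single $\mu$ appears in the BD relation, is the right way to read the statement.

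There is one wrong step: your argument that $d=d_A\otimes 1+1\otimes d_B$ is a derivation of the new bracket. You claim $d_A$ and $d_B$ are derivations of their own products, but in a BD algebra they are not. The relation \eqref{BDrel} says precisely that they fail to be derivations, by $\mu\{-,-\}$. So $d$ is not a derivation of each summand of the bracket separately. Applying $d$ to $\{a,a'\}_A\otimes bb'$ gives, besides the expected terms, the extra term $(-1)^{|a|+|a'|+r}\,\mu\{a,a'\}_A\otimes\{b,b'\}_B$, which comes from $d_B(bb')$. Applying $d$ to $aa'\otimes\{b,b'\}_B$ gives the extra term $\mu\{a,a'\}_A\otimes\{b,b'\}_B$, which comes from $d_A(aa')$.

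What rescues the claim is that these two anomalies cancel against each other. With the bracket forced by your own BD computation (in the convention \eqref{BDrel}),
\[
\{a\otimes b,a'\otimes b'\}=(-1)^{|b||a'|}\{a,a'\}_A\otimes bb'+(-1)^{|b||a'|+|a|+|a'|}\,aa'\otimes\{b,b'\}_B ,
\]
the two extra terms add up to $(-1)^{|b||a'|+|a|+|a'|}\bigl(1+(-1)^r\bigr)\mu\{a,a'\}_A\otimes\{b,b'\}_B$. This vanishes exactly because $r$ is odd, i.e.\ $\mu$ has even degree. All the $\mu$-free terms match, using that $d_A$ and $d_B$ are derivations of their brackets and the $\mu$-free part of the BD relation in each factor.

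So you need to replace your "derivation of each summand" argument with this cross-term cancellation. Alternatively, when $\mu$ is not a zero divisor in $A\otimes_{k\llbracket\mu\rrbracket}B$ (e.g.\ $A$ and $B$ are $\mu$-torsion free, as in all the paper's examples), compatibility follows from $d^2=0$ together with the BD relation. The rest of your verification is fine.
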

To define the shifted Poisson bracket on the tensor product we use both the respective algebra multiplications, denoted $m_A$ and $m_B$, and the original shifted Poisson brackets. Precisely we have that 
\begin{align}\label{tenbra}
\{a_1\otimes b_1,a_2\otimes b_2\}=(-1)^{|a_2|(|b_1|+r)}m_A(a_1,a_2)&\otimes\{b_1,b_2\}_B\\
&+(-1)^{|b_1|(|a_2|+r)}\{a_1,a_2\}_A\otimes m_B(b_1,b_2).\nonumber
\end{align}

\begin{proof}
This is the same argument described in definition 2.7 and proposition-definition 2.2 of \cite{legu14}, following \cite{man99}, section 5.8.1. Note however that we follow the convention of remark 2.3 (2) of \cite{legu14}.  
\end{proof}

 We make some slightly ad-hoc definitions, meant as a first approach to a homotopy theory of BD algebras.
 \begin{df}\label{dgBD}
 An \emph{r-twisted dg-BD algebra} is a r-twisted BD algebra $A$, additionally equipped with a degree 1 differential $D:A\rightarrow A$, which is a derivation with respect to the product and commutes with respect of the BD differential.
\end{df}
Given such an r-twisted dg BD algebra $(A,D)$ the homology $H_*(A,D)$ is naturally a BD algebra.
 \begin{df}\label{qiBD}
We say that two r-twisted dg-BD algebra $(A,D_1)$ and $(B,D_2)$ are \emph{quasi-isomorphic} if there is an isomorphism of BD algebra $H_*(A,D_1)\rightarrow H_*(B,D_2).$
     \end{df}
   \begin{remark}\label{Induzierung}
       There is a functor from r-twisted dg BD algebra to r-twisted BD algebra which is the identity on the underlying graded shifted Poisson algebra; the BD differential of the BD algebra is given by the sum of the initial BD differential of the dg-BD algebra plus the differential of the dg-BD algebra. 
   \end{remark}

\subsection{(Cyclic) $L_\infty$-Algebras and Lie Algebra Homology}
We recall the following notions, basically citing the succinct presentation of \cite{wi11}:

Let $(V,d)$ be a chain complex. We denote the symmetric algebra by $Sym V=\bigoplus_{n\geq 0} V^{\otimes n} / I$, where $I$ is the two-sided ideal generated by relations $x\otimes y -(-1)^{|x||y|}y\otimes x$. The product will be denoted by $\odot$. For example, the expressions $x_1\odot \cdots \odot x_n :=[x_1\otimes \cdots \otimes x_n]$ generate $Sym^nV$ as a vector space. Let $Sym^+ V:= \bigoplus_{n\geq 1} Sym^n V$, with grading given by $|x_1\odot \cdots \odot x_n|=\sum_j|x_j|$. This space carries the structure of a graded cocommutative dg coalgebra without counit, with comultiplication given by
\begin{equation}\label{coal_def}
\Delta( x_1 \odot \cdots \odot x_n) = \sum_{\substack{I\sqcup J=[n] \\ |I|,|J|\geq 1}} \epsilon(I,J) \bigodot_{i\in I} x_i \otimes \bigodot_{j\in J} x_j.
\end{equation}
Here $\epsilon(I,J)$ is the sign of the ``shuffle'' permutation bringing the elements of $I$ and $J$ corresponding to \emph{odd} $x$'s into increasing order. Note that $\epsilon(I,J)$ implicitly depends on the degrees of the $x$'s. It is straightforward to check that the differential $d$, extended from the chain complex to the symmetric algebra via the Leibniz rule, is compatible with the comultiplication $\Delta$. 

\begin{df}
Let $(\mathcal{C},\Delta,d)$, $(\mathcal{C}',\Delta',d')$ be dg coalgebras. A linear map $\mathcal{F}\in Hom_k(\mathcal{C},\mathcal{C}')$ is called \emph{degree $k$ morphism of coalgebras} if $\Delta' \circ \mathcal{F} = (\mathcal{F}\otimes \mathcal{F})\circ \Delta$ and it commutes with the underlying differentials. A linear map $Q\in Hom_k(\mathcal{C},\mathcal{C})$ is called a \emph{degree $k$ coderivation} on $\mathcal{C}$ if  $\Delta \circ Q = (Q\otimes 1 + 1\otimes Q)\circ \Delta$ and it commutes with the underlying differential. Here we use the Koszul sign rule, e.g., $(1\otimes Q)(x\otimes y) = (-1)^{k|x|}x\otimes Qy$ etc.
%\[
%\Delta_1 Q x = \sum_{(x)} Qx'\otimes x'' + (-1)^{k|x'|}x'\otimes Qx''
%\]
%using Sweedler notation.
\end{df}

\begin{remark}\label{nütz1}
Any coderivation $Q$ on $Sym^+V$ (coalgebra morphism $\mathcal{F}:Sym^+V\to Sym^+W$) is uniquely determined by its composition with the projection $Sym^+V\to Sym^1V=V$, respectively (${Sym^+W\to Sym^1W=W}$). The restriction to $Sym^nV$ of this composition will be denoted by ${Q_n\in Hom(Sym^kV,V)}$ ($\mathcal{F}_n\in Hom(Sym^kV,W)$) and called the $n$-th ``Taylor coefficient'' of $Q$ ($\mathcal{F}$).
\end{remark}
\begin{df}\label{coalgebra}
An $L_\infty$-algebra structure on a chain complex $(L,d)$ is a degree 1 coderivation $l$ on $Sym^+(L[1])$ such that $l^2=0$ and that $[l,d]=0$. A morphism of $L_\infty$ algebras $${F}:(L,l)\rightsquigarrow (L',l')$$ is a degree 0 coalgebra morphism $C({F}):S^+(L[1])\to Sym^+((L')[1])$ such that $C({F})\circ l=l'\circ C({F})$ and that commutes with the underlying differentials.
\end{df}

In components, the $L_\infty$-relations read
\[
\sum_{\substack{I\sqcup J=[n] \\ |I|,|J|\geq 1} } \epsilon(I,J) l_{|J|+1}(l_{|I|}(\bigodot_{i\in I} x_i) \odot \bigodot_{j\in J} x_j) = 0.
\]
\begin{remark}
    
In fact we assume that $l_1=0$ since we can just absorb it into the differential $d$ of the underlying chain complex.

 \end{remark}  
\begin{ex}\label{dgla}
Let $(\mathfrak{g},d,\{\_,\_\})$ be a differential graded Lie algebra. Then the assignments $l_1(x)=d x$, $l_2(x,y)=(-1)^{|x|}\{x,y\}$, $Q_n=0$ for $n=3,4,..$ define an $L_\infty$-algebra structure on $\mathfrak{g}$. 
Here and everywhere in this thesis $|x|$ is the degree wrt. the grading on the coalgebra.
\end{ex}
\begin{ex}\label{wmg}
An $L_\infty$-morphism $\mathcal{F}$ between dglas $\mathfrak{g}$, $\mathfrak{g}'$ has to satisfy the relations
\begin{multline}
\label{equ:dglaLinfty}
l_1'\mathcal{F}_n(x_1\odot \cdots \odot x_n) + \frac{1}{2} \sum_{\substack{I\sqcup J=[n] \\ |I|,|J|\geq 1} }\epsilon(I,J)
l_2'(\mathcal{F}_{|I|}(\bigodot_{i\in I} x_i) \odot \mathcal{F}_{|J|}( \bigodot_{j\in J} x_j) )
= \\ =
\sum_{i=1}^n \epsilon(i,1,\dots,\hat{i},\dots, n) \mathcal{F}_n( l_1(x_i)\odot x_1\odot \cdots\odot\hat{x}_i \odot \cdots \odot x_n)
+ \\ +
\frac{1}{2}
\sum_{i\neq j}^n \epsilon (i,j,\dots,\hat{i},\dots,\hat{j},\dots, n) \mathcal{F}_{n-1}( l_2(x_i\odot x_j)\odot x_1\odot\cdots\odot \hat{x}_i \odot \cdots \odot\hat{x}_j \odot \cdots x_n).
\end{multline}
Here the factor $\epsilon(i,j,1,..,\hat{i},..,\hat{j},.., n)$ is the sign of the permutation on the \emph{odd} $x$'s that brings $x_i$ and $x_j$ to the left and analogously for $\epsilon(i,1,\dots,\hat{i},\dots, n)$.

\end{ex}

\begin{df}\label{redcecom}
    Given an \emph{$L_\infty$-algebra} $L$
    we call the resulting chain complex  $$C_*^+(L):=\big(Sym_+(L[1]),d+l\big)$$ the (reduced) Chevalley-Eilenberg chains of $L$.
    We further adopt the notation ${C_*(L):=\big(Sym(L[1]),d+l\big)},$ the (non-reduced) Chevalley-Eilenberg chains.
    \end{df}
 We define $$C^*(L):=Hom(C_*(L),k),$$ the Chevalley-Eilenberg cochains of the $L_\infty$-algebra $L$ and denote $C^*_+(L):=Hom(C_*^+(L),k).$
Recall from remark \ref{nütz1} that an $L_\infty$-morphism 
can be entangled in saying that there are maps 
 $$\phi_n: L_1^{\otimes n}\rightarrow L_2$$
 of certain degree and satisfying relations. 
 For instance $\phi_1$ is just a chain map. 
\begin{df}
    An \emph{$L_\infty$-quasi-isomorphism} is an $L_\infty$-morphism such that $\phi_1$ is a quasi-isomorphism. 

 We say that an $L_\infty$-morphism is strict if $\phi_n=0$ for all $n\geq 2$.     
 \end{df}
    
\begin{df}
    A \emph{cyclic $L_\infty$-algebra} of dimension $d$ is an $L_\infty$-algebra $L$ together with a symmetric non-degenerate pairing 
    $$\langle\_,\_\rangle:L\otimes L\rightarrow k[-d]$$
    such that $S:=\langle d+l\_,\_\rangle\in C^*(L),$ i.e. such that it is symmetrically invariant. 
    Note that in this case $S$ is an element of degree $3-d$. We denote $I:=\langle l\_,\_\rangle.$
\end{df}
We limit ourselves to strict cyclic $L_\infty$-morphisms:
\begin{df}
    A \emph{cyclic strict $L_\infty$-morphism} is a strict $L_\infty$-morphism such that 
    $$\langle F_1(x),F_1(y)\rangle=\langle x,y\rangle.$$ 
\end{df}
\begin{remark}
 Note that a cyclic $L_\infty$-morphism is automatically injective.  
\end{remark}
\begin{remark}
 One should be able to give a more abstract definition of a cyclic $L_\infty$-algebra and $L_\infty$-morphisms, compare the $A_\infty$-case later.
 See for instance section 5.2 of \cite{GG15}, in the context of $L_\infty$-spaces.
\end{remark}

\subsection{$A_\infty$-Categories and Cyclic Homology}
An $A_\infty$-category is a homotopically better behaved generalization of the notion of a dg-category, further relevant since Fukaya categories of symplectic manifolds are \emph{not} dg categories, but do carry an $A_\infty$-category structure:

Let $B$ be a set and $V_B:=\{V_{ij}\}_{i,j\in B}$ be a set of chain complexes.
Then its bar-construction\footnote{In the sense that it is the Bar complex of a dg-category with zero compositions.} is defined by 
\begin{equation}\label{bar}BarV_B:=\bigoplus_{n=1}^\infty \bigoplus_{\lambda_0,\cdots,\lambda_n\in B}V_{\lambda_{0},\lambda_{1}}[1]\otimes V_{\lambda_{1},\lambda_2}[1]\otimes \cdots \otimes V_{\lambda_{n-1},\lambda_n} [1]
\end{equation}
and it is naturally a dg coalgebra $(BarV_B,d)$ whose comultiplication, see e.g. \cite{KoSo24}, example 2.1.6, we don't give an extra symbol.
\begin{df}
A \emph{small $A_\infty$-category} $\mathcal{C}$ is given by a set of objects $Ob(\mathcal{C})$, a set of chain complexes  $\{V_{\lambda_i,\lambda_j}\}_{\lambda_i,\lambda_j\in Ob(\mathcal{C})}$ and a collection of maps
\begin{equation*}
m_{n}\colon V_{\lambda_0,\lambda_{1}}[1]\otimes V_{\lambda_{1},\lambda_{2}}[1]\otimes\ldots\otimes V_{\lambda_{n-1},\lambda_n}[1] \to V_{\lambda_0,\lambda_{n}}[1]
\end{equation*} 
for all $n\geq 1$ and all tuples of objects $(\lambda_0,\lambda_1,\cdots,\lambda_n).$
We can uniquely extend those to a coderivation 
$$m_\mathcal{C}:BarV_{Ob(\mathcal{C})}\rightarrow BarV_{Ob(\mathcal{C})}$$
and we require that $$m_\mathcal{C}^2=0$$ and that it commutes with the internal differential $d$ of the chain complex $(BarV_{Ob(\mathcal{C})},d)$. 
\end{df}
In this case we denote $V_{\lambda_i,\lambda_j}=:Hom_\mathcal{C}(\lambda_i,\lambda_j)$ and 
$(BarV_{Ob(\mathcal{C})},d+m_\mathcal{C})=:(Bar\mathcal{C},d+m_\mathcal{C}).$ 
\begin{remark}
We will only consider small $A_\infty$-categories in this paper and thus drop this adjective from now on.    
\end{remark}
\begin{df}
We say that an $A_\infty$-category $\mathcal{C}$ is \emph{unital} if there is an element 
$1_{i}\in Hom_\mathcal{C}(\lambda_i,\lambda_i)$ for all $\lambda_i\in Ob(\mathcal{C})$ such that 
$m_2(g,1_i)=g$ for all $g\in Hom_\mathcal{C}(\lambda_k,\lambda_i)$, 
$\lambda_k\in Ob(\mathcal{C})$ and $m_2(1_i,f)=f$ for all $f\in Hom_\mathcal{C}(\lambda_i,\lambda_l)$, $\lambda_l\in Ob(\mathcal{C})$
and if in $m_n(a_1,a_2,\cdots)$ any entry is one of the $1_{i}$ then $m_n(a_1,a_2,\cdots)=0$ for $n\geq 3$.
\end{df}
\begin{remark} 
We can somewhat unpack the definition of an $A_\infty$-category:
\begin{itemize}
\item   
The fact that $m_\mathcal{C}^2=0$ is  equivalent to the infinite list of relations, one for each $n \geq 1$, 
\begin{equation}\label{A_inf_re}
    \sum_{\substack{r,t\geq 0, s\geq 1\\r+s+t=n}} m_{r+1+t}(id^{\otimes r} \otimes m_{s} \otimes id^{\otimes t}) = 0.
\end{equation}
 Here $id$ denotes ambiguously the identity on whichever Hom-space it is applied to.
 \item Using this it is easy to see that in the case that $m_\mathcal{C}$ is only non-zero on $Bar^2V_B$,
 i.e. on the summand $n=2$, we recover the definition of a (not necessarily unital) differential graded category; where $f\circ g=m_2(g,f)$ for composable $f,g.$
     
\item In fact we assume that $m_1=0$ for the rest of this article since we can just absorb it into the differential of the underlying chain complexes. 
\end{itemize}
\end{remark}

\begin{df}
A \emph{functor between two $A_\infty$-categories} $F:\mathcal{C}\rightarrow \mathcal{D}$ is given by a map of sets
$$F: Ob(\mathcal{C})\rightarrow Ob(\mathcal{D})$$
and maps
    $$F_n: Hom_\mathcal{C}({\lambda_0,\lambda_1})[1]\otimes Hom_\mathcal{C}({\lambda_1,\lambda_2})[1]\otimes \cdots \otimes Hom_\mathcal{C}({\lambda_{n-1},\lambda_n})[1]\rightarrow Hom_\mathcal{D}(F(\lambda_0),F(\lambda_n))[1],$$
for all $n\geq 1$ and all tuples of objects $(\lambda_0,\lambda_1,\cdots,\lambda_n).$ 
We can uniquely extend this to a map of coalgebras, which we require to determine a chain map $$F_*:\  (Bar(\mathcal{C}),d+m_\mathcal{C})\rightarrow (Bar\mathcal{D}),d+m_\mathcal{D}).$$ 
\end{df}
We denote by $A_\infty cat$ the category whose objects are $A_\infty$-categories and with morphisms given by $A_\infty$-functors.

\begin{remark}
    Again we can unpack the requirement that $F_*$ intertwines the differentials. 
    Equivalently it means that
    \begin{equation}\label{F_inf_rel}
        \sum_{r+s+t=n} F_{r+1+t}(id^{\otimes r} \otimes m_{\mathcal{C},s} \otimes id^{\otimes t}) = \sum_{i_1+\ldots+i_l=n}  m_{\mathcal{D},l}(F_{i_1} \otimes F_{i_2} \otimes \ldots \otimes F_{i_l})
    \end{equation}
for $n\geq 1$ and where $m_1=d$. 
Notably $F_1$ induces maps of chain complexes on the Hom spaces. 
\end{remark}
\begin{df}
 We say that an $A_\infty$-functor $F:\mathcal{C}\rightarrow \mathcal{D}$ of unital $A_\infty$-categories is \emph{unital} if $F_1(1_{i})=1_{F(i)}$ and $F_n(\cdots,1_{i},\cdots)=0$ for all $n\geq 2$. \end{df}
\begin{df}
   We call an $A_\infty$-functor 
   $$F:\mathcal{C}\rightarrow \mathcal{D}$$
   \emph{strict} if $F_n=0$ for all $n\geq 2$. 
\end{df}

 Given an unital $A_\infty$-functor $F: \mathcal{C}\rightarrow \mathcal{D}$ taking homology induces a functor of ordinary categories $H_*F: H_*\mathcal{C}\rightarrow H_*\mathcal{D}.$  

\begin{df}\label{equiv}
We call an unital $A_\infty$-functor $F:\mathcal{C}\rightarrow \mathcal{D}$ an \emph{equivalence} if the induced functor 
$$H_*F_1: H_*\mathcal{C}\rightarrow H_*\mathcal{D}$$
of ordinary categories is an equivalence.
\end{df}

Let us recall some further notions some of which we, strictly speaking, won't need; thus we are relatively fast in doing so:
\par
Given an $A_\infty$-category $\mathcal{C}$ there is dg-category of $\mathcal{C}$-bimodules as described in \cite{Ga19}, section 3.1 and references therein.
Further in loc. cit. it is explained that there are certain objects called the diagonal bimodule $\mathcal{C}_\Delta$, respectively the linear dual diagonal bimodule $\mathcal{C}_\Delta^\vee$ in this category.
Then one can define 
\begin{df}
$$CH_*(\mathcal{C}):=\mathcal{C}_\Delta\otimes^{\mathbb{L}}_{\mathcal{C}^e}\mathcal{C}_\Delta\in Ch,$$
 given by the derived tensor product in the category of $\mathcal{C}$-bimodules (see e.g. \cite{Ga19}, section 3.4).
 Its homology is called \emph{Hochschild homology}.
\end{df}
\begin{remark}
There are explicit complexes that compute this derived tensor product, which is one way to see that there are maps of chain complexes, for all objects $x$ of $\mathcal{C}$
$$\iota_x: Hom_\mathcal{C}(x,x)\rightarrow CH_*(\mathcal{C}).$$
\end{remark}

Let us come back to the notion of Hochschild homology.
One of its important properties is that there is an $S^1$-action on Hochschild chains (\cite{co85}, in our context eg. \cite{Ga23}, section 3.2).
By taking its homotopy $S^1$ orbits we arrive at
\begin{df}
   Given an $A_\infty$-category we call the homology of $$CC_*(\mathcal{C}):=CH_*(\mathcal{C})_{hS^1}\in Ch$$ its \emph{cyclic homology}. A concrete model is given as follows: Denote by $u$ a formal variable of degree $-2$. There is a chain map of degree 1, called Connes operator, $$B:CH_*(\mathcal{C})\rightarrow CH_*(\mathcal{C}),$$ on the graded vector space $CH_*(\mathcal{C})$ on which the differential $d_{Hoch}$ computes Hochschild homology; it encodes the $S^1$-action. $B$ commutes with the Hochschild differential. The following chain complex computes the homotopy $S^1$ orbits, that is cyclic homology: 
 $$CC_*(\mathcal{C})\simeq\big(CH_*(\mathcal{C})[ u^{-1}], d_{Hoch}+uB\big).$$
\end{df}
\begin{remark}\label{cph}
Thus just by abstract reasons we  get a map $\pi:CH_*(\mathcal{C})\rightarrow CC_*(\mathcal{C}).$ \end{remark}
Finally we want to consider a smaller model computing the cyclic (co)homology of an $A_\infty$-category:
\begin{cons}
    Let $\mathcal{C}$ be an $A_\infty$-category and consider the map given by cyclic permutation:
    $$\begin{tikzcd}
     Hom(\lambda_0,\lambda_1)[1]\otimes Hom(\lambda_1,\lambda_2)[1]\otimes \cdots \otimes Hom(\lambda_{n},\lambda_0)[1]\arrow{d}{t}&\ni&a_{01}\otimes a_{12}\otimes a_{23}\otimes \cdots\otimes a_{no}\arrow[mapsto]{d} \\  Hom(\lambda_1,\lambda_2)[1]\otimes Hom(\lambda_2,\lambda_3)[1]\otimes \cdots \otimes Hom(\lambda_{0},\lambda_1)[1]&\ni&(-1)^* a_{12}\otimes a_{23}\otimes\cdots\otimes a_{no}\otimes a_{01}
    \end{tikzcd}$$
    Here $(-1)^*$ comes from the Koszul sign rule.
    We extend $t$ to  $Bar\mathcal{C}$ by zero on the subspaces that are not of the above form, i,e, where $\lambda_n\neq \lambda_0$ and denote it by the same letter.
    \end{cons}
    \begin{lemma}
Given an $A_\infty$-category the differential $m_\mathcal{C}+d$ on $Bar\mathcal{C}$ restricts to ${ker(1-t)}$.
    \end{lemma}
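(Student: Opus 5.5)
We need to show that $d+m_\mathcal{C}$ maps $\ker(1-t)$ into itself. The plan is to use the standard norm trick. Let $N:=1+t+t^2+\cdots$ denote the cyclic norm operator; on the summand of cyclic words of length $n+1$ it is $\sum_{j=0}^{n}t^j$, and on the non-cyclic summands, where $t=0$, it is simply $1$. Since $t^{n+1}=\mathrm{id}$ on cyclic words of length $n+1$, we have $(1-t)N=N(1-t)=0$ there. In characteristic $0$, which we assume throughout, this gives $\ker(1-t)=\mathrm{im}(N)$ on each cyclic summand. So it suffices to find an operator $b'$ with
\[
(d+m_\mathcal{C})\,N \;=\; N\, b'
\]
on cyclic words. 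Then any $x=Nz\in\ker(1-t)$ satisfies $(d+m_\mathcal{C})x=N b'z\in\mathrm{im}(N)=\ker(1-t)$.

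The internal differential $d$ is the easy part. It is extended to $Bar\mathcal{C}$ as a derivation with Koszul signs, and cyclic permutation of tensor factors commutes with such a derivation, so $dt=td$ and hence $dN=Nd$.

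For $m_\mathcal{C}$, write its action on a cyclic word $a_{01}\otimes\cdots\otimes a_{n0}$ as the sum over consecutive (non-wrapping) blocks of $\mathrm{id}^{\otimes r}\otimes m_s\otimes \mathrm{id}^{\otimes u}$. The output is again a cyclic word, now of length $n+2-s$. Define $b'$ to be the sum over all cyclically consecutive blocks, including those that wrap around the end of the word, with the wrapped-around output placed in a fixed position. The key computation is that summing $t^j$ over all rotations of the input converts the sum over non-wrapping blocks into the sum over all cyclic blocks, each counted with the correct multiplicity. One checks that $m_\mathcal{C} N$ equals $N$ applied to the "cyclic Hochschild-type" operator, i.e.\ $m_\mathcal{C}\sum_j t^j=\sum_k t'^k\, m_\mathcal{C}$, where $t'$ is rotation on the shorter words. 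This is the classical identity $b(1-t)=(1-t)b'$ together with $b'N=Nb$ from Connes' and Loday's cyclic complex, written in the bar/shifted conventions.

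The main obstacle is the bookkeeping. We need the index shift: each contraction of a block of length $s$ that wraps around appears exactly once after rotating, and the rotation count on the output matches. We also need the Koszul signs, which arise because elements live in $Hom[1]$, so that $t$ carries the sign $(-1)^{|a_{01}|(\sum_{i\geq 1}|a_{i,i+1}|)}$. I would first verify the identity on words of length up to $3$ with $m_2$ only, to fix the sign conventions. After that, the general case follows by the same telescoping in which the blocks are reindexed by their starting position modulo $n+1$.

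Finally, on non-cyclic summands nothing is needed, since $\ker(1-t)$ meets them trivially: there $t=0$, so $1-t$ is the identity. We must also note that $m_\mathcal{C}$ preserves the property "first object equals last object", because $m_s$ sends $Hom(\lambda_i,\lambda_{i+1})\otimes\cdots\to Hom(\lambda_i,\lambda_{i+s})$. So cyclic words map to cyclic words, and the decomposition into cyclic and non-cyclic summands is respected by $d+m_\mathcal{C}$.
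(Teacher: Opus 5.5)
Your argument is correct and is essentially the paper's. The paper simply defers to Loday's Lemma 2.1.1 \cite{Lo13}, where Loday's $b'$ is the bar differential (here $m_\mathcal{C}$) and $b$ is the operator summing over all cyclic blocks; there one has $b'N=Nb$, which is exactly your identity, and also $(1-t)b'=b(1-t)$, which gives $m_\mathcal{C}(\ker(1-t))\subseteq\ker(1-t)$ in one line without needing $\ker(1-t)=\mathrm{im}(N)$ or characteristic $0$.

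Your names are swapped relative to Loday's, and your displayed ``i.e.'' formula $m_\mathcal{C}\sum_j t^j=\sum_k t'^k m_\mathcal{C}$ is false as written. For example, on a length-two word with only $m_2$, the left side is $m_2(a,b)\pm m_2(b,a)$ while the right side is $m_2(a,b)$. Its right-hand side must contain your cyclic-block operator, not $m_\mathcal{C}$.
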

\begin{proof}
        This is a standard result, which follows in the same way as in lemma 2.1.1 of \cite{Lo13}.
\end{proof}
    Finally we have to include a degree shift by 1: 
\begin{theorem}[`Cyclic Words']
The cyclic chain complex computes cyclic homology $$(Cyc_*^+(\mathcal{C}),m_\mathcal{C}+d):=\big(ker(1-t)[-1],m_\mathcal{C}+d\big)\simeq CC_*(\mathcal{C})$$ 
\end{theorem}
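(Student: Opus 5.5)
The plan is to reduce to the classical single-object statement (Loday's lemma 2.1.1 / Connes' theorem) by working componentwise over cyclic sequences of objects. Concretely, $Bar\mathcal{C}$ decomposes as a direct sum over tuples $(\lambda_0,\dots,\lambda_n)$. The operator $t$ is nonzero only on the \emph{closed} tuples, those with $\lambda_n=\lambda_0$ in the cyclic-word convention. Hence $\ker(1-t)$ splits as the nonclosed words (where $t=0$) plus the $t$-invariants in the cyclic part. First I will check that the intended model is the cyclic part only, i.e. I will restrict $Bar\mathcal{C}$ to cyclically composable words $a_{01}\otimes\cdots\otimes a_{n0}$, on which $t$ generates a $\mathbb{Z}/(n+1)$-action. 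This is the complex whose coinvariants or invariants compute $CC_*$.

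Second, I will compare with the Hochschild model. The standard cyclic Hochschild complex $C_*(\mathcal{C},\mathcal{C})=\bigoplus Hom(\lambda_n,\lambda_0)\otimes Hom(\lambda_0,\lambda_1)[1]\otimes\cdots$ computes $\mathcal{C}_\Delta\otimes^{\mathbb{L}}_{\mathcal{C}^e}\mathcal{C}_\Delta$ via the two-sided bar resolution. On it one has Tsygan's double complex, with columns alternating $b$ and $b'$ and horizontal maps $1-t$ and the norm $N=\sum t^i$. Its total complex is quasi-isomorphic to $(CH_*[u^{-1}],d_{Hoch}+uB)$ by the usual Connes argument, which needs $b'$-columns to be acyclic. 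That in turn requires unitality (or the version with the reduced/normalized complex, where acyclicity comes from a contracting homotopy built from the unit). In characteristic $0$, the rows $\cdots\xrightarrow{N}\cdot\xrightarrow{1-t}\cdot$ are exact in positive degrees, since the group $\mathbb{Z}/(n+1)$ has no higher cohomology over $k\supset\mathbb{Q}$. A row-filtration spectral sequence therefore collapses the double complex onto the $t$-coinvariants of the first column. Averaging, $\frac{1}{n+1}N$ then identifies coinvariants with invariants $\ker(1-t)$. This produces the quasi-isomorphism $(\ker(1-t)[-1],m_\mathcal{C}+d)\simeq CC_*(\mathcal{C})$, where the shift by $1$ accounts for the extra Hom-factor being unshifted in the Hochschild model versus shifted in $Bar$. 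All sign checks are those of the one-object case, with the Koszul signs from the $[1]$-shifts; the earlier lemma that $m_\mathcal{C}+d$ preserves $\ker(1-t)$ ensures the invariants form a subcomplex.

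The main obstacle I expect is making the multi-object bookkeeping and the $A_\infty$ (rather than dg) differential compatible with the Tsygan double complex. Concretely, I must verify $b(1-t)=(1-t)b'$ and $b'N=Nb$ when $b$ involves all $m_k$ applied to cyclically consecutive blocks, including those wrapping around the distinguished position. I would do this by writing $b=\sum_k\sum_j t^{-j}\,(m_k\otimes id)\,t^{j}$ restricted appropriately, mirroring Loday 2.1.1, so that the identities reduce to reindexing sums over cyclic rotations. I will also have to state the hypothesis needed for $b'$-acyclicity (strict unitality, or else replace $\mathcal{C}$ by a quasi-equivalent strictly unital model, invoking invariance of both sides under equivalences in the sense of Definition \ref{equiv}).
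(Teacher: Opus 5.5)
Your argument is the standard Connes--Tsygan one (Loday's Lemma 2.1.1 and Theorem 2.1.5, plus the comparison of Tsygan's bicomplex with the $(b,B)$-model, which is where a strict unit is needed). That is exactly what the paper does by citing Loday for algebras and Amorim--Tu for categories, so your proof is correct, and your unitality caveat is a hypothesis implicit in the paper's $(b,B)$-model for $CC_*$ and in those references. One slip: on non-closed words $t=0$, so $1-t$ is the identity there and no such word lies in $\ker(1-t)$. Hence $\ker(1-t)$ is automatically concentrated on cyclically composable words, no restriction or reinterpretation of the statement is needed, and the differential it carries is the bar differential $b'$ plus $d$, which your averaging step correctly matches with $b$ on coinvariants via $b'N=Nb$.
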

\begin{proof}
    This is well known.
    For the algebra case see \cite{Lo13}, theorem 2.1.5, in our generality eg. around definition 2.5 and remark 2.6 of \cite{AmTu22}.
\end{proof}
% If confused again for grading convention, check Loday, page 177. We follow that convention.
\begin{df}\label{llekwr}
  The cochain complex $$(Cyc^*_+(\mathcal{C}),m_\mathcal{C}+d):=Hom(Cyc_*^+(\mathcal{C}),k)$$ with the induced differential is called \emph{cyclic cochain complex} and its homology is referred to as the \emph{cyclic cohomology} of $\mathcal{C}$. 
\end{df}

\begin{remark}
    We think of a cyclic cochain as a cyclic word in the dual of the Hom-spaces with matching `boundary condition'.
    We adopt the notation 
    $$(a_1a_2\cdots a_n)\in Cyc^*_+(\mathcal{C})[-1],$$
where by definition $a_i\in Hom(\lambda,\mu)^\vee[-1]$ and $a_{i+1}\in Hom(\mu,\nu)^\vee[-1]$ for $\lambda,\mu,\nu\in Ob(\mathcal{C})$.
\end{remark}

Lastly we introduce important properties of $A_\infty$-categories:

\begin{df}\label{smanpr}
    We say that an $A_\infty$-category $\mathcal{C}$ is \emph{smooth} if $\mathcal{C}_\Delta$ is a compact, ie. a perfect object in the dg category of $\mathcal{C}$-bimodules.

    We say that an $A_\infty$-category $\mathcal{C}$ is \emph{proper} if $Hom_\mathcal{C}(x,y)\in Ch$ is perfect for all $x,y\in Ob(\mathcal{C}).$ 
\end{df}
\begin{remark}
Compare also \cite{BrDy19} ex. 2.8 and ex. 2.9 for a more abstract phrasing, in the setting of dg categories, which shows that the notions smooth and proper are complementary in a certain sense.
\end{remark}

\subsection{Calabi-Yau $A_\infty$-Categories}
For the purposes of this thesis we consider $A_\infty$-categories with an extra structure, which most generally may be phrased as requiring a (smooth and) proper Calabi-Yau structure. We  work with a convenient strictification of such a structure, called cyclic and leave it to further investigations how to obtain more homotopically coherent results. 
Compare \cite{choSa12} \cite{BrDy21}   for relevant results in this direction and \cite{AmTu22} for an answer to such investigations in the purely `closed' setting. 

In any case, a cyclic $A_\infty$-category is canonically a strong proper Calabi-Yau category and thus a weak one, see around corollary 2.26 of \cite{AmTu22}. A weak proper Calabi-Yau category together with a splitting of the non-commutative Hodge filtration (a central ingredient in this paper, which we introduce later in definition \ref{splitting}) naturally induce a strong proper Calabi-Yau structure, see lemma 3.16 of \cite{AmTu22}. Lastly in characteristic zero one can strictify a strong proper Calabi-Yau category to a cyclic one, see \cite{KoSo24} theorem 10.7. 
Thus we may pretend that all these notions are roughly equivalent for our purposes.\footnote{However, the notion of cyclic $A_\infty$-category is not preserved under general $A_\infty$-functors (whereas the one of proper Calabi-Yau categories is).} 
\begin{df}
Given $\mathcal{C}$ a proper $A_\infty$-category we 
call a choice of quasi-isomorphism of $\mathcal{C}$-bimodules
$$\mathcal{C}_\Delta \simeq\mathcal{C}_\Delta^\vee[-d]$$
a \emph{weak proper Calabi-Yau} structure of dimension $d$.
\end{df}
\begin{remark}[See rmk. 49 of \cite{Ga23}]
A weak proper Calabi-Yau structure of dimension $d$, also called weak right Calabi-Yau structure, may be equivalently described by saying that there is a map of complexes 
$$tr: CH_*(\mathcal{C})\rightarrow k[-d]$$
such that the induced map 
$$\begin{tikzcd}
    Hom_\mathcal{C}(x,y)\otimes Hom_\mathcal{C}(y,x)\arrow{r}{m^2_{x,y}}& Hom_\mathcal{C}(x,x)\arrow{r}{\iota_x}& CH_*(\mathcal{C})\arrow{r}{tr}& k[-d]\end{tikzcd}$$
is non-degenerate on homology, for all objects $x,y\in Ob\mathcal{C}$.
If we additionally ask that the trace map comes from a map from cyclic homology along the map $\pi$ from remark \ref{cph}, this is called a strong proper Calabi-Yau structure.
\end{remark}
\begin{remark}
There is (are) also the notion(s) of a (weak and strong) smooth Calabi-Yau, also called (weak and strong) left Calabi-Yau structure on a smooth $A_\infty$-category.
See definitions 12 and 13 of \cite{Ga23}. In fact if a category is both smooth and proper, the notions of smooth and proper Calabi-Yau structures coincide; see proposition 6.10 of \cite{GaPeSh15}. 
\end{remark}

\begin{df}\label{cycAinfcat}
    A \emph{cyclic $A_\infty$-category} of degree $d$, denoted $(\mathcal{C},\langle\_,\_\rangle_\mathcal{C})$, is an
    $A_\infty$-category $\mathcal{C}$ together with a  symmetric non-degenerate chain map
    $$\langle\_,\_\rangle_{\mu\lambda}:\  Hom_\mathcal{C}(\mu,\lambda)\otimes Hom_\mathcal{C}(\lambda,\mu)\rightarrow k[-d]$$
    for all $\mu,\lambda\in Ob(\mathcal{C})$ such that 
    \begin{equation}\label{cycl}S_\mathcal{C}:=\langle (m+d)\_,\_\rangle\in Cyc^*_+(\mathcal{C})[-1],
    \end{equation}
    i.e. is cyclically symmetric.
    \end{df}
    Note that in this case $S_\mathcal{C}$ is of degree $3-d$.
    We further denote by 
    $$I_\mathcal{C}:=\langle m\_,\_\rangle\in Cyc^*_+(\mathcal{C})[-1],$$
    which has the same degree.

\begin{remark}\label{cy_cyc}
    See remark 9 of \cite{Ga23} for the fact that any cyclic $A_\infty$-category is canonically a strong (and thus weak) proper Calabi-Yau.
    Further it is explained that in characteristic zero one can strictify a strong proper Calabi-Yau category to a cyclic one.
    However, the notion of cyclic $A_\infty$-category is not preserved under general $A_\infty$-functors, whereas the one of proper Calabi-Yau categories is.
\end{remark}
Without giving details we state the result from definition 2.15 of \cite{AmTu22} that there is a functor
$$\Omega^{2,cl}: (A_\infty Cat)^{op} \rightarrow Ch,$$
which associates to an $A_\infty$-category a chain complex called its non-commutative 2-forms.
As described in definition 2.23 of loc. cit. a cyclic structure $\langle\_,\_\rangle_\mathcal{C}$ on an 
$A_\infty$-category $\mathcal{C}$ induces an element $\rho_\mathcal{C}\in\Omega^{2,cl}(\mathcal{C}).$

\begin{df}\label{cycAi}
   Given two cyclic $A_\infty$-categories $(\mathcal{C},\langle\_,\_\rangle_\mathcal{C})$ and $(\mathcal{D},\langle\_,\_\rangle_\mathcal{D})$
   a  \emph{cyclic $A_\infty$-functor} is an $A_\infty$-functor $F:\mathcal{C}\rightarrow \mathcal{D}$ such that

   $$F^*\rho_{\mathcal{D}}=\rho_{\mathcal{C}},$$
   where $\rho_{\mathcal{D}}$ respectively $\rho_{\mathcal{C}}$ refers to the non-commutative 2-form associated to the cyclic structure by the previous remark.
\end{df}
\begin{remark}\label{caf}
    This definition should reduce to definition 5.3 (2) of \cite{HaLa08} in the case of a category with one object.
    Note that the authors use the term symplectic where we used the term cyclic.
    See equation 13 of \cite{AmTu22} for an explicit  equivalent description of definition \ref{cycAi}.
    In particular one has 
    \begin{equation}\label{nfn}
        \langle F_1(x),F_1(y)\rangle_\mathcal{D}=\langle x,y\rangle_\mathcal{C}.
        \end{equation}
\end{remark}
 Denote by  $(cyc_dA_\infty cat)_{st}$ the (ordinary) category whose objects are cyclic $A_\infty$-categories of degree $d$ and whose morphisms are cyclic strict $A_\infty$-functors.

\begin{df}\label{ColV}
    We say 
    $$V_B:=(V_{ij},\langle\_,\_\rangle_{ij})_{i,j\in B}$$
    is a \emph{collection of cyclic chain complexes} if it is a cyclic $A_\infty$-category $\mathcal{C}$ with $m_\mathcal{C}=0$ and $B=Ob(\mathcal{C})$. 
\end{df}

\subsection{Splitting of non-commutative Hodge filtration}
\begin{df}\label{splitting}
    Let $\mathcal{C}$ be an $A_\infty$-category. A \emph{splitting of the non-commutative Hodge filtration} is a chain map
$$s:(CH_*(\mathcal{C}),d_{Hoch})\rightarrow (CH_*(\mathcal{C})\llbracket u\rrbracket,d_{Hoch}+uB)$$
splitting the canonical projection 
$$\pi:(CH_*(\mathcal{C})\llbracket u\rrbracket),d_{Hoch}+uB)\rightarrow(CH_*(\mathcal{C}),d_{Hoch}),$$
that is we require $s\circ\pi=id$.
\end{df}
\begin{remark}
Let $\mathcal{C}$ be a $\mathbb{Z}$-graded smooth and proper $A_\infty$-category. By \cite{kal17} the non-commutative Hodge to de Rham spectral sequence degenerates and converges to negative cyclic homology, as explained eg. in the proof of theorem 5.45 of \cite{She19}. Because we are working over a field of characteristic zero this implies that there is a (non-canonical) identification of negative cyclic homology
\begin{equation}\label{noncan}
HC_*^-(\mathcal{C})\cong HH_*(\mathcal{C})\llbracket u\rrbracket.\end{equation}
\end{remark}
A splitting of the non-commutative Hodge filtration provides in particular the choice of such an isomorphism. Indeed, extending a splitting $s$ $u$-linearly gives a map 
$$(CH_*(\mathcal{C})\llbracket u\rrbracket,d_{Hoch}) \rightarrow (CH_*(\mathcal{C})\llbracket u\rrbracket,d_{Hoch}+uB),$$
which we can think of as a formal power series of endomorphisms of Hochschild chains whose first term is invertible (in fact it is the identity, by the splitting condition), thus the whole formal power series is invertible. Thus this map induces an isomorphism \eqref{noncan}.
\newpage
\section{Closed String Field Theory}\label{wsuaderinn}
In this section we introduce the classical and quantum algebras of observables of closed SFT. We first recall the quantum algebra of observables of \emph{free} closed SFT \cite{Cos05, CaTu24}, which we used already in the introduction. Then we define the classical algebras of observables of interacting closed SFT, which is novel in this thesis. We recall BCOV theory \cite{coli12} and present some comparisons to the previous algebraic theory of SFT, for the classical interacting part in the form of a conjecture.

For completeness we also mention the much later chapter \ref{mimsss} on backreacted closed string field theory. 
\subsection{Free Quantum Closed String Field Theory from CY Categories}\label{fcsfts}
\begin{df}\label{cBD}
Let $\mathcal{C}$ be a cyclic $A_\infty$-category of degree $d$. Then we call \emph{free quantum observables of the closed string field theory associated to $\mathcal{C}$} the $(2d-5)$-twisted Beilinson-Drinfeld over $k\llbracket\gamma\rrbracket$ (recalling definition \ref{BD})
$$\mathcal{F}^{c}(\mathcal{C}):=\Big(Sym\Big(CH_*(\mathcal{C})[u^{-1}][d-2]\Big)^{-*}\llbracket\gamma\rrbracket,d_{Hoch}+uB+\gamma\Delta_c,\{\_,\_\}_c\Big).$$ 
In detail, the map $d_{Hoch}+uB+\gamma\Delta_c$ is a differential and $\{\_,\_\}_c$ is a $(2d-5)$ shifted Poisson bracket, which together with the natural multiplication satisfy the BD relation \ref{BDrel}. More precisely the algebraic operations are defined exactly as in \cite{AmTu22}, that is  $\mathcal{F}^{c}(\mathcal{C}):=\mathfrak{h}_\mathcal{C}$, (see \cite{AmTu22}, page 35 for the definition of the latter) just that we carry the shifts with us and we further ignore the variable $\lambda$ from there. Further the notation $-*$ means that we reverse the grading of what is inside the brackets, which is why eg. $|d_{Hoch}|=1,$ here, despite the homological convention. See also section 4.1 of \cite{CaTu24} for a more detailed description of the algebraic operations in the case of a category with one object. 
\end{df}
\begin{remark}\label{dgsmnggw2}
This algebraic structure was originally described in \cite{Cos05}, but with slightly different grading conventions. Here we chose to follow \cite{CaTu24}, but had to perform yet different shifts to be compatible with the open sector (that is notably for section \ref{octosa}). We should remark that theorem \ref{cei} is formulated using different shifts. There one uses
$$\widetilde{\mathcal{F}^{c}(\mathcal{C})}:=Sym\Big(CH_*(\mathcal{C})[u^{-1}][d]\Big)\llbracket\gamma\rrbracket[1-2d],$$
see \cite{CaTu24} for the algebraic structures and especially appendix A for the $\mathbb{Z}$-grading.  One may try to formulate theorem \ref{cei} also with the conventions for $\mathcal{F}^{c}(\mathcal{C})$, but we haven't done this,~yet. Because of this we also need to formulate definition \ref{cisft} below with respect to the grading set-up for~$\widetilde{\mathcal{F}^{c}(\mathcal{C})}$.
\end{remark}

\begin{df}\label{cBD_t}
   Let $\mathcal{C}$ be a cyclic $A_\infty$-category of degree $d$. We denote by $\mathcal{F}^{c}(\mathcal{C})^{triv}$ the $(2d-5)$-twisted BD algebra which has the same underlying graded algebra as $\mathcal{F}^{c}(\mathcal{C}),$ but with zero bracket and whose differential is just induced from the derivation $d_{Hoch}+uB$.
\end{df}
\begin{df}\label{cBD_Tr}
   Let $\mathcal{C}$ be a cyclic $A_\infty$-category of degree $d$. We denote by $\mathcal{F}^{c}(\mathcal{C})^{Triv}$ the $(2d-5)$-twisted BD algebra which has the same underlying graded algebra as $\mathcal{F}^{c}(\mathcal{C}),$ but with zero bracket and whose differential is just induced from the derivation $d_{Hoch}$.
\end{df}

\subsection{Classical Interacting Closed String Field Theory from CY Categories}\label{inclsfts}
Denote by $S^c_{g=0}$ the genus zero part of the closed string vertices, recalling definition \eqref{cms} and equation \eqref{csvmce}. Let $\mathcal{C}$ be a smooth cyclic $A_\infty$-category. Then it follows from the genus zero part of equation \eqref{csvmce} and theorem \ref{cei} that $$(d_{hoch}+uB+\{\rho(S^c_{g=0}),\_\}_c)^2=0,$$ ie. it defines a differential on the shifted Poisson algebra $\Big(\widetilde{\mathcal{F}^c(\mathcal{C})}|_{\gamma=0},\{\_,\_\}_c\Big).$
Recalling that $$\widetilde{\mathcal{F}^c(\mathcal{C})}|_{\gamma=0}=Sym\Big(CH_*(\mathcal{C})[u^{-1}][d]\Big)$$  the differential equivalently defines a co $L_\infty$-algebra on the graded vector space ${CH_*(\mathcal{C})[u^{-1}][d+1]}$, which induces by dualizing an $L_\infty$-algebra structure on 
    \begin{equation}\label{cych}
        \Big(CH_*(\mathcal{C})[ u^{-1}][(d+1)]^\vee,d_{hoch}+uB\Big),
    \end{equation}
     a chain complex computing cyclic cohomology of $\mathcal{C}$ (shifted by $-d-1$). We denote this $L_\infty$-algebra by $L(\mathcal{C}).$
     \begin{df}\label{cisft}
     Given a smooth cyclic $A_\infty$-category $\mathcal{C}$
     we call \emph{classical interacting closed string field theory} associated to $\mathcal{C}$ the datum of the dg shifted Poisson algebra
$$\Big(\widetilde{\mathcal{F}^c(\mathcal{C})}|_{\gamma=0},\cdot, d_{hoch}+uB+\{\rho(S^c_{g=0}),\_\}_c, \{\_,\_\}_c\Big).$$
     and $L(\mathcal{C})$, the $L_\infty$-algebra \eqref{cych}, the prior being its observables and the latter describing its space of fields. 
 \end{df}    
\begin{remark}\label{bgbt}
    The shifted Poisson structure $\{\_,\_\}_c$ is not \emph{not} induced from a non-degenerate odd pairing on the space of fields \eqref{cych}, which is what usually happens in the BV approach to (quantum) field theories. See section 1.4 of \cite{CL15} and \cite{BuYo17} for a general discussion of  
   degenerate shifted Poisson structures.
\end{remark}

\subsection{BCOV Theory}
    In \cite{coli12} and \cite{CL15} Costello-Li introduce BCOV theory, which is a version of Kodaira-Spencer gravity when formulated in the BV formalism. We recall in proposition \ref{propprop} how BCOV theory is related to the objects from the previous section \ref{fcsfts}; see also section 4 of \cite{coli12}.
    \subsubsection{Quantum Free BCOV Theory}
    
    Let $X$ be a smooth compact Calabi-Yau manifold. The central object is the following chain complex built from polyvector fields \begin{equation}\label{poly1}
        \mathcal{E}_{BCOV}(X):=(PV^{\bullet,\bullet}(X)\llbracket u\rrbracket[2],\bar{\partial}+u\partial).
    \end{equation}
    The abelian $L_\infty$-algebra $\mathcal{E}_{BCOV}(X)[-1]$ describes the fields of (perturbative) free BCOV theory. We want to equip
    $$\mathcal{O}\big(\mathcal{E}_{BCOV}(X)\big):=\widehat{Sym}\big(\mathcal{E}_{BCOV}(X)^\vee\big)\llbracket\gamma\rrbracket,$$
    the space of completed symmetric powers of (strong) linear functionals\footnote{Here we use the grading convention that for $V$ a graded vector space $(V^\vee)_n=(V_{-n})^\vee$.} on the chain complex \eqref{poly1}, with the structure of a BD algebra. Because of the infinite dimensional nature of the objects appearing one has to use regularization techniques, which we do following the theory developed in \cite{Co11}. For that we fix a Kähler metric $g$ on $X$. Then we can define for every $L>0$ a square zero map 
    $$\Delta_L:\mathcal{O}\big(\mathcal{E}_{BCOV}(X)\big)\rightarrow \mathcal{O}\big(\mathcal{E}_{BCOV}(X)\big),$$
which is roughly defined by contracting with an element $K_L^g\in PV(X)\otimes PV(X)$ and applying the map $\partial$ on powers of $u=0$, zero otherwise; see section 3.1 of \cite{coli12} for details. By imposing the BD relation \eqref{BDrel} $\Delta_L$ determines a shifted Poisson bracket $\{\_,\_\}_L$. Thus we can formulate:
\begin{df}\label{uvdwekm}
Let $X$ be a smooth compact Calabi-Yau manifold ($L>0$ and $g$ a Kähler metric). Then the BD algebra 
    \begin{equation}\label{phycloBCOV}
  \Big( \mathcal{O}\big(\mathcal{E}_{BCOV}(X)\big),\bar{\partial}+u\partial+\gamma\Delta_L,\{\_,\_\}_L\Big)
    \end{equation}
    are \emph{the (regularized) quantum observables of free BCOV theory}. 
\end{df}
\begin{remark}
   Definition \ref{uvdwekm} is independent from the choice of a Kähler metric up to homotopy. See the discussion around theorem 3.6.2 of \cite{coli12} for a precise statement.
\end{remark}
\begin{remark}\label{nenr}
    There is a natural pairing on polyvector fields, given by 
    \begin{equation}\label{polypair}
\langle \mu,\lambda\rangle=\int_XTr(\mu\wedge \lambda),
 \end{equation}
 for $\mu,\lambda\in PV^{\bullet,\bullet}(X)[2]$ and where for $\alpha \in PV^{\bullet,\bullet}(X)[2]$ $$Tr(\alpha)=(\omega\vee \alpha)\wedge \omega$$
 with $\omega$ the holomorphic volume form of $X$, see section 2.2 of \cite{coli12}. For $L=0$ the element $K_L^g$ is given by the inverse of the non-degenerate pairing \eqref{polypair}, but it is of distributional nature (see section 4.3 of \cite{coli12}). The elements $K_l^g$ are homologous to it. 
\end{remark}
\begin{remark}
    As in remark \ref{bgbt} the shifted Poisson bracket $\{\_,\_\}_L$ is not induced from a (smooth functions of) non-degenerate pairing(s) on the (fibers over X of the) space of fields $\mathcal{E}_{BCOV}(X)$. If we however just pretend that this  is the case (and take $L=0$, see the previous remark \ref{nenr}) and denote by the  $\langle\_,\_\rangle_{im}$ the induced pairing on fields from this imagined inverse we recover the original kinetic term of the BCOV theory from \cite{BCOV94}
    \begin{equation}
    \int_XTr(\mu\wedge\bar{\partial}\partial^{-1}\mu)=\langle(\bar{\partial}+u\partial)\mu,\mu\rangle_{im}
     \end{equation}
    
    where $\mu\in PV^{1,1}(X)[2].$
\end{remark}

  We recall the notions of dg-BD algebras and of quasi-isomorphism of dg-BD algebras from definition \ref{dgBD} and \ref{qiBD}. The dg-BD algebra (note the extra comma)
  \begin{equation}\label{dgBD1}
\Big(  \mathcal{O}\big(\mathcal{E}_{BCOV}(X)\big),\bar{\partial}+u\partial, \gamma\Delta_L,\{\_,\_\}_L\Big)\end{equation}
  and for $\mathcal{C}$ a smooth cyclic $A_\infty$-category the dg-BD algebra
  \begin{equation}\label{dgBD2}
    \Big(Sym\Big(CH_*(\mathcal{C})[u^{-1}][d-2]\Big)^{-*}\llbracket\gamma\rrbracket,d_{Hoch}+uB,\gamma\Delta_c,\{\_,\_\}_c\Big)
     \end{equation}
  induce the BD algebras \eqref{phycloBCOV} respectively $\mathcal{F}^{c}(\mathcal{C})$ under the functor from dg-BD algebras to BD algebras from remark \ref{Induzierung}.
\begin{prop}\label{propprop}
    Let $X$ be a smooth compact Calabi-Yau manifold, so that its category of bounded coherent sheaves $\mathcal{C}=D^b_{dg}(X)$ is a smooth proper Calabi-Yau category. The dg-BD algebra from \eqref{dgBD1}  is quasi-isomorphic to the dg BD algebra  from \eqref{dgBD2} for $\mathcal{C}=D^b_{dg}(X)$.\footnote{Or rather a cyclic model of the latter via theorem 2.37 of \cite{AmTu22}, which we refer to for the independence of this choice.} 
\end{prop}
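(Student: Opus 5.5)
Since the statement concerns quasi-isomorphism in the weak sense of definition \ref{qiBD}, I will produce an isomorphism of BD algebras between the homologies of the two dg-BD algebras with respect to their dg-differentials, $\bar{\partial}+u\partial$ and $d_{Hoch}+uB$ respectively. The plan has three steps. First, identify the underlying chain complexes $(PV^{\bullet,\bullet}(X)\llbracket u\rrbracket[2],\bar\partial+u\partial)$ and the dual of $(CH_*(\mathcal{C})[u^{-1}],d_{Hoch}+uB)$ for $\mathcal{C}=D^b_{dg}(X)$ (or a cyclic model of it). Second, pass to homology of the symmetric algebras. Third, check that under this identification the operators $\gamma\Delta_L$, $\{\_,\_\}_L$ and $\gamma\Delta_c$, $\{\_,\_\}_c$ agree on homology.

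For the first step I would use the Hochschild--Kostant--Rosenberg isomorphism together with the holomorphic volume form $\omega$. The contraction $\mu\mapsto \mu\vee\omega$ identifies $PV^{i,j}(X)$ with $\Omega^{n-i,j}(X)$ and intertwines $\partial$ with the de Rham operator $\partial$ on forms. HKR gives $HH_*(D^b_{dg}(X))\cong\bigoplus H^{j}(X,\Omega^{i})$, and the twisted HKR map intertwines Connes' $B$ with the holomorphic de Rham differential. Because $X$ is compact Kähler, the $\partial\bar\partial$-lemma makes $(PV\llbracket u\rrbracket,\bar\partial+u\partial)$ formal. Likewise, degeneration of Hodge--de Rham (Kaledin \cite{kal17}, or classical Hodge theory here) makes the $u$-direction split. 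The upshot is that, after linear dualisation and the regrading $-*$, both complexes have homology $H^{*,*}(X)$ with $u$-powers in matching positions. Taking $Sym$ commutes with homology in characteristic zero, so the underlying graded commutative algebras of the homologies are isomorphic, provided the completed symmetric algebra on strong duals matches $Sym$ of the finite-dimensional homology. This holds because cohomology is finite-dimensional.

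The third step, matching the operators, is where I expect the main obstacle. On the BCOV side, $\Delta_L$ contracts with $K^g_L$, which is homologous to the inverse of the trace pairing $\int_X Tr(\mu\wedge\lambda)$, and inserts $\partial$ on the $u^0$ component. On homology this reduces to the Poincaré pairing on $H^{*,*}(X)$ composed with $\partial$ on the lowest $u$-power. On the categorical side, $\Delta_c$ is built from the residue pairing \eqref{Res}, induced from the Mukai pairing, together with $B$. The key input I would invoke is the compatibility of the HKR isomorphism with pairings: the Mukai pairing corresponds to the Poincaré pairing twisted by $\sqrt{td_X}$, a statement due to Căldăraru and Markarian--Ramadoss. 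For Calabi--Yau manifolds one must also handle the Todd correction. I would first try to show that modifying HKR by $\sqrt{td_X}$ (the Kontsevich/Căldăraru twist) gives a map that is simultaneously compatible with $B$ and with the pairings.

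Once the pairings and the $B$ versus $\partial$ actions agree on homology, $\Delta$ agrees, since each $\Delta$ is a second-order operator determined by the pairing and by $B$ respectively $\partial$. The brackets then agree automatically, because both are defined via the BD relation \eqref{BDrel}. Independence of the choices $g$ and $L$ is handled by the remark after definition \ref{uvdwekm}. Independence of the cyclic model is handled by \cite{AmTu22}, theorem 2.37.
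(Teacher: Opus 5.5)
Your steps 1 and 2 are fine in outline, but step 3 does not work as argued. A class in $H_*(CH_*(\mathcal{C})[u^{-1}],d_{Hoch}+uB)$ is represented by $\xi=\sum_{k\geq 0}x_ku^{-k}$ with $d_{Hoch}x_k+Bx_{k+1}=0$. So its $u^0$-component $x_0$ is in general not a Hochschild cycle, and the induced operator is $\Delta_c(\xi\cdot\eta)=\pm\langle Bx_0,y_0\rangle$. This is a secondary operation. It is not determined by the Mukai pairing on $HH_*(\mathcal{C})$ together with the map that $B$ induces on $HH_*(\mathcal{C})$. That induced map is in fact zero: this is the degeneration you invoke in step 1. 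Likewise $\partial$ induces zero on $H^{*,*}(X)$ by Hodge theory. So ``the Poincar\'e pairing composed with $\partial$ on the lowest $u$-power'' is identically zero on homology. Matching it with its categorical analogue, with or without a $\sqrt{td_X}$-twist, therefore tells you nothing about the two induced $\Delta$'s. Your sentence ``$\Delta$ is a second-order operator determined by the pairing and by $B$ respectively $\partial$'' is true at chain level, but it does not descend to homology in the way you use it.

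What is true is that both induced structures vanish. Put $\tilde\xi=u^{-1}\xi\in CH_*(\mathcal{C})((u))$. Then $(d_{Hoch}+uB)\tilde\xi=Bx_0\in CH_*(\mathcal{C})\llbracket u\rrbracket$ represents the image of $[\xi]$ under the connecting map of $0\to CH_*\llbracket u\rrbracket\to CH_*((u))\to u^{-1}CH_*[u^{-1}]\to 0$. This connecting map is zero because degeneration makes $HC^-_*\to HP_*$ injective. Write $Bx_0=(d_{Hoch}+uB)\zeta$ with $\zeta\in CH_*\llbracket u\rrbracket$. Since the residue pairing $\omega$ is a chain map for which $CH_*\llbracket u\rrbracket$ is isotropic, you get $\langle Bx_0,y_0\rangle=\pm\omega(Bx_0,u^{-1}\eta)=\pm\omega(\zeta,By_0)=0$. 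On the BCOV side, harmonic representatives (which are $\partial$-closed) give the same conclusion directly.

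Hence, in the weak sense of Definition \ref{qiBD}, both homology BD algebras have zero bracket and zero $\Delta$. The proposition then reduces to your steps 1--2, and the Todd-twisted compatibility of HKR with pairings and $B$, which you only plan to try to prove, is not needed. If you want a comparison in which the pairings actually matter, you must stay at chain level, with maps preserving the Lagrangian subcomplexes $\llbracket u\rrbracket$ and the pairings. That is the paper's route. It regards both sides as Fock spaces of symplectic chain complexes and uses the zig-zag $CH_*(D^b_{dg}(X))((u))\to H^*(X)((u))\leftarrow PV^{\bullet,\bullet}(X)((u))$, taking pairing compatibility from \cite{tu24} and from \cite{coli12}, Proposition 5.2.1.

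A minor point: finite-dimensionality of $H^*(X)$ does not let you replace $\widehat{Sym}$ by $Sym$ in homology. Here $H^*(X)[u^{-1}]$ is infinite-dimensional, and the product over symmetric powers differs from the direct sum, so the categorical side has to be completed. The paper also glosses over this.
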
    
\begin{proof}[Sketch of Proof]
    
This follows because we can identify the respective dg-BD algebras as Fock spaces associated to symplectic chain complexes, see section 7 of \cite{Cos05} respectively section 4 of \cite{coli12}.\footnote{In fact the two cited sources differ by the fact that in \cite{Cos05} the symplectic vector space is used, whereas in \cite{coli12} the dual symplectic vector space is used. In the identification \eqref{llD} we are implicitly reversing this, again. See also remark \ref{nenr}.}
Those symplectic chain complexes are quasi-isomorphic, thus implying the claim: The underlying chain complexes of these symplectic chain complexes are given by the left and right most object of diagram \eqref{llD} below
 \begin{equation}\label{llD}
 \begin{tikzcd}
 \hspace{-0.7cm}
\Big(CH_*(D^b_{dg}(X))((u))[d-2],d_{H}+uB\Big)\arrow{r}{\simeq}&\Big(H^*(X)((u))[d-2],0\Big)&\Big(PV^{\bullet,\bullet}(X)((u))[2],\bar{\partial}-u\partial\Big)\arrow[swap]{l}{\simeq}.
      \end{tikzcd}
     \end{equation} 
 The first map is the HKR map (see \cite{kon03}), the second map is described in proposition 5.2.1 of \cite{coli12}. The compatible symplectic structures of the symplectic chain complexes are given by following diagram (commuting on homology with respect to the differentials in diagram \ref{llD}).
 \begin{equation}
    \begin{tikzcd}\label{komkom}
        CH_*(D^b_{dg}(X))((u))[d-2]^{\otimes 2} \arrow{r}{}\arrow{d}&k[2d-6]\arrow[equal]{d}\\
         H^*(X)((u))[d-2]^{\otimes 2} \arrow{r}{}&k[2d-6]\arrow[equal]{d}\\
         PV^{\bullet,\bullet}(X)((u))[2]^{\otimes 2}\arrow{r}{}\arrow{u}&k[2d-6],
 \end{tikzcd}
   \end{equation}
where the upper most horizontal map is given by the higher residue pairing induced from the Mukai pairing (see section 7.1 of \cite{Cos05}), the second horizontal map is given by the standard integration pairing on  $H^*(X)$ for a compact manifold times the residue pairing (see proposition 5.2.1 of \cite{coli12}) and the last horizontal map is induced from pairing \ref{polypair} times the residue pairing (see section 4.3 of \cite{coli12}). The fact that the upper square commutes on homology is documented in \cite{tu24}, the fact that the second square commutes is proposition 5.2.1 of~\cite{coli12}. 
\end{proof}
\subsubsection{Classical Interacting BCOV Theory}\label{scibcov}
The original interaction term of classical BCOV theory from \cite{BCOV94} was defined as 
\begin{equation}\label{sesb}
I_{cubic}:=\int_XTr(\mu^3),
\end{equation}
recalling the pairing from equation \eqref{polypair} and for $\mu\in PV^{1,1}(X)[2]$ (that is for the ordinary, not (anti) ghost/field content and not taking into account the gravitational descendants encoded by the $u$ parameter).
However, it is not clear how to extend the functional \eqref{sesb} to positive powers of $u$ in $(PV^{\bullet,\bullet}(X)\llbracket u\rrbracket[2]$.
On the other hand the functional \eqref{sesb} induces a Lie algebra structure on $PV^{\bullet,\bullet}(X)[1]$ via the square zero vector field $$\{I_{cubic},\_\}_{L=0}:\mathcal{O}\big(\mathcal{E}_{BCOV}(X)\big)|_{\gamma=0}\rightarrow \mathcal{O}\big(\mathcal{E}_{BCOV}(X)\big)|_{\gamma=0}$$  which is well-defined for $L=0$, see section 5.9.4 of \cite{Co11}).
We can see that the resulting Lie structure on $PV^{\bullet,\bullet}(X)[1]$ exactly comes from the standard Schouten bracket $[\_\_]_S$ on polyvector fields, see eg. section 2.1 of \cite{coli12}.
We can extend the  Schouten bracket $u$-linearly to $(PV^{\bullet,\bullet}(X)\llbracket u\rrbracket[1]$. This leads Costello-Li to the following definition.
\begin{df}\label{cibcov}
 \emph{Classical interacting BCOV theory formulated in the BV formalism} is the datum of the $L_\infty$-algebra
 \begin{equation}\label{ezd}
     \Big(PV^{\bullet,\bullet}(X)\llbracket u\rrbracket[1],\bar{\partial}+u\partial,[\_\_]_S\Big)
 \end{equation}
 and the shifted Poisson algebra
 \begin{equation}
 \Big( \mathcal{O}\big(\mathcal{E}_{BCOV}(X)\big)|_{\gamma=0},\bar{\partial}+u\partial+d_{CE},\{\_,\_\}_{L=0}\Big),
 \end{equation}
 where $d_{CE}$ is the differential induced from the $L_\infty$-algebra structure \eqref{ezd}.
\end{df}
Costello-Li provide an alternative description of definition \ref{cibcov}. They note that there is an element 
\begin{equation}
I_{BCOV}\in \mathcal{O}\big(\mathcal{E}_{BCOV}(X)\big)|_{\gamma=0}\end{equation}
which satisfies the classical master equation
$$(\bar{\partial}+u\partial) I_{BCOV}+\frac{1}{2}\{I_{BCOV},I_{BCOV}\}_{L=0}=0$$
or in other words it gives a square zero vector field
$$\Big(\bar{\partial}+u\partial+\{I_{BCOV},\_\}_{L=0}\Big)^2=0,$$
which induces an $L_\infty$-structure on the complex underlying \eqref{ezd}.
Costello-Li further prove that there is an $L_\infty$-quasi isomorphism between the  $L_\infty$-algebra determined by the square zero vector field $\bar{\partial}+u\partial+\{I_{BCOV},\_\}_{L=0}$ and the $L_\infty$-algebra \eqref{ezd}, see section 6 of \cite{coli12}, also around section 8 of \cite{CL15}.

We expect the structures discussed in this subsection to be related to the structures from subsection \ref{inclsfts}, which we make precise via following conjecture.

\begin{Conj}\label{conj_fo}
The dg-Lie algebra \eqref{ezd} is $L_\infty$-quasi-isomorphic to the $L_\infty$-algebra of classical interacting closed SFT from definition \ref{cisft} applied to the smooth proper Calabi-Yau category ${\mathcal{C}=D^b_{dg}(X)}$ of quasi-coherent sheafs.\footnote{Or better a cyclic model of the latter via theorem 2.37 of \cite{AmTu22}.}
\end{Conj}
Let us make some remarks supporting this conjecture. We can identify the underlying chain complexes of the $L_\infty$-algebras featuring in conjecture \ref{conj_fo} via 
\begin{equation}\label{dstabi}
\Big(CH_*(\mathcal{C})[u^{-1}][(d+1)]^\vee,d_{hoch}+uB\Big)\simeq \Big(CH^*(\mathcal{C})\llbracket u\rrbracket[(d+1)],d_{hoch}+uB\Big)\simeq \Big(PV^{\bullet,\bullet}(X)\llbracket u\rrbracket[1],\bar{\partial}+u\partial\Big),
\end{equation}
where we first use the higher residue pairing and then the HKR isomorphism (see eg. section 3.3 of \cite{CL15}). Next we make some remarks about how also the higher brackets of the $L_\infty$-algebras featuring in conjecture \ref{conj_fo} should be related:
We recall that the $L_\infty$-structure \eqref{ezd} is (non-trivially) equivalently described via the square zero vector field $$\bar{\partial}+u\partial+\{I_{BCOV},\_\}_{L=0}:\mathcal{O}\big(\mathcal{E}_{BCOV}(X)\big)|_{\gamma=0}\rightarrow \mathcal{O}\big(\mathcal{E}_{BCOV}(X)\big)|_{\gamma=0},$$
whereas the $L_\infty$-structure \eqref{cisft} is defined via the square zero vector field 
$$d_{hoch}+uB+\{\rho(S^c_{g=0}),\_\}_c:\widetilde{\mathcal{F}^c(\mathcal{C})}|_{\gamma=0}\rightarrow \widetilde{\mathcal{F}^c(\mathcal{C})}|_{\gamma=0}.$$
We should be able to identify under the diagram \eqref{dstabi} the shifted Poisson structures $\{\_,\_\}_{L=0}$ from definition \ref{ezd} respectively $\{\_,\_\}_c$ from definition \ref{cych}, similarly for the differentials $$\bar{\partial}+u\partial:\mathcal{O}\big(\mathcal{E}_{BCOV}(X)\big)|_{\gamma=0}\rightarrow \mathcal{O}\big(\mathcal{E}_{BCOV}(X)\big)|_{\gamma=0}$$ respectively $$d_{hoch}+uB:\widetilde{\mathcal{F}^c(\mathcal{C})}|_{\gamma=0}\rightarrow \widetilde{\mathcal{F}^c(\mathcal{C})}|_{\gamma=0},$$ along the lines of proposition \ref{propprop}.
Then the conjecture reduces to showing that under such an identification the Maurer-Cartan element $$I_{BCOV}\in\mathcal{O}\big(\mathcal{E}_{BCOV}(X)\big)|_{\gamma=0}$$ is equivalent to the Maurer-Cartan element $$S^c_{g=0}\in\widetilde{\mathcal{F}^c(\mathcal{C})}|_{\gamma=0}$$ for $\mathcal{C}=D^b_{dg}(X)$ (or again better a cyclic model of the latter via theorem 2.37 of \cite{AmTu22}). Following theorem provides strong evidence that these two elements are indeed equivalent.
\begin{theorem}[theorem 6.6 of \cite{AmTu22}]\label{linojunwu}
Given a smooth Frobenius associative algebra $A$, which induces a commutative Frobenius algebra structure on $HH_*(A)$ or equivalently a closed 2d TCFT with structure maps 
$$\omega^A_{g,n}:HH_*(A)^{\otimes n}\rightarrow k,$$
there is an essentially unique splitting $s^{can}$ and
we have (recalling the splitting formality morphism \ref{cltrmo})
\begin{equation}\label{ssvl}
(\mathcal{K}_s\circ\rho_A)(S^c_{g=0})(\lambda_1u^{k_1},\cdots, \lambda_nu^{k_n})=\omega(\lambda_1,\cdots,\lambda_n)\cdot\int_{\widebar{\mathcal{M}_{0,n}}}\psi^{k_1}\cdots \psi^{k_n}.
\end{equation}
    \end{theorem}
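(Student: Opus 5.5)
We would establish the two assertions of theorem \ref{linojunwu} separately: first the existence and essential uniqueness of $s^{can}$, then the genus zero formula \eqref{ssvl}.

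\textbf{Splitting.} For a smooth Frobenius associative algebra $A$, concentrated in degree zero, Hochschild homology is concentrated in degree zero, and for degree reasons $B$ acts trivially on homology. The natural candidate for $s^{can}$ is the splitting that is compatible with the $\mathbb{Z}$-grading (equivalently, homogeneous for the weight grading by $u$). We would argue that the space of splittings is a torsor under the group of $u$-linear automorphisms $1+uR_1+u^2R_2+\cdots$ of $HH_*(A)\llbracket u\rrbracket$ that are compatible with the residue pairing. Degree constraints then force every $R_i$ to vanish, which gives essential uniqueness. This step looks routine.

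\textbf{Genus zero formula.} Both sides of \eqref{ssvl} are genus zero correlators, so we would compare them as solutions of the same recursion. The plan has four steps.
\begin{itemize}
\item[(i)] Since $SV^c_{g=0}$ satisfies the classical part of \eqref{csvmce}, the element $\mathcal{K}_s\rho_A(S^c_{g=0})$ is a Maurer--Cartan element of the abelian algebra $\mathcal{F}^c(A)^{Triv}$ at $\gamma=0$. Its coefficients therefore form a collection of multilinear forms on $HH_*(A)\llbracket u\rrbracket$.
\item[(ii)] We would compute the $u$-free part. With $k_i=0$, the $L_\infty$-morphism $\mathcal{K}_s$ acts as the identity on the linear term, and the image of the string vertices gives the closed TCFT correlators $\omega_{0,n}$. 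This matches the right-hand side, because $\int_{\widebar{\mathcal{M}_{0,n}}}1$ vanishes unless $n=3$ and equals $1$ for $n=3$.
\item[(iii)] We would show that the collection of forms satisfies the string, dilaton and topological recursion (TRR) equations in genus zero. The TRR should come from the gluing structure of $SV^c$ together with the explicit graph formula for $\mathcal{K}_s$: the $u$-powers are produced by edges decorated with the splitting, which mimics the $\psi$-class pullback formula $\psi_i=\sum D$ on $\widebar{\mathcal{M}_{0,n}}$.
\item[(iv)] Genus zero descendants are determined by the TRR and the primary correlators. Combining this with (ii) gives the product form, since for a commutative Frobenius algebra the primary genus zero correlators are $\omega_{0,n}$, which factor as $\omega(\lambda_1,\dots,\lambda_n)$.
\end{itemize}

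\textbf{Main obstacle.} Step (iii) is the hard part: proving the TRR directly from the Căldăraru--Tu graph formula for $\mathcal{K}_s$. We would try to reduce it to the semisimple or formal case. Since $HH_*(A)$ is a commutative Frobenius algebra, we would deform to a split semisimple one, where the category is Morita equivalent to a sum of copies of $k$. There, theorem \ref{JTT} (Tu's computation for a point) gives exactly $\int_{\widebar{\mathcal{M}_{0,n}}}\psi^{k_1}\cdots\psi^{k_n}$. We would then conclude by additivity of the construction under direct sums of categories, provided the canonical splitting is compatible with the deformation.
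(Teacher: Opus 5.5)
The thesis does not prove this statement; it quotes it from \cite{AmTu22}, Theorem 6.6. So your argument has to stand on its own. Your treatment of the splitting is fine in outline. The genus zero formula, however, is not established. Everything rests on step (iii), which you leave open, and the way you propose to get around it fails. Deforming the commutative Frobenius algebra $HH_*(A)$ to a split semisimple one is not a deformation of the cyclic algebra $A$, and $A$ is what $\mathcal{K}_s\rho_A(S^c_{g=0})$ is actually built from. Even a genuine deformation of $A$ would need deformation invariance of the categorical invariants and of $s^{can}$, and nothing in your argument supplies that.

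The missing observation is that no deformation is needed, because $A$ is already semisimple:
\begin{itemize}
\item Smoothness gives finite global dimension: tensor a finite resolution of the diagonal bimodule by finitely generated projective bimodules with a module $M$, and you get a finite projective resolution of $M$.
\item A Frobenius algebra is self-injective.
\item A self-injective algebra of finite global dimension is semisimple, since the last term of a finite projective resolution is injective and therefore splits off.
\end{itemize}
After a finite base change, $A\cong\prod_i M_{n_i}(k)$ with trace $\sum_i c_i\,\mathrm{tr}$. Each $(M_{n_i}(k),c_i\,\mathrm{tr})$ is cyclically Morita equivalent to $(k,c_i)$, via the two-object cyclic category on the free and the simple module. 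Your endgame (Morita reduction to copies of $k$, Theorem~\ref{JTT}, additivity) then applies directly, and steps (iii)--(iv) become unnecessary. You still have to prove three things:
\begin{itemize}
\item $\mathcal{K}_s\circ\rho$ is compatible with such strict cyclic equivalences and with base change;
\item the genus zero potential is additive over products (the canonical splitting of a product is the product of the canonical ones, by essential uniqueness);
\item rescaling the trace on $k$ by $c$ rescales both sides of \eqref{ssvl} in the same way.
\end{itemize}

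Separately, step (ii) is wrong as written, and (iv) inherits the error. The $u$-free genus zero correlators are not $\omega_{0,n}$. For a commutative Frobenius algebra, $\omega_{0,n}(\lambda_1,\dots,\lambda_n)$ is the trace of $\lambda_1\cdots\lambda_n$, which is generally nonzero for every $n$. The right-hand side of \eqref{ssvl}, by contrast, vanishes when $k_1=\dots=k_n=0$ and $n>3$, so (ii) as stated contradicts the theorem you want to prove. What is true is that a degree count forces $\sum_i k_i=n-3$ for every nonzero coefficient. The inputs are: Calabi--Yau dimension $0$, $HH_*(A)$ concentrated in degree $0$, degree-zero maps $\rho_A$ and $\mathcal{K}_s$, and an $n$-point genus zero vertex of dimension $2n-6$. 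Hence the only nonzero primaries are the three-point ones, and they equal $\omega_{0,3}$. It is this cubic primary potential that genus zero TRR turns into $\omega(\lambda_1,\dots,\lambda_n)\int_{\widebar{\mathcal{M}_{0,n}}}\psi^{k_1}\cdots\psi^{k_n}$. If you keep the TRR route, (iv) has to be stated in this form.
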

We should compare this with the formula from definition 2.10.1 of \cite{coli12} for 
${I_{BCOV}\in\mathcal{O}\big(\mathcal{E}_{BCOV}(X)\big)|_{\gamma=0}}$ given by 
\begin{equation}\label{axig}
I_{BCOV}(\mu_1u^{k_1},\cdots, \mu_nu^{k_n})=\mu_1\wedge\cdots\wedge\mu_n\int_{\widebar{\mathcal{M}_{0,n}}}\psi^{k_1}\cdots \psi^{k_n},
\end{equation}
furthermore in this context there is a unique splitting provided by the complex conjugate, see section 6.3 of \cite{AmTu22}.
\begin{remark}
Let us recall sections \ref{2dop} and \ref{cycmodop} where we explain how open-closed TCFT's induce the open and closed BD algebras and shifted Poisson algebras considered here. This makes clear that a relevant question related to conjecture \ref{conj_fo} and more precisely related to comparing the elements \eqref{ssvl} and \eqref{axig} via above discussion is whether we can construct some (analytic version) of a (genus zero) open-closed TCFT (recalling this notion from definition \ref{octcft123}) such that it's purely open sector is described by the smooth `cyclic' dg-algebra of Dolbeault forms on X possibly with values in some vector bundles generating $D^b_{dg}(X)$ and whose purely closed part is described by the `Frobenius' algebra of polyvector fields with the pairing \eqref{polypair}. Then we could hope to apply some version of theorem \ref{linojunwu} \footnote{which we cannot directly apply due to the infinite dimensional nature of these objects} (such that the algebra of Dolbeault froms would take the place of the algebra $A$ from theorem \ref{linojunwu} and such that the algebra of polyvector fields would correspond to $HH_*(A)$ from theorem \ref{linojunwu}), which would then imply conjecture \ref{conj_fo}.

In fact such an analytic version of an open TCFT has been constructed in \cite{co07b} in a rather general fashion; we would be interested in applying this theory to example 3.2 from \cite{co07b}.  
\end{remark}

\begin{remark}
For further related questions we refer to section 8 of \cite{CL15}, the paper \cite{Tu19} and also to proposition 8.2 of \cite{HVZ08}. 
\end{remark}
In \cite{li11, Li16} quantum BCOV theory on the elliptic curve is constructed. Its $L\rightarrow \infty$ limit defines correlation functions\footnote{Since an elliptic curve is compact the $L\rightarrow \infty$ limit of the regularized BV Laplacian and the associated shifted Poisson bracket is zero. Thus the ${L\rightarrow \infty}$ limit of the obtained solution to the QME defines mentioned correlation functions; see section 2.3 of \cite{li11}.  It would be interesting to compare this phenomena with how a splitting of the non-commutative Hodge filtration is used to trivialize the BV structure in the algebraic set-up; see around equation \eqref{cltrmo}. }, which are shown to coincide with the Gromov-Witten invariants of the mirror elliptic curve. This presents one striking application of the circle of ideas around string field theory to the topic of mirror symmetry, as laid out in the introduction in section \ref{eahms}.

\newpage
\section{Large $N$ Open String Field Theory}\label{ncw}
In this section we explain how to obtain shifted Poisson and BD algebra structures from cyclic $A_\infty$-categories and their quantizations. We will see later in section \ref{lqtsection} that we can interpret those as describing the observables of large $N$ open string field theory, compare also around equation \eqref{lqtintro} from the introduction. In fact in order to do that we should restrict to what we call essentially finite cyclic $A_\infty$-categories, which we introduce in the last part of this section.
\subsection{Shifted Poisson Structure from Cyclic $A_\infty$-Categories} 
Given a collection of (cyclic) chain complexes $V_B$, see definition \ref{ColV}, we have that $\big(Cyc^*_+(V_B)[-1],d\big)$ is a cochain complex; this is just definition \ref{llekwr} applied to the $A_\infty$-category with zero (higher) compositions built from $V_B$.
Recall that we adopted the notation 
$$(a_1a_2\cdots a_n)\in Cyc^*_+({V_B})[-1],$$
where by definition $a_i\in Hom(\lambda,\mu)^\vee[-1]$ and $a_{i+1}\in Hom(\mu,\nu)^\vee[-1]$ for $\lambda,\mu,\nu\in B$.
\begin{cons}
   
 We define
 $$\{\_,\_\}:Cyc^*_+({V_B})[-1]\otimes Cyc^*_+({V_B})[-1]\rightarrow Cyc^*_+({V_B})[-1]$$
 by 
 \begin{equation}\label{bracket}
 \{(a_1a_2\cdots a_n),(b_1b_2\cdots b_m\}= \sum_{i=1}^m\sum_{j=1}^n\langle a_j,b_i\rangle^{-1}(-1)^*(a_{i+1}\cdots a_na_1\cdots a_{i-1}b_{j+1}\cdots b_mb_1\cdots b_{j-1})
 \end{equation}
for $n+m>2$ and zero otherwise, $(-1)^*$ being determined by the Koszul rule. 
Here 
\begin{equation*}
   \langle a_j,b_i\rangle^{-1}=
    \begin{cases}
      \langle a_j,b_i\rangle_{\lambda\mu}^{-1}, & \text{if}\ a_j\in Hom(\lambda,\mu)^\vee[-1]\ \text{and}\ b_i\in Hom(\mu,\lambda)^\vee[-1]\ \text{for some $\mu,\lambda\in Ob(\mathcal{C})$} \\
      0, & \text{otherwise}
    \end{cases}
      \end{equation*}
      \end{cons}
\begin{lemma}
   Let $V_B$ be a collection of cyclic chain complexes of degree $d$. Then 
   $$(Cyc^*_+(V_B)[-1],d,\{\_,\_\})$$
   is a $(d-2)$-shifted dg Lie algebra. 
   \end{lemma}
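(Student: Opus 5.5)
We must show three things: the bracket \eqref{bracket} has degree $d-2$, it is graded (shifted) antisymmetric, it satisfies the shifted Jacobi identity, and $d$ is a derivation of it. The most economical route is to realize $\{\_,\_\}$ as the commutator bracket of derivations, i.e. as the necklace (Kontsevich) bracket, so that Jacobi becomes automatic. Concretely, we view $Cyc^*_+(V_B)[-1]$ as the space of cyclically invariant functions on the "non-commutative formal symplectic manifold" built from $V_B[1]$. For each cyclic word $f$ we define a Hamiltonian derivation $X_f$ of the tensor coalgebra $Bar V_B$ (equivalently of the completed tensor algebra on the duals $\bigoplus_{i,j}Hom(\lambda_i,\lambda_j)^\vee[-1]$). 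It is obtained by cutting open $f$ at one letter $a_j$ and using the inverse pairing $\langle\_,\_\rangle^{-1}$ to turn $a_j$ into a generator. Then \eqref{bracket} is precisely $X_f(g)$, which is a cyclic word again, and the map $f\mapsto X_f$ is injective on words of length $\geq 2$. The restriction $n+m>2$ handles the length-one words.

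The key steps, in order, are as follows. First, we check the degree: $\langle\_,\_\rangle_{ij}$ has degree $-d$, so its inverse has degree $d$, and removing two letters of degree shifted by $-1$ each produces the total shift $d-2$. This bookkeeping is routine given the shift conventions of the paper. Second, we prove $X_{\{f,g\}}=[X_f,X_g]$, the graded commutator. This is the standard identity for necklace Lie algebras (compare Kontsevich and Ginzburg; for several objects one simply inserts the boundary-matching condition, and $\langle a_j,b_i\rangle^{-1}=0$ whenever the objects do not match). Concatenating the cut-open words respects the object labels, so the multi-object case is formally identical to the single-object case. Third, shifted antisymmetry follows from the symmetry of $\langle\_,\_\rangle_{ij}$ together with $\langle\_,\_\rangle_{ij}$ versus $\langle\_,\_\rangle_{ji}$ being transposes, after a Koszul sign comparison. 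The shifted Jacobi identity then follows from the Jacobi identity of the commutator together with injectivity of $f\mapsto X_f$. Alternatively, it can be checked directly: the double sum for $\{\{f,g\},h\}$ splits into terms where the two contractions hit $g$ twice and terms where they hit $f$ and $h$ separately, and these cancel in the cyclic sum.

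Finally, compatibility with $d$: the internal differential on $V_B$ is compatible with the pairing, in the sense that $\langle d x,y\rangle\pm\langle x,dy\rangle=0$, since the pairings are chain maps. Hence $\langle\_,\_\rangle^{-1}$ is $d$-closed. Because $d$ acts on cyclic words as a derivation letter by letter, it commutes with the contraction, which gives $d\{f,g\}=\{df,g\}\pm\{f,dg\}$.

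I expect the main obstacle to be the sign bookkeeping $(-1)^*$ in \eqref{bracket}, needed to make antisymmetry and Jacobi hold on the nose with the $[-1]$ shifts and the $(d-2)$ degree. I would handle this by proving the commutator identity in the second step at the level of the tensor algebra, where Koszul signs are automatic, and then defining the signs in \eqref{bracket} as those induced by that identity.
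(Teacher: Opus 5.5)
Your argument is correct and is essentially the paper's. The paper does not prove antisymmetry and Jacobi itself; it cites Chen (Definition-Lemma 2) and Hamilton (Proposition 2.8), which rest on the same non-commutative symplectic (necklace bracket) picture you use, and it gets compatibility with $d$ exactly as you do, from the pairing being closed under the differential.

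The one point to tighten is the length-one words. In the reduced coalgebra $Bar V_B$ a length-one cyclic word has no associated coderivation, so injectivity on lengths $\geq 2$ alone does not give the Jacobi identities involving such words. Working with derivations of the completed tensor algebra fixes this: there $X_{(a)}$ is a constant derivation, $f\mapsto X_f$ is injective on all lengths $\geq 1$, and two constant derivations commute, which matches the convention $\{(a),(b)\}=0$.
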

\begin{proof}
The fact that this is a shifted Lie algebra is proven in \cite{Che10}, Definition-Lemma 2.
For the algebra case see \cite{Ha07}, proposition 2.8 where a notation closer to here is used.
The fact that the differential is compatible with the Lie structure follows from the fact that the cyclic structure is by definition closed with respect to this differential.
\end{proof}
Recall the notion of cyclic $A_\infty$-category from definition \ref{cycAinfcat}.
\begin{lemma}\label{Ham1}

   A cyclic $A_\infty$-category structures on $V_B$ induces a Maurer-Cartan elements of $(Cyc^*_+(V_B)[-1],d,\{\_,\_\})$ of degree $(3-d)$.
    \end{lemma}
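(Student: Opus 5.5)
The candidate Maurer--Cartan element is the hamiltonian $I_\mathcal{C}=\langle m\_,\_\rangle$ from definition \ref{cycAinfcat}. By the cyclicity condition \eqref{cycl} it lies in $Cyc^*_+(V_B)[-1]$ and has degree $3-d$, as noted right after that definition. What remains is the Maurer--Cartan equation
$$dI_\mathcal{C}+\tfrac12\{I_\mathcal{C},I_\mathcal{C}\}=0 .$$

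First I unpack both terms as cyclic words. Write $I_\mathcal{C}=\sum_{n\ge 2} I_n$, where $I_n(x_0,\dots,x_n)=\langle m_n(x_1,\dots,x_n),x_0\rangle$ is a cyclic word of length $n+1$.

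The term $dI_\mathcal{C}$ is the dual of the internal differential. Because $\langle\_,\_\rangle$ is a chain map, $dI_n$ is the cyclic word whose value is $\sum_j \pm\langle m_n(\dots,dx_j,\dots),x_0\rangle \pm \langle m_n(x_1,\dots,x_n),dx_0\rangle$. Moving $d$ across the pairing, this becomes $\langle [d,m_n](x_1,\dots,x_n),x_0\rangle$ up to a global sign.

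For the bracket, formula \eqref{bracket} contracts one letter of $I_r$ with one letter of $I_s$ using the inverse pairing $\langle\_,\_\rangle^{-1}$. By cyclic symmetry we may take the contracted letters to be the output slots. Contracting with the inverse of a nondegenerate pairing amounts to plugging the output of one operation into the input of the other. Concretely,
$$\{I_r,I_s\}(x_0,\dots,x_{n})=\pm\sum \langle m_r(x_1,\dots,m_s(\dots),\dots,x_{n}),x_0\rangle,$$
summed over all cyclic positions of the inner $m_s$, with $n=r+s-1$. Here cyclicity of $I$ is what lets every contraction be rewritten with $x_0$ as the outer output. Matching the factor $\tfrac12$ with the symmetric count of the pairs $(r,s),(s,r)$, the length $n+1$ component of $dI+\tfrac12\{I,I\}$ becomes
$$\Big\langle \sum_{r+s+t=n} m_{r+1+t}(id^{\otimes r}\otimes m_s\otimes id^{\otimes t})(x_1,\dots,x_n),\,x_0\Big\rangle .$$
This sum contains the $d=m_1$ terms. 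It vanishes by the $A_\infty$-relations \eqref{A_inf_re}, and nondegeneracy is not even needed in this direction.

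The multi-object nature only affects the bookkeeping. The inverse pairing in \eqref{bracket} is zero unless the boundary labels match, and this is exactly the composability condition for $m_s$ feeding into $m_r$. So the argument is identical to the one-object case treated in \cite{Ha07} (and \cite{HaLa08}), with object labels carried along.

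The main obstacle is the sign and combinatorial-factor bookkeeping. One has to check that the Koszul signs in \eqref{bracket}, the shift $[-1]$, and the degree $d$ of the pairing combine so that every term of the $A_\infty$-relation appears with the correct relative sign and multiplicity one. In particular the $\tfrac12$ must cancel the double counting from symmetry of $\{\_,\_\}$ on an odd-shifted element. To settle this I would first check it in the simplest nontrivial case, the component coming from $m_2$ alone (associativity up to $d$), and then argue in general by comparison with the coderivation formalism. There $\{I,\_\}$ corresponds to the coderivation $m_\mathcal{C}$ on the bar construction, so $\{I,\{I,\_\}\}$ corresponds to $m_\mathcal{C}^2$. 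This is the approach of \cite{Ha07}, proposition 2.8 / \cite{Che10}, and it gives the signs uniformly.
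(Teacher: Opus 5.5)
Your proposal is correct and takes essentially the paper's route. The paper just cites the standard fact that the square-zero coderivation $m_\mathcal{C}$ on the bar construction, compatible with $d$, induces a differential on $Cyc^*_+(V_B)$, and that this is equivalent to the Maurer--Cartan equation for $I_\mathcal{C}=\langle m_\mathcal{C}\_,\_\rangle$; your componentwise identification of $dI_\mathcal{C}+\tfrac12\{I_\mathcal{C},I_\mathcal{C}\}$ with the $A_\infty$-relations paired against $x_0$, with signs fixed by that same coderivation comparison, is an explicit unpacking of it, and it has the small merit of using the full relations including the $m_1=d$ terms, so you see $dI_\mathcal{C}$ and $\tfrac12\{I_\mathcal{C},I_\mathcal{C}\}$ cancel against each other.
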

   
\begin{proof}
This is a standard result, see eg. 4.4.3 of \cite{da24}.
Recall that we can describe a cyclic $A_\infty$-category structure on $V_B$ as a degree $-1$ 
coderivation on $BarV_B$ that squares to zero and commutes with $d$.
We get an induced differential $m_\mathcal{C}$ on $Cyc^*_+(V_B)$ that squares to zero and commutes with the internal differential $d$.
This fact then implies that $I_\mathcal{C}=\langle m_\mathcal{C}\_,\_\rangle$ satisfies 
$$dI_\mathcal{C}+\frac{1}{2}\{I_\mathcal{C},I_\mathcal{C}\}=0,$$
ie. it is a Maurer-Cartan element.
\end{proof}
 \begin{remark}
We call such Maurer-Cartan elements also classical interaction term or $A_\infty$-hamiltonian.
    \end{remark}
 We recall that for $V$ a chain complex we use the notation $SymV:=\bigoplus_{n=0}V^{\otimes n}/S_n.$ 
 It is a standard fact that for $\mathfrak{g}$ a dg n-shifted Lie algebra $Sym(\mathfrak{g})$ 
 becomes an n-shifted Poisson dg algebra by extending the bracket according to the Leibniz rule and the differential as a derivation.
 Thus from the previous lemma it follows:
\begin{theorem}\label{fP}
    Let $V_B$ be a collection of cyclic chain complexes of degree $d$.
    Then `the observables of the free\footnote{a name motivated by physics, where a theory coming from a quadratic action functional is referred to as free.} theory' 
    $$\mathcal{F}^{fr}(V_B):=Sym(Cyc^*_+(V_B)[-1])$$
    are a $(d-2)$-shifted Poisson dg-algebra.

\end{theorem}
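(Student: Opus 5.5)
The statement is a formal consequence of the preceding lemma. The plan is to apply the standard ``free Poisson algebra on a shifted Lie algebra'' construction to $\mathfrak{g}:=\big(Cyc^*_+(V_B)[-1],d,\{\_,\_\}\big)$, which that lemma shows is a $(d-2)$-shifted dg Lie algebra. Concretely, I would take $\mathcal{F}^{fr}(V_B)=Sym(\mathfrak{g})$ with its free graded commutative product $\odot$. The differential $d$ is extended as the unique degree $1$ derivation of $\odot$ restricting to $d$ on $Sym^1=\mathfrak{g}$. The bracket is extended as the unique biderivation of degree $d-2$ which restricts to $\{\_,\_\}$ on generators, via
\[
\{x_1\odot\cdots\odot x_n,\,y_1\odot\cdots\odot y_m\}=\sum_{i,j}\pm\,\{x_i,y_j\}\odot x_1\odot\cdots\widehat{x_i}\cdots\odot x_n\odot y_1\odot\cdots\widehat{y_j}\cdots\odot y_m .
\]
The signs here are fixed by the Koszul rule for the shifted degrees $|x|+(d-2)$.

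The verification then proceeds in three steps.

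\textbf{First,} well-definedness. The formula is compatible with the symmetric group relations defining $Sym$, because the summation over $i,j$ is symmetric and the Koszul signs transform correctly under transposition of adjacent factors.

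\textbf{Second,} the Leibniz rule holds by construction, and graded antisymmetry of the shifted bracket on $Sym(\mathfrak{g})$ follows from that on $\mathfrak{g}$.

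\textbf{Third,} the shifted Jacobi identity and compatibility with $d$. Here I would use the standard reduction argument. The Jacobiator $J(a,b,c)$ is a derivation in each argument, since the bracket is a biderivation and $\odot$ is graded commutative. Likewise $d\{a,b\}-\{da,b\}\mp\{a,db\}$ is a biderivation, and $d^2=\tfrac12[d,d]$ is a derivation. Each of these expressions is therefore determined by its values on generators in $Sym^1=\mathfrak{g}$. On generators they vanish by the previous lemma: Jacobi holds, $d^2=0$, and $d$ is a derivation of the bracket. Hence they vanish identically.

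No step is a real obstacle; the only place requiring care is the sign bookkeeping in the Jacobiator-is-a-triderivation argument. I would handle this by working in the shifted suspension, where the bracket becomes an ordinary graded Lie bracket on $\mathfrak{g}[d-2]$ or its appropriate parity shift, and then citing the classical fact that $Sym$ of a graded Lie algebra with a biderivation extension is Poisson (the Gerstenhaber/$P_{n}$ case). Since the paper states this as a standard fact just before the theorem, the proof amounts to invoking it together with the preceding lemma.
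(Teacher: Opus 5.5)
Your proposal is correct and follows the same route as the paper: the paper deduces the theorem in one line from the preceding lemma, which says that $\big(Cyc^*_+(V_B)[-1],d,\{\_,\_\}\big)$ is a $(d-2)$-shifted dg Lie algebra, together with the standard fact that $Sym$ of an $n$-shifted dg Lie algebra is an $n$-shifted Poisson dg algebra once the bracket is extended by the Leibniz rule and the differential as a derivation. Your argument that the Jacobiator, the compatibility defect and $d^2$ are (tri/bi)derivations, and so can be checked on generators, just unpacks that standard fact, which the paper invokes without spelling out.
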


\begin{df}\label{F_cl} 
Given a cyclic $A_\infty$ category $\mathcal{C}$ of degree $d$  denote by
    $$\mathcal{F}^{cl}(\mathcal{C}):=\mathcal{F}^{fr}(V_{Ob\mathcal{C}})^{tw}$$
    the $(d-2)$-shifted Poisson dg-algebra obtained by twisting $\mathcal{F}^{fr}(V_{Ob\mathcal{C}})$ by the Maurer-Cartan element
    determined by cyclic $A_\infty$-structure through lemma \ref{Ham1}.
    
\end{df}
Here $cl$ stands for classical.\footnote{ a natural name from the physics perspective would also be interactive.}
\begin{lemma}\label{cov}
We find a functor
$$(cyc_dA_\infty cat)_{st}\rightarrow   (Poiss_{d-2})^{op},$$
$$(F:(\mathcal{C},\langle\_,\_\rangle_\mathcal{C})\rightarrow (\mathcal{D},\langle\_,\_\rangle_\mathcal{D}))\mapsto(F^*:\mathcal{F}^{cl}(\mathcal{D})\rightarrow \mathcal{F}^{cl}(\mathcal{C}))$$
\end{lemma}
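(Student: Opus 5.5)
The functor is built in two stages: first on the free level $\mathcal{F}^{fr}$, using only the underlying collections of cyclic chain complexes, and then checking that it respects the Maurer-Cartan elements $I_\mathcal{C}$ of lemma \ref{Ham1}, so that it passes to the twisted algebras $\mathcal{F}^{cl}$.

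\textbf{Free level.} Let $F:\mathcal{C}\rightarrow\mathcal{D}$ be a cyclic strict $A_\infty$-functor. It consists of a map of object sets $F:Ob(\mathcal{C})\rightarrow Ob(\mathcal{D})$ and chain maps $F_1:Hom_\mathcal{C}(\lambda,\mu)\rightarrow Hom_\mathcal{D}(F\lambda,F\mu)$. Dualizing $F_1$ gives maps $Hom_\mathcal{D}(F\lambda,F\mu)^\vee[-1]\rightarrow Hom_\mathcal{C}(\lambda,\mu)^\vee[-1]$. A cyclic word in $\mathcal{D}$ is sent to the sum over all liftings of its cyclic sequence of objects $(\nu_0,\dots,\nu_n)$ to sequences $(\lambda_0,\dots,\lambda_n)$ in $\mathcal{C}$ with $F\lambda_i=\nu_i$. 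Each letter is pulled back along the corresponding component of $F_1$.

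This is the restriction to cyclic tensors of the dual of the induced map $Bar\mathcal{C}\rightarrow Bar\mathcal{D}$. That map commutes with $t$, so it preserves $\ker(1-t)$, and dually we obtain a chain map $F^*:Cyc^*_+(V_\mathcal{D})[-1]\rightarrow Cyc^*_+(V_\mathcal{C})[-1]$. We extend $F^*$ multiplicatively to the symmetric algebras. Functoriality, $(G\circ F)^*=F^*\circ G^*$ and $id^*=id$, is immediate because strict functors compose componentwise.

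\textbf{Bracket.} Next we check that $F^*$ intertwines the brackets \eqref{bracket}. The bracket contracts one letter $a_j$ of the first word with one letter $b_i$ of the second via the inverse pairing $\langle\_,\_\rangle^{-1}$. We need the pulled-back inverse pairing to match, i.e.
\[
(F_1^\vee\otimes F_1^\vee)\big(\langle\_,\_\rangle_\mathcal{D}^{-1}\big)=\langle\_,\_\rangle_\mathcal{C}^{-1}.
\]
Since $F_1$ preserves pairings \eqref{nfn} and these are non-degenerate, $F_1$ is injective. If $F_1$ is an isomorphism on each Hom-space, the identity above is exact. In general it fails, because the inverse pairing of $\mathcal{D}$ restricted along $F_1$ also sees the orthogonal complement of the image.

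This is the step I expect to be the main obstacle. What I would try first is the orthogonal decomposition $Hom_\mathcal{D}=F_1(Hom_\mathcal{C})\oplus F_1(Hom_\mathcal{C})^\perp$, which exists because the pairing restricted to the image is non-degenerate. In this decomposition the inverse pairing is block diagonal, so the complement block is killed by $F_1^\vee\otimes F_1^\vee$. Hence $F^*\{x,y\}=\{F^*x,F^*y\}$.

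The bookkeeping over object liftings needs separate care. When $F$ is non-injective on objects, a contraction in $\mathcal{D}$ between letters over $(\nu,\nu')$ lifts to contractions over all preimage pairs. Non-matching lifts get $\langle\_,\_\rangle^{-1}=0$ by definition, so the sum over liftings is consistent. Orthogonality of different preimage Hom-spaces inside the same $Hom_\mathcal{D}$ must also be checked, and this again follows from \eqref{nfn} together with the convention that pairings vanish between non-dual Hom-spaces. The Koszul signs agree because $F_1$ has degree zero. Compatibility with the Leibniz extension to $Sym$ is then automatic.

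\textbf{Twisting.} It remains to show $F^*I_\mathcal{D}=I_\mathcal{C}$. Strictness of $F$ together with \eqref{F_inf_rel} gives $F_1\circ m_{\mathcal{C},n}=m_{\mathcal{D},n}\circ F_1^{\otimes n}$. Using \eqref{nfn} we compute
\[
F^*I_\mathcal{D}(x_0,\dots,x_n)=\langle m_\mathcal{D}(F_1x_1,\dots),F_1x_0\rangle=\langle F_1m_\mathcal{C}(x_1,\dots),F_1x_0\rangle=I_\mathcal{C}(x_0,\dots,x_n).
\]
A map of dg shifted Poisson algebras that sends one Maurer-Cartan element to another intertwines the twisted differentials $d+\{I,\_\}$. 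So $F^*$ descends to a map $\mathcal{F}^{cl}(\mathcal{D})\rightarrow\mathcal{F}^{cl}(\mathcal{C})$, and the functor $(cyc_dA_\infty cat)_{st}\rightarrow(Poiss_{d-2})^{op}$ follows.
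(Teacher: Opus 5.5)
\textbf{The gap is in the bracket step: the orthogonal decomposition does not repair it.} View $\langle\_,\_\rangle^{-1}$ as a copairing tensor. Your block-diagonal argument shows that the copairing of $\mathcal{D}$, pushed along the orthogonal projection onto $F_1(Hom_\mathcal{C})\cong Hom_\mathcal{C}$, equals the copairing of $\mathcal{C}$. But $F^*\{x,y\}=\{F^*x,F^*y\}$ needs the opposite identity: $(F_1\otimes F_1)$ applied to the copairing of $\mathcal{C}$ must equal the copairing of $\mathcal{D}$. This fails by exactly the complement block. In $F^*\{x,y\}$ the contracted letters $a_j,b_i$ are consumed by the scalar $\langle a_j,b_i\rangle^{-1}_\mathcal{D}$ and are never pulled back. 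So that scalar sees the complement, and only the remaining letters are restricted along $F_1$.

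Here is a concrete counterexample. Let $\mathcal{D}$ be the orthogonal direct sum of a one-object $\mathcal{C}$ with a nonzero cyclic complex $U$, with compositions extended by zero on $U$. The inclusion $F$ is a strict cyclic functor. Pick $u,u'\in U$ with $\langle u,u'\rangle\neq 0$, set $a=\langle u,\_\rangle$ and $b=\langle u',\_\rangle$, and let $c_1,c_2,e_1,e_2$ be of the form $\langle F_1v,\_\rangle$. For $x=(a\,c_1\,c_2)$ and $y=(b\,e_1\,e_2)$ you get $F^*x=F^*y=0$, hence $\{F^*x,F^*y\}=0$. In $\{x,y\}$, every term other than the $a$--$b$ contraction either has coefficient $\pm\langle u,F_1v\rangle=0$ or still contains $a$ or $b$. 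Therefore $F^*\{x,y\}=\pm\langle u,u'\rangle\,(F^*c_1\,F^*c_2\,F^*e_1\,F^*e_2)$, which is nonzero for suitable choices. This is the cyclic-word version of the fact that restricting functions along a linear symplectic embedding is not a Poisson map.

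\textbf{Consequence for the statement.} The lemma as stated is false for strict cyclic functors whose $F_1$ is not surjective, so no argument can close this step. The paper's own proof has the same defect. It derives $\langle F^*a,F^*b\rangle^{-1}_\mathcal{C}=\langle a,b\rangle^{-1}_\mathcal{D}$ from injectivity of $F_1$, via the claimed commutativity of the diagram with inverted horizontal maps. That is exactly the identity you correctly observed to fail: injectivity only gives it for $a,b$ dual to elements of $F_1(Hom_\mathcal{C})$.

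\textbf{What can be proved.} Assume $F_1$ is bijective on every Hom-space, for instance for inclusions of full subcategories such as $Sk\mathcal{C}\hookrightarrow\mathcal{C}$, which is what \eqref{fff} uses. Then the inverse pairings correspond on the nose. Your lifting bookkeeping goes through componentwise, since each lifting of a glued word arises from exactly one pair of liftings of the two words whose contracted components are dual. Together with your twisting step $F^*I_\mathcal{D}=I_\mathcal{C}$, this gives the functor on that subcategory. One further point: your claim that distinct preimage Hom-spaces are mutually orthogonal in $Hom_\mathcal{D}$ does not follow from \eqref{nfn}, which only constrains dual pairs. It is not needed in that setting.
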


\begin{proof}
Since we assume that $F$ is cyclic we have commutativity of following diagram.
$$\begin{tikzcd}
\hspace*{-1cm}
    Hom_\mathcal{D}(F(i),F(j))^\vee\otimes Hom_\mathcal{D}(F(j),F(i))^\vee[-2d]\arrow{d}{F^*_{i,j}\otimes F^*_{j,i}}&Hom_\mathcal{D}(F(i),F(j))\otimes Hom_\mathcal{D}(F(j),F(i))\arrow{l}\arrow{r}{\langle\_,\_\rangle_\mathcal{D}}&k[-d]\\
    Hom_\mathcal{C}(i,j)^\vee\otimes Hom_\mathcal{C}(j,i)^\vee[-2d]&Hom_\mathcal{C}(i,j)\otimes Hom_\mathcal{C}(j,i)\arrow{l}\arrow[swap]{ru}{\langle\_,\_\rangle_\mathcal{C}}\arrow{u}{F_*^{ij}\otimes F_*^{ji}}
\end{tikzcd}$$
Here the horizontal two maps are defined in the standard way using the respective pairings, see section 3.1 of \cite{GGHZ21}.
Since we assume that these are non-degenerate we can invert the vertical maps and ponder commutativity of following diagram.
$$\begin{tikzcd}
\hspace*{-1cm}
    Hom_\mathcal{D}(F(i),F(j))^\vee\otimes Hom_\mathcal{D}(F(j),F(i))^\vee[-2d]\arrow{d}{F^*_{i,j}\otimes F^*_{j,i}}\arrow{r}&Hom_\mathcal{D}(F(i),F(j))\otimes Hom_\mathcal{D}(F(j),F(i))\arrow{r}{\langle\_,\_\rangle_\mathcal{D}}&k[-d]\\
    Hom_\mathcal{C}(i,j)^\vee\otimes Hom_\mathcal{C}(j,i)^\vee[-2d]\arrow{r}&Hom_\mathcal{C}(i,j)\otimes Hom_\mathcal{C}(j,i)\arrow[swap]{ru}{\langle\_,\_\rangle_\mathcal{C}}\arrow{u}{F_*^{ij}\otimes F_*^{ji}},
\end{tikzcd}$$
where now the horizontal maps are the inverses of the previous vertical maps.
Indeed, this diagram is also commutative, which follows by injectivity of the maps $F_*^{ji}$ and commutativity of the previous diagram.
This implies that for all $F(i),F(j)\in Ob(\mathcal{D})$ and $a\in Hom_\mathcal{D}(F(i),F(j))^\vee$ and $b\in Hom_\mathcal{D}(F(j),F(i))^\vee$ we have 
$\langle F^*_{ij}(a),F^*_{ji}(b)\rangle_{\mathcal{C}}^{-1}=\langle a,b\rangle_{\mathcal{D}}^{-1}.$
As we further assume that  $F$ is strict it is now straightforward to verify that the induced map 
$F^*:\mathcal{F}^{cl}(\mathcal{D})\rightarrow \mathcal{F}^{cl}(\mathcal{C})$ respects the shifted Poisson algebra structures, using definition \eqref{bracket}.   
\end{proof}

\begin{lemma}
Given a cyclic strict $A_\infty$-morphism $F:\mathcal{C}\rightarrow \mathcal{D}$
we have that 
$$F^*(I_\mathcal{D})=I_\mathcal{C}$$
for the associated cyclic $A_\infty$ hamiltonians.
\end{lemma}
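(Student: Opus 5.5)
The plan is to evaluate both sides on an arbitrary element of the cyclic bar complex and reduce the identity to the two properties of a strict cyclic functor. First, I will pin down what $F^*$ means on cyclic cochains. Since $F$ is strict, it is determined by the chain maps $F_1\colon Hom_\mathcal{C}(i,j)\to Hom_\mathcal{D}(F(i),F(j))$. The induced coalgebra map on bar constructions is $F_1^{\otimes n}$ on each summand. It commutes with the cyclic permutation $t$, so it restricts to $\ker(1-t)$. The map $F^*$ on $Cyc^*_+(-)[-1]$ is the transpose of this restriction, and on $Sym$ it is extended multiplicatively. This is the same map used in Lemma~\ref{cov}.

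Next, I unpack the strictness condition. By \eqref{F_inf_rel} with $F_n=0$ for $n\ge 2$, the condition reads
\[
F_1\circ m_{\mathcal{C},n}=m_{\mathcal{D},n}\circ F_1^{\otimes n}
\]
for all $n$ and all composable tuples. We also have $F_1\circ d=d\circ F_1$.

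Now I evaluate $F^*(I_\mathcal{D})$ on a cyclic word $x_0\otimes x_1\otimes\cdots\otimes x_n$ in $\mathcal{C}$ with $x_0\in Hom_\mathcal{C}(\lambda_n,\lambda_0)$ closing the cycle. By definition,
\[
F^*(I_\mathcal{D})(x_0\otimes\cdots\otimes x_n)=\langle m_{\mathcal{D},n}(F_1x_1,\dots,F_1x_n),F_1x_0\rangle_\mathcal{D}.
\]
Using the strictness relation, this equals $\langle F_1 m_{\mathcal{C},n}(x_1,\dots,x_n),F_1x_0\rangle_\mathcal{D}$. By the cyclicity property \eqref{nfn} of Remark~\ref{caf}, this is $\langle m_{\mathcal{C},n}(x_1,\dots,x_n),x_0\rangle_\mathcal{C}=I_\mathcal{C}(x_0\otimes\cdots\otimes x_n)$. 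Summing over $n$ and over all object tuples gives $F^*(I_\mathcal{D})=I_\mathcal{C}$.

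The only point needing care is bookkeeping. First, the shifts $[1]$ and $[-1]$ and the Koszul signs must agree on both sides; they do, because $F_1$ has degree $0$ and is applied uniformly to every slot. Second, the object labels must match: the pairing on $\mathcal{D}$ is evaluated only on $Hom_\mathcal{D}(F(\lambda_0),F(\lambda_n))\otimes Hom_\mathcal{D}(F(\lambda_n),F(\lambda_0))$, which is exactly where $F_1$ lands. One must also check that summands of $I_\mathcal{D}$ supported on objects outside the image of $F$, or on non-closing object sequences, contribute zero after pullback. This holds because $F^*$ only evaluates on images of cyclic words of $\mathcal{C}$.

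I expect no real obstacle. The mildest subtlety is whether the $m_1=d$ part is included in $I$; since $I$ is defined using $m$ alone and $F_1$ is a chain map, the same argument with $S=\langle(m+d)\_,\_\rangle$ also gives $F^*S_\mathcal{D}=S_\mathcal{C}$.
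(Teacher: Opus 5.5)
Your proposal is correct and is exactly the concrete verification the paper points to, alongside its citation of Proposition~4.6 of \cite{da24}. As you do, strictness collapses \eqref{F_inf_rel} to $F_1\circ m_{\mathcal{C},n}=m_{\mathcal{D},n}\circ F_1^{\otimes n}$, and the cyclicity condition \eqref{nfn} then removes $F_1$ from the pairing, giving $F^*(I_\mathcal{D})=I_\mathcal{C}$ term by term.
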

\begin{proof}
This is well known, see eg. proposition 4.6 of \cite{da24}.
Concretely this can be verified using the explicit characterizations of cyclic $A_\infty$-functors,
equation \eqref{nfn} and equation \eqref{F_inf_rel}.
\end{proof}
\begin{remark}\label{befu}
    In the case of a cyclic $A_\infty$-algebra definition 2.7 and equation 2.5 of \cite{Ha07}
    provide an alternative description of the shifted Poisson bracket. Using this alternative 
    description lemma 5.5 of \cite{HaLa08} implies that the assignment of the shifted Poisson
    algebra $F^{cl}(A)$ to a cyclic $A_\infty$ algebra $A$ is also functorial with respect to 
    cyclic $A_\infty$-isomorphisms, not just strict ones. The authors use the so called non-commutative calculus for doing so.
    See section 4.5 of \cite{da24} for a sketch of such a non-commutative calculus for the category case. 
    We imagine that using this one should be able to prove a similar functoriality result also for categories.
    See also our remark \ref{caf} for a comparison of nomenclature used. 
    \end{remark}
    \begin{remark}\label{dar}
Similarly lemma 5.5 of \cite{HaLa08} implies that a cyclic $A_\infty$-isomorphism sends associated
$A_\infty$-hamiltonian to associated $A_\infty$-hamiltonian. We imagine that analogous comments to
above remark apply.
\end{remark}

Note that this discussion carries over to the full subcategory of $(cyc_dA_\infty cat)_{st}$ on
collections of cyclic chain complexes, ie. explains functoriality of theorem \ref{fP}.
  
\subsection{Beilinson-Drinfeld Structure and Quantization of Cyclic $A_\infty$-Categories}
\begin{df}
    Given a collection of cyclic chain complexes $V_B$ we extend the previous dg shifted Lie
    algebra structure to
    \begin{equation}
    \label{ce}Cyc^*(V_B):=Cyc^*_+(V_B)\Pi (\prod_{\lambda\in B}\nu_\lambda k)[1], 
    \end{equation}
    where the vector space $\prod_{\lambda\in B}\nu_\lambda k$ spanned by the $\nu_\lambda$ is central and we further redefine the bracket on words of length one by 
    \begin{equation*}
    \{(a),(b)\}=
    \begin{cases}
      \langle a,b\rangle^{-1}\nu_\lambda, & \text{if}\ a\in Hom(\lambda,\lambda)^\vee[-1]\ \text{and}\ b_i\in Hom(\lambda,\lambda)^\vee[-1]\ \text{for some $\lambda\in Ob(\mathcal{C})$} \\
      0, & \text{otherwise.}
    \end{cases}
 \end{equation*}

\end{df}
\begin{remark}
   The image of the bracket of words of length one can contain infinite sums of $\nu_\lambda$ over
   different $\lambda$. Our definition is well defined since we are using the direct product over
   $B$ in the central extension. 
\end{remark}
\begin{cons}\label{nabla}
Given $V_B$ we define 
$$\nabla:Cyc^*(V_B)[-1]\rightarrow \big(Cyc^*(V_B)[-1]\otimes Cyc^*(V_B)[-1]\big)_{S_2}$$
by 
$$\nabla(a_1a_2\cdots a_n)= \sum_{1\leq i<j\leq n}\langle a_i,a_j\rangle^{-1}(-1)^*(a_{i+1}\cdots  a_{j-1})\otimes(a_{j+1}\cdots a_na_1\cdots a_{i-1}).$$
Here 
$$\langle a_i,a_j\rangle^{-1}\in k$$
is defined as before.
We define the value of the summand where $j=i+1$ and if $a_i\in Hom(\beta,\lambda)^\vee[-1]$ and 
$a_j=a_{i+1}\in Hom(\lambda,\beta)^\vee[-1]$ to be 
$$\langle a_i,a_{i+1}\rangle^{-1}_{\beta\lambda}\ \nu_\lambda\otimes(a_{i+2}\cdots a_na_1\cdots a_{i-1}),$$
otherwise zero. If additionally $n=2$ we define the value by 
$$\langle a_i,a_j\rangle^{-1}_{\mu\lambda}\ \nu_\lambda\otimes\nu_\mu$$
if $a_i\in Hom(\mu,\lambda)^\vee[-1]$ and $a_j=a_{i+1}\in Hom(\lambda,\mu)^\vee[-1]$, again
otherwise zero. 
Additionally $\nabla$ is zero on the vector space spanned by the $\nu$'s.
\end{cons}
We refer to definition 2.11 of \cite{Ha07} for the notion of an involutive bi Lie algebra.
\begin{lemma}\label{ibl}
Given $V_B$ a collection of cyclic chain complexes of degree $d$ we have that
    $(Cyc^*(V_B),d,\{\_,\_\},\nabla)$ is an involutive bi Lie dg-algebra of degree $(d-2)$.
\end{lemma}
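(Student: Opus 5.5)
The plan is to reduce to the one-object case, where the analogous statement is known (proposition 2.10 and section 2 of \cite{Ha07}, and \cite{Che10} for the multi-object Lie part). What is genuinely new is the bookkeeping of object labels and of the central elements $\nu_\lambda$. Four facts have to be verified:
\begin{enumerate}
\item $\{\_,\_\}$ is a $(d-2)$-shifted Lie bracket on $Cyc^*(V_B)$.
\item $\nabla$ is a cobracket, i.e.\ it satisfies co-Jacobi and $\nabla^2=0$ in the appropriate shifted sense.
\item The Drinfeld compatibility (cocycle condition) between bracket and cobracket holds.
\item Involutivity holds: $\{\_,\_\}\circ\nabla=0$.
\end{enumerate}
In addition, $d$ must be a derivation and coderivation for both operations.

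The Lie part on $Cyc^*_+(V_B)[-1]$ is the earlier lemma. For the extension by $\prod_\lambda\nu_\lambda k$, centrality makes Jacobi reduce to checking that the new bracket of length-one words lands in the centre. It therefore contributes nothing further to Jacobi, since any iterated bracket involving $\nu_\lambda$ vanishes.

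Compatibility with $d$ follows as before. The pairings are chain maps, so $\langle\_,\_\rangle^{-1}$ is closed, and both $\{\_,\_\}$ and $\nabla$ are built solely from contracting a pair of letters with $\langle\_,\_\rangle^{-1}$ and reconcatenating. Hence $d$, acting letterwise as a derivation, commutes with both.

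The core of the argument for (2)--(4) is a uniform combinatorial model. I will interpret a cyclic word $(a_1\cdots a_n)$ as a boundary circle whose $n$ marked intervals carry labels, with the arc between consecutive letters labelled by an object of $B$. In this picture:
\begin{itemize}
\item $\{\_,\_\}$ glues two circles along one pair of letters into one circle;
\item $\nabla$ pinches one circle along a chord joining two letters into two circles.
\end{itemize}
An arc that becomes empty after pinching encloses exactly one object $\lambda$. This is precisely why the degenerate summands of $\nabla$ ($j=i+1$, and $n=2$) produce $\nu_\lambda$ rather than an empty word: $\nu_\lambda$ is the ``empty cyclic word with boundary label $\lambda$''. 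The same holds for the bracket of two length-one words.

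With this model, each identity becomes a signed sum over configurations of two chords or gluings. The terms cancel in pairs according to which operation is performed first:
\begin{itemize}
\item co-Jacobi from two disjoint chords on one circle;
\item Drinfeld compatibility from a chord and a gluing;
\item involutivity from a chord whose two resulting circles are immediately reglued along the same pair, which cancels by symmetry of $\langle\_,\_\rangle^{-1}$ and the degree convention.
\end{itemize}
The object labels are carried along passively: a contraction $\langle a_i,a_j\rangle^{-1}$ is nonzero only when the boundary conditions match, and this matching is preserved under reordering the two operations. The signs are identical to those in \cite{Ha07}, because the labels do not affect degrees.

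The main obstacle will be the degenerate terms involving $\nu_\lambda$. One must check that every cancellation pair in the generic argument still has a partner when one side produces an empty word. This is needed in particular for involutivity on words of length $2$ and $3$, and for Drinfeld compatibility where $\{(a),(b)\}$ yields $\nu_\lambda$.

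To handle this, I will first verify these low-length cases by direct computation. I will then argue that the identification ``empty word with label $\lambda$'' $=\nu_\lambda$ is consistent: it is forced by the matching labels of the adjacent letters, so the generic pairing argument extends verbatim once $\nu_\lambda$ is treated as the empty word. Centrality of $\nu_\lambda$ and $\nabla(\nu_\lambda)=0$ are then exactly the statements that empty circles neither glue nor pinch.
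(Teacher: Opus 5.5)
Your plan follows the paper's. The paper cites theorem 7 of \cite{Che10} (several objects, no central extension) and section 3.2 of \cite{GGHZ21} (one object, with $\nu$), and leaves the rest as a direct check. Treating $\nu_\lambda$ as the empty cyclic word with label $\lambda$ is a good way to organise that check.

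\textbf{The Jacobi step fails as written.} The Jacobi identity for the extended bracket does not follow from centrality. Write the new bracket as $\{x,y\}'=\{x,y\}+c(x,y)$, with $c((a),(b))=\langle a,b\rangle^{-1}\nu_\lambda$ on pairs of length-one words.
\begin{itemize}
\item Centrality only kills $\{c(x,y),z\}'$. The Jacobiator of $\{\_,\_\}'$ still contains $c(\{x,y\},z)$, which appears whenever $z$ has length one and $\{x,y\}$ has a length-one component, i.e.\ on triples of lengths $1,1,2$.
\item For $a,b_1,b_2,c\in Hom(\lambda,\lambda)^\vee[-1]$ one gets $\{\{(a),(b_1b_2)\}',(c)\}'=\pm\langle a,b_1\rangle^{-1}\langle b_2,c\rangle^{-1}\nu_\lambda\pm\langle a,b_2\rangle^{-1}\langle b_1,c\rangle^{-1}\nu_\lambda$.
\item Meanwhile $\{\{(c),(a)\}',(b_1b_2)\}'=0$, so these terms must cancel exactly against $\{\{(b_1b_2),(c)\}',(a)\}'$.
\end{itemize}
So what you need is the $2$-cocycle condition for $c$, and your claim that the extension ``contributes nothing further to Jacobi'' is false. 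The condition does hold. To see it, run your empty-word argument for Jacobi as well. Each Jacobiator term is given by two contractions joining the three words in a path. Each path is produced by exactly the two nested brackets whose inner bracket is an edge of that path, with opposite signs. Nothing in this changes when the resulting word is empty.

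\textbf{A smaller correction concerns involutivity.} The letters of the chord are consumed by $\nabla$, so nothing can be ``reglued along the same pair''. The nonzero terms of $\{\_,\_\}\circ\nabla$ on $(a_1\cdots a_n)$ come from a chord $(i,j)$ followed by a contraction of letters $a_k$, $a_l$ on opposite sides of it, i.e.\ from two crossing chords. The cancelling partner is the term with the roles of $(i,j)$ and $(k,l)$ exchanged.

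Accordingly, the first case involving the central extension is $n=4$. There the chords $(1,3)$ and $(2,4)$ both produce $\langle a_1,a_3\rangle^{-1}\langle a_2,a_4\rangle^{-1}\nu_\lambda$ up to sign. The cases $n=2,3$ that you single out are trivial: every term of $\nabla$ there has a central factor $\nu_\lambda$, which brackets to zero. With these repairs your outline becomes a complete, more explicit version of the check the paper leaves to the reader.
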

\begin{proof}
    This is proved in theorem 7 of \cite{Che10}. Note however that the author does not consider
    the central extension. One can check directly that the result remains true. For the one object
    case this is also explained in section 3.2 of \cite{GGHZ21}, where this central extension is included.
\end{proof}

From now on let us \emph{restrict to odd $d$}: 
Introducing $\gamma$, a formal variable of cohomological degree $6-2d$, we denote 
$$\mathcal{F}^{pq}(V_B):=Sym\big(Cyc^*(V_B)[d-4]\big) \llbracket\gamma\rrbracket[3-d]$$
where $V_B$ is a collection of cyclic chain complexes of degree $d$. 
Denote by $\delta$ the Chevalley-Eilenberg differential extended to $\mathcal{F}^{pq}(V_B)$ from the Lie algebra chain
complex of the odd Lie algebra $Cyc^*(V_B)$. 
Denote by $d$ and $\nabla$ the differentials extended
from $Cyc^*(V_B)$ as derivations to $\mathcal{F}^{pq}(V_B)$ and by $\{\_,\_\}$
the shifted Poisson bracket extended from the odd Lie bracket on $Cyc^*(V_B)$.
\begin{theorem}\label{nc_pq} 
Let $V_B$ be a collection of cyclic chain complexes of degree $d$. Then 
    $$\Big(\mathcal{F}^{pq}(V_B) ,d+\nabla+\gamma\delta,\{\_,\_\}\Big)$$
    is a $(d-2)$ twisted Beilinson-Drinfeld algebra over $k\llbracket\gamma\rrbracket$. $\mathcal{F}^{pq}(V_B)$ has multiplication of even degree $(d-3)$.
\end{theorem}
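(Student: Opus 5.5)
The plan is to recognise $\mathcal{F}^{pq}(V_B)$ as the standard ``Fock space'' of an involutive bi-Lie dg algebra and verify the BD axioms of definition \ref{BD} directly. By lemma \ref{ibl}, $(Cyc^*(V_B),d,\{\_,\_\},\nabla)$ is an involutive bi-Lie dg algebra of degree $(d-2)$. The underlying structure is the same as in the construction of $Fr^\gamma$ in lemma \ref{BDfSL}, with the BV-type operator given here by the cobracket $\nabla$ rather than a $\Delta$. I would therefore check, in order:
\begin{enumerate}
\item that $\{\_,\_\}$, extended by the Leibniz rule, is a $(d-2)$-shifted Poisson bracket on $\mathcal{F}^{pq}(V_B)$;
\item that $D:=d+\nabla+\gamma\delta$ squares to zero;
\item the BD relation \eqref{BDrel} with $\mu=\gamma$;
\item the degree bookkeeping, namely that $\gamma$ has degree $6-2d=1-(d-2)+(d-3)$, consistent with the remark after definition \ref{BD} for a multiplication of even degree $m=d-3$.
\end{enumerate}

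Step 1 is the standard fact recalled before theorem \ref{fP}: $Sym$ of an odd (shifted) dg Lie algebra is a shifted Poisson algebra. The shift $[d-4]$ is chosen precisely so that the odd bracket on $Cyc^*$ becomes a degree $(d-2)$ Poisson bracket on $Sym$, with the product of degree $d-3$.

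For step 3, $d$ and $\nabla$ are derivations of the product, so they contribute nothing to the failure of the Leibniz rule. The Chevalley–Eilenberg operator $\delta$ is the second-order operator on $Sym(\mathfrak g)$ determined on $Sym^{2}$ by $\delta(x\cdot y)=\pm\{x,y\}$. It is a standard fact that its deviation from being a derivation equals the extended bracket, $\delta(ab)-\delta(a)b\mp a\delta(b)=\pm\{a,b\}$. Multiplying by $\gamma$ gives exactly \eqref{BDrel}.

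For step 2, expand
\[
D^2=d^2+\nabla^2+\gamma^2\delta^2+[d,\nabla]+\gamma[d,\delta]+\gamma[\nabla,\delta].
\]
\begin{itemize}
\item $d^2=0$ is immediate.
\item $[d,\nabla]=0$ and $[d,\delta]=0$ hold because $d$ is compatible with both the bracket and the cobracket (lemma \ref{ibl}).
\item $\delta^2=0$ follows from the Jacobi identity.
\item $\nabla^2=0$, for $\nabla$ extended as a coderivation-type derivation of $Sym$, follows from co-Jacobi.
\end{itemize}

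The remaining term $[\nabla,\delta]=0$ is the crux, and I expect it to be the main obstacle. A priori $[\nabla,\delta]$ is a second-order operator. On $Sym^1$ it gives the involutivity term, bracket composed with cobracket, which vanishes by involutivity. On $Sym^2$ it gives the Drinfeld compatibility (cocycle condition) between bracket and cobracket. Since $\nabla$ is a derivation and $\delta$ has order two, their commutator is determined by these two components, so these two checks suffice.

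The delicate part is the central extension by the $\nu_\lambda$ and the special conventions of construction \ref{nabla} for adjacent letters and for $n=2$. Lemma \ref{ibl} only asserts that these still define an involutive bi-Lie algebra, so I would recheck involutivity and compatibility explicitly on short words of lengths one and two, where the $\nu$-terms appear, following section 3.2 of \cite{GGHZ21}. The signs coming from the shifts, which are also where oddness of $d$ enters, would be handled by Koszul conventions as in that reference.
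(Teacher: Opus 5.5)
Your approach is the paper's. Its proof simply invokes Lemma 3.4 of \cite{Ha07} (an involutive bi-Lie dg algebra yields a BD structure on its symmetric algebra, with $\delta$ the Chevalley--Eilenberg operator of the bracket) for the structure of lemma \ref{ibl}, plus the shifts. Your steps 1--3, including reducing $[\nabla,\delta]=0$ to involutivity on $Sym^1$ and the cocycle condition on $Sym^2$, are exactly the content of that lemma.

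The one thing to correct is the degree bookkeeping in step 4, since $1-(d-2)+(d-3)=0$, not $6-2d$. On $Sym(Cyc^*(V_B)[d-4])$ itself the bracket has degree $2d-5$ and the product has degree $0$; this is $\mathcal{F}^o(V_B)$ of definition \ref{oBD}. So $\delta$ has degree $2d-5$, and $|\gamma\delta|=1$ forces $|\gamma|=6-2d$. It is the final shift $[3-d]$, not $[d-4]$ as your step 1 suggests, that turns the bracket degree into $d-2$ and gives the product (cohomological) degree $m=3-d$. With this $m$, the remark after definition \ref{BD} gives $1-(d-2)+(3-d)=6-2d$, as needed. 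The statement's ``even degree $(d-3)$'' therefore has to be read with the opposite sign for that remark to apply. Evenness, which is where $d$ odd is used, is unaffected.
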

The superscript $pq$ stands for prequantum.\footnote{A better name should have been derived from
`quantization of free theory', which is what $\mathcal{F}^{pq}(V)$ is.}
\begin{proof}
     The result follows as in lemma 3.4 of \cite{Ha07} from our lemma \ref{ibl} and accounting for
     the shifts made. 
     Note that these shifts and the degree of $\gamma$ precisely give the correct
     degrees required in our definition of an $r$-twisted BD algebra, here for $s=(d-2)$.
\end{proof}
Given a strict cyclic $A_\infty$-morphism $F: V_B\rightarrow V_C$ we define a map 
$$F^*: Cyc^*(V_C)\rightarrow Cyc^*(V_B)$$
by setting 
$$F^*(\nu_\lambda)=\sum_{\mu\in F^{-1}(\lambda)}\nu_{\mu}.$$
One can check directly that this is compatible with the extension of the odd Lie bracket. 
Furthermore, by arguments analogous to the discussion of lemma \ref{cov}, it follows that $F^*$ intertwines the co Lie brackets.
We denote by the same letter the extension of this map to 
    $$F^*: \mathcal{F}^{pq}(V_C)\rightarrow \mathcal{F}^{pq}(V_B),$$
   which is a map of BD algebras over $k\llbracket\gamma\rrbracket$. As an upshot we find
\begin{lemma}
There is a functor from the full subcategory of collections of cyclic chain complexes within the category  
$(cyc_dA_\infty Cat)_{st}$ to the category of $(d-2)$ twisted BD algebras by the assignment 
$$\mathcal{F}^{pq}:\ (F: V_B\rightarrow V_C)\mapsto \big(F^*: \mathcal{F}^{pq}(V_C)\rightarrow \mathcal{F}^{pq}(V_B)\big).$$
 
\end{lemma}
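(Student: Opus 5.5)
To prove that $\mathcal{F}^{pq}$ defines a functor, I check three things. First, it is well defined on objects. Second, for each strict cyclic morphism $F\colon V_B\to V_C$ the map $F^*\colon\mathcal{F}^{pq}(V_C)\to\mathcal{F}^{pq}(V_B)$ is a map of $(d-2)$-twisted BD algebras over $k\llbracket\gamma\rrbracket$. Third, the assignment respects identities and composition. Objects are handled by theorem \ref{nc_pq}. Since $F$ is strict, $F^*$ is determined by the linear duals $F_1^\vee$ on Hom-spaces together with the rule $F^*(\nu_\lambda)=\sum_{\mu\in F^{-1}(\lambda)}\nu_\mu$. So $F^*$ is first defined on $Cyc^*(V_C)$ by applying $F_1^\vee$ letterwise to cyclic words, then extended multiplicatively to $Sym$ and $\gamma$-linearly.

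By the BD relation \eqref{BDrel}, the differential $d+\nabla+\gamma\delta$ and the bracket $\{\_,\_\}$ on $\mathcal{F}^{pq}$ are determined by their values on generators. Indeed, $d$ and $\nabla$ are derivations, while $\delta$ and the bracket are the Chevalley–Eilenberg and Leibniz extensions of the Lie bracket on $Cyc^*$. Hence it suffices to show that $F^*\colon Cyc^*(V_C)\to Cyc^*(V_B)$ is a map of involutive bi-Lie dg algebras in the sense of lemma \ref{ibl}: it commutes with $d$, with $\{\_,\_\}$ and with $\nabla$. Compatibility with $d$ holds because $F_1$ is a chain map. For the bracket on words of length $\geq 2$ I use the identity $\langle F^*a,F^*b\rangle^{-1}_{\mathcal{C}}=\langle a,b\rangle^{-1}_{\mathcal{D}}$ proved in lemma \ref{cov}. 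In \eqref{bracket}, contracting a pair of letters commutes with applying $F_1^\vee$ letterwise, so the same computation goes through verbatim. For $\nabla$ (Construction \ref{nabla}), the same identity settles the generic summands.

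The main obstacle is the bookkeeping of the central elements $\nu_\lambda$. These arise both in the bracket of length-one words and in the summands of $\nabla$ where adjacent letters are contracted. The key point is that a contraction $\langle a,b\rangle^{-1}$ with $a,b\in Hom(\lambda,\lambda)^\vee$ pulls back to a sum over the preimages. Since $F_1$ maps $Hom(\mu,\mu')$ into $Hom(F\mu,F\mu')$, the dual $F^*_1$ sends $Hom(\lambda,\lambda)^\vee$ into $\prod_{F\mu=F\mu'=\lambda}Hom(\mu,\mu')^\vee$. The component $Hom(\mu,\mu)$ therefore contributes exactly $\nu_\mu$, and summing over $\mu\in F^{-1}(\lambda)$ reproduces $F^*(\nu_\lambda)$. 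Off-diagonal components $\mu\neq\mu'$ give zero contribution on both sides. The case $n=2$ of $\nabla$ follows in the same way, giving $\nu_\mu\otimes\nu_{\mu'}$ summed over the fibres, and the direct product in \eqref{ce} ensures that these possibly infinite sums are defined. Finally, strictness gives $(G\circ F)^*=F^*\circ G^*$ letterwise, and on the $\nu$'s we have $\sum_{\mu\in (GF)^{-1}(\lambda)}\nu_\mu=F^*\sum_{\kappa\in G^{-1}(\lambda)}\nu_\kappa$. The identity morphism is clearly sent to the identity.
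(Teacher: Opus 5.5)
\textbf{The gap: the identity borrowed from lemma \ref{cov} is false for non-surjective $F_1$.} Your reduction to $Cyc^*$ and your treatment of $d$, of the $\nu$'s and of composition follow the paper's sketch. However, the step that both rely on fails. The identity $\langle F^*a,F^*b\rangle^{-1}_{\mathcal{C}}=\langle a,b\rangle^{-1}_{\mathcal{D}}$ that you import from lemma \ref{cov} is false unless $F_1$ is \emph{surjective} on the relevant Hom-spaces. A strict cyclic morphism only satisfies $\langle F_1x,F_1y\rangle=\langle x,y\rangle$, so its components are isometric embeddings.

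To see where the argument breaks, write $\flat$ for the isomorphisms $Hom\to Hom^\vee$ induced by the pairings. Cyclicity says $F_1^\vee\circ\flat_{\mathcal{D}}\circ F_1=\flat_{\mathcal{C}}$. From this you only learn that $F_1\circ\flat_{\mathcal{C}}^{-1}\circ F_1^\vee\circ\flat_{\mathcal{D}}$ is the identity on the image of $F_1$ and zero on its orthogonal complement. Injectivity of $F_1$ cannot upgrade this to $F_1\circ\flat_{\mathcal{C}}^{-1}\circ F_1^\vee=\flat_{\mathcal{D}}^{-1}$.

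\textbf{A counterexample.} Take one object on each side. Let $V=kx\oplus ky$ with $|x|=0$, $|y|=d$, $\langle x,y\rangle=1$, and let $U=kz\oplus kw$ be an orthogonal copy with $\langle z,w\rangle=1$. Let $F\colon V\to V\oplus U$ be the inclusion; this is a strict cyclic morphism. Then $F_1^\vee(z^\ast)=F_1^\vee(w^\ast)=0$, although $\langle z^\ast,w^\ast\rangle^{-1}=\pm1$.
\begin{itemize}
\item \emph{Bracket.} We have $F^\ast(x^\ast z^\ast)=F^\ast(w^\ast y^\ast)=0$. By \eqref{bracket}, $\{(x^\ast z^\ast),(w^\ast y^\ast)\}=\pm(x^\ast y^\ast)\pm(z^\ast w^\ast)$, which pulls back to $\pm(x^\ast y^\ast)\neq 0$.
\item \emph{Cobracket.} We have $F^\ast(x^\ast z^\ast y^\ast w^\ast)=0$, but $F^\ast\nabla(x^\ast z^\ast y^\ast w^\ast)=\pm(y^\ast)\cdot(x^\ast)\neq 0$.
\end{itemize}
So $F^\ast$ intertwines neither the bracket nor $\nabla$. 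The commutative analogue is that restricting functions to a symplectic subspace is not a Poisson map. The lemma as stated, for all strict cyclic morphisms, therefore does not hold, and the paper's sketch inherits the same error. No bookkeeping of the $\nu$'s can repair this.

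\textbf{What your argument does prove.} It proves the statement restricted to strict cyclic morphisms whose components $F_1\colon Hom(\mu,\mu')\to Hom(F\mu,F\mu')$ are isomorphisms, e.g.\ identities and inclusions of full subcategories such as $Sk\mathcal{C}\hookrightarrow\mathcal{C}$. For these the componentwise identity $\langle (F_1^\vee a)_{\mu\mu'},(F_1^\vee b)_{\mu'\mu}\rangle^{-1}=\langle a,b\rangle^{-1}$ holds.

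To make ``the same computation goes through verbatim'' precise when $F$ is not injective on objects, you need one more observation. Composable lifts of a contracted word correspond bijectively to composable lifts of the original word(s) satisfying the matching constraint at the two contracted letters. Combined with your fibre sums $\sum_{\mu\in F^{-1}(\lambda)}\nu_\mu$ for the central terms, this gives compatibility with \eqref{bracket} and with Construction \ref{nabla}, and the rest of your proof is fine.
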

\begin{df}\label{nc_dq}
Define  $$p:  \mathcal{F}^{pq}(V_B)\rightarrow Sym^1(Cyc^*_+(V_B)[d-4])[3-d]\cong Cyc^*_+(V_B)[-1]\rightarrow \mathcal{F}^{fr}(V_B),$$
where the first map is the projection on the `$Sym^1$, $\gamma=0$ and $\nu=0$ part'.
The last map is just the inclusion.
\end{df}
 It is straightforward to check that $p$ is a dequantization map in the sense of definition \ref{dq}. 
 For example we have $p(\gamma\delta)=0=p(\nabla).$ 
\begin{df}\label{qcat}
Given a collection of cyclic chain complexes $V_B$ and $I\in MCE (\mathcal{F}^{fr}(V_B))$, we say that 
$$I^q\in MCE (\mathcal{F}^{pq}(V_B))$$
of degree $(3-d)$ is a \emph{quantization of $I$} if $p(I^q)=I.$ 
\end{df}
\begin{remark}
Note that more generally one may ask a quantization to be a differential $d^q$ on the underlying
shifted Poisson algebra of $\mathcal{F}^{pq}(V)$ satisfying the BD relation and such $p\circ d^q=d^{cl}\circ p.$
Thus we are more restrictive, imposing $d^q=d+\nabla+\gamma\delta+\{I^q,\_\}$. 
We ask that a quantization $I^q$ of the interacting theory $\mathcal{F}^{cl}$ is compatible with the standard quantization of the free theory, i.e. with $\mathcal{F}^{pq}(V)$.
\end{remark}
\begin{remark}
  Quantizations of cyclic $A_\infty$-categories  are sometimes called quantum $A_\infty$-categories.
Equivalently a Maurer-Cartan element $I^q \in\mathcal{F}^{pq}(V_\Lambda)$ may be described as a possibly curved (involutive bi-Lie$)_\infty$-algebra structure on a central extension of cyclic cochains of this category.  Compare work by Campos-Merkulov-Willwacher \cite{Ca16}, section 5 and also work by Cieliebak-Fukaya-Latschev \cite{CiFuLa20}, section 12 and work by Naef-Willwacher \cite{NaWi19}, section 7. In \cite{sla24} Slawinski has explicitly analyzed the lift of the $A_\infty$-structure on the Fukaya category of an elliptic curve to a quantum $A_\infty$-structure by using a certain combinatorial model.
However, these authors don't consider the central extension, which is crucial for us.
\end{remark}
\begin{remark}
Further we restrict to the case where $I^q$ has zero constant terms, that is its components in the ground field and the vector space spanned by the $\nu$'s are zero.
\end{remark}
Given $F:V_B\rightarrow V_C$ a cyclic strict $A_\infty$-morphism between collections of cyclic chain complexes and an $I^q\in MCE (\mathcal{F}^{pq}(V_C))$ then 
$$F^*I^q\in MCE (\mathcal{F}^{pq}(V_B)).$$

Indeed this follows as we saw that $F^*$ is compatible with the odd Lie bracket,
thus also with the induced Chevalley Eilenberg differential, as well as the odd co Lie bracket.

Thus we can define a category which we denote 
$$(\text{q-cyc}_dA_\infty cat)_{st},$$
whose objects are pairs of a cyclic $A_\infty$-category $\mathcal{C}$ together with an
$I^q\in MCE (\mathcal{F}^{pq}(V_{Ob\mathcal{C}}))$ such that $p(I^q)=I_\mathcal{C}$.
Its morphisms are cyclic strict $A_\infty$-morphisms $F:\mathcal{C}\rightarrow \mathcal{D}$ such that $F^*I^q_\mathcal{D}=I^q_\mathcal{C}$. 
Note that there is a forgetful functor to $(cyc_dA_\infty Cat)_{st}$ induced by the dequantization map $p$.

\begin{remark}
 In the literature such elements $I^q$ have also been referred to as quantum $A_\infty$-categories. 
\end{remark}
\begin{df}

Given an object of $(\text{q-cyc}_dA_\infty cat)_{st}$ define 
$$\begin{tikzcd}\label{F_q}
\mathcal{F}^q(\mathcal{C}):=\mathcal{F}^{pq}(V_B)^{tw}\end{tikzcd}$$
as the $(d-2)$ twisted BD algebra obtained by twisting $\mathcal{F}^{pq}(V_B)$ by the MC element $I^q$.
The assignment \ref{F_q} defines a functor 
$$\mathcal{F}^q:\ (\text{q-cyc}_dA_\infty cat)_{st}\rightarrow (\text{BD}^{(d-2)tw})^{op},$$
where on morphisms we use $F^*$.
\end{df}
\begin{remark}
Note that this notation for $\mathcal{F}^q(\mathcal{C})$ is ambiguous and we need to have the choice of an $I^q$ in mind.
\end{remark}

In this work we did not examine how the discussion so far can be refined when considering homotopy equivalent solutions to the quantum master equation.
Compare section 6 of \cite{GGHZ21}.

\subsection{Essentially Finite Cyclic $A_\infty$-Categories}
Recall that given a cyclic $A_\infty$-category $\mathcal{C}$ we defined the central extension of our previous shifted Lie algebra by 
$$Cyc^*(V_B):=Cyc^*_+(V_B)\times (\prod_{\lambda\in B}\nu_\lambda k[1]).$$
As we will see in section \ref{LQT_r}, under the `quantized' LQT map at level $N$, each $\nu_\lambda$ will be send to the actual number $N$.
However, if we allow infinite sums of $\nu_\lambda$'s this would be ill-defined.
Thus we restrict ourselves as follows:
\begin{df}\label{essfin}
We call a cyclic unital $A_\infty$-category $\mathcal{C}$ that admits a finite skeleton $Sk\mathcal{C}$, that is a subcategory with finitely many objects whose inclusion defines an equivalence, 
an \emph{essentially finite cyclic $A_\infty$-category}.
\end{df} 
\begin{remark}
Note that for an essentially finite cyclic $A_\infty$-category the inclusion 
$$Sk\mathcal{C}\rightarrow\mathcal{C}$$
is strict and cyclic.
\end{remark}

We define a category, denoted
$$(\text{e.f cyc}_dA_\infty cat)_{st},$$
as the full subcategory of $(cyc_dA_\infty cat)_{st}$ given by essentially finite cyclic $A_\infty$-categories.

In practise we will work with a slightly more complicated, though equivalent category:
\begin{df}
We denote by 
 $$(\text{e.f}^{+} \text{cyc}_dA_\infty cat)_{st},$$
 the category  whose objects are essentially finite cyclic unital $A_\infty$-categories together with the choice of a finite skeleton and together with the inclusion 
$$Sk\mathcal{C}\rightarrow\mathcal{C},$$
and whose morphisms are diagrams as follows 
    \begin{equation}\label{fwde}
    \begin{tikzcd}
   & \mathcal{C}\arrow{r}{F} &\mathcal{D}&\\
   Sk\mathcal{C}\arrow{ur}{\simeq}&  &&Sk\mathcal{D}\arrow{ul}[swap]{\simeq},
\end{tikzcd}
\end{equation}
where $F$ is a strict cyclic $A_\infty$-functor.
\end{df}
Note that the obvious functor 
\begin{equation}\label{obvFun}
  (\text{e.f}^{+} \text{cyc}_dA_\infty cat)_{st}\rightarrow (\text{e.f cyc}_dA_\infty cat)_{st}  
\end{equation}
is an equivalence, which is why we will treat the two appearing categories as interchangeable. We will later see \eqref{undf} that different choices of an inverse of \eqref{obvFun}, that is different choices of a finite skeleta for each essentially finite category, are related by natural quasi-isomorphism, once we localized appropriately.

Let us denote by $\mathcal{D}Poiss_{(d-2)}$ the localization of the category of 
$(d-2)$ shifted Poisson algebras with respect to quasi-isomorphisms.
We (re)define previous functors:
\begin{lemma}\label{nattr1}
    Sending $(\mathcal{C},Sk\mathcal{C})$ an essentially finite cyclic $A_\infty$ category of degree $d$ to $\mathcal{F}^{cl}(Sk\mathcal{C})$ defines a functor 
    $$\mathcal{F}^{cl}_{fin}:\ (\text{e.f cyc}_dA_\infty cat)_{st}\rightarrow (\mathcal{D}Poiss_{(d-2)})^{op}.$$ 
    \end{lemma}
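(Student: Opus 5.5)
We have to show that $\mathcal{C}\mapsto\mathcal{F}^{cl}(Sk\mathcal{C})$ is well defined on objects up to canonical isomorphism in $\mathcal{D}Poiss_{(d-2)}$, and that it sends a strict cyclic functor $F:\mathcal{C}\rightarrow\mathcal{D}$ to a morphism $\mathcal{F}^{cl}(Sk\mathcal{D})\rightarrow\mathcal{F}^{cl}(Sk\mathcal{C})$ compatibly with composition. The plan is to work in $(\text{e.f}^{+} \text{cyc}_dA_\infty cat)_{st}$ and transport along the equivalence \eqref{obvFun}. A morphism there is a diagram \eqref{fwde}. The outer legs $\iota_\mathcal{C}:Sk\mathcal{C}\rightarrow\mathcal{C}$ and $\iota_\mathcal{D}:Sk\mathcal{D}\rightarrow\mathcal{D}$ are strict cyclic inclusions, by the remark after definition \ref{essfin}.

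Applying lemma \ref{cov} then gives honest maps of shifted Poisson algebras
$$\mathcal{F}^{cl}(Sk\mathcal{D})\xleftarrow{\iota_\mathcal{D}^*}\mathcal{F}^{cl}(\mathcal{D})\xrightarrow{F^*}\mathcal{F}^{cl}(\mathcal{C})\xrightarrow{\iota_\mathcal{C}^*}\mathcal{F}^{cl}(Sk\mathcal{C}).$$
These maps respect brackets and Maurer--Cartan elements by lemma \ref{cov} and the subsequent lemma $F^*(I_\mathcal{D})=I_\mathcal{C}$. One caveat: $\mathcal{F}^{cl}(\mathcal{D})$ for a category with infinitely many objects is still an honest shifted Poisson algebra, since we use $\mathcal{F}^{cl}$ and not the central extension, so lemma \ref{cov} applies with no finiteness needed. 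The functor is then defined on the morphism by
$$\mathcal{F}^{cl}_{fin}(F):=\iota_\mathcal{C}^*\circ F^*\circ(\iota_\mathcal{D}^*)^{-1},$$
provided $\iota_\mathcal{D}^*$ is a quasi-isomorphism, i.e.\ invertible in $\mathcal{D}Poiss_{(d-2)}$. Functoriality is then formal. Given $G:\mathcal{D}\rightarrow\mathcal{E}$, the middle terms $\iota_\mathcal{D}^*\circ(\iota_\mathcal{D}^*)^{-1}$ cancel in the localization, and $(G\circ F)^*=F^*\circ G^*$ holds strictly by lemma \ref{cov}. Identities go to identities.

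The main step, and the expected obstacle, is showing that restriction along a strict cyclic unital equivalence $\iota:Sk\mathcal{C}\rightarrow\mathcal{C}$ induces a quasi-isomorphism $\iota^*:\mathcal{F}^{cl}(\mathcal{C})\rightarrow\mathcal{F}^{cl}(Sk\mathcal{C})$. The differential of $\mathcal{F}^{cl}$ is $d+\{I,\_\}$. On $Sym^1$ this is the cyclic cochain differential $m_\mathcal{C}+d$, extended as a derivation. So I would filter both sides by symmetric word-length $Sym^{\leq n}$. This is an increasing exhaustive filtration preserved by $\iota^*$, and the bracket with $I$ does not decrease it in a way that breaks the filtration argument; I would check that the associated graded differential is precisely the one induced from $(Cyc^*_+,m+d)$ on each $Sym^n$. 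Then it suffices to know that $\iota^*:Cyc^*_+(\mathcal{C})\rightarrow Cyc^*_+(Sk\mathcal{C})$ is a quasi-isomorphism. In characteristic zero, $Sym^n$ preserves quasi-isomorphisms (coinvariants are exact), so a spectral sequence comparison would finish the argument.

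The quasi-isomorphism on cyclic cochains is Morita invariance of cyclic homology for unital $A_\infty$-equivalences. Using the `Cyclic Words' theorem identifying $Cyc_*^+$ with $CC_*$, it reduces to invariance of Hochschild homology via the $S^1$-orbit spectral sequence (a map that is a quasi-isomorphism on $CH_*$ compatible with $B$ is one on $CC_*$). Invariance of Hochschild homology in turn follows from the bimodule description $CH_*=\mathcal{C}_\Delta\otimes^{\mathbb{L}}_{\mathcal{C}^e}\mathcal{C}_\Delta$, which is invariant under equivalences. Dualizing over a field preserves quasi-isomorphisms. The points I expect to need care with are the completion and convergence issues in the filtration argument (cyclic cochains are products, and $Sym$ is not completed here), and making sure that the unitality hypotheses of definition \ref{essfin} are exactly what the Morita invariance needs. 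Finally, independence of the choice of skeleton follows from the same argument applied to the inclusion of two skeleta into $\mathcal{C}$, which gives the natural quasi-isomorphisms alluded to in \eqref{undf}.
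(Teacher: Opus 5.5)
Your proposal is correct and follows essentially the paper's own route: apply $\mathcal{F}^{cl}$ (lemma \ref{cov}) to the diagram \eqref{fwde}, then invert the map $\mathcal{F}^{cl}(\mathcal{D})\rightarrow\mathcal{F}^{cl}(Sk\mathcal{D})$ of \eqref{fff} in the localization; that map is a quasi-isomorphism by invariance of cyclic (co)homology under $A_\infty$-equivalences. The filtration and convergence concerns are unnecessary: since $I\in Sym^1$, the twisted differential $d+\{I,\_\}$ preserves symmetric word length exactly, so as a complex $\mathcal{F}^{cl}$ is just $Sym$ of the cyclic cochain complex, and in characteristic zero the quasi-isomorphism follows directly from the one on cyclic cochains.
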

This follows directly from definition \ref{fwde} and \eqref{fff} below.
Further by \eqref{undf} we will see that this functor is independent of the choice of skeleton, up to isomorphism.
To see these facts we first remark the standard result 
\begin{lemma}
    
Cyclic (co)homology is invariant under $A_\infty$-equivalence.
\end{lemma}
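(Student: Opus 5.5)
We will deduce the invariance of cyclic homology from the invariance of Hochschild homology, combined with the fact that cyclic homology is the homotopy $S^1$-orbits of Hochschild chains. Dually this gives invariance of cyclic cohomology, since over a field $k$ of characteristic zero the cochain complex $Cyc^*_+(\mathcal{C})=Hom(Cyc_*^+(\mathcal{C}),k)$ has homology dual to that of $Cyc_*^+(\mathcal{C})$.

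The first step is to check that an $A_\infty$-functor $F:\mathcal{C}\rightarrow\mathcal{D}$ induces a map on Hochschild chains compatible with the $S^1$-action. Concretely we use the explicit cyclic bar model: $F_*$ extends to the cyclic bar complex by summing over all ways of applying the Taylor components $F_{i_1},\dots,F_{i_l}$ to consecutive cyclic blocks of a cyclic word. This gives a chain map $F_*:(CH_*(\mathcal{C}),d_{Hoch})\rightarrow(CH_*(\mathcal{D}),d_{Hoch})$ that intertwines Connes' operator $B$. It therefore extends $u$-linearly to a map $(CH_*(\mathcal{C})[u^{-1}],d_{Hoch}+uB)\rightarrow(CH_*(\mathcal{D})[u^{-1}],d_{Hoch}+uB)$. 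Equivalently, $F_*$ restricts to the subcomplex $ker(1-t)$ of the cyclic words model from the `Cyclic Words' theorem. The verification is a standard sign computation using the $A_\infty$-functor relations \eqref{F_inf_rel}.

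The second step is to show that for an equivalence in the sense of definition \ref{equiv}, $F_*$ is a quasi-isomorphism on Hochschild chains. Here we invoke Morita invariance of $CH_*(\mathcal{C})=\mathcal{C}_\Delta\otimes^{\mathbb{L}}_{\mathcal{C}^e}\mathcal{C}_\Delta$. A unital $A_\infty$-equivalence induces a quasi-equivalence of bimodule categories, under which $\mathcal{D}_\Delta$ pulls back to $\mathcal{C}_\Delta$, so the derived tensor product is preserved. We take this from \cite{Ga19}, section 3, or the classical argument of \cite{Lo13}; for a skeleton inclusion it reduces to checking that adding isomorphic objects does not change Hochschild homology.

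The third step passes to cyclic homology. We filter $(CH_*[u^{-1}],d_{Hoch}+uB)$ by powers of $u^{-1}$; this filtration is bounded below and exhaustive, and $F_*$ respects it. On the associated graded, $F_*$ is a direct sum of copies of the Hochschild quasi-isomorphism, so the spectral sequence comparison theorem gives a quasi-isomorphism on cyclic homology. Alternatively one can use the SBI long exact sequence and the five lemma. Finally, applying $Hom(-,k)$ over the field $k$ yields invariance of cyclic cohomology.

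We expect the second step, Morita invariance of Hochschild chains for $A_\infty$-equivalences, to be the main obstacle, since it requires the bimodule formalism. To avoid it, we would first reduce to the case relevant here, the inclusion of a finite skeleton $Sk\mathcal{C}\rightarrow\mathcal{C}$, which is strict. In that case one can build an explicit homotopy inverse on normalized Hochschild chains by transporting words along chosen homotopy isomorphisms to skeleton objects.
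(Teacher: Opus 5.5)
Your proposal is correct and follows essentially the same route as the paper. Both arguments take invariance of Hochschild homology under $A_\infty$-equivalences from the literature (the paper cites Proposition 7.3.6 and Lemma 7.4.1 of \cite{Cos07a}, where you cite \cite{Ga19}), then pass to cyclic homology by the Loday-type comparison (SBI sequence or the filtration by powers of $u^{-1}$, as in Corollary 2.2.4 of \cite{Lo13}), and finally dualize over the field $k$ to get cyclic cohomology.
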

\begin{proof} 
   As a sketch, by Proposition 7.3.6 and Lemma 7.4.1 of \cite{Cos07a} it follows that Hochschild (co)homology is invariant under $A_\infty$-equivalence.
   By arguments as in Loday, corollary 2.2.4, it follows that also cyclic (co)homology is invariant.
\end{proof}
 Thus given $(\mathcal{C},Sk\mathcal{C})$ an essentially finite cyclic $A_\infty$-category we get a map of shifted Poisson algebras
    \begin{equation}\label{fff}
        \mathcal{F}^{cl}(\mathcal{C})\rightarrow \mathcal{F}^{cl}(Sk\mathcal{C}),
\end{equation}
    whose underlying map of chain complexes is a quasi-isomorphism.
    This follows from the lemma before, as the inclusion $Sk\mathcal{C}\rightarrow\mathcal{C}$ is in particular an $A_\infty$-equivalence.
    
    Furthermore \eqref{fff} defines a natural quasi-isomorphism as follows 

$$\begin{tikzcd}
(\text{e.f cyc}
    _dA_\infty cat)_{st} \arrow[rr, "\mathcal{F}^{cl}_{fin}", bend left=25, ""{name=U, below}]
\arrow[rd," forget ", bend right=10]
&&\text{$(\mathcal{D}Poiss_{(d-2)})^{op}$}
\\
&(cyc_dA_\infty cat)_{st}\arrow[ur, "\mathcal{F}^{cl}", bend right=10]\arrow[Rightarrow, to=U, ""]&
\end{tikzcd}$$
 
    which we remark to justify our redefinition.
Regarding the choice of finite skeleton of a cyclic $A_\infty$-category we remark that
given a cyclic $A_\infty$-category $\mathcal{C}$ with two choices of finite skeleta $(\mathcal{C},Sk\mathcal{C}_1)$, $(\mathcal{C},Sk\mathcal{C}_2)$ we get a zig-zag of shifted Poisson algebras
   \begin{equation}\label{undf}
        \begin{tikzcd}
        &\mathcal{F}^{cl}(\mathcal{C})\arrow{dl}[swap]{\simeq}\arrow{dr}{\simeq}&\\
        \mathcal{F}^{cl}(Sk\mathcal{C}_1)&&\mathcal{F}^{cl}(Sk\mathcal{C}_2),
    \end{tikzcd}
    \end{equation}
    where the underlying maps of chain complexes are quasi-isomorphisms. 
    
    We make similar (re)definitions for the quantized context: 
 \begin{df}\label{qesfq}
 Let $\mathcal{C}$ be an essentially finite cyclic $A_\infty$-category.
 Denote by $I$ the  $A_\infty$-hamiltonian associated to $\mathcal{C}$.
 We say that  
 $$ I^q\in MCE(\mathcal{F}^{pq}(V_{Ob{\mathcal{C}}}))$$
 is \emph{a quantization of $\mathcal{C}$}
if $$ p(I^q)=I,$$ recalling the dequantization maps p from definition \ref{nc_dq}.
\end{df}
\begin{df}
   We define a category, denoted  
   $$(\text{q.e.f. cyc}_dA_\infty cat)_{st}.$$
    Its objects are quantizations of essentially finite cyclic $A_\infty$-categories.
    The morphisms are given by morphisms of the underlying  cyclic $A_\infty$-categories  
    \begin{equation}
        F:\ \mathcal{C}\rightarrow \mathcal{D}
\end{equation}
such that additionally  $F^*I^q=J^q$ for the respective quantizations.
\end{df}
In practice we will work with a slightly more complicated, though equivalent category:

\begin{df}
   We define a category, denoted  
   $$(\text{q.e.f}^+ \text{cyc}_dA_\infty cat)_{st}.$$
    An objects of this category is given by \begin{itemize}
        \item an essentially finite cyclic $A_\infty$-category
        \item together with the choice of a finite skeleton and with the data of inclusion 
$$\Phi: Sk\mathcal{C}\rightarrow\mathcal{C},$$
\item
denoting by $(I_f,I)$ the pair of $A_\infty$-hamiltonian associated to $Sk\mathcal{C}$ and the one associated to $\mathcal{C}$, we further ask for a pair of 
 $$I^q_f\in MCE(\mathcal{F}^{pq}(V_{Ob{Sk\mathcal{C}}}))\ \text{and} \ I^q\in MCE(\mathcal{F}^{pq}(V_{Ob{\mathcal{C}}}))$$
such that $$p(I^q_f)=I_f\ \text{and}\ \ p(I^q)=I,$$ (recalling the dequantization maps p from definition \ref{nc_dq}) and we have that 
$$\Phi^*I^q=I^q_f.$$
\end{itemize}
The morphisms are given by morphisms in $(\text{e.f}^+ \text{cyc}_dA_\infty cat)_{st}$, that is diagrams like 
    \begin{equation}\label{tnc}
        \begin{tikzcd}
   & \mathcal{C}\arrow{r}{F} &\mathcal{D}&\\
   Sk\mathcal{C}\arrow{ur}{\simeq}&  &&Sk\mathcal{D}\arrow{ul}[swap]{\simeq},
\end{tikzcd}
\end{equation}
such that additionally  $F^*I^q=J^q$ for the respective quantizations.
\end{df}
Note that the obvious functor 
\begin{equation}\label{qobvFun}
  (\text{q.e.f}^{+} \text{cyc}_dA_\infty cat)_{st}\rightarrow (\text{q.e.f cyc}_dA_\infty cat)_{st}  
\end{equation}
is an equivalence, which is why we will treat the two appearing categories as interchangeable.

Note that given an element of $(\text{q.e.f}^{+} \text{cyc}_dA_\infty cat)_{st}$ the induced map 
\begin{equation}\label{BDqi}
\Phi^*:\mathcal{F}^q(\mathcal{C})\rightarrow \mathcal{F}^q(Sk\mathcal{C})
\end{equation}
of BD algebras is a quasi-isomorphism.
Indeed, consider the complete and exhaustive filtration of $\mathcal{F}^q(\mathcal{C})$ described in example 6.3 of \cite{GGHZ21}.
The map $\Phi^*$ is by definition a filtered map.
Then the claim follows from the from the Eilenberg-Moore comparison theorem (theorem 5.5.11 of \cite{wei94}) as on the first page of the spectral sequence associated to the filtrations the map $\Phi^*$ induces a quasi-isomorphism, as remarked in \eqref{fff}.

We denote by $\mathcal{D}\text{BD}^{(d-2)tw}$ the category obtained by localizing $\text{BD}^{(d-2)tw}$ with respect to quasi-isomorphisms.
Since \eqref{BDqi} is an quasi-isomorphism and by definition \eqref{tnc} we observe that
 sending a quantization of an essentially finite cyclic $A_\infty$-category to $\mathcal{F}^q(Sk\mathcal{C})$ defines a functor 
    \begin{equation}\label{redef1}
    \mathcal{F}^q_{fin}:\ (\text{q.e.f. cyc}_dA_\infty cat)_{st}\rightarrow (\mathcal{D}\text{BD}^{(d-2)tw})^{op}. 
    \end{equation}

This redefinition makes sense as follows: Given a quantization of an essentially finite cyclic $A_\infty$-category $(\mathcal{C},Sk\mathcal{C})$ we have by \ref{BDqi} a map of BD algebras
    $$\mathcal{F}^{q}(\mathcal{C})\rightarrow \mathcal{F}^{q}(Sk\mathcal{C}),$$
    whose underlying map of chain complexes is a quasi-isomorphism. This defines a natural quasi-isomorphism  
$$\begin{tikzcd}
(\text{q.e.f.cyc}_dA_\infty )_{st} \arrow[rr, "\mathcal{F}^{q}_{fin}", bend left=25, ""{name=U, below}]
\arrow[rd," forget ", bend right=10]
&&(\mathcal{D}\text{BD}^{(d-2)tw})^{op}
\\
&(\text{q.cyc}
    _dA_\infty cat)_{st}\arrow[ur, "\mathcal{F}^{q}", bend right=10]\arrow[Rightarrow, to=U, ""]&
\end{tikzcd}$$
Regarding the dependence on how a quantum $A_\infty$-category is essentially finitely presented we note:
Given a cyclic $A_\infty$-category $\mathcal{C}$ which is finitely presented in two ways $(\mathcal{C},Sk\mathcal{C}_1)$, $(\mathcal{C},Sk\mathcal{C}_2)$ and with respective quantizations that coincide on $\mathcal{C}$ we get a zig-zag of BD algebras
   \begin{equation}\label{undfq}
    \begin{tikzcd}
        &\mathcal{F}^{q}(\mathcal{C})\arrow{dl}[swap]{\simeq}\arrow{dr}{\simeq}&\\
        \mathcal{F}^{q}(Sk\mathcal{C}_1)&&\mathcal{F}^{q}(Sk\mathcal{C}_2),
    \end{tikzcd}
    \end{equation}
    where the underlying maps of chain complexes are quasi-isomorphisms.
    Thus the functor \eqref{redef1} does not depend on the choice of skeleton and compatible quantization, up to isomorphism.
    \newpage
\section{From (Cyclic) $A_\infty$-Categories to 
(Cyclic) $L_\infty$-Algebras}\label{cw}
In this section we recall the standard construction of shifted Poisson and BD algebras given a cyclic $L_\infty$-algebra or a quantization of such as input. Next we study a functor from $A_\infty$-categories to $A_\infty$-algebras, which lands in $L_\infty$-algebras when composed with the standard commutator functor. We examine how this functor behaves when additionally considering cyclic $A_\infty$-categories, for which we restrict ourselves to essentially finite categories.
\subsection{Shifted Poisson Structure}
A cyclic chain complex $V$ is by definition a cyclic $L_\infty$-algebra for which $l=0$; see definition \ref{coalgebra}.
Given a cyclic chain complex $V$ we denote 
$$\mathcal{O}^{fr}(V):=C^*_+(V),$$
recalling definition \ref{redcecom} of the Chevalley-Eilenberg complex. 
By extending the inverse of the pairing of $V$
$$\langle\_,\_\rangle^{-1}:V^\vee[-1]\otimes V^\vee[-1]\rightarrow k[d-2],$$
    according to the Leibniz rule to the symmetric algebra underlying the Lie algebra cochains we can define a $(d-2)$-shifted Poisson structure.
    The differential is compatible with the shifted Poisson bracket because $\langle\_,\_\rangle$ is closed with respect to the differential.
    Summarizing (for details see section 3.1 of \cite{GGHZ21}) we have
\begin{theorem}\label{shPoC}
  Let $(V,d,\langle\_,\_\rangle)$ be a cyclic chain complex of degree $d$. Then 
  $$\big(\mathcal{O}^{fr}(V),d,\{\_,\_\}\big)$$
  is a $(d-2)$-shifted Poisson dg-algebra.
  \end{theorem}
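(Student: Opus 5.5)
The plan is to verify the shifted Poisson axioms directly on generators and then extend to the whole symmetric algebra by the Leibniz rule. First I fix conventions. The underlying graded commutative algebra is $\mathcal{O}^{fr}(V)=C^*_+(V)=\mathrm{Hom}(Sym^+(V[1]),k)$. Since $V$ is finite dimensional in each degree (the pairing is non-degenerate), I identify this with the completed $\widehat{Sym}^{+}(V^\vee[-1])$. The product is the shuffle product dual to the coproduct \eqref{coal_def}.

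The bracket on generators is defined by the inverse pairing $\langle\_,\_\rangle^{-1}:V^\vee[-1]\otimes V^\vee[-1]\rightarrow k[d-2]$. This is just the pairing transported along the isomorphism $V\cong V^\vee[-d]$ induced by $\langle\_,\_\rangle$. Because $\langle\_,\_\rangle$ is symmetric of degree $-d$, the transported form on $V^\vee[-1]$ is graded (anti)symmetric in the sense required for a bracket of degree $d-2$; this is a Koszul sign bookkeeping check. I then extend it to all of $Sym$ as a biderivation. Concretely, $\{f,\_\}$ is the unique derivation of the product whose value on a generator $a$ is $\{f,a\}$, and which is defined on generators by the constant $\langle\_,\_\rangle^{-1}$.

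\emph{Jacobi identity.} The key point is that the bracket of two generators lands in constants, i.e. in $Sym^0$. Hence the Jacobiator of three generators vanishes, since the bracket of anything with a constant is zero. Now the Jacobiator of a biderivation bracket is a graded triderivation in its arguments. One way to see this is that $\{\_,\_\}$ is the Schouten-type bracket $[[\Pi,\_],\_]$ for the constant bivector $\Pi=\langle\_,\_\rangle^{-1}$, and $[\Pi,\Pi]=0$ holds trivially for a constant bivector. Either way the Jacobi identity holds on all of $Sym$. The Leibniz rule holds by construction.

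\emph{Compatibility with the differential.} The differential $d$ on $C^*_+(V)$ is the derivation induced from the dual of the differential of $V$. To show it is a derivation of the bracket, again by biderivation arguments it suffices to check on generators $a,b$ that
$$d\{a,b\}=\{da,b\}\pm\{a,db\}.$$
The left side is $0$, since constants are closed. So the claim reduces to $\langle da,b\rangle^{-1}\pm\langle a,db\rangle^{-1}=0$, which is exactly the dual of the hypothesis that $\langle\_,\_\rangle$ is a chain map, i.e. closed with respect to $d$. That $d$ is a derivation of the product and squares to zero is standard.

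The main obstacle is purely the sign and degree bookkeeping. In particular one must check that the bracket has degree exactly $d-2$ and the correct symmetry, rather than the opposite symmetry, under the shifts $[-1]$ and the homological conventions. I would handle this by writing out the shifted pairing explicitly on homogeneous elements, following section 3.1 of \cite{GGHZ21}.

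Finally, the $Sym^{+}$ versus $Sym$ issue needs a remark. The bracket of two linear functionals lands in $Sym^0$, which is excluded from $C^*_+$. I would either note that the bracket vanishes on the $Sym^1\otimes Sym^1$ part, mirroring the convention "zero for $n+m\leq 2$" in \eqref{bracket}, or work in $C^*(V)$ and observe that the reduced part is a sub-Poisson algebra modulo constants.
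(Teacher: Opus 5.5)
Your core argument is the paper's own. The paper only says: extend $\langle\_,\_\rangle^{-1}$ by the Leibniz rule, get compatibility with $d$ from closedness of the pairing, and see \cite{GGHZ21}, \S 3.1. Your triderivation argument for Jacobi and your reduction of $[d,\{\_,\_\}]=0$ to generators are correct. However, they hold in the non-reduced algebra $C^*(V)\cong\widehat{Sym}(V^\vee[-1])$, where brackets of generators may be constants.

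You were right to flag the reduced versus non-reduced issue, which the paper passes over, but neither of your repairs works.

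Take first the option of discarding the constants coming from $Sym^1\otimes Sym^1$, i.e. using on $C^*_+(V)$ the bracket of $C^*(V)$ with its constant part thrown away. This is a Lie bracket, since it is the quotient of $C^*(V)$ by the central Lie ideal $k$. But it violates the Leibniz rule. Choose generators $x,y,z\in V^\vee[-1]$ with $\{x,y\}=1$, $\{x,z\}=0$ and $z\neq 0$; this is possible as soon as $\dim V\geq 2$, which is automatic for $d$ odd. Then $\{x,yz\}=z$ lies in $Sym^1$, so the truncation does not touch it. On the other hand $\{x,y\}_+z\pm y\{x,z\}_+=0$, because both generator brackets were truncated away.

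If instead you extend the truncated bracket of generators (which is identically zero) as a biderivation, you get the zero bracket. That is not the intended structure: twisting by $I_L$ would then not produce the Chevalley--Eilenberg differential.

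The analogy with \eqref{bracket} is misleading. There, the truncation happens in the Lie algebra of generators $Cyc^*_+(V_B)[-1]$, the quotient of the central extension by the span of the $\nu_\lambda$. The product of $\mathcal{F}^{fr}(V_B)$ is the free symmetric product on those generators, so Leibniz holds by construction. In $C^*_+(V)$, by contrast, the product lives on the very space whose generators bracket into constants.

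Your second option fails for the same reason. $C^*_+(V)$ is not closed under the bracket of $C^*(V)$, and $k$ is not a multiplicative ideal (it contains the unit). So $C^*(V)/k$ is a dg Lie algebra but not a Poisson algebra.

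What your argument actually establishes is:
\begin{itemize}
\item $(C^*(V),d,\{\_,\_\})$ is a $(d-2)$-shifted Poisson dg algebra;
\item on $C^*_+(V)\cong C^*(V)/k$, only the induced shifted dg Lie structure survives.
\end{itemize}
The truncated bracket is in fact forced by the rest of the paper. The dequantization map $p$ of definition \ref{fc_dq} is, at $\hbar=0$, exactly the quotient by constants, and it is a Lie map only for the truncated bracket. For that bracket the computation above shows Leibniz fails.

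What does survive on $C^*_+(V)$ is this: since $I_L$ has polynomial degree at least $3$, $\{I_L,\_\}$ never produces constants. Hence $d+\{I_L,\_\}$ is still a derivation of the product, and the twisted object is the dg algebra $C^*_+(L)$.

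You should therefore prove the statement for the non-reduced $C^*(V)$, and claim for $C^*_+(V)$ only the quotient Lie structure. Do not assert a repair on $C^*_+(V)$ that does not exist.
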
 
  
\begin{lemma}\label{ham_l}
     A cyclic $L_\infty$-structures on a cyclic chain complex $V$ induces a Maurer-Cartan elements of $\mathcal{O}^{fr}(V)$ of degree $(3-d)$. 
  \end{lemma}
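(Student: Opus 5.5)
The proof follows the same pattern as Lemma \ref{Ham1} for cyclic $A_\infty$-categories. We set $I:=\langle l\_,\_\rangle$ and show that the square-zero condition on the $L_\infty$-coderivation becomes the Maurer--Cartan equation once we pass through the pairing.

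First we place $I$ in $\mathcal{O}^{fr}(V)=C^*_+(V)$. The Taylor components $l_n: Sym^n(V[1])\to V[1]$ produce multilinear functionals $I_n(x_1,\dots,x_{n+1}):=\langle l_n(x_1,\dots,x_n),x_{n+1}\rangle$. The cyclicity assumption in the definition of a cyclic $L_\infty$-algebra says exactly that $S=\langle (d+l)\_,\_\rangle$, and hence $I$, lies in $C^*(V)$, i.e.\ is graded symmetric. Because we assumed $l_1=0$, every component has arity at least $3$, so $I\in C^*_+(V)$. The pairing has degree $-d$ and $l$ has degree $1$ on $V[1]$. With the shifts $[1]$ from $Sym(V[1])$ included, the total degree comes out to $3-d$, matching the degree of $S$ stated after the definition of cyclic $L_\infty$-algebras. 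This is a short shift count, which we check rather than re-derive.

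Next we identify the operator $\{I,\_\}$ on $\mathcal{O}^{fr}(V)$. The bracket of Theorem \ref{shPoC} is built from $\langle\_,\_\rangle^{-1}$ and extended by the Leibniz rule. Bracketing with $I$ therefore contracts one input of $I$ against the argument through the inverse pairing. Non-degeneracy gives $\{I,\_\}|_{V^\vee[-1]}=l^\vee$, the dual of the coderivation $l$, restricted to cogenerators. Since both sides are derivations of the symmetric algebra, $\{I,\_\}$ equals the Chevalley--Eilenberg differential $d_{CE}^{l}$ dual to $l$, up to a global sign fixed by conventions.

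Then we derive the Maurer--Cartan equation. The Jacobi identity gives $\{\tfrac12\{I,I\},\_\}=\{I,\_\}^2$. Closedness of the pairing under $d$ gives $\{dI,\_\}=[d,\{I,\_\}]$. Adding these,
$$\Big\{dI+\tfrac12\{I,I\},\_\Big\}=[d,l^\vee]+(l^\vee)^2=\big((d+l)^2\big)^\vee=0,$$
using $l^2=0$ and $[l,d]=0$ from Definition \ref{coalgebra}. We still need to pass from this operator identity back to the element itself. Write $E:=dI+\tfrac12\{I,I\}$; by construction $E$ has arity at least $2$. Evaluating $\{E,\_\}$ on cogenerators recovers all Taylor components of $E$ with one input contracted. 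Non-degeneracy of the pairing then forces $E=0$, because an element of $C^*_+$ of arity at least $2$ whose bracket with every linear functional vanishes must itself vanish.

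The main obstacle is the sign and shift bookkeeping in the second step: checking that $\{I,\_\}$ agrees with $l^\vee$ exactly, not only up to arity-dependent signs. Our first attempt will be to compare with the standard treatment cited in Theorem \ref{shPoC} (section 3.1 of \cite{GGHZ21}). If that does not settle it, we will verify the identification on arity-two inputs and extend by the derivation property.
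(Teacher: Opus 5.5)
Your proposal is correct and follows the same route as the paper, whose proof just says the result ``follows as in the $A_\infty$-case'' (Lemma \ref{Ham1}): with $I=\langle l\_,\_\rangle$, the square-zero condition $(d+l)^2=0$ on the coderivation turns, through the pairing, into the Maurer--Cartan equation. Your write-up is more complete than the paper's, because you state explicitly the non-degeneracy step that recovers $dI+\tfrac12\{I,I\}=0$ from the operator identity $\{dI+\tfrac12\{I,I\},\_\}=0$, a step the paper leaves implicit.
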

  \begin{proof}
      This is a standard result and follows as in the $A_\infty$-case.
      The Maurer-Cartan element is given by $I_L:=\langle l\_,\_\rangle.$
  \end{proof}
  \begin{df}
      Given $L$ a cyclic $L_\infty$-algebra with underlying cyclic chain complex of degree d denote by $\mathcal{O}^{cl}(L)$ the $(d-2)$-shifted Poisson algebra obtained by twisting $\mathcal{O}^{fr}(V)$ by the MCE determined by lemma \ref{ham_l}. 
  \end{df}
Denote by $(cyc_dL_\infty alg)_{st}$ the category whose objects are cyclic $L_\infty$-algebras of degree $d$ and whose morphisms are cyclic strict $L_\infty$-morphisms.
\begin{lemma}\label{nattr2}
    We find a functor 
    $$(cyc_dL_\infty Alg)_{st}\rightarrow (Poiss_{(d-2)})^{op}$$
    $$\big(\phi:(L_1\rightarrow L_2)\big)\mapsto \big(\phi^*:\mathcal{O}^{cl}(L_2)\rightarrow \mathcal{O}^{cl}(L_1)\big).$$ 
\end{lemma}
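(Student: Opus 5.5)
The plan is to mirror the proof of lemma \ref{cov}, the $A_\infty$-category case, in the commutative setting. Let $\phi:L_1\rightarrow L_2$ be a cyclic strict $L_\infty$-morphism with underlying cyclic chain complexes $V_1,V_2$, and write $\phi_1$ for its only nonzero Taylor component. The candidate map is the pullback $\phi^*:\mathcal{O}^{cl}(L_2)\rightarrow \mathcal{O}^{cl}(L_1)$. Concretely, $\phi_1^\vee:V_2^\vee[-1]\rightarrow V_1^\vee[-1]$ is extended multiplicatively to the symmetric algebras $Sym_+(V_2^\vee[-1])\rightarrow Sym_+(V_1^\vee[-1])$. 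This is a map of graded commutative algebras by construction. Functoriality, $(\psi\circ\phi)^*=\phi^*\circ\psi^*$ and $id^*=id$, is immediate because strict morphisms compose by composing their first components.

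We then check three things in order.

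\textbf{(i) Compatibility with the underlying differentials.} Since $\phi_1$ is a chain map, its dual intertwines the internal differentials $d$, and these are extended as derivations on both sides.

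\textbf{(ii) Compatibility with the brackets.} Both brackets are biderivations, so it suffices to check generators. By the argument of lemma \ref{cov}, cyclicity $\langle\phi_1 x,\phi_1 y\rangle_2=\langle x,y\rangle_1$ implies that $\phi_1$ is injective. The dual inverse pairings then satisfy $\langle\phi_1^\vee a,\phi_1^\vee b\rangle_1^{-1}=\langle a,b\rangle_2^{-1}$ for $a,b\in V_2^\vee[-1]$. To see this, transcribe the two commutative diagrams in that proof with a single object. The first diagram commutes by cyclicity. Inverting the horizontal isomorphisms given by the non-degenerate pairings, the second diagram follows from injectivity.

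\textbf{(iii) Compatibility with the twist.} We need $\phi^*(I_{L_2})=I_{L_1}$ for the Hamiltonians of lemma \ref{ham_l}. For a strict morphism the $L_\infty$-relations read $l^{(2)}_n(\phi_1x_1,\dots,\phi_1x_n)=\phi_1 l^{(1)}_n(x_1,\dots,x_n)$. Therefore
$$\phi^*I_{L_2}(x_1,\dots,x_{n+1})=\langle l^{(2)}_n(\phi_1x_1,\dots),\phi_1x_{n+1}\rangle_2=\langle\phi_1 l^{(1)}_n(x_1,\dots),\phi_1x_{n+1}\rangle_2=\langle l^{(1)}_n(x_1,\dots),x_{n+1}\rangle_1,$$
which is $I_{L_1}(x_1,\dots,x_{n+1})$. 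Since $\phi^*$ is a map of shifted Poisson algebras $\mathcal{O}^{fr}(V_2)\rightarrow\mathcal{O}^{fr}(V_1)$ sending Maurer--Cartan element to Maurer--Cartan element, it intertwines the twisted differentials $d+\{I,\_\}$. Hence it defines a map $\mathcal{O}^{cl}(L_2)\rightarrow\mathcal{O}^{cl}(L_1)$.

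The main obstacle is step (ii). Because $\phi_1$ is only injective and not surjective, dualizing the pairing identity does not formally give the identity for inverse pairings. The statement should be read as holding on the image, with the remaining generators handled exactly as in the proof of lemma \ref{cov}. I would simply port that argument, noting that the single-object commutative case is strictly easier. Signs in step (iii) follow by the Koszul rule, as in the $A_\infty$ case.
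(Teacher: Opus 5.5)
You follow the paper's route: its proof of this lemma just says it is verified as in lemma \ref{cov}. Steps (i) and (iii) are correct. The difficulty you flag in step (ii), however, is fatal rather than technical, and the argument of lemma \ref{cov} you propose to port has the same hole. Suppress shifts and write $\flat_i\colon V_i\to V_i^\vee$, $x\mapsto\langle x,\_\rangle_i$, and $\sharp_i=\flat_i^{-1}$. Cyclicity of $\phi$ says exactly $\phi_1^\vee\flat_2\phi_1=\flat_1$. Hence $E:=\phi_1\sharp_1\phi_1^\vee\flat_2$ is an idempotent on $V_2$ with image $\mathrm{im}\,\phi_1$ and kernel $(\mathrm{im}\,\phi_1)^\perp$, i.e.\ the orthogonal projection onto the image. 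One then computes $\langle\phi_1^\vee a,\phi_1^\vee b\rangle_1^{-1}=\langle E\sharp_2a,E\sharp_2b\rangle_2$. This differs from $\langle a,b\rangle_2^{-1}=\langle\sharp_2a,\sharp_2b\rangle_2$ by $\langle(1-E)\sharp_2a,(1-E)\sharp_2b\rangle_2$, the inverse pairing of the orthogonal complement. Injectivity of $\phi_1$ only yields $E\phi_1=\phi_1$, not $E=\mathrm{id}$. So the second diagram in the proof of lemma \ref{cov} does not commute unless the maps $F_*^{ij}$ are also surjective.

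This is not an artifact of checking on generators: the statement fails for every non-surjective $\phi_1$. Let $V_2=V_1\oplus W$ be an orthogonal sum of cyclic chain complexes with $W\neq0$. Put $l=0$ on both sides, so $I=0$ and $\mathcal{O}^{cl}=\mathcal{O}^{fr}$. Let $\phi_1$ be the inclusion; it is a cyclic strict $L_\infty$-morphism. Choose functionals $w,w'$ on $V_2$ vanishing on $V_1$ with $\langle w,w'\rangle_2^{-1}=1$, and a functional $v$ on $V_2$ vanishing on $W$ with $v|_{V_1}\neq0$. By the Leibniz rule and orthogonality, $\{vw,w'\}=\pm v$, hence $\phi^*\{vw,w'\}=\pm v|_{V_1}\neq0$. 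On the other hand $\phi^*w=\phi^*w'=0$ gives $\{\phi^*(vw),\phi^*w'\}=0$. So $\phi^*$ is not a map of shifted Poisson algebras, and no treatment of ``the remaining generators'' can repair step (ii). The same example with cyclic words, $\{(v\,w),(w')\}=\pm(v)$, shows that lemma \ref{cov} also fails for a one-object non-surjective cyclic strict morphism.

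What your three steps do prove is the lemma for cyclic strict morphisms with $\phi_1$ bijective, i.e.\ cyclic strict isomorphisms. Then $E=\mathrm{id}$ and step (ii) holds on all generators. Step (iii) gives $\phi^*I_{L_2}=I_{L_1}$, so $\phi^*$ intertwines the twisted differentials. To get a functor you must restrict the source category accordingly, or change the class of morphisms; the statement as written does not hold.
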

\begin{proof}
Indeed, this can be verified as in the $A_\infty$-case. See lemma \ref{cov}.    
\end{proof}

\subsection{Beilinson-Drinfeld Structure}
From now on let us restrict again to $d$ odd.
We introduce $\hbar$, a formal variable of cohomological degree $3-d$.
For $(V,d,\langle\_,\_\rangle)$ a cyclic chain complex denote 
$$\mathcal{O}^{pq}(V):=C^*(V)\llbracket\hbar\rrbracket,$$
the non-reduced Chevalley-Eilenberg complex adjoined $\hbar$.
Eg. following \cite{GGHZ21}, we can define a BD differential $d^q$ on $\mathcal{O}^{pq}(V)$ as 
    $$d^q=d+\hbar \Delta.$$
    Indeed, by imposing the BD-relation \eqref{BD} and $\hbar$-linearity we only need to specify the values of $d^q$ on $C^2(V)$, which leaves us to specify the value of $\Delta$ on $C^2(V).$
    It is given by  $\Delta(a\cdot b)=\langle a,b\rangle^{-1}$ for $a,b\in L^\vee[-1]$.
    This is compatible with defining $\{\_,\_\}$ by $\hbar$-linearly extending the bracket on $\mathcal{O}^{fr}(V)$.
    Summarizing we have
\begin{theorem}\label{c_pq}
    For $V$ a cyclic chain complex 
    $$\mathcal{O}^{pq}(V):=\big(C^*(V)\llbracket\hbar\rrbracket,d^q,\{\_,\_\}\big)$$
    is a $(d-2)$ twisted BD algebra over $k$.  
\end{theorem}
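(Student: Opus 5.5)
The plan is to verify the axioms of Definition \ref{BD} directly, with $\mu=\hbar$ and $r=d-2$. All structure on $C^*(V)\llbracket\hbar\rrbracket$ is determined by generators. As a graded algebra, $C^*(V)$ is (a completion of) $Sym(V^\vee[-1])$, and we extend everything $\hbar$-linearly. The bracket $\{\_,\_\}$ is the $\hbar$-linear extension of the $(d-2)$-shifted Poisson bracket of Theorem \ref{shPoC}, so it is $k\llbracket\hbar\rrbracket$-linear of degree $d-2$, satisfies the Leibniz rule and the shifted Jacobi identity, and is compatible with $d$.

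\textbf{Step 1: define $\Delta$ globally.} The operator $\Delta$ is the unique second-order operator, vanishing on $Sym^{\le 1}$, with $\Delta(a\cdot b)=\langle a,b\rangle^{-1}$ for $a,b\in V^\vee[-1]$. Concretely, $\Delta$ is contraction with the inverse pairing. On $x_1\cdots x_n$ it is the signed sum over pairs $i<j$ of $\langle x_i,x_j\rangle^{-1}x_1\cdots\hat x_i\cdots\hat x_j\cdots x_n$. Defined this way, the relation
\[\Delta(ab)=\Delta(a)b+(-1)^{|a|}a\Delta(b)+\{a,b\}\]
holds by construction: a pair of letters in $ab$ either lies inside $a$, lies inside $b$, or is split between them, and the split pairs give exactly the Leibniz extension of the bracket on generators. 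Hence $d^q=d+\hbar\Delta$ satisfies \eqref{BDrel} with $\mu=\hbar$, using that $d$ is a derivation. For degrees, $\Delta$ has degree $d-2$ and $\hbar$ has degree $3-d$, so $\hbar\Delta$ has degree $1=1-r+r$. This is consistent with the requirement that $\hbar$ has degree $1-r$.

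\textbf{Step 2: check $(d^q)^2=0$.} Expanding, $(d^q)^2=d^2+\hbar[d,\Delta]+\hbar^2\Delta^2$. First, $d^2=0$. Second, $[d,\Delta]=0$ because $\Delta$ is contraction with $\langle\_,\_\rangle^{-1}$, and the pairing is a chain map, so its inverse is $d$-closed. Third, $\Delta^2=0$: applying $\Delta$ twice contracts two distinct pairs of letters. Each unordered choice of the two pairs appears twice, in the two possible orders, and the two contributions carry opposite signs because $\Delta$ has odd degree $d-2$ (here $d$ is odd). Sign bookkeeping in this cancellation is the only delicate point, and I expect it to be the main obstacle. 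I would handle it by the standard argument that $\Delta^2$ is itself a second-order operator vanishing on $Sym^{\le 3}$, so it suffices to check it on quartic monomials. There the claim reduces to the graded symmetry of $\langle\_,\_\rangle^{-1}$ and the oddness of $\Delta$.

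\textbf{Step 3: conclude.} Combining the two steps, $(C^*(V)\llbracket\hbar\rrbracket,d^q,\{\_,\_\})$ is a $(d-2)$-twisted BD algebra. Alternatively, one could cite section 3.1 of \cite{GGHZ21}, where this is carried out.
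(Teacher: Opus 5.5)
This is essentially the paper's argument. The paper fixes $\Delta$ on $C^2(V)$ by $\Delta(a\cdot b)=\langle a,b\rangle^{-1}$, extends $d^q=d+\hbar\Delta$ by the BD relation and $\hbar$-linearity, takes $\{\_,\_\}$ to be the $\hbar$-linear extension of the bracket of Theorem \ref{shPoC}, and defers the rest to section 3.1 of \cite{GGHZ21}; your Step 2 ($[d,\Delta]=0$, $\Delta^2=0$) makes explicit what it leaves implicit.

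One small correction: $\Delta^2=\tfrac12[\Delta,\Delta]$ is a priori of order at most three, not two. Since it lowers word length by four, it kills $Sym^{\le 3}$ and hence vanishes, so no check on quartic monomials is needed. Either this or your pair-cancellation argument suffices on its own.
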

Again, analogously to the $A_\infty$-case it follows:
\begin{lemma}
     The assignement 
$$(\phi: V_1\rightarrow V_2)\mapsto (\phi^*:\mathcal{O}^{pq}(V_2)\rightarrow \mathcal{O}^{pq}(V_1))$$
defines a functor from the full subcategory of cyclic chain complexes of degree $d$ of 
$(cyc_dL_\infty alg)_{st}$ to the category of (d-2) twisted BD algebras.
\end{lemma}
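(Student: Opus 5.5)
The plan is to mirror the proof of lemma \ref{cov} and the discussion preceding the functor $\mathcal{F}^{pq}$ in the $A_\infty$-setting, replacing cyclic words by symmetric words. Given a cyclic strict morphism $\phi:V_1\rightarrow V_2$ of cyclic chain complexes of degree $d$, it suffices to look at $\phi_1$. This is a chain map satisfying $\langle \phi_1 x,\phi_1 y\rangle_2=\langle x,y\rangle_1$, and it is therefore injective. Its linear dual $\phi_1^\vee: V_2^\vee[-1]\rightarrow V_1^\vee[-1]$ extends uniquely to a map of graded commutative algebras $\phi^*: C^*(V_2)\rightarrow C^*(V_1)$, and we extend it $\hbar$-linearly to $\mathcal{O}^{pq}$. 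Since $\phi_1$ is a chain map, $\phi^*$ commutes with the derivation $d$ on generators and hence everywhere.

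Next I check compatibility with the bracket. Since both brackets are biderivations extended from generators, it is enough to prove
$$\langle \phi_1^\vee a,\phi_1^\vee b\rangle_1^{-1}=\langle a,b\rangle_2^{-1}\quad\text{for } a,b\in V_2^\vee[-1].$$
For this I run exactly the two-diagram argument of lemma \ref{cov}. The first diagram commutes because $\phi_1$ preserves the pairings. Inverting the vertical identifications given by the non-degenerate pairings and using injectivity of $\phi_1$ gives commutativity of the second diagram, which is the displayed identity.

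Then I treat the BV operator. On $\mathcal{O}^{pq}(V_i)$ the operator $\Delta$ vanishes on $Sym^{\le 1}$, and on $Sym^{2}$ it is $\Delta(a\cdot b)=\langle a,b\rangle^{-1}$. In general it is determined from these values by the BD relation \eqref{BDrel}, $\Delta(xy)=\Delta(x)y\pm x\Delta(y)\pm\{x,y\}$. Since $\phi^*$ is multiplicative and intertwines the brackets, an induction on polynomial degree shows $\phi^*\Delta_2=\Delta_1\phi^*$. The base cases are degree $\le 1$, which is trivial, and degree $2$, which is the displayed identity. Together with the compatibility with $d$, this shows that $\phi^*$ intertwines $d^q=d+\hbar\Delta$, so $\phi^*$ is a map of $(d-2)$-twisted BD algebras. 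Functoriality is immediate, since $(\psi\circ\phi)^*=\phi^*\circ\psi^*$ on generators and hence everywhere, and $\mathrm{id}^*=\mathrm{id}$.

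I expect the main obstacle to be the identity of inverse pairings when $\phi_1$ is injective but not surjective. Writing $K_i\in V_i\otimes V_i$ for the inverse pairings, the identity amounts to $(\phi_1\otimes\phi_1)K_1=K_2$. In finite dimensions this would seem to force $\phi_1$ to be an isomorphism, since otherwise $V_2=\phi_1(V_1)\oplus\phi_1(V_1)^\perp$ and $K_2$ has an extra summand. My first approach would be to follow lemma \ref{cov} verbatim. If this gap is genuine, I would restrict to cyclic isomorphisms, or note that only the component along $\phi_1(V_1)$ is relevant. In the latter case the bracket would be preserved only after composing with the projection onto $\phi_1(V_1)$, and I would check whether this suffices for the applications below.
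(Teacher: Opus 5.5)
Your reductions are sound, and they follow the paper's own route. The paper justifies this lemma only as ``analogous to the $A_\infty$-case'', i.e.\ by the two-diagram argument of lemma \ref{cov}. Compatibility with $d$ holds on generators, the bracket is a biderivation, and $\Delta$ is determined by the BD relation from its values on $Sym^{\le 2}$. So everything reduces to the identity $\langle\phi_1^\vee a,\phi_1^\vee b\rangle_1^{-1}=\langle a,b\rangle_2^{-1}$.

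However, the obstacle you raise at the end is real, and your finite-dimensional analysis of it is correct. It invalidates the statement for non-surjective morphisms, not just your write-up. Work in the degree-wise finite-dimensional setting where the inverse pairings exist, and write $\kappa_i$ for the isomorphism $V_i\to V_i^\vee$ (up to shift) induced by the pairing. The first diagram of lemma \ref{cov} gives $\phi_1^\vee\kappa_2\phi_1=\kappa_1$. This only shows that $\phi_1\kappa_1^{-1}\phi_1^\vee\kappa_2$ restricts to the identity on $\phi_1(V_1)$. In fact this composite is the orthogonal projection $P$ onto $\phi_1(V_1)$, so $\langle\phi_1^\vee a,\phi_1^\vee b\rangle_1^{-1}=\langle P\kappa_2^{-1}a,P\kappa_2^{-1}b\rangle_2$. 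This equals $\langle a,b\rangle_2^{-1}$ for all $a,b$ only when $\phi_1$ is surjective. The appeal to injectivity in lemma \ref{cov} is exactly where the argument breaks, so following it verbatim cannot work.

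Here is a concrete counterexample. Take any cyclic chain complex $V'\neq 0$ of degree $d$, e.g.\ zero differential and basis $x',y'$ with $\langle x',y'\rangle=1$. Let $\phi\colon V_1\hookrightarrow V_1\oplus V'$ be the inclusion into the orthogonal sum, which is a cyclic strict morphism. Let $a,b\in(V_1\oplus V')^\vee[-1]$ be the generators dual to $x',y'$, vanishing on $V_1$. Then $\phi^*a=\phi^*b=0$, so $\Delta_1\phi^*(a\cdot b)=0$. On the other hand, $\phi^*\Delta_2(a\cdot b)=\langle a,b\rangle_2^{-1}\neq 0$, because $\phi^*$ fixes constants. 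The bracket fails as well: for a nonzero generator $x$ vanishing on $V'$, $\{x\cdot a,b\}_2=\pm\langle a,b\rangle_2^{-1}x$ has nonzero image under $\phi^*$, although $\phi^*(x\cdot a)=0$.

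More structurally, $\phi_1(V_1)^\perp$ is a subcomplex because the pairing is a chain map. Hence $V_2\cong V_1\oplus V'$, and $\phi^*=\mathrm{id}\otimes\varepsilon$ with $\varepsilon$ the augmentation of $\mathcal{O}^{pq}(V')$. That augmentation is a BD map only if $V'=0$. So your fallback of keeping only the component along $\phi_1(V_1)$ does not make $\phi^*$ a BD map either.

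What is true is the lemma with morphisms restricted to cyclic isomorphisms. There the second diagram commutes because $\phi_1$ is invertible, and your argument proves it as it stands. Be aware that this is a genuine restriction for the later uses. For instance, the morphisms $A_{Sk\mathcal{C}}\to A_{\bar{\mathcal{D}}}$ coming from the roofs in the proof of lemma \ref{iknm} are isometric embeddings that are typically not surjective.
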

\begin{df}\label{fc_dq} 
Define 
    $$p: \mathcal{O}^{pq}(V)\rightarrow \mathcal{O}^{fr}(V)$$ 
    by setting $\hbar$ and the $n=0$ factor of the Chevalley-Eilenberg cochains to zero. 
    It is easy to verify that this defines a dequantization map in the sense of definition \ref{dq}.
\end{df}
\begin{df} We say that an
    $I^q\in MCE(\mathcal{O}^{pq}(V))$ of degree $(3-d)$ is a \emph{quantization} of $I\in MCE(\mathcal{O}^{fr}(V))$ if $p(I^q)=I$.
\end{df}
If $\phi: V_1\rightarrow V_2$ is a strict cyclic $L_\infty$-morphism of cyclic chain complexes and if $I^q\in MCE(\mathcal{O}^{pq}(V_2))$ then $\phi^*I^q\in MCE(\mathcal{O}^{pq}(V_1))$.
This allows us to define a category, which we denote by $$(\text{q-cyc}_dL_\infty alg)_{st}$$ whose objects are pairs of a cyclic $L_\infty$-algebra $L$ with underlying chain complex V and $I^q\in MCE(\mathcal{O}^{pq}(V))$ such that $p(I^q)=I$.
Its morphisms are cyclic strict $L_\infty$-morphisms 
$$\phi:\ L_1\rightarrow L_2$$
such that $\phi^*I^q_2=I^q_1.$ 
\begin{df}\label{O_q}
    Given a cyclic $L_\infty$ algebra $L$ denote by $V$ its underlying chain complex and by $I\in MCE(\mathcal{O}^{fr}(V))$ its associated $L_\infty$-hamiltonian.
    Assume we are given a quantization $I^q\in MCE(\mathcal{O}^{pq}(V)),$ then we define
    $$\mathcal{O}^{q}(L):=\mathcal{O}^{pq}(V)^{tw}$$
    as the twist of $\mathcal{O}^{pq}(V)$ by the MCE $I^q.$
    This assignment defines a functor 
    $$\mathcal{O}^q: (\text{q-cyc}_dL_\infty Alg)_{st}\rightarrow (\text{BD}^{(d-2)tw})^{op}.$$
    \end{df}
\begin{remark}
    Note that the notation $\mathcal{O}^{q}(L)$ is ambiguous and we implicitly have to keep in mind an $I^q\in MCE(\mathcal{O}^{pq}(V))$.
\end{remark}

\subsection{Commutator $L_\infty$-algebras of $A_\infty$-categories}\label{CLS}
We recall the standard definition (eg definition 2.8 of \cite{GGHZ21})
of the functor 
\begin{equation}\label{Cmt}
[\_,\_]:A_\infty Alg\rightarrow L_\infty Alg,\end{equation}
 called the commutator functor, which generalizes the construction of the commutator Lie algebra out of an associative algebra.

In this section we wish to generalize the domain of this functor to $A_\infty$-categories.
We do so by defining a functor 
$$\ A_\infty cat\rightarrow A_\infty Alg.$$
 $$(F:\mathcal{C}\rightarrow \mathcal{D})\mapsto (\tilde{F}:A_\mathcal{C}\rightarrow A_\mathcal{D}).$$
  On objects the functor is defined as follows:
 \begin{cons}\label{obj}
    Let $\mathcal{C}$ be an $A_\infty$-category.
    Recall that this defines the datum of a degree $-1$ coderivation $m_\mathcal{C}$ on its bar complex $Bar \mathcal{C}$ that squares to zero.
\begin{itemize}
     \item We set
 \begin{equation}\label{sqc}
    A_\mathcal{C}:=\bigoplus_{(\lambda,\mu)\in Ob(\mathcal{C})^{\times2}}Hom_\mathcal{C}(\lambda,\mu). 
    \end{equation}
\item We define an $A_\infty$-algebra structure on $A_\mathcal{C}$ as follows.
Define maps
$$ \begin{tikzcd}
m^n: (A_\mathcal{C}[1]^{\otimes n})\arrow{r}{\iota_n}& Bar \mathcal{C}\arrow{r}{m_\mathcal{C}}&Bar \mathcal{C}[1]\arrow{r}{\pi[1]}& A_\mathcal{C}[2]\end{tikzcd}.$$
 Here $\iota_n$ denotes the projection onto length $n$ tensor factors of the form in the bar complex - that is those with `matching boundary conditions', except for the first and last one.\footnote{See equation~\eqref{bar}.}
 Further 
 $$\pi:Bar \mathcal{C}\rightarrow A_\mathcal{C}[1]$$
 denotes projection to tensors of $Bar\mathcal{C}$ of length one.
 Its image is by definition $A_\mathcal{C}[1]$. 
 \item
 We extend the maps $m^n$ as a coderivation to $Bar(A_\mathcal{C})$, which we denote by $m_{A_\mathcal{C}}$ and which has degree $-1$.
 Thus it remains to show that $m_{A_\mathcal{C}}^2=0$ to prove that this defines an $A_\infty$-algebra structure on ${A_\mathcal{C}}$.
 \item We observe that the $\iota_n$ maps induce a chain map 
 $$\iota:Bar(A_\mathcal{C})\rightarrow Bar\mathcal{C}.$$
 Note that this map has a `canonical' complement $W$ to its kernel, given by tensors with`matching boundary conditions, except the first and last one'.
 By definition $\iota|_W$ is injective.
\item It follows by the definition of $m_{A_\mathcal{C}}$ that $\iota\circ m_{A_\mathcal{C}}=m_\mathcal{C}\circ\iota$.
Thus we have that 
$$\iota \circ m_{A_\mathcal{C}}^2=m_\mathcal{C}^2\circ\iota=0.$$
 It holds that $m_{A_\mathcal{C}}(W)\subseteq W.$ Thus since $\iota$ is injective on $W$, it follows that ${m_{A_\mathcal{C}}^2|_W=0}$. 
 However, by definition also $m_{A_\mathcal{C}}|_{ker(\iota)}=0$, thus also $m_{A_\mathcal{C}}^2|_{ker(\iota)}=0.$ 
 Since $W$ is complementary to ${ker(\iota)}$ this together implies  $$m_{A_\mathcal{C}}^2=0$$ in general.
 \end{itemize}
 \end{cons}
 \begin{df}
  Given $\mathcal{C}$ an $A_\infty$-category denote by 
  $\big(A_\mathcal{C},d+m_{A_\mathcal{C}}\big)$ the thus constructed $A_\infty$-algebra, where $d$ denotes the internal differential.
 \end{df}
 Next we define the  functor on morphisms:
 \begin{cons}\label{func}
  Let  $F: \mathcal{C}\rightarrow \mathcal{D}$ be a functor of $A_\infty$-categories which by definition gives a map of coalgebras
  $$Bar\mathcal{C}\rightarrow  Bar\mathcal{D}.$$
  \begin{itemize}
      \item 
  We define 
  $$\tilde{F}_*: Bar(A_\mathcal{C})\rightarrow Bar(A_\mathcal{D})$$
  by extending 
$$  \begin{tikzcd}    
\tilde{F}_*: Bar(A_\mathcal{C})\arrow{r}{\iota_\mathcal{C}}& Bar \mathcal{C}\arrow{r}{F_*}& Bar \mathcal{D} \arrow{r}{\pi_\mathcal{D}}& A_\mathcal{D}[1]\end{tikzcd}$$
  as a map of coalgebras. 
  It has degree zero. 
  We check that $\tilde{F}_*$ intertwines the coderivation $m_{A_\mathcal{C}}$ with $m_{A_\mathcal{D}}$:
  \item
We have a commutative diagram of chain complexes
 \begin{equation}\label{iota_func}
\begin{tikzcd}
    Bar \mathcal{C}\arrow{r}{F_*}  &Bar \mathcal{D}\\
    Bar(A_\mathcal{C})\arrow{u}{\iota_\mathcal{C}}\arrow{r}{\tilde{F}_*}&Bar(A_\mathcal{D})\arrow{u}{\iota_\mathcal{D}},
 \end{tikzcd}
 \end{equation}
 which follows by construction.
 \item
 Thus we have that 
 $$\iota_\mathcal{D}\circ\tilde{F}_*\circ m_{A_\mathcal{C}}={F}_*\circ\iota_\mathcal{C}\circ m_{A_\mathcal{C}}={F}_*\circ m_\mathcal{C}\circ\iota_\mathcal{C}=m_\mathcal{D}\circ{F}_*\circ\iota_\mathcal{C}=m_\mathcal{D}\circ\iota_\mathcal{D}\circ\tilde{F}_*=\iota_\mathcal{D}\circ m_{A_\mathcal{D}}\circ\tilde{F}_*,$$
 
 where we first used commutativity of above diagram, then that $\iota_\mathcal{C}$ intertwines the respective coderivations.
 Then we used that $F_*$ intertwines the coderivations, next again commutatvity of above diagram and lastly that $\iota_\mathcal{D}$ intertwines the respective coderivations.
 Thus
 $$ \iota_\mathcal{D}\circ\tilde{F}_*\circ m_{A_\mathcal{C}}=\iota_\mathcal{D}\circ m_{A_\mathcal{D}}\circ\tilde{F}_*.$$
\item
We have 
$$\tilde{F}_*\circ m_{A_\mathcal{C}}|_{ker(\iota_\mathcal{C})}=0=m_{A_\mathcal{D}}\circ\tilde{F}_*|_{ker(\iota_\mathcal{C})},$$
which follows by definition. 
On the other hand we have that 
$$\tilde{F}_*\circ m_{A_\mathcal{C}}(W_\mathcal{C})\subseteq W_\mathcal{D}\ \text{and}\ \ m_{A_\mathcal{D}}\circ\tilde{F}_*(W_\mathcal{C})\subseteq W_\mathcal{D}.$$  
Since $\iota_\mathcal{D}|_{W_\mathcal{D}}$ is injective we conclude from the previous line and the previous bullet point that 
$$m_{A_\mathcal{D}}\circ\tilde{F}_*=\tilde{F}_*\circ m_{A_\mathcal{C}}.$$
\item Unwinding the definitions one can also show that $\widetilde{G\circ F}=\tilde{G}\circ \tilde{F}.$
\end{itemize}
\end{cons}
\begin{lemma}\label{Commu}
Constructions \ref{obj} and \ref{func} allow us to define 
$$(\tilde\_): A_\infty cat\rightarrow A_\infty Alg.$$
 $$(F:\mathcal{C}\rightarrow \mathcal{D})\mapsto (\tilde{F}:A_\mathcal{C}\rightarrow A_\mathcal{D}).$$
\end{lemma}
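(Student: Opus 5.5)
The statement collects Constructions \ref{obj} and \ref{func}, so what remains is to check that the assignment is a functor. Concretely, I need three things: that $(A_\mathcal{C}, d+m_{A_\mathcal{C}})$ is an $A_\infty$-algebra, that $\tilde F$ is an $A_\infty$-morphism, and that identities and composites are preserved. All of these will be handled by one device: the comparison map $\iota_\mathcal{C}:Bar(A_\mathcal{C})\rightarrow Bar\mathcal{C}$ together with the decomposition $Bar(A_\mathcal{C})=W_\mathcal{C}\oplus ker(\iota_\mathcal{C})$. Here $W_\mathcal{C}$ is spanned by tensors $a_1\otimes\cdots\otimes a_n$ with $a_i\in Hom(\lambda_{i-1},\lambda_i)$ (composable strings), and $ker(\iota_\mathcal{C})$ is spanned by tensors in which some adjacent pair fails to match. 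The recurring strategy is that an identity between coalgebra maps or coderivations holds on $Bar(A_\mathcal{C})$ as soon as it holds after applying $\iota$ on $W$ and both sides vanish on $ker(\iota)$.

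First I would verify the structural claims used in the constructions. The key observation is combinatorial. A coderivation on $Bar(A_\mathcal{C})$ acts on a tensor by applying $m^s$ to consecutive blocks, and $m^s$ vanishes on non-composable inputs. Since $m_\mathcal{C}$ sends a composable block with endpoints $\lambda_0,\lambda_s$ into $Hom(\lambda_0,\lambda_s)$, replacing a composable block inside a composable string keeps the string composable. Hence $m_{A_\mathcal{C}}(W)\subseteq W$ and $\iota\circ m_{A_\mathcal{C}}=m_\mathcal{C}\circ\iota$ on $W$.

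On $ker(\iota)$, a summand of $m_{A_\mathcal{C}}$ is nonzero only if the block it acts on is composable. The resulting tensor still contains some mismatch: either the original mismatch lies outside the block, or it lies inside the block, in which case $m^s$ already vanishes on that summand. So $m_{A_\mathcal{C}}(ker\,\iota)\subseteq ker\,\iota$. I should correct the construction's wording here: $m_{A_\mathcal{C}}$ need not vanish on $ker(\iota)$, but it preserves it. Since every term produced on $ker(\iota)$ still contains a mismatch that is never removed, a straightforward induction gives $m_{A_\mathcal{C}}^2(x)=0$ for $x\in ker\,\iota$.

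The cleanest route to this last point is to note that $Bar(A_\mathcal{C})$ splits as a tensor-coalgebra decomposition. I would express $m_{A_\mathcal{C}}^2$ on a tensor with mismatches as a sum of terms obtained by applying the $A_\infty$-relation \eqref{A_inf_re} of $\mathcal{C}$ to each maximal composable sub-string, plus cross terms. The cross terms cancel by the usual Koszul sign argument for squares of odd coderivations acting on disjoint blocks. This is the step I expect to be the main obstacle, because the constructions gloss over it, and I would write it out carefully using the formula for a coderivation applied to a tensor.

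For morphisms, I apply the same analysis to the coalgebra map $\tilde F_*$, whose components are $F_n$ on composable strings and zero otherwise. By the same block argument, $\tilde F_*$ maps $W_\mathcal{C}$ into $W_\mathcal{D}$, using that $F$ maps composable strings to composable strings via $F$ on objects. On $W$, diagram \eqref{iota_func} commutes and the displayed chain of equalities gives $\iota_\mathcal{D}\tilde F_* m=\iota_\mathcal{D} m\tilde F_*$. Injectivity of $\iota_\mathcal{D}$ on $W_\mathcal{D}$ then yields the intertwining on $W_\mathcal{C}$. On $ker(\iota_\mathcal{C})$ I argue blockwise, reducing to \eqref{F_inf_rel} on each maximal composable substring as above.

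Functoriality is checked at the level of Taylor components. The component $\widetilde{G\circ F}_n$ is $\pi(G_*F_*)\iota$, which on composable strings is $\sum G_l(F_{i_1}\otimes\cdots\otimes F_{i_l})$, and this equals $(\tilde G\circ\tilde F)_n$. On non-composable strings both sides vanish. For identities, $\widetilde{id}_n$ is the identity for $n=1$ and $0$ otherwise. Since coalgebra maps into $Bar(A_\mathcal{D})$ are determined by their components (Remark \ref{nütz1}, in its associative form), this finishes the proof.
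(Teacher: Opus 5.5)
Your treatment of the objects is correct. You are right to correct Construction \ref{obj}: $m_{A_\mathcal{C}}$ does not vanish on $\ker(\iota_\mathcal{C})$, it only preserves it. Your blockwise argument then gives $m_{A_\mathcal{C}}^2=0$ there. A quicker route: $m_{A_\mathcal{C}}^2$ is a coderivation, so it suffices to check \eqref{A_inf_re} string by string, and on a non-composable string every term dies, because the mismatch survives the inner $m_s$ and kills the outer one.

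The gap is in the morphism part. Your step ``on $\ker(\iota_\mathcal{C})$ I argue blockwise'' tacitly uses $\tilde F_*(\ker\iota_\mathcal{C})\subseteq\ker\iota_\mathcal{D}$. That is, you assume a mismatch $\lambda_k\neq\lambda_k'$ between consecutive entries stays a mismatch $F(\lambda_k)\neq F(\lambda_k')$ after applying $F$, which requires $F$ to be injective on objects. If $F$ identifies two objects, $m_{A_\mathcal{D}}\circ\tilde F_*$ acquires terms in which some $m^{\mathcal{D}}_l$ is applied across a former mismatch, and $\tilde F_*\circ m_{A_\mathcal{C}}$ has no terms to match them. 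The same issue breaks your functoriality check. On a pair $(a,b)$ that is not composable in $\mathcal{C}$ but whose images are composable in $\mathcal{D}$, one has $(\tilde G\circ\tilde F)_2(a,b)=\tilde G_2(F_1a,F_1b)=G_2(F_1a,F_1b)$. This can be nonzero, while $\widetilde{G\circ F}_2(a,b)=0$.

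This is not a fixable detail: the statement is false for general $A_\infty$-functors. Take the following data:
\begin{itemize}
\item $\mathcal{C}$ with two objects $\lambda\neq\lambda'$, $Hom(\lambda,\lambda)=k\cdot 1_\lambda$, $Hom(\lambda',\lambda')=k\cdot 1_{\lambda'}$, and no other morphisms;
\item $\mathcal{D}$ with one object $\mu$ and $Hom(\mu,\mu)=k\cdot 1_\mu$;
\item $F$ the strict dg functor sending both objects to $\mu$ and both units to $1_\mu$.
\end{itemize}
Then $A_\mathcal{C}\cong k\times k$, $A_\mathcal{D}=k$, and $\tilde F$ is strict with $\tilde F_1=F_1$. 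Relation \eqref{F_inf_rel} at $n=2$, evaluated on $(1_\lambda,1_{\lambda'})$, would read $0=\tilde F_1\big(m^2(1_\lambda,1_{\lambda'})\big)=m^2(1_\mu,1_\mu)=\pm 1_\mu$. The same example shows that \eqref{iota_func} fails to commute on $\ker(\iota_\mathcal{C})$. So the paper's own argument, which asserts that both sides vanish on $\ker(\iota_\mathcal{C})$ ``by definition'', has the same defect.

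Once you add the hypothesis, your argument does prove the lemma on the subcategory of $A_\infty$-functors injective on objects. In that case $\tilde F_*$ preserves both $W$ and $\ker\iota$. On a non-composable tensor it acts as $F_*$ applied separately to each maximal composable substring. The intertwining and $\widetilde{G\circ F}=\tilde G\circ\tilde F$ then follow blockwise from \eqref{F_inf_rel}. This is also the generality the paper actually needs later: the proof of Lemma \ref{iknm} explicitly works with functors that are injective on objects.
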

\begin{remark}
Note that the chain map $\iota$ is not a map of coalgebras.
However, one may ponder whether it is induced from a morphism in a certain bi-module category, see section 2.2.11 of \cite{Lo13}.
\end{remark}

Let us make observations that we will need in the next section.
By restricting $\iota_\mathcal{C}$ to cyclic words we get an induced map:
\begin{equation}\label{iota}
\iota_\mathcal{C}: (Cyc_*^+(A_\mathcal{C})[1],d+m_{A_\mathcal{C}})\rightarrow (Cyc_*^+(\mathcal{C})[1],d+m_{\mathcal{C}}).
\end{equation}
Additionally to the functor \eqref{Cmt} we also get a map 
\begin{equation}\label{f}
(C_*^+(L),d+l)\rightarrow (Cyc_*^+(A)[1],d+m_{A}),
\end{equation}
where $L$ is the commutator $L_\infty$-algebra of $A$,
see equation 2.6 of \cite{GGHZ21}.
Both these maps are functorial with respect to $A_\infty$-functors.

\subsection{Cyclic Commutator $L_\infty$-Algebras}\label{ccla}
Given a cyclic $A_\infty$-category we would like to associate to it a cyclic $A_\infty$-algebra, generalizing the previous section.
The relatively strict notions we use mean that we restrict ourselves to essentially finite cyclic $A_\infty$-categories to do so.
\begin{cons}\label{assgn1}
    Let $\big((\mathcal{C},\langle\_,\_\rangle_{\mathcal{C}}),(Sk\mathcal{C},\langle\_,\_\rangle_{Sk\mathcal{C}})\big)$ be an essentially finite cyclic $A_\infty$-category.
    Then by the previous section ${A_{Sk\mathcal{C}}}$ is an $A_\infty$-algebra.
    Furthermore we define a symmetric non-degenerate chain map
    $$\langle\_,\_\rangle_{A_{Sk\mathcal{C}}}:\ A_{Sk\mathcal{C}}\otimes A_{Sk\mathcal{C}}\rightarrow k[-d]$$
    by 
    recalling that 
    $A_{Sk\mathcal{C}}=\bigoplus_{(\lambda,\mu)\in Ob(Sk\mathcal{C})^{\times2}}Hom_{Sk\mathcal{C}}(\lambda,\mu)$ and by linearly extending
 \begin{equation}\label{sodef}
    \langle x,y\rangle_{A_{Sk\mathcal{C}}}=
    \begin{cases}
      \langle x,y\rangle_{Sk\mathcal{C}}, & \text{if}\ x\in Hom_{Sk\mathcal{C}}(\lambda,\mu) \text{and}\ y \in Hom_{Sk\mathcal{C}}(\mu,\lambda)\ \text{for some}\  \mu,\lambda\in Sk\mathcal{C} \\
      0, & \text{otherwise.}
    \end{cases}
  \end{equation}
 Non-degeneracy follows by non-degeneracy of the original pairing and since the cardinality of $Ob(Sk\mathcal{C})$ is finite.
 By noting that 
 $$\langle m_n(\_,\cdots,\_),\_\rangle_{A_{Sk\mathcal{C}}}=\iota_\mathcal{C}^*(\langle m_n(\_,\cdots,\_),\_\rangle_{Sk\mathcal{C}})\in Cyc^*_+(A_{Sk\mathcal{C}}) $$
 we conclude that we indeed defined a cyclic $A_\infty$-algebra. 
\end{cons}
We explain how to associate a morphism of cyclic $A_\infty$-algebras to a strict cyclic functor of essentially finite $A_\infty$-categories:
\begin{cons}
 Given $F$ a strict cyclic $A_\infty$-functor of essentially finite $A_\infty$-categories
 \begin{equation}\label{haha}
        \begin{tikzcd}
   & \mathcal{C}\arrow{r}{F} &\mathcal{D}&\\
   Sk\mathcal{C}\arrow{ur}{\simeq}&  &&Sk\mathcal{D}\arrow{ul}[swap]{\simeq},
\end{tikzcd}
\end{equation}
we note that by defining $\widebar{\mathcal{D}}$ to be the full subcategory of the union of $Sk\mathcal{D}$ and the image of $Sk\mathcal{C}$ we obtain a commutative diagram

 \begin{equation}\label{hihi}
        \begin{tikzcd}
   & \mathcal{C}\arrow{rr}{F} &&\mathcal{D}&\\ 
   &&\widebar{\mathcal{D}}\arrow{ur} &&\\
   Sk\mathcal{C}\arrow{uur}{\simeq}\arrow{urr}&  &&&Sk\mathcal{D}\arrow{uul}[swap]{\simeq}\arrow{ull},
\end{tikzcd}
\end{equation}
 where all the arrows are strict cyclic $A_\infty$-functors.
 Further $\widebar{\mathcal{D}}$ also has only finitely many objects and the functor
 $$Sk\mathcal{D}\rightarrow \widebar{\mathcal{D}}$$
 is an equivalence.
 This is because the induced homology functor is full and faithful by definition.
 Further it is essentially surjective since every object of $\mathcal{D}$ is isomorphic to an object in $Sk\mathcal{D}$, thus in particular every object of $\widebar{\mathcal{D}}$ as a full subcategory of $\mathcal{D}$.
 Summarizing, given a strict cyclic functor of essentially finite cyclic $A_\infty$-categories as \eqref{hihi}, we obtain a roof
\begin{equation}\label{wbd}
    \begin{tikzcd}
&\widebar{\mathcal{D}} &\\
   Sk\mathcal{C}\arrow{ur}&  &Sk\mathcal{D}\arrow{ul}[swap]{\simeq},
\end{tikzcd}
\end{equation}
where all functors are strict and cyclic and injective on objects.
 \end{cons}
 Let us denote by $W$ the class of morphisms of cyclic $A_\infty$-algebras inducing isomorphisms
 on cyclic cohomology.\footnote{Since we are working over a characteristic zero field we could equivalently consider cyclic homology or Hochschild homology.}
 We find
\begin{lemma}\label{iknm}
    The assignment \ref{assgn1} extends to a functor
\begin{equation}\label{cyccomm}
(\tilde\_)_{fin}:\ (\text{e.f. cyc}
    _dA_\infty cat)_{st}\rightarrow (cyc_dA_\infty Alg)_{st}[W^{-1}]
     \end{equation}
     sending equivalences to isomorphisms.  
\end{lemma}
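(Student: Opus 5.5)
On objects we set $(\mathcal{C},Sk\mathcal{C})\mapsto (A_{Sk\mathcal{C}},\langle\_,\_\rangle_{A_{Sk\mathcal{C}}})$, as in construction \ref{assgn1}. For a morphism \eqref{haha} we use the roof \eqref{wbd}. Since $\widebar{\mathcal{D}}$ has finitely many objects, construction \ref{assgn1} applies verbatim to it and gives a cyclic $A_\infty$-algebra $A_{\widebar{\mathcal{D}}}$. The two legs of the roof are strict cyclic functors that are injective on objects, so each induces a strict cyclic $A_\infty$-morphism. Concretely, for $G:\mathcal{E}\rightarrow\mathcal{E}'$ strict, cyclic and injective on objects, the map $\tilde{G}_1:A_{\mathcal{E}}\rightarrow A_{\mathcal{E}'}$ of construction \ref{func} is the inclusion of summands $Hom_\mathcal{E}(\lambda,\mu)\rightarrow Hom_{\mathcal{E}'}(G\lambda,G\mu)$. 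Injectivity on objects guarantees that distinct pairs $(\lambda,\mu)$ land in distinct summands, so the pairings \eqref{sodef} are preserved by \eqref{nfn}, and $\tilde{G}$ is strict and cyclic. We then define the image of $F$ in $(cyc_dA_\infty Alg)_{st}[W^{-1}]$ as $\tilde{j}^{-1}\circ\tilde{i}$, where $i:Sk\mathcal{C}\rightarrow\widebar{\mathcal{D}}$ and $j:Sk\mathcal{D}\rightarrow\widebar{\mathcal{D}}$.

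The key claim is that $\tilde{j}\in W$ whenever $j$ is an equivalence that is injective on objects. We will prove it through the chain map $\iota$ of \eqref{iota}. Comparing $Cyc_*^+(A_\mathcal{E})$ with $Cyc_*^+(\mathcal{E})$, the map $\iota$ kills exactly the cyclic words with non-matching boundary conditions, and these form a subcomplex on which the differential vanishes identically. Hence $\iota$ should be a quasi-isomorphism only after passing to the matching part, and this needs care. We plan to handle it by restricting the bar-type complexes to the complement $W$ of construction \ref{obj}, that is, the matching cyclic words. This summand is preserved by the differential and maps isomorphically onto $Cyc_*^+(\mathcal{E})$. We then compare the non-matching parts of $A_{Sk\mathcal{D}}$ and $A_{\widebar{\mathcal{D}}}$ directly, and invoke the invariance of cyclic (co)homology under $A_\infty$-equivalence (the lemma before \eqref{fff}) for the matching parts.

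If the non-matching words obstruct this argument, the fallback is to replace $W$ by the class of morphisms that induce isomorphisms on the matching cyclic cohomology. Naturality of $\iota$ in diagram \eqref{iota_func} ensures that the comparison squares commute. Equivalences of categories are sent to isomorphisms by the same argument, because for an equivalence both legs of the roof are equivalences.

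It remains to check well-definedness and functoriality in the localized category. For a composite $Sk\mathcal{C}\rightarrow\mathcal{D}\rightarrow\mathcal{E}$ we form $\widebar{\mathcal{E}}'$, the full subcategory on $Sk\mathcal{E}$ together with the images of $Sk\mathcal{C}$ and $Sk\mathcal{D}$. Both $\widebar{\mathcal{D}}\rightarrow\widebar{\mathcal{E}}'$ and $\widebar{\mathcal{E}}\rightarrow\widebar{\mathcal{E}}'$ are strict, cyclic and injective on objects, and the latter is an equivalence. Composing the roofs and using the commutative squares obtained from the functoriality $\widetilde{G\circ F}=\tilde{G}\circ\tilde{F}$ of lemma \ref{Commu} shows that the composite of the images equals the image of the composite. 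Identities are sent to identities because $\widebar{\mathcal{C}}=Sk\mathcal{C}$. The main obstacle is the first claim, that $\tilde{j}$ lies in $W$, because the non-matching words in $Cyc_*^+(A)$ do not come from the category. We expect this to be resolved by showing that the inclusion of $A_{Sk\mathcal{D}}$ into $A_{\widebar{\mathcal{D}}}$ adds only contractible contributions. If that fails, we will adjust $W$ as in the fallback above.
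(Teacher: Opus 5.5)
There is a genuine gap at exactly the step you flag as the main obstacle, namely $\tilde{j}\in W$, and your analysis of that step points the wrong way.

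The words with a mismatch do form a subcomplex: it is $\ker\iota$, and composing a composable block preserves its endpoints, so the mismatch survives. But the differential does not vanish on this subcomplex. The internal differential of the Hom-complexes, and $m^k$ applied to composable sub-blocks, act nontrivially on a word whose mismatch lies elsewhere.

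If the differential did vanish, all of $\ker\iota$ would survive in homology. That contribution is strictly larger for $A_{\widebar{\mathcal{D}}}$ than for $A_{Sk\mathcal{D}}$ whenever $\widebar{\mathcal{D}}$ has objects outside $Sk\mathcal{D}$. So your own reasoning would give $\tilde{j}\notin W$ in general. Beyond this, ``comparing the non-matching parts directly'' is never carried out. Your fallback of replacing $W$ by morphisms that are isomorphisms on ``matching'' cyclic cohomology proves a different statement, not the lemma.

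The missing ingredient is unitality. It is built into Definition \ref{essfin}, and your plan never uses it. For unital $\mathcal{E}$, the kernel of $\iota\colon CH_*(A_\mathcal{E})\to CH_*(\mathcal{E})$ is acyclic. A contracting homotopy inserts the unit $1_\mu$ at the first place where a letter ending at $\lambda$ is followed by a letter starting at $\mu\neq\lambda$. Among the terms of the Hochschild differential that involve the inserted unit:
\begin{itemize}
\item $m^2$ with the left neighbour vanishes, because that pair is non-composable;
\item the unit is closed, and the higher $m^n$ vanish by strict unitality;
\item $m^2(1_\mu,\cdot)$ with the right neighbour returns the original chain.
\end{itemize}
Hence $\iota$ is a quasi-isomorphism on Hochschild chains. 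Since $\iota$ is compatible with $B$, it is also a quasi-isomorphism on cyclic chains \eqref{iota}. This is exactly the argument in the proof of Theorem \ref{LQT_conj}, which the paper's proof of this lemma invokes.

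With that in hand, $\tilde{j}\in W$ follows from the naturality square \eqref{iota_func}. Both $\iota_{Sk\mathcal{D}}$ and $\iota_{\widebar{\mathcal{D}}}$ are quasi-isomorphisms, and $j$ induces an isomorphism on cyclic cohomology by invariance under $A_\infty$-equivalences.

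The rest of your outline coincides with the paper's proof: the roof \eqref{wbd}, the strict cyclic algebra maps induced by functors injective on objects, and functoriality via the full subcategory on $Sk\mathcal{E}$ together with the images of $Sk\mathcal{C}$ and $Sk\mathcal{D}$.
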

In particular, see \eqref{ssvr}, this functor is up to isomorphism independent from the choice of a skeleton.
\begin{proof}
 Given a strict cyclic functor of cyclic $A_\infty$-categories with finitely many objects which is additionally injective on objects we obtain
 a strict cyclic $A_\infty$-algebra morphism of the induced cyclic $A_\infty$-algebras (from construction \ref{assgn1}), as can be deduced easily.
 Furthermore given a functor which is an equivalence, we already saw that it induces an isomorphism on cyclic cohomology.
 We will see later in the proof of theorem \ref{LQT_conj} that the cyclic cohomology of an $A_\infty$-category and its associated cyclic $A_\infty$-algebra are isomorphic.
 Thus from diagram \eqref{wbd} it follows that the assignment on morphisms underlying \eqref{cyccomm} is well defined.
 It remains to verify functoriality.
 Given 
$$\begin{tikzcd}
    \mathcal{C}\arrow{r}{G}&\mathcal{D}\arrow{r}{H}&\mathcal{E}\\
     Sk\mathcal{C}\arrow{u}{\simeq}&Sk\mathcal{D}\arrow{u}{\simeq}&Sk\mathcal{E}\arrow{u}{\simeq}
\end{tikzcd}$$
strict cyclic functors of essentially finite cyclic $A_\infty$-categories. Then we need to compare the images of the diagram

\begin{equation*}
    \begin{tikzcd}
&\widetilde{\mathcal{E}} &\\
   Sk\mathcal{C}\arrow{ur}{G\circ H}&  &Sk\mathcal{E}\arrow{ul}[swap]{\simeq},
\end{tikzcd}\end{equation*}
and the diagram
\begin{equation*}
    \begin{tikzcd}
&\widebar{\mathcal{D}} &&\widebar{\mathcal{E}}&\\
   Sk\mathcal{C}\arrow{ur}&  &Sk\mathcal{D}\arrow{ul}{\simeq}\arrow{ur}&&Sk\mathcal{E}\arrow{ul}[swap]{\simeq},
\end{tikzcd}\end{equation*}
under the assignment \ref{cyccomm}.
Let us denote by $\mathcal{E}^*$ the full subcategory of $\mathcal{E}$ on the union of $Sk\mathcal{E}$ and the images of $Sk\mathcal{D}$ and $Sk\mathcal{C}$.
Then the images of both diagrams before are, in the localized category, equal to 
\begin{equation*}
    \begin{tikzcd}
&\mathcal{E}^* &\\
   Sk\mathcal{C}\arrow{ur}{G\circ H}&  &Sk\mathcal{E}\arrow{ul}[swap]{\simeq},
\end{tikzcd}\end{equation*}
 which proves that they are equal, which proves functoriality
\end{proof}
\begin{remark}
 Further, diagram \eqref{hihi} induces a natural quasi-isomorphism between the functor \ref{cyccomm}, composed with forgetting the cyclic structure,
 and the functor induced by sending an essentially finite $A_\infty$-category $(\mathcal{C},Sk\mathcal{C})$ to $\mathcal{C}$ and then applying the functor \ref{Commu}.
\end{remark}
Lastly, given an cyclic $A_\infty$-category $\mathcal{C}$ which is finitely presented in two ways
$(\mathcal{C},Sk\mathcal{C}_1)$, $(\mathcal{C},Sk\mathcal{C}_2)$ we get a roof of cyclic $A_\infty$-algebras
   \begin{equation}\label{ssvr}
   \begin{tikzcd}
&A_{\widebar{\mathcal{C}}}\arrow{dl}\arrow{dr}&\\
    A_{Sk\mathcal{C}_1}&&A_{Sk\mathcal{C}_2},
    \end{tikzcd}
    \end{equation}
   where the arrows are in $W$, which tells us that the choice of finite presentation in lemma \ref{cyccomm} does not  matter.\par
We will see a bit later in definition \ref{vfnh} that we can also associate to a quantizaiton of an essentially finite cyclic $A_\infty$-category a quantization of a cyclic $A_\infty$-algebra.
If we denote by $U$ the class of morphisms inducing an isomorphism on the BD algebras associated to a cyclic $A_\infty$-algebra, see definition \ref{F_q}, then we find
\begin{lemma}
 The assignment from lemme \ref{iknm} extends to a functor
\begin{equation}\label{qcyccomm}
(\tilde\_)_{fin}:\ (\text{q.e.f. cyc}
    _dA_\infty cat)_{st}\rightarrow (q. cyc_dA_\infty Alg)_{st}[U^{-1}]
    \end{equation}
 sending equivalences of quantum $A_\infty$-categories to isomorphisms.  
\end{lemma}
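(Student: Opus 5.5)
The plan is to follow the proof of lemma \ref{iknm} step by step, adding the quantizations as extra data, and taking the forward reference to definition \ref{vfnh} as the model for the quantized object.

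\textbf{Objects.} Take an object of $(\text{q.e.f}^+\text{cyc}_dA_\infty cat)_{st}$: a skeleton $\Phi:Sk\mathcal{C}\rightarrow\mathcal{C}$ and quantizations $I^q_f,I^q$ with $\Phi^*I^q=I^q_f$. By construction \ref{assgn1} we get the cyclic $A_\infty$-algebra $A_{Sk\mathcal{C}}$. Since $Ob(Sk\mathcal{C})$ is finite, there is a map of BD algebras
$$\iota^*:\mathcal{F}^{pq}(V_{Ob Sk\mathcal{C}})\rightarrow\mathcal{F}^{pq}(V_{A_{Sk\mathcal{C}}}),$$
the dual of \eqref{iota} extended to $Sym$. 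It sends $\nu_\lambda$ to $\nu$, the single central generator of the one-object case. Finiteness is exactly what makes this well defined. I would check that $\iota^*$ intertwines $d$, $\nabla$, the bracket $\{\_,\_\}$ and the Chevalley--Eilenberg differential $\delta$. The reason is that the pairing \eqref{sodef} vanishes on mismatched boundary conditions, so every contraction in \eqref{bracket} and in construction \ref{nabla} computed in $A_{Sk\mathcal{C}}$ is the image of the corresponding contraction in $Sk\mathcal{C}$. I then set the quantization of $A_{Sk\mathcal{C}}$ to be $\iota^*I^q_f$. It is a Maurer--Cartan element because $\iota^*$ is a BD map. It satisfies $p(\iota^*I^q_f)=\iota^*I_f$, which is the hamiltonian of $A_{Sk\mathcal{C}}$ by the computation at the end of construction \ref{assgn1}.

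\textbf{Morphisms.} Given a morphism \eqref{tnc}, form the roof \eqref{wbd} through $\widebar{\mathcal{D}}$. Equip $\widebar{\mathcal{D}}$ with the quantization obtained by pulling $J^q$ back along the strict cyclic inclusion $\widebar{\mathcal{D}}\rightarrow\mathcal{D}$. By the condition $F^*J^q=I^q$ and the compatibility of the skeleta, this restricts along $Sk\mathcal{C}\rightarrow\widebar{\mathcal{D}}$ to $I^q_f$, and along $Sk\mathcal{D}\rightarrow\widebar{\mathcal{D}}$ to $J^q_f$. Both legs are strict cyclic functors that are injective on objects, so the induced maps $A_{Sk\mathcal{C}}\rightarrow A_{\widebar{\mathcal{D}}}\leftarrow A_{Sk\mathcal{D}}$ are strict cyclic morphisms, and by naturality of $\iota^*$ they respect the quantizations.

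\textbf{Main obstacle: the backward leg lies in $U$.} We must show that $\mathcal{F}^q(A_{\widebar{\mathcal{D}}})\rightarrow\mathcal{F}^q(A_{Sk\mathcal{D}})$ is a quasi-isomorphism. I would use the filtration of example 6.3 of \cite{GGHZ21} together with the Eilenberg--Moore comparison theorem, as in \eqref{BDqi}. This reduces the claim to the first page, which consists of symmetric powers of cyclic cochains. On that page the map is a quasi-isomorphism for two reasons. First, $Sk\mathcal{D}\rightarrow\widebar{\mathcal{D}}$ is an equivalence, so it induces an isomorphism on cyclic cohomology. Second, the cyclic cohomology of a finite category agrees with that of its associated algebra, which is the comparison promised in the proof of theorem \ref{LQT_conj}. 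The delicate point is the central part: the pieces involving $\nu$ and $\gamma$ must not spoil the spectral sequence. I would handle this by checking that $\nu$ lies in positive filtration degree.

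\textbf{Functoriality.} This goes exactly as in lemma \ref{iknm}. Both composites are computed through the category $\mathcal{E}^*$, equipped with the quantization restricted from $\mathcal{E}$. Since restrictions of quantizations compose, the two roofs agree in the localization $[U^{-1}]$. Finally, an equivalence of quantum $A_\infty$-categories yields a roof both of whose legs lie in $U$, so it is sent to an isomorphism.
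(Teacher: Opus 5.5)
Your route is the paper's own: repeat lemma \ref{iknm}, transport quantizations along $\iota^*$ as in definition \ref{vfnh}, and upgrade quasi-isomorphisms through the filtration and Eilenberg--Moore argument of corollary \ref{qc12}. Your treatment of objects is fine. The step that fails is ``by naturality of $\iota^*$ they respect the quantizations''.

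Set up notation as follows.
\begin{itemize}
\item Let $j\colon Sk\mathcal{D}\hookrightarrow\widebar{\mathcal{D}}$ be the inclusion, and let $\tilde j\colon A_{Sk\mathcal{D}}\to A_{\widebar{\mathcal{D}}}$ be the induced inclusion of a summand.
\item Let $\bar J^q$ be the restriction of $J^q$ to $\widebar{\mathcal{D}}$.
\item Take $\mu\in Ob(\widebar{\mathcal{D}})\setminus Ob(Sk\mathcal{D})$. Such $\mu$ exist exactly when $F(Ob(Sk\mathcal{C}))\not\subseteq Ob(Sk\mathcal{D})$, which is why the roof \eqref{wbd} is needed at all.
\end{itemize}
On cyclic words one does have $\tilde j^*\iota^*=\iota^*j^*$. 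On the central extension one does not: $\iota^*\nu_\mu=\nu$ and $\tilde j^*\nu=\nu$ (the algebras have one object), whereas $j^*\nu_\mu=0$. Consequently $\tilde j^*(\iota^*\bar J^q)\neq\iota^*J^q_f$ as soon as $\bar J^q$ contains a monomial $\nu_\mu\cdot w$ with $w$ supported on objects of $Sk\mathcal{D}$ (geometrically, an annulus with an empty boundary on $\mu$). Nothing in the definition of a quantization rules such terms out. So $\tilde j$ is in general not a morphism of $(q.cyc_dA_\infty Alg)_{st}$ between the objects you assign, and your roof does not live in the target category. You sensed that $\nu$ is delicate, but the problem is naturality, not the filtration degree of $\nu$.

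There is a more basic problem: $\tilde j^*$ is not a map of BD algebras. Hence $\mathcal{F}^q(\tilde j)$ is undefined, ``$\tilde j\in U$'' has no content, and your spectral sequence has no chain map to act on. Here is a counterexample.
\begin{itemize}
\item Pick $\lambda\in Ob(Sk\mathcal{D})$ isomorphic to $\mu$, so that $Hom(\lambda,\mu)\neq0$.
\item Let $w=(a\,b\,c)$ with $a\in Hom(\lambda,\mu)^\vee[-1]$, $b\in Hom(\mu,\lambda)^\vee[-1]$, $c\in Hom(\lambda,\lambda)^\vee[-1]$, $\langle a,b\rangle^{-1}\neq0$ and $c\neq0$.
\item Then $\tilde j^*w=0$.
\item In $A_{\widebar{\mathcal{D}}}$, however, the adjacent contraction of construction \ref{nabla} gives $\nabla w=\pm\langle a,b\rangle^{-1}\nu\cdot(c)$, and this survives restriction.
\item The twisting term cannot compensate. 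Every contraction of a composable word with $w$ leaves $a$ or $b$ behind, so $\tilde j^*\{\iota^*\bar J^q,w\}=0$. Also $\delta w=0$.
\end{itemize}

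The reason is general. Restriction along a cyclic inclusion $V\hookrightarrow V\oplus V^\perp$ does not preserve the inverse pairing, so it respects neither $\{\_,\_\}$, nor $\nabla$, nor $\delta$. For full inclusions of categories composability rescues this; this covers the restriction $\mathcal{D}\to\widebar{\mathcal{D}}$ you use, and $\Phi^*$ in \eqref{BDqi}. A contraction through a missing object always leaves behind a letter or a $\nu_\mu$ at that object, and both are killed by restriction. In the one-object algebra this bookkeeping is lost, because all the $\nu_\mu$ collapse to a single $\nu$ and non-composable words appear.

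The paper's one-line proof presupposes the same functoriality (compare the argument of lemma \ref{cov}), so it does not supply the missing step either. A correct argument must avoid restricting quantizations along proper summands of the algebras $A_{Sk\mathcal{C}}$; otherwise the statement has to be weakened accordingly.
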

Further this functor is  independent up to isomorphism from the choice of a finite presentation and a compatible quantization.
\begin{proof}
    Indeed, we can argue in exactly the same way as in the previous lemma.
    The only additional argument that we need is that if a map of the BD algebras considered here induces a quasi-isomorphism when applying the dequantization functor, see definition \ref{nc_dq},
    then that the map of BD algebras itself is already a quasi-isomorphism.
    This follows by a spectral sequence argument associated to a suitable filtration, compare e.g. the arguments around \ref{qc12}.
\end{proof}
    We finish this subsection by recalling the standard functors
   \begin{equation}\label{cAL}
       (cyc_dA_\infty Alg)_{st}\rightarrow  (cyc_dL_\infty Alg)_{st}
   \end{equation}
   and
   \begin{equation}\label{qcAL}
   (q.cyc_dA_\infty Alg)_{st}\rightarrow  (q.cyc_dL_\infty Alg)_{st}
    \end{equation}
which descend to the localized versions if one the right hand side we invert morphisms inducing a quasi-isomorphism on the associated shifted Poisson algebras (see definition \ref{nattr1}),
the class of those denoted $V$, respectively on the associated shifted BD-algebras (see definition \ref{O_q}), the class of those denoted $P$.
This is a standard result and follows by functoriality eg. of proposition 4.5 of \cite{GGHZ21}:
We are in characteristic zero which means that we can chose a chain level map inducing an inverse in homology to a given quasi-isomorphism.  
\newpage
\section{Loday-Quillen-Tsygan Theorem}\label{lqtsection}
In this section we construct a Loday-Quillen-Tsygan map for $A_\infty$-categories and prove that it becomes a quasi-isomorphism in the large $N$-limit. We further study variants of this maps for cyclic $A_\infty$-categories and their quantizations, which preserves the shifted Poisson and BD algebra structures touched upon in earlier sections.

Let us begin by recalling the following simple facts: For that we denote by $M_N(k)$ $N\times N$-matrices with values in the ground field $k$. 
\begin{df}
Let $A$ be an $A_\infty$-algebra. One can define another $A_\infty$-algebra $M_NA$ as follows:\begin{itemize}
    \item 
Set $$  M_NA=M_N(k)\otimes A.$$ We define  structure maps by multi-linearly extending 
   $$m_n^{M_NA}(M_1u_1,\cdots ,M_nu_n):=(M_1\cdot\ldots\cdot M_n)\otimes m_n^A(u_1,\cdots,u_n).$$
    Using equation \ref{A_inf_re} it is straightforward to verify that this indeed defines an $A_\infty$-algebra.
  \item  
    Given $F:A_1\rightarrow A_2$ an $A_\infty$-morphism define a morphism
    $M_NF:M_NA_1\rightarrow M_NA_2$ by multi-linearly extending 
    $$(M_NF)_n(u_1M_1,\cdots u_nM_n):=(M_1\cdot\ldots\cdot M_N)F_n(u_1,\cdots,u_n),$$
    where $F_n$ are the structure maps of $F$.
    Using equation \ref{F_inf_rel} it is easy to verify that this defines an $A_\infty$-morphism.
    \end{itemize}
\end{df} 
%\begin{remark}
 %   We note that for $\mathcal{C}$ essentially finite we have that $M_N\mathcal{C}$ is essentially finite as well, where the necessary map/functor is provided by functoriality of $M_N$. Verifying that the provided map is fully faithful is straightforward and also essential surjectivity can be checked easily. 
%\end{remark}
\begin{remark}\label{cycM}
    As another comment that we will need in the next section we remark that given $(A,\langle\_,\_\rangle^{A})$ a cyclic $A_\infty$-algebra we have that $(M_NA,\langle\_,\_\rangle^{M_NA})$ is a cyclic $A_\infty$-algebra as well. Here we define 
    $$\langle\_,\_\rangle_{\mu\lambda}^{M_NA}:M_N A\otimes M_n A\rightarrow k[d],$$
    by 
    $$\langle M_1u_1,M_2u_2\rangle^{M_NA}=tr(M_1M_2)\langle u_1,u_2\rangle.$$
    Using the cyclic invariance of the trace it is easy to verify that this defines a cyclic $A_\infty$-algebra, ie condition \ref{cycl} is satisfied.
\end{remark}
\subsection{For $A_\infty$-Categories}
We recall the functor from section \ref{CLS} assigning to an $A_\infty$-category $\mathcal{C}$ an $A_\infty$-algebra $A_\mathcal{C}$, see equation \ref{sqc} for its explicit form.
\begin{df}\label{LQT_d}
Let $\mathcal{C}$ be an $A_\infty$-category. Then we define the map
$$\begin{tikzcd}
LQT_N: Sym( Cyc_+^*(\mathcal{C})[-1])\arrow{r}{\iota_\mathcal{C}^*}& Sym (Cyc_+^*(A_\mathcal{C})[-1])\arrow{r}{LQT_N}& C^*_+(\mathfrak{gl}_NA_\mathcal{C})
\end{tikzcd}$$
as the composition of the map induced by equation \ref{iota} and the standard LQT map applied to the $A_\infty$-algebra $A_\mathcal{C}$, see eg. section 2.4 of \cite{GGHZ21}. In particular $\mathfrak{gl}_NA_\mathcal{C}$ denotes the commutator $L_\infty$-algebra of $M_NA_\mathcal{C}$, the $A_\infty$-algebra of $A_\mathcal{C}$ with values in $N\times N$-matrices.
\end{df}
\begin{theorem}\label{LQT_c}
The LQT map from definition \ref{LQT_d} is a map of dg-algebras, that is we have 
$$LQT_N\circ(d+m_\mathcal{C})=(d+l_N)\circ LQT_N.$$ Further this map is functorial in $\mathcal{C}$ under $A_\infty$-functors.
 \end{theorem}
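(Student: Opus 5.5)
The map $LQT_N$ of definition \ref{LQT_d} is the composite $LQT_N^{A}\circ \iota_\mathcal{C}^*$. I would prove the chain-map property and functoriality for each factor separately and then compose.

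For the first factor, equation \eqref{iota} already says that $\iota_\mathcal{C}$ is a chain map
\[
(Cyc_*^+(A_\mathcal{C})[1],d+m_{A_\mathcal{C}})\rightarrow (Cyc_*^+(\mathcal{C})[1],d+m_\mathcal{C}).
\]
This follows from the identity $\iota\circ m_{A_\mathcal{C}}=m_\mathcal{C}\circ\iota$ of construction \ref{obj}, once we check that $\iota$ commutes with the cyclic permutation $t$ and hence restricts to $\ker(1-t)$. That check is immediate: a cyclic word in $A_\mathcal{C}$ whose letters have matching boundary conditions all the way around, including between the last and first letter, is sent to the corresponding cyclic word in $Bar\mathcal{C}$. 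Every other word is sent to zero, and this set of words is stable under $t$. Dualizing gives a cochain map $\iota_\mathcal{C}^*$ on $Cyc^*_+(\cdot)[-1]$. Since the differentials on both sides are the symmetric-algebra extensions of the cyclic differentials as derivations, its extension $Sym(\iota_\mathcal{C}^*)$ is a map of dg-algebras.

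For the second factor, I would take from section 2.4 of \cite{GGHZ21} the statement that the standard LQT map for an $A_\infty$-algebra $A$ is a map of dg-algebras
\[
\big(Sym(Cyc^*_+(A)[-1]),d+m_A\big)\rightarrow \big(C^*_+(\mathfrak{gl}_NA),d+l_N\big),
\]
and apply it to $A=A_\mathcal{C}$. For the reader I would recall why it holds. It is the multiplicative extension of the dual of the composite
\[
C_*^+(\mathfrak{gl}_NA)\rightarrow Cyc_*^+(M_NA)[1]\xrightarrow{\ \mathrm{tr}\ } Cyc_*^+(A)[1].
\]
The first arrow is \eqref{f} applied to $M_NA$, and it is a chain map by equation 2.6 of \cite{GGHZ21}. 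The trace on cyclic words is a chain map because $m^{M_NA}_n$ multiplies the matrices and the trace is cyclically invariant. Composing the two factors gives $LQT_N\circ(d+m_\mathcal{C})=(d+l_N)\circ LQT_N$.

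For functoriality, take an $A_\infty$-functor $F:\mathcal{C}\rightarrow\mathcal{D}$. Lemma \ref{Commu} gives $\tilde F:A_\mathcal{C}\rightarrow A_\mathcal{D}$, and the matrix construction gives $M_N\tilde F$, which induces an $L_\infty$-morphism of commutator algebras via \eqref{Cmt} and hence a pullback on Chevalley–Eilenberg cochains. Naturality of the $\iota$-square is diagram \eqref{iota_func}, restricted to cyclic words and dualized. Naturality of the standard LQT square with respect to $\tilde F$ is again taken from \cite{GGHZ21}.

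I expect the main obstacle to be the compatibility in diagram \eqref{iota_func} when $F_*$ is a nonstrict coalgebra map, that is, whether it still commutes with $\iota$ on cyclic words. Here $F_*$ sends a cyclic word to sums of cyclic words built from the components $F_n$ applied to consecutive blocks, including blocks that wrap around the cut point. I would first try to handle this by noting that the induced map on $Cyc_*^+$ is defined through the coalgebra map in exactly the same way on both sides, so that checking the identity on the image of $\iota$ suffices.
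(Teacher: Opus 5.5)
Your decomposition $LQT_N=LQT_N^{A_\mathcal{C}}\circ\iota_\mathcal{C}^*$ matches the paper's argument. Like the paper, you cite \cite{GGHZ21} for the algebra factor and reduce naturality to diagram \eqref{iota_func}. The chain-map part is fine, up to the correction in the second paragraph. The functoriality part has a genuine gap, which the paper's one-line proof shares: diagram \eqref{iota_func}, and with it Lemma \ref{Commu}, fails for $A_\infty$-functors that are not injective on objects, even strict unital ones.

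Here is a counterexample. Take $\mathcal{C}$ with two objects $\mu\neq\mu'$, $Hom(\mu,\mu)=k e_\mu$, $Hom(\mu',\mu')=k e_{\mu'}$ (the units), and no other morphisms. Let $\mathcal{D}$ have one object with endomorphisms $k$, and let $F$ collapse the two objects. Then $\tilde F_1(e_\mu)=\tilde F_1(e_{\mu'})=1$ and $\tilde F_2=0$, so $\iota_\mathcal{D}\tilde F_*(e_\mu\otimes e_{\mu'})=1\otimes 1\neq 0=F_*\iota_\mathcal{C}(e_\mu\otimes e_{\mu'})$. Worse, $\tilde F$ is not an $A_\infty$-morphism at all: $\tilde F_1(m_2^{A_\mathcal{C}}(e_\mu,e_{\mu'}))=0$ while $m_2^{A_\mathcal{D}}(1,1)=\pm 1$, i.e. it is the non-multiplicative map $k\times k\to k$, $(a,b)\mapsto a+b$. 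So there is no map on $C^*_+(\mathfrak{gl}_NA_{(-)})$ to be natural with respect to.

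Your proposed remedy, checking the identity on the image of $\iota$, misses exactly this. The failure lives on $\ker\iota_\mathcal{C}$, where blocks that are each composable but mismatched with one another become composable once $F$ identifies objects. The fix is to restrict to functors injective on objects, as the paper itself does in section \ref{ccla}. For those, a mismatch between consecutive blocks survives $F$, so $\tilde F_*(\ker\iota_\mathcal{C})\subseteq\ker\iota_\mathcal{D}$ and \eqref{iota_func} commutes. The same observation, checked on non-composable tuples, shows $\tilde F$ is an $A_\infty$-morphism.

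Two smaller points. First, the map you verify on cyclic words is not the $\iota$ of construction \ref{obj}. That $\iota$ keeps every internally composable word, closed or not, and does not commute with $t$. For $a\in Hom(\lambda,\nu)$, $b\in Hom(\nu,\nu)$ and $\lambda\neq\nu$, the $t$-invariant element $a\otimes b+(-1)^{|a||b|}b\otimes a$ is sent to $a\otimes b$. This is not in $\ker(1-t)$, since $t$ vanishes on non-closed words. What you actually describe is $\pi_{\mathrm{cl}}\circ\iota$, where $\pi_{\mathrm{cl}}$ projects $Bar\mathcal{C}$ onto closed words. That is the right map: it is what lemma \ref{asgh} and the proof of theorem \ref{LQT_conj} use. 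You need to say that it is still a chain map, which holds because $m_\mathcal{C}$ preserves the outer objects $(\lambda_0,\lambda_n)$ of a bar word.

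Second, non-strictness of $F$, which you flag as the main obstacle, is harmless. For injective $F$, $F_*$ commutes with $\pi_{\mathrm{cl}}$. On a cyclically invariant word $\sum_r t^r w$, the non-wrapping coalgebra map $F_*$, summed over the rotations, produces every cyclic block decomposition, including those with a block straddling the cut, each exactly once in each of its rotations. Hence $F_*$ preserves $\ker(1-t)$ with no separate check.
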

 \begin{proof}
The fact that this is a map of dg-algebras follows by the results around remark \ref{iota} and the standard results on the LQT map, see eg. 2.4 of \cite{GGHZ21}. Similarly it is functorial under $A_\infty$-functors implied by diagram \ref{iota_func} and the standard results on the LQT map, see eg again 2.4 of \cite{GGHZ21}.
\end{proof}
We have a commutative diagram of dg algebras     
$$\begin{tikzcd}
   Sym_+( Cyc_+^*(\mathcal{C})[-1])\arrow[dddrr,"LQT_N",swap]\arrow[dddr,"LQT_{N+1}", swap]\arrow[ddd]\arrow[dashed,dddrrr]&&&\\
   \\ \\
   (\cdots)\arrow[r,dashed]&C^*_+(\mathfrak{gl}_{N+1}A_\mathcal{C})\arrow{r}\arrow{r}&C^*_+(\mathfrak{gl}_NA_\mathcal{C})\arrow[dashed,r]&(\cdots)
\end{tikzcd}$$
Commutativity of the diagram follows from the same diagram for the one object LQT map, see equation 2.8 of \cite{GGHZ21}.
\begin{theorem}\label{LQT_conj}
     If $\mathcal{C}$ is a unital $A_\infty$-category then 
    $$Sym_+( Cyc_+^*(\mathcal{C})[-1])\simeq \varprojlim_{N\to\infty}  C^*_+(\mathfrak{gl}_NA_\mathcal{C}) $$

\end{theorem}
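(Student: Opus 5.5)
The map $LQT_N$ of definition \ref{LQT_d} factors as $\iota_\mathcal{C}^*$ followed by the classical LQT map for the $A_\infty$-algebra $A_\mathcal{C}$. The plan is to prove that each factor is a quasi-isomorphism in the large $N$ limit. For the second factor I would invoke the classical Loday--Quillen--Tsygan theorem for unital $A_\infty$-algebras, in the form recalled in section 2.4 of \cite{GGHZ21}:
\[
Sym_+\big(Cyc^*_+(A)[-1]\big)\simeq \varprojlim_N C^*_+(\mathfrak{gl}_NA).
\]
There is one subtlety: $A_\mathcal{C}$ has a unit only if $Ob(\mathcal{C})$ is finite, in which case it is $\sum_\lambda 1_\lambda$. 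For infinitely many objects I would instead write $A_\mathcal{C}$ as a filtered colimit of the algebras $A_{\mathcal{C}'}$ over finite full subcategories $\mathcal{C}'$. Each $A_{\mathcal{C}'}$ is unital, and everything in sight is compatible with this colimit: cyclic chains commute with filtered colimits, and cyclic cochains and CE cochains are duals, so they turn the colimit into a limit. Alternatively, one can note that the invariant-theory argument only uses local units.

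For the first factor I would show that
\[
\iota_\mathcal{C}\colon Cyc^+_*(A_\mathcal{C})[1]\to Cyc^+_*(\mathcal{C})[1]
\]
is a quasi-isomorphism; dualizing and applying $Sym$ then gives the claim. This is the standard Morita-type fact that Hochschild and cyclic homology of the ``matrix algebra'' $A_\mathcal{C}=\bigoplus_{\lambda,\mu}Hom(\lambda,\mu)$ agree with those of the category. I would prove it on Hochschild complexes first and then pass to cyclic homology, either via Connes' SBI sequence or via the $u$-filtration spectral sequence, as in Loday, corollary 2.2.4.

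On Hochschild chains, $\iota$ is the projection onto tensors whose boundary conditions match cyclically; all other tensors are killed. Let $E=\prod_\lambda k\,1_\lambda$ be the semisimple algebra of idempotents, again in the finite or colimit version. Then $A_\mathcal{C}$ is an augmented $A_\infty$-algebra over $E$, and the Hochschild complex relative to $E$ (tensor products over $E$, cyclically) is exactly $CH_*(\mathcal{C})$. So I need that relative and absolute Hochschild homology agree for algebras over a separable ground algebra. The key step is to exhibit an explicit contracting homotopy on the kernel complex of nonmatching tensors. This homotopy would insert units $1_\lambda$ at a mismatched slot, using strict unitality: $m_2(1,\cdot)=\mathrm{id}$ and $m_{n\ge3}(\ldots,1,\ldots)=0$. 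A filtration by the number of matched adjacencies would reduce the verification to the $m_2$ part of the differential.

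I expect this comparison to be the main obstacle. The issue is sign bookkeeping with the homotopy on the $A_\infty$ (not dg) Hochschild differential, together with checking that the homotopy respects the cyclic symmetrization $\ker(1-t)$, so that the result descends to the small cyclic complexes. If the direct homotopy becomes messy, I would fall back on the filtration and spectral sequence: reduce to the $E_1$ page, where only $d$ and $m_2$ act and the statement is classical separable base change. Finally, the passage to the limit is an isomorphism and not only a map on homology, because the tower $C^*_+(\mathfrak{gl}_NA_\mathcal{C})$ has surjective transition maps and therefore satisfies Mittag-Leffler. Hence $\varprojlim$ computes the homotopy limit, and the levelwise comparison suffices.
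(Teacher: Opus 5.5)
Your proposal is correct and follows essentially the same route as the paper. Both factor $LQT_N$ through $\iota_\mathcal{C}^*$, apply the one-object LQT theorem to $A_\mathcal{C}$, reduce the cyclic comparison to Hochschild chains via Loday's corollary, and kill the complex $\ker\iota$ of non-composable chains, either by separable base change over the span of the units or by inserting a unit at the first mismatched slot. Your remark that $A_\mathcal{C}$ is unital only when $Ob(\mathcal{C})$ is finite, which you handle with a filtered colimit over finite full subcategories (best done on chains, where filtered colimits are exact, before dualizing), addresses a point the paper passes over silently.
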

%\begin{theorem}[Definition: LQT Map]
    
%We denote by $$\begin{tikzcd}
%LQT_N: Sym( Cyc_+^*(\mathcal{C})[-1])\arrow{r}{tr^*}& Sym (Cyc_+^*(M_N\mathcal{C})[-1])\arrow{r}{f}& Sym (C^*_+(\mathfrak{gl}_NL_\mathcal{C}))\arrow{r}{mult.}& C^*_+(\mathfrak{gl}_NL_\mathcal{C})\end{tikzcd}$$ the composition of the map induced by trace map, the map induced by \ref{f} and lastly the multiplication map\footnote{which we for convenience set to be zero on $Sym^0(\cdots)$.}. By the results \ref{Mor} and \ref{f} 
%\end{theorem}
%$$\begin{tikzcd}
%A_\infty cat \arrow[rr, "\mathcal{F}^{cl}", bend left=25, ""{name=U, below}]
%\arrow[rd," com_N ", bend right=10]
%&&\textit{(d-2)-}PoissAlg
%\\
%&L\arrow[ur, "\mathcal{O}^{cl}", bend right=10]\arrow[Rightarrow, from=U, "LQT^{cl}"]&
%\end{tikzcd}$$

\begin{proof}
The one object LQT theorem applied to the unital $A_\infty$-algebra $A_\mathcal{C}$ implies that the second map in the definition of the LQT map for categories is a quasi-isomorphism for $N\rightarrow \infty$ (see thm 2.11 of \cite{GGHZ21} for the LQT theorem for $A_\infty$-algebras). Thus it remains to show that $\iota_\mathcal{C}^*$ is a quasi-isomorphism for unital categories. Since we are in characteristic zero latter statement is equivalent to showing that the map \ref{iota} on cyclic chains induces a quasi-isomorphism. Note that this map is induced from a map on Hochschild chains $$\begin{tikzcd}\label{qwert}
\iota: CH_*(A_\mathcal{C})\rightarrow CH_*(\mathcal{C})\end{tikzcd}$$ and by corollary 2.2.3 of \cite{Lo13} the previous statement is equivalent\footnote{To be precise corollary 2.2.3 is only valid for maps on Hochschild chains induced by algebra maps, which $\iota$ is not. However it is straightforward to verify that $\iota$ still satisfies the required functoriality properties to make the argument there work.} to showing that this map \ref{qwert} on Hochschild chains is a quasi-isomorphism. Indeed, this seems to be known to experts under the slogan that both composable and non-composable Hochschild chains of a category compute its Hochschild homology. One way to prove this should be theorem 1.2.13 of \cite{Lo13} applied to $A_\mathcal{C}$ and the separable algebra $E$ spanned as a k-vector space by the original units of $\mathcal{C}$. As this statement is only provided for actual algebras we give a short alternative proof of above slogan, basically using similar ideas: We first note that we have a decomposition of chain complexes
$$\big(CH_*(A_\mathcal{C}),d_{Hoch}\big)=(ker(\iota),\partial)\oplus (ker(\iota)^\bot,\partial),$$
where $ker(\iota)^\bot$ denotes composable Hochschild chains, that is linear combinations of tensors with matching boundary conditions. Since $\iota$ is surjective it suffices to show that $H_*(ker(\iota),\partial)=0$ to prove the statement. Thus let $x\in ker(\iota)$ st. $\partial x=0$. Denote by $V_{\lambda\mu}^k$ the subspace of Hochschild chains of $A_\mathcal{C}$ such that at the k-th position for the first time the boundary conditions do not match and that those are $\lambda\neq\mu\in Ob(\mathcal{C})$. As a k-vector spaces we have
$$ker(\iota)=\oplus_{k\in \mathbb{N}}\bigoplus_{\lambda\neq\mu}V_{\lambda\mu}^k.$$

Thus we deduce that also 
$
\partial x|_{V_{\lambda\mu}^k}=0.$
Let us denote by $ e_\mu\otimes^k:V_{\lambda\mu}^k\rightarrow V_{\lambda\mu}^k$ the map given by inserting the unit of $Hom_\mathcal{C}(\mu,\mu)$ into the k-th position. We then have also 
\begin{equation}\label{123}e_\mu\otimes^k\partial x|_{V_{\lambda\mu}^k}=0.
\end{equation}
Consider $$\Tilde{x}:=\sum_k\sum_{\lambda\neq\mu}e_\mu\otimes^k x|_{V_{\lambda\mu}^k}.$$ We then have 
$$\partial \Tilde{x}=\sum_k\sum_{\lambda\neq\mu}e_\mu\otimes^k\partial x|_{V_{\lambda\mu}^k}+x.$$
Indeed, the summands of the Hochschild differential\footnote{See eg. equation 7 of \cite{She19} for the formula.} always give zero when a unit is involved, except for the $m^2$ term, which produces the second summand in above equality when applied to the inserted unit and the tensor factor to its right. Further the signs work out since $|e_\mu|=0.$ By equation \ref{123} we conclude
$$\partial \Tilde{x}=x,$$
which proves that $H_*(ker(\iota),\partial)=0$ and thus $$HH_*(A_\mathcal{C})=HH_*(\mathcal{C}).$$ \end{proof}
\subsection{The Quantized LQT Map for Cyclic $A_\infty$-Categories}\label{slqtq}
In this section we explain that the LQT map connects the `free and classical observables' from the commutative and non-commutative world that we saw in the previous sections. Further, slight modifications of the LQT map also connect the `prequantum and quantum observables' from the previous sections.

To start, given $V_B$ a collection of cyclic chain complexes of degree d where $B$ is finite and denoting $\mathfrak{gl}_NV_B$ its associated commutator cyclic chain complex we have
\begin{df}
Denote by $$LQT^{fr}_N:\ \mathcal{F}^{fr}(V_B)\rightarrow \mathcal{O}^{fr}(\mathfrak{gl}_NV_B)$$ the LQT map from definition \ref{LQT_d}, given as the composition of the original LQT map applied to $\widetilde{(V_B)}$, the cyclic chain complex of $V_B$ under the functor \ref{cyccomm}.
\end{df}
\begin{theorem}\label{LQT_f} Given $V_B$ a collection of cyclic chain complexes of degree d where $B$ is finite
    $$LQT^{fr}_N:\ \mathcal{F}^{fr}(V_B)\rightarrow \mathcal{O}^{fr}(\mathfrak{gl}_NV_B),$$ 
     is a map of (d-2) shifted Poisson dg algebras, functorial with respect to strict cyclic $A_\infty$-morphisms. 
\end{theorem}

\begin{proof}
 This follows directly from corollary \ref{ibc} further below and  thm. 4.7 of \cite{GGHZ21}, the previously studied LQT map for cyclic $A_\infty$-algebras.  
\end{proof}
\begin{cons}
We want to define a map, which we call the pre-quantum LQT map, as follows
\begin{equation}
\label{LQT_r}
LQT^{pq}_N:\ Sym\big({Cyc}^{*+d-4}(V_B)\big)\llbracket\gamma\rrbracket[3-d]\rightarrow C^*(\mathfrak{gl}_NV_B)\llbracket\hbar\rrbracket
\end{equation} 
$$
(a_{1_1}\cdots a_{n_1})\dots (a_{1_m}\cdots a_{n_m}) \nu_{\lambda_1}^{b_1}\dots \nu_{\lambda_r}^{b_r} \gamma^s\mapsto Tr\big(a_{1_1}(\_)\cdots a_{n_1}(\_)\big)\dots Tr\big(a_{1_m}(\_)\cdots a_{n_m}(\_)\big)\hbar^{m+b-1+2s}N^{b} 
$$
Here $\lambda_i\in B$ and $b=\sum^r_{i=1}b_i\in \mathbb{N}$, further we understand the right hand side as polynomials on $\mathfrak{gl}_NV_B[1]$, adjoined $\hbar$. 
\end{cons}
\begin{remark}Note that this assignment does not define a map of algebras: We have that $$LQT^{pq}_N(\nu^2)=\hbar N^2\ \  \text{but}\ \ LQT^{pq}_N(\nu)\cdot LQT^{pq}_N(\nu)= N^2.$$
However, we will see that it is a weighted map of BD algebras\footnote{to be precise it is a weighted BD map on $Sym^{>0}$, but we ignore this subtlety.} in the sense of definition \ref{weimap}.
\end{remark}
More formally, we can define the $LQT^{pq}_N$ as the composition of various maps:

$$LQT^{pq}_N:=m^{pq}\circ f^{pq}\circ tr^{pq}\circ Sym(\iota_\mathcal{C}^*),$$
We introduce these in the following, but first state the central, though immediate result:
\begin{lemma}\label{asgh}
For $\mathcal{C}$ an essentially finite cyclic $A_\infty$-category the map
    $${\iota_\mathcal{C}}^*:(Cyc^*(Sk\mathcal{C}),d+m_{Sk\mathcal{C}})\rightarrow (Cyc^*(A_{Sk\mathcal{C}}),d+m_{A_{Sk\mathcal{C}}}),$$
    induced by \ref{iota} and where we send $\nu_\lambda\mapsto \nu$ for all $\lambda\in Ob(Sk\mathcal{C}),$
    is a quasi-isomorphism of involutive shifted Lie bialgebras.
\end{lemma}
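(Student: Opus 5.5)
The plan is to split the statement into two independent claims: first, that $\iota_\mathcal{C}^*$ (extended by $\nu_\lambda\mapsto\nu$) intertwines the bracket $\{\_,\_\}$, the cobracket $\nabla$ and the differentials; second, that it is a quasi-isomorphism. For the algebraic compatibility I will work directly with the explicit formulas, the bracket \eqref{bracket} and Construction \ref{nabla}, on cyclic words. A cyclic word $(a_1\cdots a_n)$ in $Cyc^*_+(Sk\mathcal{C})$ has matching boundary conditions. Its image under $\iota_\mathcal{C}^*$ is the same word, now viewed as a functional on $A_{Sk\mathcal{C}}^{\otimes n}$ that vanishes on non-composable tensors. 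Every element of $Cyc^*_+(A_{Sk\mathcal{C}})$ therefore decomposes according to the boundary labels of its letters, and the image of $\iota_\mathcal{C}^*$ is exactly the span of the composable words.

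The pairing \eqref{sodef} on $A_{Sk\mathcal{C}}$ is by construction block-diagonal with blocks $\langle\_,\_\rangle_{Sk\mathcal{C}}$ on $Hom(\lambda,\mu)\otimes Hom(\mu,\lambda)$. Its inverse is therefore the sum of the inverses $\langle\_,\_\rangle^{-1}_{\lambda\mu}$, and it vanishes on non-matching pairs. This is the same case distinction that appears in the categorical bracket and cobracket. Hence each summand of $\{\iota^*x,\iota^*y\}$ and of $\nabla(\iota^* x)$ either vanishes or equals $\iota^*$ of the corresponding summand, with identical Koszul signs. When both letters paired by $\nabla$ are adjacent, the categorical side produces $\nu_\lambda$. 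The algebra side produces $\nu$, because $A_{Sk\mathcal{C}}$ has only one object, and the assignment $\nu_\lambda\mapsto\nu$ matches this. The same argument covers the length-one bracket and the length-two cobracket.

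A point I will double-check is that, after cutting a composable word along a matched pair, the two resulting subwords are again composable. Otherwise an extra term could appear on the $A$-side. The check is immediate: if $a_i\in Hom(\beta,\lambda)^\vee$ and $a_j\in Hom(\lambda',\beta')^\vee$ are paired, then $\lambda'=\lambda$ and $\beta'=\beta$, so both arcs close up. Compatibility with $d+m$ is the chain-map statement for \eqref{iota}, restricted to a finite skeleton, with the central $\nu$-part being closed.

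For the quasi-isomorphism, the $\nu$-summand maps isomorphically, since $B$ is finite and the product of copies of $k\nu_\lambda$ maps onto $k\nu$. I must be careful here: this map is not injective when $|Ob(Sk\mathcal{C})|>1$. I will therefore handle the central part separately. Either I confirm that the intended statement concerns the reduced part $Cyc^*_+$, or I check whether the $\nu_\lambda$ summand is meant to be identified. This is the main obstacle I anticipate. If the intended meaning is $Cyc^*_+$, the claim reduces to the proof of Theorem \ref{LQT_conj}: there I showed that $\iota:CH_*(A_\mathcal{C})\to CH_*(\mathcal{C})$ is a quasi-isomorphism for unital $\mathcal{C}$ by contracting the non-composable part using inserted units. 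Passing to cyclic chains via the Connes long exact sequence (Loday, corollary 2.2.3) and dualizing over the characteristic zero field $k$ then gives the quasi-isomorphism on cyclic cochains. Essential finiteness guarantees that the skeleton is unital and has finitely many objects, so that \eqref{sodef} is non-degenerate.
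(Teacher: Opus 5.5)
Your argument for the parts of the lemma that are true follows the paper's proof. The paper makes exactly two observations. First, $\iota_\mathcal{C}^*$ is the inclusion of composable cyclic words, so compatibility with $\{\_,\_\}$ and $\nabla$ follows from the block-diagonal pairing \eqref{sodef}. Second, the quasi-isomorphism comes from the unit-insertion argument in the proof of Theorem \ref{LQT_conj}. Your explicit checks fill in what the paper calls immediate: cutting a composable word along a nonzero pairing leaves composable arcs, and the length-one bracket and the adjacent-letter and length-two cases of $\nabla$ produce $\nu_\lambda$ exactly where the $A_{Sk\mathcal{C}}$ side produces $\nu$.

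The obstacle you anticipate on the central part is genuine, and no choice of intended reading makes it go away. As literally stated, the lemma is false whenever $Sk\mathcal{C}$ has at least two objects. The paper's proof does not address this, because the argument it cites from Theorem \ref{LQT_conj} only concerns $Cyc^*_+$.

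To see the failure, note that the central summand $\prod_\lambda k\nu_\lambda[1]$ splits off as a subcomplex with zero differential. The internal differential preserves word length. The part induced by $m$ is bracketing with the hamiltonian, which has no length-one component, so it never produces a $\nu$. The $\nu_\lambda$ are cycles. Hence the cohomology of the source is $HC^*_+(Sk\mathcal{C})\oplus\prod_\lambda k\nu_\lambda$, up to shifts. For $\lambda\neq\mu$, the nonzero class $\nu_\lambda-\nu_\mu$ is sent to $0$.

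Delete your sentence asserting that the $\nu$-summand maps isomorphically. The map $\prod_\lambda k\nu_\lambda\to k\nu$ is only surjective, with kernel of dimension $|Ob(Sk\mathcal{C})|-1$. Then commit to a corrected statement:
\begin{itemize}
\item $\iota_\mathcal{C}^*$ is a strict map of involutive shifted Lie bialgebras on all of $Cyc^*$;
\item it is a quasi-isomorphism on $Cyc^*_+$;
\item it is a quasi-isomorphism on $Cyc^*$ after passing to the quotient of the source that identifies all $\nu_\lambda$, or when $Sk\mathcal{C}$ has a single object.
\end{itemize}
The quotient in the last item is by a bi-Lie ideal, since the $\nu_\lambda$ are central and annihilated by $\nabla$. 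Be aware that Corollary \ref{qc12}, which feeds this lemma into a spectral-sequence comparison, inherits the same defect on the $\nu$-monomials.
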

\begin{remark}Note that here it is essential that the cardinality of $ObSk\mathcal{C}$ is finite.
\end{remark}
\begin{proof}
We saw already in the proof of \ref{LQT_conj} that $\iota_\mathcal{C}^*$ is a quasi-isomorphism. Since $\iota_\mathcal{C}^*$ is just the inclusion of composable cyclic words into general cyclic words it follows immediately that it is a map of involutive shifted Lie bialgebras, recalling how we defined the cyclic structure on $A_{Sk\mathcal{C}}$, see definition \ref{sodef}.    
\end{proof}
\begin{corollary}\label{ibc}
   For $\mathcal{C}$ an essentially finite cyclic $A_\infty$-category we find a quasi-isomorphism of (d-2) shifted Poisson algebras
   $$\begin{tikzcd}
    \mathcal{F}^{cl}(\mathcal{C})\arrow{r}{\simeq}&\mathcal{F}^{cl}(Sk\mathcal{C})\arrow{r}{\iota^*}&\mathcal{F}^{cl}(A_{Sk\mathcal{C}}),\end{tikzcd}$$
   induced by the map from lemma \ref{asgh} and the map \ref{fff}.
\end{corollary}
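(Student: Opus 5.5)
The corollary asserts that the composite $\mathcal{F}^{cl}(\mathcal{C})\to\mathcal{F}^{cl}(Sk\mathcal{C})\to\mathcal{F}^{cl}(A_{Sk\mathcal{C}})$ is a quasi-isomorphism of $(d-2)$-shifted Poisson algebras. I would treat the two arrows separately and then compose. The first arrow is \eqref{fff}. It is the pullback $\Phi^*$ along the inclusion $\Phi\colon Sk\mathcal{C}\to\mathcal{C}$, which is strict and cyclic. By lemma \ref{cov} it is therefore a map of shifted Poisson algebras, and it was already observed around \eqref{fff} that its underlying chain map is a quasi-isomorphism. So for this arrow nothing new is needed beyond quoting these facts.

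For the second arrow I would first check that it is a map of shifted Poisson algebras. The map $\iota^*$ is $Sym$ applied to the restriction of the map of lemma \ref{asgh} to the unextended part $Cyc^*_+(Sk\mathcal{C})[-1]\to Cyc^*_+(A_{Sk\mathcal{C}})[-1]$, i.e.\ no $\nu$'s and no cobracket. Lemma \ref{asgh} says this is a map of shifted Lie algebras commuting with $d+m$. Extending by the Leibniz rule, $Sym(\iota^*)$ is then a map of $(d-2)$-shifted Poisson dg algebras $\mathcal{F}^{fr}\to\mathcal{F}^{fr}$, because both bracket and differential are determined by their values on generators. To pass to the twisted algebras $\mathcal{F}^{cl}$, I have to check that the hamiltonians correspond, $\iota^*(I_{Sk\mathcal{C}})=I_{A_{Sk\mathcal{C}}}$. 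This is exactly the identity $\langle m_n(\_,\dots,\_),\_\rangle_{A_{Sk\mathcal{C}}}=\iota_\mathcal{C}^*\langle m_n(\_,\dots,\_),\_\rangle_{Sk\mathcal{C}}$ recorded in construction \ref{assgn1}. A map of dg shifted Lie algebras sending one Maurer--Cartan element to the other intertwines the twisted differentials $d+\{I,\_\}$, so $\iota^*$ is a map $\mathcal{F}^{cl}(Sk\mathcal{C})\to\mathcal{F}^{cl}(A_{Sk\mathcal{C}})$.

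The remaining point, and the only non-formal one, is the quasi-isomorphism property. On generators, the differential of $\mathcal{F}^{cl}$ restricted to $Sym^1$ is $d+m$, i.e.\ the cyclic differential. The proof of theorem \ref{LQT_conj} shows that $\iota_\mathcal{C}^*$ is a quasi-isomorphism on cyclic cochains; this uses unitality, which holds because essentially finite categories are unital by definition. The twisted differential does not preserve symmetric degree: $\{I,\_\}$ maps $Sym^n$ into $Sym^{\le n}$, since the bracket of $I$ with one factor can also produce terms involving several factors via cyclic words. I would therefore filter both sides by symmetric length, $F_p=Sym^{\le p}$. This filtration is preserved by the differential and by $\iota^*$, it is exhaustive, and it is bounded below. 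On the associated graded the differential becomes $Sym^n$ of the cyclic differential $d+m$. Since $k$ has characteristic zero, $Sym^n$ preserves quasi-isomorphisms, so the $E_1$ map is an isomorphism. The comparison theorem (as used for \eqref{BDqi}) then gives that $\iota^*$ is a quasi-isomorphism, and composing with the first arrow finishes the proof.

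The step I expect to require the most care is this last one. One must verify that the twisting term really lowers or preserves length, so that the associated graded differential is exactly $Sym^n(d+m)$. One must also confirm that the $Sym^1$ identification uses the reduced cyclic complex $Cyc^*_+$, which is the object to which theorem \ref{LQT_conj} applies.
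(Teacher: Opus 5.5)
Your proof is correct and follows essentially the paper's own (implicit) argument: \eqref{fff} is a quasi-isomorphism of shifted Poisson algebras by invariance of cyclic cohomology under the equivalence $Sk\mathcal{C}\to\mathcal{C}$, and lemma \ref{asgh} together with $\iota^*I_{Sk\mathcal{C}}=I_{A_{Sk\mathcal{C}}}$ gives a Poisson map that becomes a quasi-isomorphism after applying $Sym$. One correction: your claim that the twisted differential does not preserve symmetric degree is false, because $\{I,\_\}$ preserves symmetric length exactly (the bracket of two cyclic words is a single cyclic word, and splitting into several words only happens via $\nabla$, which does not occur in $\mathcal{F}^{cl}$); hence $\mathcal{F}^{cl}$ is just $Sym$ of the complex $(Cyc^*_+[-1],d+m)$, and the filtration argument collapses to the fact that $Sym^n$ preserves quasi-isomorphisms in characteristic zero.
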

\begin{df}\label{vfnh}
Given a quantization $I^q$ of an essentially finite cyclic $A_\infty$-category $\mathcal{C}$ denote by $(A_{Sk\mathcal{C}},I^q_f)$ the quantization of the cyclic $A_\infty$-algebra under \ref{cyccomm}, determined by $I^q_f:=\iota^*I^q,$ where $\iota^*$ is the map from corollary \ref{ibc}.
\end{df}
\begin{corollary}\label{qc12}
  For $\mathcal{C}$ a quantization of an essentially finite cyclic $A_\infty$-category we have a quasi-isomorphism of (d-2) twisted BD algebras
   $$\begin{tikzcd}
    \mathcal{F}^{q}(\mathcal{C})\arrow{r}{\simeq}&\mathcal{F}^{q}(Sk\mathcal{C})\arrow{r}{\iota^*}&\mathcal{F}^{q}(A_{Sk\mathcal{C}}),\end{tikzcd}$$
   induced by the map from lemma \ref{asgh} and the \ref{BDqi}.
\end{corollary}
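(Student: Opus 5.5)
The plan is to factor the displayed composite into its two arrows and treat each separately. Then, because $\iota^*$ is merely a restriction of cyclic words, the compatibility with the BD structures will follow immediately from Lemma \ref{asgh}. The point needing care is that the twisted differentials correspond to each other and that quasi-isomorphisms survive the twist.

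The first arrow $\mathcal{F}^{q}(\mathcal{C})\rightarrow\mathcal{F}^{q}(Sk\mathcal{C})$ is exactly the map $\Phi^*$ of \eqref{BDqi}. There it was already shown to be a quasi-isomorphism of BD algebras, using the filtration of example 6.3 of \cite{GGHZ21} together with the Eilenberg--Moore comparison theorem. I take that step as given.

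For the second arrow, I first note that Lemma \ref{asgh} gives a quasi-isomorphism $\iota^*:Cyc^*(Sk\mathcal{C})\rightarrow Cyc^*(A_{Sk\mathcal{C}})$ of involutive shifted Lie bialgebras, sending each $\nu_\lambda\mapsto\nu$. I extend it multiplicatively and $\gamma$-linearly to $Sym(\cdots)\llbracket\gamma\rrbracket$. Since it preserves the bracket and the cobracket $\nabla$, it intertwines the induced Chevalley--Eilenberg differential $\delta$, the derivation $\nabla$ and the internal $d$. Hence it is a map of BD algebras $\mathcal{F}^{pq}(V_{Ob Sk\mathcal{C}})\rightarrow\mathcal{F}^{pq}(A_{Sk\mathcal{C}})$, by functoriality of Theorem \ref{nc_pq}. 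By Definition \ref{vfnh} the quantization on $A_{Sk\mathcal{C}}$ is $I^q_f=\iota^*I^q$, so the map sends Maurer--Cartan element to Maurer--Cartan element. It therefore commutes with the twisted differentials $d+\nabla+\gamma\delta+\{I^q,\_\}$, and so it descends to a BD map $\mathcal{F}^q(Sk\mathcal{C})\rightarrow\mathcal{F}^q(A_{Sk\mathcal{C}})$.

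The remaining, and main, step is to show this twisted map is a quasi-isomorphism. I would filter both sides by $\gamma$-adic order combined with symmetric (and word-length) weight, as in example 6.3 of \cite{GGHZ21}. The filtration should be complete and exhaustive, and the twisted differential should preserve it. On the associated graded, the quantum corrections $\gamma\delta$, $\nabla$ and the higher-genus part of $I^q$ should drop out, leaving $Sym$ of the cyclic cochain complex with differential $d+m$. There Lemma \ref{asgh}, in characteristic zero and passing to symmetric powers, gives a quasi-isomorphism on the $E_1$-page, so the Eilenberg--Moore comparison theorem (theorem 5.5.11 of \cite{wei94}) concludes. The delicate point is choosing the filtration so that $\nabla$, which decreases word length but raises symmetric degree, and $\{I^q,\_\}$ genuinely increase filtration degree. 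I would first try the filtration by $\gamma$-power plus symmetric degree minus one, mimicking \cite{GGHZ21}. I would also check the central $\nu$ terms separately: they cause no issue, because $\nu_\lambda\mapsto\nu$ is an isomorphism on the relevant summands only after identifying all $\nu_\lambda$, and the constant terms of $I^q$ are assumed zero.
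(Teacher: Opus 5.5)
Your outline is the paper's own argument: the first arrow comes from \eqref{BDqi}, and the second from Lemma \ref{asgh}, a filtration by $\gamma$, $\nu$ and symmetric length, and the Eilenberg--Moore comparison theorem. However, the step you dismiss at the end, the central $\nu$'s, is exactly where it fails. The paper's proof has the same hole, since it asserts that Lemma \ref{asgh} gives an isomorphism on the first page. On the central part the map is $k[\nu_\lambda:\lambda\in Ob(Sk\mathcal{C})]\to k[\nu]$, $\nu_\lambda\mapsto\nu$. This is not injective once $Sk\mathcal{C}$ has two objects; your remark that it is an isomorphism only after identifying all $\nu_\lambda$ concedes this. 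The central summand of $Cyc^*(Sk\mathcal{C})$ carries zero differential and is never hit, so Lemma \ref{asgh} is only a quasi-isomorphism on $Cyc^*_+$, and the $E_1$-comparison breaks down.

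This is not an artefact of the spectral sequence. Take $x=\nu_{\lambda_1}-\nu_{\lambda_2}$. It is a cocycle for $d+\nabla+\gamma\delta+\{I^q,\_\}$: the $\nu$'s are central, $d$ and $\nabla$ kill them, and $\delta$ vanishes on $Sym^1$. It is not exact, for three reasons.
\begin{enumerate}
\item No term of the differential lowers the $\gamma$-power, so a primitive of $x$ can be taken at $\gamma^0$.
\item The $\gamma^0$-part $d+\nabla+\{I^q|_{\gamma=0},\_\}$ does not lower symmetric length and vanishes on constants.
\item On $Sym^1\gamma^0$ that part reduces to $d+\{I,\_\}$. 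This never produces a $\nu$, because $I$ consists of cyclic words of length $\geq 3$ and the bracket yields a $\nu$ only from two words of length one.
\end{enumerate}
Since $\iota^*(x)=\nu-\nu=0$, the second arrow is not injective on cohomology. Likewise, the first arrow you take as given from \eqref{BDqi} kills the non-zero class $\nu_\lambda$ for every $\lambda\in Ob(\mathcal{C})\setminus Ob(Sk\mathcal{C})$, because $\Phi^*\nu_\lambda$ is an empty sum.

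So the statement as written holds only when the central parts already match, e.g.\ when $\mathcal{C}=Sk\mathcal{C}$ has a single object. Alternatively, one can divide out the BD ideals generated by the central cocycles. On the source of the first arrow, quotient by the $\nu_\lambda-\nu_\mu$ for $\lambda,\mu\in Ob(Sk\mathcal{C})$ and by the central summand indexed by $Ob(\mathcal{C})\setminus Ob(Sk\mathcal{C})$. On $\mathcal{F}^q(Sk\mathcal{C})$, quotient by the differences $\nu_\lambda-\nu_\mu$. These are BD ideals because the $\nu$'s are central and closed, and after this your $E_1$-argument applies.

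You should also fix the filtration weights. With ``$\gamma$-power plus symmetric degree minus one'', the operator $\gamma\delta$ preserves the filtration, so it survives to the associated graded. Use instead the weight $2\cdot(\gamma\text{-power})+(\text{symmetric length, counting the }\nu\text{'s})$, which is the exponent of $\hbar$ under $LQT^{pq}_N$. Then $\nabla$, $\gamma\delta$ and $\{I^q-I,\_\}$ all strictly raise the filtration. The associated graded is $Sym$ of $(Cyc^*,d+\{I,\_\})$, and Lemma \ref{asgh}, restricted to $Cyc^*_+$, can be applied.
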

\begin{proof}
Lemma \ref{ibc} implies that we have a map of twisted BD algebras. It remains to argue why this map is a quasi-isomorphism. This follows by considering the spectral sequence associated to the filtration by powers of $\gamma$, $\nu$ and length of symmetric words, which is complete and exhaustive (see example 6.3 of \cite{GGHZ21}). Note that here it is essential that we consider formal power series in $\gamma$. The map from this corollary induces an isomorphism on the first page of the spectral sequence, eg by lemma \ref{asgh}. By the Eilenberg-Moore comparison theorem, eg theorem 5.5.11 of \cite{wei94}, the result follows.   
\end{proof}
Let us introduce the other maps in defining the prequantum LQT map, which are just slight variants of the ones in \cite{GGHZ21}, taking care of the shifts needed for the $\mathbb{Z}$-grading we included:
%\subsubsection{Trace Map}
%Given $V_B$ a collection of cyclic chain complexes degree d, B not necessarily finite, so is
%$$M_NV_B:=\{M_NV_{ij}\}_{i,j\in B}$$ by remark \ref{cycM}.
\begin{itemize}
\item
We recall the trace map from \cite{GGHZ21} section 4.2, inducing for every cyclic chain complex $V$ a map of chain complexes
%$$tr^{fr}:\ (Sym\big(Cyc^{*-1}_+(V_B)\big),d)\rightarrow (Sym\big(Cyc^{*-1}_+(M_NV_B)\big),d)$$
%and also 
$$ (Cyc^*(V),d)\rightarrow (Cyc^*(M_NV),d),$$
where we further sent $\nu$ to $N\nu$. By suitably shifting and extending $\gamma$-linearly this further induces a map denoted
\begin{equation}\label{tracemap}
tr^{pq}:\ Sym\big(Cyc^{*+d-4}(V)\big)\llbracket\gamma\rrbracket[3-d]\rightarrow Sym\big(Cyc^{*+d-4}(M_NV)\big)\llbracket\gamma\rrbracket[3-d].\end{equation} 

   \item 
%As before the map \ref{f} induces
%$$f^{fr}:\ (Sym\big(Cyc^{*-1}_+(M_NV_B)\big),d)\rightarrow \big(Sym\big(C^*_+(\mathfrak{gl}_NV_B)\big),d\big).$$
We recall the map from our remark \ref{f} which allows us to define \begin{equation*}
\big(Cyc^*(M_NV)[-1],d\big)\rightarrow \big(C^*(\mathfrak{gl}_NV),d\big),\end{equation*}
by further sending $\nu\mapsto 1$.  Shifting both sides by $(d-3)$, applying the symmetric algebra functor, extending $\gamma$-linearly and then shifting by $(3-d)$ this induces a map of chain complexes
$$f^{pq}:\ (Sym\big(Cyc^{*+d-4}(M_NV)\big)\llbracket\gamma\rrbracket[3-d],d)\rightarrow \big(Sym\big(C^*(\mathfrak{gl}_NV)[d-3]\big)\llbracket\gamma\rrbracket[3-d],d\big).$$
 \item

There is also a map of degree zero
$$ \big(Sym\big(C^{*}(\mathfrak{gl}_NV)[d-3])\llbracket\gamma\rrbracket,d\big)\rightarrow \big(C^*(\mathfrak{gl}_NV)[d-3]\llbracket\hbar\rrbracket,d\big).
$$ First it sends $\gamma$ to $\hbar^2$, it applies multiplication, ie. sends a symmetric word in n letters of the algebra $C^*(\mathfrak{gl}_NV)[d-3]$ whose multiplication has degree $(d-3)$ to the product of these words, but multiplied by $\hbar^{n-1}$. Note that this map thus has degree 0. Again shifting by $(3-d)$ induces the map, denoted
\end{itemize}
$$m^{pq}: \big(Sym\big(C^*(\mathfrak{gl}_NV)[d-3])\big)\llbracket\gamma\rrbracket[3-d],d\big)\rightarrow \big(C^*(\mathfrak{gl}_NV)[d-3]\llbracket\hbar\rrbracket[3-d],d\big)
\cong \big(C^*(\mathfrak{gl}_NV)\llbracket\hbar\rrbracket,d\big).$$

These maps preserves the underlying algebraic structure as follows:
\begin{lemma}\label{tr}
For every cyclic chain complex $V$ of degree d
$$m^{pq}\circ f^{pq}\circ tr^{pq}:\mathcal{F}^{pq}(V)\rightarrow \mathcal{O}^{pq}(\mathfrak{gl}_NV)$$ is a weighted map of (d-2) twisted BD algebras and which is functorial with respect to strict cyclic morphisms. 
\end{lemma}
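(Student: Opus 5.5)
The plan is to reduce the lemma to the single-object quantized LQT theorem of \cite{GGHZ21} (theorem 4.9 there). The maps $tr^{pq}$, $f^{pq}$ and $m^{pq}$ are defined exactly as in loc.\ cit. The only changes are the $\mathbb{Z}$-grading shifts by $(d-3)$ and $(3-d)$ and the explicit $\gamma\mapsto\hbar^2$ bookkeeping. The argument therefore has four parts: check the degrees, check the weighted multiplicativity condition \eqref{mult}, check compatibility with the dg shifted Lie structures, and check functoriality.

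\textbf{Degrees and weighted multiplicativity.} First I verify that each of the three maps has degree zero once the shifts are accounted for. Here $\gamma$ has degree $6-2d$, $\hbar$ has degree $3-d$, and the products on $\mathcal{F}^{pq}$ and on $C^*(\mathfrak{gl}_NV)[d-3]$ have degree $d-3$. The factor $\hbar^{n-1}$ in $m^{pq}$ then exactly compensates the $n-1$ products.

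Next I check \eqref{mult} with $t=2$. The maps $tr^{pq}$ and $f^{pq}$ are $\gamma$-linear and are induced by $Sym$ of linear maps, so they are algebra maps for the symmetric product. The map $m^{pq}$ sends a product of $k$ symmetric factors, each containing $n_i$ letters, to the ordinary product times $\hbar^{\sum n_i-1}$. Comparing this with the product of the individual images, which carries $\prod\hbar^{n_i-1}$, produces exactly the discrepancy $\hbar^{k-1}=\hbar^{(t-1)(k-1)}$. Together with $g(\gamma)=\hbar^2$, this gives the weighting condition.

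\textbf{Compatibility with differentials and bracket.} This is the main step. I will treat each piece separately.

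The internal differential $d$ commutes with all three maps, since each map is induced from chain maps.

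For the bracket, I pass the bracket through $tr^{pq}$ using invariance of the trace pairing $tr(M_1M_2)\langle u_1,u_2\rangle$ from remark \ref{cycM}. On $\mathcal{O}^{pq}$, the bracket of the product of traces is computed by the Leibniz rule. Each contraction of letter pairs across two trace words, via $\langle\_,\_\rangle^{-1}$, corresponds to summing over matrix indices $\sum_{ij}E_{ij}\otimes E_{ji}$. This is the standard identity that merges two traces into one trace, which reproduces formula \eqref{bracket}.

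The same index computation applied inside a single trace word splits it into two traces. This shows that $\hbar\Delta$ corresponds to $\nabla$ and to $\gamma\delta$:
\begin{itemize}
\item a contraction inside one trace word gives the cobracket $\nabla$, and produces $\hbar$ times a product that gains one trace factor;
\item a contraction between different factors gives the $\gamma\delta$ term, and produces $\hbar$ times a product that loses one factor, hence a net $\hbar^{2}=g(\gamma)$.
\end{itemize}
The adjacent-letter case yields $\mathrm{tr}(1)=N$. This matches $\nu\mapsto N\nu\mapsto N$ together with the extra $\hbar$ accounted for in the $\nu$-exponent of \eqref{LQT_r}.

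The main obstacle I expect is tracking the powers of $\hbar$ and the Koszul signs under the shifts. I would first settle this on generators: a single cyclic word, and a product of two cyclic words. This suffices because both sides satisfy the BD relation \eqref{BDrel}, and a weighted map that respects the brackets and agrees on the differential of $Sym^{\le2}$ then agrees everywhere by induction on word length.

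\textbf{Functoriality.} Finally, functoriality under strict cyclic morphisms $F$ is immediate. $F$ acts letterwise on cyclic words and entrywise on matrices, so it commutes with taking traces, and by \eqref{nfn} it preserves the inverse pairings.
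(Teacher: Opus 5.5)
Your proposal is correct and is essentially the paper's argument: reduce to the one-object quantized LQT results of \cite{GGHZ21}, note that the $(d-3)$-shifts change nothing, and read off the weight $t=2$ from the factor $\hbar^{n-1}$ in $m^{pq}$ together with $\gamma\mapsto\hbar^2$. The only difference is organizational: the paper factors the composite through $\mathcal{F}^{pq}(M_NV)$, citing that $tr^{pq}$ is an honest BD map (lemma 4.6 of loc.\ cit.) and that $m^{pq}\circ f^{pq}$ carries the 2-weighting (prop.\ 4.5), whereas you check the composite directly on generators via the matrix-unit identities and extend to all word lengths via the BD relation, which is equally valid.
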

\begin{proof}
    This follows in exactly the same way as in \cite{GGHZ21}, noting that the shifts made do not change the argument in any way. More precisely the trace map is a map of (d-2) twisted BD algebras by lemma 4.6 of loc.cit. The composition of the other two maps are a weighted map of (d-2) twisted BD algebras by prop. 4.5 of loc.cit. To be precise it is a 2-weighted map, recalling the definition \ref{weimap} of a weighted map of BD algebras and examining the definition of the multiplication map above.
\end{proof}
\begin{df}
For $V_B$ a collection of cyclic chain complexes of degree d and $B$ finite denote 
$$LQT^{pq}_N:=m^{pq}\circ f^{pq}\circ tr^{pq}\circ Sym\iota^*:\ \mathcal{F}^{pq}(V_B)\rightarrow \mathcal{O}^{pq}(\mathfrak{gl}_NV_B).$$

\end{df}
It is a simple check to verify that this coincides with the explicit definition \ref{LQT_r}.
\begin{theorem}\label{LQT_pq}
Given $V_B$ a collection of cyclic chain complexes of degree d and $B$ finite the map
    $$LQT^{pq}_N:\ \mathcal{F}^{pq}(V_B)\rightarrow \mathcal{O}^{pq}(\mathfrak{gl}_NV_B),$$ 
    is a weighted map of (d-2) twisted BD algebras, functorial with respect to strict cyclic $A_\infty$-morphisms of essentially finite collections of cyclic cochain complexes.
\end{theorem}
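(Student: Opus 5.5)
The plan is to use the factorization $LQT^{pq}_N=(m^{pq}\circ f^{pq}\circ tr^{pq})\circ Sym\,\iota^*$ and verify the properties factor by factor, since weighted maps compose multiplicatively.

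\textbf{First factor.} Since $B$ is finite, $V_B$ is an essentially finite collection of cyclic chain complexes, taking the skeleton to be the collection itself. So $\widetilde{(V_B)}=A_{V_B}$ is a cyclic chain complex by Construction \ref{assgn1}, carrying the block pairing \eqref{sodef}.

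Lemma \ref{asgh} says that $\iota^*\colon Cyc^*(V_B)\rightarrow Cyc^*(A_{V_B})$, with all $\nu_\lambda\mapsto\nu$, is a map of involutive shifted Lie bialgebras. The reason is that it is the inclusion of composable cyclic words, and the pairing on $A_{V_B}$ vanishes on non-matching blocks. The only terms needing a check are those producing $\nu$'s in the bracket and in $\nabla$; they match because each $\nu_\lambda$ goes to the single $\nu$.

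It follows that $Sym\,\iota^*$, extended $\gamma$-linearly, intertwines $d$, $\nabla$, $\gamma\delta$ and $\{\_,\_\}$. Being a map of symmetric algebras, it is an honest (1-weighted) map of BD algebras $\mathcal{F}^{pq}(V_B)\rightarrow\mathcal{F}^{pq}(A_{V_B})$.

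\textbf{Second factor.} Lemma \ref{tr}, applied to the single cyclic chain complex $A_{V_B}$, shows that $m^{pq}\circ f^{pq}\circ tr^{pq}$ is a 2-weighted map of BD algebras $\mathcal{F}^{pq}(A_{V_B})\rightarrow\mathcal{O}^{pq}(\mathfrak{gl}_NA_{V_B})$. By definition $\mathfrak{gl}_NV_B$ is exactly $\mathfrak{gl}_NA_{V_B}$.

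\textbf{Composition.} By the remark after Definition \ref{weimap}, the composite of a 1-weighted and a 2-weighted map is 2-weighted. I will also check that the composite agrees with the explicit formula \eqref{LQT_r}. In particular the exponent of $\hbar$ is $m+b-1+2s$: $\gamma\mapsto\hbar^2$, multiplication contributes $\hbar^{(\#\text{factors})-1}$, and each $\nu_\lambda\mapsto\nu\mapsto N\nu\mapsto N$. This confirms the identification.

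\textbf{Functoriality.} Let $F\colon V_B\rightarrow V_C$ be a strict cyclic morphism between finite collections. Construction \ref{haha}--\eqref{wbd} becomes trivial here, because the object sets are finite. We can then use $\widetilde F\colon A_{V_B}\rightarrow A_{V_C}$ directly, at least when $F$ is injective on objects, which a cyclic functor is for non-degenerate pairings up to the $\nu$ bookkeeping.

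The needed square $\iota_B^*\circ F^*=\widetilde F^*\circ\iota_C^*$ on cyclic cochains follows from diagram \eqref{iota_func}. The one real check is on the central elements: $F^*(\nu_\lambda)=\sum_{\mu\in F^{-1}(\lambda)}\nu_\mu$ maps to $|F^{-1}(\lambda)|\,\nu$, while $\widetilde F^*\nu=\nu$.

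\textbf{Main obstacle.} This discrepancy on the $\nu$'s is where I expect the difficulty. I would try to resolve it by restricting to object-bijective (or object-injective onto a skeleton) morphisms, which is the setting of essentially finite categories where skeleta are compared. Alternatively, I would note that after applying $LQT$, both sides send $\nu$-monomials to powers of $N$ that agree when $F$ is injective on objects. Functoriality of $m^{pq}\circ f^{pq}\circ tr^{pq}$ under $\mathfrak{gl}_N\widetilde F$ is the one-object statement of Lemma \ref{tr}.
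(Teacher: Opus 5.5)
Your proof that $LQT^{pq}_N$ is a $2$-weighted map of BD algebras is the paper's proof. The paper simply cites Corollary \ref{qc12} (that is, Lemma \ref{asgh} for $Sym\,\iota^*$) and Lemma \ref{tr} for $m^{pq}\circ f^{pq}\circ tr^{pq}$. Your bookkeeping of the weights and of the exponent $m+b-1+2s$ is correct.

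The problems are in your functoriality paragraph, which goes beyond the paper's one-line argument and contains two false claims.

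First, a strict cyclic morphism need not be injective on objects. Take a cyclic chain complex $V$ on one object $\lambda$. Let $B=\{\mu_1,\mu_2\}$, with every $V_{\mu_i\mu_j}=V$ carrying the pairing of $V$. Let $F$ collapse $B$ to $\lambda$ and be the identity on each block. This $F$ preserves the pairing on every matching pair, so it is strict and cyclic (cf.\ \eqref{nfn}). However, $\widetilde F_1\colon A_{V_B}=V^{\oplus 4}\to V$ is not injective. It also fails to preserve the block pairing \eqref{sodef}: pair $x\in V_{\mu_1\mu_1}$ with $y\in V_{\mu_2\mu_2}$. This is why the paper only forms $\widetilde F$ for functors injective on objects, via the roof \eqref{wbd}.

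Second, injectivity on objects does not reconcile the central terms. If $\lambda\in C\setminus F(B)$, then $F^*\nu_\lambda$ is the empty sum, so $LQT^{pq}_N(F^*\nu_\lambda)=0$. On the other hand, $LQT^{pq}_N(\nu_\lambda)=N$ by \eqref{LQT_r}, and $(\mathfrak{gl}_N\widetilde F)^*$ fixes constants. In general the two composites differ on $\nu_\lambda$ by the factor $|F^{-1}(\lambda)|$. So the naturality square commutes on the nose only when $F$ is bijective on objects.

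Your first proposed fix, restricting to object-bijective morphisms, is therefore what actually holds for the $\nu_\lambda$'s. The second fix should be discarded, together with the claim that cyclic functors are injective on objects. Be aware that this restriction excludes the non-surjective inclusions $Sk\mathcal{D}\hookrightarrow\widebar{\mathcal{D}}$ in \eqref{wbd}, on which the paper's functoriality relies (through the cyclic commutator functor and the localized categories of Theorem \ref{LQT_q}). The paper's proof never examines the central elements, so the discrepancy you found is a real gap in the functoriality clause, not something the paper resolves.

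For the same reason, do not lean uncritically on the functoriality half of Lemma \ref{tr} that you import. Pulling back along a pairing-preserving but non-surjective map $V_1\to V_2$ does not intertwine brackets or BV Laplacians built from inverse pairings. The orthogonal complement of the image contributes to $\langle a,b\rangle^{-1}$ but is killed by the pullback. Strict functoriality is only safe for isomorphisms.
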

\begin{proof}
Indeed this follows directly from corollary \ref{qc12} and lemma \ref{tr}. 
\end{proof}
Furthermore given $B$ finite and a collection of cyclic chain complexes $V_B$  the prequantum and free LQT map fit with the dequantization maps together in following commutative diagram
\begin{equation}\label{pqdfr}
\begin{tikzcd}
\mathcal{F}^{pq}(V_B)\arrow{r}{ LQT^{pq}_N}\arrow{d}{p}& \mathcal{O}^{pq}(\mathfrak{gl}_NV_B)\arrow{d}{p}\\
\mathcal{F}^{fr}(V_B)\arrow{r}{ LQT^{fr}_N}& \mathcal{O}^{fr}(\mathfrak{gl}_NV_B), \end{tikzcd}\end{equation}
which is a straightforward check.
As a corollary of theorems \ref{LQT_f} and \ref{LQT_pq} we have following two theorems:

\begin{theorem}\label{LQT_cl}
    Given an essentially finite cyclic $A_\infty$-category we denote by $\mathfrak{gl}_NA_{Sk\mathcal{C}}$ the associated cyclic $L_\infty$-algebra. The LQT map is a map of (d-2)-shifted Poisson algebras
    $$LQT^{cl}: F^{cl}(\mathcal{C})\rightarrow \mathcal{O}^{cl}(\mathfrak{gl}_NA_{Sk\mathcal{C}}).$$
    Further this is functorial with respect to  strict cyclic $A_\infty$-morphisms of essentially finite  $A_\infty$-categories. That is, it defines a natural transformation between the functor
    $$ (\text{e.f cyc}
    _dA_\infty cat)_{st}\rightarrow (\mathcal{D}Poiss_{(d-2)})^{op},$$
     given by definition
    \ref{nattr1} and the composite functor  $$ (\text{e.f cyc}
    _dA_\infty cat)_{st}\rightarrow(cyc_dA_\infty Alg)_{st}[W^{-1}]\rightarrow(cyc_dA_\infty Alg)_{st}[W^{-1}]\rightarrow\cdots$$
    $$\cdots\rightarrow(cyc_dL_\infty Alg)_{st}[V^{-1}] \rightarrow (\mathcal{D}Poiss_{(d-2)})^{op},$$
    determined by
    \eqref{cyccomm}, the assignment \ref{cycM}\footnote{The fact that the functor defined by this assignment descends to the localized versions follows from Morita invariance of cyclic cohomology, ie. thm. 2.10 of \cite{GGHZ21}.} and \ref{cAL}\footnote{See there for the definition of V.} and \ref{nattr2}. These functors and the natural transformation are in fact independent up to isomorphism from the choice of a skeleton.
\end{theorem}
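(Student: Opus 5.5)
We obtain $LQT^{cl}$ by twisting the free LQT map of theorem \ref{LQT_f} by Maurer--Cartan elements. Concretely, write $LQT^{cl}$ as the composite
$$\mathcal{F}^{cl}(\mathcal{C})\xrightarrow{\ \simeq\ }\mathcal{F}^{cl}(Sk\mathcal{C})\xrightarrow{\ LQT^{fr}_N\ }\mathcal{O}^{cl}(\mathfrak{gl}_NA_{Sk\mathcal{C}}).$$
The first arrow is the quasi-isomorphism \eqref{fff}. For the second, apply theorem \ref{LQT_f} to the finite collection $V_{Ob Sk\mathcal{C}}$; this gives a map of $(d-2)$-shifted Poisson algebras $\mathcal{F}^{fr}(V_{Ob Sk\mathcal{C}})\to\mathcal{O}^{fr}(\mathfrak{gl}_N V_{Ob Sk\mathcal{C}})$. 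A map of shifted Poisson algebras sends Maurer--Cartan elements to Maurer--Cartan elements and intertwines the twisted structures. It therefore suffices to check the identity
$$LQT^{fr}_N(I_{Sk\mathcal{C}})=I_{\mathfrak{gl}_NA_{Sk\mathcal{C}}},$$
where the two sides are the hamiltonians of lemma \ref{Ham1} and lemma \ref{ham_l}.

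This identity is the main computational step. I would reduce it to the one-object case as follows. By construction \ref{assgn1}, $\iota^*I_{Sk\mathcal{C}}=I_{A_{Sk\mathcal{C}}}$, since $\langle m_n(\_,\dots),\_\rangle_{A}=\iota^*\langle m_n(\_,\dots),\_\rangle_{Sk\mathcal{C}}$. The statement that the algebra LQT map sends the $A_\infty$-hamiltonian of $A$ to the $L_\infty$-hamiltonian of $\mathfrak{gl}_NA$ is then standard. It follows from the definitions $m_n^{M_NA}=(\text{matrix product})\otimes m_n^A$, the trace pairing of remark \ref{cycM}, and the fact that the commutator $l_n$ is the symmetrization of $m_n$. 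This is the argument behind the twisted version of thm.~4.7 of \cite{GGHZ21}, and I would cite or reproduce it. This also shows that $LQT^{cl}$ intertwines the twisted differentials $d+\{I,\_\}$, which is the dg-algebra statement of theorem \ref{LQT_c} in the cyclic setting.

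For naturality, take a morphism \eqref{haha} and pass to the roof \eqref{wbd}, where all functors are strict, cyclic and injective on objects. For such functors, pullback along $F$ commutes with $\iota^*$ (by diagram \eqref{iota_func}), with the trace map, and with the map $f$. Together with functoriality in theorem \ref{LQT_f}, this gives commuting squares for each leg of the roof. The leg $Sk\mathcal{D}\to\bar{\mathcal{D}}$ is an equivalence. It induces quasi-isomorphisms on both sides: on the cyclic side by invariance of cyclic cohomology and lemma \ref{asgh}, and on the matrix side by Morita invariance (thm.~2.10 of \cite{GGHZ21}). These become isomorphisms in the localized categories $\mathcal{D}Poiss_{(d-2)}$ and in $(cyc_dL_\infty Alg)_{st}[V^{-1}]$. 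The squares therefore assemble into a natural transformation between the two composite functors.

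Independence of the skeleton follows from the zig-zags \eqref{undf} and \eqref{ssvr}, applying the same commuting-square argument to the roof through $A_{\bar{\mathcal{C}}}$. I expect the main obstacle to be bookkeeping: checking that the localizations are compatible, i.e. that each functor in the composite sends the relevant weak equivalences ($W$, then Morita, then $V$) to isomorphisms, so that the transformation is well defined after localization. The hamiltonian identity itself is routine once reduced to the algebra case.
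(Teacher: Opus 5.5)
Your argument for the Poisson statement is the paper's argument: twist $LQT^{fr}_N$ by the two hamiltonians, using $LQT^{fr}_N(I_{Sk\mathcal C})=I_{\mathfrak{gl}_NA_{Sk\mathcal C}}$. You reduce this identity, via $\iota^*I_{Sk\mathcal C}=I_{A_{Sk\mathcal C}}$ from construction \ref{assgn1}, to the one-object statement of \cite{GGHZ21}, which correctly fills in what the paper only asserts. Compatibility with strict cyclic functors that are injective on objects (the two legs of the roof \eqref{wbd}) also follows as you say, from \eqref{iota_func} and functoriality of the algebra LQT map.

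The gap is in the step you defer as bookkeeping. Morita invariance (thm.~2.10 of \cite{GGHZ21}) concerns cyclic cohomology. It shows that $M_NA_{Sk\mathcal D}\to M_NA_{\bar{\mathcal D}}$ lies in $W$. It does \emph{not} make $\mathcal{O}^{cl}(\mathfrak{gl}_NA_{\bar{\mathcal D}})\to\mathcal{O}^{cl}(\mathfrak{gl}_NA_{Sk\mathcal D})$ a quasi-isomorphism, so it does not put $\mathfrak{gl}_NA_{Sk\mathcal D}\to\mathfrak{gl}_NA_{\bar{\mathcal D}}$ in $V$. These complexes are Chevalley--Eilenberg cochains of $\mathfrak{gl}_N$ of two Morita-equivalent but non-quasi-isomorphic algebras, and such Lie algebra cohomology only agrees stably.

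Here is a concrete counterexample. Let $\mathcal C$ have one object with endomorphisms $E$. Let $\mathcal D$ have two isomorphic copies $\lambda,\lambda'$ of it, with $Sk\mathcal D=\{\lambda\}$, and let $F$ send the object of $\mathcal C$ to $\lambda'$. Then $A_{\bar{\mathcal D}}\cong M_2(E)$, and the map you need to invert is restriction $C^*_+(\mathfrak{gl}_{2N}E)\to C^*_+(\mathfrak{gl}_NE)$ along a corner embedding. For $E=k$ this is not a quasi-isomorphism, since $\dim H^*(\mathfrak{gl}_{2N}(k))=2^{2N}>2^N=\dim H^*(\mathfrak{gl}_N(k))$. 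The same holds for augmented $E$ such as $H^*(S^d)$, which retracts onto this case.

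So the commutator step does not send $W$ to $V$, and the roof \eqref{wbd} does not give a morphism after applying the composite functor whenever $\bar{\mathcal D}\neq Sk\mathcal D$. Your commuting squares therefore do not assemble into a natural transformation at finite $N$. Your skeleton-independence argument through $A_{\bar{\mathcal C}}$ in \eqref{ssvr} has the same defect.

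What your argument does establish is naturality for morphisms with $F(Sk\mathcal C)\subseteq Sk\mathcal D$, where $\bar{\mathcal D}=Sk\mathcal D$. One can only expect the enlarging legs to become invertible in the large $N$ limit of theorem \ref{LQT_conj}. The paper's own proof is no more careful here: it relies on the claim around \eqref{cAL} that the commutator functor descends from $W^{-1}$ to $V^{-1}$, which is exactly the point that breaks.
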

Because of this independence we sometimes omit the notation for the choice of skeleton, which is what we also did for the domain of the $LQT^{cl}$ map.
\begin{proof}
    An essentially finite cyclic $A_\infty$-category has underlying  collection of cyclic chain complexes over a finite set $B$. Thus we have the free LQT map $$LQT^{fr}_N:\ \mathcal{F}^{fr}(V_B)\rightarrow \mathcal{O}^{fr}(\mathfrak{gl}_NV_B).$$ The cyclic $A_\infty$-category structure induces a Maurer-Cartan element in $\mathcal{F}^{fr}(V_B)$, further the commutator cyclic $L_\infty$-algebra structure induces a Maurer-Cartan element in $\mathcal{O}^{fr}(\mathfrak{gl}_NV_B)$ and $LQT^{fr}$ maps the prior one to the latter one. Twisting the $LQT^{fr}$ map by these compatible MCE is again a map of (d-2)-shifted Poisson algebras by general properties of the twisting procedure. When forgetting the shifted Poisson structures this map reduces to the standard LQT map applied to the underlying $A_\infty$-category, by construction.

    The functoriality statement follows by functoriality of the one object LQT map and functoriality of the map induced by \ref{iota}.

    The independence up to isomorphism for the choice of a skeleton follows for the first functor from \ref{undf}. The independence up to isomorphism for the choice of a skeleton for the second functor is induced from \ref{cyccomm}.
\end{proof}
\begin{theorem}\label{LQT_q}
   i) A quantization of an essentially finite cyclic $A_\infty$-category induces a quantization of the associated cyclic $L_\infty$-algebra with values in $N\times N$-matrices, for each $N\in\mathbb{N}$. There is a weighted map of (d-2) twisted BD algebras
    $$LQT^{q}: \mathcal{F}^{q}(\mathcal{C})\rightarrow \mathcal{O}^{q}(\mathfrak{gl}_NA_{Sk\mathcal{C}}).$$
  ii)  Further this is functorial with respect to strict cyclic $A_\infty$-morphisms of quantum essentially finite  $A_\infty$-categories in the sense that it defines a natural transformation between the functor
    $$ (\text{q.e.f. cyc}_dA_\infty cat)_{st}\rightarrow (\mathcal{D}\text{BD}^{(d-2)tw})^{op},$$ 
     given by definition
    \ref{F_q} and the composite functor  $$ (\text{q.e.f cyc}
    _dA_\infty cat)_{st}\rightarrow(q.cyc_dA_\infty Alg)_{st}[U^{-1}]\rightarrow(q.cyc_dA_\infty Alg)_{st}[U^{-1}]\rightarrow\cdots$$
$$\cdots\rightarrow (q. cyc_dL_\infty Alg)_{st}[P^{-1}] \rightarrow (\mathcal{D}BD^{(d-2)tw})^{op}$$
     determined by \eqref{qcyccomm}, by \ref{cycM}\footnote{The fact that this functor descend to quantized version follows from the trace map of lemma \ref{tr}. Further that this functor descends to the localized version follows from Morita invariance of cyclic cohomology, theorem 2.10 of \cite{GGHZ21} and the filtration spectral sequence, see proof of part iii) of  theorem \ref{LQT_q}.} and by  \ref{qcAL}\footnote{See there for the definition of P.} and \eqref{O_q}. These functors and the natural transformation are in fact independent from the choice of a skeleton, up to isomorphism.

  iii)    We have that 
    
\begin{equation}\label{lNq}
\mathcal{F}^{q}(\mathcal{C})\simeq \varprojlim_{N\to\infty}\mathcal{O}^{q}(\mathfrak{gl}_NA_{Sk\mathcal{C}})    .\end{equation}
\end{theorem}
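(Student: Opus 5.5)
The plan is to reduce all three parts to the prequantum statement, Theorem \ref{LQT_pq}, and the one-object results of \cite{GGHZ21}, in the same way Theorem \ref{LQT_cl} was derived from Theorem \ref{LQT_f}.

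\emph{Part i).} Start from a quantization $I^q\in MCE(\mathcal{F}^{pq}(V_{Ob\mathcal{C}}))$. Pass to the skeleton, i.e.\ to $I^q_f=\Phi^*I^q$, and then to $\iota^*I^q_f$ on $A_{Sk\mathcal{C}}$ as in definition \ref{vfnh}. Set
$$I^q_N:=LQT^{pq}_N(I^q_f).$$
\begin{itemize}
\item Theorem \ref{LQT_pq} says $LQT^{pq}_N$ is a map of the underlying dg shifted Lie algebras (over $g:\gamma\mapsto\hbar^2$). Hence it sends Maurer--Cartan elements to Maurer--Cartan elements, so $I^q_N\in MCE(\mathcal{O}^{pq}(\mathfrak{gl}_NV_B))$.
\item Diagram \eqref{pqdfr} gives $p(I^q_N)=LQT^{fr}_N(p(I^q_f))=LQT^{fr}_N(I_f)$. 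This is the hamiltonian of the commutator cyclic $L_\infty$-algebra $\mathfrak{gl}_NA_{Sk\mathcal{C}}$, exactly as in the proof of Theorem \ref{LQT_cl}. So $I^q_N$ is a quantization.
\item The weighted map $LQT^q$ is obtained by twisting $LQT^{pq}_N$ by these compatible Maurer--Cartan elements. Definition \ref{weimap} only requires a dg Lie map together with the multiplicativity rule \eqref{mult}. Twisting preserves the dg Lie property, and it does not change the product, so \eqref{mult} still holds.
\end{itemize}
One point must be checked explicitly. Twisting in a BD algebra alters the differential by $\{I^q,\_\}$, and the BD relation \eqref{BDrel} must remain compatible with the weight. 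This holds because \eqref{mult} is untouched and the bracket is preserved.

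\emph{Part ii).} Naturality reduces to three ingredients:
\begin{itemize}
\item functoriality of $LQT^{pq}_N$ under strict cyclic morphisms (Theorem \ref{LQT_pq});
\item the roof construction \eqref{wbd}, together with the compatibility $F^*I^q=J^q$;
\item the localization statements \eqref{qcyccomm}, \eqref{qcAL} and \eqref{undfq}.
\end{itemize}
Independence of the skeleton follows from the zig-zags \eqref{undfq} and \eqref{ssvr}. Each map in these zig-zags is a quasi-isomorphism of BD algebras by Corollary \ref{qc12}.

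\emph{Part iii).} This is the main obstacle. The plan is to use the complete, exhaustive filtration of \cite{GGHZ21}, example 6.3, by powers of $\gamma$ (resp.\ $\hbar$), by powers of $\nu$, and by symmetric word length. This filtration must be matched on both sides; the weighting $\gamma\mapsto\hbar^2$ and $\nu\mapsto N$ means it is compatible only after regrading, as in \cite{GGHZ21}, section 6.

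On the associated graded, the twisted maps reduce to $Sym$ of the classical LQT map on $Sym(Cyc^*_+(\mathcal{C})[-1])$, with the $\nu$ and $\gamma$ directions as free extra variables. By Theorem \ref{LQT_conj} and Corollary \ref{qc12}, this classical map becomes a quasi-isomorphism in the limit $N\to\infty$.

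The delicate step is to commute $\varprojlim_N$ with the spectral sequence. The transition maps $C^*(\mathfrak{gl}_{N+1})\to C^*(\mathfrak{gl}_N)$ are surjective, so the $\varprojlim^1$ terms vanish (Mittag-Leffler). Since $\nu\mapsto N$ varies with $N$, I would first check that the transition maps are compatible with the quantum differentials. If needed, I would replace $N$ by a formal variable, as in \cite{GGHZ21}, theorem 6.x, before taking the limit.

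With this in place, the Eilenberg--Moore comparison theorem (\cite{wei94}, 5.5.11) applied to the complete filtrations yields \eqref{lNq}.
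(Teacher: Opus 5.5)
Your parts i) and ii) coincide with the paper's proof. You push the quantization of $Sk\mathcal{C}$ through $LQT^{pq}_N$, identify $p(I^q_N)$ with the hamiltonian of $\mathfrak{gl}_NA_{Sk\mathcal{C}}$ via \eqref{pqdfr}, twist, and get skeleton-independence from \eqref{undfq} and \eqref{qcyccomm}. Part iii) also follows the paper's route: the filtration of Corollary~\ref{qc12} plus the $\hbar$-filtration, a first-page comparison via Theorem~\ref{LQT_conj}, and Eilenberg--Moore. However, the point you postpone is a real gap, and the check you propose fails. The restrictions $\mathcal{O}^q(\mathfrak{gl}_{N+1}A_{Sk\mathcal{C}})\to\mathcal{O}^q(\mathfrak{gl}_NA_{Sk\mathcal{C}})$ along the block inclusion are not chain maps for the quantum differentials, for two reasons. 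First, pullback along an isometric but non-surjective inclusion does not commute with $\hbar\Delta$. For $\mathrm{Tr}(a(\_)b(\_))$ one gets, up to normalization, the constants $\hbar(N+1)^2\langle a,b\rangle^{-1}$ versus $\hbar N^2\langle a,b\rangle^{-1}$; this matches $\nabla(ab)=\langle a,b\rangle^{-1}\nu\nu\mapsto\hbar N^2$. Second, $I^q_{N+1}$ does not restrict to $I^q_N$ as soon as $I^q$ has components involving $\nu$'s, because of the weights $N^b$. So surjectivity and Mittag--Leffler on the underlying graded spaces do not give an inverse system of complexes. Making $N$ formal, as you suggest, has no counterpart on the matrix side of this setup.

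For the same reason, your description of the first page is not correct: there are no free $\gamma,\nu$ directions on the target. Indeed $LQT^{pq}_N$ sends $\gamma^s\nu^b$ times $m$ cyclic words to $\hbar^{m+b-1+2s}N^b$ times a product of traces. Multiplication by $\gamma$ and by $\nu$ (graded-)commutes with the twisted differential of $\mathcal{F}^q$. Take a cocycle $y$ whose first-page class is nonzero. Then $\gamma y$ and $\nu^2y$ are independent on the source's first page. Their images are $\hbar^2LQT^q_N(y)$ and $\hbar^2N^2LQT^q_N(y)$, which are proportional. So no single level is injective, and any injectivity in the limit must come from the varying $N$-dependence, which the restriction maps do not respect. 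To finish iii) you would first have to fix what the large-$N$ limit means, rather than taking an inverse limit of BD algebras. One option is levelwise statements on filtration quotients for $N$ large, keeping track of the polynomial dependence on $N$. The paper's own proof does not supply this either. It simply asserts that the filtered level-$N$ maps of \cite{GGHZ21} (diagram 7.1) induce \eqref{lNq}, and compares first pages by Theorem~\ref{LQT_conj}. Your hesitation therefore identifies an imprecision shared with the paper, not a step the paper resolves.
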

Because of the independence from the choice of skeleton we sometimes omit the notation for the choice of such, which is what we also did for the domain of the $LQT^{q}$ map.
\begin{proof}
A quantization of an essentially finite cyclic $A_\infty$-category has underlying collection of cyclic chain complexes over a finite set $B$. Thus we have the prequantum LQT map $$LQT^{pq}_N:\ \mathcal{F}^{pq}(V_B)\rightarrow \mathcal{O}^{pq}(\mathfrak{gl}_NV_B).$$
The datum of the quantization induces a Maurer-Cartan element in $\mathcal{F}^{pq}(V_B)$. Since the prequantum LQT map is a weighted map of BD algebras it sends a Maurer-Cartan element to a Maurer Cartan element. By diagram \ref{pqdfr} it follows that the so determined Maurer-Cartan element is a quantization of the cyclic commutator $L_\infty$-algebra. Twisting the $LQT^{pq}$ map by these compatible MCE is again a weighted map of (d-2) twisted BD algebras, which we denote by $LQT^q$. 

The independence up to isomorphism for the choice of a skeleton follows for the first functor from \ref{undfq}. The independence up to isomorphism for the choice of a skeleton for the second functor is induced from \ref{qcyccomm}.

The large $N$-statement follows again by considering the compete and exhaustive filtration from the proof of corollary \ref{qc12} and the one given by powers of $\hbar$. The LQT maps at each level $N$, which are filtered, see diagram 7.1 of \cite{GGHZ21}, induce the map \ref{lNq}. On the first page of the spectral sequence associated to the filtrations this map induces a quasi-isomorphism, by \ref{LQT_conj}. By the Eilenberg-Moore comparison theorem (theorem 5.5.11 of \cite{wei94}) the claim follows.
\end{proof}

\newpage

\section{Various Graph Complexes}
In this section we introduce various graph complexes, equip those with shifted Poisson and Beilinson-Drinfeld structures, we explain how they are connected and we identify certain Maurer-Cartan elements in these algebras. 
 \subsection{Ribbon Graph Complexes}\label{rbgs}
We begin by introducing stable ribbon graphs, originally introduced in \cite{Kon92a}, their simpler cousins ribbon graphs and ribbon trees and by giving names to some of their features. 
Afterwards we equip complexes built from these graphs with shifted Poisson and Beilinson-Drinfeld structures, induced by gluing leafs. 
Lastly we identify certain Maurer-Cartan elements for these structures.

\begin{df}
A \emph{stable ribbon graph} $\Gamma$ is given by the data of
\begin{itemize}
\item a finite set $H$, called the set of half edges,
\item $V(\Gamma)$ a partition  of $H$, called the set of vertices. We denote the cardinality of a vertex $v$ by $|v|$ and the total number of vertices by $v(\Gamma)$.
\item an involution $\sigma_1$ acting on $H$ whose fixed points are called leafs, the set of those denoted $L(\Gamma)$ and whose 2-cycles are called internal edges, the set of those denoted $E(\Gamma)$.
We further denote the total number of internal edges by $e(\Gamma)$ and the total number of leafs by $l(\Gamma)$.
\item for each vertex $v$, a partition $C(v)$ of its half edges. We denote the  cardinality of such a partition at a vertex $v$ by $c(v)$ and by $c(\Gamma)$ the sum of those over all vertices.
\item a cyclic ordering for the half edges in each such partition,
\item a function $(g,b):V(\Gamma)\rightarrow \mathbb{N}_{\geq 0}\times \mathbb{N}_{\geq 0}$ called the genus and boundary defect
such that for any vertex $v$ if $g(v)=b(v)=0$ and $c(v)=1$ we have $|v|\geq 3$.
\end{itemize}

\end{df}
\begin{df}\label{faces}
Given a stable ribbon graph $\Gamma$ denote $\sigma_0: H\rightarrow H$ the permutation which defines the cyclic ordering of the elements in the partitions $C(v)$ at each vertex $v$.
Furthermore recall the permutation $\sigma_1: H \rightarrow H$, whose cycles are the interior edges respectively leaves.
We define $$\sigma_b:=\sigma_0^{-1}\circ\sigma_1.$$ The cycles of $\sigma_b$ are called the \emph{faces} of $\Gamma$, we denote the number of faces of a graph by $f(\Gamma)$.
    \end{df}
 Following remark 4.9 of \cite{Ha07} we make   
    \begin{df}\label{armg}
 The \emph{arithmetic genus} of a stable ribbon graph is defined by 
 $$g(\Gamma)=1-v(\Gamma)+\frac{1}{2}(e(\Gamma)+c(\Gamma)-f(\Gamma))+\sum_{v\in V(\Gamma)}g_v$$
    \end{df}  
We call the number of leafs in a given cycle defining a face the number of open boundaries in that face.

        A \emph{ribbon graph} is a stable ribbon graph such that $g(v)=b(v)=0$ for all vertices and thus $c(v)$=1 for all vertices.
        That is, we  recover graphs, possibly with leaves, together with a cyclic ordering on the half edges at each vertex. 
   
    \begin{remark}
        Note that for ribbon graphs we have the standard $2-2g(\Gamma)=
        v(\Gamma)-e(\Gamma)+f(\Gamma)).$
    \end{remark}
    \begin{ex}
  The ribbon graph given by the figure $8$ has 4 half edges, 1 vertex,  and 3 faces.     
    \end{ex}
    
        A \emph{ribbon tree} is a ribbon graph that has $f(\Gamma)=1$ and $g(\Gamma)=0$.
    
    \begin{df}\label{fBd}
Note that the half edges belonging to a given face of a stable ribbon graph $\Gamma$ which are leafs inherit a cyclic ordering.
We call a tuple of consecutive leafs in that cyclic ordering $(l_{i},l_{i+1})$ a \emph{free boundary} of that face.
Denote by $\partial(\Gamma)$ the set of free boundaries of a given stable ribbon graph. 
    \end{df}
\begin{ex}
    The ribbon tree given by the figure $+$ has four free boundaries.
\end{ex}
\begin{df}\label{Dec}
Let $B$ be a set, which we call the set of boundary conditions.
 A $B$-colored stable ribbon graph is a stable ribbon graph $\Gamma$ together with a map $c:\partial(\Gamma)\rightarrow B$. Furthermore we actually ask for a map $(g,b):V(H)\rightarrow \mathbb{N}_{\geq 0}\times \mathbb{N}_{\geq 0}^B$, that is a boundary defect $b_\lambda(v)\in\mathbb{N}_{\geq 0}$ for each element $\lambda$ of $B$ at each vertex $v$.
\end{df}  

        Denote the set of faces with zero open boundaries by $F_0(\Gamma)$ and its cardinality by $f_0(\Gamma).$ We call 
        $$f_{tot}(\Gamma):=f_0+(\Gamma)\sum_{\lambda\in B,v\in V(\Gamma)}b_\lambda(v)$$
        the total number of empty faces.

    An orientation of a (B-clolored) stable ribbon graph is an ordering of its internal edges.
   
\begin{remark}
    In the literature this may be better known as a twisted orientation.
    An orientation traditionally refers to an ordering of the edges and vertices.
    Since we only consider the latter we stick to the simpler name of orientation and hope that it does not cause confusion.
\end{remark}

    Two oriented B-colored stable ribbon graphs are called isomorphic if there is a bijection between their set of half edges that respects all other structures.

    The notions of a $B$-coloring, orientation and isomorphism extend to ribbon graphs and ribbon trees.

We define various chain complexes using the previous definitions. For that we introduce the following rule - slightly generalizing definition 4.3 of \cite{Ha07} to B-colored stable ribbon graphs, which we will use to define a differential.
\begin{df}[Contracting an Edge] \label{cont}
Let $\Gamma$ be an oriented $B$-colored stable ribbon graph and let $e\in E(\Gamma)$ be an edge.
We define the oriented B-colored stable ribbon graph $\Gamma/e$ to be the graph obtained by contracting this edge according to the following rules:
\begin{enumerate}
\item \label{item_contractedge}
Suppose that $e$ is not a loop, that is it joins distinct vertices $v_1,v_2\in V(\Gamma)$.
The half-edges of these vertices are partitioned into cycles, so the endpoints of $e$ lie in distinct cycles $c_1\subset v_1$ and $c_2\subset v_2$.
By contracting the edge $e$, the vertices $v_1$ and $v_2$ become joined, and the cycles $c_1$ and $c_2$ coalesce to form a new cycle with a naturally defined cyclic ordering.
The genus and boundary defects for the vertices $v_1$ and $v_2$ are added to give the defects for the new vertex created by joining $v_1$ and $v_2$.
The orientation is defined in an obvious way.
All the other combinatorial structures elsewhere on the graph are left alone.

Note that when both $c_1$ and $c_2$ each consist of a single half-edge, they define a face with one free boundary on it, which by definition is colored by an element of $B$.
In this case $c_1$ and $c_2$ do not coalesce when contracting the relevant edge, but instead are deleted.
The boundary defect at the new vertex is defined to be the sum of the boundary defects of $v_1$ and $v_2$ plus one, with respect to the element by which the face was colored.
If, furthermore, $c_1$ and $c_2$ are the only cycles of $v_1$ and $v_2$, then the edge $e$ cannot be contracted.

\item \label{item_contractloop1}
Now suppose that $e$ is a loop, in which case both its endpoints lie in a single vertex $v$.
Suppose furthermore, that they join distinct cycles $c_1,c_2\subset v$.
By contracting $e$, these cycles coalesce to form a single cycle as before.
In so doing the genus defect of $v$ increases by one.
No other combinatorial structures are changed.

As before, care must be taken when both $c_1$ and $c_2$ consist of a single half-edge.
Again these half-edges together define a face with one free boundary on it, which is by definition colored by an element of $B$.
In this case $c_1$ and $c_2$ are annihilated and \emph{both} the genus \emph{and} the boundary defect corresponding to that element are increased by one.

\item \label{item_contractloop2}
Finally, suppose that $e$ is again a loop, but that now both of its endpoints lie in the same cycle $c$ contained in some vertex $v$.
By contracting this loop the cycle $c$ splits up into two cycles $c_1$ and $c_2$, with naturally defined cyclic orderings.
All the other combinatorial structures remain unchanged.

Again, care must be taken with this definition when the endpoints of $e$ lie next to each other in the cyclic ordering- in this case they again define at least one face with one free boundary on it, which is colored by an element of $B$.
In this case, the cycle $c$ does not split up, but the boundary defect corresponding to that element is increased by one.
Furthermore, if the cycle $c$ consists of just the two half-edges of $e$ (and thus actually defines two faces, colored by an element each), then the cycle $c$ is annihilated and the boundary defect actually increases by one for each of these elements.
Finally, if in the lattermost situation the vertex has no other cycles than $c$, then the loop $e$ actually cannot be contracted at all.
\end{enumerate}
\end{df}
\begin{remark}\label{totfac}
    Note that the property being a ribbon tree is preserved under this differential, whereas being a ribbon graph is not.
    The differential preserves the arithmetic genus of a stable ribbon graph.
    It further preserves the 'total number of empty faces' $f_{tot}(\Gamma)$ of a stable ribbon graph, in sense of definition \ref{faces}.
\end{remark}

\begin{df}\label{Ddrgc}
Denote by $(s\mathcal{RG}_{c,\bullet}^B,\partial)$ the complex whose underlying vector space is freely generated by
isomorphism classes of connected B-colored oriented stable ribbon graphs, modulo the relation that reversing the orientation on a stable ribbon graph is equivalent to multiplying by $(-1)$.
This complex is graded by the number of internal edges.
The differential $\partial$ is given by summing over all possible contractions of the edges:
\[ \partial(\Gamma):=\sum_{e\in E(\Gamma)} \Gamma/e.\]
Note that some edges cannot be contracted, in which case the corresponding term in the sum is defined to be zero.

  Denote by $(\mathcal{RG}_{c,\bullet}^B,\partial)$ the complex given by the subspace (which is not sub-complex) of connected ribbon graphs, which uniquely becomes a chain complex by requiring that the projection map
  $$s\mathcal{RG}_{c,\bullet}^B\rightarrow \mathcal{RG}_{c,\bullet}^B$$
  is a map of chain complexes.

    Denote by $(\mathcal{RT}_{c,\bullet}^B,\partial)$ the sub-complex of $s\mathcal{RG}_{c,\bullet}^B$ given by ribbon trees.
\end{df}
We could have equivalently defined $\mathcal{RT}_{c,\bullet}^B$ as the sub-complex of $\mathcal{RG}_{c,\bullet}^B$
given by ribbon trees, which can be verified directly.
That is, following diagram of chain complexes commutes
   \begin{equation}\label{inc_proj}
   \begin{tikzcd}
(\mathcal{RT}_{c,\bullet}^B,\partial)\arrow[bend right=25]{rr}\arrow{r}&(\mathcal{RG}_{c,\bullet}^B,\partial)&
(s\mathcal{RG}_{c,\bullet}^B,\partial)\arrow{l}.
    \end{tikzcd}
    \end{equation}
Let us now further fix an odd integer $d$ and $\gamma$, a formal variable of degree $6-2d$.
By taking the dual of the previous complexes we arrive at the definitions we will mainly work with:
 \begin{df}\label{sRG}
The \emph{d-shifted stable ribbon graph complex colored by $B$} is defined to be the cochain complex 
$$s\mathcal{RG}^d_B\llbracket\gamma\rrbracket:=\Big(Sym\big( {s\mathcal{RG}^{B}_{c,\bullet}}^\vee[2d-6]\big)\llbracket \gamma\rrbracket[3-d],d\Big).$$ 
Its differential $d$ is given by the one induced from $(s\mathcal{RG}_{c,\bullet}^B,\partial)$. 
 \end{df}   
 \begin{df} The \emph{d-shifted ribbon graph complex colored by $B$} is defined to be the cochain complex
 $$\mathcal{RG}^d_B\llbracket\gamma\rrbracket:=\Big(Sym\big( {\mathcal{RG}^{B}_{c,\bullet}}^\vee[2d-6]\big)\llbracket \gamma\rrbracket[3-d],d\Big),$$
 its differential $d$ induced from  $(\mathcal{RG}^{B}_{c,\bullet},\partial)$
 \end{df}
 \begin{df}\label{rtc} The \emph{d-shifted ribbon tree complex colored by $B$} is defined to be the cochain complex 
 $$\mathcal{RT}^d_B:=\Big(Sym\big( {\mathcal{RT}^{B}_{c,\bullet}}^\vee[d-3]\big),d\Big),$$
 its differential induced from  $(\mathcal{RT}^{B}_{c,\bullet},\partial)$. 
 \end{df}
 \begin{remark}
   We refer to an element of eg. $s\mathcal{RG}^d_B\llbracket\gamma\rrbracket$ as a not necessarily connected stable ribbon graph.
   Indeed, note that above three graph complexes have a basis given by symmetric powers of $B$-colored connected stable ribbon graphs, ribbon graphs, respectively ribbon trees.
   In this basis the respective differential $d$ expands edges in all possible ways,
   thereby possibly also changing the other properties of the graph, ie. joining and splitting cycles and changing boundary and genus defect in the case of stable ribbon graphs.
 \end{remark}
 In the following section we will make extensive use of these bases:
 \subsubsection{Shifted Poisson Structure}
 We equip the graph complexes from the previous section with shifted Poisson structures:
\begin{cons}
    Define 
    $$\{\_,\_\}:{s\mathcal{RG}^{B}_{c,\bullet}}^\vee\otimes {s\mathcal{RG}^{B}_{c,\bullet}}^\vee\rightarrow {s\mathcal{RG}^{B}_{c,\bullet}}^\vee$$
    by 
    $$\{\Gamma_1,\Gamma_2\}=\sum_{ (*)}{\Gamma_1\cup_{l_1,l_2} \Gamma_2}.$$
    \begin{itemize}
        \item 
    Here $(*)$ indicates that we are summing over all isomorphism classes of stable ribbon graphs, suggestively denoted ${\Gamma_1\cup_{l_1,l_2} \Gamma_2}$, whose (un-oriented) underlying stable ribbon graph is obtained by gluing $\Gamma_1$ and $\Gamma_2$ together along one of their respective leafs, thus creating a new internal edge.

  \item The orientation of $\Gamma_1\cup_{l_1,l_2} \Gamma_2$ is defined by taking first the given ordering of internal edges of $\Gamma_1$, then the newly created internal edge, and then the given ordering of internal edges of $\Gamma_2$.
    \end{itemize}
\end{cons} 
Note that
  $\Gamma_1\cup_{l_1,l_2} \Gamma_2$ has $f(\Gamma_1) +f(\Gamma_2)-1$ faces.  

\begin{lemma}
Given $B$ a set 
$\big((s\mathcal{RG}^{B}_{c,\bullet})^\vee,d,\{\_,\_\}\big) $
is a 1-shifted dg Lie algebra.
\end{lemma}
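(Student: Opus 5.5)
The plan is to verify three things for the gluing bracket on connected stable ribbon graphs: graded antisymmetry (shifted by one), the shifted Jacobi identity, and compatibility of the bracket with the differential $d$ dual to edge contraction $\partial$. The degree bookkeeping is simple. Elements of $(s\mathcal{RG}^{B}_{c,\bullet})^\vee$ are graded by the number of internal edges. Gluing $\Gamma_1,\Gamma_2$ along a leaf of each produces $e(\Gamma_1)+e(\Gamma_2)+1$ internal edges, so the bracket has degree one. The orientation convention (edges of $\Gamma_1$, then the new edge, then edges of $\Gamma_2$) is exactly what makes the sign behave like a one-shifted bracket.

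\emph{Antisymmetry.} Compare $\Gamma_1\cup_{l_1,l_2}\Gamma_2$ with $\Gamma_2\cup_{l_2,l_1}\Gamma_1$. These are the same unoriented graph. The two orientations differ by moving the block of $e(\Gamma_1)$ edges past the new edge and the $e(\Gamma_2)$ edges of $\Gamma_2$. This is a permutation of sign $(-1)^{(e_1)(e_2+1)+e_2}$, which is precisely the Koszul sign for a degree-one bracket of elements of degrees $e_1,e_2$. Since reversing orientation is identified with multiplication by $-1$, antisymmetry follows.

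\emph{Jacobi identity.} $\{\{\Gamma_1,\Gamma_2\},\Gamma_3\}$ is a sum over graphs obtained by gluing two leaf pairs. Each such graph is a connected tree-like assembly of the three pieces along two new edges. Summing cyclically, every assembly (with $\Gamma_i$ in the middle, say) appears exactly twice. The two appearances carry orientations that differ by transposing the two new edges together with blocks of old edges. I will check that the Koszul signs make these contributions cancel pairwise. This is the standard argument for gluing brackets on graph complexes, as in Hamilton \cite{Ha07} for ribbon graphs, and it carries over verbatim because gluing never touches vertex data.

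\emph{Compatibility with $d$.} Dually, I need $\partial$ to act compatibly with the cobracket, i.e. with "cutting an edge into two leaves and disconnecting". Concretely, $d\{\Gamma_1,\Gamma_2\}$ equals $\{d\Gamma_1,\Gamma_2\}\pm\{\Gamma_1,d\Gamma_2\}$. Here $d$ sums over graphs $\Gamma'$ with an edge $e$ such that $\Gamma'/e=\Gamma$. In $d\{\Gamma_1,\Gamma_2\}$ one expands an edge of the glued graph, and the expanded edge lies either inside the $\Gamma_1$-part or inside the $\Gamma_2$-part. It cannot be the glued edge itself, since expansion creates new edges.

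This is the main obstacle. I must check that contraction rules \ref{item_contractedge}--\ref{item_contractloop2} of Definition~\ref{cont} interact well with the leaves used in gluing. The special cases, where single-half-edge cycles bounding a face with one free boundary are deleted and boundary defects increase, are the delicate ones. Such a face contains a leaf, so after gluing along that leaf the face is no longer of that type. The expansions creating or destroying such faces therefore have to match on both sides. I will handle this by a case split on whether the leaf $l_i$ used in gluing is adjacent (in the face cyclic order) to the expanded edge, and I will show that the colored faces and boundary defects agree.

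Signs come from the position of the expanded edge in the orientation, which gives the $(-1)^{e(\Gamma_1)}$ in the Leibniz rule. I expect this case analysis on special faces to be the only nonformal part of the proof.
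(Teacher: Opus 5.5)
Your outline follows the same route as the paper: degree from the newly created edge, antisymmetry from the orientation convention together with the relation ``reversed orientation $=-1$'', Jacobi by pairwise cancellation, and compatibility of $d$ with gluing. It is more detailed than the paper, which only asserts the last two points.

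There is, however, a misreading at exactly the step you call the main obstacle. The exceptional configurations of Definition~\ref{cont} never involve a face containing a leaf. Compute with $\sigma_b=\sigma_0^{-1}\circ\sigma_1$ from Definition~\ref{faces}:
\begin{itemize}
\item In rules~\ref{item_contractedge} and~\ref{item_contractloop1}, with $c_1=\{h_1\}$ and $c_2=\{h_2\}$, one gets $\sigma_b(h_1)=h_2$ and $\sigma_b(h_2)=h_1$. So the face is $(h_1h_2)$.
\item In rule~\ref{item_contractloop2}, with $\sigma_0(h_1)=h_2$, one gets $\sigma_b(h_1)=h_1$. So the face is $(h_1)$, together with $(h_2)$ when $c=(h_1h_2)$.
\end{itemize}
These faces consist only of half-edges of the contracted edge $e$ and have no open boundaries. ``One free boundary'' there means that the whole boundary circle is free and carries a single colour.

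So your sentence ``such a face contains a leaf, so after gluing along that leaf the face is no longer of that type'' is false. It is fortunate that it is false. If it were true, the same contraction would be exceptional in $\Gamma_1$ but ordinary in $\Gamma_1\cup_{l_1,l_2}\Gamma_2$. That would be an actual failure of compatibility between $d$ and the bracket, not something a case analysis could repair.

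The correct argument is simpler than the one you plan. Gluing changes only $\sigma_1$ on $l_1,l_2$, and merges the faces and free boundaries passing through them. It leaves the vertices, the cycles $C(v)$ and $\sigma_0$ untouched.

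The conditions that select the exceptional cases are conditions on this untouched data: single-half-edge cycles, adjacency of the two ends of a loop inside a cycle, and a vertex having no other cycles. The exceptional faces themselves are determined by $\sigma_0$ and $\sigma_1$ on the half-edges of $e$, and $e$ is not the glued edge. Hence these faces are unchanged by gluing.

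Consequently, for every edge other than the glued one, contraction commutes with gluing at $(l_1,l_2)$ on the nose. The expansions appearing in $d\{\Gamma_1,\Gamma_2\}$ are in bijection with those in $\{d\Gamma_1,\Gamma_2\}$ and $\{\Gamma_1,d\Gamma_2\}$. No case split on the position of $l_i$ is needed; only the orientation sign remains to be tracked.
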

\begin{proof}
$\{\_,\_\}$ is of degree 1 since we are creating a new internal edge.
The graded anti-symmetry follows from the fact that we quotient out by the relation that reversing the orientation is equivalent to multiplying by $(-1)$ and the definition of the bracket.
The shifted Jacobi-identity, ie. the fact that the bracket is a derivation with respect to the bracket can be verified easily.
Furthermore it is straightforward to see that the bracket and the differential together define a shifted dg Lie algebra.    
\end{proof}
\begin{remark}\label{nütz}
    In the same way we can define a shifted Lie bracket on ribbon graphs and ribbon trees such that the maps induced by diagram \ref{inc_proj} respect these brackets.
\end{remark}
  By first shifting the Lie bracket and then extending it according to the Leibniz rule to the symmetric algebra we obtain
  a bracket $\{\_,\_\}$ on $\mathcal{RT}^d_{B}$. Then, considering the previous lemma, it follows by standard arguments that: 
\begin{theorem}\label{RS}
For every set $B$ the ribbon tree graph complex from definition \ref{rtc} 
$$\big(\mathcal{RT}^d_{B},-d,\{\_,\_\},\cdot\big)$$
 is a $(d-2)$-shifted Poisson dg algebra. 
 \end{theorem}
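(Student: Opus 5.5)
The plan is to deduce Theorem \ref{RS} from the general fact that the symmetric algebra on a shifted dg Lie algebra is a shifted Poisson dg algebra, the same mechanism used for Theorem \ref{fP}. First, restrict the $1$-shifted dg Lie algebra structure on $\big((s\mathcal{RG}^{B}_{c,\bullet})^\vee,d,\{\_,\_\}\big)$ to ribbon trees, as in Remark \ref{nütz}. For this to make sense, the gluing bracket must preserve trees. Gluing two connected ribbon trees (each with $f=1$, $g=0$) along one leaf of each gives a connected graph with $f(\Gamma_1)+f(\Gamma_2)-1=1$ faces. It is still a tree with cyclic orderings, and its arithmetic genus is $0$ by Definition \ref{armg}, because $v$ and $e$ add while one new edge is created. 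The differential preserves trees by Remark \ref{totfac}, and $\mathcal{RT}_{c,\bullet}^B$ is a subcomplex. Dually, this gives a $1$-shifted dg Lie algebra $\big((\mathcal{RT}^{B}_{c,\bullet})^\vee,d,\{\_,\_\}\big)$, with antisymmetry and Jacobi inherited from the stable ribbon graph case.

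Second, I would shift by $[d-3]$. A $1$-shifted bracket on $W$ becomes a bracket of degree $1+(d-3)=d-2$ on $W[d-3]$, after the usual Koszul sign twist so that it is again graded antisymmetric and Jacobi in the $(d-2)$-shifted sense. Because $d$ is odd, $d-3$ is even, so this shift changes no parities and the sign bookkeeping is mild. The differential $d$ becomes $-d$ under the chosen suspension convention, which explains the sign in the statement. Compatibility of $-d$ with the shifted bracket follows immediately from compatibility before shifting.

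Third, extend the bracket to $Sym\big((\mathcal{RT}^{B}_{c,\bullet})^\vee[d-3]\big)$ by the Leibniz rule in each variable, and extend $-d$ as a derivation. The standard argument then yields a $(d-2)$-shifted Poisson dg algebra in the sense of Definition \ref{shPoA}:
\begin{itemize}
\item Jacobi holds on generators, and both sides of the Jacobi identity are multiderivations, so it holds everywhere.
\item $-d$ is a derivation of the product, and it is a derivation of the bracket on generators, hence on all of $Sym$.
\end{itemize}

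The main obstacle I expect is the sign check in the shift step. Specifically, one has to confirm that the orientation convention (first the edges of $\Gamma_1$, then the new edge, then the edges of $\Gamma_2$), combined with the $[d-3]$ suspension, produces exactly the signs of a $(d-2)$-shifted Lie bracket, and that the differential picks up precisely the stated minus sign. I would handle this by recording the degree of a tree as $-e(\Gamma)$ shifted by $d-3$ and comparing with the $(d-2)$-shifted Jacobi identity term by term. I expect the parity of $d$ to make every extra sign cancel.
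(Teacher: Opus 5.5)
This is essentially the paper's argument, which it compresses into ``standard arguments'': the gluing bracket passes to ribbon trees, then one shifts by $[d-3]$ and extends to $Sym$ by the Leibniz rule. One gap in your first step: the dual of the subcomplex $\mathcal{RT}^{B}_{c,\bullet}\subset s\mathcal{RG}^{B}_{c,\bullet}$ is a quotient of $(s\mathcal{RG}^{B}_{c,\bullet})^\vee$, so you also need that a gluing involving a non-tree never produces a tree; this holds because gluing gives $g=g_1+g_2$ and $f=f_1+f_2-1$ and leaves the vertex defects unchanged.

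One correction: $d-3$ is even, so the suspension puts no sign on the differential, and $-d$ is only a convention (cf.\ Lemma \ref{diff_comp2r}, where it gives $\rho_{I^q}\circ(-d)=d\circ\rho_{I^q}$). Since $-d$ is a square-zero derivation of the product and the bracket whenever $d$ is, the sign check you single out as the main obstacle is not actually an issue.
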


 By abuse of notation we will denote this $(d-2)$-shifted Poisson dg algebra by $\mathcal{RT}^d_{B}$.

Define
 $$D:=\sum_{n>2}\sum_{(\#)}{D_{n,C}},$$
where $(\#)$ indicates that we are summing over all isomorphism classes of B-colored ribbon trees with one vertex, no internal edges and n leaves.
We denote  by ${D_{n,C}}$ a such determined isomorphism class. 
 \begin{theorem}\label{RS_mce}
 The element
$$D \in \mathcal{RT}^d_B$$
is a Maurer-Cartan element.
\end{theorem}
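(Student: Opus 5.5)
We need to show $-dD+\frac12\{D,D\}=0$ (with the sign convention of Theorem \ref{RS}, differential $-d$). Both terms lie in $Sym^1$, i.e. in linear combinations of connected ribbon trees, and both are concentrated on trees with exactly one internal edge. Indeed, $D$ consists of corollas and the bracket glues two corollas along a leaf, creating one edge. The differential $d$ on the dual complex is dual to contraction, so it expands a vertex into an edge. Hence it suffices to compare, for each isomorphism class $T$ of $B$-colored ribbon tree with two vertices and one internal edge (both vertices of valence $\geq 3$), the coefficient of $T$ in $dD$ and in $\frac12\{D,D\}$.

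For $dD$, I evaluate on the basis dual to trees. The coefficient of $T$ is the number of corollas $D_{n,C}$ with $T/e=D_{n,C}$, counted with the orientation sign and automorphism factors. Contracting the unique edge of $T$ produces a single corolla, with the coloring of its free boundaries inherited from $T$. Since no leaf-only faces appear for trees with $|v|\geq 3$, the degenerate cases in Definition \ref{cont} do not arise. So the coefficient is $\pm1$ up to symmetry factors.

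For $\frac12\{D,D\}$, the coefficient of $T$ counts ordered pairs $(D_{n_1,C_1},D_{n_2,C_2})$ together with leaves $l_1,l_2$ whose gluing gives $T$. Cutting the edge of $T$ recovers exactly two such decompositions, differing by the order of the two vertices. The factor $\frac12$ cancels this double count, once I check that graded antisymmetry makes the two orders contribute with the same sign; here $D$ has odd degree, which is why $\{D,D\}\neq 0$. I then have to match the colorings: cutting an edge splits a face and adds new free boundaries, and I must check that the colors are consistent. Since the corollas in $D$ range over all colorings, every coloring of $T$ arises.

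The main obstacle is the sign and normalization bookkeeping. I need the sign $(-1)^{*}$ from the orientation convention in the bracket (edges of $\Gamma_1$, then the new edge, then those of $\Gamma_2$), the sign of the dual differential, and the shifts $[d-3]$ in Definition \ref{rtc}. I also need automorphism factors, for instance when $T$ has a symmetry swapping its two vertices. I would first verify the identity on the smallest case, $T$ with two trivalent vertices, to fix the overall sign. This is why the differential in Theorem \ref{RS} carries a minus. Then I would argue that the same local computation applies uniformly for all $T$, since only one edge is involved. Vanishing of $\{D,D\}$ in higher $Sym$ degrees is automatic, because the bracket of elements of $Sym^1$ lies in $Sym^1$.
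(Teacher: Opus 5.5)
Your direct count is correct, but the paper takes a different route: it never compares coefficients in $\mathcal{RT}^d_B$. It writes $D=p(S)$ (Remark~\ref{prak_1}), where $p\colon s\mathcal{RG}^d_B\llbracket\gamma\rrbracket\to\mathcal{RT}^d_B$ is the dequantization map, and $S=\pi(G)$, where $\pi$ is the map from the stable graph complex. The only combinatorial verification is done once, for the corolla sum $G\in s\mathcal{G}^d\llbracket\hbar\rrbracket$ (Theorem~\ref{sG_mce}). There one regroups the one-edge graphs either by the corolla they contract to or by the pair of pieces obtained by cutting the edge; a separate loop term is matched with $\Delta_iG$. Since $\pi$ and $p$ intertwine the dg shifted Lie structures, the Maurer--Cartan property passes to $S$ (Theorem~\ref{sRG_mce}) and then to $D$.

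Your proof is that same regrouping done directly on ribbon trees. There is no loop term, but you handle the ribbon structure and colourings yourself. This works because each of the two mixed free boundaries of $T$ splits, after cutting, into one arc at each vertex, and both arcs must carry its colour. Hence the decomposition into coloured corollas is unique up to order.

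What each route buys: the paper's is more economical, and it simultaneously yields the compatibilities $\pi(G)=S$ and $p(S)=D$ needed later for twisting the diagrams (e.g.\ in Theorem~\ref{Main}). Yours is self-contained and never passes through $\pi$, which the paper only constructs for finite $B$. So it gives the statement for an arbitrary set $B$, as in Theorem~\ref{RS}, without a reduction step.

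Two bookkeeping remarks. First, in the paper's grading $D$ has even degree, since $d$ is odd. What makes $\{D,D\}\neq 0$, and gives both orders the same sign, is the orientation rule for the bracket. For edgeless graphs that rule gives $\{\Gamma_1,\Gamma_2\}=\{\Gamma_2,\Gamma_1\}$, and the shift $[d-3]$ is even, so it adds no sign. Equivalently, $D$ is odd only after shifting by the bracket degree.

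Second, you need no automorphism factors if you read the bracket on the dual as dual to cutting the edge of $T$ into an ordered pair of pieces, with both orders included. This is the convention behind the paper's $\{G_{n,g},G_{n,g}\}=2\sum X$. With it, every $T$, with or without a vertex-swapping symmetry, gets coefficient $\tfrac12\cdot 2=1$ in $\tfrac12\{D,D\}$. This matches $\langle D,\partial T\rangle=1$ on the differential side.
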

\begin{proof}
This will follow from remark \ref{prak_1} and theorem \ref{sRG_mce}.
Indeed, we wil see that $D$ is the image of a Maurer-Cartan element under a map of dg shifted Lie algebras.
\end{proof}

    We denote by ${\mathcal{RT}^{d,tw}_{B}}$ the $(d-2)$-shifted Poisson dg algebra
    obtained by twisting the $(d-2)$-shifted Poisson dg algebra ${RT}^d_{B}$ by the MCE $D$.

\subsubsection{Beilinson Drinfeld Structure}
\begin{cons}
   Again making use of our basis, define a map 
   $$\nabla:{s\mathcal{RG}^{B}_{c,\bullet}}^\vee\rightarrow {s\mathcal{RG}^{B}_{c,\bullet}}^\vee$$
by 
$$\nabla\Gamma=\sum_{ (\#)}\circ_{l_1,l_2}\Gamma.$$
    \begin{itemize}
        \item 
    Here $(\#)$ indicates that we are summing over all isomorphism classes of stable ribbon graphs whose underlying (un-oriented) stable ribbon graph is obtained by gluing  together two leafs of $\Gamma$ which lie in the same face.
   
 \item The orientation of $\circ_{l_1,l_2}\Gamma$ is defined by taking first the new internal edge and then the given ordering of internal edges of $\Gamma$.
    \end{itemize}
\end{cons}

    Note that in case $(\#)$ the stable ribbon graph $\circ_{l_1,l_2}\Gamma$ has one more face as compared to $\Gamma$, the genus stays the same, however.

One easily verifies:
\begin{lemma}
    We have $[d,\nabla]=0$.
\end{lemma}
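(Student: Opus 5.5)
We have to show that $[d,\nabla]=0$ on ${s\mathcal{RG}^{B}_{c,\bullet}}^\vee$. Here $d$ is dual to the contraction differential $\partial$, so on a graph it sums over all ways of expanding a vertex into an internal edge. The operator $\nabla$ glues two leafs lying in the same face. The strategy is to dualize the statement: it suffices to show that the dual operator $\nabla^\vee$, which cuts an internal edge $e$ into two leafs lying in a common face, anticommutes (with the orientation signs) with $\partial$ on $s\mathcal{RG}^B_{c,\bullet}$. Both $\partial\nabla^\vee\Gamma$ and $\nabla^\vee\partial\Gamma$ are then sums indexed by ordered pairs of distinct internal edges $(e,e')$ of $\Gamma$: one edge is contracted and the other is cut.

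The main step is to check, for a fixed pair $e\neq e'$, that cutting $e'$ and then contracting $e$ gives the same stable ribbon graph as contracting $e$ and then cutting the image of $e'$, with opposite orientation signs. The signs are easy. The orientation convention puts the newly created edge first in $\nabla$, while contraction removes one edge from the ordering. Moving $e$ past $e'$ therefore produces exactly one transposition, which gives the relative sign $-1$ needed for $d\nabla+\nabla d=0$ (with $|\nabla|=1$ this is the graded commutator).

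The geometric identity is a local statement near the two edges. Contracting $e$ only alters the cycles, genus and boundary defects at the vertices incident to $e$, following the rules of definition \ref{cont}. Cutting $e'$ only replaces $e'$ by two leafs. When $e$ and $e'$ share no vertex the two operations visibly commute.

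The main obstacle is the case where they interact, which I would handle with a case-by-case check along definition \ref{cont}, cases (1)--(3). In this case $e'$ becomes adjacent to the cycles merged or split by $e$. Cutting $e'$ can then create a one-leaf face whose two half-edges fall under the ``delete and raise the boundary defect'' clause when $e$ is later contracted, and vice versa. We must check that the two orders yield the same boundary defect increments for the same color $\lambda\in B$. We must also check the side condition that $e'$ becomes a pair of leafs in the same face: a face containing both new leafs of $e'$ exists before contracting $e$ if and only if it exists after, since contraction preserves faces up to the deleted one-leaf faces (remark \ref{totfac}).

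Finally, pairs where one side is zero need to be matched. These are terms where contraction is forbidden, as in the last clauses of cases (1) and (3), or where the cut leafs do not lie in a common face. I expect these to cancel or vanish symmetrically, and this is where most care is needed. Once the statement holds on connected generators, extending it to the symmetric algebra and $\gamma$-linearly is formal, since both $d$ and $\nabla$ are derivations.
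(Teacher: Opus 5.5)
Your plan is correct and is a detailed version of the direct check that the paper leaves to the reader (``One easily verifies''). In the predual, take $\nabla^\vee=\sum_{e'}\pm\,\mathrm{cut}_{e'}$ summed over edges exactly as $\partial$ is, and give contraction the move-to-front sign that $\partial^2=0$ forces; then the terms of $\partial\nabla^\vee$ and $\nabla^\vee\partial$ match pair by pair $(e,e')$ with opposite signs. The interaction you single out as the main obstacle does not occur. The special clauses of definition \ref{cont} are triggered only by the cycles containing the endpoints of $e$, and they delete only a leafless face made of $e$'s own half-edges, so cutting $e'$ cannot affect them. Contracting $e$ preserves every face through the half-edges of $e'$. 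Forbidden contractions occur only in one-edge graphs. Hence the terms you expect to vanish do so on both sides.
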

We extend $\nabla$ to a degree 1 map by first shifting it, then as a derivation and then shifting again: 
$$\nabla: s\mathcal{RG}_B^d\rightarrow s\mathcal{RG}_B^d
.$$
\begin{cons}
    Define a map $$\delta_{i}:{s\mathcal{RG}^{B}_{c,\bullet}}^\vee\rightarrow {s\mathcal{RG}^{B}_{c,\bullet}}^\vee$$
   by $$\delta_{i}\Gamma=\sum_{  (\%)}\circ_{l_1,l_2}\Gamma.$$
    \begin{itemize}
        \item 
    Here $(\%)$ indicates that we sum over all isomorphism classes of graphs $\circ_{l_1,l_2}\Gamma$ whose
    underlying (unoriented) stable ribbon graph is obtained by gluing together two leafs of $\Gamma$ that lie in different faces. 
    
    \item The orientation of $\circ_{l_1,l_2}\Gamma$ is defined by taking first the new internal edge and then the given ordering of internal edges of $\Gamma$.
    \end{itemize}
\end{cons}

    Note that the stable ribbon graph $\circ_{l_1,l_2}\Gamma$ obtained
    by gluing together two leafs of $\Gamma$ that lie in the same face has one less face compared to $\Gamma$ and the genus increased by one.

It is easily verified that:
\begin{lemma}
    We have $[d,\delta_{i}]=0$.
\end{lemma}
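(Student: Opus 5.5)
The statement asks that $[d,\delta_i]=0$, where $d$ is the differential on ${s\mathcal{RG}^{B}_{c,\bullet}}^\vee$ dual to edge contraction; as a map, $d\Gamma$ sums over all graphs $\Gamma'$ having an edge $e$ with $\Gamma'/e=\Gamma$, i.e.\ it expands $\Gamma$ in all possible ways. I will check the identity on the basis of connected oriented $B$-colored stable ribbon graphs, in the same way as the analogous statement $[d,\nabla]=0$ just before. Both $d\circ\delta_i(\Gamma)$ and $\delta_i\circ d(\Gamma)$ are sums over pairs (an expanded edge $e$, a glued pair of leaves $l_1,l_2$ lying in different faces). The plan is to put the two index sets in bijection, term by term, and then compare signs.

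The key combinatorial point is that gluing two leaves and expanding an internal edge act on disjoint parts of the structure. Expansion never touches leaves: it splits a vertex, a cycle, or a genus/boundary defect, and leaves stay attached to the same cyclic sequence. Gluing two leaves creates a new edge $e_{new}$, and $e_{new}$ is never the edge being contracted/expanded. So, given a graph $G$ containing both an expanded edge $e$ and the new glued edge $e_{new}$, contracting $e$ and ungluing $e_{new}$ commute. I would also check that the condition ``$l_1,l_2$ lie in different faces'' is unchanged by contracting $e$. By Remark \ref{totfac}, contraction preserves arithmetic genus and the total number of empty faces. More concretely, the cyclic sequence of leaves in each face is unchanged under contraction, apart from the degenerate cases in Definition \ref{cont}, where faces with a single free boundary and no further leaves are absorbed into a boundary defect. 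Those absorbed faces contain no leaves, so they can never contain $l_1$ or $l_2$. This gives a bijection between terms of $d\delta_i\Gamma$ and terms of $\delta_i d\Gamma$; the colourings of free boundaries also match, since both operations respect them.

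Next come the orientations. In $\delta_i$ the new edge is placed first. In $d$ I take the expanded edge to be placed first as well, the convention dual to $\partial$ for which $d^2=0$. The two composites then produce the orderings $(e,e_{new},\dots)$ and $(e_{new},e,\dots)$, which differ by a transposition. This gives the relative sign $-1$, so the graded commutator, taken with the Koszul sign for two degree-one operators, vanishes. The same sign bookkeeping was already implicit in $[d,\nabla]=0$.

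I expect the main obstacle to be the degenerate contractions of Definition \ref{cont}. In these cases a cycle of one or two half-edges is annihilated and the genus and boundary defects are incremented, so I must check that the bijection does not break when a glued leaf is adjacent to such a configuration. My approach is to observe that these configurations involve only internal half-edges of $e$ and faces without leaves. Gluing $l_1,l_2$ can merge faces only away from them, because $e_{new}$ is internal and its endpoints are former leaves, so these cases are still matched. A residual worry is automorphisms: isomorphism-class counting could introduce multiplicities. I would handle this by working with labelled half-edges and passing to coinvariants at the end, as is standard in \cite{Ha07}.
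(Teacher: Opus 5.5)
Your proof is correct and is exactly the direct check the paper leaves to the reader (it only says the identity is ``easily verified''). The three ingredients you use are the right ones: contracting $e$ commutes with cutting at $e_{new}$; faces containing leaves are unchanged by contraction, since the degenerate cases only absorb leafless faces made of half-edges of $e$; and the sign comes from swapping the two new edges, as in the proof of Lemma~\ref{rbc}. One small correction to your justification: placing the expanded edge first is not forced by $d^2=0$, because the dual of the convention for $\partial$ that moves the contracted edge to the end before deleting it also gives $d^2=0$ (with the expanded edge placed last), but then $d$ and $\delta_i$ would commute rather than anticommute; what actually fixes the convention is that $d$ and $\delta_i$ insert their new edge in the same position.
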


We extend $\delta_{i}$ to a degree 1 map by first shifting it, then as a derivation and then shifting again: 
$$\delta_{i}: s\mathcal{RG}_B^d\rightarrow s\mathcal{RG}_B^d.$$  

Denote by $\delta_e$ the Chevalley-Eilenberg differential on 
$Sym\big( (s\mathcal{RG}^{B}_{c,\bullet})^\vee[(2d-6)]\big)$
associated to the $(2d-5)$-shifted Lie bracket $\{\_,\_\}$ on 
$(s\mathcal{RG}^{B}_{c,\bullet})^\vee[(2d-6)]$.
Denote by the same latter the induced degree $(2d-5)$ map  
$$\delta_e: s\mathcal{RG}^d_{B}\llbracket\gamma\rrbracket\rightarrow s\mathcal{RG}^d_{B}\llbracket\gamma\rrbracket.$$

\begin{lemma}\label{rbc}
    We have $[d,\delta_e]=(\nabla+\delta_i)^2=[\nabla+\delta_i,\delta_e]=0$ as maps on $s\mathcal{RG}^d_{B}\llbracket\gamma\rrbracket.$
\end{lemma}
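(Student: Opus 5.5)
The plan is to dualize everything to operations on connected stable ribbon graphs. There each of $d$, $\nabla$, $\delta_i$, $\delta_e$ is a sum over local moves: contracting an edge for $\partial$, and gluing a pair of leaves for the others. Each identity then becomes a sign-cancellation statement about sums over pairs of moves. Because $\nabla$, $\delta_i$ are extended as derivations, $\delta_e$ is the Chevalley--Eilenberg differential of the bracket, and $d$ is a derivation, it suffices to check the identities on generators, i.e.\ on a single connected graph, or on pairs of connected graphs for the terms involving $\delta_e$. For the second-order parts one also uses that a commutator of derivations is a derivation and that $\delta_e$ is second order.

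For $[d,\delta_e]=0$ I would note that $\delta_e$ is built from the bracket $\{\_,\_\}$. By the lemma that $\big((s\mathcal{RG}^{B}_{c,\bullet})^\vee,d,\{\_,\_\}\big)$ is a shifted dg Lie algebra, $d$ is a derivation of the bracket. The standard argument that the Chevalley--Eilenberg differential of a dg Lie algebra anticommutes with the internal differential then applies verbatim, after tracking the shifts $[2d-6]$ and $[3-d]$.

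For $(\nabla+\delta_i)^2=0$, write $\nabla+\delta_i=:\Delta$. It glues any two distinct leaves of $\Gamma$, and whether the leaves lie in the same face decides which summand the term belongs to. Then $\Delta^2\Gamma$ is a sum over ordered pairs of disjoint unordered leaf-pairs $(\{l_1,l_2\},\{l_3,l_4\})$. The resulting unoriented graph is independent of the order in which the two gluings are performed. The orientations differ by swapping the two newly created edges, which are put first in the ordering, so the two terms carry opposite signs and cancel. This uses $\nabla$ and $\delta_i$ having odd degree after the shift, which holds for $d$ odd. The only care needed is that a given pair of leaves may be in the same face before the first gluing and in different faces afterwards, or vice versa. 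Working with the total $\Delta$ rather than the individual summands avoids this issue entirely.

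The identity $[\nabla+\delta_i,\delta_e]=0$ is handled similarly on a pair $\Gamma_1,\Gamma_2$. The $\delta_e$ term glues one leaf of $\Gamma_1$ to one of $\Gamma_2$, and the $\Delta$ term glues two leaves, possibly from different components once they are joined. Terms of $\Delta\circ\delta_e$ that glue two leaves of the same original component match terms of $\delta_e\circ\Delta$ with the opposite sign, by the same edge-swap argument. The remaining terms of $\Delta\circ\delta_e$ glue a second leaf of $\Gamma_1$ to one of $\Gamma_2$ inside the already connected graph. These appear twice with opposite orientation, because either of the two cross-edges could have been created by $\delta_e$, so they cancel. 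The hard step is checking that this pairing respects the combinatorial factors and the Koszul signs from the shifts of $Sym$ and from $\gamma$. I would fix the sign convention by treating each new edge as an odd symbol prepended to the edge ordering, and verify the cancellation on the minimal example of two one-vertex graphs.
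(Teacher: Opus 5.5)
Your proposal is correct and follows the paper's argument: $[d,\delta_e]=0$ holds because $d$ is compatible with the bracket, $(\nabla+\delta_i)^2=0$ follows by pairing the two orders of a double self-gluing (whose orientations differ by swapping the two new edges), and the third identity follows from the same kind of pairing. For $[\nabla+\delta_i,\delta_e]=0$ you give more detail than the paper, which only says it "follows by similar arguments": you reduce it to $\nabla+\delta_i$ being a derivation of the bracket, and you show that the terms with two cross-edges cancel in pairs.
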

\begin{proof}
 The fact that the first commutator is zero follows since $\delta_e$ is the Chevalley-Eilenberg differential to $\{\_,\_\}$, which was compatible with $d$.
 
Note that $\nabla+\delta_i$ is given by the sum over all possible self-gluing - independent of a condition on the endpoints of the leafs.
Then the fact that the second term is zero follows since for $\Gamma$ a stable ribbon graph $(\delta_i+\nabla)^2(\Gamma)$ is given by a sum of connected stable ribbon graphs in which each two summands have the same underlying unoriented stable ribbon graph, but their orientation differs by exactly swapping the first two internal edges.
Since we mod out by the relation that reversing the multiplication is equivalent to multiplying by (-1) the claim follows.

The fact that the third term is zero follows by similar arguments.
\end{proof}
Thus we can infer 
\begin{theorem}\label{sRGt}
The B-colored stable ribbon graph complex defined as 
$$\big(s\mathcal{RG}^d_{B}\llbracket\gamma\rrbracket,\cdot, -d+\nabla+\delta_i+\gamma\delta_e,\{\_,\_\}\big)$$
is a $(d-2)$-twisted BD algebra over $k\llbracket\gamma\rrbracket$ whose multiplication has even degree $(3-d)$. 
\end{theorem}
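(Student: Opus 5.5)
The plan is to verify the BD axioms directly, following the standard construction (lemma 3.4 of \cite{Ha07} and the lemma on $Fr^\gamma$ in section \ref{cycmodop}) of a BD algebra on the symmetric algebra of an involutive Lie bialgebra. The input here is the shifted dg Lie algebra $\big((s\mathcal{RG}^{B}_{c,\bullet})^\vee,d,\{\_,\_\}\big)$ together with the self-gluing operator $\nabla+\delta_i$.

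First I would fix the degrees. After the shifts in definition \ref{sRG}, the product on $Sym(\cdots)\llbracket\gamma\rrbracket[3-d]$ has degree $3-d$ (or $d-3$, depending on convention). The bracket extended by the Leibniz rule has degree $d-2$. The differential $-d+\nabla+\delta_i+\gamma\delta_e$ has degree $1$: $d$, $\nabla$ and $\delta_i$ each create exactly one internal edge, and $\gamma$ has degree $6-2d$, which compensates the degree $2d-5$ of $\delta_e$. This is exactly the bookkeeping required by the remark after definition \ref{BD} for a $(d-2)$-twisted BD algebra with even-degree multiplication. The bracket is $k\llbracket\gamma\rrbracket$-linear and satisfies the Leibniz rule by construction, and the shifted Jacobi identity is inherited from the lemma on the $1$-shifted dg Lie algebra.

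Second, I would show the total operator squares to zero. Expanding, the square is
\[
d^2-[d,\nabla+\delta_i]+(\nabla+\delta_i)^2+\gamma\big([\nabla+\delta_i,\delta_e]-[d,\delta_e]\big)+\gamma^2\delta_e^2 .
\]
Each term vanishes: $d^2=0$; $[d,\nabla]=[d,\delta_i]=0$ by the two lemmas; the remaining mixed terms and $(\nabla+\delta_i)^2$ vanish by lemma \ref{rbc}; and $\delta_e^2=0$ because $\delta_e$ is a Chevalley--Eilenberg differential of a Lie bracket. Since $d$, $\nabla$ and $\delta_i$ are derivations, these commutators computed on generators extend to all of $Sym$.

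Third, I would verify the BD relation \eqref{BDrel} with $\mu=\gamma$. The operators $d$, $\nabla$ and $\delta_i$ are derivations of the product, so they contribute nothing beyond the Leibniz terms. The defect of $\gamma\delta_e$ from being a derivation is, by the standard property of the Chevalley--Eilenberg operator on $Sym(\mathfrak{g})$ (a second-order operator whose Leibniz defect is the Leibniz-extended bracket), exactly $\gamma\{a,b\}$. I would check this on products of two generators, where it is the definition, and then extend by the second-order property. The conceptual content here is that $\delta_e$ glues two leaves lying on different connected components, so its failure to be a derivation on a product $\Gamma_1\cdot\Gamma_2$ is precisely the sum of gluings between $\Gamma_1$ and $\Gamma_2$, which is $\{\Gamma_1,\Gamma_2\}$.

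The main obstacle I expect is sign and shift bookkeeping. Three points need care: that the orientation convention for $\{\_,\_\}$ (edges of $\Gamma_1$, then the new edge, then edges of $\Gamma_2$) matches the convention for $\nabla$ and $\delta_i$ (new edge first); that the sign $-d$ makes all cross terms cancel rather than add; and that the compatibility $[\nabla+\delta_i,\{\_,\_\}]$ (these operators are derivations of the bracket) holds with the correct sign. This last point is implicitly needed for the BD relation and for the bracket to be compatible with the differential. I would handle it as in lemma \ref{rbc}: the two ways of producing a graph with two new edges give the same unoriented graph with the first two edges swapped, and the orientation relation forces them to cancel.
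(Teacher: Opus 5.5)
Your proposal is correct and follows the paper's own proof. There, square-zero of $-d+\nabla+\delta_i+\gamma\delta_e$ is read off from $[d,\nabla]=[d,\delta_i]=0$ and Lemma~\ref{rbc}, and the BD relation follows because $d$, $\nabla$, $\delta_i$ are derivations of the product while $\delta_e$ is the Chevalley--Eilenberg operator of $\{\_,\_\}$. Your explicit expansion of the square and your degree bookkeeping only spell out what the paper leaves implicit; that bookkeeping in fact forces the product to have degree $3-d$, so the hedge is unnecessary. One small correction: the derivation-of-the-bracket property of $\nabla+\delta_i$ enters through $[\nabla+\delta_i,\delta_e]=0$, not through the BD relation itself.
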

\begin{proof}
 Indeed the fact that $-d+\nabla+\delta_i+\gamma\delta_e$ defines a map of degree 1 that squares to zero follows from the previous two lemmas.
 The BD relation follows since $\nabla$, $\delta_i$ and $d$ are derivations and by the definition of $\delta_e$ as the Chevalley-Eilenberg differential associated to $\{\_,\_\}$. 
\end{proof}

    By abuse of notation we also denote by $s\mathcal{RG}^d_{B}\llbracket\gamma\rrbracket$ above $(d-2)$-twisted BD algebra. 

\begin{df}\label{deq_rib}
Denote by 
\begin{equation}\label{ncgdq}
p:s\mathcal{RG}^d_{B}\llbracket\gamma\rrbracket\rightarrow {\mathcal{RT}^{d}_{B}}\end{equation}
the composite 
$$Sym\big( {s\mathcal{RG}^{B}_{c,\bullet}}^\vee[2d-6]\big)\llbracket \gamma\rrbracket[3-d]\rightarrow Sym^1\big( {s\mathcal{RG}^{B}_{c,\bullet}}^\vee[2d-6]\big)[3-d]\cong {s\mathcal{RG}^{B}_{c,\bullet}}^\vee[d-3]$$
$${s\mathcal{RG}^{B}_{c,\bullet}}^\vee[d-3]\rightarrow {\mathcal{RT}^{B}_{c,\bullet}}^\vee[d-3]\rightarrow Sym\big({\mathcal{RT}^{B}_{c,\bullet}}^\vee[d-3]\big).$$
Here the first map in the first line is the projection on symmetric words of length one and the $\gamma=0$ part.
The first map in the second line is the one induced by the curved map of \ref{inc_proj} and the last map is just the inclusion. 
\end{df}
\begin{lemma}
    Given a set B
$$p:s\mathcal{RG}^d_{B}\llbracket\gamma\rrbracket\rightarrow {\mathcal{RT}^{d}_{B}} $$
    is a dequantization map in the sense of definition \ref{dq}.
\end{lemma}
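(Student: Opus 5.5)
To prove that $p$ is a dequantization map in the sense of definition \ref{dq}, two things have to be checked. First, $p$ is a degree zero chain map from $\big(s\mathcal{RG}^d_{B}\llbracket\gamma\rrbracket,-d+\nabla+\delta_i+\gamma\delta_e\big)$ to $\big(\mathcal{RT}^d_B,-d\big)$. Second, $p$ intertwines the shifted Lie brackets. Multiplicativity is not required, consistent with the remark after definition \ref{dq}. I would work throughout in the basis of symmetric words of connected stable ribbon graphs, and write $p$ as $j\circ q\circ \pi_1$. Here $\pi_1$ projects onto $Sym^1$ at $\gamma^0$, $q$ is the dual of the inclusion $\mathcal{RT}^B_{c,\bullet}\subset s\mathcal{RG}^B_{c,\bullet}$ (restriction to ribbon trees), and $j$ is the inclusion into $Sym$. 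The shifts $[2d-6]$, $[3-d]$ versus $[d-3]$ match, so $p$ has degree zero.

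\textbf{Compatibility with differentials.} Compute $p$ on each summand of the BD differential applied to a basis element $\Gamma_1\cdots\Gamma_n\gamma^s$.
\begin{itemize}
\item The term $\gamma\delta_e$ raises the $\gamma$-power, so $\pi_1$ kills it.
\item The terms $\nabla$ and $\delta_i$ are derivations; they preserve the word length and the $\gamma$-power. On a single connected graph $\Gamma$, both $\nabla\Gamma$ and $\delta_i\Gamma$ are sums of graphs obtained by self-gluing two leaves. Such a graph has either $f\geq 2$ (gluing in the same face increases the number of faces) or genus $\geq 1$ (gluing in different faces). Neither outcome is a ribbon tree, since a ribbon tree has $f=1$ and $g=0$. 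Hence $q$ kills these terms.
\item The term $d$ preserves word length and $\gamma$-power. By diagram \eqref{inc_proj}, the inclusion of trees into stable ribbon graphs is a chain map. Dually, $q$ intertwines the duals of $\partial$, so $p\circ(-d)=(-d)\circ p$ on $Sym^1\gamma^0$.
\end{itemize}
Since $\pi_1$ commutes with all the length- and $\gamma$-preserving pieces, this gives $p\circ D_{BD}=-d\circ p$.

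\textbf{Compatibility with brackets.} The bracket on $s\mathcal{RG}^d_B\llbracket\gamma\rrbracket$ is the Leibniz extension of gluing along one pair of leaves. On words of lengths $m$ and $n$, it produces words of length $m+n-1$ and preserves $\gamma$-powers. So $p\{x,y\}$ can be nonzero only if $x$ and $y$ both lie in $Sym^1\gamma^0$. In that case
$$p\{\Gamma_1,\Gamma_2\}=q(\Gamma_1\cup_{l_1,l_2}\Gamma_2).$$
A glued graph has $f(\Gamma_1)+f(\Gamma_2)-1$ faces and additive genus, with connectedness preserved. It is therefore a ribbon tree if and only if both $\Gamma_1$ and $\Gamma_2$ are ribbon trees; in particular all vertices must have zero defects and a single cycle. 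This matches $\{p\Gamma_1,p\Gamma_2\}$ by remark \ref{nütz}. If either $x$ or $y$ has $\gamma$-power or word length different from $\gamma^0 Sym^1$, the right-hand side $\{p x,p y\}$ also vanishes, because $p$ kills that argument.

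\textbf{Main obstacle.} The only delicate point I expect is the treatment of stable (non-ribbon) vertices under $d$, where the contraction rules of definition \ref{cont} can annihilate cycles. One must argue that no tree component arises from the dual differential of a non-tree graph in a way that breaks the chain map property. This is exactly the content of \eqref{inc_proj} being a commutative diagram of chain complexes: trees form a subcomplex of $s\mathcal{RG}$. I would verify it directly, using remark \ref{totfac} that $\partial$ preserves the arithmetic genus and $f_{tot}$. The remaining work is a sign check on the orientation conventions in the shifted bracket, which I expect to be routine.
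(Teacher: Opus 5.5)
Your proposal is correct and takes the same approach as the paper. You show that the bracket is compatible because gluing trees along leaves commutes with restriction to ribbon trees. You show that $\nabla$ and $\delta_i$ are killed because they produce graphs with more than one face or with positive genus, and that $\gamma\delta_e$ is killed by its positive power of $\gamma$; along the way you spell out the word-length and $\gamma$-degree bookkeeping and the fact that $d$ commutes with $p$ because ribbon trees form a subcomplex, both of which the paper leaves implicit.
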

\begin{proof}
The fact that $p$ intertwines the shifted Poisson bracket follows from remark \ref{nütz}.
The fact that $p$ intertwines the BD differential with the differential of ${\mathcal{RT}^{d}_{B}}$ can be verified directly:
$\nabla$ and $\delta_i$ get send to zero under the projection to ribbon trees since the prior produces a graph with more than one face and since the latter produces a graph with positive genus.
Lastly $\gamma\delta_e$ gets send to zero since it involved a positive power of $\gamma$.
\end{proof}
Define
 $${S}:=\sum_{n>0}\sum_{(\#)}{S}_{n,C},$$
where the second sum is over isomorphism classes of a B-colored stable ribbon graphs with no edges, one vertex and $n$ leaves and we denote a such determined class by ${S}_{n,C}$. 
\begin{theorem}\label{sRG_mce}
The element $${S}\in s\mathcal{RG}^d_{B}\llbracket\gamma\rrbracket$$
is a Maurer-Cartan element.
\end{theorem}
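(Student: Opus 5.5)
We have to show that $S$ satisfies
\[
(-d+\nabla+\delta_i+\gamma\delta_e)S+\tfrac12\{S,S\}=0 .
\]
Since $S$ lies in $Sym^1$ and involves no $\gamma$, the term $\gamma\delta_e S$ vanishes: $\delta_e$ is the Chevalley--Eilenberg differential of the bracket, so it only acts on symmetric words of length at least two. The equation therefore reduces to an identity in $Sym^1$, namely in the dual of connected stable ribbon graphs. I will check it by pairing both sides with an arbitrary connected oriented $B$-colored stable ribbon graph $\Gamma$, and I expect only graphs with exactly one internal edge to contribute.

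First I describe each term concretely. The element $-dS$, evaluated on a graph $\Gamma$ with one internal edge $e$, is $\pm S(\Gamma/e)$. Here $S$ is the sum of all one-vertex, edgeless stable ribbon graphs; these are corollas with arbitrary genus defect, boundary defects and cycle partition. So $dS$ is the sum, over all one-edge graphs $\Gamma$, of the number of ways $\Gamma$ contracts to such a corolla. By Definition \ref{cont} every contractible one-edge graph contracts to a corolla, so $dS$ is the sum of all contractible one-edge graphs, with signs. On the other side of the equation:
\begin{itemize}
\item $\tfrac12\{S,S\}$ produces every graph with two vertices joined by a single edge, obtained by gluing two corollas along one leaf each;
\item $\nabla S$ produces every one-vertex graph with a loop whose two ends lie in the same face;
\item $\delta_i S$ produces every one-vertex graph with a loop whose two ends lie in different faces.
\end{itemize}
Together these give all one-edge stable ribbon graphs, each with coefficient one once automorphisms are accounted for. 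This includes the non-contractible graphs, the ``bubble'' cases excluded in Definition \ref{cont}.

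The core of the proof is a case-by-case bijection along the three cases of Definition \ref{cont}. In case 1 (non-loop edge), $\{S,S\}$ matches $dS$. In case 2 (loop joining distinct cycles) the ends lie in different faces, and $\delta_iS$ matches $dS$, including the degenerate case where both defects increase. In case 3 (loop within one cycle) the relevant operation is $\nabla S$, or $\delta_i S$ when adjacent half-edges bound faces. I must also check that the graphs which \emph{cannot} be contracted contribute zero on the algebraic side. These are a non-loop edge whose two endpoints are lone half-edges forming each vertex's only cycle, and a two-half-edge loop forming the only cycle. My first attempt is to show that such graphs are unstable or vanish by an orientation-reversing automorphism: swapping the two half-edges of the edge reverses orientation, so the graph is zero. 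If that fails, I will check directly that such configurations are excluded from the corollas by the stability condition (for example, the stability condition prevents single-leaf corollas with zero defects).

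The main obstacle is the bookkeeping of signs and symmetry factors. These include the factor $\tfrac12$ against the symmetry of gluing two identical corollas, the sign $-d$ against the orientation conventions (new edge placed first, and the bracket's ordering), and the degree shifts $[2d-6]$ and $[3-d]$. I will fix conventions by checking the simplest instance: two trivalent genus-zero corollas glued to a four-leaf tree, contracted to the four-valent corolla. This should confirm that the terms cancel rather than double. I will then argue that the same local sign computation applies uniformly, since all the operations are insertions of a single edge at the first position.
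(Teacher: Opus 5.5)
Your route differs from the paper's. The paper never checks the equation inside the ribbon complex. It writes $S=\pi(G)$ and transports the Maurer--Cartan property of $G\in s\mathcal{G}^d\llbracket\hbar\rrbracket$ along the dg Lie map $\pi$ of Theorem~\ref{GAL}; that property (Theorem~\ref{sG_mce}) is itself proved by the same kind of regrouping you propose. The transport needs $B$ finite, whereas a direct check would not. On graphs with at least one leaf your matching is sound: every such one-edge graph contracts to a stable corolla in $S$, and cutting its edge shows it arises exactly once from $\tfrac12\{S,S\}$, $\delta_iS$ or $\nabla S$. One detail needs correcting. In a corolla the faces are exactly the cycles, so case-2 loops come precisely from $\delta_iS$, and all case-3 loops, adjacent endpoints included, come from $\nabla S$.

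The gap is the step that disposes of the non-contractible graphs, and neither of your arguments works there.
\begin{itemize}
\item In this paper an orientation is only an ordering of the internal edges. A one-edge graph therefore has a unique orientation, and no automorphism can reverse it; swapping the two half-edges of the edge changes nothing.
\item Stability only forces $|v|\geq 3$ when $g(v)=b(v)=0$ and $c(v)=1$. So one-leaf corollas with $g(v)\geq 1$ (or some $b_\lambda(v)\geq1$), two-leaf one-cycle corollas with a positive defect, and two-leaf two-cycle corollas are all summands of $S$.
\end{itemize}

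Concretely, let $C$ be the one-leaf corolla with $g(v)=1$ and set $\Gamma_0=C\cup_{l,l}C$. This leafless graph occurs in $\tfrac12\{S,S\}$ with coefficient $1$ and does not occur in $\nabla S$ or $\delta_iS$. Yet $(dS)(\Gamma_0)=S(\partial\Gamma_0)=0$, because its edge cannot be contracted (last clause of case~1 of Definition~\ref{cont}). The same happens in two further cases:
\begin{itemize}
\item $\nabla$ of a two-leaf one-cycle corolla gives the uncontractible case-3 loop;
\item $\delta_i$ of a two-leaf two-cycle corolla gives a leafless loop whose contraction is a vertex without half-edges, which is not a summand of $S$ (since $S$ runs over $n>0$).
\end{itemize}
So, with the definitions as written, the leafless part of $-dS+\nabla S+\delta_iS+\tfrac12\{S,S\}$ is a nonzero sum of distinct basis elements. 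No sign or symmetry-factor bookkeeping removes it.

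What your argument does establish is the equation modulo the ideal generated by leafless connected graphs. This is a BD ideal: $d$ preserves the number of leaves, and the bracket, $\nabla$ and $\delta_i$ vanish on leafless generators. To get a correct statement you must either pass to that quotient or change the conventions for leafless vertices and uncontractible edges. The paper's transport argument does not settle this corner either. $G$ is also summed over $n>0$, and the leafless graph obtained by gluing $G_{1,g_1}$ to $G_{1,g_2}$ ($g_1,g_2\geq 1$) lies in $\tfrac12\{G,G\}$ but in neither $dG$ nor $\Delta_iG$.
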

\begin{proof}
This will follow from theorem \ref{GALt} and theorem \ref{sG_mce}.
Indeed we will see that $S$ is the image of a Maurer-Cartan element under a map of dg-shifted Lie algebras.\end{proof}
\begin{remark}\label{prak_1}
    We have that $p(S)=D$, where $p$ is the dequantization map from \ref{ncgdq}.
    Thus indeed theorem \ref{sRG_mce} implies theorem \ref{RS_mce}.
\end{remark}
\begin{df}\label{sRG_tw}    
Denote by $s\mathcal{RG}^{d,tw}_{B}\llbracket\gamma\rrbracket$ the $(d-2)$-twisted BD algebra obtained by twisting $s\mathcal{RG}^{d}_{B}\llbracket\gamma\rrbracket$ by the Maurer-Cartan element $S$.
\end{df}
Twisting the map \ref{ncgdq} by the compatible Maurer-Cartan elements $D$ respectively $S$ gives an induced map 
$$p:s\mathcal{RG}^{d,tw}_{B}\llbracket\gamma\rrbracket\rightarrow \mathcal{RT}^{d,tw}_{B}\llbracket\gamma\rrbracket.$$
 Note that we can restrict the operators $\nabla$, $\delta_i$ and $\delta_e$ to ribbon graphs. Exactly as in theorem \ref{sRGt} we can deduce that this defines a BD algebra on the ribbon graph complex. Furthermore diagram \ref{inc_proj} induces a map of BD algebras
$$v:\mathcal{RG}^d_{B}\llbracket\gamma\rrbracket\rightarrow s\mathcal{RG}^d_{B}\llbracket\gamma\rrbracket$$
and a dequantization map, defined exactly as in \ref{deq_rib}
$$p:\mathcal{RG}^d_{B}\llbracket\gamma\rrbracket\rightarrow \mathcal{RT}^d_{B}.$$
Summarizing, we have 
\begin{theorem}\label{rbg_comp}
The ribbon graph complex $\big(\mathcal{RG}^d_{B}\llbracket\gamma\rrbracket,\cdot,-d+\nabla+\delta_i+\gamma\delta_e,\{\_,\_\}\big)$ is a $(d-2)$-twisted BD algebra. Furthermore diagram \ref{inc_proj} induces a diagram

 \begin{equation} \begin{tikzcd}
\mathcal{RT}^d_{B}&\mathcal{RG}^d_{B}\llbracket\gamma\rrbracket\arrow[swap]{l}{p}\arrow{r}{v}&
s\mathcal{RG}^d_{B}\llbracket\gamma\rrbracket\arrow[bend left=25]{ll}{p}
    \end{tikzcd},
    \end{equation}
    where the arrows pointing to $\mathcal{RT}^d_{B}$ denote dequantization maps and the third one denotes a map of $(d-2)$-twisted BD algebras.
\end{theorem}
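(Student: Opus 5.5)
The plan is to transport everything from the stable ribbon graph complex $s\mathcal{RG}^d_{B}\llbracket\gamma\rrbracket$, where Theorem \ref{sRGt} already gives a BD structure, to the ribbon graph complex. The obstacle is that $\mathcal{RG}^B_{c,\bullet}$ is only a quotient (not a subcomplex) of $s\mathcal{RG}^B_{c,\bullet}$, so the differential on ribbon graphs is the one induced through the projection in \eqref{inc_proj}. Dually, $(\mathcal{RG}^B_{c,\bullet})^\vee$ is a \emph{subcomplex} of $(s\mathcal{RG}^B_{c,\bullet})^\vee$: in the dual picture, the differential expands vertices, and expanding a ribbon graph vertex (genus and boundary defect zero, one cycle) only produces such vertices again. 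So the inclusion of duals is a chain map.

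First, I would check that each of $\{\_,\_\}$, $\nabla$, $\delta_i$ and $\delta_e$ preserves this subspace. Gluing two leaves, whether on distinct graphs or within one graph and whether in the same face or different faces, never modifies the vertex data: it only creates an internal edge. So ribbon graphs go to ribbon graphs. The structure maps are therefore restrictions of those on $s\mathcal{RG}$, and the restricted operations are exactly the ones defined on $\mathcal{RG}^d_B\llbracket\gamma\rrbracket$.

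Second, I would extend to symmetric powers and $\gamma$. The symmetric algebra on a sub-Lie algebra is a subalgebra closed under the extended differential, bracket and Chevalley--Eilenberg differential $\delta_e$. The relations $d^2=0$, $[d,\delta_e]=0$, $(\nabla+\delta_i)^2=0$ and $[\nabla+\delta_i,\delta_e]=0$, together with the BD relation \eqref{BDrel}, then hold on $\mathcal{RG}^d_B\llbracket\gamma\rrbracket$ because they hold on $s\mathcal{RG}^d_B\llbracket\gamma\rrbracket$ by Lemma \ref{rbc} and Theorem \ref{sRGt}. This proves the BD statement, and by the same reasoning it identifies the inclusion $v$ as an injective map of BD algebras.

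Third, I would treat the dequantization maps. The map $p$ from $\mathcal{RG}$ is defined as in \ref{deq_rib}, and by construction it equals the composite $p\circ v$ of the $s\mathcal{RG}$ dequantization with $v$. Commutativity of the diagram is then immediate. That $p$ is a dequantization map (a chain map intertwining brackets) follows either by composing with the already established $p$ on $s\mathcal{RG}$ (the lemma after Definition \ref{deq_rib}), or by repeating that argument: $\nabla$ raises the number of faces, $\delta_i$ raises the genus, and $\gamma\delta_e$ carries a factor of $\gamma$, so all three are killed by $p$.

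The main point of care is the first step: verifying that the dual differential preserves ribbon graphs, equivalently that the projection $s\mathcal{RG}\to\mathcal{RG}$ is compatible. Concretely, one must check that contracting an edge of a stable graph never yields a ribbon graph unless the original graph was already a ribbon graph, and that the resulting term is the same one the induced differential assigns. I would verify this case by case using Definition \ref{cont}. Any contraction that deletes cycles, increases a defect, or merges cycles lands outside ribbon graphs. The only contraction that stays inside ribbon graphs is a non-loop edge between two one-cycle vertices with defects zero, and every preimage of such a term is itself a ribbon graph. The orientation signs are then copied verbatim from the stable case.
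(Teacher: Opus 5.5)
Your proposal is correct and follows the same route as the paper, which restricts the bracket, $\nabla$, $\delta_i$ and $\delta_e$ to ribbon graphs, argues as in Theorem \ref{sRGt}, and obtains $v$ and $p$ from diagram \ref{inc_proj}. You supply the details the paper leaves implicit: the dual of the projection is a chain map, leaf-gluing never alters vertex data, and $p_{\mathcal{RG}}=p_{s\mathcal{RG}}\circ v$. You also inherit the BD relations by restriction along the injective map $v$ rather than re-running the argument of Lemma \ref{rbc}, which is a harmless and slightly cleaner variant.
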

\begin{remark}
    Note that the Maurer-Cartan element $S\in s\mathcal{RG}^d_{B}\llbracket\gamma\rrbracket$ does not come from an element in $\mathcal{RG}^d_{B}\llbracket\gamma\rrbracket$ under the map $v$.
\end{remark}
\subsection{(Ordinary) Graph Complexes}\label{grs}
In this section we basically repeat the constructions from the previous section, but for stable graphs, introduced in \cite{geKa96} section 2.8, their simpler cousins graphs and trees. That is we equip complexes built from these graphs with shifted Poisson and Beilinson-Drinfeld structures, induced by gluing leafs. Lastly we identify certain Maurer-Cartan elements for these structures.
\begin{df}\label{sgrdf} A \emph{stable graph} is given by the datum of
    \begin{itemize}
    \item a finite set $H$, called the set of half edges,
    \item $V(\Gamma)$ a partition  of $H$, called the set of vertices; we denote the cardinality of a vertex v by $|v|$ and the total number of vertices of a graph by $v(\Gamma)$,
    \item an involution $\sigma_1$ acting on $H$ whose fixed points are called leafs, the set of those denoted $L(\Gamma)$ and whose 2-cycles are called internal edges, the set of those denoted $E(\Gamma)$. We further denote the total number of internal edges by $e(\Gamma)$ and the total number of leafs by $l(\Gamma)$.
    \item a function $o:V(\Gamma)\rightarrow \mathbb{Z}_{\geq 0}$
    such that for $o(v)=0$ we have $|v|\geq 3$ and for $o(v)=1$ we have $|v|\geq1$ called loop defect .
    \end{itemize}
\end{df}
\begin{df}\label{betti}
    The \emph{first Betti number} $bt(\Gamma)$ of a connected stable graph $\Gamma$ is defined by  $$bt(\Gamma)=1-v(\Gamma)+e(\Gamma)+\sum_{v\in V(\Gamma)}o(v).$$
\end{df}

A \emph{graph} is a stable graph for which $o(v)=0$ for all vertices, ie. in particular all vertices are at least three-valent.
A connected \emph{tree} is a connected graph $\Gamma$ for which $bt(\Gamma)=0$, that is, it has no loops.
An orientation of a stable graph is an ordering of its edges.
Two oriented stable graphs are isomorphic if there is a bijection between their set of half-edges that respects all the other structures.
\begin{df}[Contracting an Edge]\label{cont_g}
Let $\Gamma$ be an oriented stable graph and let $e\in E(\Gamma)$ be an  edge.
We define the oriented stable graph $\Gamma/e$ to be the graph obtained by contracting this edge according to the following rules:
\begin{enumerate}
    \item Suppose that e is not a loop, that is it connects distinct vertices $v_1,v_2\in V(\Gamma)$. By contracting the edge the vertices $v_1$ and $v_2$ become joined.
    The loop defect for the vertices $v_1$ and $v_2$ are added to give the loop defect for the new vertex created by joining $v_1$ and $v_2$.
    The orientation is defined in the obvious way.
    All the other combinatorial structures elsewhere in the graph are left alone. 
    \item
    Now suppose that $e$ is a loop, that is both its endpoints lie in the same vertex $v$.
    Then $\Gamma/e$ is defined by just deleting this loop, but by that increasing the loop defect of $v$ by one.
\end{enumerate}
\end{df}
Note that the property being a tree is preserved under this differential, whereas being a graph is not.

Denote by $(s\mathcal{G}_{c,\bullet},\partial)$ the complex whose underlying vector space is freely generated by isomorphism classes of connected stable graphs, modulo the relation that reversing the orientation on a stable graph is equivalent to multiplying by (-1).
It is graded by the number of internal edges.
The differential $\partial$ is given by summing over all possible contractions of edges:
    $$\partial(\Gamma):=\sum_{e\in E(\Gamma)}\Gamma/e.$$
    Denote by $(\mathcal{G}_{c,\bullet},\partial)$ the complex given by the subspace (which is not a subcomplex) of connected graphs, which uniquely becomes a chain complex by requiring that the projection map
$$s\mathcal{G}_{c,\bullet}\rightarrow \mathcal{G}_{c,\bullet}$$
is a map of chain complexes.   

Denote by $(\mathcal{T}_{c,\bullet},\partial)$ the sub-complex of $s\mathcal{G}_{c,\bullet}$  given by connected trees.

We could have equivalently defined $(\mathcal{T}_{c,\bullet},\partial)$ as the sub-complex of $(\mathcal{G}_{c,\bullet},\partial)$ given by  trees, which can be verified directly. That is, following diagram of chain complexes commutes
   \begin{equation}\label{inc_proj_2} \begin{tikzcd}
(\mathcal{T}_{c,\bullet},\partial)\arrow{r}\arrow[ bend right=25]{rr}&(\mathcal{G}_{c,\bullet},\partial)&
(s\mathcal{G}_{c,\bullet},\partial)\arrow{l}.
    \end{tikzcd}
    \end{equation}

Let $d$ be an odd integer and $\hbar$ be a formal variable of degree $(3-d)$.
\begin{df}\label{sG}
    The \emph{d-shifted stable graph complex} is defined to be the cochain complex 
$$s\mathcal{G}^d\llbracket\hbar\rrbracket:=\Big(Sym\big( s\mathcal{G}_{c,\bullet}^\vee[d-3]\big)\llbracket \hbar\rrbracket,d\Big).$$ 
Its differential $d$ is given by the one induced from $(s\mathcal{G}_{c,\bullet},\partial)$.  
\end{df}
 \begin{df}\label{hfig} The \emph{d-shifted graph complex} is defined to be the cochain complex $$\mathcal{G}^d\llbracket\hbar\rrbracket:=\Big(Sym\big( \mathcal{G}_{c,\bullet}^\vee[d-3]\big)\llbracket \hbar\rrbracket,d\Big),$$
 its differential $d$ induced from  $(\mathcal{G}_{c,\bullet},\partial)$
 \end{df}
 \begin{df}\label{tc} The \emph{d-shifted tree complex} is defined to be the cochain complex 
 $$\mathcal{T}^d:=\Big(Sym\big( \mathcal{T}_{c,\bullet}^\vee[d-3]\big),d\Big),$$
 its differential induced from  $(\mathcal{T}_{c,\bullet},\partial)$. 
 \end{df}
 We refer to an element of eg. $s\mathcal{G}^d\llbracket\hbar\rrbracket$ as a not necessarily connected stable graph.
 Indeed, note that above three graph complexes have a basis given by symmetric powers of connected stable graphs, graphs, respectively trees.
 In this basis the differential expands edges, thereby possibly also changing the other properties of the graph, ie changing the loop defect in the case of stable graphs.

 As before we will make extensive use of these 'dual' bases to define further algebraic structures:
  \subsubsection{Shifted Poisson Structure}
\begin{cons}\label{iwjs}
    Define 
    $$\{\_,\_\}:s\mathcal{G}_{c,\bullet}^\vee\otimes s\mathcal{G}_{c,\bullet}^\vee\rightarrow s\mathcal{G}_{c,\bullet}^\vee$$
    by 
    $$\{\Gamma_1,\Gamma_2\}=\sum_{(*)}\Gamma_1\cup_{l_1,l_2} \Gamma_2.$$
    \begin{itemize}

\item Here $(*)$ indicates that we are summing over all isomorphism classes of stable graphs, suggestively denoted ${\Gamma_1\cup_{l_1,l_2} \Gamma_2}$, whose underlying (un-oriented) stable graph is obtained by gluing together two leafs of $\Gamma_1$ and $\Gamma_2$, thus creating a new internal edge\footnote{or by gluing together two leafs of $\Gamma_2$ and $\Gamma_1$}.

\item The orientation of $\Gamma_1\cup_{l_1,l_2} \Gamma_2$ is defined by taking first the given ordering of internal edges of $\Gamma_1$,
then the new internal edge, and then the given ordering of internal edges of $\Gamma_2$.
    \end{itemize}
\end{cons} 
\begin{lemma}
    $\big(s\mathcal{G}_{c,\bullet}^\vee,d,\{\_,\_\}\big)$ is a 1-shifted dg Lie algebra.
\end{lemma}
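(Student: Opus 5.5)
We verify the three defining properties of a 1-shifted dg Lie algebra for $\big(s\mathcal{G}_{c,\bullet}^\vee,d,\{\_,\_\}\big)$ in turn, mirroring the argument for the ribbon case $(s\mathcal{RG}^{B}_{c,\bullet})^\vee$. The basis of (duals of) isomorphism classes of connected oriented stable graphs is used throughout.

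\emph{Degree and graded antisymmetry.} The complex is graded by the number of internal edges, and $\Gamma_1\cup_{l_1,l_2}\Gamma_2$ has $e(\Gamma_1)+e(\Gamma_2)+1$ internal edges, so the bracket has degree $1$. For antisymmetry, compare the orientation of $\Gamma_1\cup_{l_1,l_2}\Gamma_2$ (edges of $\Gamma_1$, then the new edge, then edges of $\Gamma_2$) with that of $\Gamma_2\cup_{l_2,l_1}\Gamma_1$. The two orderings differ by the permutation that moves the block of $\Gamma_2$ past the new edge and past the block of $\Gamma_1$. Its sign is $(-1)^{e_1e_2+e_1+e_2}$, which is exactly the Koszul sign required for a degree-$1$ bracket on elements of degrees $e_1,e_2$. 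Since reversing orientation is identified with multiplication by $-1$, graded antisymmetry follows. The footnote in Construction \ref{iwjs} makes the sum symmetric in the two inputs, so no extra terms arise.

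\emph{Jacobi identity.} Expand $\{\Gamma_1,\{\Gamma_2,\Gamma_3\}\}$ as a sum over connected stable graphs obtained by two leaf-gluings joining all three graphs. Because the result is connected and each gluing creates one edge between different components, every such graph arises from a tree-shaped pattern of gluings. In this pattern either $\Gamma_2$ is glued to both $\Gamma_1$ and $\Gamma_3$, or $\Gamma_1$ or $\Gamma_3$ is the middle vertex. Each such glued graph appears exactly twice among the three cyclic terms of the Jacobiator, once for each order of performing the two gluings. The two appearances differ in orientation by a transposition of the two new edges, combined with block moves that produce precisely the Koszul signs of the shifted Jacobi identity. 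So the terms cancel in pairs. This is the step that needs the most care. I would handle it by fixing one underlying unoriented glued graph and computing the two orientation orderings explicitly, as in the proof of Lemma \ref{rbc}.

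\emph{Compatibility with $d$.} The differential $d$ is dual to $\partial$ and expands edges (Definition \ref{cont_g}): it splits vertices, and it turns loop defect into a loop. Leaves and leaf incidences are never affected except by being redistributed among the new vertices. Hence expanding an edge of $\Gamma_1\cup\Gamma_2$ is either expanding in $\Gamma_1$, or expanding in $\Gamma_2$, or a situation that does not touch the glued edge. The glued edge itself is never created by $d$ in a way that is not already accounted for, since it joins two distinct components. Therefore $d\{\Gamma_1,\Gamma_2\}=\{d\Gamma_1,\Gamma_2\}\pm\{\Gamma_1,d\Gamma_2\}$. The signs come from inserting the new edge at the front, as in the ribbon case, and the remaining check is a bijection-of-terms argument.

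Finally, $d^2=0$ holds because $d$ is dual to the differential $\partial$ on $(s\mathcal{G}_{c,\bullet},\partial)$.
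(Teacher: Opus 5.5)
Your outline is the paper's own. Its proof only says the argument is analogous to the ribbon case: the bracket has degree $1$ because of the new edge, antisymmetry follows from the orientation relation, and Jacobi and compatibility with $d$ are ``easily verified''. Your sign bookkeeping is also sound: $(-1)^{e_1e_2+e_1+e_2}$ is the right antisymmetry sign, and the orderings in the Jacobiator pair up as you say.

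The gap is in the multiplicities, which the paper's sketch also passes over. Your claim that each glued graph occurs exactly twice, once per order of gluing, holds for gluing \emph{configurations}. The bracket, however, is a sum over \emph{isomorphism classes}, and configurations with isomorphic inputs give the same class.

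A concrete case shows the failure. Take $a=b=T_3$ and $c=T_4$. These have degree $0$, so Jacobi reads $\{a,\{b,c\}\}+\{b,\{c,a\}\}+\{c,\{a,b\}\}=0$. Let $H$ be two trivalent vertices joined by an edge. Let $X$ be the chain $p-q-r$ with $p,q$ trivalent, $r$ four-valent and $q$ in the middle.
\begin{itemize}
\item Configurations with $T_4$ in the middle give a graph with an automorphism swapping its two edges, hence zero.
\item The first two terms each give $X$ with ordering $(E_{pq},E_{qr})$.
\item The third term gives $\kappa\{T_4,H\}=-\kappa X$, where $\kappa$ is the coefficient of $H$ in $\{T_3,T_3\}$.
\end{itemize}
So the Jacobiator is $(2-\kappa)X$, and the class $X$ occurs in all three terms. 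Reading Construction \ref{iwjs} literally (each class once, so $\kappa=1$), the identity fails. Checking orientations on one fixed unoriented graph, as you propose, cannot detect this.

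Your $d$-step has the same problem. Let $Z$ be two four-valent vertices joined by an edge. Then $dZ=X$ with coefficient one, because the two vertex-expansions of $Z$ are identified by its automorphism. On the other side, $\{dT_4,T_4\}-\{T_4,dT_4\}=2\{H,T_4\}=2X$. Hence $Z$ must appear in $\{T_4,T_4\}$ with coefficient $2$.

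The fix is to fix the normalization implicit in the footnote to Construction \ref{iwjs}. It is also the one used in the proof of Theorem \ref{sG_mce}, where $\{G_{n,g},G_{n,g}\}=2\sum X$. Concretely, the coefficient of $X$ in $\{\Gamma_1,\Gamma_2\}$ is the number of edges $e$ of a representative of $X$ whose cutting yields $\Gamma_1\sqcup\Gamma_2$, counted twice if $\Gamma_1\cong\Gamma_2$. Equivalently, the bracket is the transpose of the edge-cutting map $\delta$ on $s\mathcal{G}_{c,\bullet}$, just as $d$ is the transpose of $\partial$.

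Then prove the dual statements on the predual, where all sums run over edges of a fixed representative and no automorphism counting is needed.
\begin{itemize}
\item Co-Jacobi holds because cutting two distinct bridges in either order gives the same three pieces, with orientations differing by your transposition of the two new edges.
\item $\delta\partial=\pm(\partial\otimes 1+1\otimes\partial)\delta$ holds because, for $f\neq e$, $e$ is a bridge of $X$ if and only if it is a bridge of $X/f$. Contracting an edge or deleting a loop does not change connected components, and a loop is never a bridge. So both sides are sums over ordered pairs $(e,f)$ of distinct edges with $e$ a bridge.
\end{itemize}
Your sign computations then carry over unchanged.
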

\begin{proof}
This follows analogously to the stable ribbon graph case.  
\end{proof}
\begin{remark}\label{nütz_2}
    In the same way we can define a shifted Lie bracket on  graphs and  trees such that the maps induced by \ref{inc_proj_2} respect these brackets.
\end{remark}
  In the same way as for ribbon graphs (theorem \ref{RS}) we deduce
\begin{theorem}\label{S}
The tree graph complex from definition \ref{tc} 
$$\big(\mathcal{T}^d,-d,\{\_,\_\},\cdot\big)$$
 is a $(d-2)$-shifted Poisson dg algebra. 
 \end{theorem}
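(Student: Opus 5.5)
The plan is to follow the proof of theorem \ref{RS} for ribbon trees verbatim, replacing ribbon trees by ordinary trees. The input is the preceding lemma: $\big(s\mathcal{G}_{c,\bullet}^\vee,d,\{\_,\_\}\big)$ is a $1$-shifted dg Lie algebra. Combined with remark \ref{nütz_2}, this says the bracket restricts to $\mathcal{T}_{c,\bullet}^\vee$. I first check this directly: gluing a leaf of one connected tree to a leaf of another connected tree gives a connected graph with $v=v_1+v_2$ and $e=e_1+e_2+1$. Hence the Betti number is $1-v+e=0$, the graph is again a tree, and no loop defect is created. Since $\mathcal{T}_{c,\bullet}$ is a subcomplex of $s\mathcal{G}_{c,\bullet}$ (contracting edges of a tree never creates loops), its dual is a quotient complex of $s\mathcal{G}_{c,\bullet}^\vee$. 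The bracket descends to this quotient because any glued graph that is not a tree pairs to zero against trees. So $\big(\mathcal{T}_{c,\bullet}^\vee,d,\{\_,\_\}\big)$ is a $1$-shifted dg Lie algebra.

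The second step is the shift. Passing to $\mathcal{T}_{c,\bullet}^\vee[d-3]$ turns the degree $1$ bracket into a degree $(d-2)$ bracket. Because $d$ is odd, $d-3$ is even, so the shift introduces no extra Koszul signs: graded antisymmetry and the shifted Jacobi identity carry over unchanged, and $d$ remains a derivation of the bracket. The overall sign in the differential $-d$ is harmless, since $-d$ still squares to zero and is still a derivation of the bracket.

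The third step is the standard fact already used before theorem \ref{fP}: for a dg $n$-shifted Lie algebra $\mathfrak g$, the algebra $Sym(\mathfrak g)$ is an $n$-shifted Poisson dg algebra once we extend the bracket by the Leibniz rule and the differential as a derivation. Applying this with $\mathfrak g=\mathcal{T}_{c,\bullet}^\vee[d-3]$ and $n=d-2$ gives the claim.

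The only point needing care is the sign bookkeeping for orientations, namely that the bracket is graded antisymmetric and satisfies Jacobi. The ordering convention used in construction \ref{iwjs} ($\Gamma_1$'s edges, then the new edge, then $\Gamma_2$'s edges) means that swapping the two arguments moves the new edge and reorders the edge blocks. This produces exactly the Koszul sign of the degrees, where each graph's degree is its number of edges and the new edge has degree $1$. For Jacobi, the two gluings of three graphs differ only by the order in which the two new edges appear, which gives the required sign $-1$. I expect this verification to be identical to the ribbon case, and I would cite the preceding lemma rather than redo it.
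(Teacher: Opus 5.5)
Your proposal is correct and follows the paper's argument. The paper likewise takes the $1$-shifted dg Lie structure on $s\mathcal{G}_{c,\bullet}^\vee$ from the preceding lemma, uses the compatible bracket on trees, shifts by the even integer $d-3$ and extends by the Leibniz rule to $Sym$, exactly as for ribbon trees in theorem \ref{RS}. One phrase should be tightened: descent of the bracket to the quotient $\mathcal{T}_{c,\bullet}^\vee$ requires that non-trees span an ideal. This follows from additivity, $bt(\Gamma_1\cup_{l_1,l_2}\Gamma_2)=bt(\Gamma_1)+bt(\Gamma_2)$; equivalently, the tree span is the Betti-number-zero summand, which both $d$ and the bracket preserve.
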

 Recall that $\{\_,\_\}$ is given by first shifting the Lie bracket and then extending it as a according to the Leibniz rule to the symmetric algebra.

 We will by abuse of notation also denote this dg $(d-2)$-shifted Poisson algebra by $\mathcal{T}^d$.

Denote by $T_n$ the class of the tree with one vertex, no internal edges and $n$ leaves. We define 
 $$T:=\sum_{n>2}T_n$$
 \begin{theorem}\label{S_mce}
 The element
$$T \in \mathcal{T}^d$$
is a Maurer-Cartan element.
\end{theorem}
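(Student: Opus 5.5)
Following the pattern of theorem \ref{RS_mce}, one route is to deduce the statement from a Maurer-Cartan element one level up. Mirroring remark \ref{prak_1}, the dequantization map $p:s\mathcal{G}^d\llbracket\hbar\rrbracket\rightarrow\mathcal{T}^d$ sends the element $G$ of one-vertex stable graphs (all loop defects, all numbers of leaves) to $T$. That reduces the claim to the statement that $G$ is Maurer-Cartan. Since the proof of theorem \ref{sG_mce} may itself rely on this theorem, I will give a direct argument instead.

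\textbf{Step 1.} The bracket term. The element $\frac12\{T,T\}$ is a sum over pairs of corollas $T_n$, $T_m$ with $n,m\geq 3$. Each pair is glued along one leaf of each, which produces a tree with two vertices and one internal edge, and hence $n+m-2\geq 4$ leaves. Orientation is trivial here, since there is only one internal edge. The key bookkeeping is the multiplicity: $T_n$ is an isomorphism class, so in the dual basis the coefficient of the two-vertex tree $\Gamma_{a,b}$ (with $a$ and $b$ leaves at its vertices) in $\{T_{a+1},T_{b+1}\}$ equals the number of ways to realize $\Gamma_{a,b}$ by such a gluing. This number must be computed relative to automorphism factors. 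I expect to fix the convention that dual basis elements are normalized by automorphisms, so that each $\Gamma_{a,b}$ appears with coefficient $1$, and with coefficient $2$ when $a=b$ and the two corollas are interchangeable, which the factor $\frac12$ cancels.

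\textbf{Step 2.} The differential term. The differential $d$ on $\mathcal{T}^d$ is dual to edge contraction. So $dT_n$ is the sum over all trees $\Gamma$ with one internal edge whose contraction is $T_n$, that is, over all splittings of the $n$ leaves into two vertices of valence at least $3$. These are exactly the two-vertex trees $\Gamma_{a,b}$ with $a+b=n$ and $a,b\geq 2$. Summing over $n$ therefore ranges over the same set of graphs as Step 1.

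\textbf{Step 3.} Comparison. I will show that the coefficients agree up to sign, so that $-dT+\frac12\{T,T\}=0$; the sign comes from the $-d$ in theorem \ref{S}. The main obstacle is making the sign and automorphism conventions consistent: the shift $[d-3]$ with $d$ odd, the degree of the bracket, and the normalization of the dual basis all enter. I will first check the smallest case, $T_3$ and $T_3$ glued to the $4$-leaf tree $\Gamma_{2,2}$ against $dT_4$, and use it to fix the conventions. Every other case has the same local structure, so the general verification then follows identically.
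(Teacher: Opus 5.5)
Your direct verification is correct. However, your reason for abandoning the first route does not apply, and that first route is exactly the paper's proof. The paper proves Theorem~\ref{sG_mce} by a self-contained combinatorial check in $s\mathcal{G}^d\llbracket\hbar\rrbracket$ that never uses the present statement. It then obtains $T=p(G)$ (Remark~\ref{prak_2}). Since $p$ is a chain map from $(s\mathcal{G}^d\llbracket\hbar\rrbracket,-d+\Delta_i+\hbar\Delta_e)$ to $(\mathcal{T}^d,-d)$ intertwining the brackets, it sends Maurer--Cartan elements to Maurer--Cartan elements.

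Your Steps 1--3 are the tree-level specialization of that proof of Theorem~\ref{sG_mce}. Your Step 2 is the sum indexed by $(i)$ there, and your Step 1 is the sum indexed by $(a)$. The loop contributions $(ii)$, $(b)$, $\Delta_i G$ and $\hbar\Delta_e G$ are absent for trees. The paper's route gets the tree statement for free and records that $T$ is the classical limit of $G$, which is used later when twisting the dequantization maps. Your route is self-contained and avoids the loop-defect bookkeeping.

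On Step 3, there is no convention left to fix. In the paper, $T_n$ enters with coefficient one, and the bracket is a sum over isomorphism classes of glued graphs. Hence $\{T_{a+1},T_{b+1}\}=\{T_{b+1},T_{a+1}\}=\Gamma_{a,b}$ for $a\neq b$, and $\{T_{a+1},T_{a+1}\}=2\Gamma_{a,a}$, exactly as in the proof of Theorem~\ref{sG_mce}. Dually, cutting the edge of $\Gamma_{a,a}$ produces $T_{a+1}\otimes T_{a+1}$ in two ways. Rescaling the $T_n$ by automorphism factors would destroy the identity.

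No signs occur: $d$ odd makes the shift $[d-3]$ even, and a one-edge tree has a unique orientation, the same for both gluing orders. Hence $\frac12\{T,T\}=\sum_{2\le a\le b}\Gamma_{a,b}=dT$ (note $dT_3=0$), i.e.\ $-dT+\frac12\{T,T\}=0$, and no separate low-degree check is needed.
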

\begin{proof}
   This will follow from theorem \ref{sG_mce} and remark \ref{prak_2}. 
   Again, we will be able to express $T$ as the image of a Maurer-Cartan element under a map of dg-shifted Lie algebras.
\end{proof}

    Denote by $\mathcal{T}^{d,tw}$ the $(d-2)$-shifted Poisson dg algebra obtained by twisting the $(d-2)$-shifted Poisson dg algebra $\mathcal{T}^d$ by the Maurer-Cartan element $T\in\mathcal{T}^d$.

\subsubsection{Beilinson Drinfeld Structure}
\begin{cons}
   Define a map 
   $$\Delta_i:s\mathcal{G}_{c,\bullet}^\vee\rightarrow s\mathcal{G}_{c,\bullet}^\vee$$
by 
$$\Delta_i(\Gamma)=\sum_{(\#)}\circ_{l_1,l_2}\Gamma.$$
    \begin{itemize}

 \item Here $(\#)$ indicates that we sum over all isomorphism classes of stable graphs $\circ_{l_1,l_2}\Gamma$ whose
 underlying (un-oriented) stable graph is obtained by gluing two leafs of $\Gamma$ together, thus creating a new internal edge.
    
   \item The orientation of $\circ_{l_1,l_2}\Gamma$ is defined by taking first the new internal edge and then the given ordering of internal edges of $\Gamma$.
    \end{itemize}
\end{cons}
    Note that the betti number changes as $bt(\circ_{l_1,l_2}\Gamma)=bt(\Gamma)+1$.

We easily verify
\begin{lemma}
    We have $[d,\Delta_i]=0$.
\end{lemma}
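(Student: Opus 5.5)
The statement is $[d,\Delta_i]=0$ on $s\mathcal{G}_{c,\bullet}^\vee$. I would prove it the same way as the corresponding ribbon-graph lemmas $[d,\nabla]=0$ and $[d,\delta_i]=0$. That is, I would dualize and compare, graph by graph, the two composites expanding an edge and then gluing two leaves, versus gluing two leaves and then expanding an edge. Concretely, for a connected oriented stable graph $\Gamma$, $d\Gamma$ is the sum over all stable graphs $\Gamma'$ with a distinguished edge $e'$ such that $\Gamma'/e'\cong\Gamma$. On the other side, $\Delta_i\Gamma$ is the sum over all ways of joining two leaves $l_1,l_2$ into a new first edge. So both $d\Delta_i\Gamma$ and $\Delta_i d\Gamma$ are sums indexed by pairs $(\Gamma'',\{e,f\})$, where $\Gamma''$ is a stable graph with two distinguished internal edges, $e$ being a "glued" edge and $f$ being an "expanded" edge, such that cutting $e$ and contracting $f$ returns $\Gamma$.

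The key step is to set up the bijection between the two index sets. Given such a configuration, I first check that cutting $e$ (undoing a leaf gluing) and contracting $f$ commute as operations on stable graphs whenever $e\neq f$. The cases follow definition \ref{cont_g}. If $f$ is a non-loop, contracting it merges two vertices and adds their loop defects; cutting $e$ only changes leaves, so the order of the two operations doesn't matter. If $f$ is a loop, contracting deletes it and raises $o(v)$ by one; again this is independent of $e$. One subtlety is that $e$ could become a loop after contracting $f$, if $e$ joined the two endpoints of $f$. Cutting it afterwards is still just removing an edge and creating two leaves, so nothing changes and the correspondence persists. The stability condition ($|v|\ge 3$ for $o=0$, $|v|\ge1$ for $o=1$) depends only on leaves plus edges at a vertex, so cutting an edge does not change valence, and both sides range over the same stable graphs.

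Then I compare signs. In $\Delta_i$ the new edge is placed first in the orientation. In the dual of $\partial$, the expanded edge is inserted at its position, by the convention implicit in $\partial\Gamma=\sum_e\Gamma/e$ where $\Gamma/e$ removes $e$ from the ordering. Moving the two distinguished edges past each other yields a single transposition. Combined with the Koszul sign from commuting the degree-one operators $d$ and $\Delta_i$ after shifting, the terms should cancel pairwise (or match, depending on whether $[\,,]$ is the graded commutator, which for two odd maps is the anticommutator). I expect this sign/orientation bookkeeping, and checking it is uniform across the loop and non-loop cases, to be the main obstacle. I would handle it by fixing the convention that $\Gamma/e$ carries the induced order with $e$ deleted, and computing the sign of both composites relative to the ordering $(e,f,\text{rest})$ of $\Gamma''$. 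Automorphism multiplicities agree because both sums run over isomorphism classes of the same decorated objects.
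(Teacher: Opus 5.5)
Your strategy is what the paper intends; it gives no argument beyond ``we easily verify''. You fix $\Gamma''$ and match the two composites using the fact that cutting an edge $e$ and contracting a different edge $f$ commute. Your case analysis (loop and non-loop $f$, $e$ parallel to $f$, stability) is fine.

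The gap is the multiplicity claim. The composites do not produce one term per isomorphism class of $(\Gamma'',e,f)$. In the dual basis, the coefficient of $\Gamma''$ in $d\Gamma_1$ is the signed number of \emph{edges} $f$ of $\Gamma''$ with $\Gamma''/f\cong\Gamma_1$. The identity holds only if $\Delta_i$ is normalized compatibly with this.

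With your description of $\Delta_i$ as a sum over all pairs of leaves, the identity is false. Set up the following graphs:
\begin{itemize}
\item $\Gamma$: one vertex with $o=0$ and four leaves;
\item $H$: two trivalent vertices joined by an edge, each carrying two leaves;
\item $Y$: a vertex with a loop and two leaves;
\item $T$: a vertex with a loop, joined by an edge to a vertex with two leaves.
\end{itemize}
The graph with two parallel edges vanishes, because swapping those edges is an odd automorphism. Then $d\Gamma=H$ and $dY=\pm T$. All six leaf pairs of $\Gamma$ give $Y$, but only two of the six leaf pairs of $H$ give $T$. So the composites are $\pm 6T$ and $\pm 2T$, which cannot cancel.

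The correct normalization is the transpose of the cutting operator $c(\Gamma'')=\sum_e\pm\,\Gamma''\setminus e$ on $s\mathcal{G}_{c,\bullet}$, summed over edges whose removal keeps $\Gamma''$ connected. Equivalently, each glued graph $\Gamma_1$ is weighted by the number of edges $e$ with $\Gamma_1\setminus e\cong\Gamma$; in the example both coefficients then become $1$. The lemma becomes the transpose of $\partial c+c\partial=0$. This is a termwise statement about ordered pairs of distinct edges of a single representative $\Gamma''$. Contracting $f\neq e$ does not change whether $\Gamma''\setminus e$ is connected, so the two index sets coincide and no automorphism counting is needed.

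Your sign convention also needs repair. Deleting $e$ from the ordering with no sign is not well defined on oriented graphs modulo reversal: $(e,e')$ and $(e',e)=-(e,e')$ both delete to $(e')$. On representatives it would also give $\partial^2\neq 0$. You must first move $e$ to the front, with sign $(-1)^{\mathrm{pos}(e)-1}$, and then delete it. Dually, $d$ places the expanded edge first, just as $\Delta_i$ places the glued edge first.

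The two composites then produce $\Gamma''$ with orderings $(f,e,\ldots)$ and $(e,f,\ldots)$. Hence $d\Delta_i=-\Delta_i d$ term by term, which is exactly $[d,\Delta_i]=0$ for two odd operators. No additional Koszul sign enters, since the lemma is stated on $s\mathcal{G}^\vee_{c,\bullet}$ before any shift.
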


We extend $\Delta_i$ to a degree 1 map by first shifting it and then as a derivation to
$$\Delta_i: s\mathcal{G}^d\llbracket\hbar\rrbracket\rightarrow s\mathcal{G}^d\llbracket\hbar\rrbracket
.$$
Denote by $\Delta_e$ the Chevalley-Eilenberg differential on $Sym\big( s\mathcal{G}_{c,\bullet}^\vee[d-3]\big)$
associated to the odd Lie bracket $\{\_,\_\}$ on $s\mathcal{G}_{c,\bullet}^\vee[d-3]$.
Denote by the same latter the induced degree (d-2) map  
$$\Delta_e: s\mathcal{G}^d\llbracket\hbar\rrbracket\rightarrow s\mathcal{G}^d\llbracket\hbar\rrbracket.$$
\begin{lemma}
    We have $[d,\Delta_e]=(\Delta_i)^2=[\Delta_i,\Delta_e]=0$ on $s\mathcal{G}^d\llbracket\hbar\rrbracket.$
\end{lemma}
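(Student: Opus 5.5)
The plan is to mirror the proof of Lemma \ref{rbc} for stable ribbon graphs, working in the basis of not necessarily connected stable graphs and using that all three operators are defined by gluing leaves. Orientations are orderings of internal edges, and graphs differing by an odd permutation of edges are identified up to sign.

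\emph{Step 1: $[d,\Delta_e]=0$.} Since $\Delta_e$ is the Chevalley--Eilenberg differential of the shifted bracket $\{\_,\_\}$ on $s\mathcal{G}_{c,\bullet}^\vee[d-3]$, this reduces to the compatibility of $d$ with the bracket. That compatibility is part of the statement that $\big(s\mathcal{G}_{c,\bullet}^\vee,d,\{\_,\_\}\big)$ is a shifted dg Lie algebra, established in the preceding lemma. On generators $\Delta_e$ only involves the bracket of two symmetric factors, and $d$ is extended as a derivation, so the commutator vanishes by the standard argument.

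\emph{Step 2: $\Delta_i^2=0$.} For a connected $\Gamma$, $\Delta_i^2\Gamma$ is a sum over ordered pairs of disjoint leaf pairs $(\{l_1,l_2\},\{l_3,l_4\})$ glued successively. Each term has two new edges $e_1,e_2$ placed first and second in the orientation. The term where $\{l_3,l_4\}$ is glued first gives the same unoriented stable graph with $e_1,e_2$ swapped, and so carries the opposite sign; the terms therefore cancel in pairs. On products, $\Delta_i$ is a derivation of odd degree, so $\Delta_i^2$ is again a derivation, and vanishing on generators suffices.

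\emph{Step 3: $[\Delta_i,\Delta_e]=0$.} This is the main bookkeeping step. On a product $\Gamma_1\cdots\Gamma_n$, $\Delta_e$ produces sums over pairs $i<j$ of $\{\Gamma_i,\Gamma_j\}$ times the remaining factors. I would expand $\Delta_i\Delta_e+\Delta_e\Delta_i$ into two types of terms.
\begin{itemize}
\item Terms where the self-gluing and the joining gluing act on disjoint leaf pairs. These appear in both composites, with the two new edges in opposite orders, so they cancel, as in Step 2.
\item Terms where $\Delta_i$ glues a leaf of $\Gamma_i$ to a leaf of $\Gamma_j$ inside the already joined graph $\{\Gamma_i,\Gamma_j\}$. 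These produce graphs with two edges between the former components. Each such graph arises twice, depending on which of the two edges is regarded as the bracket edge, again with swapped orientation, so these terms also cancel.
\end{itemize}
The only delicate point is tracking the signs from the shifts $[d-3]$ and the Koszul rule; for this I would check the case $n=2$ explicitly and then extend by the derivation property of $\Delta_i$.
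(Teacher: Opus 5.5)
Your proposal is correct and follows the paper's route. The paper only says the lemma follows analogously to Lemma \ref{rbc}, whose proof rests on two points: compatibility of $d$ with the bracket gives $[d,\Delta_e]=0$, and for the other two identities the summands cancel in pairs that differ by swapping the two newly created edges. These are exactly your Steps 1--3. Your Step 3 spells out the ``similar arguments'' the paper leaves implicit: the mixed terms that join $\Gamma_i$ and $\Gamma_j$ twice cancel within $\Delta_i\Delta_e$, and the remaining terms cancel against $\Delta_e\Delta_i$. The signs work out because the shift $d-3$ is even, so Koszul signs agree with reorderings of edges.
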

\begin{proof}
 These facts follows analogously to the stable ribbon graph case, see lemma \ref{rbc}.
\end{proof}
Thus we can infer 
\begin{theorem}\label{BD_G}
The stable graph complex from definition \ref{sG} 
$$\big(s\mathcal{G}^d\llbracket\hbar\rrbracket,\cdot, -d+\Delta_i+\hbar\Delta_e,\{\_,\_\}\big)$$
is a $(d-2)$-twisted BD algebra. 
\end{theorem}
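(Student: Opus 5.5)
The plan is to mirror the proof of theorem \ref{sRGt} for stable ribbon graphs. Three groups of facts are needed. First, $-d+\Delta_i+\hbar\Delta_e$ must be a degree-one $\hbar$-linear operator squaring to zero. Second, $\{\_,\_\}$ must be a $(d-2)$-shifted Poisson bracket on $Sym\big(s\mathcal{G}_{c,\bullet}^\vee[d-3]\big)\llbracket\hbar\rrbracket$. Third, the BD relation \eqref{BDrel} must hold with $\mu=\hbar$. The grading check comes first. Since $\hbar$ has degree $3-d$ and the bracket on $s\mathcal{G}_{c,\bullet}^\vee[d-3]$ has degree $d-2$ after the shift, the operator $\hbar\Delta_e$ has degree $(3-d)+(d-2)=1$. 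Likewise $\Delta_i$ has degree $1$ because it creates one internal edge, and $d$ has degree $1$. This matches definition \ref{BD} with $r=d-2$, where the formal variable should have degree $1-r=3-d$.

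The square-zero property follows from the preceding lemma. Expanding $(-d+\Delta_i+\hbar\Delta_e)^2$ gives
\[
d^2-[d,\Delta_i]-\hbar[d,\Delta_e]+\Delta_i^2+\hbar[\Delta_i,\Delta_e]+\hbar^2\Delta_e^2 .
\]
Here $d^2=0$ holds because $d$ is dual to the contraction differential. The commutators $[d,\Delta_i]$, $[d,\Delta_e]$ and $[\Delta_i,\Delta_e]$ and the square $\Delta_i^2$ vanish by the two previous lemmas. Finally $\Delta_e^2=0$ because $\Delta_e$ is the Chevalley--Eilenberg differential of a shifted Lie bracket, using the Jacobi identity from the lemma that $\big(s\mathcal{G}_{c,\bullet}^\vee,d,\{\_,\_\}\big)$ is a shifted dg Lie algebra.

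The Poisson structure is the standard one. We extend the shifted Lie bracket on generators to $Sym$ by the Leibniz rule and $\hbar$-linearly, as in theorem \ref{S}. Then $d$ and $\Delta_i$ are derivations of the product, since they are extended as derivations. The operator $\Delta_e$ is a second-order operator whose failure to be a derivation is exactly the bracket, so
\[
\Delta_e(a\cdot b)=\Delta_e(a)\cdot b\pm a\cdot\Delta_e(b)+\{a,b\}
\]
holds by the definition of the Chevalley--Eilenberg differential on the symmetric algebra. Multiplying by $\hbar$ gives the BD relation for the total operator. It remains to check that the BD differential is compatible with the bracket, i.e.\ acts as a derivation of it. For $d$ this follows from the dg Lie lemma, and for $\hbar\Delta_e$ it is automatic for any second-order operator generating the bracket. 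For $\Delta_i$ it must be checked on generators: self-gluing the glued graph $\Gamma_1\cup_{l_1,l_2}\Gamma_2$ either glues two leaves within one of the factors, which gives the derivation terms, or glues one leaf of $\Gamma_1$ to one of $\Gamma_2$, which gives a second connecting edge.

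That last case is the main obstacle. The two-edge-connected terms appear twice, with the two new edges in opposite orders. I will show they cancel by the orientation-reversal relation, exactly as in the proof of lemma \ref{rbc}, taking care with the Koszul signs coming from the shift by $d-3$ and the oddness of $d$. If this cancellation fails with the given orientation conventions, I would instead absorb these terms into the statement $[\Delta_i,\Delta_e]=0$ already established, by checking the relation on products of two generators.
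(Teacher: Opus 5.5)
Your proposal is correct and follows the paper's proof: square-zero comes from the preceding lemmas (with $\Delta_e^2=0$ from the Jacobi identity), and the BD relation comes from $d$ and $\Delta_i$ being derivations and $\Delta_e$ being the Chevalley--Eilenberg operator of the bracket. The extra check that $\Delta_i$ is a derivation of the bracket, which you call the ``main obstacle'', is not needed as a separate step: on products of two generators it is equivalent to the already established $[\Delta_i,\Delta_e]=0$, so your fallback is the right resolution (and it also follows from $(-d+\Delta_i+\hbar\Delta_e)^2=0$ together with the BD relation, since there is no $\hbar$-torsion).
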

\begin{proof}
 Indeed the fact that $-d+\Delta_i+\hbar\Delta_e$ defines a differential of degree 1 that squares to zero follows from the previous lemmas.
 The BD relation follows since $\Delta_i$ and $d$ are derivations and by the definition of $\Delta_e$ as the Chevalley Eilenberg differential associated to $\{\_,\_\}$. 
\end{proof}

    By abuse of notation we also denote by $s\mathcal{G}^d\llbracket\hbar\rrbracket$ this (d-2 twisted) BD algebra. 

\begin{df}\label{deq_sg}
Denote by 
\begin{equation}\label{cgdq}
p:s\mathcal{G}^d_{B}\llbracket\hbar\rrbracket\rightarrow {\mathcal{T}^{d}_{B}}
\end{equation}
the composite 
$$Sym\big( s\mathcal{G}_{c,\bullet}^\vee[d-3]\big)\llbracket \hbar\rrbracket\rightarrow Sym^1\big( s\mathcal{G}_{c,\bullet}^\vee[d-3]\big)\cong s\mathcal{G}_{c,\bullet}^\vee[d-3]\rightarrow \mathcal{T}_{c,\bullet}^\vee[d-3]\rightarrow Sym\big(\mathcal{T}_{c,\bullet}^\vee[d-3]\big).$$
Here the first map is the projection on symmetric words of length one and the $\hbar=0$ part.
The third map in the second line is the one induced by the curved map of diagram \ref{inc_proj_2} and the last map is just the inclusion. 
\end{df}
\begin{lemma}
    For every set $B$ 
$$p:s\mathcal{G}^d_{B}\llbracket\hbar\rrbracket\rightarrow {\mathcal{T}^{d}_{B}}$$
defines a dequantization map in the sense of definition \ref{dq}.
\end{lemma}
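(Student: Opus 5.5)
We must show that $p$ is a degree zero chain map intertwining the BD differential $-d+\Delta_i+\hbar\Delta_e$ with the differential $-d$ of $\mathcal{T}^d$, and intertwining the brackets. The plan mirrors the ribbon graph case (the lemma after definition \ref{deq_rib}). First note $p$ has degree zero: the shifts $[d-3]$ on both sides match, the projection to $Sym^1$ and to $\hbar^0$ preserves degree, and the map induced by diagram \eqref{inc_proj_2} is the dual of the inclusion of connected trees into connected stable graphs, so it preserves the edge grading.

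For the differential, compute term by term on a basis element, a symmetric word $\Gamma_1\cdots\Gamma_k\hbar^s$ of connected stable graphs. The internal differential $d$ is a derivation induced from $\partial$ on $s\mathcal{G}_{c,\bullet}$, so it preserves word length and the power of $\hbar$. By the commutativity of \eqref{inc_proj_2}, the restriction map $s\mathcal{G}_{c,\bullet}^\vee\to\mathcal{T}_{c,\bullet}^\vee$ is dual to the chain map $\mathcal{T}_{c,\bullet}\hookrightarrow s\mathcal{G}_{c,\bullet}$, so $p\circ d=d\circ p$. Trees are closed under contraction, so no tree arises from a non-tree. Next, $\hbar\Delta_e$ carries a positive power of $\hbar$ and is killed by $p$; the words of length one not involving $\hbar$ cannot receive contributions from $\hbar\Delta_e$ anyway. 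Finally $\Delta_i$ preserves word length and power of $\hbar$, but on a connected graph it glues two leaves of the same component, so by the Betti number formula $bt(\circ_{l_1,l_2}\Gamma)=bt(\Gamma)+1\geq 1$. Hence its image never lies in trees and $p\circ\Delta_i=0$. Together these give $p\circ(-d+\Delta_i+\hbar\Delta_e)=-d\circ p$.

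For the bracket, on $Sym$ the bracket is the Leibniz extension of the bracket on connected graphs, and it is $\hbar$-linear. The bracket of two words of lengths $k,l$ has length $k+l-1$, so it lands in $Sym^1$ only if $k=l=1$. In that case the bracket of two connected stable graphs is obtained by gluing one leaf of each, which is again connected with Betti number $bt(\Gamma_1)+bt(\Gamma_2)$. It is therefore a tree exactly when both inputs are trees, and then it equals the tree bracket by remark \ref{nütz_2}. If one input is not a tree, both sides vanish after $p$. When $p$ of an input is zero, either because of length $>1$, because of $\hbar$, or because it is not a tree, the same holds for the bracket output, by additivity of length, $\hbar$-power and Betti number. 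This shows $p\{x,y\}=\{px,py\}$.

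The only step needing care is the bracket compatibility. The issue is that $\mathcal{T}^d$ carries the Leibniz-extended bracket on all of $Sym$, whereas $p$ lands in $Sym^1$. So one must check that $\{p x,p y\}$ for $x,y$ of length one is itself of length one (true), and that no length-one output arises from longer inputs under $p$. The latter fails only for $k=l=1$, which is the case already handled. Sign and orientation conventions agree because both brackets order edges as ($\Gamma_1$, new edge, $\Gamma_2$).
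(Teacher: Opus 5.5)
Your proof is correct and follows the paper's argument: $\hbar\Delta_e$ is killed by the projection to $\hbar^0$, $\Delta_i$ is killed because self-gluing raises the Betti number so no tree can appear, and bracket compatibility comes from restriction to trees respecting the gluing bracket. You also spell out what the paper leaves implicit, namely that restriction to trees is a chain map (dual to the inclusion of the tree subcomplex), and that additivity of word length, $\hbar$-power and Betti number carries the connected-level compatibility over to all of $Sym$.
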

\begin{proof}
The fact that $p$ intertwines the shifted Poisson bracket follows from remark \ref{nütz_2}.
The fact that $p$ intertwines the BD differential with the differential of ${\mathcal{T}^{d}}$ can be verified directly:
$\Delta_i$ gets send to zero under the projection to ribbon trees since it produces a graph with betti number bigger or equal to one.
Trivially $\hbar\Delta_e$ gets send to zero since it involved a positive power of $\hbar$.
\end{proof}

 Denote by ${G}_{n,g}$ the isomorphism class of the stable ribbon graph with one vertex of loop defect $g$, no internal edges and $n$ leafs.
 Define
 $${G}:=\sum_{n>0}\sum_{g\geq0}G_{n,g}.$$
\begin{theorem}\label{sG_mce}
The
 element 
$${G}\in s\mathcal{G}^d\llbracket\hbar\rrbracket$$
is a Maurer-Cartan element.
\end{theorem}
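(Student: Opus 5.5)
The plan is a direct verification of the Maurer--Cartan (quantum master) equation
$$(-d+\Delta_i+\hbar\Delta_e)G+\tfrac12\{G,G\}=0$$
in $s\mathcal{G}^d\llbracket\hbar\rrbracket$. Here $d$ is the dual of edge contraction, i.e.\ edge expansion. Since $G$ is a sum of connected one-vertex graphs with no internal edges, all three nonzero terms land in the span of connected stable graphs with exactly one internal edge. Such a graph comes in one of two shapes. Shape (a) has two vertices, of loop defects $g_1,g_2$ and carrying $n_1,n_2$ leaves, joined by one edge. Shape (b) has one vertex of loop defect $g$ and $n$ leaves, together with a self-loop. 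The term $\hbar\Delta_e G$ vanishes: $\Delta_e$ is the Chevalley--Eilenberg differential of the bracket, so on the linear element $G\in Sym^1$ it yields zero, since it only acts on $Sym^{\geq 2}$. Hence it suffices to compare the coefficients of each graph of shape (a) and (b) in $-dG$, $\Delta_i G$ and $\frac12\{G,G\}$.

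For shape (a), the bracket $\{G_{n_1+1,g_1},G_{n_2+1,g_2}\}$ glues one leaf of each corolla, producing exactly this two-vertex graph. The symmetric factor $\frac12$ together with the two orderings of the pair $(\Gamma_1,\Gamma_2)$ gives coefficient $1$ after quotienting by isomorphisms. On the other side, $dG_{n_1+n_2,g_1+g_2}$ contains the same graph, because contracting its edge produces the corolla of loop defect $g_1+g_2$ with $n_1+n_2$ leaves (rule 1 of definition \ref{cont_g}). This requires checking that every splitting of defect and leaves appears exactly once in both expressions. The stability conditions need attention: the loop defect $o(v)=1$ with $|v|\geq 1$ is exactly what allows $G$ to include the small corollas that the bracket produces. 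For shape (b), $\Delta_i G_{n+2,g}$ glues two leaves of the corolla into a loop. By rule 2 of definition \ref{cont_g}, contracting that loop yields $G_{n,g+1}$, so the same graph occurs in $dG_{n,g+1}$. Hence the terms cancel pairwise with the sign $-d$.

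The main obstacle is making the coefficients and signs match exactly: automorphism factors of the corollas (the $S_n$-symmetry of leaves, the flip of a self-loop), the orientation conventions in the bracket and in $\Delta_i$, and the sign from the shifts $[d-3]$ with $d$ odd. My first attempt would be to work in a normalization where isomorphism classes are summed without $1/|\mathrm{Aut}|$ weights. I would then count, for each one-edge graph $\Gamma$, the number of pairs (corolla, leaf pair) gluing to $\Gamma$, and compare it with the number of edges of $\Gamma$ whose contraction gives a corolla, which is $1$. If the counts disagree, I would rescale $G_{n,g}$ by symmetry factors. As a consistency check, I would use that the ribbon analogue $S$ and the image under $\pi$ of \eqref{yangningduscht} must also be Maurer--Cartan elements, as remark \ref{prak_1} and theorem \ref{sRG_mce} require.
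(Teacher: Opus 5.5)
Your plan is the same as the paper's proof. Both sides of $dG=\frac12\{G,G\}+\Delta_iG+\hbar\Delta_eG$ are supported on one-edge stable graphs, $\Delta_eG=0$ because $G$ is linear, and each two-vertex graph, resp. self-loop graph, is matched with the corolla obtained by contracting its unique edge. In the paper's conventions $d$, $\Delta_i$ and $\{\cdot,\cdot\}$ are sums over isomorphism classes of the resulting graphs. Each such graph therefore appears with coefficient one on each side, so you will not need to rescale the $G_{n,g}$.

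There is, however, a genuine gap in the matching, and the paper's proof has it too: it fails for graphs without leaves.

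\begin{itemize}
\item The differential $d$ preserves the number of leaves, and $G=\sum_{n>0}\sum_g G_{n,g}$ has no leafless summand. Hence $-dG$ contains no leafless graph.
\item Yet $\frac12\{G,G\}$ contains, through $\{G_{1,g_1},G_{1,g_2}\}$ with $g_1,g_2\ge 1$, the graph with two univalent vertices of loop defects $g_1,g_2$ joined by an edge.
\item Likewise $\Delta_iG$ contains, through $\Delta_iG_{2,g}$ with $g\ge 1$, the one-vertex graph of loop defect $g$ carrying a single self-loop.
\end{itemize}

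These graphs are stable, since a vertex with $o(v)=1$ only needs $|v|\ge 1$. They are nonzero, since with one edge no automorphism reverses the orientation. They are distinct basis elements, and nothing in $-dG$ cancels them. In your notation, the case $n_1=n_2=0$ of shape (a) and the case $n=0$ of shape (b) would need the corollas $G_{0,g_1+g_2}$ and $G_{0,g+1}$, which are not summands of $G$. Correspondingly, the paper's identity $\sum_{n>0}\sum_{(i)}X=\frac12\sum_{n_1,n_2>0}\sum_{(a)}X$ fails for $n_1=n_2=1$. So with $G$ as defined, the Maurer--Cartan equation is violated by exactly these leafless terms.

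The fix is to also include the leafless corollas $G_{0,g}$ for $g\ge 2$. These are stable because $o(v)\ge 2$ imposes no valence condition. Having no leaves, they contribute nothing to $\Delta_iG$ or to any bracket. Meanwhile $dG_{0,g}$ produces exactly the missing graphs: the two-vertex ones with $g_1+g_2=g$, and the self-loop one with loop defect $g-1$. With $G$ enlarged to the sum of all stable corollas, your case analysis goes through verbatim.
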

\begin{proof}
Let us denote by $X$ a representative of a stable graph with two vertices and one internal edge connecting these and
by $Y$ a representative of a stable graph that has one vertices and one internal edge (that is thus a loop).
 We have by definition that $$dG_{n,g}=\sum_{(i)} X+\sum_{(ii)} Y. $$   
 Here $(i)$ denotes the sum over all isomorphism classes of stable  graphs $X$ such that when contracting that internal edge according to rule (\ref{cont_g}-1.) the resulting graph is isomorphic to $G_{n,g}$.
 Further $(ii)$ denotes the sum over all isomorphism classes of stable graphs $Y$ such that when contracting that loop according to rule (\ref{cont_g}-2.) we recover $G_{n,g}$.
 On the other hand we have, also by definition \ref{iwjs}, and if $G_{n_1,g_1}\neq G_{n_2,g_2}$ that $$\{G_{n_1,g_1},G_{n_2,g_2}\}=\sum_{(a)}  X $$ 
and the same if we swap the entries of the bracket.
Here $(a)$ indicates that the sum is over all isomorphism classes of graphs $X$ such that when cutting its one edge the resulting stable graph is the disjoint union of $G_{n_1,g_1}$ and $G_{n_2,g_2}$.
Further we have  
$$\{G_{n_1,g_1},G_{n_1,g_1}\}=2\sum_{(a)}  X $$
where $(a)$ stands for the analogous indexing set from before. Then we have that 
$$\Delta_i(G_{n,g})=\sum_{(b)}Y$$
where $(b)$ indicates that the sum is over all isomorphism classes of stable graphs $Y$ such that when cutting its one loop the resulting stable graph is $G_{n,g}$. Trivially it follows that $$\Delta_e(G_{n,g})=0.$$ Next we observe, with the same notation, that 
$$\sum_{n>0} \sum_{g\geq 0}\ \sum_{(i)}X=\sum_{Iso.Cls(X)}X=\frac{1}{2} \sum_{n_1,n_2>0} \sum_{g_1,g_2\geq0}\ 
\sum_{(a)}  X$$
and that
$$\sum_{n>0}\ \sum_{g\geq0}\ \sum_{(ii)}Y=\sum_{Iso.Cls(Y)}Y= \sum_{n>0} \sum_{g\geq0}\ \sum_{(b)}  Y.$$
Here the sum in the middle is over all isomorphism classes of type $X$ and $Y$ respectively, for all $n>0$ and $g\geq0$.
We can regroup these sums in the middle in two different ways, respectively (namely $(i)$ and $(a)$ respectively $(ii)$ and $(b)$).
Summing over the indexing set of this regrouping of the terms in that group thus equals the original sum.

Taking the previous arguments together it follows that
$$dG=\frac{1}{2}\{G,G\}+\Delta_iG+\hbar\Delta_eG,$$
which is what we wanted to show.
\end{proof}
\begin{remark}\label{prak_2}
Under the dequantization map from definition \ref{deq_sg} we have that $p(G)=T$.   
\end{remark}
    Denote by  $s\mathcal{G}^{d,tw}\llbracket\hbar\rrbracket$ the 
    $(d-2)$-twisted BD algebra obtained by twisting $s\mathcal{G}^{d}\llbracket\hbar\rrbracket$ by the Maurer-Cartan element $G$.

Thus by the previous remark we also get a dequantization map
$$p: s\mathcal{G}^{d,tw}\llbracket\hbar\rrbracket\rightarrow \mathcal{T}^{d,tw}. $$
 Note that we can restrict the maps $\Delta_i$ and $\Delta_e$ to graphs.
 Exactly as in theorem \ref{BD_G} we can deduce that this defines a $(d-2)$-twisted BD algebra on the graph complex.
 Furthermore diagram \ref{inc_proj_2} induces a map of BD algebras
$$v:\mathcal{G}^d\llbracket\hbar\rrbracket\rightarrow s\mathcal{G}^d\llbracket\hbar\rrbracket$$
and a dequantization map, defined exactly as in \ref{cgdq}
$$p:\mathcal{G}^d\llbracket\hbar\rrbracket\rightarrow \mathcal{T}^d.$$
Summarizing, we have 
\begin{theorem}
The graph complex $\big(\mathcal{G}^d\llbracket\hbar\rrbracket,\cdot,-d+\Delta_i+\hbar\Delta_e,\{\_,\_\}\big)$
is a $(d-2)$-twisted BD algebra. Furthermore diagram \ref{inc_proj_2} induces a diagram

 \begin{equation}\label{ag_dq} \begin{tikzcd}
\mathcal{T}^d&\mathcal{G}^d\llbracket\hbar\rrbracket\arrow[swap]{l}{p}\arrow{r}{v}&
s\mathcal{G}^d\llbracket\hbar\rrbracket\arrow[bend left=25]{ll}{p}
    \end{tikzcd},
    \end{equation}
    where the arrows pointing to $\mathcal{T}^d$ denote dequantization maps and the third one denotes a map of BD algebras.
\end{theorem}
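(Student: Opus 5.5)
The plan is to transport the construction for stable graphs in Theorem \ref{BD_G} to the subspace of ordinary graphs, exactly as was done for ribbon graphs in Theorem \ref{rbg_comp}. First, define $\Delta_i$, $\Delta_e$ and $\{\_,\_\}$ on $\mathcal{G}_{c,\bullet}^\vee$ by the same leaf-gluing formulas. Gluing two leaves of a graph, or of two graphs, never changes loop defects, so the output is again a graph with all $o(v)=0$; thus the operators preserve $\mathcal{G}_{c,\bullet}^\vee\subset s\mathcal{G}_{c,\bullet}^\vee$.

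The delicate point is the differential. On $\mathcal{G}_{c,\bullet}$ it is defined as the projection of the stable graph differential, so dually $d$ on $\mathcal{G}^\vee_{c,\bullet}$ is the restriction of the dual stable differential. The dual differential expands vertices, and we must check that on a graph it outputs only graphs. I expect this to be the main obstacle. Expanding a vertex with $o=0$ into a loop would require a vertex of positive loop defect, so it does not occur. Splitting a vertex into two vertices both with $o=0$ yields only graphs. So $\mathcal{G}^\vee_{c,\bullet}$ is a subcomplex of $s\mathcal{G}^\vee_{c,\bullet}$, and the inclusion $v$ is dual to the projection in diagram \eqref{inc_proj_2}.

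Hence $v$ is an injective chain map commuting with $\{\_,\_\}$, $\Delta_i$ and $\Delta_e$. The identities $d^2=0$, $[d,\Delta_i]=0$, $[d,\Delta_e]=0$, $\Delta_i^2=0$, $[\Delta_i,\Delta_e]=0$ and the BD relation then restrict from $s\mathcal{G}^d\llbracket\hbar\rrbracket$ via Theorem \ref{BD_G}. This makes $\mathcal{G}^d\llbracket\hbar\rrbracket$ a $(d-2)$-twisted BD algebra and $v$ a map of BD algebras, since it is an algebra map on symmetric powers and is $\hbar$-linear.

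For the dequantization maps, define $p:\mathcal{G}^d\llbracket\hbar\rrbracket\to\mathcal{T}^d$ as in Definition \ref{deq_sg}: project onto $Sym^1$ and $\hbar=0$, then onto trees. Then $p\circ v$ agrees with the stable-graph $p$ because trees are graphs, so the triangle \eqref{ag_dq} commutes. That $p$ is a dequantization map follows as in the lemma after Definition \ref{deq_sg}. Brackets are compatible by Remark \ref{nütz_2}. The term $\Delta_i$ raises the Betti number, so it dies under projection to trees. The term $\hbar\Delta_e$ dies at $\hbar=0$, and $\Delta_e$ also leaves $Sym^1$.
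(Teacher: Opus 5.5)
Your proposal is correct and is essentially the paper's argument. The paper simply notes that the operators restrict to graphs, that the BD identities follow as in Theorem \ref{BD_G}, and that diagram \eqref{inc_proj_2} induces $v$ and $p$; your closure checks and your deduction of the identities through the injective map $v$ make this explicit. One small correction: $\Delta_e$ lowers symmetric word length ($Sym^{n}\to Sym^{n-1}$, vanishing on $Sym^{1}$), so it does send $Sym^{2}$ into $Sym^{1}$, and it is only the factor $\hbar$ that gives $p\circ\hbar\Delta_e=0$ — which your first reason already covers.
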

\begin{remark}
    Note that the Maurer-Cartan element G does not come from an element in $\mathcal{G}^d\llbracket\hbar\rrbracket$ under the map $v$.
\end{remark}
\subsection{`Comm to Assoc'}\label{vsgzg}
Here we explain how the objects from the previous two subsections are connected.
For this we need that $B$, the set decorating the stable ribbon graphs, is finite.
We denote by
\begin{equation}\label{sRGsG}pr: s\mathcal{RG}^B_{c,\bullet}\rightarrow s\mathcal{G}_{c,\bullet} \end{equation}
the map given by sending an oriented B-colored stable ribbon graph to the underlying oriented graph, which we endow with the structure of a stable graph
by defining the loop defect at a vertex $v$ by
$$o(v)=2g(v)+\sum_{\lambda\in B}b_\lambda(v)+c(v)-1.$$
\begin{lemma}\label{fal}
For every finite set $B$
$$pr: s\mathcal{RG}^B_{c,\bullet}\rightarrow s\mathcal{G}_{c,\bullet}$$
is a map of chain complexes.
\end{lemma}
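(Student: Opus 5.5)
Both differentials are sums over internal edges, $\partial\Gamma=\sum_{e\in E(\Gamma)}\Gamma/e$, and $pr$ does not change the underlying set of half edges, the involution $\sigma_1$ or the ordering of internal edges. So the plan is to prove the identity edge by edge:
$$pr(\Gamma/e)=pr(\Gamma)/e$$
for each $e\in E(\Gamma)$, with both sides read with the same orientation sign. For a contractible edge this means checking two things: that the underlying (oriented) graphs agree, and that the loop defect
$$o(v)=2g(v)+\sum_{\lambda}b_\lambda(v)+c(v)-1$$
at the new or modified vertex equals the loop defect prescribed by Definition \ref{cont_g}. We also need that $pr$ is well defined, i.e.\ lands in stable graphs. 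If $g(v)=b(v)=0$ and $c(v)=1$ then $o(v)=0$ and $|v|\geq 3$ by stability. Otherwise $o(v)\geq 1$, and every vertex has $|v|\geq 1$ except in degenerate cases. Finiteness of $B$ keeps the sum in $o(v)$ finite and the map well defined on the direct sum.

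The main step is the case analysis following the rules of Definition \ref{cont}.

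\textbf{Case 1, generic non-loop edge.} The genus and boundary defects add, and $c_{new}=c(v_1)+c(v_2)-1$. Then
$$o_{new}=o(v_1)+1+o(v_2)+1-1-1=o(v_1)+o(v_2),$$
matching rule 1 of Definition \ref{cont_g}. In the special sub-case where $c_1,c_2$ are single half edges, the two cycles are deleted ($c$ drops by $2$) and $b$ rises by $1$, which again gives $o(v_1)+o(v_2)$.

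\textbf{Case 2, loop joining distinct cycles.} Here $g$ goes up by $1$ and $c$ goes down by $1$, so $o$ goes up by $1$, matching rule 2. In the annihilation sub-case, $c$ goes down by $2$ while $g$ and $b$ each go up by $1$, so the net change is $+2-2+1=+1$.

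\textbf{Case 3, loop inside one cycle.} $c$ goes up by $1$, so $o$ goes up by $1$. In the adjacent sub-case $b$ goes up by $1$ and $c$ is unchanged, again $+1$. If the cycle consists of just the two half edges of $e$, $b$ goes up by $2$ and $c$ goes down by $1$, again $+1$.

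In every case the change is exactly what contraction in $s\mathcal{G}$ prescribes, and orientations match because both sides delete $e$ from the same ordering.

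The expected obstacle is the set of edges that \emph{cannot} be contracted in $s\mathcal{RG}$ (the excluded situations in rules 1 and 3), since in $s\mathcal{G}$ every edge contracts. For these edges I would show that the corresponding term $pr(\Gamma)/e$ vanishes for another reason. In the non-loop exclusion, $v_1$ and $v_2$ are univalent-cycle vertices glued along $e$, so $pr(\Gamma)$ contains that edge between two vertices. I would check whether such terms either have vanishing contribution by an orientation/automorphism argument, or are already excluded by the stability conventions (e.g.\ the resulting isolated vertex with $|v|=0$ is not a stable graph, so $\Gamma/e$ is set to zero there as well). If neither works, the fallback is to interpret the statement modulo these degenerate components, which only occur when the graph is a single edge or a single loop with no leaves.

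Finally, $pr$ is compatible with the identification of isomorphism classes and with orientation reversal, so it descends to the quotient complexes. Summing the edgewise identities over all $e$ then gives $pr\circ\partial=\partial\circ pr$.
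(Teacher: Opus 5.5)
Your proposal is correct and follows the paper's route. The paper also reduces the statement to the fact that contracting an edge commutes with $pr$, and leaves the ``straightforward case-by-case analysis'' implicit; your loop-defect bookkeeping for the rules of Definition~\ref{cont} carries that analysis out correctly. For the non-contractible edges, commit to your second option: those contractions would produce a vertex with no half-edges, which is not a block of the partition of $H$ and hence not a stable graph, so the $s\mathcal{G}$ side vanishes too. The orientation argument cannot help there, since a one-edge graph admits no odd permutation of its edges.
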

\begin{proof}
This is a slight generalization of the standard fact that the  map \begin{equation}\label{rib_g}
\mathcal{RG}_{\bullet,c}\rightarrow \mathcal{G}_{\bullet,c}\end{equation}  
sending a ribbon graph to its underlying graph is a map of chain complexes. The lemma is proven in the same way by noting that the differential both on $s\mathcal{RG}^B_{c,\bullet}$ and on $s\mathcal{G}_{c,\bullet} $ is given by all ways of contracting an edge, independent of whether it is considered as part of a stable ribbon graph or a stable graph. The fact that the differential is compatible with the other combinatorial structures can be verified by a straightforward case-by-case analysis.
\end{proof}
Let us consider the induced map, which we denote by$$pr^\vee:s\mathcal{G}_{c,\bullet}^\vee\rightarrow {s\mathcal{RG}^B_{c,\bullet}}^\vee.$$
In the dual basis it is given by
\begin{equation}\label{ebieb}pr^\vee(\Gamma)=\sum_{Q}\Bar{\Gamma}
\end{equation}
where $Q$ indicates that we sum over all isomorphism classes of stable ribbon graphs whose underlying stable graph is $\Gamma$ and $\Bar{\Gamma}$ denotes the corresponding class.
\begin{lemma}\label{kId}
We have that  
$$\{pr^\vee(\_),pr^\vee(\_)\}=pr^\vee(\{\_,\_\})\ \
\text{and}\ \  pr^\vee\circ\Delta_i=(\delta_i+\nabla)\circ pr^\vee.$$
\end{lemma}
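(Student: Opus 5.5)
The plan is to prove both identities directly in the dual bases, by comparing coefficients of a fixed stable ribbon graph on each side. By \eqref{ebieb}, $pr^\vee(\Gamma)$ is the sum of all classes $\bar\Gamma$ of $B$-colored stable ribbon graphs whose underlying stable graph is $\Gamma$. Here the loop defect is determined by $o(v)=2g(v)+\sum_\lambda b_\lambda(v)+c(v)-1$. Both identities then reduce to a bijection between two indexing sets of decorated gluings.

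\textbf{Bracket identity.} Fix a connected stable ribbon graph $\Xi$. On the left, the coefficient of $\Xi$ in $\{pr^\vee\Gamma_1,pr^\vee\Gamma_2\}$ counts triples $(\bar\Gamma_1,\bar\Gamma_2,\text{choice of leaves } l_1,l_2)$ with $\Xi=\bar\Gamma_1\cup_{l_1,l_2}\bar\Gamma_2$. On the right, the coefficient of $\Xi$ in $pr^\vee\{\Gamma_1,\Gamma_2\}$ counts pairs consisting of a gluing $\Gamma_1\cup_{l_1,l_2}\Gamma_2$ and a ribbon structure on it equal to $\Xi$.

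To match these, cut the new edge of $\Xi$. Cutting a separating edge of a stable ribbon graph canonically recovers ribbon structures on both pieces. The cyclic orders, the partitions $C(v)$ and the defects are local to the vertices and are unchanged. The coloring of free boundaries also passes through: the face containing the edge splits, and its free boundaries distribute to the two faces.

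The loop defects are compatible because $o$ is computed vertexwise and gluing two different components does not change any vertex. Orientations agree because both sides use the same convention ($\Gamma_1$ edges, new edge, $\Gamma_2$ edges). So the bijection is sign-preserving, including the symmetry factor when $\Gamma_1=\Gamma_2$.

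\textbf{Laplacian identity.} For $pr^\vee\circ\Delta_i=(\delta_i+\nabla)\circ pr^\vee$ we argue the same way. The key observation, already made in the proof of lemma \ref{rbc}, is that $\delta_i+\nabla$ is the sum over \emph{all} self-gluings of two leaves, with no condition on faces. Self-gluing two leaves of $\bar\Gamma$ gives a ribbon structure on $\circ_{l_1,l_2}\Gamma$. Conversely, cutting a non-separating edge of a stable ribbon graph $\Xi$ over $\circ_{l_1,l_2}\Gamma$ returns a unique $\bar\Gamma$ over $\Gamma$ together with the two leaves.

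Again vertices are unchanged, so $o$ is unchanged, and the orientation convention (new edge first) matches on both sides. One point needs a check: the faces and colorings of $\bar\Gamma$ are recovered from $\Xi$. Cutting an edge merges or splits faces in a determined way, and the $B$-coloring on free boundaries of $\Xi$ induces one on $\bar\Gamma$. In the merge case, the two new free boundaries adjacent to the cut both inherit the color of the face they came from, so the reconstruction is well-defined.

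\textbf{Main obstacle.} The main difficulty is the bookkeeping of isomorphism classes and automorphisms: sums over isomorphism classes can hide multiplicities when graphs have symmetries. I would first try to handle this by working with labeled leaves and half-edges and only passing to isomorphism classes at the end. The bijections above are equivariant under relabeling, so the counts agree with automorphism factors on both sides. A second point to verify is the role of finiteness of $B$: it only ensures each $pr^\vee(\Gamma)$ is a finite sum, since there are finitely many colorings and defect distributions realizing a given $o(v)$.
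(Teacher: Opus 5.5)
Your proposal is correct and is essentially the paper's argument. The paper proves both identities by the single observation that cutting an edge commutes with projecting to the underlying stable graph (separating edges for the bracket, the new non-separating edge for $\Delta_i$ versus $\delta_i+\nabla$); you make the resulting bijection and its orientation, defect and coloring bookkeeping explicit, noting only that your color-recovery step after a cut tacitly needs leafless faces to carry a color in $B$, as the paper also assumes in its Kontsevich construction.
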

\begin{proof}
Given $\Gamma_1$ and $\Gamma_2$ stable graphs we have that $\{pr^\vee(\Gamma_1),pr^\vee(\Gamma_2)\}$ is the sum over all stable ribbon graphs such that
when cutting one edge the result is the disjoint union of stable ribbon graph whose underlying stable graphs (in the sense of lemma \ref{fal}) are $\Gamma_1$ respectively $\Gamma_2$.
But this is the same as the sum over all stable ribbon graphs such that when cutting one internal edge of their underlying stable graphs the result is the disjoint union of $\Gamma_1$ respectively $\Gamma_2$.
In other words `cutting an edge and projecting to the underlying stable graph commutes´.
This is the first identity and the second one follows analogously. 
\end{proof}

\begin{cons}
Let us consider an auxiliary map of degree $(d-3)$ induced by the map $pr^\vee$ and suitable shifts 
$$s\mathcal{G}_{c,\bullet}^\vee[d-3]\rightarrow {s\mathcal{RG}^B_{c,\bullet}}^\vee[2d-6].$$
It induces a map 
$$aux:Sym(s\mathcal{G}_{c,\bullet}^\vee[d-3])\rightarrow Sym({s\mathcal{RG}^B_{c,\bullet}}^\vee[2d-6])$$
which has degree $n(d-3)$ on the subspace $Sym^n(s\mathcal{G}_{c,\bullet}^\vee[d-3])$.
\end{cons}
\begin{df}
We define a map of degree zero
\begin{equation}\label{isjk}
\pi: Sym(s\mathcal{G}_{c,\bullet}^\vee[d-3])\llbracket\hbar\rrbracket\rightarrow Sym({s\mathcal{RG}^B_{c,\bullet}}^\vee[2d-6])\llbracket\gamma\rrbracket[3-d]
\end{equation}
by sending 
$$(\Gamma_1\cdot\ldots \cdot\Gamma_n){\hbar}^{n+2k-1}\mapsto aux(\Gamma_1\cdot\ldots \cdot\Gamma_n)\gamma^k,$$
composed by a shift of $(3-d)$ and zero on elements not of this form.
\end{df}
\begin{theorem}\label{GAL}
For every finite set $B$ the map from definition \ref{isjk} is a map of dg shifted Lie algebras
 $$\pi:\Big(s\mathcal{G}^d\llbracket\hbar\rrbracket,-d+\Delta_i+\hbar\Delta_e,\{\_,\_\}\Big)\rightarrow \Big( s\mathcal{RG}^d_B\llbracket\gamma\rrbracket,-d+\nabla+\delta_i+\gamma\delta_e,\{\_,\_\}\Big).$$   
   \end{theorem}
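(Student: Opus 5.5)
Theorem \ref{GAL} asks for two things: that $\pi$ intertwines the shifted Lie brackets, and that it intertwines the differentials. I treat them separately, working throughout in the dual bases of (not necessarily connected) graphs. Lemma \ref{kId} is the basic input. It says that $pr^\vee$ intertwines the brackets on connected graphs, and that it sends $\Delta_i$ to $\nabla+\delta_i$.

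The first step is to pin down the weight bookkeeping. Take a product $\Gamma_1\cdots\Gamma_n$ of connected stable graphs. The exponent of $\hbar$ attached to it is $n+2k-1$, and this is exactly what is required for $\pi$ to land on $\gamma^k$. It is consistent with the LQT weighting $\gamma\mapsto\hbar^2$, read backwards. Each operation shifts this exponent in a definite way:
\begin{itemize}
\item $\{\_,\_\}$ on the symmetric algebra glues two connected components. This lowers $n$ by one and fixes the power of $\hbar$, so the exponent decreases by $1$ relative to $n$.
\item $\Delta_i$ fixes $n$ and the $\hbar$-power.
\item $\hbar\Delta_e$ merges two components and raises the $\hbar$-power by one. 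Hence $n+2k-1$ goes to $(n-1)+2k'-1$ with $k'=k+1$.
\end{itemize}
On the ribbon side, $\gamma\delta_e$ likewise merges two components while raising $k$ by one. The consequence is that $\pi$ is nonzero only on the allowed parity class and carries each operation to its counterpart. The degrees also match: $aux$ has degree $n(d-3)$, the $\hbar$-power contributes $(n+2k-1)(3-d)$, $\gamma^k$ has degree $k(6-2d)$, and the final shift is $3-d$. These cancel, so $\pi$ has degree zero.

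The second step is to check the structures on generators. Since $\{\_,\_\}$, $\Delta_e$ and $\delta_e$ are all determined by the bracket on connected pieces, the Leibniz rule and the Chevalley--Eilenberg formula, matching brackets reduces to matching on connected graphs; this is the first identity of Lemma \ref{kId}, extended over symmetric products. Note that $aux$ is the symmetric power of a single linear map, so it is compatible with extending by Leibniz. Compatibility with $d$ comes from Lemma \ref{fal}, dualized. Compatibility of $\Delta_i$ with $\nabla+\delta_i$ is the second identity of Lemma \ref{kId}, extended as a derivation. Compatibility of $\hbar\Delta_e$ with $\gamma\delta_e$ then follows, because both are the Chevalley--Eilenberg differentials of brackets that $\pi$ intertwines, once the weights from the first step are included.

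I expect the main obstacle to be the signs and shifts. Specifically, $aux$ has degree $(d-3)$ per factor, so on a product it yields Koszul signs that must agree with those in the shifted symmetric algebras $Sym(s\mathcal{G}^\vee[d-3])$ and $Sym(s\mathcal{RG}^\vee[2d-6])[3-d]$. My first attempt would be to use that $d$ is odd: then $d-3$ is even, the per-factor shift carries no sign, and the Koszul signs on the two sides coincide. With that settled, what remains is to confirm that the orientation conventions for the new edge (first, or in the middle) are preserved by $pr^\vee$. They are, since $pr$ keeps the ordering of edges.
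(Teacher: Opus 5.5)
You follow the same route as the paper: Lemma~\ref{fal} for $d$, Lemma~\ref{kId} for the bracket and for $\Delta_i\mapsto\nabla+\delta_i$, plus the count of $\hbar$-exponents. On elements $(\Gamma_1\cdots\Gamma_n)\hbar^{n+2k-1}$ with $k\ge 0$ your verification is fine.

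The gap is the step ``$\pi$ is nonzero only on the allowed parity class and carries each operation to its counterpart''. Since $\pi$ is \emph{defined} to be zero on every other element, you also need the operations to send killed elements to killed elements, and they do not.

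\emph{Differential.} Take $B\neq\emptyset$ and $x=T_3\cdot T_3\cdot T_3$ with no $\hbar$. For $n=3$ only the powers $\hbar^{2+2k}$, $k\ge 0$, survive, so $\pi(x)=0$ and the target differential of $\pi(x)$ is $0$. But $(-d+\Delta_i+\hbar\Delta_e)x$ contains the term $\hbar\Delta_e x=3\{T_3,T_3\}\,T_3\,\hbar$. This lies in $Sym^2\hbar^{1}$, so it is of the allowed form with $n=2$, $k=0$. Its image under $\pi$ is a nonzero multiple of $pr^\vee\{T_3,T_3\}\cdot pr^\vee T_3$. The terms $-dx$ and $\Delta_i x$ stay in $Sym^3\hbar^0$ and are killed.

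\emph{Bracket.} We have $\pi(T_3\hbar)=0$, since for $n=1$ only even powers of $\hbar$ survive. Yet $\{T_3\hbar,T_3\hbar\}=\{T_3,T_3\}\hbar^2$ is allowed with $n=1$, $k=1$, and $\pi$ sends it to $pr^\vee\{T_3,T_3\}\,\gamma\neq 0$.

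So neither compatibility holds on all of $s\mathcal{G}^d\llbracket\hbar\rrbracket$.

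The reason is already in your bookkeeping if you give $Sym^n\hbar^a$ the weight $k=(a-n+1)/2\in\tfrac12\mathbb{Z}$. The bracket adds weights, $\hbar\Delta_e$ raises the weight by one, and $-d$ and $\Delta_i$ preserve it. Hence the allowed part (weight in $\mathbb{Z}_{\ge 0}$) is closed under all operations. The kernel of $\pi$ (half-integral or negative weight) is neither stable under $\hbar\Delta_e$ nor a Lie ideal. You only checked the first of these two facts.

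The paper's proof has the same blind spot. Its identity $n_1+2k_1-1+n_2+2k_2-1=(n_1+n_2-1)+2(k_1+k_2)-1$ only treats two allowed inputs.

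The repair is to restrict the domain to the sub dg Lie algebra $A\subset s\mathcal{G}^d\llbracket\hbar\rrbracket$ of formal sums of elements $y\,\hbar^{n+2k-1}$ with $y\in Sym^n(s\mathcal{G}_{c,\bullet}^\vee[d-3])$ and $k\ge 0$. $A$ is not closed under the product, which fits $\pi$ being only predual to a $2$-weighted algebra map. On $A$ your argument goes through verbatim, including the sign discussion via the evenness of $d-3$ and $2d-6$. Since $G\in A$, the restricted map still gives $\pi(G)=S$.
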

\begin{proof}
 The fact that $\pi$ is compatible with the graph complex differentials and intertwines $\Delta_i$ with $\nabla+\delta_i$ follows directly from the lemma \ref{fal} respectively \ref{kId} and by definitions.
 The fact that $\pi$ is compatible with the brackets follows as they are defined by extending the Lie brackets as a derivation to the symmetric algebra- further by noting that $n_1+2k_1-1+n_2+2k_2-1=(n_1+n_2-1)+2(k_1+k_2)-1,$
 ie. the case distinction for when $\pi$ is or is not zero checks out as well.
 Similarly it follows that $\pi$ intertwines $\hbar\Delta_e$ with $\gamma\delta_e$. 
\end{proof}
Further the map $\pi$ is the predual of a 2-weighted map of algebras, in the following sense.

 \begin{proof}

 Let $\tilde{\gamma}^k$ be a variable of degree $k(2d-6)$ and $\tilde{\hbar}^l$ a variable of degree $l(d-3)$, for all $k,l>0$ which we will understand as the dual of $\gamma^k$ respectively $\hbar^l$.
 Define a map of degree zero
$$\pi^{n,k}:Sym^n\big(s\mathcal{RG}^B_{c,\bullet}[6-2d]\big)\tilde{\gamma}^k[d-3] \rightarrow Sym^n(s\mathcal{G}_{c,\bullet}[3-d])\cdot \tilde{\hbar}^{n+2k-1} $$
by $$\pi^{n,k}(\Gamma_1\cdot\ldots \cdot \Gamma_n\tilde{\gamma}^k)=proj(\Gamma_1\cdot\ldots \cdot\Gamma_n)\widetilde{\hbar}^{n+2k-1}$$
composed by a shift of $(d-3)$.
Here $proj$ is defined as the symmetric algebra functor of the map $pr$ composed by suitable shifts.
Thus we have that
\begin{equation}\label{p2w}
\pi^{n,k}(\Gamma_1\cdot\ldots \cdot \Gamma_n\tilde{\gamma}^k)=\pi^{1,0}(\Gamma_1)\cdot\ldots \cdot \pi^{1,0}(\Gamma_n)\widetilde{\hbar}^{n+2k-1}.
\end{equation}
Considering the dual maps and their direct sum over all $n,k$ this induces a map
$$s\mathcal{G}^d\llbracket\hbar\rrbracket\rightarrow s\mathcal{RG}^d_B\llbracket\gamma\rrbracket,$$
which coincides with the map $\pi$ from theorem \ref{GAL}. We say $\pi$ is predual to a 2-weighted map of algebras (recalling definition \ref{weimap}) in the sense that it is
induced by maps satisfying \ref{p2w} via taking duals and then direct sum.
 \end{proof}
 By using the map \ref{rib_g} $$\mathcal{RG}_{\bullet,c}\rightarrow \mathcal{G}_{\bullet,c}$$ we can prove in the same way as in the previous theorem that it
 induces a map (by abuse of notation also denoted $\pi$) as follows:
\begin{theorem}\label{GALt}
For every finite set $B$
    $$\pi: \Big(\mathcal{G}^d\llbracket\hbar\rrbracket,-d+\Delta_i+\hbar\Delta_e,\{\_,\_\}\Big)\rightarrow \Big(\mathcal{RG}^d_B\llbracket\gamma\rrbracket-d+\nabla+\delta_i+\gamma\delta_e,\{\_,\_\}\Big)$$
    is a map of dg $(d-2)$-shifted Lie algebras and the predual to a map of weighted algebras in the sense of the previous theorem.  
\end{theorem}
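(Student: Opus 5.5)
The plan is to copy the proof of Theorem \ref{GAL} verbatim, replacing the stable (ribbon) graph complexes by the ribbon graph and graph complexes. First I check that the map \eqref{rib_g}, sending a ribbon graph to its underlying graph, is a chain map $\mathcal{RG}^B_{c,\bullet}\rightarrow \mathcal{G}_{c,\bullet}$. The cleanest route is to use the commutative diagrams \eqref{inc_proj} and \eqref{inc_proj_2}. The projection $s\mathcal{RG}^B_{c,\bullet}\rightarrow\mathcal{RG}^B_{c,\bullet}$ kills every stable ribbon graph having a vertex with nonzero defect or $c(v)>1$. The projection $s\mathcal{G}_{c,\bullet}\rightarrow\mathcal{G}_{c,\bullet}$ kills every stable graph with nonzero loop defect. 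Under $pr$ from Lemma \ref{fal}, the loop defect is $o(v)=2g(v)+\sum_\lambda b_\lambda(v)+c(v)-1$, and this vanishes exactly when $v$ is an honest ribbon vertex. Hence $pr$ descends along the two projections, giving the map \eqref{rib_g}, and chain-compatibility is inherited from Lemma \ref{fal}.

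Dually, $pr^\vee:\mathcal{G}_{c,\bullet}^\vee\rightarrow{\mathcal{RG}^B_{c,\bullet}}^\vee$ sends a graph to the sum of all ribbon structures on it with $B$-colorings. The identities of Lemma \ref{kId} then hold with the same argument, because cutting an edge commutes with forgetting the ribbon structure. Concretely, they are $\{pr^\vee\_,pr^\vee\_\}=pr^\vee\{\_,\_\}$ and $pr^\vee\circ\Delta_i=(\delta_i+\nabla)\circ pr^\vee$. One alternative is to note that gluing two leaves of a graph yields a graph and never a stable one, so these identities are simply restrictions of the stable ones. Doing so also checks that the BD operators on $\mathcal{RG}$ and $\mathcal{G}$ are the restricted ones used in Theorem \ref{rbg_comp}.

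Next I define $\pi$ exactly as in Definition \ref{isjk}, using $aux$ built from this $pr^\vee$. It sends $(\Gamma_1\cdots\Gamma_n)\hbar^{n+2k-1}\mapsto aux(\Gamma_1\cdots\Gamma_n)\gamma^k$ and is zero otherwise. The degree bookkeeping is identical to the stable case. Compatibility with $-d$ and with $\Delta_i\mapsto\nabla+\delta_i$ follows from the two identities above, extended as derivations. For the bracket and for $\hbar\Delta_e\mapsto\gamma\delta_e$, I use the exponent count $(n_1+2k_1-1)+(n_2+2k_2-1)=(n_1+n_2-1)+2(k_1+k_2)-1$. For $\Delta_e$ one power of $\hbar$ times a bracket lowering $n$ by one matches $\gamma$: $(n-1)+2(k+1)-1=n+2k-1+1$ together with the $\hbar$. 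This shows that the nonvanishing locus of $\pi$ is respected. The predual statement follows as before via the maps $\pi^{n,k}$ satisfying \eqref{p2w}, with $proj$ now built from \eqref{rib_g}.

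The main obstacle is the sign and orientation bookkeeping. We must make sure that the shifts $[d-3]$ versus $[2d-6]$ in $aux$ (degree $n(d-3)$ on $Sym^n$) interact correctly with the orientation conventions of the glued edge. I would handle this exactly as in Theorem \ref{GAL}, noting that nothing in that argument used stability.
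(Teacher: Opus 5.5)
Your reduction is the paper's own: rerun the proof of Theorem \ref{GAL} with the underlying-graph map \eqref{rib_g}. The preliminary steps are sound. The quantity $o(v)=2g(v)+\sum_\lambda b_\lambda(v)+c(v)-1$ vanishes exactly at honest ribbon vertices, so $pr$ descends through the surjective projections to a chain map. The identities of Lemma \ref{kId} restrict, because gluing leaves never changes vertex decorations.

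\textbf{The step that fails is the exponent count.} From it you conclude that ``the nonvanishing locus of $\pi$ is respected'', but it proves only one direction. Let $S$ be the support of $\pi$, the span of the pieces $Sym^n(\mathcal{G}^\vee_{c,\bullet}[d-3])\,\hbar^{\,n-1+2k}$ with $k\geq 0$. The count shows that if the inputs lie in $S$, then so does the output, with the matching power of $\gamma$. Since $\pi$ is zero off $S$, you also need the operations to send elements outside $S$ to elements outside $S$, and they do not. Call $e-(n-1)$ the offset of $Sym^n\hbar^e$. The bracket adds offsets and $\hbar\Delta_e$ raises the offset by $2$. So two odd offsets, or an offset $-2$ acted on by $\hbar\Delta_e$, land in $S$.

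Concretely, take $B\neq\emptyset$ and the trivalent corolla $T_3$. First, take $X=\hbar\,T_3$, which has offset $1$, so $\pi(X)=0$. But $\{X,X\}=\hbar^2\{T_3,T_3\}$ lies in $Sym^1\hbar^2$ (so $k=1$). Hence $\pi\{X,X\}$ is the shift of $aux(\{T_3,T_3\})\gamma\neq 0$, while $\{\pi X,\pi X\}=0$. Second, take $Y=T_3\cdot T_3\cdot T_3\in Sym^3\hbar^0$, which has offset $-2$. Then $\pi(Y)=\pi(-dY)=\pi(\Delta_iY)=0$. But $\hbar\Delta_eY$ is a nonzero multiple of $\hbar\,\{T_3,T_3\}\cdot T_3\in Sym^2\hbar^1$ (so $k=0$), whose image under $\pi$ is nonzero. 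So $\pi$ is not even a chain map on all of $\mathcal{G}^d\llbracket\hbar\rrbracket$, let alone compatible with brackets.

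Sign and orientation bookkeeping, which you flag as the main obstacle, cannot repair this, and the problem is not peculiar to your write-up. The paper's proof of Theorem \ref{GAL}, which Theorem \ref{GALt} invokes, consists of the same one-directional count, so the statement as literally written does not hold.

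What your argument does prove is the statement for the restriction $\pi|_S$. The subspace $S$ is closed under $\{\_,\_\}$, $-d$, $\Delta_i$ and $\hbar\Delta_e$, because even nonnegative offsets stay even and nonnegative. On $S$, the exponent identity, Lemma \ref{kId}, and the fact that $aux$ is $Sym$ of a Lie map together give a complete proof. The stable analogue of $S$ contains $G\in Sym^1\hbar^0$, so twisting by $G$ preserves it. You should state and prove the theorem for this sub dg shifted Lie algebra rather than for the whole domain.
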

Further we note that the map \ref{rib_g} restricts to $$\mathcal{RT}_{\bullet,c}\rightarrow \mathcal{T}_{\bullet,c}.$$
We can prove in a similar way (just omitting some previous shifts) that it induces a map (by abuse of notation also denoted $\pi$) as follows:
\begin{theorem}
For every finite set $B$
    $$\pi: \Big(\mathcal{T}^d,\cdot,-d,\{\_,\_\}\Big)\rightarrow \Big(\mathcal{RT}^d_B,\cdot,-d,\{\_,\_\}\Big)$$
     is a map of $(d-2)$-shifted Poisson dg algebras.
\end{theorem}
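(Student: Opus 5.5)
The plan is to mirror the proof of Theorem \ref{GAL}, restricted to genus zero, one-face objects and with the $\hbar$/$\gamma$ bookkeeping removed. First, the restriction of the map \ref{rib_g} to trees is well defined. A connected ribbon tree has $g(\Gamma)=0$ and $f(\Gamma)=1$, so its underlying graph has first Betti number zero and is a tree. Its dual map
$$pr^\vee:\mathcal{T}_{c,\bullet}^\vee\rightarrow {\mathcal{RT}^B_{c,\bullet}}^\vee,\qquad \Gamma\mapsto\sum_{Q}\bar\Gamma,$$
sums over all $B$-colored ribbon tree structures on $\Gamma$ (finitely many since $B$ is finite), exactly as in \eqref{ebieb}. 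Since the shifts in definitions \ref{tc} and \ref{rtc} are both $[d-3]$, $pr^\vee$ has degree zero after shifting. We then let $\pi:=Sym(pr^\vee)$. No weighting is needed, because the trees sit only in $Sym$ with no formal variable, so $\pi$ is a map of graded commutative algebras by construction.

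Second, $\pi$ commutes with the differentials. By Definition \ref{Ddrgc}, $\mathcal{RT}^B_{c,\bullet}$ is a subcomplex of $s\mathcal{RG}^B_{c,\bullet}$, and $\mathcal{T}_{c,\bullet}$ is a subcomplex of $s\mathcal{G}_{c,\bullet}$. Lemma \ref{fal} gives that $pr:s\mathcal{RG}^B_{c,\bullet}\rightarrow s\mathcal{G}_{c,\bullet}$ is a chain map. Contracting an internal edge of a ribbon tree never produces a boundary-defect or genus-defect vertex, because such cases require two leaves cut off into a single face, and that would force a second face. So $pr$ restricts to a chain map of tree subcomplexes, and $pr^\vee$ is a chain map. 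Since $\pi$ is the symmetric-algebra extension and both differentials $-d$ are derivations, $\pi$ is a chain map.

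Third, $\pi$ intertwines the brackets. Lemma \ref{kId} says $\{pr^\vee(\_),pr^\vee(\_)\}=pr^\vee\{\_,\_\}$ on stable graphs. By remarks \ref{nütz} and \ref{nütz_2}, the brackets on trees and ribbon trees are the restrictions compatible with diagrams \ref{inc_proj} and \ref{inc_proj_2}. Gluing two trees (ribbon trees) along a leaf again yields a tree (ribbon tree), since the face count is $f_1+f_2-1=1$. Hence the identity restricts to connected trees. Both brackets on the symmetric algebras are extended by the Leibniz rule, and $\pi$ is an algebra map, so $\pi\{x,y\}=\{\pi x,\pi y\}$ follows on all of $\mathcal{T}^d$.

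The main obstacle is the sign and orientation bookkeeping in step three. Given a ribbon tree obtained by gluing, we must check that its orientation, which orders the edges of $\Gamma_1$, then the new edge, then the edges of $\Gamma_2$, matches the orientation of the underlying glued tree. We must also check that the sum over ribbon structures on $\Gamma_1\cup\Gamma_2$ is in bijection with pairs of ribbon structures on $\Gamma_1$ and $\Gamma_2$, including automorphism factors. I would handle this as in the proof of Lemma \ref{kId}: the statement that cutting an edge commutes with forgetting the ribbon structure gives the bijection, and since the orientation conventions are literally identical on both sides, no extra signs arise.
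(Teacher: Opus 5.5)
Your proposal is correct and matches the paper's argument. The paper likewise restricts the map \eqref{rib_g} to $\mathcal{RT}_{\bullet,c}\to\mathcal{T}_{\bullet,c}$ and reruns the proof of Theorem \ref{GAL} via Lemmas \ref{fal} and \ref{kId}, dropping the $\hbar$/$\gamma$ weighting and the extra shifts. One small point: your step-two justification that contracting an edge of a ribbon tree creates no defects is imprecise, since the exceptional cases of the contraction rule require either a loop or a single-half-edge cycle, and neither occurs because ribbon-tree vertices have one cycle of at least three half-edges; in any case you do not need it, as $\mathcal{RT}^B_{c,\bullet}$ being a subcomplex is part of Definition \ref{Ddrgc}.
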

The diagrams \ref{rbg_comp} and \ref{ag_dq} fit with the maps provided by the three previous theorems together as follows:
    \begin{equation}\label{comp_g_rg}
\begin{tikzcd}
\mathcal{T}^d\arrow{d}{\pi}&\mathcal{G}^d\llbracket\hbar\rrbracket\arrow{d}{\pi}\arrow[swap]{l}\arrow{r}&
s\mathcal{G}^d\llbracket\hbar\rrbracket\arrow{d}{\pi}\arrow[bend right=20]{ll}\\
\mathcal{RT}^d_{B}&\mathcal{RG}^d_{B}\llbracket\gamma\rrbracket\arrow[swap]{l}\arrow{r}&
s\mathcal{RG}^d_{B}\llbracket\gamma\rrbracket\arrow[bend left=20]{ll}.
    \end{tikzcd}
    \end{equation}
    This can be verified by noting that the diagrams \ref{inc_proj} and \ref{inc_proj_2}
    fit together with the maps induced by \ref{sRGsG} into a commutative diagram and by construction. 

Using the explicit description of the maps $\pi$ from equation \ref{ebieb}
it is immediate to see that for the Maurer-Cartan elements from the previous sections we have that 
 $$\pi(G)=S\ \text{and}\ \pi(T)=D.$$

Since with the previous remark and remarks \ref{nütz} and \ref{nütz_2} we have thus seen that
all the respective Maurer-Cartan elements are compatible we can twist the outer square of \ref{comp_g_rg} by those to obtain following
commutative diagram where the horizontal maps are dequantization maps 
     $$\begin{tikzcd}
     s\mathcal{G}^{d,tw}\llbracket\hbar\rrbracket\arrow{d}{\pi}\arrow{r}{p}&\mathcal{T}^{d,tw}
\arrow{d}{\pi}\\
    s\mathcal{RG}^{d,tw}_{B}\llbracket\gamma\rrbracket\arrow{r}{p} &\mathcal{RT}^{d,tw}_{B}.    
     \end{tikzcd}$$
\newpage
\section{Generalized Kontsevich Cocycle Map}
In section \ref{ksc} we explain how to connect the ribbon graph world, section \ref{rbgs}, with the world of cyclic $A_\infty$-categories and their quantizations, section \ref{ncw}. We do so using a slight generalization of Kontsevich's cocycle construction \cite{Kon92b}, see also \cite{AmTu15}. Further we study carefully what happens when we restrict to essentially finite cyclic $A_\infty$-categories. 

In the following section \ref{cksc} we study similar ideas, but for ordinary graphs and for cyclic $L_\infty$-algebras and their quantizations.
\subsection{Non-Commutative World}\label{ksc}

    Given $V_B$ a collection of cyclic chain complexes of odd degree $d$ together with ${I^q\in MCE (\mathcal{F}^{pq}(V_B))}$ we construct a map (see lemma \ref{skadj123} below)
    $$\rho_{I^q}: s\mathcal{RG}_B^d\llbracket\gamma\rrbracket\rightarrow \mathcal{F}^{pq}(V_B).$$

We explain how to to so in multiple steps, partly inspired by \cite{AmTu15}.
We begin by associating certain tensors, denoted $I^q|_{v_i}$, to each vertex $v_i$ of a connected B-colored stable ribbon graph.
\begin{cons}
  Let $\Gamma$ be a connected B-colored stable ribbon graph and $v_i$ one of its vertices:\begin{itemize}
      \item 
 To such a vertex $v_i$ of $\Gamma$ is associated a genus defect $g_i\in\mathbb{N}_{\geq 0}$, a boundary defect $b_\lambda(v_i)\in\mathbb{N}_{\geq 0}$ for each $\lambda\in B$.
 Further the half-edges of that vertex are partitioned into cycles $c_1,\cdots, c_{n}$.
 Two consecutive half-edges in a cycle together belong to a unique free boundary of $\Gamma$ and thus are associated an element of $B$.
 Considering all the half-edges in a cycle thus defines a cyclic word in elements of $B$ and all the cycles taken together a `symmetric' word in cyclic word in letters from $B$. 
 \item  Given a vertex $v_i$ of $\Gamma$ we define a map (which is not graded), denoted
  \begin{equation}\label{iso_sc}
(\_)|_{v_i}: Sym(Cyc^*(V_B)[d-4])\llbracket\gamma\rrbracket[3-d]\rightarrow Sym(Cyc^*_+(V_B)[-1]).\end{equation}
It is given by first a shift by $(3-d)$, composed with the map  
$$Sym(Cyc^*(V_B)[d-4])\llbracket\gamma\rrbracket\rightarrow Sym(Cyc^*(V_B)[-1])\llbracket\gamma\rrbracket$$
induced by {$Cyc^*(V_B)[d-4]\rightarrow Cyc^*(V_B)[-1]$}.
Lastly we project to the subspace of $Sym(Cyc^*_+(V_B)[-1])$ determined by the symmetric word in cyclic words in letters of $B$ and 
restrict to the power $\gamma^{g_i}$ and the power of $\Pi_{\lambda\in B}\nu_\lambda^{b_\lambda(v_i)}$.
\item Note that for $I^q\in MCE(\mathcal{F}^{pq}(V_B))$, which has degree $(3-d)$, we have that 
$${|{I}^q|_{v_i}|=(3-d)(2-2g_i-c(v_i)-\sum_{\lambda\in B}b_\lambda(v_i))}.$$
\end{itemize}
\end{cons}
Next we explain how the datum of an oriented stable ribbon graph together with some auxiliary choices
defines a permutation of the half-edges of such a stable ribbon graph:
\begin{cons}\label{perm}
 Assume that $\Gamma$  has $n$ vertices, each with $c(v_i)$ cycles.
 \begin{itemize}
     \item 
Chose an ordering of its vertices, at each vertex $v_i$  an ordering of the cycles and for each such cycle an half-edge.
This gives an ordering of all the half edges at that vertex (using the cyclic orderings of the half edges within each cycle) denoted 
$$h_{v_i,1}\cdots h_{v_i,|v_i|}$$
and with the ordering of the vertices together defines an ordering of all half edges of $\Gamma$
$$h_{v_1,1}\cdots h_{v_1,|v_1|}h_{v_2,1}\cdots h_{v_r,|v_r|}.$$
\end{itemize}
Next assume that $\Gamma$ is orientated, ie. we have an ordering $(e_1\cdots e_n)$ of the internal edges.
\begin{itemize}
    \item 
Chose an ordering of the half edges belonging to a given internal edge.
Together with the orientation this gives an ordering of the half edges belonging to the internal edges
$$h_{1_x}h_{1_y}\cdots h_{n_x} h_{n_y}.$$ 
\end{itemize}
Given the choices made and given the orientation we can define a permutation of all the half-edges as follows
$$
\sigma: H(\Gamma)\rightarrow H(\Gamma) $$
$$
h_{v_1,1}\cdots h_{v_1,|v_1|}h_{v_2,1}\cdots h_{v_r,|v_r|}\mapsto h_{1_x}h_{1_y}\cdots h_{n_x} h_{n_y}l_{1,1}\cdots l_{1,m_1}l_{2,1}\cdots l_{b,m_{f_{>0}}} $$
\begin{itemize}
    \item 

We begin the ordering of the leaves on the right hand side by the first leave that appears on the left hand side.
This leafs belongs to a face. We use the cyclic ordering within that face to order the remaining half edges that belong to it.
Then we go to the first half edge that is a leaf and does not lie in that first face.
Again we use the cyclic ordering within that face to order the remaining half edges that belong to it.
And so on till we come to the $f_{>0}$-th face with non-zero open boundaries.
\end{itemize}
\end{cons}
Recall the definition of $\widetilde{(V_B)}$, the cyclic chain complex associated to the collection of cyclic chain complexes $V_B$, see equation \eqref{cyccomm}. Denote $W:=\widetilde{(V_B)}^\vee[-1]$.
We still fix the same oriented stable ribbon graph $\Gamma$ and the choices of orderings made.
We next explain how to define, given these choices, a map $$W^{\otimes h(\Gamma)}\rightarrow Sym(Cyc^*_+({V_B})[d-4])$$ of degree $e(\Gamma)(d-2)+f_{>0}(\Gamma)(d-3)$:

\begin{cons}\label{con_proj}
The permutation $\sigma$ induces a map which we denote by abuse of notation by the same letter
$${\sigma}:W^{\otimes h(\Gamma)}\rightarrow W^{\otimes h(\Gamma)}\cong W^{\otimes 2e(\Gamma)}\otimes W^{\otimes l(\Gamma)}.$$
We say an element in $W^{\otimes n}$ has `matching boundary' conditions if it is the sum of tensors having matching boundary conditions. 
\begin{itemize}
    \item 
To any edge $e$ of $\Gamma$ are associated two elements of $B$, say $i$ and $j$. Denote by 
$\langle\_,\_\rangle^{-1}_{e}: W^{\otimes 2}\rightarrow k[d-2]$
the map given by sending elements to zero which do not have the matching boundary condition $\{ij\}$ and
by sending elements which do have the matching boundary condition $\{ij\}$ to the number given by applying $\langle\_,\_\rangle^{-1}_{ij}$ to them\footnote{The ordering of $\{ij\}$ does not matter since $\langle\_,\_\rangle^{-1}_{ij}$ is symmetric.}.
\item A stable ribbon graph $\Gamma$ has $f$ faces, $f_{>0}$ of those with non-zero open boundaries lying in that face. Further say that the i-th such face 
(determined by the ordering of all half-edges given made choices, as before) has $m_i$ open boundaries. Denote by 
$$proj: W^{\otimes l(\Gamma)}\rightarrow Sym(Cyc^*_+({V_B})[d-4])$$
the map described as follows:
It is zero on elements not of following matching boundary conditions: We ask that the first $m_1$ elements have the matching boundary condition determined by 
the first face of the graph with nonzero leafs, the next $m_2$ elements have the matching boundary condition determined by the second face of the graph with nonzero leafs etc till the last $m_{f_{>0}}$ elements. 
Then the map is given by
 $$proj=sym^{f_{>0}}(cyc_{m_1}\otimes\cdots\otimes cyc_{m_{f_{>0}}}).$$
Here $cyc_{m_i}$ denotes the projection to cyclic words of length $m_i$ with boundary conditions determined by the i-th face as above,
shifted by $(d-3)$ and $sym$ denoted the projection to symmetric words in cyclic words. Counting together $proj$ has degree $f_{>0}(\Gamma)(3-d)$.
\end{itemize}
Thus we can define a map of degree $e(\Gamma)(d-2)-f_{>0}(\Gamma)(d-3)$
$$\bigotimes_{e\in E(\Gamma)}\langle\_,\_\rangle^{-1}_e\otimes proj:\ W^{\otimes 2e(\Gamma)}\otimes W^{\otimes l(\Gamma)}\rightarrow Sym(Cyc^*_+({V_B})[d-4]).$$
\end{cons}
\begin{df}
Fixing the same oriented stable ribbon graph $\Gamma$ and $I^q\in MCE (\mathcal{F}^{pq}(V_B))$ as in the previous constructions and the choices made we define 
\begin{equation}\label{main_constr_nc}
\rho_{I^q}(\Gamma)=(\bigotimes_{e\in E(\Gamma)}\langle\_,\_\rangle^{-1}_e\otimes proj)\circ{\sigma}\big(I^q|_{v_1}\otimes\ldots\otimes I^q|_{v_n}\big)\nu^{f_{tot}(\Gamma)}\gamma^{g(\Gamma)}\in Sym(Cyc^*({V_B})[d-4])\llbracket\gamma\rrbracket.
\end{equation}
Here 
$$\nu^{f_{tot}(\Gamma)}:=\prod_{\lambda\in B,v\in V(\Gamma)}\nu^{b_\lambda(v)}\prod_{l\in F_0(\Gamma)}\nu_{\lambda(l)},$$
where we recall that $F_0(\Gamma)$ denotes the set of faces without open boundaries,
each of which is thus colored by one element of $B$.
Further we used the standard isomorphism between invariants and co-invariants in characteristic zero to view $I^q|_{v_i}$ as an invariant tensors, which we can thus see as an element of $W^{\otimes r},$
for $r=H(v_i)$, the valency of the vertex $v_i$.
The order of these tensors is determined by the choice of ordering of vertices, the order of the $\nu$ does not matter since they have even degree.
\end{df}
\begin{lemma}\label{skadj123}
   Given $I^q\in MCE (\mathcal{F}^{pq}(V_B))$ the assignment \eqref{main_constr_nc} defines a map
\begin{equation}\label{Kon_gen}
\begin{split}
\rho_{I^q}:{s\mathcal{RG}^B_{c,\bullet}}^\vee[2d-6]&\rightarrow Sym(Cyc^*(V_B)[4-d])\llbracket\gamma\rrbracket,\\
\Gamma&\mapsto \rho_{I^q}(\Gamma).
\end{split}
\end{equation}
In particular, it is independent of the choices made.
\end{lemma}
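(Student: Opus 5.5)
The plan is to check separately that each auxiliary choice in Constructions \ref{perm} and \ref{con_proj} leaves \eqref{main_constr_nc} unchanged, and then that the result depends only on the isomorphism class of $\Gamma$ and is compatible with the orientation relation. There are four kinds of choices:
\begin{enumerate}
\item an ordering of the vertices;
\item at each vertex, an ordering of its cycles;
\item in each cycle, a starting half-edge;
\item for each internal edge, an ordering of its two half-edges.
\end{enumerate}

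\textbf{Choices (c) and (b): the vertex tensors.} Since $I^q|_{v_i}$ is the projection of $I^q$ to a component of $Sym(Cyc^*_+(V_B)[-1])$, the associated invariant tensor in $W^{\otimes|v_i|}$ is cyclically invariant in each block belonging to a cycle. This invariance carries a Koszul sign, because of the shift in $Cyc^*_+(V_B)[-1]=\ker(1-t)[-1]$. The same tensor is also symmetric, with Koszul sign, under permuting the blocks. Changing the starting half-edge of a cycle, or the order of the cycles at a vertex, therefore changes the left-hand side of $\sigma$ by a permutation. Because $I^q|_{v_i}$ is invariant under exactly that permutation, the composite $\sigma\circ(\ldots)$ is unchanged, including signs.

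\textbf{Choice (a): ordering of vertices.} Reordering the vertices permutes the tensor factors $I^q|_{v_j}$. The Koszul sign is governed by the degrees $|I^q|_{v_i}|=(3-d)(2-2g_i-c(v_i)-\sum_\lambda b_\lambda(v_i))$, which are even because $d$ is odd. So swapping two vertices produces no sign, and the correspondingly modified $\sigma$ absorbs the reordering.

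\textbf{Choice (d): half-edges of an internal edge.} Swapping the two half-edges of an edge changes $\sigma$ by a transposition of two adjacent factors. Since $\langle\_,\_\rangle^{-1}_{ij}$ is graded symmetric, the sign produced by this transposition is compensated by the symmetry of the pairing, exactly as in Kontsevich's original construction.

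\textbf{Leaves, orientation, and isomorphism classes.} The ordering of the leaf block depends on the earlier choices only through which leaf of each face comes first and in which order the faces appear. Changing which leaf of a face comes first is absorbed by $cyc_{m_i}$, which projects to cyclic words. Changing the order of the faces is absorbed by $sym$; each face block is shifted by $(d-3)$, which is even, so reordering faces introduces no sign. Reversing the orientation, i.e.\ transposing two internal edges, moves two pairs of $W$-factors. The degree of $\langle\_,\_\rangle^{-1}_e$ is $d-2$, which is odd, so the sign flips; this matches the relation identifying orientation reversal with multiplication by $-1$. Finally, an isomorphism of oriented stable ribbon graphs transports all choices, so $\rho_{I^q}(\Gamma)$ depends only on the isomorphism class. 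The factors $\nu^{f_{tot}}$ and $\gamma^{g}$ are visibly invariant.

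\textbf{Main obstacle.} The hardest step is the sign bookkeeping in (c) and (d): matching the Koszul sign of the cyclic rotation in the shifted space $Cyc^*_+(V_B)[-1]$ with the sign of the rotation of $\sigma$. I would first carry out this check on a single vertex with a single cycle, which is essentially the computation in \cite{AmTu15}, and then reduce the general case to it using the even-degree observations above.
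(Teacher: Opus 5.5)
Your treatment of the choices and of the orientation matches the paper's argument. The paper handles your (a)--(c) in one step. With the edge data fixed, the two choices give
$\sigma_1(I^q|_{v_1}\otimes\cdots\otimes I^q|_{v_n})=\tau\circ\sigma_2(I^q|_{v_{\tilde 1}}\otimes\cdots\otimes I^q|_{v_{\tilde n}})$. Here $\tau$ is the identity on half-edges of internal edges. On leaves it is a permutation of the $f_{>0}$ faces composed with cyclic rotations inside each face, so $proj$ absorbs it, and no sign appears because every $|I^q|_{v_i}|$ is even. Choice (d) and the orientation relation are settled, as in your proposal, by the symmetry and odd degree of $\langle\_,\_\rangle^{-1}_e$. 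You additionally spell out the cyclic and block-symmetric invariance of the vertex tensors, which the paper uses tacitly when it regards $I^q|_{v_i}$ as an invariant tensor.

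What your plan omits is the degree check, which the paper does first and which the statement requires. The lemma asserts a degree-zero map out of ${s\mathcal{RG}^B_{c,\bullet}}^\vee[2d-6]$. Independence of choices only gives a well-defined linear map; the shift $[2d-6]$ comes from a separate count. The contributions are:
\begin{itemize}
\item vertex tensors: $\sum_i|I^q|_{v_i}|=(d-3)(-2v+c+\sum_{\lambda,v}b_\lambda(v)+2\sum_i g_i)$;
\item the $e$ pairings: $(d-2)e$;
\item $proj$: $-(d-3)f_{>0}$;
\item the weights $\nu^{f_{tot}}\gamma^{g}$: $-(d-3)(\sum_{\lambda,v}b_\lambda(v)+f_0+2g)$;
\item the degree of $\Gamma$ itself: $-e$.
\end{itemize}
These add up to $(d-3)(e-f-2v+c+2\sum_i g_i-2g)$. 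Definition \ref{armg} gives $e+c-f-2v+2\sum_i g_i=2g-2$, so the total is $6-2d$, exactly what the shift $[2d-6]$ absorbs.

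Two smaller points. First, $\ker(1-t)[-1]$ is $Cyc_*^+$, not $Cyc^*_+(V_B)[-1]$. Second, reordering the face blocks of $W$-factors does produce a Koszul sign. It is harmless only because it equals the sign in $Sym(Cyc^*(V_B)[d-4])$, the shift $d-3$ being even.
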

\begin{proof}
The fact that the degrees work out follows by counting together \begin{equation*} 
\begin{split}&-e+(d-2)e-f_{>0}(d-3)+(d-3)(-2v+c+\sum_{\lambda\in B,v\in V}b_\lambda(v)+2\sum_{v_i}g_i-2g-\sum_{\lambda\in B,v\in V}b_\lambda(v)-f_{0})\\
&=(d-3)(-2g-2v+e+c-f+2\sum_{v_i}g_i)=6-2d
\end{split}\end{equation*}
and definition \ref{armg} of the arithmetic genus. 
This map is independent of the choices made as follows:
\begin{itemize}
    \item Let $\Gamma$ be a stable ribbon graph as in the previous construction. Assume that we chose a different ordering of vertices, of the cycles belonging to each vertex and of a half-edge in each cycle,
    but for now take the same orientation datum and ordering of half-edges belonging to each internal edge.
    Let us call the initial permutation as in construction \ref{perm} $\sigma_1:H(\Gamma)\rightarrow H(\Gamma)$ and the new one $\sigma_2:H(\Gamma)\rightarrow H(\Gamma)$.
    Then we have that 
    $${\sigma_1}\big(I^q|_{v_1}\otimes\ldots\otimes I^q|_{v_n}\big)=\tau\circ{\sigma_2}\big(I^q|_{v_{\Tilde{1}}}\otimes\ldots\otimes I^q|_{v_{\Tilde{n}}}\big),$$
    where on the right-hand-side we use the new ordering of the vertices to order the input tensors.
    Further $\tau$ is the map determined by the following permutation: It is the identity on the half-edges belonging to internal edges.
    On the half-edges belonging to leafs it can be written as a permutation on $f_{> 0}(\Gamma)$ letters applied to $f_{> 0}(\Gamma)$ cyclic permutations (with regard to the cyclic ordering of the leafs in each face of the graph).
    Further there is no sign showing up because all the $|I^q(v_i)|$ are even.
    
    Since in the definition of $\rho_{I^q}$, described in the second bullet point of construction \ref{con_proj},
    we project to the quotient determined by exactly such permutations $\tau$ we have that $\rho_{I^q}$ is independent of 
    a different ordering of vertices, of the cycles belonging to each vertex and of a half-edge in each cycle.
    \item Now assume that we chose a different ordering of the half-edges belonging to an internal edge respectively.
    Since the pairings $\langle\_,\_\rangle^{-1}_{ij}$ are symmetric it follows that this gives the same result $\rho_{I^q}$.
  \end{itemize}  
Lastly this map is compatible with the orientation: Since the pairings have odd degree if
we change the orientation by an odd permutation the result changes by a factor of $(-1)$.
\end{proof}
\begin{lemma}\label{bra_comp}
    Given $I^q\in MCE (\mathcal{F}^{pq}(V_B))$ we have that $\rho_{I^q}(\{\_,\_\})=\{\rho_{I^q}(\_)\rho_{I^q}(\_)\}.$

\end{lemma}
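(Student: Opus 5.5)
The plan is to reduce the identity to a statement about individual pairs of connected stable ribbon graphs and then compare both sides term by term. On ${s\mathcal{RG}^B_{c,\bullet}}^\vee$ the bracket $\{\Gamma_1,\Gamma_2\}$ is defined on connected generators as the sum over all graphs $\Gamma_1\cup_{l_1,l_2}\Gamma_2$ obtained by gluing a leaf $l_1$ of $\Gamma_1$ to a leaf $l_2$ of $\Gamma_2$. On $\mathcal{F}^{pq}(V_B)$ the bracket is the Leibniz extension of the bracket \eqref{bracket} on $Cyc^*(V_B)$, which pairs one letter of a cyclic word with one letter of another cyclic word via $\langle\_,\_\rangle^{-1}$ and concatenates the two words. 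Since $\rho_{I^q}$ is first defined on connected graphs, it suffices to treat connected $\Gamma_1,\Gamma_2$ and verify
$$\rho_{I^q}(\Gamma_1\cup_{l_1,l_2}\Gamma_2)\ \text{summed over }(l_1,l_2)\;=\;\{\rho_{I^q}(\Gamma_1),\rho_{I^q}(\Gamma_2)\}.$$
The extension to symmetric products then follows because both sides are biderivations.

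The core of the argument is that cutting the new edge inverts the gluing. First I use the choice-independence from Lemma \ref{skadj123} to make convenient choices for the glued graph. I order the vertices of $\Gamma_1$ before those of $\Gamma_2$, put the new edge between the internal edges of $\Gamma_1$ and those of $\Gamma_2$ (as the bracket's orientation convention requires), and order its half-edges as $(l_1,l_2)$. With these choices, the tensor $\sigma(I^q|_{v_1}\otimes\cdots\otimes I^q|_{v_n})$ for the glued graph agrees with the tensor product of the two tensors for $\Gamma_1$ and $\Gamma_2$, up to one permutation. That permutation moves the leaves $l_1,l_2$ from their positions in the face orderings of $\Gamma_1$ and $\Gamma_2$ into the slot of the new edge.

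Next I track the faces. Suppose $l_1$ lies in a face $F_1$ of $\Gamma_1$ with cyclic leaf sequence $(a_1\cdots a_n)$ and $l_2$ lies in a face $F_2$ of $\Gamma_2$. In the glued graph, $F_1$ and $F_2$ merge into a single face whose leaves, in cyclic order, are the remaining leaves of $F_1$ followed by those of $F_2$. This is exactly the concatenation pattern in \eqref{bracket}. The merged face has $f(\Gamma_1)+f(\Gamma_2)-1$ faces in total, and the boundary colour at the junction matches, since the glued leaves must carry compatible boundary conditions for $\langle\_,\_\rangle^{-1}_e$ to be nonzero. Applying $\langle\_,\_\rangle^{-1}_e$ to the slot $(l_1,l_2)$ therefore reproduces the contraction $\langle a_j,b_i\rangle^{-1}$, and the $proj$ map produces the concatenated cyclic word. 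The remaining factors are additive under the gluing: $\gamma^{g}$, because the arithmetic genus adds, and $\nu^{f_{tot}}$, because the vertex defects and the empty faces are unchanged.

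Summing over all pairs $(l_1,l_2)$ then matches the double sum over $(i,j)$ in \eqref{bracket}, extended to all pairs of cyclic words in the symmetric products. There are two degenerate cases to check separately: when $F_1$ or $F_2$ has only one leaf, which produces a $\nu_\lambda$ term via the central extension, and the length-one words. These are governed by the boundary-defect conventions and the definition of $f_{tot}$ for faces without open boundaries.

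I expect the main obstacle to be the signs: the Koszul sign of the leaf permutation, the degree shifts $[d-4]$ and $[3-d]$, and the odd degree $d-2$ of the inverse pairing. I would handle these by first moving the two leaf factors adjacent to the edge slot, using that every $I^q|_{v}$ has even degree so that vertex tensors commute freely. I would then compare with the sign convention $(-1)^*$ in \eqref{bracket}, using the same bookkeeping as in Lemma \ref{skadj123} for the orientation sign.
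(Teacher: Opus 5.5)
Your geometric picture (faces merging, cyclic words concatenating, genus adding, an extra $\nu$ from a new empty face) is the mechanism the paper uses. However, the reduction you start from is wrong as stated, and that leaves a genuine gap. You take $\{\Gamma_1,\Gamma_2\}$ to be $\sum_{(l_1,l_2)}\Gamma_1\cup_{l_1,l_2}\Gamma_2$, a sum over all pairs of leaves. In the paper, the bracket on ${s\mathcal{RG}^B_{c,\bullet}}^\vee$ is a sum over \emph{isomorphism classes} of glued graphs, each counted once. Leaf pairs in the same orbit of $\mathrm{Aut}(\Gamma_1)\times\mathrm{Aut}(\Gamma_2)$ give the same class, so the correct rewriting is
\[
\{\Gamma_1,\Gamma_2\}=\sum_{l_1\in L(\Gamma_1),\,l_2\in L(\Gamma_2)}\frac{\Gamma_1\cup_{l_1,l_2}\Gamma_2}{c_{l_1}(\Gamma_1)\,c_{l_2}(\Gamma_2)},
\]
where $c_l(\Gamma)$ is the size of the automorphism orbit of $l$. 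Your term-by-term matching therefore computes $\sum_{(l_1,l_2)}\rho_{I^q}(\Gamma_1\cup_{l_1,l_2}\Gamma_2)$. This is not $\rho_{I^q}(\{\Gamma_1,\Gamma_2\})$ once a graph has an automorphism that moves leaves. That is not an edge case: the one-vertex graphs $S_{n,C}$ with rotation-invariant colourings make up the Maurer--Cartan element $S$ and have such automorphisms. For a single colour and $n\neq m$, $\{S_n,S_m\}$ is one graph, while your sum has $nm$ terms.

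The missing ingredient is the matching multiplicity count on the algebraic side. First fix the normalisation in formula \eqref{main_constr_nc}: how each $I^q|_{v}$ becomes an invariant tensor and how $proj$ acts. This normalisation is constrained by $\rho_{I^q}(S)=I^q$ (Lemma \ref{comp_1}). Then count how often each contraction of a letter of $\rho_{I^q}(\Gamma_1)$ with a letter of $\rho_{I^q}(\Gamma_2)$ arises when the vertex tensors are paired in all possible ways. You must check that this count compensates exactly the weights $1/(c_{l_1}(\Gamma_1)c_{l_2}(\Gamma_2))$. This over-counting by $c_{l_1}(\Gamma_1)c_{l_2}(\Gamma_2)$ is the point the paper's proof turns on; the signs you single out as the main obstacle are not where it spends its effort. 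The single-colour corollas $S_n,S_m$ are a good test that your normalisation is consistent.

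A smaller correction concerns the central term. $\nu_\lambda$ arises only when \emph{both} glued leaves are the sole leaf of their faces: two words of length one, a new empty face, and $f_{tot}$ raised by one. If only one face has a single leaf, the gluing just deletes that leaf from the other word, as in \eqref{bracket} for $n+m>2$.
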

\begin{proof}
    Recall that the bracket on $Sym(Cyc^*({V_B})[d-4])\llbracket\gamma\rrbracket$ is defined by pairing each letter of a symmetric word in cyclic words with a letter of a symmetric word in cyclic words exactly once,
    by that joining two cyclic words and taking into account the Koszul rule. Furthermore, if two cyclic words consists of just one letter respectively we obtain
    a $\nu_\lambda$ weighted by the pairing of the two letters (which have matching boundary condition $(\lambda,\lambda)$, otherwise we get zero).

    On the other hand we can write 
    $$\{\Gamma_1,\Gamma_2\}=\sum_{l_1\in L(\Gamma_1),l_2\in L(\Gamma_1)} \frac{\Gamma_1\cup_{l_1,l_2} \Gamma_2}{c_{l_1}(\Gamma_1)c_{l_2}(\Gamma_2)},$$
    the sum over all leafs of the graphs and gluing the two graphs together along those (if they have matching boundary condition). Here $c_{l_1}(\Gamma_i)=n\in \mathbb{N}$ if there are automorphisms of $\Gamma_i$ that send $l_1$ to $n$ other leafs. 

    By definition \ref{main_constr_nc} we see that $\sum_{l_1\in L(\Gamma_1),l_2\in L(\Gamma_1)} \rho(\Gamma_1\cup_{l_1,l_2} \Gamma_2)$
    is given by pairing the tensor $(\bigotimes_{e\in E(\Gamma_1)}\langle\_,\_\rangle^{-1}_e)\circ{\sigma}\big(I^q|_{v_1}\otimes\ldots\otimes I^q|_{v_n}\big)$ and 
    the tensor $(\bigotimes_{e\in E(\Gamma_2)}\langle\_,\_\rangle^{-1}_e)\circ{\sigma}\big(I^q|_{v_1}\otimes\ldots\otimes I^q|_{v_n}\big)$
    together in all possible ways, using a letter $l_1$ and $l_2$ from each word and then projecting to symmetric words in cyclic words.
    However, compared to $\{\rho(\Gamma_1),\rho(\Gamma_2\}$ we are over-counting exactly $c_{l_1}(\Gamma_1)c_{l_2}(\Gamma_2)$ terms.
    Further, if both leafs that we glue come from a face with only one leaf respectively, we increase the number
    of empty faces by one and thus get one additional $\nu$ according to formula \ref{main_constr_nc}. This proves the result.
\end{proof}
\begin{lemma}\label{diff_comp}
Given $I^q\in MCE (\mathcal{F}^{pq}(V_B))$ we have that 
$\rho_{I^q}\circ \nabla=\nabla\circ\rho_{I^q}$ and
$\rho_{I^q}\circ \delta_i=\gamma\delta_i\circ\rho_{I^q}.$
\end{lemma}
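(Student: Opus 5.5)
The plan mirrors the proof of lemma \ref{bra_comp}: the bracket there glued two leaves of different graphs, while here both operators glue two leaves of a single connected stable ribbon graph $\Gamma$. We first record how $\nabla$ and $\delta_i$ on $\mathcal{F}^{pq}(V_B)$ act on the image $\rho_{I^q}(\Gamma)$. By Construction \ref{nabla}, $\nabla$ is a derivation that contracts two letters of one cyclic word with $\langle\_,\_\rangle^{-1}$ and splits that word into two. Here $\delta_i$ denotes the Chevalley--Eilenberg-type part $\gamma\delta$, which contracts two letters belonging to two different cyclic words of one symmetric word and joins those words.

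The next step is to rewrite the graph operators in the same leaf-sum form used for the bracket:
$$\nabla\Gamma=\sum_{l_1,l_2}\frac{\circ_{l_1,l_2}\Gamma}{c_{l_1,l_2}(\Gamma)},$$
summing over pairs of leaves in the same face, with $c_{l_1,l_2}(\Gamma)$ the number of pairs in the automorphism orbit of $(l_1,l_2)$. The analogous formula holds for $\delta_i$, summing over pairs of leaves in different faces.

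The core observation concerns the faces of $\Gamma$ carrying leaves, which by \eqref{main_constr_nc} and the map $proj$ correspond bijectively to the cyclic words of $\rho_{I^q}(\Gamma)$. Gluing two leaves $l_1,l_2$ in the same face splits that face into two faces, as noted after the definition of $\nabla$. Evaluating $\rho_{I^q}$ on $\circ_{l_1,l_2}\Gamma$ then inserts one extra factor $\langle\_,\_\rangle^{-1}_e$ on the corresponding two letters, which is exactly the term of $\nabla$ splitting that cyclic word at those letters. Gluing leaves in different faces merges the two faces, so the corresponding cyclic words are joined, matching $\delta$. In this case the arithmetic genus (Definition \ref{armg}) rises by one, since $e$ rises by one and $f$ falls by one. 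This produces the extra $\gamma$ through the factor $\gamma^{g(\Gamma)}$, and explains $\rho_{I^q}\circ\delta_i=\gamma\delta\circ\rho_{I^q}$. Summing over all ordered letter pairs counts each isomorphism class $c_{l_1,l_2}(\Gamma)$ times, which cancels against the automorphism denominators exactly as in lemma \ref{bra_comp}.

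I expect the boundary cases and signs to be the main obstacle. When the two glued leaves are adjacent in a face, or a face of length two is split, the new face may contain no open boundaries. The image then acquires a factor $\nu_\lambda$ (resp. $\nu_\lambda\otimes\nu_\mu$), and this must match both the special clauses in Construction \ref{nabla} and the increase of $f_{tot}(\Gamma)$ in the $\nu^{f_{tot}(\Gamma)}$ factor. For $\delta$ the corresponding case is joining two length-one words into $\nu_\lambda$, which parallels the last step of lemma \ref{bra_comp}.

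For signs, the convention puts the new edge first in the orientation. I would choose, in Construction \ref{perm}, the half-edge ordering so that the new pair sits first as well, so that the Koszul sign of applying one more odd pairing in front matches the sign of the derivation $\nabla$ (resp. $\delta$). This uses that all the $I^q|_{v_i}$ have even degree and that the choices are irrelevant by lemma \ref{skadj123}.
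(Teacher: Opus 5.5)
Your proposal is correct and is the same argument as the paper's, which says only that the lemma follows like Lemma~\ref{bra_comp} (now gluing two leaves of a single connected graph) and that the extra $\gamma$ comes from the genus increase when the glued leaves lie in different faces. Your face-splitting/merging correspondence, the $\nu$ edge cases and the automorphism count are exactly the details left implicit there. One wording slip: the target-side operator is the Chevalley--Eilenberg part $\delta$ of $\mathcal{F}^{pq}(V_B)$, which enters the BD differential as $\gamma\delta$. So you should not write ``$\delta_i=\gamma\delta$'' on the target, since then $\gamma\delta_i$ would read $\gamma^2\delta$. Your final identity $\rho_{I^q}\circ\delta_i=\gamma\delta\circ\rho_{I^q}$ is the correct reading of the statement.
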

\begin{proof}
This follows similarly to above lemma.
In the second equality the additional $\gamma$ comes from the fact that $\delta_i$ increases the genus by one.
\end{proof}
\begin{lemma}\label{diff_comp2r}
 Given $I^q\in MCE (\mathcal{F}^{pq}(V_B))$ we have that $\rho_{I^q}\circ (-d)=d\circ \rho_{I^q}$.
\end{lemma}
\begin{proof}
    We have 
    $$\rho_{I^q}(-d\Gamma)=-\rho_{I^q}(\sum_{(*)}\Gamma')=\sum_{(*)}(\bigotimes_{e\in E(\Gamma')}\langle\_,\_\rangle^{-1}_e\otimes proj)\circ\sigma\circ'(I^q|_{v_1}\otimes\ldots \otimes X\otimes\ldots I^q|_{v_n})\nu^{f_{tot}(\Gamma)}\gamma^{g(\Gamma)}.$$
    Here $(*)$ indicates that the sum is over isomorphism classes of stable ribbon graphs $\Gamma'$ such that when contracting one of their legs according to rule \ref{cont} we recover $\Gamma$.
    Say $\Gamma$ has $n$ vertices.
    Note that a graph $\Gamma'$ as before has at most n+1 vertices and $n-1$ of those actually `coincide' with $n-1$ ones of $\Gamma$; let us call the vertex of $\Gamma$ which is not matched $v_i$.
    That is how the second equality above follows, where we denote by $X$ the tensor that gets associated to the at most two vertices which are different (and which we assume wlog. to be numbered consecutively).
    Note further that $d$ preserves the arithmetic genus and the total number of empty colored faces.
    
    According to the rule how to contract \ref{cont} the sum over $(*)$ splits up into three summands: 
    \begin{enumerate}
        \item 
    Similarly to lemma \ref{bra_comp} we have that for the first summand, corresponding to rule \ref{item_contractedge} of definition \ref{cont}
    $$\sum_{(*.1)}(\bigotimes_{e\in E(\Gamma')}\langle\_,\_\rangle^{-1}_e\otimes proj)\circ\sigma'(\cdots\otimes X\otimes\cdots )=(
    \bigotimes_{e\in E(\Gamma)}\langle\_,\_\rangle^{-1}_e\otimes proj)\circ\sigma(\cdots\otimes  \frac{1}{2}\{I^q,I^q\}|_{v_i}\otimes\cdots).$$
 Note that here it is important that we had redefined the bracket on words of length one, see \ref{ce}, which corresponds to rule \ref{item_contractedge}, second case distinction.
 \item Again similarly to lemma \ref{diff_comp} we have that for the second summand, corresponding to rule \ref{item_contractloop1} of definition \ref{cont} 
 $$\sum_{(*.2)}(\bigotimes_{e\in E(\Gamma')}\langle\_,\_\rangle^{-1}_e\otimes proj)\circ\sigma'(\cdots\otimes X\otimes\cdots )=(
    \bigotimes_{e\in E(\Gamma)}\langle\_,\_\rangle^{-1}_e\otimes proj)\circ\sigma(\cdots\otimes  (\gamma\delta I^q)|_{v_i}\otimes\cdots).$$
Here we need to include a $\gamma$ since for this summand we have decreased the genus defect by one.
\item Further we have that for the third summand, corresponding to rule \ref{item_contractloop2} of definition \ref{cont} that 
$$\sum_{(*.3)}(\bigotimes_{e\in E(\Gamma')}\langle\_,\_\rangle^{-1}_e\otimes proj)\circ\sigma'(\cdots\otimes X\otimes\cdots )=(
    \bigotimes_{e\in E(\Gamma)}\langle\_,\_\rangle^{-1}_e\otimes proj)\circ\sigma(\cdots\otimes  \nabla I^q|_{v_i}\otimes\cdots).$$

 Again it is crucial how we defined $\nabla$ in definition \ref{nabla} with regards to the central extension
 when contracting consecutive letters and when a word consists of just two letters, which matches the second and third case distinction of rule \ref{item_contractloop2}.
 \end{enumerate}
 Since we have that $I^q$ is a Maurer-Cartan element, ie. in particular
 $$dI^q|_{v_i}=-(\frac{1}{2}\{I^q,I^q\}|_{v_i}+\nabla I^q|_{v_i}+\gamma\delta I^q|_{v_i})$$
 it follows that above sum is equal to
 $$=\sum_{i=1}^n(\bigotimes_{e\in E(\Gamma)}\langle\_,\_\rangle^{-1}_e\otimes proj)\circ\sigma\circ(I^q|_{v_1}\otimes\ldots dI^q|_{v_i}\cdots I^q|_{v_n})\gamma^{g(\Gamma)},$$
 which we can rewrite, since both $proj$ and $(\langle\_,\_\rangle^{-1}$ are chain maps, as 
 
 $$=d\circ\big(\bigotimes_{e\in E(\Gamma)}\langle\_,\_\rangle^{-1}_e\otimes proj\big)\circ\sigma\circ(I^q|_{v_1}\otimes\cdots \otimes I^q|_{v_n})\gamma^{g(\Gamma)}=d\rho(\Gamma).$$ \end{proof}
\begin{cons}\label{ext_sym}
    We extend the map \eqref{Kon_gen} as a map of algebras to 
$$Sym({s\mathcal{RG}^B_{c,\bullet}}^\vee[2d-6])\rightarrow Sym(Cyc^*(V_B)[d-2])\llbracket\gamma\rrbracket,$$
and finally shift both sides to obtain the map
\begin{equation}\label{sdt}
Sym({s\mathcal{RG}^B_{c,\bullet}}^\vee[2d-6])[3-d]\rightarrow Sym(Cyc^*(V_B)[d-2])\llbracket\gamma\rrbracket[3-d].\end{equation}
which by abuse of notation we denote by the same letter.
\end{cons}
\begin{theorem}\label{Kc2}
    For every set $B$ the map \eqref{sdt} defines a map of $(d-2)$-twisted BD algebras
    $$\rho_{I^q}: s\mathcal{RG}_B^d\llbracket\gamma\rrbracket\rightarrow \mathcal{F}^{pq}(V_B),$$
    which is functorial with respect to strict cyclic $A_\infty$-morphisms of quantum $A_\infty$-categories. 
\end{theorem}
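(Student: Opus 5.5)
The plan is to check the BD-algebra axioms one at a time for the map $\rho_{I^q}$ of Construction \ref{ext_sym}, using the lemmas just proved, and then to treat functoriality separately. The map is defined on connected generators by \eqref{main_constr_nc} and extended multiplicatively. So it respects the (shifted) product by construction, provided the shifts in \eqref{sdt} match the degree-$(3-d)$ multiplications on both sides. I would verify this by the degree count in Lemma \ref{skadj123}: a connected generator lands in degree $6-2d$ relative to the shifts, and $\gamma$ is sent to $\gamma$, so the map is a genuine (1-weighted) map over $k\llbracket\gamma\rrbracket$.

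Next I would show that $\rho_{I^q}$ intertwines the BD differentials
$$-d+\nabla+\delta_i+\gamma\delta_e \quad\text{and}\quad d+\nabla+\gamma\delta,$$
and the brackets. On connected generators this follows from three results:
\begin{itemize}
\item Lemma \ref{diff_comp2r} gives $\rho(-d\Gamma)=d\rho(\Gamma)$.
\item Lemma \ref{diff_comp} gives $\rho\nabla=\nabla\rho$ and $\rho\delta_i=\gamma\delta\rho$.
\item Lemma \ref{bra_comp} gives compatibility with the bracket.
\end{itemize}
There is one point to settle here. In $\mathcal{F}^{pq}(V_B)$ the operator $\delta$ is the Chevalley–Eilenberg operator of the bracket on cyclic words, and $\gamma\delta$ applied to a product of several cyclic words joins two different cyclic words. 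So I must check that $\rho\circ\gamma\delta_e$ and the ``inter-word'' part of $\gamma\delta$ match on products. I would argue this as follows. Both sides satisfy the BD relation \eqref{BDrel} with respect to their brackets. Since $\rho$ is multiplicative and bracket-preserving, the discrepancy
$$D:=\rho\circ Q_{\mathcal{RG}}-Q_{\mathcal{F}}\circ\rho$$
is then a $\rho$-derivation. Here $Q_{\mathcal{RG}}$ and $Q_{\mathcal{F}}$ denote the full BD differentials on source and target. A $\rho$-derivation vanishes as soon as it vanishes on the generators $s\mathcal{RG}^B_{c,\bullet}{}^\vee$, and on generators $\delta_e$ is zero. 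The second-order parts therefore cancel via $\delta_e\mapsto\gamma\{\,,\}$, using Lemma \ref{bra_comp}. The bracket on products is handled the same way: both brackets are biderivations, so agreement on generators suffices.

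The step I expect to be the main obstacle is bookkeeping in the generator computation. Specifically, it is checking that the mixed term in which $\gamma\delta$ acts on two different cyclic words of the same $\rho(\Gamma)$ is exactly produced by $\delta_i$, which glues leaves in different faces of a connected graph. It is also checking that the $\nu$-weights, for faces that become empty, match the central extension \eqref{ce}. I would first try to reduce this to the statement of Lemma \ref{diff_comp}, reading ``different faces'' as ``different cyclic words in the projection $proj$''. I would then track signs via the orientation convention, with the new edge placed first.

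Finally, for functoriality, let $F:V_B\to V_C$ be strict and cyclic with $F^*I^q_C=I^q_B$. I would show $F^*\circ\rho_{I^q_C}=\rho_{I^q_B}\circ F^*_{\mathrm{graph}}$. Here $F^*_{\mathrm{graph}}$ sums over recolorings of free boundaries along $F^{-1}$, which is compatible with $F^*(\nu_\lambda)=\sum\nu_\mu$. The key inputs are the identity $\langle F^*a,F^*b\rangle^{-1}=\langle a,b\rangle^{-1}$ from the proof of Lemma \ref{cov}, and the fact that $F^*$ commutes with the restriction $(\_)|_{v}$ and with $proj$. Together these give the required equality termwise in \eqref{main_constr_nc}.
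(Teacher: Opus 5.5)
Your proposal is correct and follows the paper's proof essentially step by step. The paper also argues multiplicativity by construction, applies Lemmas \ref{diff_comp}, \ref{diff_comp2r} and \ref{bra_comp} on connected generators, extends to products via the free Poisson structure, and proves functoriality by checking the square termwise in \eqref{main_constr_nc}. Your $\rho$-derivation argument for $D=\rho\circ Q_{\mathcal{RG}}-Q_{\mathcal{F}}\circ\rho$ is simply a cleaner, explicit version of the paper's brief remark that $\rho_{I^q}$ then intertwines $\gamma\delta_e$ with the Chevalley--Eilenberg part of $\gamma\delta$.
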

\begin{proof}  \begin{itemize}
    \item 
  It is a map of algebras (whose multiplication have non-zero even degree) by construction.
\item By the first part of lemma \ref{diff_comp} and construction $\rho_{I^q}$ intertwines
the maps on both sides that we called suggestively $\nabla$, further by lemma \ref{diff_comp2r} it intertwines the internal differentials.
\item We proved that map \eqref{Kon_gen} is a map of shifted Lie algebras, in fact it is a map of a shifted Lie algebra to a shifted Poisson algebra.
Any such map determined a map of shifted Poisson algebras from the free algebra on the domain to the target and this is indeed how we defined the shifted Poisson structure on 
$s\mathcal{RG}_B^d\llbracket\gamma\rrbracket$ (up to a total shift).
\item Furthermore this implies that $\rho_{I^q}$ intertwines $\delta_e$ with the associated Chevalley-Eilenberg differential of the 
Lie algebra $\mathcal{F}^{pq}(V_B)$. This, together with the arguments in bullet point two imply
$$\rho_{I^q}\big((-d+\gamma\delta_i+\gamma\delta_e+\nabla)(\Gamma_1\cdot\ldots\cdot \Gamma_n)\big)=(d+\gamma\delta+\nabla)\rho_{I^q}(\Gamma_1\cdot\ldots\cdot \Gamma_n),$$
which finishes the first part of the proof.
\item Given $F:V_B\rightarrow V_C$ a strict cyclic $A_\infty$-morphisms of quantum  $A_\infty$-categories we have in particular a map of sets $B\rightarrow C$, which induces a map 
$$s\mathcal{RG}_C^d\llbracket\gamma\rrbracket\rightarrow s\mathcal{RG}_B^d\llbracket\gamma\rrbracket.$$
Since $F$ is strict and cyclic and we have by definition that $F^*I^q=J^q$ for the respective quantizations one shows that
$$\begin{tikzcd}
 \mathcal{F}^{pq}(V_C)\arrow{r}{F^*}&   \mathcal{F}^{pq}(V_B)\\
 s\mathcal{RG}_C^d\llbracket\gamma\rrbracket\arrow{r}\arrow{u}& s\mathcal{RG}_B^d\llbracket\gamma\rrbracket\arrow{u}
\end{tikzcd}$$
commutes, using the definition \ref{main_constr_nc} of the vertical maps.
\end{itemize}
\end{proof}
Given an $I\in MCE(\mathcal{F}^{fr}(V_B))$, we can construct a map
$$\rho_{I}:{\mathcal{RG}^B_{c,\bullet}}^\vee[2d-6]\rightarrow Sym(Cyc^*(V_B)[d-2)\llbracket\gamma\rrbracket.$$
It is given by almost the exactly the same construction, just that we modify the map \ref{iso_sc} by omitting all shifts. 
Furthermore, in the same way as in lemma \ref{diff_comp} and \ref{diff_comp2r} we can prove that this maps intertwines the algebraic structures.
In the same way as in construction \ref{ext_sym} we get a map, also denoted
\begin{equation}\label{sigs}
\rho_{I}:Sym({\mathcal{RG}^B_{c,\bullet}}^\vee[2d-6])\llbracket\gamma\rrbracket[3-d]\rightarrow Sym(Cyc^*(V_B)[d-4])\llbracket\gamma\rrbracket[3-d]\end{equation}
for which we can argue exactly the same as in theorem \ref{Kc2} that we have: 
\begin{theorem}\label{Kc2.1}
Given $I\in MCE(\mathcal{F}^{fr}(V_B))$ the map \eqref{sigs} is a map of $(d-2)$-twisted BD algebras
    $$\rho_{I}: \mathcal{RG}_B^d\llbracket\gamma\rrbracket\rightarrow \mathcal{F}^{pq}(V_B),$$
    functorial with respect to strict cyclic $A_\infty$-morphisms.
\end{theorem}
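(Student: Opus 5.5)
The plan is to mirror the proof of theorem \ref{Kc2}, with the ribbon graph complex $\mathcal{RG}_B^d\llbracket\gamma\rrbracket$ in place of the stable one and the classical hamiltonian $I\in MCE(\mathcal{F}^{fr}(V_B))$ in place of a quantization $I^q$. First I check that \eqref{sigs} is well defined on connected ribbon graphs. For a ribbon graph every vertex has $g(v)=b(v)=0$ and a single cycle. Hence $I|_v$ is simply the component of $I$ in $Cyc^*_+(V_B)[-1]$ of length $|v|$ with the boundary colors read off from the vertex. Independence of the orderings of vertices, of the starting half-edge in each cycle and of the half-edges of each edge, together with the sign behaviour under change of orientation, go through verbatim as in lemma \ref{skadj123}. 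The only changes are the degree count, now with $c(v)=1$ and no defects, and the fact that $I$ has even degree $3-d$, so no Koszul signs appear when reordering vertices.

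Next I extend as an algebra map (construction \ref{ext_sym}) and establish the three intertwining properties. Compatibility with brackets is exactly lemma \ref{bra_comp}, whose argument never used anything specific to stable graphs. Compatibility of $\nabla$ and $\delta_i$ on the graph side with $\nabla$ and $\gamma\delta$ on $\mathcal{F}^{pq}(V_B)$ is lemma \ref{diff_comp}: gluing two leaves in the same face splits a face, and gluing leaves in different faces raises the genus, which is recorded by $\gamma^{g(\Gamma)}$. As in theorem \ref{Kc2}, the Chevalley--Eilenberg parts $\gamma\delta_e$ are then matched automatically, because both BD structures are generated by their brackets on the free algebra.

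The main obstacle is the analogue of lemma \ref{diff_comp2r}, the compatibility with the internal differential. The differential on $\mathcal{RG}$ is defined through the projection $s\mathcal{RG}\to\mathcal{RG}$. Dually, $d\Gamma$ therefore sums only over ribbon graphs $\Gamma'$ that contract to $\Gamma$ via rule \ref{cont}(1) with a genuine edge between two distinct vertices, each carrying one cycle, with the two cycles merging into one. The loop-contraction terms (rules 2 and 3), which in the stable case produce $\gamma\delta I^q$ and $\nabla I^q$, do not occur. The degenerate case in which both cycles are single half-edges produces a boundary defect, so it is excluded as well; this is why the $\nu$-term of the bracket on length-one words does not enter. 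The expansion at the vertex $v_i$ thus yields exactly $\tfrac12\{I,I\}|_{v_i}$ computed in $Cyc^*_+$.

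To finish, I use the classical Maurer--Cartan equation $dI+\tfrac12\{I,I\}=0$ from lemma \ref{Ham1} to replace this term by $-dI|_{v_i}$. Since the pairings $\langle\_,\_\rangle^{-1}_e$ and the projection $proj$ are chain maps, the sum over $v_i$ gives $d\rho_I(\Gamma)$, and hence $\rho_I\circ(-d)=d\circ\rho_I$. I expect this bookkeeping, showing that the ribbon-graph differential realizes exactly the classical MC term with no quantum corrections, to be the delicate step.

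Functoriality under strict cyclic $A_\infty$-morphisms then follows from the same commuting square as in theorem \ref{Kc2}, using $F^*I_{\mathcal D}=I_{\mathcal C}$.
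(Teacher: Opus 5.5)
Your proposal is correct and follows the paper's route. The paper simply asserts that the construction and lemmas \ref{bra_comp}, \ref{diff_comp} and \ref{diff_comp2r} carry over, then argues as in theorem \ref{Kc2}; your observation that on $\mathcal{RG}$ only non-loop contractions survive, so that only the classical term $\tfrac12\{I,I\}$ is needed, is exactly the point the paper leaves implicit.

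One small correction to your justification. The degenerate single-half-edge case never arises, simply because ribbon-graph vertices are at least trivalent. For the same reason you need, as the paper tacitly does, that the $Sym^1$-part of $I$ has no components of word length $\leq 2$; otherwise terms such as $\{I_2,I_n\}$ in the Maurer--Cartan equation would have no ribbon-graph counterpart. This condition holds for the hamiltonian of a cyclic $A_\infty$-structure with $m_1$ absorbed into $d$.
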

Again using the same construction, but omitting some shifts, given $I\in MCE(\mathcal{F}^{fr}(V_B))$, we can construct a map
$$\rho_{I}:{\mathcal{RT}^B_{c,\bullet}}^\vee[d-3]\rightarrow (Cyc^*_+(V_B)[-1]).$$
Indeed we land in $Cyc^*_+(V_B)[-1]$ since ribbon trees have by definition genus zero and only one face, which needs to have a non-zero number of open boundaries. 
In the same way as in lemmas \ref{bra_comp} and \ref{diff_comp2r} we can argue that this map is compatible with the algebraic structures. 
As in construction \ref{ext_sym} we get a map, also denoted
\begin{equation}\label{www}
\rho_{I}:Sym({\mathcal{RT}^B_{c,\bullet}}^\vee[3-d])\rightarrow Sym(Cyc^*_+(V_B)[-1])
\end{equation}
for which we can argue analogously to same as in theorem \ref{Kc2} that we have:
\begin{theorem}\label{Kc1}
Given $I\in MCE(\mathcal{F}^{fr}(V_B))$ the map \eqref{www} is a map of $(d-2)$-shifted Poisson algebras
    $$\rho_{I}: \mathcal{RT}_B^d\rightarrow \mathcal{F}^{fr}(V_B),$$
    functorial with respect to strict cyclic $A_\infty$-morphisms.
\end{theorem}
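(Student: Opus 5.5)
The plan is to run the proof of theorem \ref{Kc2} again, in the simpler setting where the domain is the ribbon tree complex and the target is the free observables $\mathcal{F}^{fr}(V_B)$.

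\textbf{Step 1: the generators.} First I check that $\rho_I$ is well defined and has the right degree on connected ribbon trees. A ribbon tree $\Gamma$ has arithmetic genus $0$, a single face containing $l(\Gamma)>0$ leaves, no genus or boundary defects, and one cycle at each vertex. Formula \eqref{main_constr_nc} therefore specialises to
\[
\rho_I(\Gamma)=\Big(\bigotimes_{e\in E(\Gamma)}\langle\_,\_\rangle^{-1}_e\otimes cyc_{l(\Gamma)}\Big)\circ\sigma\big(I|_{v_1}\otimes\cdots\otimes I|_{v_n}\big).
\]
Here $I|_{v}$ is the cyclic word component of $I$ whose letters are the boundary colors around $v$. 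No powers of $\nu$ or $\gamma$ occur. Independence of the auxiliary orderings and compatibility with orientation reversal then follow word for word from lemma \ref{skadj123}. The degree count is the same computation as there, with $g=0$, $f=1$ and $c=v$.

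\textbf{Step 2: the bracket.} Next I show $\rho_I(\{\Gamma_1,\Gamma_2\})=\{\rho_I(\Gamma_1),\rho_I(\Gamma_2)\}$ by the argument of lemma \ref{bra_comp}. Gluing two trees along one leaf each gives a tree, so the result stays in $\mathcal{RT}$. Contracting the two corresponding letters with $\langle\_,\_\rangle^{-1}$ joins the two cyclic words exactly as the bracket \eqref{bracket} does. The automorphism factors $c_{l_i}(\Gamma_i)$ cancel the overcounting as before. The subtle case of two length-one words producing a $\nu_\lambda$ does not arise. In $\mathcal{F}^{fr}$ the bracket of two length-one words vanishes by definition (it is only defined for $n+m>2$). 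On the graph side, each vertex of a tree has valency at least $3$, so every connected tree has at least three leaves and its single face never reduces to a one-letter word.

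\textbf{Step 3: the differential.} Then I show $\rho_I\circ(-d)=d\circ\rho_I$ as in lemma \ref{diff_comp2r}. The terms of $-d\Gamma$ are trees $\Gamma'$ which collapse to $\Gamma$ by contracting one edge. Only rule \ref{item_contractedge} of definition \ref{cont} occurs, because trees have no loops. Moreover the degenerate sub-case of that rule, in which a face with a single free boundary is deleted, cannot happen in a tree of trivalent-or-more vertices. Grouping the terms by the vertex $v_i$ that gets split, the sum becomes the same tensor contraction with $I|_{v_i}$ replaced by $\tfrac12\{I,I\}|_{v_i}$. The Maurer--Cartan equation $dI+\tfrac12\{I,I\}=0$, with no $\nabla$ or $\delta$ terms, replaces this by $-dI|_{v_i}$. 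Since the pairings and the projection are chain maps, summing over $i$ gives $d\rho_I(\Gamma)$. The signs need the same bookkeeping as in lemma \ref{diff_comp2r}, with the sign convention $-d$ on the graph side.

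\textbf{Step 4: extension and functoriality.} Finally, $\rho_I$ is now a map of dg shifted Lie algebras from ${\mathcal{RT}^B_{c,\bullet}}^\vee[d-3]$ into the shifted Poisson algebra $\mathcal{F}^{fr}(V_B)$. It therefore extends uniquely to an algebra map on $Sym$ that respects both bracket and differential, as in construction \ref{ext_sym} and the third bullet of theorem \ref{Kc2}. Functoriality under strict cyclic $F$ is the commutative square of theorem \ref{Kc2}. It uses $F^*I_{\mathcal{D}}=I_{\mathcal{C}}$ and the invariance of inverse pairings from lemma \ref{cov}.

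I expect the main obstacle to be the sign and symmetry-factor bookkeeping in Step 3. Specifically, the factor $\tfrac12$ for the two ways of splitting a vertex has to match the $\tfrac12$ in the Maurer--Cartan equation. I would handle this exactly as the analogous regrouping in the proof of theorem \ref{sG_mce}.
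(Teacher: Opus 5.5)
Your proposal is correct and follows the paper's route: the paper's proof just says that lemmas \ref{bra_comp} and \ref{diff_comp2r}, construction \ref{ext_sym} and theorem \ref{Kc2} go through for ribbon trees, and you spell out why. The relevant facts are genus zero, a single face with at least three leaves, only rule \ref{item_contractedge} occurring, and no $\nu$ or $\gamma$ terms. In Step 3 you should add the hypothesis that $I$ has no cyclic words of length $\le 2$. This holds for the hamiltonian of a cyclic $A_\infty$-category with $m_1=0$, which is the paper's standing convention and is also implicit in $\rho_I(D)=I$. Otherwise $\tfrac12\{I,I\}|_{v_i}$ contains terms that split $v_i$ into two vertices, one of valency $1$ or $2$; such graphs are not ribbon trees and do not occur in $-d\Gamma$.
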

Theorems \ref{Kc2.1}, \ref{Kc2} and \ref{Kc1} fit together with 
diagram \eqref{rbg_comp} and map \eqref{nc_dq} together:
\begin{theorem}\label{comp_rg_al}
Given $I^q\in MCE(\mathcal{F}^{pq}(V_B))$ denote by $I=p(I^q)\in MCE(\mathcal{F}^{fr}(V_B))$ its dequantization. Then following diagram commutes:
$$\begin{tikzcd}
\mathcal{F}^{fr}(V_B)&\mathcal{F}^{pq}(V_B)\arrow{l}{p}&
\mathcal{F}^{pq}(V_B)\arrow[equal]{l}\\
\mathcal{RT}^d_{B}\arrow{u}{\rho_{I}}&\mathcal{RG}^d_{B}\llbracket\gamma\rrbracket\arrow[swap]{l}\arrow{r}\arrow{u}{\rho_{I}}&
s\mathcal{RG}^d_{B}\llbracket\gamma\rrbracket\arrow[bend left=20]{ll}\arrow{u}{\rho_{I^q}}.
    \end{tikzcd}$$
\end{theorem}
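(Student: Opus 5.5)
The plan is to check commutativity of the two squares separately, basis element by basis element. By Theorems \ref{Kc2.1}, \ref{Kc2} and \ref{Kc1} every vertical map is given by the explicit formula \eqref{main_constr_nc}, or a shift-free variant of it. The horizontal maps also have explicit descriptions. The bottom ones come from diagram \eqref{inc_proj}: $v$ is induced by the inclusion of ribbon graphs into stable ribbon graphs, and $p$ is the projection of Definition \ref{deq_rib}. The top ones are the identity and the dequantization map of Definition \ref{nc_dq}. Since the two maps being compared in each square are linear, it is enough to evaluate both sides on a monomial $\Gamma_1\cdots\Gamma_n\gamma^s$ of (dual) connected graphs.

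For the right square, the map $\mathcal{RG}^d_B\llbracket\gamma\rrbracket\to s\mathcal{RG}^d_B\llbracket\gamma\rrbracket$ dual to the projection sends a ribbon graph $\Gamma$ to itself, viewed as a stable ribbon graph with all defects zero and one cycle per vertex. I then compare $\rho_{I^q}(\Gamma)$ with $\rho_I(\Gamma)$. In \eqref{main_constr_nc} the vertex tensor $I^q|_{v}$ at a vertex with $g(v)=b(v)=0$ and $c(v)=1$ is, by the construction of \eqref{iso_sc}, the projection of $I^q$ onto the $Sym^1$, $\gamma^0$, $\nu^0$ part. This part is exactly $p(I^q)=I$, so it agrees with $I|_v$. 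The remaining data are the same for both graphs: the edge pairings, the projection $proj$, the factor $\nu^{f_{tot}}$ (with $f_{tot}=f_0$ when all defects vanish) and the factor $\gamma^{g(\Gamma)}$. So $\rho_{I^q}\circ v=\rho_I$ on generators. Both maps are algebra maps by Construction \ref{ext_sym}, so the square commutes. I expect this to need only bookkeeping of shifts.

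For the left square, take a monomial $\Gamma_1\cdots\Gamma_n\gamma^s$ in $\mathcal{RG}^d_B\llbracket\gamma\rrbracket$. Both composites vanish unless $n=1$ and $s=0$, and unless $\Gamma_1$ is a ribbon tree. On the bottom path, $p$ kills everything else by definition. On the top path, $\rho_I$ of a product is a product of at least $n$ symmetric factors, so $p$ kills it when $n\ge 2$. If $s>0$, or if $\Gamma_1$ has genus $>0$, the image carries a positive power of $\gamma$. If $\Gamma_1$ has at least two faces, the image lies in $Sym^{\ge 2}$, or picks up a $\nu$ from a face without open boundaries. In each of these cases $p$ kills it. For a connected ribbon tree, $\rho_I(\Gamma_1)$ lies in $Cyc^*_+(V_B)[-1]$, and it agrees with the value of the tree map from Theorem \ref{Kc1}, since the two constructions differ only by shifts.

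The step I expect to be the main obstacle is this last case analysis. I have to make sure that every non-tree contribution really lands outside the $Sym^1$, $\gamma^0$, $\nu^0$ component. The delicate points are the $\nu$-insertions coming from length-one and length-two words, and the shift conventions relating $\mathcal{F}^{pq}$ to $\mathcal{F}^{fr}$ in Definition \ref{nc_dq}. I would handle these by tracking, via the degree count in Lemma \ref{skadj123}, the exponents $g(\Gamma)$, $f_{tot}(\Gamma)$ and the number of faces $f_{>0}(\Gamma)$ carrying open boundaries. These three numbers determine exactly the $\gamma$-, $\nu$- and $Sym$-degree of $\rho_I(\Gamma)$.
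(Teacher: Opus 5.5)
Your treatment of the two squares is correct, and it is the paper's ``by construction'' argument made explicit. On a vertex with $g(v)=b(v)=0$ and $c(v)=1$, the tensor $I^q|_v$ is the $Sym^1$, $\gamma^0$, $\nu^0$ component of $I^q$, so it equals $I|_v$. In the left square, only connected ribbon trees in $Sym^1\gamma^0$ survive $p$ along either path.

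The gap is that the diagram also contains the curved arrow $p\colon s\mathcal{RG}^d_B\llbracket\gamma\rrbracket\to\mathcal{RT}^d_B$. So you must also show $p\circ\rho_{I^q}=\rho_I\circ p$ on all of $s\mathcal{RG}^d_B\llbracket\gamma\rrbracket$. This does not follow formally from your two squares and the bottom triangle, because $v$ is only the inclusion of (duals of) ribbon graphs. Stable ribbon graphs with some $g(v)>0$, some $b_\lambda(v)>0$, or a vertex with $c(v)\geq 2$ lie outside its image. It is also not a side issue: this outer cell is exactly the one twisted by $S$ and $D$ in Theorem \ref{aiaiai}. That is why the paper's proof stresses that $p$ projects to symmetric words of length one ``both for (stable) ribbon graphs and for cyclic words''.

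The repair is the count you already do, now run on stable graphs.
\begin{itemize}
\item For a connected stable ribbon graph $\Gamma$, $\rho_{I^q}(\Gamma)$ lies in $Sym^{f_{>0}(\Gamma)}$ times $\nu^{f_{tot}(\Gamma)}\gamma^{g(\Gamma)}$. The defects are stripped off the vertex tensors by \eqref{iso_sc} and reinserted only through these two exponents.
\item Read Definition \ref{armg} as the genus of the surface obtained by gluing, along the edges, one genus-$g_v$ surface with $c(v)$ boundary circles per vertex. This gives $g(\Gamma)\geq\sum_v g_v$. Also $f_{tot}(\Gamma)\geq\sum_{v,\lambda}b_\lambda(v)$.
\item Hence $p\,\rho_{I^q}(\Gamma)\neq 0$ forces all defects to vanish, $f(\Gamma)=1$, and $1-v+\tfrac12(e+c-1)=0$. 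This means $e=2v-1-c\leq v-1$, since every vertex then carries at least one cycle.
\item Connectivity gives $e=v-1$ and $c(v)=1$ at every vertex, so $\Gamma$ is a ribbon tree. On a ribbon tree, $\rho_{I^q}(\Gamma)=\rho_I(\Gamma)$ by your vertex identification.
\item Products and positive powers of $\gamma$ are killed on both sides exactly as in your left square.
\end{itemize}
With this added, your proof is complete.
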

\begin{proof}
    This basically follows by construction. Note that it is essential that the dequantization maps
    project to symmetric words of length one, both for (stable) ribbon graphs and for cyclic words. 
\end{proof}
Recall the two Maurer-Cartan elements from theorems \ref{RS_mce} and \ref{sRG_mce} in the respective graph complexes. We state following lemmas, which are straightforward to prove:
\begin{lemma}\label{comp_1}
Letting $B$ be a set and given $I^q\in MCE(\mathcal{F}^{pq}(V_B))$ we have that $\rho_{I^q}(S)=I^q$.
\end{lemma}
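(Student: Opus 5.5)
The proof is a direct computation of $\rho_{I^q}$ on each summand of $S$. Recall that $S=\sum_{n>0}\sum_{(\#)}S_{n,C}$ runs over isomorphism classes of connected $B$-colored stable ribbon graphs with one vertex, no internal edges and $n$ leaves. Such a class is determined by three pieces of data at its single vertex $v$: a genus defect $g(v)$, boundary defects $b_\lambda(v)$, and a partition of the $n$ leaves into cycles $c_1,\dots,c_{c(v)}$. Each cycle carries a cyclic ordering, and its consecutive pairs form free boundaries colored by elements of $B$. On the algebraic side, $I^q$ decomposes as a sum over exactly the same combinatorial data. These are the components of $Sym(Cyc^*(V_B)[d-4])\llbracket\gamma\rrbracket$ indexed by a symmetric word of cyclic words in the colors, a power $\gamma^{g}$, and a monomial $\prod_\lambda\nu_\lambda^{b_\lambda}$. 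The plan is to match these two decompositions term by term.

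First I evaluate \eqref{main_constr_nc} on a single $S_{n,C}$. There are no internal edges, so the pairing factor $\bigotimes_e\langle\_,\_\rangle_e^{-1}$ is empty. The permutation $\sigma$ of construction \ref{perm} only reorders the leaves face by face. For a one-vertex graph with no edges, the faces with open boundaries correspond exactly to the cycles $c_j$, with their induced cyclic orders, and there are no empty faces, so $f_0=0$. Hence $proj\circ\sigma$ applied to $I^q|_v$, with $I^q|_v$ viewed as an invariant tensor, returns precisely the projection $I^q|_v$ itself, placed back into $Sym(Cyc^*)$.

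Next I check the weights. Using definition \ref{armg} with $e=0$, $v=1$ and $f=c(v)$, the arithmetic genus is $g(\Gamma)=g(v)$, which reproduces the power $\gamma^{g(v)}$ that the restriction $(\_)|_v$ had stripped off. Likewise $\nu^{f_{tot}}=\prod_\lambda\nu_\lambda^{b_\lambda(v)}$ restores the $\nu$-monomial. The degree shifts in the construction of $(\_)|_v$ are undone by the shifts in \eqref{sdt}. Therefore $\rho_{I^q}(S_{n,C})$ equals the component of $I^q$ indexed by the data of $S_{n,C}$.

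Finally I sum over all $n$ and all isomorphism classes. Every component of $I^q$ is hit exactly once, because $I^q$ has no constant terms and no pure-$\nu$ terms, and such terms would correspond to vertices with $n=0$, which are excluded from $S$. This gives $\rho_{I^q}(S)=I^q$.

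The main obstacle is the combinatorial normalization. I must check that the invariants/coinvariants identification, followed by the projection $proj$ onto symmetric words of cyclic words, introduces no factor such as $|\mathrm{Aut}(S_{n,C})|$ or a product of cyclic-symmetry orders. I would first fix the convention that $I^q|_v$ is lifted to the symmetrized tensor, normalized so that it maps back to itself under the projection to coinvariants. I would then verify that $proj\circ\sigma$ is exactly that projection, using the independence of choices proved in lemma \ref{skadj123}. Signs cause no trouble, since $|I^q|_v|$ is even.
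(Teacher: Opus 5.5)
The one-vertex evaluation is the intended argument; the paper only calls the lemma straightforward. Your bookkeeping on a single summand is also right: with $e=0$, $v=1$ and $f=c(v)$ one gets $g(\Gamma)=g(v)$, $f_0=0$ and $\nu^{f_{tot}}=\prod_\lambda\nu_\lambda^{b_\lambda(v)}$.

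The gap is in the final matching step. There you assert that the summands of $S$ and the components of $I^q$ are indexed by the same data, with only the $n=0$ terms unmatched. This ignores the stability condition on stable ribbon graphs: a vertex with $g(v)=0$, all $b_\lambda(v)=0$ and $c(v)=1$ must have $|v|\geq 3$.

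\begin{itemize}
\item Consequently $S$ contains no one-cycle, defect-free vertex with one or two leaves.
\item So $\rho_{I^q}(S)$ has no component in $Sym^1$ at $\gamma^0\nu^0$ that consists of a single cyclic word of length $1$ or $2$.
\item These components of $I^q$ are exactly the length $\leq 2$ part of $p(I^q)$.
\end{itemize}

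For an arbitrary Maurer--Cartan element nothing forces these components to vanish; a tree-level quadratic term $\langle m_1\_,\_\rangle$ is allowed by degree. Such a term would simply be missed by $\rho_{I^q}(S)$.

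To close the gap you must invoke the standing hypothesis under which the lemma is used: $I^q$ quantizes a cyclic $A_\infty$-category. That is, $p(I^q)=I=\langle m\_,\_\rangle$, with $m_1$ absorbed into $d$ and no curvature term, so $I$ has only cyclic words of length $\geq 3$. This is the same reason $D$ only runs over $n>2$ in lemma \ref{comp_2}, and why $p(S)=D$ in remark \ref{prak_1}. Under this hypothesis every remaining component of $I^q$ has $g>0$, or some $b_\lambda>0$, or at least two cyclic words. Each of these corresponds to a stable vertex, so your bijection holds.

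Separately, check your claim that the faces are the cycles $c_j$ ``with their induced cyclic orders'' against definition \ref{faces}. Taken literally, $\sigma_b=\sigma_0^{-1}\circ\sigma_1$ equals $\sigma_0^{-1}$ on a one-vertex graph whose half-edges are all leaves. Then construction \ref{perm} lists each cycle backwards from its first half-edge. So $\sigma$ acts on each block by a reflection rather than a rotation, and $proj\circ\sigma$ would return the reversed cyclic word.

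You therefore need to fix the convention in which faces are traversed in the direction of $\sigma_0$. Once you do, $\sigma$ is a symmetry of the relevant component of $Sym(Cyc^*_+(V_B)[-1])$. Your normalization worry then disappears: the lift you propose is exactly the inverse of the standard isomorphism from invariants to coinvariants used in \eqref{main_constr_nc}, so $proj$ of it returns $I^q|_v$.
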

\begin{lemma}\label{comp_2}
   Letting $B$ be a set and given $I\in MCE(\mathcal{F}^{pq}(V_B))$ we have $\rho_I(D)=I.$
\end{lemma}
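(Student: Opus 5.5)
The plan is to compute $\rho_I(D)$ directly from the definition of the map \eqref{www}, i.e.\ the tree version of \eqref{main_constr_nc}, evaluated on each summand $D_{n,C}$ of $D=\sum_{n>2}\sum_{(\#)}D_{n,C}$. Each $D_{n,C}$ is a $B$-colored ribbon tree with a single vertex $v$, no internal edges and $n$ leaves. Its unique face contains all $n$ leaves, and its free boundaries are colored by a cyclic word $C=(\lambda_1\cdots\lambda_n)$ in letters of $B$.

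For such a one-vertex graph the construction simplifies drastically.
\begin{itemize}
\item There are no internal edges, so no inverse pairings $\langle\_,\_\rangle^{-1}_e$ are applied.
\item The permutation $\sigma$ of construction \ref{perm} is just the cyclic reading of the half edges around the vertex.
\item The genus, boundary defects and number of empty faces all vanish, so no $\nu$ or $\gamma$ factors appear.
\item $proj$ is the projection onto cyclic words of length $n$ with boundary condition $C$.
\end{itemize}
Hence $\rho_I(D_{n,C})=proj\circ\sigma(I|_v)$. By definition of \eqref{iso_sc}, $I|_v$ is the component of $I=I_{\mathcal{C}}=\langle m\_,\_\rangle$ (via lemma \ref{Ham1}) lying in cyclic words of length $n$ with boundary condition $C$, viewed as an invariant tensor. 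Projecting back to cyclic words therefore returns exactly this component, once one uses that $I$ lies in $Sym^1$ and has no $\gamma$ or $\nu$ terms.

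It remains to sum over all $n>2$ and all cyclic colorings $C$. This sum recovers all components of $I$, i.e.\ $\sum_{k}\langle m_k(\_,\dots,\_),\_\rangle$. The range $n>2$ matches the assumption $m_1=0$, so the lowest term is $\langle m_2\_,\_\rangle$, which is cubic. The result is $\rho_I(D)=I$.

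The main obstacle is the bookkeeping of normalizations. First, the identification of invariants with coinvariants may introduce symmetry factors: a coloring $C$ can have cyclic automorphisms, and these appear both in the isomorphism-class sum defining $D$ and in the projection to cyclic words. I would check that they cancel, as they do in the one-object case of Kontsevich's construction, by comparing with the convention used in lemma \ref{comp_1} and remark \ref{prak_1}; alternatively one could deduce the statement from lemma \ref{comp_1} and theorem \ref{comp_rg_al} using $p(S)=D$ and $p(I^q)=I$, but this would require a quantization $I^q$ of $I$ to exist. Second, the degree shifts must be verified to agree with those in the definition of $\mathcal{F}^{fr}(V_B)$. Both checks are routine.
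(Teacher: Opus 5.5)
Your proposal is the argument the paper intends; the paper itself only calls the lemma straightforward. You evaluate $\rho_I$ on the one-vertex trees $D_{n,C}$, reading $I$ as the hamiltonian $I_{\mathcal{C}}$, and this reading is necessary, since for a general Maurer--Cartan element $\rho_I(D)$ only recovers the $Sym^1$ part with cyclic words of length $\geq 3$. The normalization you defer is, however, not automatic once $|B|\geq 2$, and the one-object case is the wrong model for it: if you lift $I|_v$ to the full $\mathbb{Z}/n$-invariant tensor on all rotations of the coloring and $proj$ then keeps only the linear coloring fixed by the starting leaf, you get $\frac{s_C}{n}\,I|_C$, where $s_C$ is the order of the rotational stabilizer of $C$ (the factor is $1/n$ exactly when $C$ has no rotational symmetry). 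So the lemma needs the convention that the coinvariant class is lifted relative to that fixed linear coloring, i.e.\ invariantly under the stabilizer of $C$ only, which is the same convention the left square in the proof of Theorem~\ref{main_compa} tacitly requires.
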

Thus we can induce the following theorem:
 \begin{theorem}\label{aiaiai}
Given a quantization $I^q$ of a cyclic $A_\infty$-category $\mathcal{C}$ there is a commutative diagram as follows:        
    $$\begin{tikzcd}
      \mathcal{F}^{cl}(\mathcal{C})&  \mathcal{F}^{q}(\mathcal{C})\arrow{l}\\
      \mathcal{RT}^{d,tw}_{Ob\mathcal{C}}\arrow{u}{\rho_{I}}&s\mathcal{RG}^{d,tw}_{Ob\mathcal{C}}\llbracket\gamma\rrbracket\arrow{u}{\rho_{I^q}}\arrow{l}.
    \end{tikzcd}$$
    Here the horizontal maps are dequantization maps, the right vertical map is a map of $(d-2)$-twisted BD algebras and 
    the left vertical maps is a map of $(d-2)$-shifted Poisson algebras
    \end{theorem}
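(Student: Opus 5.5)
The diagram will be obtained by twisting the outer square of theorem \ref{comp_rg_al} by compatible Maurer--Cartan elements. Take $B=Ob\mathcal{C}$ and $V_B$ the underlying collection of cyclic chain complexes. Set $I=p(I^q)$; by the definition of a quantization this is the $A_\infty$-hamiltonian $I_\mathcal{C}$ of lemma \ref{Ham1}.

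\textbf{Commuting square before twisting.} Theorem \ref{comp_rg_al} gives a commutative square
\[
p\circ\rho_{I^q}=\rho_I\circ p:\; s\mathcal{RG}^d_B\llbracket\gamma\rrbracket\rightarrow\mathcal{F}^{fr}(V_B).
\]
It is formed from the bent dequantization arrow $p$ of theorem \ref{rbg_comp}, the map $\rho_{I^q}$ of theorem \ref{Kc2}, the map $\rho_I$ of theorem \ref{Kc1}, and the dequantization map $p$ of definition \ref{nc_dq}. Here $\rho_{I^q}$ is a map of $(d-2)$-twisted BD algebras and $\rho_I$ is a map of $(d-2)$-shifted Poisson algebras.

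\textbf{Compatibility of the Maurer--Cartan elements.} Place $S$ at the lower right corner, $D$ at the lower left, $I^q$ at the upper right and $I$ at the upper left. The needed relations are:
\begin{itemize}
\item $p(S)=D$, by remark \ref{prak_1};
\item $\rho_{I^q}(S)=I^q$, by lemma \ref{comp_1};
\item $\rho_I(D)=I$, by lemma \ref{comp_2};
\item $p(I^q)=I$, by definition of a quantization.
\end{itemize}

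\textbf{Twisting.} A morphism of dg shifted Lie algebras $f\colon \mathfrak g\to\mathfrak h$ with $f(x)=y$ for Maurer--Cartan elements $x,y$ remains a morphism after twisting by $x$ and $y$. Indeed,
\[
f(\{x,a\})=\{f(x),f(a)\}=\{y,f(a)\}.
\]
If $f$ additionally respects products, as $\rho_{I^q}$ and $\rho_I$ do, then the twisted map is again a BD, respectively shifted Poisson, map. The reason is that twisting only changes the differential by the derivation $\{x,\_\}$, so the BD relation \eqref{BDrel} is preserved. For the two dequantization maps the same computation shows they remain chain maps and Lie maps, so they are still dequantization maps in the sense of definition \ref{dq}.

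The twisted corners are identified as follows:
\begin{itemize}
\item $\mathcal{F}^{pq}(V_B)$ twisted by $I^q$ is $\mathcal{F}^q(\mathcal{C})$ (definition \ref{F_q});
\item $\mathcal{F}^{fr}(V_B)$ twisted by $I$ is $\mathcal{F}^{cl}(\mathcal{C})$ (definition \ref{F_cl});
\item the graph complexes twisted by $S$ and $D$ are $s\mathcal{RG}^{d,tw}_B\llbracket\gamma\rrbracket$ and $\mathcal{RT}^{d,tw}_B$ by definition.
\end{itemize}
Commutativity of the twisted square is immediate, since twisting does not change the underlying linear maps.

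\textbf{Main obstacle.} The subtle point is that the dequantization maps are not algebra maps, so the twisting argument for them must use only the Lie structure. One must check that the differential $d+\nabla+\gamma\delta+\{I^q,\_\}$ is sent to $d+\{I,\_\}$. This follows from $p$ being a Lie map together with $p(I^q)=I$.

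A second point is that theorem \ref{Kc2} only establishes compatibility with the $\gamma$-graded structure. One must verify that the twisted differential $-d+\nabla+\delta_i+\gamma\delta_e+\{S,\_\}$ is intertwined with the differential of $\mathcal{F}^q(\mathcal{C})$. This reduces to $\rho_{I^q}(\{S,\_\})=\{I^q,\rho_{I^q}(\_)\}$, which holds by lemma \ref{bra_comp} and lemma \ref{comp_1}.
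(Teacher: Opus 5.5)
Your proposal is correct and follows the paper's own argument: the paper also obtains the diagram by twisting the outer square of theorem~\ref{comp_rg_al} by the compatible Maurer--Cartan elements $S$, $D$, $I^q$, $I$, using remark~\ref{prak_1} and lemmas~\ref{comp_1} and~\ref{comp_2}. Your explicit check that the twisted maps remain BD, shifted Poisson, or dequantization maps (using only the Lie structure for $p$) just spells out what the paper leaves implicit.
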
 
    \begin{proof}
  Indeed this follows from the previous theorem and definitions and the consecutive two lemmas by twisting the outer diagram by the compatible Maurer-Cartan elements.      
    \end{proof}
Lastly assume we are given an essentially finite (in the sense of definition \ref{essfin}) cyclic $A_\infty$-category $\mathcal{C}$ with two choices of skeleta $(\mathcal{C},Sk\mathcal{C}_1)$ and $(\mathcal{C},Sk\mathcal{C}_2)$.
Further assume that we are given compatible quantizations of these three categories in the sense of definition \ref{qesfq}.
Let us denote by $\widebar{\mathcal{C}}$ the full subcategory on the images of $Sk\mathcal{C}_1$ and $Sk\mathcal{C}_2$.
$\widebar{\mathcal{C}}$ can be endowed with the structure of a quantum $A_\infty$-category, it has finitely many objects.
Its quantization gives another finite presentation of the quantization of $\mathcal{C}$ and it is compatible with the ones by $\mathcal{C}_1$ and $\mathcal{C}_2$ by definition.
We summarize the situation by following diagram, where all the maps are equivalences of quantum $A_\infty$-categories in the sense of \ref{BDqi}.
\begin{equation}\label{dall}\begin{tikzcd}
    &\mathcal{C}&\\
   Sk\mathcal{C}_1\arrow[hookrightarrow]{ur}\arrow[hookrightarrow]{r}&\widebar{\mathcal{C}}\arrow[hookrightarrow]{u}& Sk\mathcal{C}_2\arrow[hookrightarrow]{ul}\arrow[hookrightarrow]{l}
\end{tikzcd}\end{equation}

Then, the vertical right maps of theorem \ref{aiaiai} induced from the quantization of $\mathcal{C}_1$ and from the quantization of $\mathcal{C}_2$ are compatible as follows:
 \begin{equation}\label{unabhai}
\begin{tikzcd}
      \mathcal{F}^{q}(Sk\mathcal{C}_1)&  \mathcal{F}^{q}(\widebar{\mathcal{C}})\arrow{l}[swap]{\simeq}\arrow{r}{\simeq}&\mathcal{F}^{q}(Sk\mathcal{C}_2)\\
      s\mathcal{RG}^{d,tw}_{ObSk\mathcal{C}_1}\llbracket\gamma\rrbracket\arrow{u}&s\mathcal{RG}^{d,tw}_{Ob\widebar{\mathcal{C}}}\llbracket\gamma\rrbracket\arrow{u}\arrow{r}\arrow{l}&s\mathcal{RG}^{d,tw}_{ObSk\mathcal{C}_2}\llbracket\gamma\rrbracket\arrow{u}.
    \end{tikzcd}\end{equation}
Note that here the left and right horizontal map are just given by extending by zero on elements which are not colored by $Sk\mathcal{C}_1$ respectively $Sk\mathcal{C}_2$.
Further $s\mathcal{RG}^{d,tw}_{ObSk\mathcal{C}_1}\llbracket\gamma\rrbracket$ and 
$s\mathcal{RG}^{d,tw}_{ObSk\mathcal{C}_2}\llbracket\gamma\rrbracket$ are in fact (non-canonically) isomorphic, since $ObSk\mathcal{C}_1$ and $ObSk\mathcal{C}_2$ are isomorphic sets, both being the skeleton of the same category.
Thus diagram \ref{unabhai} tells us that its left and right vertical map are equivalent in the derived category, up to extending these maps by zero to a bigger domain.

\subsection{Commutative World}\label{cksc}
In this section we explain how to connect the  graph complex world, section \ref{grs}, with the world of cyclic $L_\infty$-algebras and their quantizations, section \ref{cw}.
We do so using a slight generalization of Penkava's cocycle construction \cite{pe96}, generalizing it to graphs with leafs.

 Given $V$ a cyclic chain complex of odd degree $d$ together with $I^q\in MCE (\mathcal{O}^{pq}(V))$ we construct a map 
 $$\theta_{I^q}: s\mathcal{G}^d\llbracket\hbar\rrbracket\rightarrow \mathcal{O}^{pq}(V).$$

We explain how to to so in multiple steps.
We begin by associating certain tensors, denoted $I^q|_{v_i}$, to each vertex $v_i$ of a connected stable graph.
\begin{cons}
  Let $\Gamma$ be a connected stable graph and $v_i$ one of its vertices:\begin{itemize}
      \item 
 To such a vertex $v_i$ of $\Gamma$ is associated a loop defect $o_i\in\mathbb{N}_{\geq0}$ by definition, see \ref{sgrdf}. 
 \item  Given a vertex $v_i$ of $\Gamma$ we define a map of degree $(d-3)o_i$, denoted
  \begin{equation}\label{iso_scg}
(\_)|_{v_i}:\mathcal{O}^{pq}(V)\rightarrow C^*(V)\end{equation}
by restricting to the power $\hbar^{o_i}$.
\item Thus for $I^q\in MCE(\mathcal{O}^{pq}(V))$, which has degree $(3-d)$, we have that $${|{I}^q|_{v_i}|=(3-d)(1-o_i)}.$$
\end{itemize}
\end{cons}
Next we explain how the datum of an oriented stable graph together with some auxiliary choices defines a permutation of the half-edges of such a stable  graph:
\begin{cons}\label{perm_g}
 Assume that $\Gamma$  has $n$ vertices.
 \begin{itemize}
     \item 
Chose an ordering of its vertices and at each vertex $v_i$  an ordering of the half-edges, denoted, 
$$h_{v_i,1}\cdots h_{v_i,|v_1|}.$$
With the ordering of the vertices together this defines an ordering of all half edges of $\Gamma$
$$h_{v_1,1}\cdots h_{v_1,|v_1|}h_{v_2,1}\cdots h_{v_r,|v_r|}.$$
\end{itemize}
Next assume that $\Gamma$ is oriented, ie. we have an ordering $(e_1\cdots e_n)$ of the internal edges.
\begin{itemize}
    \item 
Chose an ordering of the half edges belonging to a given internal edge.
Together with the orientation this gives an ordering of the half edges belonging to the internal edges 
$$h_{1_x}h_{1_y}\cdots h_{n_x} h_{n_y}.$$ 
\end{itemize}
Given the choices made and given the orientation we can define a permutation of all the half-edges as follows
$$
\sigma: H(\Gamma)\rightarrow H(\Gamma) $$
$$
h_{v_1,1}\cdots h_{v_1,|v_1|}h_{v_2,1}\cdots h_{v_r,|v_r|}\mapsto h_{1_x}h_{1_y}\cdots h_{n_x} h_{n_y}l_{1,1}\cdots l_{1,m_1}l_{2,1}\cdots l_{b,m_b} $$
\begin{itemize}
    \item 

We begin the ordering of the leaves on the right hand side by the first leave that appears on the left hand side, then continue with the next one and so on.
\end{itemize}
\end{cons}
 We still fix the same oriented stable graph $\Gamma$ and the choices of orderings made.
 Denote $U:=V^\vee[-1]$. We next explain how to define, given these choices, a map
$$U^{\otimes h(\Gamma)}\rightarrow C^*(V)$$
 of degree $e(\Gamma)(d-2)$.
\begin{cons}\label{con_proj_g}
The permutation $\sigma$ induces a map which we denote by abuse of notation by the same letter
$${\sigma}:U^{\otimes h(\Gamma)}\rightarrow U^{\otimes h(\Gamma)}\cong U^{\otimes 2e(\Gamma)}\otimes U^{\otimes l(\Gamma)}.$$ 
\begin{itemize}
    \item 
Denote by 
$$proj: U^{\otimes l(\Gamma)}\rightarrow C^*(V)$$
the projection to symmetric words, which has degree 0.
\end{itemize}
Thus we can define a map of degree $e(\Gamma)(d-2)$
$$\bigotimes_{e\in E(\Gamma)}\langle\_,\_\rangle^{-1}\otimes proj:\ U^{\otimes 2e(\Gamma)}\otimes U^{\otimes l(\Gamma)}\rightarrow C^*(V).$$
\end{cons}
\begin{df}
Fixing the same oriented stable graph $\Gamma$ as in the previous constructions and the choices made we define 
\begin{equation}\label{main_constr_c}
\theta_{I^q}(\Gamma)=(\bigotimes_{e\in E(\Gamma)}\langle\_,\_\rangle^{-1}\otimes proj)\circ{\sigma}\big(I^q|_{v_1}\otimes\ldots\otimes I^q|_{v_n}\big)\hbar^{bt(\Gamma)}\in C^*(V)\llbracket\hbar\rrbracket.
\end{equation}
Here we use the the standard isomorphism between invariants and co-invariants in characteristic zero to view $I^q|_{v_i}$
as an invariant tensors, which we can thus see as an element of $U^{\otimes r},$ for $r=|v_i|$.
Further the order of the tensors is determined by the choice of ordering of vertices. 
\end{df}
\begin{lemma}
   Given $I^q\in MCE (\mathcal{O}^{pq}(V))$ the assignment  \ref{main_constr_c} defines a degree zero map of vector spaces
\begin{equation}\label{Kon_gen_2}
\begin{split}
\theta_{I^q}:{s\mathcal{G}_{c,\bullet}}^\vee[d-3]&\rightarrow C^*(V)\llbracket\hbar\rrbracket,\\
\Gamma&\mapsto \rho_{I^q}(\Gamma).
\end{split}
\end{equation}
In particular, it is independent of the choices made.
\end{lemma}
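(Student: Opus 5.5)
I will follow the proof of lemma \ref{skadj123} closely, since the present construction is its commutative analogue and is in fact simpler: there are no cycles, faces, boundary colourings or $\nu$-factors to keep track of. The plan has three steps: check degrees, check independence of the auxiliary choices in construction \ref{perm_g}, and check compatibility with the orientation relation.

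\textbf{Degree count.} By construction \ref{con_proj_g}, the contraction $\bigotimes_{e}\langle\_,\_\rangle^{-1}\otimes proj$ has degree $e(\Gamma)(d-2)$. Each vertex contributes $|I^q|_{v_i}|=(3-d)(1-o_i)$, and $\hbar^{bt(\Gamma)}$ contributes $(3-d)\,bt(\Gamma)$. Using definition \ref{betti}, $bt=1-v+e+\sum o_i$, the total is
\[
e(d-2)+(3-d)\Big(v-\sum_i o_i\Big)+(3-d)\Big(1-v+e+\sum_i o_i\Big)=e(d-2)+(3-d)(1+e)=(3-d)-e.
\]
On the source, the generator $\Gamma$ has degree $-e(\Gamma)$, because we dualize a complex graded by the number of edges, shifted by $d-3$. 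Its total degree is therefore $3-d-e$, so the map has degree zero. I will only need to be careful that the conventions for the shift $[d-3]$ match those used in the ribbon case.

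\textbf{Independence of choices.} First, changing the ordering of the vertices permutes the tensors $I^q|_{v_i}$. These components have even degree $(3-d)(1-o_i)$, since $d$ is odd, so no Koszul sign arises. Second, changing the ordering of the half-edges at a vertex does nothing, because each $I^q|_{v_i}$ is viewed as a symmetric (invariant) tensor. Third, the induced reordering of the leaves is absorbed by the final projection $proj$ onto symmetric words, which is precisely the quotient by such permutations. Fourth, swapping the two half-edges of an internal edge leaves the result unchanged, by symmetry of $\langle\_,\_\rangle^{-1}$.

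\textbf{Orientation.} Changing the ordering of the internal edges by a transposition exchanges two paired factors of odd degree $d-2$, so it introduces a sign $-1$. This matches the relation in $s\mathcal{G}_{c,\bullet}$, where reversing the orientation equals multiplication by $-1$. Finally, the value depends only on the isomorphism class of $\Gamma$: an isomorphism transports every choice, and we have just shown the result is choice-independent.

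The only real obstacle is the sign bookkeeping. One must confirm that odd-degree letters in $U$ produce no sign beyond those absorbed by the symmetry of the vertex tensors and by $proj$. I would handle this by the same argument as in the proof of lemma \ref{skadj123}.
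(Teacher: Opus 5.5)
Your arguments for independence of the choices and for the orientation sign are the ones the paper gives:
\begin{itemize}
\item the vertex tensors have even degree $(3-d)(1-o_i)$;
\item each $I^q|_{v_i}$ is invariant, and the projection $proj$ symmetrizes the leaf slots;
\item $\langle\_,\_\rangle^{-1}$ is symmetric, which handles the two half-edges of an internal edge;
\item the pairings have odd degree $d-2$, which gives the orientation sign.
\end{itemize}
You split the paper's single permutation $\tau$ into a vertex-tensor symmetry plus a reordering of the leaf slots, which is slightly more precise. Your remark that the value depends only on the isomorphism class of $\Gamma$ is a useful addition.

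The degree count, however, is wrong as written. First, the arithmetic slips: $e(d-2)+(3-d)(1+e)=(3-d)+e$, not $(3-d)-e$. Your computation only appears to close because you also take the source degree with the wrong sign.

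The target $C^*(V)\llbracket\hbar\rrbracket$ is graded cohomologically: $\hbar$ and $I^q$ have degree $3-d$, and the pairings have degree $d-2$. In the same grading, the dual differential on $s\mathcal{G}_{c,\bullet}^\vee$ expands an edge and has degree $+1$. The paper uses this explicitly when it says the gluing bracket has degree $1$ ``since we are creating a new internal edge''. So a graph with $e$ edges sits in degree $+e$, and in degree $e+3-d$ after the shift $[d-3]$. The paper's identity $3-d=-e+(d-2)e+(3-d)(bt+v-\sum_i o_i)$ is exactly the statement $\deg\theta_{I^q}(\Gamma)=e+3-d=\deg\Gamma$.

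With your source degree $3-d-e$ and the correct arithmetic, you would get a discrepancy of $2e$. Already for one edge between two vertices the target has degree $4-d$ while your source degree is $2-d$. So you must correct the source grading as well as the arithmetic. With both corrections the degrees agree and the map has degree zero.
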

\begin{proof}
The fact that the degrees work out follows by counting together 
$$3-d=-e+(d-2)e+(3-d)(bt+v-\sum_{v_i}o_i)$$
and definition \ref{betti} of the betti number. 
This map is independent of the choices made as follows:
\begin{itemize}
    \item Let $\Gamma$ be a stable graph as in the previous construction.
    Assume that we chose a different ordering of vertices and of the half-edges at each vertex, but for now take the same orientation datum and ordering of half-edges belonging to each internal edge.
    Let us call the initial permutation as in construction \ref{perm_g} $\sigma_1:H(\Gamma)\rightarrow H(\Gamma)$ and the new one $\sigma_2:H(\Gamma)\rightarrow H(\Gamma)$.
    Then we have that 
    $${\sigma_1}\big(I^q|_{v_1}\otimes\ldots\otimes I^q|_{v_n}\big)=\tau\circ{\sigma_2}\big(I^q|_{v_{\Tilde{1}}}\otimes\ldots\otimes I^q|_{v_{\Tilde{n}}}\big),$$
    where on the right-hand-side we use the new ordering of the vertices to order the input tensors.
    Here $\tau$ is a general permutation.
    There is no sign showing up because all the $|I^q(v_i)|$ are even.
    
    Since in the definition of $\theta_{I^q}$, described in the second bullet point of construction \ref{con_proj_g}, we project to the quotient determined by all permutations $\tau$, that is symmetric words,
    we have that $\theta_{I^q}$ is independent of a different ordering of vertices and of an ordering of the half-edge at each vertex.
    \item Now assume that we chose a different ordering of the half-edges belonging to an internal edge respectively.
    Since the pairings $\langle\_,\_\rangle^{-1}$ are symmetric it follows that this gives the same result $\theta_{I^q}$.
  \end{itemize}  
Lastly this map is compatible with the orientation: Since the pairings have odd degree if
we change the orientation by an odd permutation the result changes by a factor of $(-1)$.
    
\end{proof}
\begin{lemma}\label{bra_comp_2}
     Given $I^q\in MCE (\mathcal{O}^{pq}(V))$ we have that 
     $\theta_{I^q}(\{\_,\_\})=\{\theta_{I^q}(\_)\theta_{I^q}(\_)\}.$

\end{lemma}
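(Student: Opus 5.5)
The argument mirrors the proof of Lemma \ref{bra_comp}, in the simpler commutative setting without faces, cyclic words or central extension. By the Leibniz rule on the target, both sides are determined on connected stable graphs, so we check the identity for $\Gamma_1,\Gamma_2$ connected stable graphs. First rewrite the bracket of Construction \ref{iwjs} as a sum over ordered pairs of leaves,
$$\{\Gamma_1,\Gamma_2\}=\sum_{l_1\in L(\Gamma_1),\,l_2\in L(\Gamma_2)}\frac{\Gamma_1\cup_{l_1,l_2}\Gamma_2}{c_{l_1}(\Gamma_1)\,c_{l_2}(\Gamma_2)},$$
where $c_{l_i}(\Gamma_i)$ is the size of the orbit of $l_i$ under $\mathrm{Aut}(\Gamma_i)$. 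This converts the sum over isomorphism classes into a sum over labelled gluings, which is the form in which $\theta_{I^q}$ is computed.

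Next we evaluate $\theta_{I^q}(\Gamma_1\cup_{l_1,l_2}\Gamma_2)$ using \eqref{main_constr_c}. We choose the orderings of Construction \ref{perm_g} compatibly: first the vertices of $\Gamma_1$ and then those of $\Gamma_2$; for edges, those of $\Gamma_1$, then the new edge $(l_1,l_2)$, then those of $\Gamma_2$, matching the orientation convention. With these choices the tensor $\sigma(I^q|_{v_1}\otimes\cdots\otimes I^q|_{v_n})$ factors as the two tensors of $\Gamma_1$ and $\Gamma_2$, with the new edge contracting one letter of each via $\langle\_,\_\rangle^{-1}$. The Betti numbers add, $bt(\Gamma_1\cup\Gamma_2)=bt(\Gamma_1)+bt(\Gamma_2)$, since $v$ and $e$ each increase additively by the right amounts. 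Hence the powers of $\hbar$ match, and no extra factor arises, unlike the $\nu$ in the ribbon case.

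On the other side, $\{\theta(\Gamma_1),\theta(\Gamma_2)\}$ is by Theorem \ref{c_pq} the $\hbar$-linear Leibniz extension of $\langle\_,\_\rangle^{-1}$ on generators of $C^*(V)$. For the product of the symmetric words $\theta(\Gamma_1)$ and $\theta(\Gamma_2)$, it therefore sums over one letter from each word contracted with the inverse pairing. After applying $proj$, the letters of $\theta(\Gamma_i)$ correspond to leaves of $\Gamma_i$. Symmetrization makes each $\mathrm{Aut}$-orbit of leaves contribute $c_{l_i}(\Gamma_i)$ identical terms, and this is exactly compensated by the denominator above. Summing over $l_1,l_2$ gives equality.

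The main obstacle is the sign bookkeeping: we must check that moving the two contracted letters to the front, which is where the Koszul rule of the Leibniz extension puts them, agrees with placing the new edge between the edges of $\Gamma_1$ and $\Gamma_2$ in the orientation. This should follow because the $|I^q|_{v_i}|$ are even and the inverse pairing has odd degree $d-2$, so the only sign is the one from passing the edges of $\Gamma_1$, which is the same on both sides.
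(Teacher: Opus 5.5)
Your proposal follows the paper's proof: you rewrite $\{\Gamma_1,\Gamma_2\}$ as the sum over leaf pairs $(l_1,l_2)$ weighted by $1/(c_{l_1}(\Gamma_1)c_{l_2}(\Gamma_2))$, observe that $\theta_{I^q}(\Gamma_1\cup_{l_1,l_2}\Gamma_2)$ is a single inverse-pairing contraction between the tensors of $\Gamma_1$ and $\Gamma_2$, and match the orbit overcounting against the Leibniz-extended bracket on $C^*(V)\llbracket\hbar\rrbracket$. Your check that $bt(\Gamma_1\cup_{l_1,l_2}\Gamma_2)=bt(\Gamma_1)+bt(\Gamma_2)$, so the $\hbar$-powers agree, and your sign paragraph are additions the paper leaves implicit; the paper does not discuss signs at all, so in both versions the sign step is only a sketch.
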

\begin{proof}
    Recall that the bracket on 
    $C^*(V)\llbracket\hbar\rrbracket$
    is defined by pairing each letter of a symmetric word with a letter of a symmetric word exactly once,
    by that joining the two symmetric words and taking into account the Koszul rule. 

    On the other hand we can write 
    $$\{\Gamma_1,\Gamma_2\}=\sum_{l_1\in L(\Gamma_1),l_2\in L(\Gamma_1)} \frac{\Gamma_1\cup_{l_1,l_2} \Gamma_2}{c_{l_1}(\Gamma_1)c_{l_2}(\Gamma_2)},$$
    the sum over all leafs of the graphs and gluing the two graphs together along those.
    Here $c_{l_1}(\Gamma)=n\in \mathbb{N}$ if there are automorphisms of $\Gamma$ that send $l_1$ to $n$ other leafs. 

    By definition \ref{main_constr_c} we see that $\sum_{l_1\in L(\Gamma_1),l_2\in L(\Gamma_1)} \theta(\Gamma_1\cup_{l_1,l_2} \Gamma_2)$
    is given by pairing the tensor 
    $(\bigotimes_{e\in E(\Gamma_1)}\langle\_,\_\rangle^{-1})\circ{\sigma}\big(I^q|_{v_1}\otimes\ldots\otimes I^q|_{v_n}\big)$
    and the tensor $(\bigotimes_{e\in E(\Gamma_2)}\langle\_,\_\rangle^{-1})\circ{\sigma}\big(I^q|_{v_1}\otimes\ldots\otimes I^q|_{v_n}\big)$
    together in all possible ways, using a letter $l_1$ and $l_2$ from each word and then projecting to symmetric words.
    However, compared to $\{\theta(\Gamma_1),\theta(\Gamma_2\}$ we are overcounting exactly $c_{l_1}(\Gamma_1)c_{l_2}(\Gamma_2)$ terms.
    This proves the result. 
\end{proof}
\begin{lemma}\label{diff_comp_2}
Given $I^q\in MCE (\mathcal{O}^{pq}(V))$ we have $\theta_{I^q}\circ \Delta_i=\hbar\Delta\circ\theta_{I^q}$.
\end{lemma}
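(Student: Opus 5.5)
The identity $\theta_{I^q}\circ \Delta_i=\hbar\Delta\circ\theta_{I^q}$ should follow by a bookkeeping argument parallel to the proof of lemma \ref{bra_comp_2}, now with a self-gluing in place of a gluing of two graphs. I would first reduce to a connected stable graph $\Gamma$. The reason is that $\Delta_i$ on $s\mathcal{G}^d\llbracket\hbar\rrbracket$ is the derivation extension of the map on ${s\mathcal{G}_{c,\bullet}}^\vee$, while on $\mathcal{O}^{pq}(V)$ the operator $\Delta$ is a second order operator. On a product of connected pieces, $\Delta$ therefore produces both self-contractions within one factor and brackets between factors. For the statement as formulated on the generators \eqref{Kon_gen_2}, only the connected case is needed; the cross terms are then handled in the later theorem, together with $\hbar\Delta_e$ and lemma \ref{bra_comp_2}.

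For connected $\Gamma$, I would rewrite $\Delta_i\Gamma$ as a sum over ordered or unordered pairs of distinct leaves $l_1,l_2\in L(\Gamma)$ of $\circ_{l_1,l_2}\Gamma$, divided by the automorphism factor $c_{l_1,l_2}(\Gamma)$. This factor counts the pairs of leaves in the orbit of $\{l_1,l_2\}$ under $\mathrm{Aut}(\Gamma)$, exactly as in the proof of lemma \ref{bra_comp_2}. Next I would evaluate $\theta_{I^q}(\circ_{l_1,l_2}\Gamma)$ using definition \eqref{main_constr_c}. The vertices and their tensors $I^q|_{v_i}$ are unchanged, and the loop defects are unchanged. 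The only differences are that the two leaf slots $l_1,l_2$ are moved into the internal edge block and contracted with $\langle\_,\_\rangle^{-1}$, and that $bt$ increases by one. The latter gives exactly one extra factor of $\hbar$.

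On the other side, $\Delta$ acts on the symmetric word $proj(\cdots)$ by summing over unordered pairs of letters and contracting each pair with $\langle\_,\_\rangle^{-1}$. Since $proj$ symmetrizes, contracting a pair of letters after projection is the same as contracting the corresponding pair of leaf slots before projection. Summing over all leaf pairs therefore recovers $\Delta\theta_{I^q}(\Gamma)$, up to the automorphism overcounting, which cancels the factor $c_{l_1,l_2}$. Together with the extra $\hbar$, this gives $\hbar\Delta\theta_{I^q}(\Gamma)$.

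The main obstacle is signs and orientation. In $\circ_{l_1,l_2}\Gamma$ the new edge is placed first in the ordering. In the permutation $\sigma$ of construction \ref{perm_g}, the new pair of half-edges must accordingly be moved to the front of the edge block. I would check that the resulting Koszul sign, coming from moving two letters of $U=V^\vee[-1]$ past the others together with the odd degree $d-2$ of $\langle\_,\_\rangle^{-1}$, coincides with the sign convention in $\Delta$ on $C^*(V)$, which is extended from $\Delta(a\cdot b)=\langle a,b\rangle^{-1}$. Because the $I^q|_{v_i}$ have even degree, as noted in the previous lemma, no extra signs arise from the vertex tensors. I expect this to reduce to the same verification as in lemma \ref{bra_comp_2}.
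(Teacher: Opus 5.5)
Your route is the paper's own: its proof of this lemma is just ``similarly to lemma \ref{bra_comp_2}''. However, the step where ``the automorphism overcounting cancels the factor $c_{l_1,l_2}$'' fails with the definitions as written, and the paper's one-line argument inherits the same defect.

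What you establish correctly is $\Delta\circ proj=proj\circ\sum_{\{a,b\}}c_{ab}$, where $c_{ab}$ contracts the leaf slots $a,b$ with $\langle\_,\_\rangle^{-1}$. Hence $\sum_{\{l_1,l_2\}}\theta_{I^q}(\circ_{l_1,l_2}\Gamma)=\hbar\Delta\theta_{I^q}(\Gamma)$, with the sum over \emph{all} unordered leaf pairs. Since $\theta_{I^q}(\circ_{l_1,l_2}\Gamma)$ depends only on the $\mathrm{Aut}(\Gamma)$-orbit $O$ of $\{l_1,l_2\}$, this reads $\hbar\Delta\theta_{I^q}(\Gamma)=\sum_O|O|\,\theta_{I^q}(\Gamma_O)$. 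By contrast, $\theta_{I^q}(\Delta_i\Gamma)$ contains each glued class once. Nothing absorbs $|O|=c_{l_1,l_2}$, because \eqref{main_constr_c} carries no factor $1/|\mathrm{Aut}(\Gamma)|$.

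Concretely, take $\Gamma=G_{n,g}$. Then $\Delta_iG_{n,g}=Y$, the one-vertex graph with one loop and $n-2$ leaves. The $S_n$-invariance of the vertex tensor makes all $\binom{n}{2}$ contractions equal, so $\theta_{I^q}(\Delta_iG_{n,g})=\binom{n}{2}^{-1}\hbar\Delta\theta_{I^q}(G_{n,g})$. This holds for any normalization of the invariants--coinvariants identification, since both sides scale the same way. For $n\ge 3$ the identity therefore forces $\Delta I^q_{n,g}=0$, which is not a hypothesis; the Maurer--Cartan equation only makes $\Delta I^q_{3,0}$ $d$-exact. The same mismatch, a factor $n_1n_2$ for two corollas, breaks lemma \ref{bra_comp_2}, so it cannot serve as your template.

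The missing idea is the standard Feynman normalization, which the paper itself uses for $\mathcal{K}_s$ in rule \ref{Feynm}. Decorate each vertex with the desymmetrized tensor $\widetilde{I^q|_{v}}$ as in \eqref{desym}, and divide by $|\mathrm{Aut}(\Gamma)|$, with leaf permutations included in $\mathrm{Aut}(\Gamma)$. This still gives $\theta_{I^q}(G_{n,g})=I^q_{n,g}\hbar^g$, so lemma \ref{comp_3} is unaffected.

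Your computation then closes by orbit--stabilizer. An orbit $O$ contributes $|O|\,|\mathrm{Aut}(\Gamma_O)|/|\mathrm{Aut}(\Gamma)|=|\mathrm{Aut}(\Gamma_O)|/|\mathrm{Stab}_{\mathrm{Aut}(\Gamma)}\{l_1,l_2\}|$. The stabilizer of the pair in $\mathrm{Aut}(\Gamma)$ is exactly the stabilizer of the new edge in $\mathrm{Aut}(\Gamma_O)$, so this contribution is the size of the $\mathrm{Aut}(\Gamma_O)$-orbit of that edge. Summing over the orbits producing a fixed class $\Gamma'$, the coefficient of $\theta_{I^q}(\Gamma')$ in $\hbar\Delta\theta_{I^q}(\Gamma)$ is the number of edges of $\Gamma'$ whose cutting returns $\Gamma$, counted with orientation signs.

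Consequently $\Delta_i$ on the dual basis must carry these multiplicities; it is the transpose of edge-cutting on $s\mathcal{G}_{c,\bullet}$. Neither your rewriting $\sum c_{l_1,l_2}^{-1}\circ_{l_1,l_2}\Gamma$ (one term per orbit) nor the paper's ``each glued class once'' has these multiplicities in general. They agree only when $\mathrm{Aut}(\Gamma')$ fixes the new edge and a single orbit produces $\Gamma'$. With these conventions in place, your sign discussion can proceed as planned.
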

\begin{proof}
This follows similarly to above lemma.
In the second equality the additional $\hbar$ comes from the fact that $\Delta_i$ increases the loop defect by one.
\end{proof}
\begin{lemma}\label{diff_comp2}
   Given $I^q\in MCE (\mathcal{O}^{pq}(V))$ we have that 
   $\theta_{I^q}\circ (-d)=d\circ \theta_{I^q}$.
\end{lemma}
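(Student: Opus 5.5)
The argument will mirror the proof of lemma \ref{diff_comp2r} in the ribbon graph case, and is in fact simpler because the commutative setting has no cycles, no faces and no central extension to track. Fix a connected oriented stable graph $\Gamma$ with vertices $v_1,\dots,v_n$. By definition of the dual differential,
$$\theta_{I^q}(-d\Gamma)=-\sum_{(*)}\theta_{I^q}(\Gamma'),$$
where $(*)$ runs over isomorphism classes of stable graphs $\Gamma'$ with a distinguished edge $e$ such that $\Gamma'/e\cong\Gamma$ under definition \ref{cont_g}. Each such $\Gamma'$ agrees with $\Gamma$ away from a single vertex $v_i$. That vertex is replaced either by two vertices joined by $e$, with loop defects adding up to $o_i$ (rule 1), or by one vertex with loop defect $o_i-1$ carrying a loop $e$ (rule 2). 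Using the ordering freedom shown in the previous lemma, I will order the vertices and the half-edges of $e$ so that the new local data sit in the slot of $v_i$ and $e$ is the first edge. Then $\theta_{I^q}(\Gamma')$ becomes the common contraction $\big(\bigotimes_{e}\langle\_,\_\rangle^{-1}\otimes proj\big)\circ\sigma$ applied to $I^q|_{v_1}\otimes\cdots\otimes X\otimes\cdots\otimes I^q|_{v_n}$, where $X$ is the local tensor contracted along $e$. Since $bt$ is preserved by contraction, both sides carry the same factor $\hbar^{bt(\Gamma)}$.

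Next I identify the two local sums, as in the proof of theorem \ref{sG_mce} and lemma \ref{bra_comp_2}.

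\emph{Rule 1.} The sum over ways of splitting $v_i$ into two vertices with loop defects $o'+o''=o_i$, together with a partition of its half-edges, is the tensor obtained by pairing $I^q|_{\hbar^{o'}}$ with $I^q|_{\hbar^{o''}}$ through one application of $\langle\_,\_\rangle^{-1}$. Summed over all splittings, this is $\frac12\{I^q,I^q\}|_{v_i}$. The factor $\frac12$ and the automorphism factors $c_l$ are handled as in lemma \ref{bra_comp_2}.

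\emph{Rule 2.} The sum over ways of attaching a loop to a vertex with loop defect $o_i-1$ is a self-contraction of $I^q|_{\hbar^{o_i-1}}$. Since restriction to the $\hbar^{o_i}$ power absorbs the shift, this equals $(\hbar\Delta I^q)|_{v_i}$.

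Combining the two rules, the total is the sum over $i$ of the contraction applied with $\big(\tfrac12\{I^q,I^q\}+\hbar\Delta I^q\big)|_{v_i}$ in slot $i$. The Maurer--Cartan equation in $\mathcal{O}^{pq}(V)$ reads $dI^q+\hbar\Delta I^q+\tfrac12\{I^q,I^q\}=0$, so this becomes $\sum_i(\cdots I^q|_{v_1}\otimes\cdots\otimes dI^q|_{v_i}\otimes\cdots)$. The restriction to a fixed $\hbar$-power commutes with $d$, and both $\langle\_,\_\rangle^{-1}$ and $proj$ are chain maps. By the Leibniz rule the expression therefore equals $d\,\theta_{I^q}(\Gamma)$. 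As the $I^q|_{v_j}$ have even degree, no extra Koszul signs appear.

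The main obstacle is the bookkeeping of the combinatorial weights in the two local sums. One must check that summing over isomorphism classes of $\Gamma'$, with the symmetry factors coming from the identification of invariants and coinvariants, reproduces exactly the coefficients $\tfrac12$ in the bracket and $1$ in $\Delta$. One must also check that the overall sign $-1$ from $-d$ matches the orientation convention placing $e$ first. I would settle this the same way as in theorem \ref{sG_mce}, by regrouping the sum over all one-edge stable graphs $X$ and $Y$ in two ways, once by contraction and once by cutting.
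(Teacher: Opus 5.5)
Your proposal is correct and follows the paper's proof essentially step by step: you expand $d\Gamma$ at a single vertex $v_i$ according to the two contraction rules of definition \ref{cont_g}, identify the local sums with $\tfrac12\{I^q,I^q\}|_{v_i}$ and $(\hbar\Delta I^q)|_{v_i}$, invoke the Maurer--Cartan equation, and pull $d$ out using that the contractions and the projection are chain maps. Your handling of the $\hbar$-power in rule 2 is slightly cleaner than the paper's remark at that point: you note that the Betti number is unchanged and that the shift is absorbed by restricting to the $\hbar^{o_i}$ coefficient.
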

\begin{proof}
    We have 
    $$\theta_{I^q}(-d\Gamma)=-\theta_{I^q}(\sum_{(*)}\Gamma')=-\sum_{(*)}(\bigotimes_{e\in E(\Gamma')}\langle\_,\_\rangle^{-1}\otimes proj)\circ\sigma'\circ(I^q|_{v_1}\otimes\ldots \otimes X\otimes\ldots I^q|_{v_n})\hbar^{bt(\Gamma)}.$$
    Here $(*)$ indicates that the sum is over isomorphism classes of stable graphs $\Gamma'$
    such that when contracting one of their legs according to rule \ref{cont_g} we recover $\Gamma$.
    Say $\Gamma$ has $n$ vertices.
    Note that a graph $\Gamma'$ as before has at most n+1 vertices and $n-1$ of those actually `coincide' with $n-1$ ones of $\Gamma$;
    let us call the vertex of $\Gamma$ which is not matched $v_i$.
    That is how the second equality above follows, where we denote by $X$ the tensor that gets associated to the 
    at most two vertices which are different (and which we assume wlog. to be numbered consecutively).
    Note further that $d$ preserves the betti number.
    
    According to the rule how to contract \ref{cont_g} the sum over $(*)$ splits up into two summands: 
    \begin{enumerate}
        \item 
    Similarly to lemma \ref{bra_comp_2} we have that for the first summand, corresponding to rule \ref{cont_g}.1
    $$-\sum_{(*.1)}(\bigotimes_{e\in E(\Gamma')}\langle\_,\_\rangle^{-1}\otimes proj)\circ\sigma'(\cdots\otimes X\otimes\cdots )=-(
    \bigotimes_{e\in E(\Gamma)}\langle\_,\_\rangle^{-1}\otimes proj)\circ\sigma(\cdots\otimes  \frac{1}{2}\{I^q,I^q\}|_{v_i}\otimes\cdots).$$
 \item Again similarly to lemma \ref{diff_comp_2} we have that for the second summand, corresponding to rule \ref{cont_g}.2
 $$-\sum_{(*.2)}(\bigotimes_{e\in E(\Gamma')}\langle\_,\_\rangle^{-1}\otimes proj)\circ\sigma'(\cdots\otimes X\otimes\cdots )=-(
    \bigotimes_{e\in E(\Gamma)}\langle\_,\_\rangle^{-1}\otimes proj)\circ\sigma(\cdots\otimes  (\hbar\Delta I^q)|_{v_i}\otimes\cdots).$$
Here we need to include a $\hbar$ since for this summand we have decreased the betti number by one.
 \end{enumerate}
 Since we have that $I^q$ is a Maurer-Cartan element, ie. in particular
 $$dI^q|_{v_i}=-(\frac{1}{2}\{I^q,I^q\}|_{v_i}+\hbar\Delta I^q|_{v_i})$$
 it follows that above sum is equal to
 $$=\sum_{i=1}^n(\bigotimes_{e\in E(\Gamma)}\langle\_,\_\rangle^{-1}\otimes proj)\circ\sigma\circ(I^q|_{v_1}\otimes\ldots dI^q|_{v_i}\cdots I^q|_{v_n})\hbar^{bt(\Gamma)},$$
 which we can rewrite, since both $proj$ and $\langle\_,\_\rangle^{-1}$ are chain maps, as 
 
 $$=d\circ\big(\bigotimes_{e\in E(\Gamma)}\langle\_,\_\rangle^{-1}\otimes proj\big)\circ\sigma\circ(I^q|_{v_1}\otimes\cdots \otimes I^q|_{v_n})\hbar^{g(\Gamma)}=d\theta_{I^q}(\Gamma).$$ \end{proof}
\begin{cons}\label{ext_sym_g}
    We extend the map \ref{Kon_gen_2} as a map of algebras to 
\begin{equation}\label{sdnit}
Sym({s\mathcal{G}_{c,\bullet}}^\vee[d-3])\rightarrow C^*(V)\llbracket\hbar\rrbracket,\end{equation}
which by abuse of notation we denote by the same letter.
\end{cons}
\begin{theorem}\label{Kc3}
    The map \eqref{sdnit} is a map of $(d-2)$-twisted BD algebras
    $$\theta_{I^q}: s\mathcal{G}^d\llbracket\hbar\rrbracket\rightarrow \mathcal{O}^{pq}(V),$$
    which is functorial with respect to strict cyclic $L_\infty$-morphisms.
\end{theorem}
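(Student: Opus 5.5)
The plan is to follow the proof of theorem \ref{Kc2} verbatim in the commutative setting, using the three compatibility lemmas just established. First, \eqref{sdnit} is by construction (construction \ref{ext_sym_g}) a map of graded commutative algebras from $Sym({s\mathcal{G}_{c,\bullet}}^\vee[d-3])\llbracket\hbar\rrbracket$ to $C^*(V)\llbracket\hbar\rrbracket$, extended $\hbar$-linearly. It is homogeneous of degree zero by the lemma following \eqref{main_constr_c}. Both sides are $(d-2)$-twisted BD algebras over $k\llbracket\hbar\rrbracket$, by theorem \ref{BD_G} and theorem \ref{c_pq}, with the same base ring and weight $t=1$. So it suffices to check two things: that $\theta_{I^q}$ intertwines the shifted Lie brackets, and that it intertwines the BD differentials $-d+\Delta_i+\hbar\Delta_e$ and $d+\hbar\Delta$.

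For the bracket, lemma \ref{bra_comp_2} shows that the restriction of $\theta_{I^q}$ to the generators ${s\mathcal{G}_{c,\bullet}}^\vee[d-3]$ is a map of shifted Lie algebras into the underlying shifted Lie algebra of the shifted Poisson algebra $C^*(V)\llbracket\hbar\rrbracket$. Since the bracket on $s\mathcal{G}^d\llbracket\hbar\rrbracket$ is the Leibniz extension of the bracket on generators, the algebra map it induces automatically intertwines the extended brackets. This uses only the derivation property in each slot on both sides.

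For the differential, the plan is to compare the two BD differentials on a product $\Gamma_1\cdots\Gamma_n$, using the BD relation \eqref{BDrel} on both sides. The operators $-d$ and $\Delta_i$ are derivations on the source. By lemma \ref{diff_comp2} and lemma \ref{diff_comp_2}, on generators they correspond to $d$ and to the part $\hbar\Delta$ acting on a single connected graph, which is the part of $\Delta$ pairing two letters within one factor. The remaining term $\hbar\Delta_e$ is the Chevalley--Eilenberg differential of the bracket, so on products it equals the sum over pairs $i<j$ of $\hbar\{\Gamma_i,\Gamma_j\}$, placed with Koszul signs. On the target, expanding $\hbar\Delta$ on the product $\theta(\Gamma_1)\cdots\theta(\Gamma_n)$ via the BD relation gives the sum of $\hbar\Delta\theta(\Gamma_i)$ plus $\hbar$ times the pairwise brackets $\{\theta(\Gamma_i),\theta(\Gamma_j)\}$. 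The bracket compatibility then matches the cross terms exactly, and the derivation parts match term by term. This gives
\[
\theta_{I^q}\circ(-d+\Delta_i+\hbar\Delta_e)=(d+\hbar\Delta)\circ\theta_{I^q}.
\]

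Functoriality works as in the last bullet of theorem \ref{Kc2}. Take a strict cyclic $\phi\colon V_1\to V_2$ with $\phi^*I^q_2=I^q_1$. The map $\phi^*$ preserves the inverse pairings on the image, by the same argument as in lemma \ref{cov}. It also commutes with restriction to $\hbar^{o_i}$ and with symmetrization. Hence $\phi^*\theta_{I^q_2}(\Gamma)=\theta_{I^q_1}(\Gamma)$ directly from \eqref{main_constr_c}.

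The main obstacle I expect is sign bookkeeping in the differential step. One must check that the Koszul signs and the $[d-3]$ shifts make the cross terms of $\hbar\Delta$ on products agree with $\hbar\Delta_e$, including the symmetry factor when $\Gamma_i=\Gamma_j$. I would handle this by reducing to $n=2$, using that both sides satisfy the BD relation, and then inducting on $n$.
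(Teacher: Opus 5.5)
Your check that $\theta_{I^q}$ is a map of BD algebras follows the paper's proof. That is: it is an algebra map by construction, Lemma~\ref{bra_comp_2} plus the Leibniz extension handles the bracket, Lemmas~\ref{diff_comp2} and~\ref{diff_comp_2} handle $-d$ and $\Delta_i$, and $\hbar\Delta_e$ is matched with the cross terms of $\hbar\Delta$ on products. Your explicit use of the BD relation on both sides is, if anything, clearer than the paper's wording. The gap is in the functoriality step. You reduce it, exactly as the paper does via Lemma~\ref{cov} and the ``one can check'' around \eqref{hdlm}, to the claim that $\phi^*$ preserves inverse pairings.

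That claim holds only when $\phi$ is surjective. A strict cyclic $\phi\colon V_1\to V_2$ is injective, so $V_2=\phi(V_1)\oplus\phi(V_1)^{\perp}$. The tensor $K_{V_2}$ representing the inverse pairing of $V_2$ then splits as $K_{V_2}=(\phi\otimes\phi)K_{V_1}+K_{\perp}$. For $a,b$ vanishing on $\phi(V_1)$ you get $\langle\phi^*a,\phi^*b\rangle^{-1}_{V_1}=0$, while $\langle a,b\rangle^{-1}_{V_2}=K_\perp(a,b)$ can be nonzero. Hence $\phi^*\theta_{I^q_2}(\Gamma)-\theta_{\phi^*I^q_2}(\Gamma)$ is the sum of all contractions in which at least one internal edge carries $K_\perp$. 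These terms need not vanish.

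Here is an example. Let $V_1$ have zero structure and an odd coordinate $x$ of degree $d-2$ (paired with a coordinate of degree $4-2d$). Let $V_2=V_1\oplus E$ orthogonally, with coordinates $p_1,q_1,p_2,q_2$ on $E$, $|p_i|=0$, $|q_i|=2-d$ and $\langle p_i,q_j\rangle^{-1}=\delta_{ij}$.
\begin{itemize}
\item $J=\hbar\,(p_1+xq_1+c\,xp_2q_2)$ is a Maurer--Cartan element of $\mathcal{O}^{pq}(V_2)$ for a suitable sign $c$. Only $\{p_1,xq_1\}$ and $\Delta(xp_2q_2)$ survive, both multiples of $x$; everything else vanishes because $x^2=0$ or the relevant pairing is zero.
\item $\phi^*J=0$.
\item Take the one-vertex stable graph $\Gamma$ with $o(v)=1$, one self-loop and one leaf. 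Then $\phi^*\theta_J(\Gamma)$ is a nonzero multiple of $\hbar^2x$, whereas $\theta_{\phi^*J}(\Gamma)=\theta_0(\Gamma)=0$.
\end{itemize}
So the triangle \eqref{hdlm} does not commute for general strict cyclic morphisms. Your argument is correct as written for cyclic isomorphisms, where $K_{V_2}=(\phi\otimes\phi)K_{V_1}$. Beyond that, the functoriality clause must either be restricted to such morphisms or supported by an argument that the $K_\perp$-contributions vanish, and no such argument exists in general.
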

\begin{proof}  \begin{itemize}
    \item 
  It is a map of algebras by construction.
\item By the first part of lemma \ref{diff_comp_2} and construction $\theta_{I^q}$ intertwines $\Delta_i$ with $\hbar\Delta$, further by lemma \ref{diff_comp2} it intertwines the internal differentials.
\item We proved that map \ref{Kon_gen_2} is a map of shifted Lie algebras,
in fact it is a map of a shifted Lie algebra to a shifted Poisson algebra.
Any such map determines a map of shifted Poisson algebras from the free algebra on the domain to the target and this is indeed
how we defined the shifted Poisson structure on 
$s\mathcal{G}_B^d\llbracket\hbar\rrbracket$.
\item Furthermore this implies that $\theta_{I^q}$ intertwines $\Delta_e$ with the associated Chevalley-Eilenberg differential of the Lie algebra $\mathcal{O}^{pq}(V)$.
This, together with the arguments in bullet point two imply
$$\theta_{I^q}\big((-d+\hbar\Delta_e+\Delta_i)(\Gamma_1\cdot\ldots\cdot \Gamma_n)\big)=(d+\hbar\Delta)\rho_{I^q}(\Gamma_1\cdot\ldots\cdot \Gamma_n),$$
which finishes the first part of the proof.
\item Given $F:V\rightarrow W$ a strict cyclic $L_\infty$-morphism such that $F^*I^q=J^q$ one can check that 
\begin{equation}\label{hdlm}\begin{tikzcd}
 \mathcal{F}^{pq}(W)\arrow{rr}{F^*}&&   \mathcal{F}^{pq}(V)\\
&s\mathcal{G}^d\llbracket\hbar\rrbracket\arrow[swap]{ur}{\theta_{I^q}}\arrow{ul}{\theta_{J^q}}&
\end{tikzcd}\end{equation}
commutes, using the definition \ref{main_constr_c}.
\end{itemize}\end{proof}

Given an $I\in MCE(\mathcal{O}^{fr}(V))$, we can construct a map
$$\theta_{I}:{\mathcal{G}_{c,\bullet}}^\vee[d-3]\rightarrow C^*(V)\llbracket\hbar\rrbracket$$
given by exactly the same construction. Furthermore, in the same way as in lemma \ref{bra_comp_2} and \ref{diff_comp2} we can prove that this maps intertwines the algebraic structures.
In the same way as in construction \ref{ext_sym_g} we get a map, also denoted
\begin{equation}\label{hhd}
    \theta_{I}:Sym({\mathcal{G}_{c,\bullet}}^\vee[d-3])\llbracket\hbar\rrbracket\rightarrow C^*(V)\llbracket\hbar\rrbracket
    \end{equation}
for which we can argue exactly the same as in theorem \ref{Kc3} that we have: 
\begin{theorem}\label{Kc3.1}
Given $I\in MCE(\mathcal{O}^{fr}(V))$ the map \eqref{hhd} is a map of $(d-2)$-twisted BD algebras
    $$\theta_{I}: \mathcal{G}^d\llbracket\hbar\rrbracket\rightarrow \mathcal{O}^{pq}(V),$$
    functorial with respect to strict cyclic $L_\infty$-morphisms.
\end{theorem}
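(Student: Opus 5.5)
The plan is to deduce Theorem \ref{Kc3.1} from Theorem \ref{Kc3}, using the stable graph complex and the inclusion $v$ from diagram \eqref{ag_dq}, rather than redoing the vertex-by-vertex bookkeeping. First, given $I\in MCE(\mathcal{O}^{fr}(V))$, regard $I$ as an element of $\mathcal{O}^{pq}(V)$ through the inclusion $C^*_+(V)\subset C^*(V)\llbracket\hbar\rrbracket$ (no $\hbar$-terms, no constant term). For graphs, every vertex has loop defect $o(v)=0$, so the restriction $(\_)|_{v_i}$ only ever picks out the $\hbar^0$ part, and that part is $I$. Consequently $\theta_I$ on $\mathcal{G}^\vee_{c,\bullet}$ is given by the same formula \eqref{main_constr_c}. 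Independence of choices, the degree count, and compatibility with brackets and with $\Delta_i$ (which is $\hbar\Delta$ on the target) then go through word for word as in Lemmas \ref{bra_comp_2} and \ref{diff_comp_2}. Extending multiplicatively and $\hbar$-linearly gives an algebra map that intertwines the brackets, hence also intertwines $\Delta_e$ with the Chevalley–Eilenberg part, exactly as in the proof of Theorem \ref{Kc3}.

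The main obstacle is compatibility with the differential, i.e.\ the analogue of Lemma \ref{diff_comp2}. There we used the full quantum master equation $dI^q+\frac12\{I^q,I^q\}+\hbar\Delta I^q=0$, but $I$ only satisfies the classical equation $dI+\frac12\{I,I\}=0$. This is where it matters that the differential on $\mathcal{G}^\vee$ is defined so that the projection $s\mathcal{G}_{c,\bullet}\to\mathcal{G}_{c,\bullet}$ is a chain map. Dually, $d_{\mathcal{G}}$ on a graph $\Gamma$ sums only over graphs $\Gamma'$ with $\Gamma'/e=\Gamma$ where $\Gamma'$ is again a graph, i.e.\ $o=0$ everywhere. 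Since $\Gamma$ has no loop defects, expanding a loop, the inverse of rule \ref{cont_g}.2, would require a vertex of positive loop defect, so no such term occurs. Only rule \ref{cont_g}.1 terms (splitting a vertex along a new non-loop edge) contribute. The argument of Lemma \ref{diff_comp2} then yields $-\sum_i(\cdots\otimes\frac12\{I,I\}|_{v_i}\otimes\cdots)$. The classical equation turns this into $\sum_i(\cdots\otimes dI|_{v_i}\otimes\cdots)$, and since the contraction with $\langle\_,\_\rangle^{-1}$ and the projection are chain maps, this equals $d\theta_I(\Gamma)$.

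I will verify carefully that the vertex-splitting terms exactly reproduce $\frac12\{I,I\}$ restricted to the vertex. That is, splitting $v_i$ into two vertices, each at least trivalent, corresponds to the bracket of two components of $I$ of arity $\ge 3$, matching the convention that the hamiltonian has no components of arity $<3$. This arity check is where I expect any subtlety. Finally, functoriality follows as in diagram \eqref{hdlm}: if a strict cyclic $F$ satisfies $F^*I_W=I_V$, then $F^*$ commutes with each tensor contraction in \eqref{main_constr_c} because $F$ preserves the pairings, and hence with its inverse on the image.
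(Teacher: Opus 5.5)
Your argument that $\theta_I$ is a map of BD algebras is correct and is essentially the paper's: the paper only says that the construction and Lemmas \ref{bra_comp_2}, \ref{diff_comp2} carry over from stable graphs. Your observation is what justifies that remark. In $\mathcal{G}^d\llbracket\hbar\rrbracket$ the differential only splits a vertex along a new non-loop edge, since a graph has no loop defect to resolve, so only $dI+\frac12\{I,I\}=0$ is used. The arity condition you flag is genuinely needed. Components of $I$ of arity $\le 2$ would contribute terms $\{I_{\le 2},I\}$ to the master equation, and no vertex splitting of a graph reproduces them; so $I$ must be $\langle l\_,\_\rangle$ with $l_1=0$.

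The route announced in your first sentence, through $v$ and Theorem \ref{Kc3}, is never used, and it is not available in general. $I$ is a Maurer--Cartan element of $\mathcal{O}^{pq}(V)$ only if $\Delta I=0$, and it need not admit a quantization. Only when $I=p(I^q)$ does one get $\theta_I=\theta_{I^q}\circ v$.

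The step that fails is functoriality. That $F\colon V\to W$ preserves the pairings does not make $F^*$ commute with the edge contractions unless $F$ is surjective. In general $W=F(V)\oplus F(V)^{\perp}$ and $\langle\_,\_\rangle_W^{-1}=\langle\_,\_\rangle_{F(V)}^{-1}+\langle\_,\_\rangle_{F(V)^{\perp}}^{-1}$. So $F^*\theta_{I_W}(\Gamma)-\theta_{I_V}(\Gamma)$ collects the terms in which some internal edges are contracted with the second summand.

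Such terms vanish when those edges form a forest. Indeed, a vertex tensor with exactly one input $u\in F(V)^\perp$ and all other inputs in $F(V)$ equals $\pm\langle F\,l_V(\dots),u\rangle=0$. So trees are fine, but cycles survive.

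Here is a concrete counterexample.
\begin{itemize}
\item Take $W=V\oplus U$ orthogonally, $I_V=0$, and $I_W=qpz$, with $q,p$ conjugate coordinates on $U$ and $z$ a coordinate on $V$.
\item Since $|I_W|$ is even and $|q|+|p|$ is odd, $z$ is odd, so $\{I_W,I_W\}=0$.
\item The inclusion is then a strict cyclic $L_\infty$-morphism with $F^*I_W=I_V$.
\item Let $\Gamma$ be the graph with one trivalent vertex carrying a loop and a leaf. Then $F^*\theta_{I_W}(\Gamma)$ is a nonzero multiple of $\hbar z$, because it computes $\hbar\Delta(qpz)$, while $\theta_{I_V}(\Gamma)=0$.
\end{itemize}

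Hence the functoriality assertion holds for cyclic isomorphisms (and at tree level), not for arbitrary strict cyclic morphisms. The paper's own justification has the same defect: diagram \eqref{hdlm} rests on the pairing-inversion step in the proof of Lemma \ref{cov}. So you should restrict that part of the statement rather than try to prove it as it stands.
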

Again, using the same construction, given $I\in MCE(\mathcal{O}^{fr}(V))$, we can construct a map
$$\theta_{I}:{\mathcal{T}_{c,\bullet}}^\vee[d-3]\rightarrow C^*_+(V).$$
Indeed we land in $C^*_+(V)$ since trees needs to have a non-zero number of leafs.
In the same way as in lemmas \ref{bra_comp_2} and \ref{diff_comp2} we can argue that this map is compatible with the algebraic structures.
As in construction \ref{ext_sym_g} we get a map, also denoted
\begin{equation}\label{ibmd}
\theta_{I}:Sym({\mathcal{T}_{c,\bullet}}^\vee[d-3])\rightarrow C^*_+(V)\end{equation}
for which we can argue analogously to same as in theorem \ref{Kc3} that we have:
\begin{theorem}\label{Kc4}
Given $I\in MCE(\mathcal{O}^{fr}(V))$ the map \eqref{ibmd} is a map of $(d-2)$-shifted Poisson algebras
    $$\theta_{I}: \mathcal{T}^d\rightarrow \mathcal{O}^{fr}(V),$$
    functorial with respect to strict cyclic $L_\infty$-morphisms.
\end{theorem}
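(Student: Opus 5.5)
The plan is to follow the proof of Theorem \ref{Kc3} verbatim, restricted to trees and with $\hbar$ set to zero. First I will check that the assignment \eqref{main_constr_c}, specialised to $I\in MCE(\mathcal{O}^{fr}(V))$, gives a well-defined degree zero map ${\mathcal{T}_{c,\bullet}}^\vee[d-3]\rightarrow C^*_+(V)$. A connected tree has $bt(\Gamma)=0$ and all loop defects zero, so no power of $\hbar$ appears. Every vertex is at least trivalent, so a tree has at least one leaf and the output lies in the reduced part $C^*_+(V)$. Independence of the auxiliary orderings and compatibility with orientation reversal go exactly as in the lemma following \eqref{main_constr_c}: the vertex tensors $I|_{v_i}$ have even degree $3-d$, and the inverse pairing has odd degree.

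Next I will prove the two compatibilities on connected generators.

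\textbf{Bracket.} The statement $\theta_I(\{\Gamma_1,\Gamma_2\})=\{\theta_I(\Gamma_1),\theta_I(\Gamma_2)\}$ follows the argument of Lemma \ref{bra_comp_2}. Gluing two trees along one leaf from each is again a tree, so the sum over glued trees reproduces contracting one letter of $\theta_I(\Gamma_1)$ with one letter of $\theta_I(\Gamma_2)$ via $\langle\_,\_\rangle^{-1}$. The automorphism factors $c_{l_1}(\Gamma_1)c_{l_2}(\Gamma_2)$ absorb the overcounting.

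\textbf{Differential.} The statement $\theta_I\circ(-d)=d\circ\theta_I$ follows Lemma \ref{diff_comp2}. The graphs $\Gamma'$ that contract to a tree $\Gamma$ are themselves trees, and only rule \ref{cont_g}.1 (splitting a vertex along a new non-loop edge) occurs. The sum over these splittings at a vertex $v_i$ therefore produces $\frac{1}{2}\{I,I\}|_{v_i}$. The Maurer--Cartan equation in $\mathcal{O}^{fr}(V)$ is $dI+\frac12\{I,I\}=0$ with no $\Delta$ term, so this equals $-dI|_{v_i}$. Since $\langle\_,\_\rangle^{-1}$ and $proj$ are chain maps, the vertex-wise sum is $d\theta_I(\Gamma)$.

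I then extend $\theta_I$ multiplicatively to $Sym({\mathcal{T}_{c,\bullet}}^\vee[d-3])$, as in Construction \ref{ext_sym_g}. The bracket on $\mathcal{T}^d$ is the Leibniz extension of the Lie bracket on generators, and the target is a shifted Poisson algebra. So a map of dg shifted Lie algebras on generators extends uniquely to a map of shifted Poisson dg algebras, intertwining $-d$ with $d$.

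Functoriality under a strict cyclic $L_\infty$-morphism $F$ with $F^*I_W=I_V$ follows as in diagram \eqref{hdlm}. Because $F$ is cyclic, $F^*$ preserves inverse pairings (as in Lemma \ref{cov}), and $F^*$ acts vertex by vertex.

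The main obstacle is the differential step, specifically the bookkeeping of signs and symmetry factors: the factor $\frac12$ in $\frac12\{I,I\}$ must match the count of ways to split a vertex. I would handle this exactly as in the proof of Theorem \ref{sG_mce}, regrouping the isomorphism classes of one-edge graphs in the two ways described there.
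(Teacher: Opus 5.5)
Your proposal follows the paper's own proof. The paper proves Theorem~\ref{Kc4} by running the proof of Theorem~\ref{Kc3} on trees: well-definedness on connected generators, the bracket and differential compatibilities of Lemmas~\ref{bra_comp_2} and~\ref{diff_comp2}, multiplicative extension, and functoriality via \eqref{hdlm}, exactly as you outline.

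One hypothesis that you, like the paper, use silently in the differential step should be stated. Write $I_a$ for the component of $I$ in $Sym^a(V^\vee[-1])$. Every vertex of a tree has valency at least $3$, so splitting $v_i$ only produces $\frac12\sum_{a+b=|v_i|+2,\ a,b\geq 3}\{I_a,I_b\}$. Identifying this with $-dI|_{v_i}$ therefore requires $I_1=I_2=0$. This holds for the hamiltonian $\langle l\_,\_\rangle$ of Lemma~\ref{ham_l} (with $l_1$ absorbed into $d$), which is the case of interest. It fails for a general Maurer--Cartan element with a nonzero quadratic part twisting the differential: for such an $I$ the trivalent corolla $T_3$ satisfies $dT_3=0$, yet $d\theta_I(T_3)=dI_3$ can be nonzero.
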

Theorems \ref{Kc3.1}, \ref{Kc4} and \ref{Kc3} fit together with 
diagram \eqref{ag_dq} and the dequantization map \eqref{fc_dq} together:
\begin{theorem}\label{comp_gr_al}
Given $I^q\in MCE(\mathcal{O}^{pq}(V))$ denote by $I=p(I^q)\in MCE(\mathcal{O}^{fr}(V))$ its dequantization, see equation \eqref{fc_dq}. Then following diagram commutes:
$$\begin{tikzcd}
\mathcal{O}^{fr}(V)&\mathcal{O}^{pq}(V)\arrow{l}{p}&
\mathcal{O}^{pq}(V)\arrow[equal]{l}\\
\mathcal{T}^d\arrow{u}{\theta_{I}}&\mathcal{G}^d\llbracket\hbar\rrbracket\arrow[swap]{l}\arrow{r}\arrow{u}{\theta_{I}}&
s\mathcal{G}^d\llbracket\hbar\rrbracket\arrow[bend left=20]{ll}\arrow{u}{\theta_{I^q}}.
    \end{tikzcd}$$
\end{theorem}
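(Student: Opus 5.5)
The diagram commutes square by square, and each square reduces to checking the construction \eqref{main_constr_c} on a single connected generator. All maps in sight are either algebra maps out of free symmetric algebras, or projections to $Sym^1$ at $\hbar=0$. So it suffices to compare the two composites on a connected (stable) graph $\Gamma$, viewed as an element of $Sym^1$, together with powers of $\hbar$ and products. I treat the right square, the left square, and the curved outer arrow separately, in that order.

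\textbf{Right square.} Here the top arrow is the identity, so I must show $\theta_{I^q}\circ v=\theta_{I}$ on $\mathcal{G}^d\llbracket\hbar\rrbracket$. The map $v$ is dual to the projection $s\mathcal{G}_{c,\bullet}\rightarrow\mathcal{G}_{c,\bullet}$, so on a connected graph $\Gamma$ (all loop defects zero) it sends $\Gamma$ to itself, regarded as a stable graph. In \eqref{main_constr_c} each vertex then receives $I^q|_{v}$ with $o(v)=0$, i.e.\ the $\hbar^0$ component of $I^q$. This component is the non-$\hbar$ part of $I^q$, and restricted to $C^{\geq1}$ it equals $p(I^q)=I$. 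This is exactly the tensor used in $\theta_I$.

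The one subtlety is the $Sym^0$ (constant) part of $I^q$ at $\hbar^0$. Vertices of graphs have valence at least three, so only the components of arity $|v|\geq3$ are ever inserted, and the constant term never enters. The Betti weights $\hbar^{bt(\Gamma)}$ agree because $bt$ reduces to the first Betti number when $o\equiv0$. Both sides are algebra maps, so equality on generators suffices.

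\textbf{Left square.} I must show $p\circ\theta_I=\theta_I\circ p$ from $\mathcal{G}^d\llbracket\hbar\rrbracket$ to $\mathcal{O}^{fr}(V)$. Take a monomial $\Gamma_1\cdots\Gamma_n\hbar^k$. The dequantization on $\mathcal{O}^{pq}$ kills positive powers of $\hbar$ and the $Sym^0$ part of $C^*(V)$. The image $\theta_I(\Gamma_1\cdots\Gamma_n)\hbar^k$ carries the factor $\hbar^{k+\sum bt(\Gamma_i)}$.

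For $n=1$, $k=0$ and $\Gamma$ a tree, both sides equal $\theta_I(\Gamma)$, since trees have leaves, so the image lies in $C^*_+(V)$. In every other case I expect both sides to vanish:
\begin{itemize}
\item if $k>0$ or some $\Gamma_i$ has a loop, a positive power of $\hbar$ appears on the left, while the right-hand $p$ already kills the monomial;
\item if $n\geq2$ and all $\Gamma_i$ are trees, the right side is zero by definition of $p$.
\end{itemize}

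The last bullet is the main obstacle: on the left, $\theta_I(\Gamma_1)\cdots\theta_I(\Gamma_n)$ has no $\hbar$, yet it is a product in $C^*_+(V)$ and is not killed by $p$ as literally described. The first thing I would check is whether the product on $\mathcal{G}^d\llbracket\hbar\rrbracket$ carries a hidden weighting (compare the $\hbar^{n+2k-1}$ in \eqref{isjk} and the 2-weighting of the LQT map), so that a length-$n$ word is sent with $\hbar^{n-1}$. If not, I would reinterpret $p$ on $\mathcal{G}^d\llbracket\hbar\rrbracket$ via the identification of $Sym^1$ used in definition \ref{deq_sg}, restricting the claim to the image of the length-one part. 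This is the point where the conventions must be pinned down.

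\textbf{Curved outer arrow.} The triangle through $s\mathcal{G}^d\llbracket\hbar\rrbracket$ follows from the same computation as the left square with $I^q$ in place of $I$. For a stable tree all loop defects are zero, so only the $\hbar^0$, positive-arity components of $I^q$ enter, and these again give $I$. This is combined with the commutativity of \eqref{ag_dq}.
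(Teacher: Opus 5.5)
Your right square is correct: $v$ is the algebra map dual to the projection $s\mathcal{G}_{c,\bullet}\to\mathcal{G}_{c,\bullet}$. Every vertex of a graph has $o(v)=0$ and $|v|\geq 3$, so $\theta_{I^q}\circ v$ only inserts the $\hbar^0$, arity $\geq 3$ components of $I^q$, and these are those of $I=p(I^q)$. Your checks of the left square and the curved arrow are also correct on $Sym^{\leq 1}\llbracket\hbar\rrbracket$ and on everything of positive $\hbar$-weight.

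The obstacle in your last bullet is genuine, though, and neither of your escape routes removes it. There is no hidden weighting. $\theta_I$ and $\theta_{I^q}$ are $\hbar$-linear and are extended as honest algebra maps (construction \ref{ext_sym_g}), and the BD relations force this. A weighting $\theta(ab)=\hbar^{w}\theta(a)\theta(b)$ would turn the term $\hbar\{\_,\_\}$ of the BD relation in $\mathcal{O}^{pq}(V)$ into $\hbar^{w+1}\{\_,\_\}$, whereas $\theta(\hbar\Delta_e(ab))=\hbar\{\theta a,\theta b\}$; so $w=0$. The factor $\hbar^{n+2k-1}$ of \eqref{isjk} belongs to $\pi$, which does not occur here.

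Hence, for the one-vertex tree $T_3$ with $I_3:=\theta_I(T_3)$ the cubic part of $I$,
\[
\theta_I\big(p(T_3\cdot T_3)\big)=\theta_I(0)=0,\qquad p\big(\theta_I(T_3\cdot T_3)\big)=p(I_3\cdot I_3)=I_3\cdot I_3 ,
\]
because $I_3^2$ has $\hbar$-weight zero and no constant term. This is nonzero in general. For example, for $L=\mathfrak{g}\otimes H^*(S^3)$ with $\mathfrak{g}=\mathfrak{sl}_2\oplus\mathfrak{sl}_2$, the cross term of the two cubic parts survives. The same element breaks the curved triangle.

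So the diagram does not commute as literally stated, and the paper's one-line proof (``unraveling the constructions'') passes over exactly this point. Its non-commutative twin, Theorem \ref{comp_rg_al}, survives because there the dequantization of $\mathcal{F}^{pq}(V_B)$ is also a projection onto word length one, so products die along both routes. The map $p:\mathcal{O}^{pq}(V)\to\mathcal{O}^{fr}(V)$ has no such length to project along and is essentially multiplicative.

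What is missing is therefore a correction of the statement, not of your argument. The clean repair is to replace the graph-side dequantization of Definition \ref{deq_sg}, on both $\mathcal{G}^d\llbracket\hbar\rrbracket$ and $s\mathcal{G}^d\llbracket\hbar\rrbracket$, by a map $p'$ defined as follows:
\begin{itemize}
\item $p'$ is zero on $Sym^0$ and on positive powers of $\hbar$;
\item on $Sym^{\geq 1}$ at $\hbar^0$, $p'$ is the multiplicative extension of the restriction $s\mathcal{G}_{c,\bullet}^\vee\to\mathcal{T}_{c,\bullet}^\vee$.
\end{itemize}
This $p'$ is still a dequantization map, for four reasons:
\begin{itemize}
\item restriction to the subcomplex of trees is a chain map;
\item every term of the derivation $\Delta_i$ creates a cycle, so it is killed;
\item $\hbar\Delta_e$ carries a factor of $\hbar$, so it is killed;
\item gluing two stable graphs along a leaf gives a tree iff both were trees, so brackets are intertwined.
\end{itemize}
As for $p$, one has $p'(G)=T$, so the later twisting is unaffected.

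With $p'$, both composites vanish on monomials of positive $\hbar$-weight or with a non-tree factor. On a product of trees, both give $\prod_i\theta_I(\Gamma_i)\in C^*_+(V)$. Your generator computation then proves all three cells. If you keep the paper's $p$, the only true statement is your second remedy: commutativity on $Sym^{\leq 1}\llbracket\hbar\rrbracket$ plus the $\hbar$-positive part. That is a restriction, not a commuting diagram of maps.
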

\begin{proof}
    This follows by unraveling the constructions. 
\end{proof}
We recall the singled-out Maurer-Cartan elements from theorems \ref{S_mce} and \ref{sG_mce} and state following lemmas, which are straightforward to prove:
\begin{lemma}\label{comp_3}
Given $I^q\in MCE(\mathcal{O}^{pq}(V))$ we have that $\theta_{I^q}(G)=I^q$.
\end{lemma}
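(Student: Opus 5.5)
The plan is to compute $\theta_{I^q}(G)$ directly from definition \eqref{main_constr_c}, one summand $G_{n,g}$ at a time, and match the result against the decomposition of $I^q$ by polynomial degree and powers of $\hbar$. Write
$$I^q=\sum_{n\geq 0}\sum_{g\geq 0} I^q_{n,g}\hbar^g,\qquad I^q_{n,g}\in C^n(V)=Sym^n(U),$$
where the $n=0$ parts vanish by our standing convention that quantizations have no constant term. Since $\theta_{I^q}$ is linear on $s\mathcal{G}_{c,\bullet}^\vee[d-3]$ and $G=\sum_{n>0}\sum_{g\geq 0}G_{n,g}$ is a sum of connected generators, it suffices to show that
$$\theta_{I^q}(G_{n,g})=I^q_{n,g}\hbar^g.$$

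Fix $\Gamma=G_{n,g}$. It has one vertex $v$ with loop defect $o(v)=g$, no internal edges and $n$ leaves. In \eqref{main_constr_c} the following simplifications occur:
\begin{itemize}
\item the product over internal edges is empty, so no inverse pairings are applied;
\item the permutation $\sigma$ of construction \ref{perm_g} only reorders the $n$ leaves;
\item by definition \ref{betti}, $bt(\Gamma)=1-1+0+g=g$.
\end{itemize}
By construction \eqref{iso_scg}, $I^q|_v$ is the $\hbar^{g}$-coefficient of $I^q$. It is then regarded as an invariant tensor in $U^{\otimes n}$, meaning only its arity-$n$ part contributes, since the vertex has exactly $n$ half-edges. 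Applying $proj$ symmetrizes back, and this inverts the invariants/coinvariants identification. Hence
$$\theta_{I^q}(G_{n,g})=I^q_{n,g}\hbar^{g}.$$
Summing over $n>0$ and $g\geq 0$ gives $\theta_{I^q}(G)=I^q$.

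The main point to check is the normalization. We must verify that the identification of $Sym^n(U)$ with invariant tensors, followed by $proj$, is the identity and not multiplication by $n!$ or $1/n!$. We must also check that no automorphism factor of $G_{n,g}$ appears: in \eqref{main_constr_c} the graph $G_{n,g}$ is an isomorphism class, and it is evaluated once rather than summed over labelings. I would fix the identification as the symmetrization section $Sym^n\to (U^{\otimes n})^{S_n}$ and take $proj$ to be the quotient map, so the composite is the identity. The same normalization is already implicit in the proof of lemma \ref{bra_comp_2}, where the automorphism counts $c_{l}(\Gamma)$ enter, so I would keep the conventions consistent with that proof.

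Signs cause no trouble here. The tensors $I^q|_v$ have even degree, and a reordering of leaves does not matter after symmetrization. For the same reason the result is independent of the ordering choices, as in the lemma following \eqref{main_constr_c}.
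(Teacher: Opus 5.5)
Your computation of the individual terms, $\theta_{I^q}(G_{n,g})=I^q_{n,g}\hbar^{g}$, is correct. This direct unwinding of \eqref{main_constr_c} is exactly what the paper intends; it gives no proof beyond calling the lemma straightforward.

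The gap is in the final step, ``summing over $n>0$ and $g\geq 0$''. $G$ only contains those $G_{n,g}$ that are stable graphs. By definition \ref{sgrdf}, a vertex of loop defect $0$ must be at least trivalent, so $G_{1,0}$ and $G_{2,0}$ do not occur. Hence $\theta_{I^q}(G)$ recovers only the components $I^q_{n,g}\hbar^{g}$ with $n\geq 1$ and $(n,g)\notin\{(1,0),(2,0)\}$. Besides the absence of constant terms, you therefore also need $I^q_{1,0}=I^q_{2,0}=0$.

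The Maurer--Cartan equation alone does not give this. Take $V$ with zero differential and a quadratic $q\in Sym^2(U)$ of degree $3-d$ supported on a subspace isotropic for $\langle\_,\_\rangle^{-1}$. Then $\frac12\{q,q\}=\Delta q=0$, so $q$ is a Maurer--Cartan element, yet $\theta_q(G)=0$. What rescues the lemma is the hypothesis under which it is used in Theorem \ref{fde}, which you are already tacitly assuming: $I^q$ is a quantization. Its $\hbar^0$ part is then $p(I^q)=I=\langle l\_,\_\rangle$. Since $l_1$ has been absorbed into the differential, $I$ has only components of arity at least $3$. You should state this restriction and use it explicitly.

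Your normalization remark also needs correcting, although it does not affect the conclusion. Take the normalized section $Sym^n U\to (U^{\otimes n})^{S_n}$ with no automorphism factor. Then $\theta_{I^q}$ would not be a map of BD algebras as in Theorem \ref{Kc3}, which is how this lemma is used. To see this, let $(n_1,g_1)\neq(n_2,g_2)$. Then $\{G_{n_1,g_1},G_{n_2,g_2}\}=X$, the graph with two vertices joined by a single edge. Contracting one leg of each normalized vertex tensor gives $\frac{1}{n_1n_2}\{I^q_{n_1,g_1},I^q_{n_2,g_2}\}\hbar^{g_1+g_2}$, because the Leibniz-extended bracket sums over all $n_1n_2$ pairs of legs.

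The normalization compatible with Lemma \ref{bra_comp_2} is the Feynman one of Rule \ref{Feynm}: decorate each vertex by the full desymmetrization \eqref{desym} and weight by $1/|\mathrm{Aut}(\Gamma)|$. With $|\mathrm{Aut}(X)|=(n_1-1)!\,(n_2-1)!$, this recovers exactly the bracket. In that convention an automorphism factor does appear for $G_{n,g}$, namely $1/n!$. It cancels exactly the factor $n!$ produced by desymmetrizing and then projecting, so your formula $\theta_{I^q}(G_{n,g})=I^q_{n,g}\hbar^{g}$ still holds.
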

\begin{lemma}\label{comp_4}
    Given $I\in MCE(\mathcal{O}^{fr}(V))$ we have $\theta_I(T)=I.$
\end{lemma}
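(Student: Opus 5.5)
The plan is to compute $\theta_I(T)$ directly from definition \eqref{main_constr_c}, one summand $T_n$ at a time, and then sum over $n$. The map on the tree complex is the variant of the stable-graph construction in which no $\hbar$-restriction occurs, because all loop defects vanish on trees. It is extended to $Sym$ as an algebra map, and $T=\sum_{n>2}T_n$ lies in $Sym^1$. So it suffices to evaluate $\theta_I$ on a single connected tree $T_n$: one vertex, no internal edges, $n$ leaves.

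For $T_n$ the data in construction \ref{perm_g} are trivial.
\begin{itemize}
\item There is only one vertex, so $e(T_n)=0$ and no factors $\langle\_,\_\rangle^{-1}$ appear.
\item The permutation $\sigma$ only reorders the leaves.
\item $bt(T_n)=0$, so no power of $\hbar$ appears.
\end{itemize}
Hence $\theta_I(T_n)=proj\big(I|_{v}\big)$. Here $I|_v$ is the component of $I$ viewed as an invariant tensor in $U^{\otimes n}$, under the identification of invariants and coinvariants in characteristic zero, and $proj$ is the projection back to symmetric words. That identification and $proj$ are mutually inverse on the arity-$n$ part. Therefore $\theta_I(T_n)$ is exactly the $Sym^n$ component $I_n$ of $I\in C^*_+(V)$.

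Summing over $n>2$ gives $\theta_I(T)=\sum_{n>2}I_n$. It remains to check that $I$ has no components of arity $\le 2$. This holds because $I=\langle l\_,\_\rangle$ with $l_1=0$ by convention, since $l_1$ is absorbed into $d$. So $I_n$ has arity $n\geq 3$, matching the range $n>2$ in $T$.

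The only delicate point is normalization. Ones needs to confirm that the invariant/coinvariant identification is set up so that $proj\circ(\text{symmetrization})$ is the identity, with no factor like $n!$ or $1/|\mathrm{Aut}(T_n)|$. If such a factor appeared, it would have to be matched against the convention that $T_n$ is an isomorphism class with automorphism group $S_n$. I expect this convention to have been chosen so that the factors cancel; the analogous lemma \ref{comp_3} for $G$ relies on the same convention.
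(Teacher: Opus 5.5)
Your proof is correct and is the same direct unraveling of definition \eqref{main_constr_c} that the paper has in mind when it calls this lemma straightforward. Your point that $I$ must have no components of arity $\le 2$ (automatic for $I=\langle l\_,\_\rangle$ with $l_1=0$) is a hypothesis the statement leaves implicit. The normalization worry also goes away: the characteristic-zero identification of coinvariants with invariants is by definition the inverse of $proj$ restricted to invariant tensors, so $proj(\sigma(I|_v))=I_n$ with no factorial or automorphism factor.
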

Thus we can induce the following theorem:
 \begin{theorem}\label{fde}
Given a quantization $I^q$ of a cyclic $L_\infty$-algebra $L$ there is a commutative diagram as follows:        
    $$\begin{tikzcd}
      \mathcal{O}^{cl}(L)&  \mathcal{O}^{q}(L)\arrow{l}\\
      \mathcal{T}^{d,tw}\arrow{u}{\theta_{I}}&s\mathcal{G}^{d,tw}\llbracket\hbar\rrbracket\arrow{u}{\theta_{I^q}}\arrow{l}.
    \end{tikzcd}$$
    Here the horizontal maps are dequantization maps, the right vertical map is a map of $(d-2)$-twisted BD algebras and the left vertical maps is a map of $(d-2)$-shifted Poisson algebras
    \end{theorem}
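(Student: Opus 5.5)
The plan is to deduce this exactly as Theorem \ref{aiaiai} was deduced in the non-commutative setting. The starting point is the outer square of Theorem \ref{comp_gr_al}, applied to the underlying cyclic chain complex $V$ of $L$, with the Maurer-Cartan element $I^q\in MCE(\mathcal{O}^{pq}(V))$ and its dequantization $I=p(I^q)$. Since $I^q$ is a quantization of $L$, $I$ is the hamiltonian $I_L$ of Lemma \ref{ham\_l}. From that square we retain only the commutative outer diagram
$$\theta_I\circ p = p\circ \theta_{I^q}: s\mathcal{G}^d\llbracket\hbar\rrbracket\rightarrow \mathcal{O}^{fr}(V).$$
Here the left vertical map $\theta_I:\mathcal{T}^d\to\mathcal{O}^{fr}(V)$ is a map of shifted Poisson algebras by Theorem \ref{Kc4}. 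The right vertical map $\theta_{I^q}:s\mathcal{G}^d\llbracket\hbar\rrbracket\to\mathcal{O}^{pq}(V)$ is a map of BD algebras by Theorem \ref{Kc3}. The horizontal maps are the dequantization maps of Definition \ref{deq\_sg} and Definition \ref{fc\_dq}.

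Next I twist every corner by a Maurer-Cartan element.
\begin{itemize}
\item $s\mathcal{G}^d\llbracket\hbar\rrbracket$ is twisted by $G$ (Theorem \ref{sG\_mce}).
\item $\mathcal{T}^d$ is twisted by $T$ (Theorem \ref{S\_mce}).
\item $\mathcal{O}^{pq}(V)$ is twisted by $I^q$, which gives $\mathcal{O}^q(L)$.
\item $\mathcal{O}^{fr}(V)$ is twisted by $I$, which gives $\mathcal{O}^{cl}(L)$.
\end{itemize}
These twists are compatible along every edge: Lemma \ref{comp\_3} gives $\theta_{I^q}(G)=I^q$, Lemma \ref{comp\_4} gives $\theta_I(T)=I$, Remark \ref{prak\_2} gives $p(G)=T$, and $p(I^q)=I$ holds by assumption.

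I then invoke the general fact about twisting. If $f$ is a morphism of dg shifted Lie algebras and $f(x)=y$ for Maurer-Cartan elements $x,y$, then $f$ is still a morphism after twisting the source by $x$ and the target by $y$. If $f$ also respects products, as a BD or Poisson morphism does, this persists after twisting, because twisting only adds the derivation $\{x,\_\}$ to the differential.

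The step needing the most care is the dequantization maps. They are linear and do not respect products, so I have to check that $p\circ\{G,\_\}=\{T,p(\_)\}$ on $s\mathcal{G}^d\llbracket\hbar\rrbracket$ (and the same for $I^q$ versus $I$). The bracket $\{G,\Gamma_1\cdots\Gamma_n\}$ lies in $Sym^n$. So when $n\ge 2$ it is killed by $p$, and $p(\Gamma_1\cdots\Gamma_n)=0$ kills the right-hand side as well. For $n=1$ and $\hbar^0$, the statement reduces to $p$ intertwining brackets on connected graphs, which is Remark \ref{nütz\_2} together with $p(G)=T$. Any contribution carrying a positive power of $\hbar$ is killed on both sides.

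For the algebraic side the argument is the same. The projection in Definition \ref{fc\_dq} forgets $\hbar$ and the constant term. The bracket with the constant and $\hbar$-parts of $I^q$ either carries $\hbar$ or is zero, because constants are central. Hence both twisted $p$'s remain dequantization maps. Commutativity of the twisted square is inherited directly from the untwisted one, since twisting does not change the underlying maps.
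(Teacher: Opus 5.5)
Your deduction is the paper's own proof written out: you twist the outer square of Theorem~\ref{comp_gr_al} by $G$, $T$, $I^q$, $I$, using Lemmas~\ref{comp_3}, \ref{comp_4}, Remark~\ref{prak_2} and $p(I^q)=I$. Your check that the twisted $p$'s remain dequantization maps is correct. It is just the fact that a Lie morphism sending one Maurer--Cartan element to the other intertwines the twisted differentials.

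\textbf{The step that fails is the one you import.} With the definitions as written, $\theta_I\circ p=p\circ\theta_{I^q}$ does not hold on $s\mathcal{G}^d\llbracket\hbar\rrbracket$. Twisting cannot repair this, because it does not change the underlying maps. You record both ingredients yourself. The $p$ of Definition~\ref{deq_sg} kills $\mathrm{Sym}^{\geq 2}$, whereas the $p$ of Definition~\ref{fc_dq} only kills $\hbar$ and constants, and both $\theta$'s are multiplicative. Take $x=T_3\cdot T_3\in\mathrm{Sym}^2$, which is nonzero since $T_3=G_{3,0}$ has even degree. Then $\theta_I(p(x))=0$, but $p(\theta_{I^q}(x))=I_3^2$, where $I_3$ is the cubic part of $I$; this is nonzero in general. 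In the ribbon case (Theorem~\ref{comp_rg_al}) the problem does not arise, because the algebraic dequantization also projects onto $\mathrm{Sym}^1$ of cyclic words. On the commutative side there is no such projection. The same objection applies verbatim to the paper's one-line proof.

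As it stands, your argument only gives commutativity on connected graphs. To get the diagram as stated, use instead the left horizontal map $p'$ that sets $\hbar=0$, applies the projection onto trees to every connected component, and kills the empty graph.
\begin{itemize}
\item $p'$ is still a dequantization map. The differential $-d$ acts componentwise and commutes with the projection. $\Delta_i$, being a derivation, puts a loop into one component, and $\hbar\Delta_e$ carries $\hbar$, so both are killed. Gluing two connected graphs along a leaf gives a tree if and only if both were trees, since Betti numbers add.
\item $p'(G)=T$.
\item The square now commutes. For connected $\Gamma$, $\theta_{I^q}(\Gamma)$ is proportional to $\hbar^{bt(\Gamma)}$. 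So the $\hbar^0$-part of $\theta_{I^q}(\Gamma_1\cdots\Gamma_n)$ comes only from forests of trees. Their images have no constant term, since trees have at least three leaves, and they equal $\prod_i\theta_I(\Gamma_i)$.
\end{itemize}
Your twisting argument then goes through unchanged. Be aware that $p'$ no longer commutes with the maps $\pi$ of diagram~\eqref{comp_g_rg}, which kill forests at $\hbar^0$. So the same tension reappears in the cube of Theorem~\ref{Main}.
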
 
    \begin{proof}
  Indeed this follows from the previous theorem and the consecutive two lemmas by twisting the outer diagram by the compatible Maurer-Cartan elements.      
    \end{proof}

\newpage
\section{Compatibility of Quantized LQT Map, Commutative- and Non-Commutative Kontsevich Cocycle Construction}
In this section we explain how cyclic $A_\infty$-categories and their quantizations from section \ref{ncw}, cyclic $L_\infty$-algebras and their quantizations  from section \ref{cw},
(stable) ribbon graphs (section \ref{rbgs}) and (stable) graphs from section \ref{grs} fit together, using the quantized LQT map from section \ref{slqtq}, Kontsevich's cocycle construction from section \ref{cksc}, its commututive analogue (section \ref{ksc}) and the map from section \ref{vsgzg}. We do this in following main theorem:
\begin{theorem}\label{Main}
Assume we are given $I^q\in MCE(\mathcal{F}^{pq}(V_{Ob\mathcal{C}}))$,
a quantization of an essentially finite dimension $d$ cyclic $A_\infty$-category $\mathcal{C}$ with its associated hamiltonian 
$I\in MCE(\mathcal{F}^{fr}(V_{Ob\mathcal{C}}))$.
For $N\in \mathbb{N}$ denote by $\mathfrak{gl}_NA_\mathcal{C}$ the associated cyclic commutator $L_\infty$-algebra with values in $N\times N$-matrices
and by $I^q_N$ its quantization determined by theorem \ref{LQT_q}.
Then following diagram commutes.
Further it is functorial with respect to strict cyclic morphisms of quantizations of essentially finite cyclic $A_\infty$-categories
and independent up to isomorphism from the choice of a finite presentation.   
$$ \begin{tikzcd}[row sep=scriptsize, column sep=scriptsize]
& \mathcal{F}^{cl}(\mathcal{C})  \arrow[rr,"LQT^{cl}"] & & \mathcal{O}^{cl}(\mathfrak{gl}_NA_\mathcal{C})   \\ \mathcal{F}^{q}(\mathcal{C})\arrow[rr, "LQT^q"{xshift=-7pt}]  \arrow[ur] & & \mathcal{O}^{q}(\mathfrak{gl}_NA_\mathcal{C})\arrow[ur] \\
& \mathcal{RT}^{d,tw}_{Ob(Sk\mathcal{C})}\arrow[uu,dashed, "\rho_{I}^{tw}" {yshift=-10pt}]   & & \mathcal{T}^{d,tw}\arrow[ll,dashed]\arrow[ uu,"\theta_{I_N}^{tw}" {yshift=-8pt}]  \\
 s\mathcal{RG}^{d,tw}_{Ob(Sk\mathcal{C})}\llbracket\gamma\rrbracket \arrow[uu,"\rho_{I^q}^{tw}" {yshift=5pt}]\arrow[ur] & & s\mathcal{G}^{d,tw}\llbracket\hbar\rrbracket\arrow[ll]\arrow[ur]\arrow[uu,"\theta_{I^q_N}^{tw}" {yshift=-10pt}]\\
\end{tikzcd}
$$
The objects on the front face are $(d-2)$-twisted BD algebras, the vertical arrows are maps of $(d-2)$-twisted BD algebras, the upper horizontal arrow is a 2-weighted map of  BD algebras and the lower one is the predual to such a map. The back face commutes as a diagram $(d-2)$-shifted Poisson algebras. The maps in between the two faces are dequantization maps.   
\end{theorem}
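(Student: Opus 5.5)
The plan is to reduce everything to the untwisted (prequantum/free) version of the cube and then twist by compatible Maurer--Cartan elements. Concretely, I would first work with the finite skeleton $B=Ob(Sk\mathcal{C})$ and the underlying collection $V_B$, and consider the untwisted cube whose front face is
\[
\mathcal{F}^{pq}(V_B)\xrightarrow{LQT^{pq}_N}\mathcal{O}^{pq}(\mathfrak{gl}_NV_B),\qquad s\mathcal{RG}^d_B\llbracket\gamma\rrbracket\xleftarrow{\pi} s\mathcal{G}^d\llbracket\hbar\rrbracket,
\]
with vertical maps $\rho_{I^q}$ (theorem \ref{Kc2}) and $\theta_{I^q_N}$ (theorem \ref{Kc3}). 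The back face is the analogue with $\mathcal{F}^{fr}$, $\mathcal{O}^{fr}$, $\mathcal{RT}^d_B$, $\mathcal{T}^d$, $LQT^{fr}_N$, $\pi$, $\rho_I$ (theorem \ref{Kc1}) and $\theta_{I_N}$ (theorem \ref{Kc4}). The maps between the faces are the dequantization maps $p$.

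The side faces of this cube commute by results already stated. The left face is theorem \ref{comp_rg_al}, the right face is theorem \ref{comp_gr_al}, the top face is diagram \eqref{pqdfr}, and the bottom face is the outer square of \eqref{comp_g_rg}. What remains is the front face (and likewise the back face), namely
\[
LQT^{pq}_N\circ\rho_{I^q}\circ\pi=\theta_{I^q_N}.
\]
This is the main computational step. I would check it on a single connected stable graph $\Gamma$ in the dual basis, times powers of $\hbar$, using three facts.
\begin{itemize}
\item By \eqref{ebieb}, $\pi(\Gamma)$ is the sum over all $B$-colored stable ribbon structures on $\Gamma$.
\item By \eqref{main_constr_nc}, $\rho_{I^q}$ contracts the vertex tensors $I^q|_{v}$ along edges with $\langle\_,\_\rangle^{-1}$.
\item $LQT^{pq}_N$ replaces cyclic words by traces, each $\nu$ by $N$, and $\gamma$ by $\hbar^2$, with the extra weight $\hbar^{m-1}$.
\end{itemize}
On the other side, $I^q_N=LQT^{pq}_N(I^q)$ by the definition in theorem \ref{LQT_q}, so $\theta_{I^q_N}(\Gamma)$ contracts the vertex tensors $LQT(I^q)|_v$ with the pairing $\mathrm{tr}(M_1M_2)\langle u_1,u_2\rangle^{-1}$ on $\mathfrak{gl}_NV_B$.

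The key identity is the standard ``double-line'' computation: contracting matrix indices along edges by $\sum E_{ij}\otimes E_{ji}$ fuses the traces. Summing over all ways to distribute the cyclic words at each vertex (the ribbon structures) reproduces exactly the traces over the faces of the resulting ribbon graph. Closed faces contribute a factor $N$ each, matching $\nu^{f_{\rm tot}}\mapsto N^{f_{\rm tot}}$. Boundary defects $b_\lambda(v)$ account for the $\nu$-powers already inside $I^q|_v$.

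The delicate part, and what I expect to be the main obstacle, is the power bookkeeping. I need to show that the exponent of $\hbar$ coming from $bt(\Gamma)$ and the vertex $\hbar^{o_i}$ agrees with $2g(\Gamma)+(\#\text{faces with leaves})+f_{\rm tot}-1$ under $\gamma\mapsto\hbar^2$. This uses the loop-defect formula $o(v)=2g(v)+\sum_\lambda b_\lambda(v)+c(v)-1$ together with the Euler characteristic relation in definition \ref{armg}. I would also check the multiplicative weighting, namely that $\pi$ is predual to a 2-weighted map (\eqref{p2w}) while $LQT^{pq}_N$ is 2-weighted, so that disconnected graphs work out. I would first verify the one-vertex case $\Gamma=G_{n,g}$, where the identity reduces to $LQT^{pq}_N(I^q)=I^q_N$ via lemmas \ref{comp_1} and \ref{comp_3}, and then use that both sides are compatible with edge gluing (lemmas \ref{bra_comp}, \ref{bra_comp_2}, \ref{diff_comp}, \ref{diff_comp_2}). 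An alternative to this case-free reduction is to argue that every connected graph is obtained from one-vertex graphs by iterated gluing. If that reduction fails, I would fall back on the direct trace computation. The back face follows from the same argument restricted to trees, where faces and genus play no role.

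Finally, I would twist the whole cube. By lemmas \ref{comp_1}--\ref{comp_4} and $\pi(G)=S$, $\pi(T)=D$, the Maurer--Cartan elements $G\mapsto S\mapsto I^q\mapsto I^q_N$ and $G\mapsto I^q_N$ are compatible, and likewise for $T,D,I,I_N$. Twisting preserves commutativity and the map types. This yields the stated cube, with the front objects identified via corollary \ref{qc12}. Functoriality follows from the functoriality statements in theorems \ref{LQT_q}, \ref{Kc2} and \ref{Kc3}. Independence from the skeleton follows from \eqref{undfq} and \eqref{unabhai}.
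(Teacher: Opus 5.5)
Your proposal is essentially the paper's proof. The paper obtains this theorem by twisting the outer cube of Theorem~\ref{main_compa} by the compatible Maurer--Cartan elements, and the only new input there beyond the already established side faces is exactly your identity $LQT^{pq}_N\circ\rho_{I^q}\circ\pi=\theta_{I^q_N}$ (and its tree analogue). The paper proves it by the same direct trace computation, organized by factoring through $\iota_B^*$ and one-colored ribbon graphs and using Hamilton's tensors $\mu^{g,b}$, together with the count $bt(\Gamma)=2g(\bar\Gamma)+f_{>0}(\bar\Gamma)+f_{tot}(\bar\Gamma)-1$; your form of this count is the correct one, while the paper's display drops the $f_{>0}$ term.

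Treat that direct computation as the argument, not as a fallback. The corolla-plus-gluing shortcut would additionally require showing that each individual graph lies in the span of iterated brackets and $\Delta_i$ applied to corollas. That is not automatic, because these operations sum over all gluings and so only produce sums of graphs.
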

\begin{proof}
Commutativity will follow from the next theorem \ref{main_compa}. More precisely, consider the outer commuting cube featuring in the next theorem. Rotating that cube around the z-axis by 90 degrees and then twisting that diagram by the compatible Maurer-Cartan in each corner (see lemma \ref{comp_1}, lemma \ref{comp_2} and lemma \ref{comp_3}, lemma \ref{comp_4}) gives the commutative cube of this theorem.

We have explained in theorem \ref{LQT_q} and \ref{LQT_cl} how the top face is functorial as claimed in the theorem. This also explains functoriality of the right face by diagram \ref{hdlm}. In the same cited theorems the independence of the top face on the choice of a finite presentation is explained. 

In theorem \ref{Kc2} and theorem \ref{Kc1} functoriality for the left face is explained. Further we have elaborated around \eqref{unabhai} how the left face is independent of the choice of finite presentation. The corresponding statements for the other faces can be deduced from these results.
\end{proof}

\begin{theorem}\label{main_compa}
Assume we are given an $I^q\in MCE(\mathcal{F}^{pq}(V_{Ob\mathcal{C}}))$, a quantization of an essentially finite dimension $d$ cyclic $A_\infty$-category $\mathcal{C}$ with its associated hamiltonian $I\in MCE(\mathcal{F}^{fr}(V_{Ob\mathcal{C}}))$. Fix the same notations as in the previous theorem. Then following diagram, where $B=ObSk\mathcal{C}$ finite, commutes.
$$\begin{tikzcd}
&\mathcal{O}^{fr}(\mathfrak{gl}_NV)&&\mathcal{O}^{pq}(\mathfrak{gl}_NV)\arrow[swap]{ll}{p}&&
\mathcal{O}^{pq}(\mathfrak{gl}_NV)\arrow[equal]{ll}\\
\mathcal{F}^{fr}(V_B)\arrow{ur}{LQT^{fr}_N}&&\mathcal{F}^{pq}(V_B)\arrow[swap,ll, "p" {xshift=10pt}]\arrow{ur}{LQT^{pq}_N}&&
\mathcal{F}^{pq}(V_B)\arrow[equal]{ll}\arrow{ur}{LQT^{pq}_N}&\\ 
&\mathcal{T}^d\arrow[dashed, uu,"\theta_{I_N}" {yshift=-8pt}]\arrow{dl}{\pi}&&\mathcal{G}^d\llbracket\hbar\rrbracket\arrow[swap,dashed]{ll}\arrow[dashed]{rr}\arrow[dashed, uu,"\theta_{I_N}" {yshift=-10pt}]\arrow{dl}&&
s\mathcal{G}^d\llbracket\hbar\rrbracket\arrow[bend left=12,dashed]{llll}\arrow[dashed, uu,"\theta_{I^q_N}" {yshift=-8pt}]\arrow{dl}{\pi}\\
\mathcal{RT}^d_{B}\arrow[uu,"\rho_{I}" {yshift=5pt}]&&\mathcal{RG}^d_{B}\llbracket\gamma\rrbracket\arrow[swap]{ll}\arrow{rr}\arrow[uu,"\rho_{I}" {yshift=5pt}]&&
s\mathcal{RG}^d_{B}\llbracket\gamma\rrbracket\arrow[bend left=12]{llll}\arrow[uu,"\rho_{I^q}" {yshift=5pt}]&
\end{tikzcd}$$
The corners of the cube on the right are $(d-2)$-twisted BD algebras, the upper arrows pointing inward are 2-weighted maps of BD algebras, whereas the lower arrows pointing outward are predual to such.
The horizontal arrows are maps of $(d-2)$-twisted BD algebras. The horizontal arrows in the left cube are dequantization maps, as are the curved arrows.
Lastly the left face commutes as a diagram $(d-2)$-shifted Poisson algebras.  
\end{theorem}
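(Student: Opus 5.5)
The plan is to decompose the cube into its faces and check each one separately, since a cube commutes as soon as all its faces do. The top face (cyclic words to Lie algebra cochains, free versus prequantum) is diagram \eqref{pqdfr} together with the trivial right square. The front and back faces are exactly theorem \ref{comp_rg_al} and theorem \ref{comp_gr_al}. The bottom face is diagram \eqref{comp_g_rg}. What remains are the three vertical faces that connect the ribbon graph column with the graph column through the LQT maps. These are the left face (trees), the middle face (graphs) and the right face (stable graphs). All three should follow from one identity: for every stable graph $\Gamma$, $LQT^{pq}_N(\rho_{I^q}(\pi(\Gamma)))=\theta_{I^q_N}(\Gamma)$. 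The other two faces are its restrictions, with $I$ in place of $I^q$ and with shifts omitted.

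To prove this identity I would first reduce to connected $\Gamma$. The weight bookkeeping is compatible: $\pi$ sends $\Gamma_1\cdots\Gamma_n\hbar^{n+2k-1}$ to $\gamma^k$ times the product, while $LQT^{pq}_N$ sends a product of $m$ cyclic words times $\gamma^s$ to $\hbar^{m+b-1+2s}$. Both $\pi$ and $LQT^{pq}_N$ are predual, respectively equal, to 2-weighted multiplicative maps (theorem \ref{GAL}, lemma \ref{tr}), and $\theta$ and $\rho$ are algebra maps, so it suffices to treat a single connected stable graph.

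For connected $\Gamma$, $\pi(\Gamma)$ is the sum \eqref{ebieb} over all stable ribbon graphs $\bar\Gamma$ with underlying stable graph $\Gamma$. Applying $\rho_{I^q}$ contracts the vertex tensors $I^q|_{v}$ along the edges with $\langle\_,\_\rangle^{-1}$ and records the faces as cyclic words, the empty faces as $\nu$'s and the genus as $\gamma^{g}$. Applying $LQT^{pq}_N$ then turns each cyclic word into a trace $Tr(a_1(\_)\cdots a_n(\_))$ and each $\nu$ into $N$, with an appropriate power of $\hbar$. On the other side, $\theta_{I^q_N}(\Gamma)$ contracts the vertex tensors $I^q_N|_v$ using the pairing $tr(M_1M_2)\langle\_,\_\rangle$ on $\mathfrak{gl}_NV_B$. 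Here $I^q_N=LQT^{pq}_N(I^q)$ (theorem \ref{LQT_q}), so each $I^q_N|_v$ is a sum over the ways of distributing the half-edges at $v$ into cycles and boundary defects. These are exactly the ribbon structures with genus and boundary defects at $v$.

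The key step is the standard ribbon-graph Wick argument: contracting matrix indices $\sum_{ij}E_{ij}\otimes E_{ji}$ along an edge glues traces, and the index loops that close up become the faces of $\bar\Gamma$. Each face without leaves contributes a factor $N$, which matches $\nu^{f_{tot}}$. Each face with leaves contributes one trace, which matches the cyclic word $proj$ in construction \ref{con_proj}.

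What remains is to check the exponents of $\hbar$. The loop defect is $o(v)=2g(v)+\sum_\lambda b_\lambda(v)+c(v)-1$, and the genus formula of definition \ref{armg} should convert $\hbar^{bt(\Gamma)}\prod_v\hbar^{o(v)}$ into $\hbar^{f_{>0}+f_{tot}-1+2g(\bar\Gamma)}$. I expect this to be a direct Euler characteristic computation.

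The main obstacle is keeping track of signs and symmetry factors. The Koszul signs from permuting half-edges in construction \ref{perm} have to match those in construction \ref{perm_g}. The automorphism factors have to agree as well: the sum over isomorphism classes in \eqref{ebieb} must match the way the symmetrization in $I^q_N|_v$ enumerates the ribbon structures. I would handle this by working with labelled graphs, where leaves and half-edges are ordered, and dividing by automorphisms at the end, as in the proof of lemma \ref{bra_comp}. The degree shifts ($d$ odd, so the pairings are odd) should then produce the same orientation sign on both sides.
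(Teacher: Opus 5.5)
Your reduction to faces is the paper's. The top, bottom, front and back faces are \eqref{pqdfr}, \eqref{comp_g_rg} and theorems \ref{comp_rg_al}, \ref{comp_gr_al}, and only the three parallel squares remain, of which the stable one is treated in detail. You differ from the paper in how that square is proved.

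The paper factors $LQT^{pq}_N=m^{pq}\circ f^{pq}\circ tr^{pq}\circ\iota_B^*$ and splits the square as in \eqref{comp_LQT_}, in three steps:
\begin{enumerate}
\item It removes the $B$-colourings via $\rho_{\widetilde I^q}(\Gamma)=\sum_C\rho_{I^q}(\Gamma_C)$.
\item It checks $tr^{pq}\circ\rho_{\widetilde I^q}=\rho_{M_N\widetilde I^q}$ edge by edge, using Hamilton's tensors $\mu^{g,b}$ and their gluing rules (3)--(5).
\item It obtains $m^{pq}f^{pq}\circ\rho_{M_N\widetilde I^q}\circ\pi=\theta_{I^q_N}$ by regrouping the contractions of the $I^q_N|_v$ according to ribbon structures on $\Gamma$.
\end{enumerate}
You collapse this into a single 't Hooft double-line count on $\mathfrak{gl}_N\widetilde{V_B}$. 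The combinatorics are the same: your description of $I^q_N|_v$ as a sum over cycle and defect structures is the paper's regrouping step, and your closed index loops are what rules (3)--(5) encode.

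The paper's factorization lets it reuse the Morita and trace results of \cite{GGHZ21} and \cite{Ham11}, and it keeps the colour bookkeeping separate. Your route is more self-contained, but one count has to absorb everything. In particular you never mention the colourings. The sum \eqref{ebieb} runs over $B$-coloured ribbon structures, while on the $\theta$ side the inverse pairing of $\widetilde{V_B}$ splits as a sum over pairs of objects. Matching these two sums, together with $\nu_\lambda\mapsto N$, is exactly the first square of \eqref{comp_LQT_}, and you must do it inside your Wick count.

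Two bookkeeping corrections. First, by definition \ref{betti}, $bt(\Gamma)$ already contains $\sum_v o(v)$, and $\theta$ takes the $\hbar^{o(v)}$-coefficient at each vertex, so there is no extra factor $\prod_v\hbar^{o(v)}$. The identity to verify is $bt(\Gamma)=f_{>0}(\bar\Gamma)+f_{tot}(\bar\Gamma)-1+2g(\bar\Gamma)$. Both sides equal $1-2v+e+c+\sum_v(2g_v+b_v)$, where $b_v=\sum_\lambda b_\lambda(v)$. Note that the paper's displayed formula drops the $f_{>0}$ term, which $LQT^{pq}_N$ does contribute through $\hbar^{m+b-1+2s}$.

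Second, your reduction to connected graphs only holds on the span of the elements $\Gamma_1\cdots\Gamma_n\hbar^{n+2k-1}$. By \eqref{isjk}, $\pi$ kills everything else; for instance $\pi(\Gamma\hbar)=0$ for connected $\Gamma$. On the other hand $\theta_{I^q_N}(\Gamma\hbar)=\hbar\,\theta_{I^q_N}(\Gamma)$, which is nonzero in general: for the trivalent corolla it is $\hbar$ times the cubic term of the Hamiltonian $I_N$. So the vertical squares can only be asserted on that span. The paper's proof also checks only this case, so this is a caveat on the statement, not a defect specific to your argument.
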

\begin{proof}
We have already seen that the upper two faces are commuting by remark \ref{pqdfr}, the lower two faces, including the curved arrows, are commuting by remark \ref{comp_g_rg}.
Further the front two faces, including the curved arrows, are commuting by theorem \ref{comp_rg_al} as are the two faces in the back, including the curved arrows, by theorem \ref{comp_gr_al}.
Thus it remains to show that the three parallel squares commute. We show that the rightmost one commutes, for the other two it follows similarly.
By definition of the prequantum LQT map this outermost diagram is equal to the following one.
\begin{equation}\label{comp_LQT_}
\begin{tikzcd}
\mathcal{F}^{pq}(V_B)\arrow{r}{\iota_B^*}&\mathcal{F}^{pq}(\widetilde{V_B})\arrow{r}{LQT_N^{pq}}&\mathcal{O}^{pq}(\mathfrak{gl}_NV)\\ \\
s\mathcal{RG}^d_{B}\llbracket\gamma\rrbracket\arrow{uu}{\rho_{I^q}}&s\mathcal{RG}^d_{\{*\}}\llbracket\gamma\rrbracket\arrow{uu}{\rho_{\widetilde{I}^q}}\arrow{l}&s\mathcal{G}^d\llbracket\hbar\rrbracket\arrow{l}{\pi}\arrow{uu}{\theta_{I^q_N}}
\end{tikzcd}
\end{equation}
Here we denoted by $\widetilde{V_B}$ the cyclic chain complex associated to $V_B$ under the cyclic commutator functor \ref{cyccomm} and  $\widetilde{I}^q:=\iota_B^*I^q\in MCE(\mathcal{F}^{pq}(\widetilde{V_B}).$
Further the lower left horizontal map is induced by the map of sets $B\rightarrow \{*\}$.
We prove commutativity of diagram \ref{comp_LQT_} in two steps:
    \begin{lemma}
The left square of diagram \ref{comp_LQT_} commutes. 
\end{lemma}
\begin{proof}
For $\Gamma$ a stable ribbon graph colored just by one element $\{*\}$ we can write
$$\rho_{\widetilde{I}^q}(\Gamma)=\sum_C \rho_{I^q}(\Gamma_C),$$
where the sum is over all isomorphism classes of $B$-colored stable ribbon graphs $\Gamma_C$ whose underlying $\{*\}$-colored stable ribbon graph is $\Gamma$.
Here we denoted by $C$ such a coloring.
Indeed, this follows by recalling the definition \ref{cyccomm} of the cyclic pairing on the cyclic chain complex associated to a collection of cyclic chain complexes and the construction \ref{main_constr_nc} of the map $\rho$.
But this equality expresses exactly commutativity of the diagram since $\iota_B^*$ is just an inclusion.
\end{proof}
\begin{lemma}
The right square of diagram \ref{comp_LQT_} commutes.
\end{lemma}
\begin{proof}
By recalling the definition of the prequantum LQT map for algebras the right square of the diagram is given by
$$
\begin{tikzcd}
\mathcal{F}^{pq}(\widetilde{V_B})\arrow{r}{tr^{pq}}&\mathcal{F}^{pq}(M_N\widetilde{V_B})\arrow{r}{m^{pq}\circ f^{pq}}&\mathcal{O}^{pq}(\mathfrak{gl}_NV)\\ \\
s\mathcal{RG}^d_{\{*\}}\llbracket\gamma\rrbracket\arrow{uu}{\rho_{\widetilde{I}^q}}\arrow{uur}{\rho_{M_N\widetilde{I}^q}}&&s\mathcal{G}^d\llbracket\hbar\rrbracket\arrow{ll}{\pi}\arrow{uu}{\theta_{I^q_N}}.
\end{tikzcd}
$$
Here $M_N\widetilde{I}^q=tr^{pq}\widetilde{I}^q$ and $I^q_N=m^{pq}\circ f^{pq}(M_N\widetilde{I}^q).$
We prove commutativity of the triangle first.

We recall from around lemma \ref{tr} the definition of $tr^{pq}$.
As explained in \cite{GGHZ21} theorem 4.9 and theorem 4.15, where $tr^{pq}$ is denoted $\mathcal{M}_{\gamma,\nu}$, one way to understand this map is by tensoring with certain tensors
$$\mu^{g,b}_{n_1\cdots n_m}\in M_N^{-\otimes n_1}\otimes\cdots \otimes M_N^{-\otimes n_m},$$
which exist for all integers $g,b\geq 0$ and $m,n_i>0$.
The properties of these tensors is described in lemma 3.10 of \cite{Ham11}, where they are denoted $\alpha^{g,b}$.
To be precise we are interested in these tensors applied to the Frobenius algebra $M_N$ of $N\times N$ matrices, in the set-up of \cite{Ham11}.

In the following we use the ambiguous notation $\mu^{g,b}_{m}$ for such a tensor above. 
We verify commutativity of the triangle for connected stable ribbon graphs $\Gamma$ from which general commutativity follows. We have by definition that
$$\rho_{M_N\widetilde{I}^q}(\Gamma)=proj \circ \bigotimes_{e\in E(\Gamma)}(\langle\_,\_\rangle^{-1}_e\langle\_,\_\rangle^{-1}_{M_N})\sigma \Big((\mu^{g_1, b_1}_{h_1}\otimes \widetilde{I}^q)|_{v_1}\otimes\cdots\otimes (\mu^{g_n, b_n}_{h_n}\otimes \widetilde{I}^q)|_{v_n}\Big)\nu^{{tot(\Gamma)}}\gamma^{g(\Gamma)}$$
Using implicitly the isomorphism $M_N^{\otimes 2}\cong {M_N^{\otimes -2}}^\vee$ and picking a good basis for $M_N$ we may write $$\langle\_,\_\rangle^{-1}_{M_N}=x_i\otimes y^i$$
where $x_i,y^i\in M_N$ and an implicit sum over $i$ is omitted. Then consecutively using properties (1)-(5) of the tensors $\mu^{g,b}_{m}$ as described in lemma 3.10 of \cite{Ham11} we have that above is equal to

$$=proj \circ \bigotimes_{e\in E(\Gamma)}(\langle\_,\_\rangle^{-1}_e\otimes\mu^{g(\Gamma), f_{tot}(\Gamma)}_{l(\Gamma)})\sigma(\widetilde{I}^q|_{v_1}\otimes\cdots\otimes \widetilde{I}^q|_{v_n})\nu^{f_{tot}}\gamma^{g(\Gamma)} =tr^{pq}\rho_{\widetilde{I}^q}$$
Indeed, since the graph is connected we end up with just one tensor $\mu^{g(\Gamma), f_{tot}(\Gamma)}_{l(\Gamma)}$.
While consecutively applying (3)-(5) of these properties (and possibly using (1)-(2) to bring elements in the good positions - no signs arise since $M_N$ is concentrated in degree zero) we pick up exactly the genus and the total number of faces of $\Gamma$, recalling definitions \ref{armg} and \ref{faces}.
For that latter part we have used the (implicit `extreme' terms) of lemma 3.10 (3)-(5) in loc. cit.
$$(3)\ \ \mu^{g,b}_{n+1}((x_iy^i)\otimes (a_{11}\cdots a_{1k_1})\otimes\cdots\otimes (a_{n1}\cdots a_{nk_n} )=\mu^{g,b+2}_{n}( (a_{11}\cdots a_{1k_1})\otimes\cdots \otimes(a_{n1}\cdots a_{nk_n} ))$$
\smallskip
$$(4)\ \ \mu^{g,b}_{n+2}((x_i)\otimes(y^i)\otimes (a_{11}\cdots a_{1k_1})\otimes\cdots\otimes (a_{n1}\cdots a_{nk_n} )=\mu^{g+1,b+1}_{n}( (a_{11}\cdots a_{1k_1})\otimes\cdots \otimes(a_{n1}\cdots a_{nk_n} ))$$
\smallskip\begin{equation*}
\begin{split}
(5)\ \
& \mu^{g_1,b_1}_{n_1+1}\big( (a_{11}\cdots a_{1k_1})\otimes\cdots\otimes (a_{n_11}\cdots a_{n_1k_{n_1}})\otimes(x_i) \big)\mu^{g_2, b_2}_{n_2+1}\big((y^i)\otimes (b_{11}\cdots b_{1k_1})\otimes\cdots\otimes (b_{n_21}\cdots b_{n_2k_{n_2}}) \big)\\
&=\mu^{g_1+g_2,b_1+b_2+1}_{n_1+n_2}\big( (a_{11}\cdots a_{1k_1})\otimes\cdots\otimes (a_{n_11}\cdots a_{n_1k_{n_1}} )\otimes(b_{11}\cdots b_{1k_1})\otimes\cdots\otimes (b_{n_21}\cdots b_{n_2k_{n_2}})\big)
\end{split}
\end{equation*}
Now we show commutativity of the odd square. This is expressed by the following equality of first and third line:
\begin{equation*}
\begin{split}
&m^{pq}f^{pq}\rho_{M_N\widetilde{I}^q}(\pi(\Gamma))=m^{pq}f^{pq}\Big(\sum_{\bar{\Gamma}}(\bigotimes_{e\in E(\bar{\Gamma})}\langle\_,\_\rangle^{-1}\otimes proj)\circ{\sigma}\big(M_N\widetilde{I}^q|_{v_1}\otimes\ldots\otimes M_N\widetilde{I}^q|_{v_n}\big)\nu^{tot}\gamma^{g(\Gamma)}\Big)\\
&=(\bigotimes_{e\in E(\Gamma)}\langle\_,\_\rangle^{-1}\otimes proj)\circ{\sigma}\big({I}^q_N|_{v_1}\otimes\ldots\otimes {I}^q_N|_{v_n}\big)\hbar^{2g(\bar{\Gamma})+f_{tot}(\bar{\Gamma})-1}\\
&=(\bigotimes_{e\in E(\Gamma)}\langle\_,\_\rangle^{-1}\otimes proj)\circ{\sigma}\big({I}^q_N|_{v_1}\otimes\ldots\otimes {I}^q_N|_{v_n}\big)\hbar^{bt(\Gamma)}=\theta_{I^q_N}(\Gamma)
\end{split}
\end{equation*}
Here $\Gamma$ denotes a connected stable graph and the sum in the first line is over all stable ribbon graphs $\bar{\Gamma}$ whose underlying connected stable graph is $\Gamma$.
The first equality follows since  pairing the tensors $\{I^q_N|_{v_i}\}$ according to the graph $\Gamma$ is the same as pairing the tensors $\{M_NI^q|_{v_i}\}$ in all possible ways, but respecting the rule imposed by $\Gamma$.
However we can reorder all these possible ways according to each way of putting the structure of a stable ribbon graph on $\Gamma$ and then pairing the tensors $M_NI^q|_{v_i}$ respectively,
which is expressed by the sum in the first line. 

The second equality follows since given a connected stable ribbon graph $\bar{\Gamma}$ with underlying stable graph $\Gamma$ (recall definition \ref{sRGsG}) we have
$$bt((\Gamma))=2g(\bar{\Gamma})+f_{tot}(\bar{\Gamma})-1,$$
as can be verified by a simple computation further; recalling the definitions of the arithmetic genus \ref{armg}, of the total number of empty faces \ref{faces} as well as of the betti number \ref{betti}.
\end{proof}

These two lemmas finish the proof of theorem \ref{main_compa}.
\end{proof}

\newpage
\section{Open-Closed Trivialization of Circle Action}\label{octosa}
In this section we prove theorem K from the introduction, which tells us that given a splitting of the non-commutative Hodge filtration (definition \ref{splitting}) there exist an $L_\infty$ quasi-isomorphism between the open-closed SFT BD algebra associated to a smooth cyclic $A_\infty$-category and a full subcategory $\Lambda\subset \mathcal{C}$ (see definition \ref{ysysys}) and the open-closed SFT BD algebra where we have trivialized most of the algebraic structure coming from the `closed side'. 
\subsection{String Field Theory Beilinson-Drinfeld Algebras}\label{setupsection}
In order to make this precise we need to carefully set up the open-closed SFT BD algebra. Let us recall respectively slightly modify the open BD algebra that we will use.
\begin{df}\label{oBD}
Let $V_B$ be a collection of cyclic chain complexes of degree $d$, recalling definition \ref{ColV}. Then we denote $$\mathcal{F}^o(V_B):=\Big(Sym\big(Cyc^*(V_B)[d-4]\big) \llbracket\gamma\rrbracket,d+\nabla+\gamma\delta,\{\_,\_\}_o\Big).$$ 
It has a $(2d-5)$-twisted Beilinson-Drinfeld algebra structure over $k\llbracket\gamma\rrbracket$, a formal variable of degree $(6-2d)$. This follows by identifying $\mathcal{F}^o(V_B)=\mathcal{F}^{pq}(V_B)[d-3],$ see around theorem \ref{nc_pq}, just that here we give $\gamma$ degree $(6-2d)$. If $V_B$ is induced from a full subcategory with objects $\Lambda$ of a given cyclic $A_\infty$-category $\mathcal{C}$, recalling remarks around \ref{ColV}, we denote $\mathcal{F}^o(V_B)=:\mathcal{F}^o(\Lambda).$
\end{df}
\begin{remark}\label{dgsmnggw}
    As written we shifted the initial $(d-2)$-twisted BD algebra from theorem \ref{nc_pq} by $(d-3)$. In the previous chapters we made the choice to work with 
    the shifting set-up from theorem \ref{nc_pq} in order to be compatible with the target of the LQT map, which naturally has a $(d-2)$-shifted BD algebra structure as well. In the current chapter it is more practical not to perform that shift and to consider $(2d-5)$-twisted Beilinson-Drinfeld algebras. We hope to find a coherent set-up in the future and apologize for potential confusions.  
\end{remark}
Furthermore we recall from definition \ref{cBD} the $(2d-5)$-twisted Beilinson-Drinfeld algebra over $k\llbracket\gamma\rrbracket$, denoted
$\mathcal{F}^{c}(\mathcal{C})$ and from definitions \ref{cBD_t} and \ref{cBD_Tr} the $(2d-5)$-twisted Beilinson-Drinfeld algebras over $k\llbracket\gamma\rrbracket$ denoted $\mathcal{F}^{c}(\mathcal{C})^{triv}$ and $\mathcal{F}^{c}(\mathcal{C})^{Triv}$.

Recall from lemma \ref{tensorBD} that we can consider the tensor product of $(2d-5)$-twisted BD algebras.
\begin{df}\label{ysysys}
    Given a cyclic $A_\infty$-category $\mathcal{C}$ and a full subcategory $\Lambda\subset \mathcal{C}$ we denote the \emph{observables of the open-closed free quantum SFT} by 
    $$\mathcal{F}^{c}(\mathcal{C})\otimes \mathcal{F}^o(\Lambda).$$
\end{df}
Of course similar definitions apply when replacing in above tensor product $\mathcal{F}^{c}(\mathcal{C})$ with $\mathcal{F}^{c}(\mathcal{C})^{triv}$ or $\mathcal{F}^{c}(\mathcal{C})^{Triv}$. In order to set-up the open-closed trivializing morphism we recall the purely closed trivializing morphism from \cite{CaTu24}.
 \subsection{Recollection on Closed Trivialization of Circle Action}
Note that by forgetting the multiplication a $(2d-5)$-twisted BD algebra becomes a dg $(2d-5)$-shifted Lie algebra, which we denote by the same letter by abuse of notation. We obtain dg Lie algebras by shifting by $-(2d-5)$ a dg $(2d-5)$ shifted Lie algebra.

We recall from \cite{CaTu24} the construction of an $L_\infty$-quasi-isomorphism 
\begin{equation}\label{inLqi}
\mathcal{K}_s:\ \mathcal{F}^{c}(\mathcal{C})[5-2d]\rightsquigarrow\mathcal{F}^{c}(\mathcal{C})^{triv}[5-2d],\end{equation}
given a splitting $s$ of the non-commutative Hodge filtration (definition \ref{splitting}). Note, however, that we have introduced a different grading as compared to \cite{CaTu24}; we carefully keep track of the $\mathbb{Z}$-grading.  The Taylor coefficients of $\mathcal{K}_s$ (compare remark \ref{nütz}) are given by maps
\begin{equation}\label{tayf}
\mathcal{K}_s^m:Sym^m(\mathcal{F}^{c}(\mathcal{C})[6-2d])\rightarrow \mathcal{F}^{c}(\mathcal{C})^{triv}[6-2d],\end{equation}
which uniquely define the morphism \ref{inLqi}. Since this morphism constitutes one of the main ingredient of this section, we recall its definition here, after \cite{CaTu24}, whose notation we follow. First we recall some
notions of graphs, introduced in \cite{geKa96}. A labeled
graph means for us a graph $G$ (possibly with leaves) endowed with a
`loop defect' labeling function $g: V_G\rightarrow \mathbb{Z}_{\geq 0}$ on the set of its
vertices $V_G$. 
We use the following notations for a labeled graph $G$:
\begin{itemize}
\item[--] $E_G$ denotes the set of edges, the cardinality of that set denoted $|e|$;
\item[--] $h_v$ denotes the set of half-edges belonging to a vertex $v$;
\item[--] The valency of a vertex $v\in V_G$ is denoted by $|h_v|$, the cardinality of half-edges belonging to that vertex;
\item[--] the number of vertices of a graph is denoted by $|v|$.
\end{itemize}
The betti number of a labeled graph is defined to be
\begin{equation}\label{betti2}
 g(G):= \sum_{v\in V_G} g(v)+{rank\,} H_1(G), \end{equation}
and we recall that for connected graph we have \begin{equation}
\label{rank}{rank\,} H_1(G)=|e|-|v|+1.\end{equation}
\begin{itemize}
\item[--] If $G$ has $m$ vertices, a marking of $G$ is a bijection
  \[ f: \{1,\cdots,m\} \rightarrow V_G. \] 
 \item[--] An isomorphism between two marked
and labeled graphs is an isomorphism of the underlying labeled graphs
that also preserves the markings.
\end{itemize}

We denote by $\widetilde{\Gamma(g,n)}_m$  the set of isomorphism classes of connected {\em marked}
graphs of betti number $g$ with $n$ leaves and with $m$ vertices. 

Next we recall that 
$$\mathcal{F}^{c}(\mathcal{C})[6-2d]=Sym\Big(CH_*(\mathcal{C})[u^{-1}][d-2]\Big)^{-*}\llbracket\gamma\rrbracket[6-2d]$$
and fix the notation $$\mathcal{H}:=(CH_*(\mathcal{C})[u^{-1}][d-2])^{-*}$$ so that 
$$\mathcal{F}^{c}(\mathcal{C})[6-2d]=Sym(\mathcal{H})\llbracket\gamma\rrbracket[6-2d].$$
Further for an element $y=:x_1x_2\cdots x_j \in Sym^{j}(\mathcal{H})$ let
$\widetilde{y} \in \mathcal{H}^{\otimes j}$ be its
desymmetrization,
\begin{equation}\label{desym}
 \widetilde{y}:=\sum_{\sigma\in \Sigma_j} \epsilon \cdot
x_{\sigma(1)}\otimes\cdots\otimes x_{\sigma(j)} \in \mathcal{H}^{\otimes j}.
\end{equation}
Here $\epsilon$ is the Koszul sign for permuting the elements
$x_1,\ldots, x_i$ with respect to the degree in $\mathcal{H}$. 
Given $n>0,g\geq0$, we denote by 
$$(\_)|_{g,n}:\mathcal{F}^{c}(\mathcal{C})[6-2d]\rightarrow (\mathcal{F}^{c}(\mathcal{C})[6-2d])|_{g,n}$$
the projection on the sub vector space given by symmetric words of length $n$ and of $\gamma$-coefficient~$g$.

Given a splitting $s$ from definition \ref{splitting} we can construct a symmetric bilinear form
  \begin{equation}\label{Hsym}
H^{sym}_s: \mathcal{H}^{\otimes 2} \rightarrow k[6-2d], \end{equation}  see page 38 of \cite{AmTu22} or in more detail proposition 7.5 of \cite{CaTu24} for the definition - carried over to our shifting and grading convention. 

Using these notations we can recall the definition of the degree zero linear maps \eqref{tayf}
$$\mathcal{K}_s^m: Sym(\mathcal{F}^{c}(\mathcal{C})[6-2d])\rightarrow \mathcal{F}^{c}(\mathcal{C})^{triv}[6-2d],$$
defined for each $m\geq 1$. We have
\[ \mathcal{K}_s^m := \sum_{g,n} \sum_{ (G,f)\in \widetilde{\Gamma(g,n)}_m}
  \frac{1}{|Aut(G,f)|}\cdot K_{(G,f)}, \]
and we describe the map $K_{(G,f)}$: Let $(G,f)\in \widetilde{\Gamma(g,n)}_m$ be a marked graph. Set
$g_i = g(f(i))$, and $n_i = |h_{f(i)}|$.  The map $K_{(G,f)}$ has only non-trivial components
\begin{equation}\label{betco}
\bigotimes_{i=1}^m  \mathcal{F}^{c}(\mathcal{C})[6-2d]|_{g_i,n_i}  \rightarrow \mathcal{F}^{c}(\mathcal{C})[6-2d]|_{g,n},
\end{equation}
which leaves us to specify for $y_i\in Sym^{n_i}(\mathcal{H})[6-2d]\ (i=1,\ldots,m)$ the expression $$K_{(G,f)} (y_1\gamma^{g_1},\ldots, y_m\gamma^{g_m})=Z\gamma^g.$$ 
\begin{ctr}\label{Feynm}
Here the element
$Z$ is computed by the following Feynman-type procedure. 
\begin{enumerate}
\item Given $y_i\in Sym^{n_i}(\mathcal{H})[6-2d]$ for $i=1,\ldots,m$ 
decorate the half-edges adjacent to each vertex $v_i$ by
$\widetilde{y}_i$, recalling the desymmetrization operation \ref{desym}. Tensoring together the results over all the
vertices yields a tensor of the form
\begin{equation}\label{abt}
 \bigotimes_{i=1}^m \widetilde{\gamma_{i}}\in \mathcal{H}^{\otimes (\sum
    n_i)}[|v(G)|(6-2d)].  \end{equation}
The order in which these elements are tensored is the one
given by the marking $f$.

\item  For each internal edge $e$ of $G$ contract the corresponding
components of the above tensor \eqref{abt} using the symmetric bilinear form $H^{sym}_s$, built from the splitting $s$. When applying the
contraction we always permute the tensors to bring the two terms
corresponding to the two half-edges to the front, and then apply the
contraction map. The ordering of the set $E_G$ does not matter since
the operator $H^{sym}_s$ is even; also the ordering of the two
half-edges of each edge does not matter since the operator $H^{sym}_s$
is (graded) symmetric.
\item Read off the remaining tensor components (corresponding to the
leaves of the graph) in any order, and regard the result as the
element $$Z\in Sym^n (\mathcal{H})[6-2d]$$ via the canonical projection map
$\mathcal{H}^{\otimes n}\rightarrow Sym^n \mathcal{H}$, followed by a shift by $(|v|-1)(6-2d)$. 
\end{enumerate}
Tracking the degrees it follows that $|Z|=\sum_i|y_i|+e(2d-6)+(1-v)(2d-6).$ This shows that $|K_{(G,f)}|=0$, since we have
$$|Z\gamma^g|=|Z|+g(6-2d)=(g-2d)\sum_ig_i+\sum_i|y_i|,$$
recalling the formula \ref{betti2} for the betti number $g$ of a graph.

\end{ctr}
\subsection{Open-Closed Trivialization Fomality Morphism}
In this section we prove theorem K from the introduction, which we state as corollary \ref{maco}. 

Let us fix a dimension $d$ cyclic $A_\infty$-category $\mathcal{C}$, a splitting $s$ of the non-commutative Hodge filtration (see definition \ref{splitting}) and an auxiliary $(2d-5)$-twisted BD algebra $W$ (which we will later take to be $\mathcal{F}^{c}(\Lambda)$, for $\Lambda\subset\mathcal{C}$ a full sub-category) for the rest of this subsection.

Recall from lemma \ref{tensorBD} that we can consider the tensor product $(2d-5)$-twisted BD algebras $\mathcal{F}^{c}(\mathcal{C})\otimes W$ and $\mathcal{F}^{c}(\mathcal{C})^{triv}\otimes W$, recalling our set-up from section \ref{setupsection}. By forgetting their multiplications and shifting we find Lie algebras $\mathcal{F}^{c}(\mathcal{C})\otimes W[5-2d]$ and  $\mathcal{F}^{c}(\mathcal{C})^{triv}\otimes W[5-2d]$.
The goal of this section is to construct an $L_\infty$ quasi-isomorphism denoted 
\begin{equation}\label{L_inf}
\mathcal{K}_s\otimes m:\ \mathcal{F}^{c}(\mathcal{C})\otimes W[5-2d]\rightsquigarrow \mathcal{F}^{c}(\mathcal{C})^{triv}\otimes W[5-2d].\end{equation}

Equivalently (see definition \ref{coalgebra}) such a map \eqref{L_inf} can be described as a map of dg coalgebras 
\begin{equation*}
C(\mathcal{K}_s\otimes m):\ Sym\big(\mathcal{F}^{c}(\mathcal{C})\otimes W[6-2d],d_{oc}\big)\rightarrow Sym\big(\mathcal{F}^{c}(\mathcal{C})^{triv}\otimes W[6-2d],d_{oc}^t\big),\end{equation*}
where the left and right hand side have the standard coalgebra structure (compare definition \ref{coal_def}). The differentials $d_{oc}$ and $d_{oc}^t$ of degree 1 are encoding the differential graded Lie algebra structures of $\mathcal{F}^{c}(\mathcal{C})\otimes W[5-2d]$ respectively $\mathcal{F}^{c}(\mathcal{C})^{triv}\otimes W[5-2d]$, see example \ref{dgla}. 
\begin{df}
We define a map of graded coalgebras, that is with zero differentials,
\begin{equation}\label{cogb}
C(\mathcal{K}_s\otimes m):\ Sym\big(\mathcal{F}^{c}(\mathcal{C})\otimes W[6-2d],0\big)\rightarrow Sym\big(\mathcal{F}^{c}(\mathcal{C})^{triv}\otimes W[6-2d],0\big).\end{equation}
As explained in remark \ref{nütz} a coalgebra morphism on such a coalgebra is uniquely determined by its Taylor coefficients. We define those by
\begin{equation*}
C(\mathcal{K}_s\otimes m)^n((x_1\otimes y_1)\odot \ldots\odot(x_n\otimes y_n)):=(-1)^*\mathcal{K}_s^n(x_1\odot\ldots\odot x_n)\otimes (y_1\cdot\ldots\cdot y_n),\end{equation*}
where $\mathcal{K}_s^n$ is the n-th Taylor coefficient of the $L_\infty$ morphism \ref{inLqi} and
where the sign arises from the Koszul rule of permuting all the $x$'s to the left and the $y$'s to the right. 
\end{df}
It remains to verify that the map \ref{cogb} commutes with the differentials, ie. defines a map of $L_\infty$-algebras and thus determines the required map \ref{L_inf}. The fact that it is an $L_\infty$ \emph{quasi-isomorphism} follows then directly since its first Taylor component will be given by the tensor product of a quasi-isomorphism (by \ref{inLqi}) and the identity, thus being a quasi-isomorphism. 
\begin{theorem}\label{cwd}
Given a dimension $d$ cyclic $A_\infty$-category $\mathcal{C}$, a splitting $s$ of the non-commutative Hodge filtration (see definition \ref{splitting}) and a $(2d-5)$-twisted BD algebra $W$. Then we have that 
    $$C(\mathcal{K}_s\otimes m)\circ d_{c}=d_{c}^t\circ C(\mathcal{K}_s\otimes m)$$
    for the map $C(\mathcal{K}_s\otimes m)$ from \ref{cogb}. That is, it defines the desired $L_\infty$ quasi-morphism
$$\mathcal{K}_s\otimes m:\ \mathcal{F}^{c}(\mathcal{C})\otimes W[5-2d]\rightsquigarrow \mathcal{F}^{c}(\mathcal{C})^{triv}\otimes W[5-2d].$$
\end{theorem}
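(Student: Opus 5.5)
Since a coalgebra morphism into a cofree cocommutative coalgebra is determined by its Taylor coefficients, and both sides of the desired identity are coderivations along $C(\mathcal{K}_s\otimes m)$, it suffices to check the identity after projecting to $\mathcal{F}^{c}(\mathcal{C})^{triv}\otimes W[6-2d]$, i.e. the relation of example \ref{wmg} for each arity $n$. I will first write out the dg Lie structure on the tensor product using formula \eqref{tenbra}. The differential is $D\otimes 1+1\otimes D_W$, with $D=d_{Hoch}+uB+\gamma\Delta_c$ (resp. $d_{Hoch}+uB$ on the target). The bracket of $x_1\otimes y_1$ and $x_2\otimes y_2$ has two pieces: $x_1x_2\otimes\{y_1,y_2\}_W$, and $\{x_1,x_2\}_c\otimes y_1y_2$. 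On the target the closed bracket vanishes, so only the first piece survives.

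The arity-$n$ identity then splits into three groups of terms.

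The first group consists of terms involving only $D$ and $\{\_,\_\}_c$ on the $x$'s, multiplied by $y_1\cdots y_n$. These cancel by the purely closed $L_\infty$-relations for $\mathcal{K}_s$ from theorem \ref{splmap}, after a Koszul-sign bookkeeping.

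The second group consists of terms involving $D_W$. Since $D_W$ is a BD differential, it satisfies $D_W(y_1\cdots y_n)=\sum_i \pm y_1\cdots D_Wy_i\cdots y_n+\sum_{i<j}\pm \{y_i,y_j\}_W\prod_{k\neq i,j}y_k$. The derivation part matches the terms where $D_W$ hits a single input. The bracket part is left over.

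The third group consists of the terms $x_ix_j\otimes\{y_i,y_j\}_W$ fed into $\mathcal{K}_s^{n-1}$ on the source side, and the quadratic target terms $\mathcal{K}_s^{|I|}(x_I)\mathcal{K}_s^{|J|}(x_J)\otimes\{y_I,y_J\}_W$. The latter bracket I will expand by the Leibniz rule into a sum over pairs $i\in I$, $j\in J$ of $\{y_i,y_j\}_W$ times the remaining $y$'s.

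Collecting, the leftover terms from the second group and from the product part of the BD relation in the target (namely $D(\mathcal{K}^n(x))$ contributes nothing new) are all indexed by a pair $\{i,j\}$ carrying $\{y_i,y_j\}_W\prod_{k\ne i,j}y_k$. The identity therefore reduces to the following statement, for each fixed pair $i\neq j$, with the common factor $\{y_i,y_j\}_W\prod_{k\neq i,j}y_k$ removed:
\begin{equation*}
\mathcal{K}_s^{n-1}(x_ix_j\odot x_{\hat{\imath}\hat{\jmath}})-\mathcal{K}_s^n(x_1\odot\cdots\odot x_n)\cdot(\text{sign})=\sum_{\substack{I\sqcup J=[n]\\ i\in I,\ j\in J}}\pm\,\mathcal{K}_s^{|I|}(x_I)\,\mathcal{K}_s^{|J|}(x_J).
\end{equation*}
This is the identity referred to as theorem \ref{BVinf}: merging two inputs by the product, versus splitting the output.

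I expect proving this identity from the graph formula (rule \ref{Feynm}) to be the main obstacle. My plan is to expand both sides as sums over marked graphs weighted by $1/|Aut(G,f)|$.

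A graph with $n-1$ vertices whose merged vertex carries $x_ix_j$ corresponds, after desymmetrizing $x_ix_j$, to choosing a partition of that vertex's half-edges into those decorated by $x_i$ and those decorated by $x_j$. This gives a bijection with pairs consisting of a graph on $n$ vertices in which vertex $i$ and vertex $j$ are split apart. Two cases then arise:
\begin{enumerate}
\item If the split vertices are not joined by any path, the resulting graph is a disjoint union of two connected graphs. This recovers the right-hand side, with the Betti-number labels (equation \eqref{betti2}) adding up correctly.
\item If they are still connected, the graph is a connected graph on $n$ vertices. These terms cancel against the $\mathcal{K}^n$ term.
\end{enumerate}
The loop-defect labels $g_v$ must be distributed so that the $\gamma$-powers agree. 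I will handle this via the projections $(\_)|_{g,n}$, since the product $x_ix_j$ adds the $\gamma$-degrees.

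The delicate points are the automorphism factors and the signs, particularly the factor $\frac12$ coming from symmetrizing the pair. I would verify these first on small cases, namely $n=2$ with one edge, before doing the general counting argument.
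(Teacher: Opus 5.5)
Your proposal is correct and follows the paper's proof essentially step for step. The paper likewise uses the closed $L_\infty$-relations \eqref{KiL} and the BD relation \eqref{BDr} to reduce the statement to Lemma \ref{key}, expands by Leibniz (Lemma \ref{easy}), and proves Theorem \ref{BVinf} via the vertex-splitting bijection of Theorem \ref{gt}, with the disconnected and connected cases handled exactly as in your plan.

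One slip to fix: the BD relation of $W$ produces $\gamma\{y_i,y_j\}_W$, not $\{y_i,y_j\}_W$. Correspondingly, the connected-graph contribution in your displayed identity must be $\gamma\,\mathcal{K}_s^n$, because splitting a vertex of a connected graph lowers $\mathrm{rank}\,H_1$ by one.
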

The proof of this theorem occupies the rest of this section (see summary \ref{Zusa} below for an overview):

As explained in example \ref{wmg}, since we are actually dealing with dg Lie algebras, proving theorem \ref{cwd} is equivalent to showing equation \eqref{equ:dglaLinfty}. Translating our notation to that of equation \eqref{equ:dglaLinfty}, by recalling example \ref{dgla} and equation \eqref{tenbra}, means that  
\begin{equation*}
Q_1'=d_{c,t}+d_w,\ \ \ Q_1=d_{c}+d_w,
\end{equation*}
\begin{equation*}
 Q_2'(x_1\otimes y_1,x_2\otimes y_2)=(-1)^{|x_1|+|y_1|+|x_2|}m_c(x_1,x_2)\otimes\{y_1,y_2\}_w,
 \end{equation*}
 \begin{equation*}
    Q_2(x_1\otimes y_1,x_2\otimes y_2)=(-1)^{|x_1|}\{x_1,x_2\}_c\otimes m_w(y_1,y_2)+(-1)^{|x_1|+|y_1|+|x_2|}m_c(x_1,x_2)\otimes\{y_1,y_2\}_w,
    \end{equation*}
    
where we denote (momentarily) by $m_c$ respectively $m_w$ the commutative product on $\mathcal{F}^{c}(\mathcal{C})$ (and by abuse of notation on $\mathcal{F}^{c}(\mathcal{C})^{triv}$) respectively on $W$, by $d_w$ the BD differential on $W$; by $d_c$ the BD differential of $\mathcal{F}^{c}(\mathcal{C})$ and by $d_{c,t}$ the BD differential of $\mathcal{F}^{c}(\mathcal{C})^{triv}$. 

Summarizing, proving theorem \ref{cwd} is equivalent to showing that following equation\footnote{Here $(-1)^{*_1}$ is given by the Koszul rule of permuting the $x$'s to the left and the $y$'s to the right. $(-1)^{*_2}=(-1)^{\sum_{i}|x_i|+\sum_{i\in L_1}|y_i|}$ times the Koszul rule of permuting the factors of ${(x_1\otimes y_1)\odot\cdots \odot (x_m\otimes y_m)}$ into the respective order. $(-1)^{*_3}=\epsilon(i,j,1,..,\hat{i},..,\hat{j},.., n)$. $(-1)^{*_4}=(-1)^{(\sum_i|x_i|)+|y_i|} $ times the Koszul rule of permuting the factors. $(-1)^{*_5}=(-1)^{|x_i|}$ times the Koszul rule of permuting the factors.} holds:
\begin{align}\label{inter}
&(d_{c,t}+d_w)(-1)^{*_1}\mathcal{K}_n(x_1\odot\cdots \odot x_n)\otimes (y_1\cdot\ldots\cdot y_m)\nonumber\\
+&\sum_{\substack{L_1\sqcup L_2=[n]\nonumber \\ |L_1|,|L_2|\geq 1}}(-1)^{*_2}\big(\mathcal{K}_{|L_1|}(\bigodot_{i\in L_1} x_i)\cdot\mathcal{K}_{|L_2|}(\bigodot_{i\in L_2} x_i)\big)\otimes \{\Pi_{i\in L_1} y_i,\Pi_{j\in L_2} y_j\}_w\nonumber\\
=&\sum_{i}(-1)^{*_3}(\mathcal{K}_s\otimes m)^n\big((d_{c}+d_w)(x_i\otimes y_i)\odot(x_1\otimes y_1)\odot\cdots \widehat{(x_i\otimes y_i)}\cdots \odot (x_m\otimes y_n)\big)\nonumber\\
+&\sum_{i\neq j}(-1)^{*_4}\mathcal{K}_{m-1}\big((x_i\cdot x_j)\odot x_1\odot\ldots \hat{x_i}\ldots\hat{x_j}\ldots\odot x_m\big)\otimes\big(\{y_i,y_j\}_w\cdot  y_1\cdot\ldots \hat{y_i}\ldots\hat{y_j}\ldots\cdot y_m\big)\nonumber\\
+&\sum_{i\neq j}(-1)^{*_5}\mathcal{K}_{m-1}\big(\{x_i, x_j\}_c\odot x_1\odot\ldots \hat{x_i}\ldots\hat{x_j}\ldots\odot x_m\big)\otimes  \big((y_i\cdot y_j)\cdot y_1\cdot\ldots\hat{y_i}\ldots\hat{y_j}\ldots \cdot y_m\big)
\end{align}

Using following two facts \eqref{BDr} and \eqref{KiL} we show that proving theorem \ref{cwd}, which is equivalent to proving that equation \eqref{inter} holds, is in fact equivalent to proving lemma \ref{key} below.

The fact that $W$ is a BD algebra implies that\footnote{Here the sign $(-1)^{*_6}$ results from permuting the odd operator $d_w$ through the $y_1,\ldots,y_{i-1}.$ Further we have $(-1)^{*_7}=(-1)^{|y_i|}\epsilon (i,j,\dots,\hat{i},\dots,\hat{j},\dots, n) $}
\begin{align}\label{BDr}
\mathcal{K}_n(x_1\odot\cdots \odot x_n)\otimes d_w(y_1\cdot\ldots\cdot y_n)&\nonumber\\
=\sum_i(-1)^{*_6}\mathcal{K}_n(x_1\odot\cdots \odot x_n)\otimes (y_1\cdot&\ldots \cdot d_wy_i\cdot\ldots\cdot y_m)\\+\sum_{i\neq j}(-1)^{*_7}&\mathcal{K}_n(x_1\odot\cdots \odot x_n)\otimes\big(\gamma\{y_i,y_j\}_w\cdot  y_1\cdot\ldots \hat{y_i}\ldots\hat{y_j}\ldots\cdot y_m\big).\nonumber
\end{align}
Further by equation \eqref{equ:dglaLinfty} the fact that the maps $\mathcal{K}^n_s$ define an $L_\infty$-algebra morphism (by equation 26 of \cite{AmTu22}, following theorem 7.1 of \cite{CaTu24}) implies that\footnote{Here $(-1)^{*_8}=\epsilon(i,1,\dots,\hat{i},\dots, n)$ and $(-1)^{*_9}=\epsilon(i,j,1,..,\hat{i},..,\hat{j},.., n)(-1)^{|x_i|}$}
\begin{align}\label{KiL}
    d_{c,t}\mathcal{K}_n(x_1\odot\cdots \odot& x_n)\otimes  \big(y_1\cdot\ldots \cdot y_m\big)\nonumber\\
=  \sum_{i}(-1)^{*_8}\mathcal{K}_s^n\big(d_{c}x_i\odot x_1&\odot\cdots \widehat{x_i}\odot \cdots \odot x_m\big)\otimes  \big(y_1\cdot\ldots \cdot y_m\big)\\ 
&+\sum_{i\neq j}(-1)^{*_9}\mathcal{K}_{m-1}\big(\{x_i, x_j\}_c\odot x_1\odot\ldots \hat{x_i}\ldots\hat{x_j}\ldots\odot x_m\big)\otimes  \big(y_1\cdot\ldots \cdot y_m\big)\nonumber.
\end{align}
Using these facts, showing that equation \eqref{inter} reduces to the equation in the following lemma is now basic algebra: Multiplying equality \eqref{KiL} by the Koszul sign of permuting the $x$'s of ${(x_1\otimes y_1)\odot\cdots \odot (x_m\otimes y_m)}$ to the left and the $y$'s to the right shows that the first summand of the first line of \eqref{inter} is equal to the first summand of the third summand and the last line. Multiplying equality \eqref{BDr} by the same Koszul sign times $(-1)^{\sum_i|x_i|}$ implies that the second summand of the first line of \eqref{inter} is equal to the second summand of the third line of \eqref{inter} plus the last line of \eqref{BDr} (multiplied by that same sign). Since also the second and forth line of \eqref{inter} is multiplied by $(-1)^{\sum_i|x_i|}$ we have proven the claim. Thus to prove theorem \ref{cwd} it remains to show the following.
\begin{lemma}\label{key}
For all $m>1$ and $(x_1\otimes y_1)\odot\cdots \odot (x_m\otimes y_m)\in Sym^m(\mathcal{F}^{c}(\mathcal{C})\otimes W[6-2d])$ we have 
\begin{align*}
\sum_{i\neq j}(-1)^{*_a}\mathcal{K}_{m-1}\big(&(x_i\cdot x_j)\odot x_1\odot\ldots \hat{x_i}\ldots\hat{x_j}\ldots\odot x_m\big)\otimes\big(\{y_i,y_j\}_w\cdot  y_1\cdot\ldots \hat{y_i}\ldots\hat{y_j}\ldots\cdot y_m\big)\\
=\sum_{\substack{L_1\sqcup L_2=[m] \\ |L_1|,|L_2|\geq 1}}(-1)^{*_b}&\big(\mathcal{K}_{|L_1|}(\bigodot_{i\in L_1} x_i)\cdot\mathcal{K}_{|L_2|}(\bigodot_{i\in L_2} x_i)\big)\otimes \{\Pi_{i\in L_1} y_i,\Pi_{j\in L_2} y_j\}_w\\
+&\gamma\sum_{i\neq j}(-1)^{*_c}\mathcal{K}_m(x_1\odot\cdots \odot x_m)\otimes\big(\{y_i,y_j\}_w\cdot y_1\cdot\ldots \hat{y_i}\ldots\hat{y_j}\ldots\cdot y_m\big)
\end{align*}
The signs arise from the Koszul rule of permuting the (odd) factors of ${(x_1\otimes y_1)\odot\cdots \odot (x_m\otimes y_m)}$ into the respective order; further from moving the $y_i$ respectively the $\Pi_{i\in L_1} y_i$ into the odd bracket $\{\_,\_\}_w.$ 
\end{lemma}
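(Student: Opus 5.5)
The plan is to reduce Lemma \ref{key} to an identity purely about the Taylor coefficients $\mathcal{K}_s^n$, and then prove that identity by a combinatorial bijection on marked labeled graphs, using the Feynman rule \ref{Feynm}.

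\textbf{Step 1: factor out the $W$-dependence.} Fix $m$ and collect the terms of both sides of Lemma \ref{key} by the pair $\{i,j\}$ of $W$-entries entering the bracket $\{\_,\_\}_w$. On the right-hand side, the bracket $\{\Pi_{L_1}y,\Pi_{L_2}y\}_w$ expands by the Leibniz rule into a sum over pairs $(i,j)$ with $i\in L_1$, $j\in L_2$. Each such term is the product of $\{y_i,y_j\}_w$ with the remaining $y$'s. After this expansion, every term on both sides has the form
\[
(\text{closed expression})\otimes \{y_i,y_j\}_w\cdot\prod_{k\neq i,j}y_k .
\]
It therefore suffices to prove, for each fixed ordered pair $i\neq j$ (with signs matched by the Koszul rule), the following closed identity:
\begin{equation}\label{plankey}
\mathcal{K}_{m-1}\big((x_i x_j)\odot \textstyle\bigodot_{k\neq i,j}x_k\big)
=\sum_{\substack{L_1\ni i,\ L_2\ni j\\ L_1\sqcup L_2=[m]}}\mathcal{K}_{|L_1|}\big(\textstyle\bigodot_{L_1}x\big)\cdot \mathcal{K}_{|L_2|}\big(\textstyle\bigodot_{L_2}x\big)
+\gamma\,\mathcal{K}_m(x_1\odot\cdots\odot x_m).
\end{equation}
This is the identity the paper calls theorem \ref{BVinf}. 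Heuristically, it says that $\mathcal{K}_s$ intertwines ``merging two inputs by the product'' with ``either splitting the output into a product, or keeping one more vertex at the cost of a factor $\gamma$''.

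\textbf{Step 2: prove \eqref{plankey} by graph surgery.} Write $x_i x_j$ as a symmetric word whose letters are the union of the letters of $x_i$ and of $x_j$. In a graph $(G,f)$ contributing to $\mathcal{K}_{m-1}$, the vertex decorated by $x_ix_j$ has half-edges that the desymmetrization $\widetilde{x_ix_j}$ partitions into those carrying letters of $x_i$ and those carrying letters of $x_j$. Splitting this vertex into two vertices $v_i$ and $v_j$ according to the partition gives a graph $G'$ with $m$ vertices, and the loop defect of the merged vertex is distributed between $v_i$ and $v_j$. The desymmetrization of a product is exactly the sum over such partitions, so the left side of \eqref{plankey} becomes a sum over pairs ($G'$ with $m$ vertices, split data). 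There are two cases.
\begin{enumerate}
\item If $G'$ is disconnected, then $v_i$ and $v_j$ lie in different components. The components give the terms $\mathcal{K}_{|L_1|}\cdot\mathcal{K}_{|L_2|}$. The betti numbers add, and rank $H_1$ drops by one relative to the connected case, consistent with $\gamma^{g}$.
\item If $G'$ is connected, then $g(G')=g(G)-1$. This accounts for the extra factor $\gamma$ in front of $\mathcal{K}_m$.
\end{enumerate}
Conversely, every connected marked graph with $m$ vertices, and every pair of connected graphs, arises exactly once by merging $v_i$ with $v_j$. The contraction by $H^{sym}_s$ along edges is unaffected by the surgery, since edges are untouched.

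\textbf{Step 3: main obstacle — combinatorial weights.} The delicate point is matching the weights $1/|Aut(G,f)|$, the factorial factors from desymmetrizing $x_ix_j$ versus $x_i$ and $x_j$ separately, the relabelling of markings when two vertices merge into one, and the Koszul signs. I would first try an orbit-counting argument. Rewrite each $\mathcal{K}$ as a sum over \emph{all} labeled graph structures on fixed half-edge sets, divided by the product of the factorials of the vertex valencies, so that no automorphism factors appear. In that form the splitting map is a plain bijection between half-edge data, since desymmetrization sums over $\Sigma_{n_i+n_j}$ while splitting sums over $\Sigma_{n_i}\times\Sigma_{n_j}$ times the shuffles. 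Once \eqref{plankey} is established, Lemma \ref{key} follows by re-inserting the $y$-factors with the Koszul signs described in the statement.
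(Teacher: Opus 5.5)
Your proposal is correct and follows the paper's route. Your Step 1 is the paper's Lemma \ref{easy}, the Leibniz expansion of $\{\Pi_{L_1}y,\Pi_{L_2}y\}_w$. Your fixed-$(i,j)$ identity is Theorem \ref{BVinf}. Your vertex-splitting argument with the connected/disconnected dichotomy is the bijection $A^{g,n}_{k_1,k_2}\cong B^{g,n}_{k_1,k_2}\cup C^{g,n}_{k_1,k_2}$ of Theorem \ref{gt}, and your Step 3 orbit-counting plays the role of the paper's identification $\widetilde{ev}_{(G,f)}=\frac{1}{|Aut(G,f)|}K_{(G,f)}$ as ``contracting each letter exactly once''.

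One small correction to your genus bookkeeping in the disconnected case: the total rank of $H_1$ does not drop there. Both the vertex count and the number of components increase by one, which is exactly why $g(G_1)+g(G_2)=g(G)$ and no extra factor of $\gamma$ appears.
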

In order to prove this key lemma, which is found below the proof of lemma \ref{easy}, we need to elaborate a bit: 
\subsubsection{An Interlude on Graphs}
In this subsection we prove theorem \ref{gt}, which is essential in the proof of the key lemma \ref{key}.
\begin{cons}\label{Cons1}
Given a connected marked graph $G$ with $m-1$ vertices 
and a partition $I\sqcup J=h_1$ of the half-edges $h_1$ belonging to the (for example) first vertex of the graph we can construct a new marked graph with $m$ vertices
where now only the half-edges $I$ belong to the first vertex and the half-edges $J$ are declared to belong to a new vertex, marked $m$.
We denote the new graph by $G^{I,J}$.
However, we need to be careful as in this process we may create two disconnected marked graph, denoted $G^{I}$ respectively $G^{J}$,
which for instance always happens when our starting graph is a star. See the proof of theorem \ref{gt} below for how we mark these new graphs.
\end{cons}
We will see that we are also able to reverse this procedure. To make this more precise we introduce the following notations.
\begin{df}
For a given marked graphs $(G,f)$ and two partitions of their half-edges belonging to the first vertex $I_1\sqcup J_1=h_1$ respectively $I_2\sqcup J_2=h_1$ we say $$ ( I_1,J_1)\sim ( I_2,J_2)\ \ \text{if}\
 \begin{cases} G^{I_1,J_1}\cong G^{I_2,J_2},& \text{if the resulting graphs are connected} \\
G^{I_1}\cong G^{I_2}\ \text{and}\ G^{J_1}\cong G^{J_2},& \text{otherwise.}
\end{cases}
$$
To be precise the isomorphism have to be from the category of marked graphs.
 Further given a graph $(G,f)\in \widetilde{\Gamma(g,n)}_{m-1}$ and $k_1,k_2\in\mathbb{N}_{>0}$ denote by $$P_{(G,f)}^{k_1,k_2}:=\{I\sqcup J= h_{1}\ |\ |I|=k_1,|J|=k_2\}/\sim.$$
\end{df}
\begin{df}\label{grdf}
Fix $0\leq g$, $n<0$ and $k_1,k_2\in\mathbb{N}_{>0}$. We call $A_{k_1,k_2}^{g,n}$ \textit{the set of isomorphism classes of marked graphs with a partition of the first vertex}, $B_{k_1,k_2}^{g,n}$ \textit{the set of isomorphism classes of marked graphs with fixed valencies at two vertices} and $C_{k_1,k_2}^{g,n}$ \textit{the set of 2-disconnected isomorphism classes of marked graphs with a partition}. More precisely:
\begin{align*}
A_{k_1,k_2}^{g,n}:=&\{(G,f)\in \widetilde{\Gamma(g,n)}_{m-1}, I,J\in P_{(G,f)}^{k_1,k_2}\},\\
B_{k_1,k_2}^{g,n}:=&\{(G,f)\in \widetilde{\Gamma(g,n)}_m\ |\  |h_{1}|=k_1,|h_{m}|=k_2\}\\
C_{k_1,k_2}^{g,n}:=&\{(G_1,f_1)\in \widetilde{\Gamma(g_1,n_1)}_{m_1},(G_2,f_2)\in \widetilde{\Gamma(g_2,n_2)}_{m_2},\ L_1\sqcup L_2=\{1,\cdots,m\}| 1\in L_1,m\in L_2 \\
&\ g_1+g_2=g, m_1+m_2=m, n_1+n_2=n, 
|h_1(G_1)|=k_1, |h_m(G_2)|=k_2   \}.
\end{align*}
Further given $1\leq i<j\leq m$ we denote 
\begin{align*}
\widetilde{B}_{k_1,k_2}^{g,n}:=&\{(G,f)\in \widetilde{\Gamma(g,n)}_m\ |\  |h_{i}|=k_1,|h_{j}|=k_2\}\\
\widetilde{C}_{k_1,k_2}^{g,n}:=&\{(G_1,f_1)\in \widetilde{\Gamma(g_1,n_1)}_{m_1},(G_2,f_2)\in \widetilde{\Gamma(g_2,n_2)}_{m_2},\ L_1\sqcup L_2=\{1,\cdots,m\}| i\in L_1,j\in L_2 \\
&\ g_1+g_2=g, m_1+m_2=m, n_1+n_2=n, 
|h_i(G_1)|=k_1, |h_j(G_2)|=k_2   \}
\end{align*}
\end{df}
Obviously we have that \begin{equation}\label{obv2}
{B}_{k_1,k_2}^{g,n}\cong\widetilde{B}_{k_1,k_2}^{g,n}
\end{equation}
\begin{equation}\label{obv}
{C}_{k_1,k_2}^{g,n}\cong\widetilde{C}_{k_1,k_2}^{g,n}
\end{equation}
by swapping $1$ with $i$ and $m$ with $j$.
\begin{theorem}\label{gt}
   Fix $0\leq g$, $0<n$ and $k_1,k_2\in\mathbb{N}$. Then there is a bijection 
   \begin{equation}\label{wfegtffv}
    \psi: A_{k_1,k_2}^{g,n}\cong B_{k_1,k_2}^{g,n}\cup C_{k_1,k_2}^{g,n}.
    \end{equation}
\end{theorem}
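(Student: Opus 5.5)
The bijection $\psi$ will be the vertex-splitting of Construction \ref{Cons1}, and its inverse will be the operation of merging the vertices marked $1$ and $m$ into a single vertex (or, in the disconnected case, gluing the two components at these vertices). The first step is to define $\psi$ on representatives. Let $(G,f)\in\widetilde{\Gamma(g,n)}_{m-1}$ and let $I\sqcup J=h_1$ with $|I|=k_1$, $|J|=k_2$. Form $G^{I,J}$: the vertex $f(1)$ keeps the half-edges $I$, a new vertex receives $J$, and we mark the new vertex $m$ while all other markings stay the same. The loop-defect labels need a convention. The natural one is that the vertex carrying $I$ keeps $g(f(1))$ and the new vertex gets label $0$; I would fix whichever convention matches the $\gamma$-weighting used in Lemma \ref{key}.

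Next, check that the target is correct by comparing Betti numbers. If $G^{I,J}$ is connected, then by \eqref{rank} the rank of $H_1$ drops by exactly one, since $|e|$ is unchanged and $|v|$ increases by one. With the convention above, the Betti number \eqref{betti2} then changes by $-1$. So for $\psi$ to land in $B^{g,n}_{k_1,k_2}$ we must either add $1$ to a label or shift which set it maps into. This mismatch, $g$ versus $g-1$, is exactly where the extra $\gamma$ term in Lemma \ref{key} comes from. I would therefore reread the intended indices and let the connected case land in $B$ with the appropriate genus index.

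If $G^{I,J}$ disconnects into $G^{I}$ and $G^{J}$, then $H_1$ splits additively, $g_1+g_2=g$ and $n_1+n_2=n$. The partition $L_1\sqcup L_2$ records which original marks lie in which component. We then re-mark each component by the order-preserving bijection onto $\{1,\dots,m_1\}$ and $\{1,\dots,m_2\}$, with $1\in L_1$ and $m\in L_2$. This places the image in $C^{g,n}_{k_1,k_2}$.

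The third step is to show that $\psi$ is well defined and injective on classes. Well-definedness follows from the way $\sim$ and $P^{k_1,k_2}_{(G,f)}$ are defined: two partitions are identified exactly when they produce isomorphic marked graphs, respectively isomorphic pairs of marked components. The quotient $/\sim$ is built precisely so that injectivity holds in each fibre over a fixed $(G,f)$.

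Surjectivity comes from the inverse map $\phi$. Given $(G',f')\in B$, merge the vertices $f'(1)$ and $f'(m)$ into one vertex marked $1$. The partition of its half-edges is then $I=h_1(G')$ and $J=h_m(G')$, and the labels add. Given an element of $C$, take the disjoint union of the two components and merge their vertices $1$ and $m$, recovering the original marking from $L_1$ and $L_2$. Checking $\phi\circ\psi=\mathrm{id}$ and $\psi\circ\phi=\mathrm{id}$ on isomorphism classes is formal. It also shows injectivity across different $(G,f)$: if two pairs have isomorphic images, merging returns isomorphic marked graphs whose partitions correspond under the isomorphism, which is exactly the relation $\sim$.

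The step I expect to be the main obstacle is the bookkeeping of isomorphisms and automorphisms. Marked graphs have trivial vertex automorphisms, but they can still have nontrivial automorphisms permuting half-edges, such as loops or multiple edges at vertex $1$. I have to confirm that the relation $\sim$ on partitions matches isomorphism of the split graphs exactly, neither over- nor under-identifying. In particular, a merged graph $G$ has a well-defined class, so distinct splittings related by an automorphism of $(G,f)$ must be identified, and this is precisely what $\sim$ encodes. I would settle this first with stars and single-loop examples before writing the general argument.
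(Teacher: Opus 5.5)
Your split/merge construction is the one the paper uses: Construction \ref{Cons1} in one direction, and identifying the vertices marked $1$ and $m$ in the other. Your genus observation is also right. A connected splitting keeps $|e|$ and raises $|v|$ by one, so by \eqref{rank} it lands in Betti number $g-1$. The statement should therefore read $A^{g,n}_{k_1,k_2}\cong B^{g-1,n}_{k_1,k_2}\cup C^{g,n}_{k_1,k_2}$. This is exactly the source of the extra $\gamma$ in \eqref{ia1} in the proof of Theorem \ref{BVinf}, so reindexing is the correct fix. Adding $1$ to a label is not, since labels must match the $\gamma$-degrees of the inputs.

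\textbf{The gap: the loop-defect labels at the split vertex.} Your inverse $\phi$ adds the labels of vertices $1$ and $m$, which forgets how $g(f(1))$ was distributed. So no single convention for $\psi$ can give $\psi\circ\phi=\mathrm{id}$ on $B\cup C$ as defined, because there these two labels are arbitrary.
\begin{itemize}
\item With your convention (new vertex labelled $0$), nothing with a positive label at vertex $m$ is in the image.
\item Concretely, take $m=2$. The one-vertex graph with label $1$ and $n=k_1+k_2$ leaves gives a single element of $A$. But two elements of $C$ merge onto it: a $k_1$-leaf vertex and a $k_2$-leaf vertex with labels $(1,0)$, and the same pair with labels $(0,1)$.
\end{itemize}
So the claim that both composites are the identity is not formal; it fails.

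\textbf{The repair is to fix the labels as well.} For $a,b\ge 0$:
\begin{itemize}
\item let $A$ consist of pairs $((G,f),[I,J])$ with $g(f(1))=a+b$;
\item let $B$ and $C$ consist of graphs whose vertex $1$ has valency $k_1$ and label $a$, and whose last vertex has valency $k_2$ and label $b$;
\item split by putting $a$ on the $I$-vertex and $b$ on the $J$-vertex, and merge by adding.
\end{itemize}
This refined version is also exactly what Theorem \ref{BVinf} needs. There $K_{(G,f)}$ vanishes unless the vertex labels equal the $\gamma$-degrees of the inputs, which forces the labels at vertices $1$ and $m$ to be those of $x_i$ and $x_j$.

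\textbf{A related point about $\sim$,} since you name it as the main obstacle. In the disconnected case, $\sim$ must compare the components with their original marks, or equivalently remember $L_1,L_2$. Comparing re-marked components fails already for a tree in which vertex $1$ is joined by single edges to vertices $2$ and $3$, each carrying two leaves, with $k_1=k_2=1$. The two splittings $I=\{\text{edge to }2\}$ and $I=\{\text{edge to }3\}$ give isomorphic re-marked components, so they would be identified. Yet $\psi$ sends them to different elements of $C$, with $L_1=\{1,2\}$ versus $L_1=\{1,3\}$.

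The paper's proof is silent on both points. Your explicit label convention, however, is what makes the first one fail concretely.
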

\begin{proof}
    We have already described in construction \ref{Cons1} how given an element of $A_{k_1,k_2}^{g,n}$ we produce (a) graph(s) on the right hand side. In the connected case we also described the marking. In the disconnected case we simply use the old marking, but reorder according to size. Further in the disconnected case the partition $L_1\sqcup L_2=\{1,\cdots,m\}$ is obtained by remembering which of the previous marked vertices are now part of $G_1$ (including the old first one) and which of the previous marked vertices are now part of $G_2$ (including the new last one, which was obtained by splitting the old first one into two).
    
    Given an element in the right hand side (be it connected or not), whose total number of vertices is $m$, we send it to the graph determined by declaring the first and $m$-th vertex of the initial graph(s) to coincide and which we number by $1$, then.
    In the case the graph came from $B$, i.e. was connected, it is straightforward to number the remaining vertices of the resulting new graph.
    In the case the graph comes from $C$ we use the partition $L_1\sqcup L_2=\{1,\cdots,m\}$ which satisfied the condition that $ 1\in L_1,m\in L_2$ to mark the remaining vertices of the resulting new graph. To define the partition into two sets of the half-edges of the `new' vertex of $\psi^{-1}(G,f)$ we remember which ones came from the first vertex of $(G,f)$ and which came from the $m$-th vertex of $(G,f)$. 
    
    Finally it is straightforward to check that these two assignments are inverse to each other.
\end{proof}
\subsubsection{The Proof of Theorem K continued}
In turn theorem \ref{gt} allows us to prove following key technical theorem:
\begin{theorem}\label{BVinf}
For $m>1$ and $x_1,\cdots, x_m\in \mathcal{F}^{c}(\mathcal{C})[6-2d]$ and $1\leq i\neq j\leq m$ we have that\footnote{Here $(-1)^*=\epsilon(i,j,1,..,\hat{i},..,\hat{j},.., n)$ and $(-1)^\#=\epsilon(L_1,L_2)$.}
\begin{align*}
&(-1)^*\mathcal{K}_{m-1}\big((x_i\cdot x_j)\odot x_1\odot\ldots \hat{x_i}\ldots\hat{x_j}\ldots\odot x_m\big)\\
=&\gamma\mathcal{K}_m(x_1\odot\cdots \odot x_m)+\sum_{\substack{L_1\sqcup L_2=[m] \\  i\in L_1,j\in L_2}}(-1)^\#\big(\mathcal{K}_{|L_1|}(\bigodot_{i\in L_1} x_i)\cdot\mathcal{K}_{|L_2|}(\bigodot_{i\in L_2} x_i)\big)
\end{align*}
\end{theorem}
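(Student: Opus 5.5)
By the symmetry \eqref{obv2}, \eqref{obv} and the $S_m$-equivariance of the Taylor coefficients, I will reduce to $i=1$, $j=m$. I will then expand both sides of the identity using the Feynman rule \ref{Feynm}, so that the identity becomes an equality of sums over graphs and can be checked term by term.

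First I unpack the left-hand side. By definition, $\mathcal{K}_{m-1}\big((x_1\cdot x_m)\odot x_2\odot\cdots\odot x_{m-1}\big)$ is a sum over marked graphs $(G,f)\in\widetilde{\Gamma(g,n)}_{m-1}$, weighted by $1/|Aut(G,f)|$. The first vertex is decorated by the desymmetrization $\widetilde{x_1x_m}$. I restrict $x_1$ to $Sym^{k_1}(\mathcal{H})\gamma^{g_1}$ and $x_m$ to $Sym^{k_2}(\mathcal{H})\gamma^{g_m}$. The key observation is that $\widetilde{x_1x_m}$ is a sum over partitions $I\sqcup J=h_1$ with $|I|=k_1$, $|J|=k_2$, placing $\widetilde{x_1}$ on $I$ and $\widetilde{x_m}$ on $J$, up to a combinatorial factor $k_1!k_2!/(k_1+k_2)!$ coming from desymmetrizing. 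The loop defect of the merged vertex is $g_1+g_m$. Grouping the partitions into $\sim$-classes gives a sum over $A^{g,n}_{k_1,k_2}$. For each class, the Feynman amplitude of $(G,f)$ with the split decoration is exactly the amplitude of $\psi(G,f,I,J)$: the edges are contracted with $H^{sym}_s$ in the same way, and the leaves are read off the same way.

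Next I apply the bijection $\psi$ of Theorem \ref{gt}, which splits the left-hand side into two parts.

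The $B$-part consists of connected graphs with $m$ vertices. The loop defects of the split vertex add up to $g_1+g_m$, and splitting a vertex raises $|v|$ by one without changing the edges, so the $H_1$ rank drops by one. The total betti number of the new graph is therefore one less than before, which accounts for the factor $\gamma$. This part reproduces $\gamma\mathcal{K}_m(x_1\odot\cdots\odot x_m)$.

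The $C$-part consists of pairs of connected graphs together with a partition $L_1\sqcup L_2$ with $1\in L_1$, $m\in L_2$. Here the amplitude factorizes as a product of the two amplitudes, combined by the multiplication $m_c$, and the betti numbers add. This gives $\sum\mathcal{K}_{|L_1|}\cdot\mathcal{K}_{|L_2|}$.

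The main obstacle will be the weights. The factor $1/|Aut(G,f)|$, summed over a $\sim$-class of partitions, must match the factor $1/|Aut|$ on the right-hand side; in the disconnected case the right-hand side carries $1/(|Aut(G_1,f_1)||Aut(G_2,f_2)|)$. I would handle this with an orbit–stabilizer argument. $Aut(G,f)$ acts on the set of partitions of $h_1$, and partitions that are $\sim$-equivalent are exactly those in one orbit. The stabilizer of $(I,J)$ is identified with $Aut(\psi(G,f,I,J))$, with the marking fixing vertices $1$ and $m$. Counting partitions with the desymmetrization normalization then shows that each class contributes $1/|Aut(\psi(\cdot))|$.

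The remaining work is sign bookkeeping. The Koszul signs $(-1)^*$ and $(-1)^\#$ should arise from reordering tensor factors. They match because the decorations attached to vertices have degrees for which moving whole vertex tensors past each other is governed by the same $\epsilon$, and because $H^{sym}_s$ is even, so the contractions introduce no extra signs.
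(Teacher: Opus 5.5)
Your route is the paper's. You expand by Rule~\ref{Feynm} and split the decoration $\widetilde{x_1x_m}$ of the merged vertex over partitions $I\sqcup J=h_1$. You then pass to classes indexed by $A^{g,n}_{k_1,k_2}$ and apply the bijection $\psi$ of Theorem~\ref{gt}: the $\gamma$ on the $B$-part comes from the drop in the rank of $H_1$, and the $C$-part factorizes. Finally you relabel via \eqref{obv2}, \eqref{obv}. Your orbit--stabilizer treatment of the weights is more explicit than the paper, which absorbs the automorphism factors into its $\widetilde{ev}$ notation (\eqref{id1}, \eqref{se}).

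\textbf{The error: there is no factor $k_1!\,k_2!/(k_1+k_2)!$.} With the unnormalized desymmetrization \eqref{desym}, the sum over $\Sigma_{k_1+k_2}$ splits as (shuffles)$\,\times\,\Sigma_{k_1}\times\Sigma_{k_2}$. So $\widetilde{x_1x_m}$ is exactly the sum, over all $(I,J)$ with $|I|=k_1$, of $\widetilde{x_1}$ placed on $I$ and $\widetilde{x_m}$ placed on $J$. Both sides have $\binom{k_1+k_2}{k_1}k_1!\,k_2!=(k_1+k_2)!$ terms. For instance, for $k_1=k_2=1$ the two partitions give $a\otimes b$ and $(-1)^{|a||b|}b\otimes a$, whose sum is $\widetilde{ab}$ itself.

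Because every partition enters with coefficient $1$, orbit--stabilizer gives
$\frac{1}{|Aut(G,f)|}\cdot\frac{|Aut(G,f)|}{|Aut(\psi(G,f,I,J))|}=\frac{1}{|Aut(\psi(G,f,I,J))|}$, which is what you need. If you carried your factor through, your argument would instead prove $\mathrm{LHS}=\frac{k_1!k_2!}{(k_1+k_2)!}\,\mathrm{RHS}$. That is already false for $m=2$ with $x_1=a$, $x_2=b$ of length one and no $\gamma$. There both sides equal $ab+\gamma H^{sym}_s(a,b)$: on the left this comes from the two-leaf graph and the one-loop graph, each with $|Aut|=2$. So drop the factor; your final count is then consistent and correct.

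\textbf{A smaller point about $\sim$.} Define the classes directly as $Aut(G,f)$-orbits, rather than asserting that they coincide with the paper's $\sim$. Read literally, in the disconnected case $\sim$ compares $G^{I}$ and $G^{J}$ after re-marking by order, and so forgets which old vertices land in which component. Take $G$ to be vertex $1$ joined by single edges to vertices $2$ and $3$, with $h_1=\{p,q\}$. The two partitions are then $\sim$-equivalent, yet $Aut(G,f)$ is trivial. They correspond to the two distinct elements of $C^{0,0}_{1,1}$ with $L_1=\{1,2\}$ and $L_1=\{1,3\}$, and both are needed on the right-hand side. Your orbit formulation handles this correctly, and it is the version of $\sim$ that makes $\psi$ a bijection.
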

\begin{proof}
By definition we have 
\begin{align*}
&(-1)^*\mathcal{K}_{m-1}\big((x_i\cdot x_j)\odot x_1\odot\ldots \hat{x_i}\ldots\hat{x_j}\ldots\odot x_m\big)\\=&\sum_{g,n} \sum_{ (G,f)\in \widetilde{\Gamma(g,n)}_{m-1}}(-1)^*
  \frac{1}{|Aut(G,f)|}\cdot K_{(G,f)}\big((x_i\cdot x_j),x_1,\ldots, \hat{x_i},\ldots,\hat{x_j},\ldots, x_m\big)\\
  =&:E
  \end{align*}

We denote by $\widetilde{ev}_{G,f}(x_1,\cdots,x_m)$ the operation of \textit{contracting exactly once the letters of the symmetric words} $\{x_l\}_{l=1,\cdots,m}\in\mathcal{F}^c(\mathcal{C})[6-2d]$ with each other, using $H^{sym}$ and according to the datum given by a marked graph $(G,f)$ having $m$ vertices, weighted appropriately by $\gamma$. This gives the identification 
\begin{equation}\label{id1}
\widetilde{ev}_{(G,f)}(x_1,\cdots,x_m)=\frac{1}{|Aut(G,f)|}\cdot K_{(G,f)}(x_1,\cdots,x_m),\end{equation}
recalling definition \ref{Feynm}.
We can ask whether we can re-express in a similar way
\begin{equation}\label{summe}
(-1)^*\frac{1}{|Aut(G,f)|}\cdot K_{(G,f)}(x_i\cdot x_j,x_1,\cdots,\hat{x_i},\cdots,\hat{x_j},\cdots,x_m),\end{equation}
where $x_i\in Sym^{k_1}(\cdots)$ and $x_j\in Sym^{k_2}(\cdots)$.
Indeed, again recalling the definition \eqref{Feynm}, this term equals a sum which we can naturally index by $(I,J)\in P_{(G,f)}^{(k_1,k_2)}$ and whose summands we suggestively denote by 
\begin{equation}\label{se}
(-1)^*\widetilde{ev}_{(G,f,I,J)}(x_i, x_j,x_1,\cdots,\hat{x_i},\cdots,\hat{x_j},\cdots,x_m).\end{equation}
\begin{remark}
We will give a precise meaning to \eqref{se} in equations \eqref{ia1} and \eqref{ia2}.
\end{remark}For the moment we repeat that 
\begin{align*}
\sum_{I,J\in P_{(G,f)}}(-1)^*\widetilde{ev}_{(G,f,I,J)}&(x_i, x_j,x_1\cdots,\hat{x_i},\cdots,\hat{x_j},\cdots,x_m)\label{fs}\\
&=(-1)^*\frac{1}{|Aut(G,f)|}\cdot K_{(G,f)}(x_i\cdot x_j,x_1\cdots,\hat{x_i},\cdots,\hat{x_j},\cdots,x_m)\nonumber
\end{align*}
Thus we can write
\begin{equation}\label{E}
E=\sum_{g,n} \sum_{ (G,f)\in \widetilde{\Gamma(g,n)}_{m-1}}\sum_{I,J\in P_{(G,f)}}(-1)^*\widetilde{ev}_{(G,f,I,J)}(x_i, x_j,x_1,\cdots,\hat{x_i},\cdots,\hat{x_j},\cdots,x_m).\end{equation}
\begin{obs}
We note that we can re-express the summands \eqref{se} of \eqref{summe}, applying the rule \ref{Feynm}, as
\begin{align}\label{ia1}
(-1)^*\widetilde{ev}_{(G,f,I,J)}&(x_i,x_j,x_1,\cdots,\hat{x_i},\cdots,\hat{x_j},\cdots,x_m)\nonumber\\=&\gamma\epsilon(i,1,..,\hat{i},..,\hat{j},.., n,j)\widetilde{ev}_{\psi(G,f,I,J)}(x_i,x_1\cdots,\hat{x_i},\cdots,\hat{x_j},\cdots,x_m,x_j)\end{align}
if $\psi(G,f,I,J)\in B_{(k_1,k_2)}^{g,n}$. Here the right hand side is defined in the standard way by rule \ref{id1}.
The additional $\gamma$ factors in \eqref{ia1} arise from the fact that the appearing graphs on the right hand side have the same number of total edges, but one more vertex, thus decreasing the betti number by one as compared to the appearing graphs appearing on the left hand side (recalling definition \ref{betti}).

Similarly we have 
\begin{align}
&(-1)^*\widetilde{ev}_{(G,f,I,J)}(x_i, x_j,x_1,\cdots,\hat{x_i},\cdots,\hat{x_j},\cdots,x_m)\nonumber \\
=&\widetilde{ev}_{(G_1,f_1)}( \_,\cdots,\_)\cdot \widetilde{ev}_{(G_2,f_2)}(\_,\cdots,\_,)(-1)^\sigma(L_1\sqcup L_2)_*(x_i,x_j,x_1,\cdots,\hat{x_i},\cdots,\hat{x_j},\cdots,x_m) \label{ia2}
\end{align}
if $\psi(G,f,I,J)=:\Big((G_1,f_1),(G_2,f_2),L_1,L_2\Big)\in C_{k_1,k_2}^{g,n}$, recalling the notations from definition \ref{grdf}.
In \eqref{ia2} we plug the $x_l$'s belonging to $L_1$, that is such that $l\in L_1$ (among them $x_i$ at the first place) into the first factor and the $x_l$'s belonging to $L_2$ (among them $x_j$ at the last place) into the second factor, indicated by $(L_1\sqcup L_2)_*$.
The sign $(-1)^\sigma$ arises from the resulting Koszul sign of the permutation of the $x_l$'s. There is no additional $\gamma$ factors in \eqref{ia2} since on the right hand side we both increase the number of total vertices by one, but also the number of connected components from one to two, thus the total betti number stays the same.
\end{obs}
Next we note that the indexing set of the second and third sum in \eqref{E} is in bijection with the set $A_{k_1,k_2}^{g,n}$. Theorem \ref{gt} tells us that this set is in bijection with $B_{k_1,k_2}^{g,n}\cup C_{k_1,k_2}^{g,n}$ and equations \eqref{ia1} and \eqref{ia2} imply that the evaluation maps are compatible with this bijection. Thus we have  
\begin{align*}
E=&\sum_{g,n}\sum_{B^{g,n}_{k_1,k_2}}\epsilon(i,1,..,\hat{i},..,\hat{j},.., n,j)\widetilde{ev}_{(G,f)}(x_i,x_1\cdots,\hat{x_i},\cdots,\hat{x_j},\cdots,x_m,x_j)\\
+\sum_{g,n}&\sum_{C^{g,n}_{k_1,k_2}}\widetilde{ev}_{(G_1,f_1)}(\_, ,\cdots,\_)\cdot \widetilde{ev}_{(G_2,f_2)}(\_,,\cdots,\_)(-1)^\sigma(L_I\sqcup L_J)_*(x_i,x_j,x_1,\cdots,\hat{x_i},\cdots,\hat{x_j},\cdots,x_m).
\end{align*}
Finally using the identification \eqref{obv2} and the fact that the evaluation maps are compatible with that identification we rewrite the first summand
\begin{align*}
E=&\gamma\sum_{g,n}\sum_{\widetilde{B}^{g,n}_{k_1,k_2}}\widetilde{ev}_{(G,f)}(x_1,\cdots ,x_m)\\
+&\sum_{g,n}\sum_{\widetilde{C}^{g,n}_{k_1,k_2}}\widetilde{ev}_{(G_1,f_1)}(\_,\cdots,\_)\cdot \widetilde{ev}_{(G_2,f_2)}(\_, \cdots,\_)(-1)^{\widetilde\sigma}(L_1\sqcup L_2)_*(x_1,\cdots,x_m).
\end{align*}
and where we have rewritten the second summand using identification \eqref{obv} and the fact that the evaluation maps are compatible with that. Now using again \eqref{id1} we deduce that
\begin{align*}
E=\gamma\sum_{g,n}\sum_{ (G,f)\in \widetilde{\Gamma(g,n)}_m}
  \frac{1}{|Aut(G,f)|}\cdot& K_{(G,f)}(x_1,\cdots,x_m)+\\
\sum_{g,n}\sum_{\substack{n_1+n_2=n\\g_1+g_2=g\\
m_1+m_2=m}}\sum_{ \substack{G_1\in {\Gamma(g_1,n_1)}_{m_1}\\
G_2\in {\Gamma(g_2,n_2)}_{m_2}}}&\sum_{\substack{L_1\sqcup L_2=[m] \\  i\in L_1,j\in L_2}}
  \\
  \frac{1}{|Aut(G_1)|\cdot |Aut(G_2)|}&\big(\mathcal{K}_{|L_1|}(\_,\cdots,\_)\cdot\mathcal{K}_{|L_2|}(\_,\cdots,\_)\big)(-1)^{\widetilde{\sigma}}(L_1\sqcup L_2)_*(x_1,\cdots,x_m)
  \end{align*}
which by definition is equal to  
  $$=\gamma\mathcal{K}_m(x_1\odot\cdots \odot x_m)+\sum_{\substack{L_1\sqcup L_2=[m] \\  i\in L_1,j\in L_2}}\epsilon(L_1,L_2)\big(\mathcal{K}_{|L_1|}(\bigodot_{i\in L_1} x_i)\cdot\mathcal{K}_{|L_2|}(\bigodot_{i\in L_2} x_i)\big),$$
  which is what we wanted to show.
\end{proof}
\begin{lemma}\label{easy}
For all $m>1$ and $(x_1\otimes y_1)\odot\cdots \odot (x_m\otimes y_m)\in Sym^m(\mathcal{F}^{c}(\mathcal{C})\otimes W[6-2d])$ we have\footnote{Here $(-1)^{*_1}=(-1)^{|y_i|}$ and $(-1)^{*_2}=(-1)^{\sum_{i\in L_1}|y_i|}$} 

\begin{align*}
&\sum_{i\neq j}\sum_{\substack{L_1\sqcup L_2=[m]\\ i\in L_1,j\in L_2}}\big(\mathcal{K}_{|L_1|}(\bigodot_{i\in L_1} x_i)\cdot\mathcal{K}_{|L_2|}(\bigodot_{i\in L_2} x_i)\big)\otimes(-1)^{*_1}\big(\{y_i,y_j\}_w\cdot  y_1\cdot\ldots \hat{y_i}\ldots\hat{y_j}\ldots\cdot y_m\big)\\
&=\sum_{\substack{L_1\sqcup L_2=[m] \\ |L_1|,|L_2|\geq 1}}\big(\mathcal{K}_{|L_1|}(\bigodot_{i\in L_1} x_i)\cdot\mathcal{K}_{|L_2|}(\bigodot_{i\in L_2} x_i)\big)\otimes (-1)^{*_2}\{\bigodot_{i\in L_1} y_i,\bigodot_{j\in L_2} y_j\}_w
\end{align*}
\end{lemma}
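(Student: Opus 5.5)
The identity in Lemma~\ref{easy} is purely combinatorial. It concerns only the $W$-factor, namely the fact that $\{\_,\_\}_w$ is a shifted Poisson bracket satisfying the Leibniz rule. The $x$-factors, i.e.\ the products $\mathcal{K}_{|L_1|}(\bigodot_{L_1}x)\cdot\mathcal{K}_{|L_2|}(\bigodot_{L_2}x)$, appear identically on both sides once we reorganize the index sets. So I would start from the right-hand side and expand the bracket $\{\prod_{i\in L_1}y_i,\prod_{j\in L_2}y_j\}_w$ using the Leibniz rule twice, once in each slot. This expresses it as
\[
\sum_{i\in L_1}\sum_{j\in L_2}\pm\,\{y_i,y_j\}_w\cdot y_1\cdots\hat{y_i}\cdots\hat{y_j}\cdots y_m .
\]
Because $W$ is graded commutative, the remaining $y$'s can be collected into one product, at the cost of Koszul signs.

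Substituting this expansion, the right-hand side becomes a sum over triples $(L_1,L_2,i,j)$ with $i\in L_1$ and $j\in L_2$. Such triples are in bijection with pairs $(i\neq j)$ together with a partition $L_1\sqcup L_2=[m]$ with $i\in L_1$ and $j\in L_2$. The condition $|L_1|,|L_2|\geq1$ holds automatically, since $L_1$ contains $i$ and $L_2$ contains $j$. Exchanging the order of summation then produces exactly the index set of the left-hand side. The two sides therefore agree term by term, up to signs.

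The main work, and the step I expect to be the obstacle, is sign bookkeeping. First, I would fix the Koszul sign for moving $\prod_{L_1}y$ inside the odd bracket; this is the factor $(-1)^{*_2}=(-1)^{\sum_{i\in L_1}|y_i|}$. Second, the Leibniz rule for a degree-$(2d-5)$ bracket produces signs from passing the bracket past $y$'s to the left of $y_i$ and of $y_j$. Third, graded-commutative reordering gives a sign to bring $\{y_i,y_j\}$ to the front. I would check that the product of these three contributions equals $(-1)^{*_1}=(-1)^{|y_i|}$, combined with the Koszul signs implicit in the tensor notation for permuting the factors $(x_l\otimes y_l)$.

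The cleanest way to organize the check is to work first with all $y_l$ even, where every sign is trivial and the identity is an evident regrouping. The general case then follows by the standard argument that both sides are obtained by applying the same multilinear operation composed with the same permutation of the odd letters, since the shifted bracket has odd degree for odd $d$. Finally, I would observe that the parity of $2d-5$ makes the shift of $\{y_i,y_j\}_w$ consistent with the prescribed factor $(-1)^{|y_i|}$.
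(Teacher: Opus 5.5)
Your proposal is correct and follows the paper's route: the paper also expands $(-1)^{\sum_{i\in L_1}|y_i|}\{\prod_{i\in L_1}y_i,\prod_{j\in L_2}y_j\}_w$ by the Leibniz rule as $\sum_{i\in L_1,j\in L_2}(-1)^{|y_i|}\{y_i,y_j\}_w\cdot\prod_{l\neq i,j}y_l$, substitutes this into the right-hand side, and regroups via the bijection of index sets $(L_1,L_2,i,j)\leftrightarrow(i\neq j,\;L_1\ni i,\;L_2\ni j)$ that you describe. Your sign bookkeeping, checking the even case first and then appealing to the Koszul rule, spells out what the paper's one-line proof leaves implicit.
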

\begin{proof}
This follows directly from plugging the identity
$$(-1)^{\sum_{i\in L_1}|y_i|}\{\bigodot_{i\in L_1} y_i,\bigodot_{j\in L_2} y_j\}_w=\sum_{i\in L_1,j\in L_2}(-1)^{|y_i|}\{y_i,y_j\}_w\cdot  y_1\cdot\ldots \hat{y_i}\ldots\hat{y_j}\ldots\cdot y_m,$$
which follows from the Leibniz rule for $W$, into the second line, which gives the first line.
\end{proof}
Now we can easily give the proof of the key lemma \ref{key}:
\begin{proof}
For $m>1$ and $(x_1\otimes y_1)\odot\cdots \odot (x_m\otimes y_m)\in Sym^m(\mathcal{F}^{c}(\mathcal{C})\otimes W[6-2d])$ we have\footnote{Here the signs $a$, $b$, $c$ are as in the statement of lemma \ref{key}. Further $(-1)^d=(-1)^{\sum_{i\in L_1}|y_i|}$ times the Koszul sign of permuting the $x$'s to the left and $y$'s to right.} 
\begin{align*}
&\sum_{ i\neq j}(-1)^{*_a}\mathcal{K}_{m-1}\big((x_i\cdot x_j)\odot x_1\odot\ldots \hat{x_i}\ldots\hat{x_j}\ldots\odot x_m\big)\otimes\big(\{y_i,y_j\}_w\cdot  y_1\cdot\ldots \hat{y_i}\ldots\hat{y_j}\ldots\cdot y_m\big) \\
=&\sum_{i\neq j}(-1)^{*_b}\gamma\mathcal{K}_m(x_1\odot\cdots \odot x_m)\otimes\big(\{y_i,y_j\}_w\cdot  y_1\cdot\ldots \hat{y_i}\ldots\hat{y_j}\ldots\cdot y_m\big)\\
&+\sum_{i\neq j}\sum_{\substack{L_1\sqcup L_2=[m] \\  i\in L_1,j\in L_2}}(-1)^{*_d}\big(\mathcal{K}_{|L_1|}(\bigodot_{i\in L_1} x_i)\cdot\mathcal{K}_{|L_2|}(\bigodot_{i\in L_2} x_i)\big)\otimes\big(\{y_i,y_j\}_w\cdot  y_1\cdot\ldots \hat{y_i}\ldots\hat{y_j}\ldots\cdot y_m\big)\\
=&\sum_{i\neq j}(-1)^{*_b}\gamma\mathcal{K}_m(x_1\odot\cdots \odot x_m)\otimes\big(\{y_i,y_j\}_w\cdot  y_1\cdot\ldots \hat{y_i}\ldots\hat{y_j}\ldots\cdot y_m\big)\\
&+\sum_{\substack{L_1\sqcup L_2=[m] \\ |L_1|,|L_2|\geq 1}}(-1)^{*_c}\big(\mathcal{K}_{|L_1|}(\bigodot_{i\in L_1} x_i)\cdot\mathcal{K}_{|L_2|}(\bigodot_{i\in L_2} x_i)\big)\otimes \{\bigodot_{i\in L_1} y_i,\bigodot_{j\in L_2} y_j\}_w
\end{align*}
Here the first equality follows from theorem \ref{BVinf} (to be more precise from multiplying the equality stated by $(-1)^{|y_i|}$ times the Koszul sign from permuting in $(x_1\otimes y_1)\odot\cdots \odot (x_m\otimes y_m)$ the $x$'s to the left and $y$'s to right ). The second equality follows from lemma \ref{easy} (again multiplied by that same Koszul sign).
\end{proof}  
With this we have now proven theorem \ref{cwd}; let us quickly recall why:
\begin{Zusa}\label{Zusa}
As explained (beneath its statement) proving theorem \ref{cwd} is equivalent to proving equation \ref{inter}. By equations \ref{BDr} and \ref{KiL} proving equation  \ref{inter} is equivalent to proving lemma \ref{key}. Finally theorem \ref{BVinf} and the easy lemma \ref{easy} together give a proof of lemma \ref{key}, as we just explained above.
\end{Zusa}
A direct corollary is:
\begin{corollary}\label{ec}
Given a dimension $d$ cyclic $A_\infty$-category $\mathcal{C}$, a splitting $s$ of the non-commutative Hodge filtration and $\Lambda,$ a full subcategory of $\mathcal{C}$. Then there is an $L_\infty$ quasi-isomorphism
$$\mathcal{K}_s\otimes m:\ \mathcal{F}^{c}(\mathcal{C})\otimes \mathcal{F}^{o}(\Lambda)[5-2d]\rightsquigarrow \mathcal{F}^{c}(\mathcal{C})^{triv}\otimes \mathcal{F}^{o}(\Lambda)[5-2d].$$
\end{corollary}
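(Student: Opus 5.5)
This is a direct specialization of theorem \ref{cwd}. I would take $W:=\mathcal{F}^{o}(\Lambda)$, the open BD algebra of definition \ref{oBD}. Here $V_B$ is the collection of cyclic chain complexes underlying the full subcategory $\Lambda\subset\mathcal{C}$. The first step is to check that $W$ really is a $(2d-5)$-twisted BD algebra over $k\llbracket\gamma\rrbracket$. This follows from theorem \ref{nc_pq} together with the identification $\mathcal{F}^o(V_B)=\mathcal{F}^{pq}(V_B)[d-3]$ in definition \ref{oBD}, where $\gamma$ is given degree $6-2d$. One point to verify is that the shift by $d-3$ (even, since $d$ is odd) turns the $(d-2)$-twisted structure, whose multiplication has degree $d-3$, into a $(2d-5)$-twisted structure with degree-zero multiplication. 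This is a routine degree count: the bracket of degree $d-2$ becomes one of degree $(d-2)+(d-3)=2d-5$.

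The second step is to form the tensor products $\mathcal{F}^{c}(\mathcal{C})\otimes\mathcal{F}^{o}(\Lambda)$ and $\mathcal{F}^{c}(\mathcal{C})^{triv}\otimes\mathcal{F}^{o}(\Lambda)$ as $(2d-5)$-twisted BD algebras using lemma \ref{tensorBD}, with the bracket \eqref{tenbra}. Both factors live over $k\llbracket\gamma\rrbracket$, and the same $\gamma$ is used on both sides, as in definition \ref{ysysys}. After forgetting the products and shifting by $5-2d$, these become dg Lie algebras.

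Next I apply theorem \ref{cwd} with this $W$. It gives the coalgebra map $C(\mathcal{K}_s\otimes m)$ of \eqref{cogb}, which intertwines the differentials and hence defines an $L_\infty$-morphism. For the quasi-isomorphism property, the first Taylor component is $\mathcal{K}_s^1\otimes\mathrm{id}$, where $\mathcal{K}_s^1$ is the first component of the $L_\infty$ quasi-isomorphism \eqref{inLqi}. Over a field, the Künneth formula makes $\mathcal{K}_s^1\otimes\mathrm{id}$ a quasi-isomorphism.

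The only real care needed is the completion in $\gamma$. I would handle it by filtering by powers of $\gamma$ and using a spectral-sequence (Eilenberg–Moore) argument, as in corollary \ref{qc12}. I expect no real obstacle beyond matching the grading conventions in the first step.
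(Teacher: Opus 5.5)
Your proposal is correct and is the paper's argument: the paper's proof simply applies Theorem~\ref{cwd} to $W=\mathcal{F}^{o}(\Lambda)$. Just before that theorem the paper already notes that the first Taylor component $\mathcal{K}_s^1\otimes\mathrm{id}$ is a quasi-isomorphism; your extra care with the $\gamma$-filtration goes beyond the paper and works because $\mathcal{K}_s^1=\mathrm{id}+O(\gamma)$, as self-loops in one-vertex graphs raise the $\gamma$-power.

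One small slip in your degree count: a single shift changes the degrees of the product and the bracket by the same amount, so if the bracket goes from $d-2$ to $2d-5$ while the product becomes degree zero, the product on $\mathcal{F}^{pq}(V_B)$ must have degree $3-d$ (as $|\gamma|=1-r+m$ with $|\gamma|=6-2d$, $r=d-2$ forces), not $d-3$ as written in Theorem~\ref{nc_pq}.
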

\begin{proof}
    We apply theorem \ref{cwd} to the $(2d-5)$-twisted BD algebra $\mathcal{F}^{o}(\Lambda)$ from definition \ref{oBD}.
\end{proof}
Given a dimension $d$ cyclic $A_\infty$-category $\mathcal{C}$ and a splitting $s$ there is an isomorphism of induced dg Lie algebras (recalling definitions \ref{cBD_t} and \ref{cBD_Tr})
\begin{equation}\label{R}
R:\mathcal{F}^{c}(\mathcal{C})^{triv}\rightarrow \mathcal{F}^{c}(\mathcal{C})^{Triv},\end{equation}
see beginning of section 9.1 of \cite{CaTu24}, keeping in mind that we denoted eg. $\mathfrak{h}_{\mathcal{C}}^{triv}=\mathcal{F}^{c}(\mathcal{C})^{triv}.$ Thus it follows:
\begin{corollary}\label{maco}
Given a dimension $d$ cyclic $A_\infty$-category $\mathcal{C}$, a splitting $s$ of the non-commutative Hodge filtration and $\Lambda,$ a full subcategory of $\mathcal{C}$. Then there is an $L_\infty$ quasi-isomorphism
$$\Psi_s^{oc}:\ \mathcal{F}^{c}(\mathcal{C})\otimes \mathcal{F}^{o}(\Lambda)[5-2d]\rightsquigarrow \mathcal{F}^{c}(\mathcal{C})^{Triv}\otimes \mathcal{F}^{o}(\Lambda)[5-2d],$$
given by composing the isomorphism \eqref{R} with the $L_\infty$-map from corollary \ref{ec}.
\end{corollary}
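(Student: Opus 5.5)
The map $\Psi_s^{oc}$ is defined as a composite, so the plan is to check that each factor is an $L_\infty$ morphism and that the composite is a quasi-isomorphism. The first factor is the $L_\infty$ quasi-isomorphism
$$\mathcal{K}_s\otimes m:\ \mathcal{F}^{c}(\mathcal{C})\otimes \mathcal{F}^{o}(\Lambda)[5-2d]\rightsquigarrow \mathcal{F}^{c}(\mathcal{C})^{triv}\otimes \mathcal{F}^{o}(\Lambda)[5-2d]$$
from corollary \ref{ec}. That corollary is theorem \ref{cwd} applied to $W=\mathcal{F}^{o}(\Lambda)$, which is a $(2d-5)$-twisted BD algebra by definition \ref{oBD}. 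The second factor is $R\otimes \mathrm{id}$, where $R$ is the isomorphism \eqref{R}. The composite of an $L_\infty$ morphism with a strict isomorphism is again an $L_\infty$ morphism, so its Taylor coefficients are $(R\otimes\mathrm{id})\circ(\mathcal{K}_s\otimes m)^n$.

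The one real point is that $R\otimes\mathrm{id}$ must be a strict morphism of the tensor product dg Lie algebras
$$\mathcal{F}^{c}(\mathcal{C})^{triv}\otimes \mathcal{F}^{o}(\Lambda)[5-2d]\ \longrightarrow\ \mathcal{F}^{c}(\mathcal{C})^{Triv}\otimes \mathcal{F}^{o}(\Lambda)[5-2d].$$
The bracket \eqref{tenbra} on the tensor product involves the commutative product of the closed factor, not only its bracket. The closed brackets vanish on both $\mathcal{F}^{c}(\mathcal{C})^{triv}$ and $\mathcal{F}^{c}(\mathcal{C})^{Triv}$, so only the term $m_c(x_1,x_2)\otimes\{y_1,y_2\}_o$ survives.

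It therefore suffices that $R$ is a map of graded commutative algebras intertwining $d_{Hoch}+uB$ with $d_{Hoch}$, i.e. a map of the underlying BD algebras and not merely of dg Lie algebras. I would check this from the construction at the start of section 9.1 of \cite{CaTu24}. There $R$ is induced by the splitting, concretely by the $u$-linear extension of $s$ (or its inverse), which identifies $(CH_*(\mathcal{C})[u^{-1}],d_{Hoch}+uB)$ with $(CH_*(\mathcal{C})[u^{-1}],d_{Hoch})$. This map is then extended to $Sym(\cdots)\llbracket\gamma\rrbracket$ multiplicatively. A map defined on generators and extended to $Sym$ as an algebra map is automatically multiplicative. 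It intertwines the differentials because both differentials are derivations extended from the generators and $\Delta_c=0$ in both trivial algebras.

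Granting this, one checks directly that $R\otimes\mathrm{id}$ intertwines the differential $d_c^t\otimes 1+1\otimes d_o$ with $d_{Hoch}\otimes 1+1\otimes d_o$, and the surviving bracket term, using \eqref{tenbra}.

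Finally, the first Taylor component of the composite is $(R\circ\mathcal{K}_s^1)\otimes\mathrm{id}$. Since $\mathcal{K}_s^1$ is a quasi-isomorphism by \eqref{inLqi} and $R$ is an isomorphism, this is a quasi-isomorphism by the Künneth theorem over the field $k$. Hence $\Psi_s^{oc}$ is an $L_\infty$ quasi-isomorphism.

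The main obstacle I expect is confirming that $R$ respects the products, since \cite{CaTu24} states $R$ only as an isomorphism of dg Lie algebras. If it is not multiplicative, the fallback is to define $\Psi_s^{oc}$ directly by Taylor coefficients $R\mathcal{K}_s^n(\cdots)\otimes(y_1\cdots y_n)$. One would then rerun the proof of theorem \ref{cwd} with $\mathcal{K}_s$ replaced by $R\circ\mathcal{K}_s$. The key input, theorem \ref{BVinf}, then requires $R$ to commute with products of outputs, so multiplicativity of $R$ is needed in either route.
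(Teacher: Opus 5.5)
Your proposal is correct and takes the same route as the paper: $\Psi_s^{oc}$ is the $L_\infty$ quasi-isomorphism $\mathcal{K}_s\otimes m$ of corollary \ref{ec} (theorem \ref{cwd} with $W=\mathcal{F}^{o}(\Lambda)$), followed by $R\otimes\mathrm{id}$ with $R$ the isomorphism \eqref{R}. The paper states this composite in one line and quotes $R$ only as an isomorphism of dg Lie algebras, so your observation that $R\otimes\mathrm{id}$ respects the tensor-product bracket \eqref{tenbra} only if $R$ is multiplicative fills in a step the paper leaves implicit; this holds for the map of \cite{CaTu24}, which is induced on $Sym$ from the linear identification given by $s$.
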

Thus we have proven theorem K from the introduction.

\newpage
\section{Twisted Holography from Calabi-Yau Categories}\label{TwHoCY}
In this section we elaborate on the circle of ideas presented in the introduction, in section \ref{twhocy}, on how to formulate a version of twisted holography at the level of partition function, given as input datum a Calabi-Yau categories and an object of that category. First, we comment on further fundamental aspects of the open-closed story, citing partly the forthcoming work \cite{AmTu25}. In particular we explain how assumption $(*)$ from the introduction should be satisfied in general, using the existence of a Maurer-Cartan element in the universal open-closed BD algebra.

\smallskip
These ideas are still in preparation and we should warn the reader that for instance details regarding shifts (see remarks \ref{dgsmnggw2} and \ref{dgsmnggw}) still need to be pinned-down.
\subsection{Fundamental Open-Closed Structures}\label{FOCS}
 Our setting in this section is as follows: Let $\mathcal{C}$ be a smooth cyclic $A_\infty$-category and $\Lambda\subseteq\mathcal{C}$ some choice of objects. As explained eg. in definition \ref{ysysys} there is an associated tensor product BD algebra $$\mathcal{F}^c(\mathcal{C})\otimes \mathcal{F}^o(\Lambda).$$
Given such a datum we have seen in theorem \ref{cei} and theorem B that there are maps of induced $L_\infty$-algebras
\begin{equation}\label{isidhb1}
\rho_\mathcal{C}^c:M^c\rightarrow \mathcal{F}^c(\mathcal{C})\end{equation}
respectively 
\begin{equation}\label{isidhb2}
\rho_{{\Lambda}}^o:M^o_\Lambda\rightarrow \mathcal{F}^o(\Lambda)
\end{equation}
where $M^c$ respectively $M^o_\Lambda$ are built from the moduli space of `closed' Riemann surfaces respectively `open' Riemann surfaces (the latter colored by $\Lambda$). There is an open-closed analogue of these BD algebras: From now on we take for simplicity $\Lambda=\{\lambda\}$. The chain complex \begin{equation}\label{ocmsbla}
    \Big(\bigoplus_{stable} C_*(\mathcal{M}_{g,n,b,m_1,\cdots,m_b}^{fr,})_{hS^1},\partial\Big)\end{equation}
computes the ($S^1$-equivariant) cohomology of the moduli space of genus $g$ Riemann surfaces with $n$ framed boundaries and $b$ unframed boundaries each with $m_i\in\mathbb{Z}_{\geq 0}$ marked points, the free boundaries decorated by $\lambda$; we refer to \cite{HVZ08} for details on this and for the following. One can define a BD algebra structure
\begin{equation}\label{docms123}
M^{oc}:=\Big(\bigoplus_{stable} C_*(\mathcal{M}_{g,n,b,m_1,\cdots,m_b}^{fr})_{hS^1}\llbracket\gamma\rrbracket,\partial+\gamma\Delta_c+\gamma\delta+\nabla,\{\_,\_\}_c+\{\_,\_\}_o\Big)
\end{equation} on the chain complex \eqref{ocmsbla}, where $\{\_,\_\}_o$, $\delta$ and $\nabla$ are induced by gluing boundary marked points ($\delta$ gluing together marked points lying on different boundary components and $\nabla$ those which lie on the same boundary component), further $\{\_,\_\}_c$ and $\Delta_c$ twist-glue interior marked points.
Following theorem in writing generalizes the maps \eqref{isidhb1} and \eqref{isidhb2} to the open-closed setting: 
\begin{theorem}[\cite{AmTu25}]\label{ocRep}
Given $\mathcal{C}$ a smooth cyclic $A_\infty$-category and $\lambda\in \mathcal{C}$ there is an open-closed representing map of induced $L_\infty$-algebras
$$\begin{tikzcd}
    M^{oc}\arrow{r}{\rho_{\mathcal{C},\lambda}} &\mathcal{F}^c(\mathcal{C})\otimes \mathcal{F}^o(\lambda).
\end{tikzcd}$$ 
\end{theorem}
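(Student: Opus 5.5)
The plan is to build $\rho_{\mathcal{C},\lambda}$ by combining the two sector maps \eqref{isidhb1} and \eqref{isidhb2}, using the TCFT formalism of section \ref{2dop} as the organising principle.

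First, replace $M^{oc}$ by a combinatorial or operadic model. Following the discussion around proposition \ref{ssgb} and \cite{Tu19}, the spaces $\mathcal{M}^{fr}_{g,n,b,m_1,\dots,m_b}$ with zero outgoing closed and open inputs, allowing zero closed inputs, form an $S^1$-framed modular operad, and $M^{oc}$ is $Fr^\gamma(Tot_{S^1}(\mathscr{OC}^d))$. The same formal argument as in propositions \ref{dglsgnm} and \ref{ssgb} then produces a BD map $M^{oc}\to \mathcal{F}^c(\mathcal{C})\otimes\mathcal{F}^o(\lambda)$ from any extended open-closed TCFT $\Phi$ whose open part is the TCFT of $End_\mathcal{C}(\lambda)$ (theorem \ref{OTCFTcyc}) and whose closed part is $CH_*(\mathcal{C})$ (theorem \ref{KanExt}). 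The multiplication map used there (symmetric words in the totalization $\to$ products) is compatible with the tensor product BD structure of lemma \ref{tensorBD}, because the open and closed gluings act on separate tensor factors. The bracket formula \eqref{tenbra} is exactly the decomposition of gluing into an open gluing times disjoint union in the closed factor, plus the analogous closed term.

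Second, handle the obstruction. Costello's extension exists only with positive closed inputs, as noted in remark \ref{dasistGG}, so the surfaces with no incoming closed boundary (including those with only closed outputs, which produce genuinely closed classes) are missing. Here I would mimic the closed proof behind \eqref{smooRes}. Introduce resolutions $\widehat{M^{oc}}\simeq M^{oc}$, in which one closed boundary is pinched into a "cut" or an auxiliary open-closed cobordism is inserted, and $\widehat{\mathcal{F}^c}(\mathcal{C})\otimes\mathcal{F}^o(\lambda)\simeq \mathcal{F}^c(\mathcal{C})\otimes \mathcal{F}^o(\lambda)$. The latter quasi-isomorphism uses smoothness: the inverse Mukai/residue copairing in $CH_*(\mathcal{C})^{\otimes 2}$ exists because $\mathcal{C}_\Delta$ is perfect. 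The direct map $\widehat{M^{oc}}\to\widehat{\mathcal{F}^c}\otimes\mathcal{F}^o$ is then defined on cells with positive input via the TCFT, and on the remaining cells by capping with the copairing. This yields $\rho_{\mathcal{C},\lambda}$ as a zig-zag, i.e.\ a map of induced $L_\infty$-algebras. The open–closed operations that need care are the boundary-bulk maps. Open-to-closed maps go $CC(End(\lambda))\to CH_*(\mathcal{C})$, via the $\iota_\lambda$ of remark on explicit complexes. Closed-to-open maps go to Hochschild cochains of $End(\lambda)$, and are dualised into cyclic cochains using the cyclic pairing.

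Third, verify compatibility with the differentials and brackets. The $\nu\langle ch(\lambda),\_\rangle$-type terms from disks with no open insertions must match the central extension $\nu$ of definition \ref{ce}. I expect the main obstacle to lie here, together with the consistency of the resolutions with the open sector. Concretely, I would check on the generating cells (disk with one closed output, annulus with one open and one closed boundary) that the capping construction commutes with $\nabla$ and $\delta$. Sign and shift conventions, cf.\ remarks \ref{dgsmnggw2} and \ref{dgsmnggw}, would be fixed only after this.
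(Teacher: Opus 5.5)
Your overall strategy is the one the paper points to: use Costello's TCFT where it applies, and an open--closed version of the zig-zag \eqref{smooRes} of \cite{CaTu24} where it does not. The paper gives no proof; it only describes the result as that open--closed version, forthcoming in \cite{AmTu25}. However, your central step misplaces the difficulty and does not yet construct anything.

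In $\mathscr{OC}^d_\lambda$ the positivity condition recalled in section \ref{2dop} is ``a free boundary \emph{or} an incoming closed boundary''. Every connected surface in $M^{oc}$ with $b\geq 1$ has a free boundary: an unframed boundary without marked points is itself free, and open sewing never destroys free boundaries (remark \ref{dasistGG}). So these surfaces are evaluated directly by Costello's open--closed TCFT of theorem \ref{KanExt}. Closed sewing between two of them only uses the pairing on $CH_*(\mathcal{C})$ and $B$, so no smoothness is needed there.

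What is genuinely missing are the purely closed components ($b=0$), whose images exist only through the closed zig-zag. The real problem is the mixed closed sewing, via $\{\ ,\ \}_c$ and the inter-component part of $\gamma\Delta_c$, of such a component with an open--closed one evaluated by Costello. This compatibility holds only up to homotopy, and the open--closed resolutions must be built so that it holds strictly.

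``Capping the remaining cells with the copairing'' does not achieve this. A cycle in $CH_*(\mathcal{C})^{\otimes 2}$ inverting the Mukai pairing on homology exists, since $\mathcal{C}$ is smooth and proper. But it inverts the pairing, and is compatible with sewing and with $B$, only up to coherent homotopies. Capping therefore does not commute with $\partial$ and the sewing operators on the nose. Encoding exactly this coherence is the job of the Koszul-type resolutions. Until you specify $\widehat{M^{oc}}$ and $\widehat{\mathcal{F}^c}(\mathcal{C})\otimes\mathcal{F}^o(\lambda)$ and show that the middle arrow is a strict map of dg Lie algebras, your second step produces no zig-zag.

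Your third step checks an identity that is not part of this theorem. Neither $\Delta_{co}$ nor $\nu\langle ch(\lambda),-\rangle_{res}$ occurs in the differential of \eqref{docms123} or in that of $\mathcal{F}^c(\mathcal{C})\otimes\mathcal{F}^o(\lambda)$. The surface that would produce $ch(\lambda)$ (genus zero, one framed boundary, one unframed boundary without marked points) is unstable, so it is not a generator of $M^{oc}$. That is why it enters separately as $\Delta_{co}$, and its compatibility is the separate lemma \ref{chk}.

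The $\nu$-bookkeeping that does belong here is different. An unframed boundary without marked points is sent to $\nu$, and this must match the $\nu$-terms of $\nabla$ (construction \ref{nabla}) and of $\{\ ,\ \}_o$ on words of length one \eqref{ce}. These terms arise when two adjacent marked points, or two lone marked points on different boundaries, are sewn.

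Finally, closed-to-open operations never occur in $M^{oc}$, since all closed boundaries are outgoing and all marked points incoming. Your remark about Hochschild cochains of $End(\lambda)$ is therefore not needed; the only mixed operations are open-to-closed.
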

\begin{remark}
    Theorem \ref{ocRep} is still in preparation and the author is not aware how Amorim and Tu will set up exactly the gradings for $\mathcal{F}^c(\mathcal{C})\otimes \mathcal{F}^o(\lambda)$. Recall remarks \ref{dgsmnggw2} and \ref{dgsmnggw} for different set-ups used in this thesis. We suppress this point in the following, but ask the careful reader to keep this in mind. 
\end{remark}

In the purely closed BD algebra we could find a canonical Maurer-Cartan element, the closed string vertices from equation \eqref{csvmce}. In the purely open BD algebra this was not the case. We did study related phenomena, however (see eg. theorem G.1 from the introduction).

\smallskip
To explain the analogous statement for the open-closed set-up we need to introduce an additional map.
Denote by  
\begin{equation}\label{emjk}
\Delta_{co}:M^{oc}\rightarrow M^{oc}
\end{equation}
the degree 1 differential induced from declaring a framed boundary to be a non-framed boundary with zero marked points (see \cite{HVZ08}, just before theorem 4.1). Now we can state 
\begin{theorem}[\cite{Zw98}, \cite{HVZ08}]
There is an element $SV^{oc}\in M^{oc}$, called \emph{open-closed string vertices}, playing the role of a fundamental class. It is \emph{almost} a Maurer-Cartan element, up the the term coming from $\Delta_{co}$, that is it 
\begin{equation}\label{QME}
(\partial+\gamma\Delta_c+\gamma\delta+\nabla)SV^{oc}+\frac{1}{2}\Big(\{SV^{oc},SV^{oc}\}_{o}+\{SV^{oc},SV^{oc}\}_{c}\Big)+\Delta_{co}SV^{oc}=0. \end{equation}

\end{theorem}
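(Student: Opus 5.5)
This is a cited result (\cite{Zw98}, \cite{HVZ08}), so the plan reconstructs the standard argument rather than deriving it from earlier results in this thesis. The approach is the usual one for string vertices: build $SV^{oc}$ as a fundamental chain of a subset of the open-closed moduli spaces, namely those surfaces that are \emph{not} produced by gluing along boundaries or twist-gluing along framed punctures. The equation \eqref{QME} then says that the boundary of this chain is exactly the sum of its gluings, and the plan is to verify it by matching codimension-one strata against gluing terms. Concretely, I would fix a hyperbolic or Strebel-type metric model: every surface in $\mathcal{M}_{g,n,b,m_1,\cdots,m_b}^{fr}$ gets a canonical metric, with geodesic closed boundaries and collars for marked boundary points. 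One then defines $\mathcal{V}_{g,n,b,\vec m}$ as the locus where all nontrivial closed geodesics have length $\geq 2\pi$ and all open geodesic arcs between boundary components have length $\geq \pi$ (Zwiebach's systole conditions).

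The proof proceeds in the following steps.

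\textbf{Chain level.} Show that each $\mathcal{V}_{g,n,b,\vec m}$ is a compact orbifold with corners. Taking the $S^1$-homotopy quotient, choose fundamental chains with values in the orientation line bundle, and set
\[
SV^{oc} := \sum \gamma^{g}\,[\mathcal{V}_{g,n,b,\vec m}],
\]
normalized by automorphism factors and by the disc/annulus initial conditions. Since $\mathcal{M}^{fr}_{0,3}=pt$, the closed three-punctured sphere gives $\frac{1}{3!}[pt]$.

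\textbf{Boundary strata.} Identify the codimension-one boundary $\partial\mathcal{V}$ as the set of surfaces in which exactly one systole is saturated: a closed geodesic of length $2\pi$, or an open arc of length $\pi$.

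\textbf{Matching strata to gluings.} Cutting along the saturated curve yields surfaces lying in lower $\mathcal{V}$'s with a boundary of the critical length. Conversely, the plumbing/twist-gluing maps realize these boundary pieces. This gives a case split:
\begin{itemize}
\item a saturated separating closed curve corresponds to $\frac12\{SV^{oc},SV^{oc}\}_c$;
\item a saturated non-separating closed curve corresponds to $\gamma\Delta_c$, since the genus jumps;
\item a saturated arc joining two different boundary components corresponds to $\gamma\delta$;
\item a saturated arc with both ends on the same boundary component corresponds to $\nabla$ (this case also produces the extra $\nu$-type annihilation when a face is empty);
\item a saturated separating arc corresponds to $\frac12\{\,,\}_o$.
\end{itemize}
The remaining codimension-one phenomenon is a framed boundary degenerating into an unframed boundary with no marked points. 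This is the stratum accounted for by $\Delta_{co}SV^{oc}$ from \eqref{emjk}, and it is precisely why $SV^{oc}$ is only almost Maurer–Cartan.

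\textbf{Degeneracy.} Finally, show that the gluing maps are injective up to codimension $\geq 2$. This is where the systole conditions are used: they guarantee that the saturated curve is unique generically, so no boundary term is counted twice.

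\textbf{Main obstacle.} The hardest step is the last one: proving that the $\mathcal{V}$'s exist with exactly this boundary and that the gluings cover $\partial\mathcal{V}$ bijectively. Concretely this requires that the glued surface again satisfies the systole conditions, i.e.\ that no shorter curve is created across the collar. For the closed case I would follow Costello's inductive construction. There one solves \eqref{QME} order by order in the Euler characteristic, using the contractibility argument that $H_*$ of the relevant moduli space relative to its boundary vanishes in the top degree. I would extend this obstruction theory to the open-closed setting: existence and uniqueness up to homotopy follow from acyclicity of the appropriate relative complex, with the $\Delta_{co}$ term included in the differential so that the obstruction classes vanish. The remaining bookkeeping, namely signs, orientation lines and $\gamma$ powers, is routine.
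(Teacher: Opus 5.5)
The thesis states this result from Zwiebach and Harrelson--Voronov--Z\'u\~niga without proof. Your outline is the Zwiebach-type argument behind it: a vertex region cut out by length conditions, with $\partial\mathcal{V}$ given by saturated curves, which are in turn identified with gluings. However, the step that carries all the content, matching saturated curves with the terms of the equation, is wrong as written, because $\delta$ and $\nabla$ are interchanged.

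Attaching a strip between two boundary marked points lowers $\chi$ by one.
\begin{itemize}
\item If the two points lie on different boundary components, these components merge, so $b\mapsto b-1$ and $g\mapsto g+1$. This is $\gamma\delta$. The saturated open curve (the transversal of the strip) then has both endpoints on the merged component and is non-separating.
\item If the two points lie on the same component, that component splits, so $b\mapsto b+1$ and $g$ is unchanged. This is $\nabla$. The saturated curve joins the two \emph{different} new components, and the subcase of cyclically adjacent points is the one that creates an empty boundary.
\end{itemize}
So the correct matching is:
\begin{itemize}
\item arcs between distinct boundary components correspond to $\nabla$;
\item non-separating arcs with both ends on one component correspond to $\gamma\delta$;
\item separating arcs correspond to $\tfrac12\{SV^{oc},SV^{oc}\}_o$.
\end{itemize}
Separating arcs always have both ends on one component, so your last two items overlap. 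With your assignment, each open stratum is matched with an operator that changes the genus by the wrong amount. The powers of $\gamma$ therefore do not match, and the verification fails as stated. It is repaired by the Euler-characteristic bookkeeping above.

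The second gap is the one you flag yourself, and your proposed remedy does not address it. That $\partial\mathcal{V}$ is covered exactly once by images of gluing maps \emph{is} the theorem, and whether it holds depends on the metric.
\begin{itemize}
\item The constants $2\pi$ and $\pi$ belong to Zwiebach's minimal-area metrics: they are the circumference of the closed and the width of the open semi-infinite ends. There, existence of the metrics and their compatibility with plumbing are assumed rather than proved.
\item For hyperbolic metrics these constants have no meaning. One must take a systole $L$ below a collar-lemma bound and glue along geodesic boundaries of length exactly $L$; this is what gives both uniqueness of the saturated curve and the boundary identification, as in the Costello--Zwiebach hyperbolic vertices.
\item Compactness of $\mathcal{V}$ needs the length bound imposed on boundary components with no marked points. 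Saturation of that bound is precisely the $\Delta_{co}$ stratum, so it belongs in the definition of $\mathcal{V}$, not as a leftover degeneration.
\end{itemize}
Switching to Costello's inductive obstruction argument would be a different proof. It would produce some solution but would say nothing about your $\mathcal{V}$. Moreover, in the open-closed setting the vanishing of obstructions is not automatic. Consider the annulus with one marked point on one boundary and none on the other. There, $\nabla$ applied to the disc with three marked points is a nonzero multiple of a point in a contractible one-dimensional moduli space, hence a nonzero class in $H_0$. It can be cancelled only by $\Delta_{co}$ applied to the disc with one framed boundary and one boundary marked point; this is the mechanism behind the thesis's remark that the purely open algebra carries no such Maurer--Cartan element. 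You would therefore have to prove that the complex with $\Delta_{co}$ added, which mixes components with different $(n,b)$, is acyclic in the relevant degrees. ``So that the obstruction classes vanish'' is an assertion, not an argument.
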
 
 Under the map $\rho_{\mathcal{C},\lambda}$ the differential $\Delta_{co}$ has an interesting categorical meaning, which will appear in the forthcoming work \cite{AmTu25}. Denote by $ch(\lambda)\in CH_*(\mathcal{C})\llbracket u\rrbracket$ the non-commutative Chern character of $\lambda$ induced by the class of the unit of the unital algebra $End_\mathcal{C}(\lambda)$. Further denote $$\langle ch(\lambda),\_,\rangle_{res}:\mathcal{F}^c(\mathcal{C})\rightarrow \mathcal{F}^c(\mathcal{C})$$ the degree 1 map extending as a derivation the pairing with $ch(\lambda)$ via the higher residue pairing \ref{Res} which we weight by $\nu$, the basis of cyclic words of length zero:
\begin{lemma}[\cite{AmTu25}]\label{chk}
Given $\mathcal{C}$ a smooth cyclic $A_\infty$-category and $\lambda\in \mathcal{C}$ then the differential \eqref{emjk} intertwines the map \eqref{ocRep} as follows 
    $$\rho_{\mathcal{C},\lambda}\circ \Delta_{co}=(\nu\langle ch(\lambda),\_\rangle_{res})\circ \rho.$$
\end{lemma}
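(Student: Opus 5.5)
The plan is to reduce the identity to a statement about how $\rho_{\mathcal{C},\lambda}$ is built from two kinds of boundaries, and then to check a single local computation.

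\textbf{Step 1: reduction to generators.} Both $\rho_{\mathcal{C},\lambda}\circ\Delta_{co}$ and $(\nu\langle ch(\lambda),\_\rangle_{res})\circ\rho_{\mathcal{C},\lambda}$ are compatible with the products coming from disjoint union. On the source, $\Delta_{co}$ acts as a derivation over the framed boundaries of each component. On the target, $\nu\langle ch(\lambda),\_\rangle_{res}$ is extended as a derivation. It therefore suffices to check the identity on chains of connected surfaces $\Sigma\in\mathcal{M}_{g,n,b,m_1,\ldots,m_b}^{fr}$, one framed boundary at a time.

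\textbf{Step 2: local structure of the representing map.} Here I would use the structure of the representing map from Theorem~\ref{ocRep}, namely the open-closed TCFT $\Phi$ of Theorem~\ref{KanExt}. Via the zig-zag~\eqref{smooRes}, extended to the open-closed setting as in \cite{AmTu25}, the map $\rho_{\mathcal{C},\lambda}$ sends $\Sigma$ to the following element. Each framed boundary contributes an outgoing Hochschild chain, which is dualised through the residue pairing into the slot $CH_*(\mathcal{C})[u^{-1}]$. Each unframed boundary with $m_i$ marked points contributes a cyclic word in $Cyc^*(\lambda)$. An unframed boundary with zero marked points contributes the length-zero word, i.e.\ $\nu$.

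Declaring a framed boundary $\partial_j$ to be unframed with no marked points therefore replaces the $CH_*$-output at $\partial_j$ by a $\nu$-factor. Concretely, this amounts to gluing onto $\partial_j$ the open-closed disc with one framed input and no open boundary, so the output is evaluated at $\partial_j$ against the image of this disc.

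\textbf{Step 3: identify the disc.} The key claim is that the disc is sent by $\Phi$ to the boundary-bulk map $CH_*(\mathcal{C})\to CH_*(End(\lambda))\to k$. More precisely, in the $S^1$-equivariant version this is pairing with the negative cyclic class of the unit $1_\lambda$, which is $ch(\lambda)$, via the higher residue pairing. With this, gluing the disc becomes contraction of that output with $ch(\lambda)$ via $\langle\_,\_\rangle_{res}$, producing exactly $\nu\langle ch(\lambda),\_\rangle_{res}$ applied to that tensor factor. Summing over $j$ then yields the derivation on the right-hand side.

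\textbf{Main obstacle.} The hardest step is Step 3 at the chain level and $S^1$-equivariantly. The residue pairing is induced by the Mukai pairing, and one must verify that the disc with one framed boundary, read through the smooth resolution $\widehat{\mathcal{F}^c}(\mathcal{C})$, really produces the higher residue pairing against $ch(\lambda)$ and not merely its $u^0$ part. I would first prove this on homology, using the Cardy-type identity that the open-closed map sends $1_\lambda$ to $ch(\lambda)$. I would then lift it to chains by choosing the cyclic model in which $ch(\lambda)$ is represented by $\iota_\lambda(1_\lambda)$. Here $B\iota_\lambda(1_\lambda)=0$ because the category is strictly unital, so no higher $u$-corrections arise. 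Finally, I would check that the signs and shifts line up with the conventions of remarks~\ref{dgsmnggw2} and~\ref{dgsmnggw}.
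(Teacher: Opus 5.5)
The paper contains no proof of this lemma to compare with. It is quoted from the forthcoming \cite{AmTu25}, as is Theorem~\ref{ocRep}, which introduces $\rho_{\mathcal{C},\lambda}$ without constructing it. Your mechanism has two parts. First, $\Delta_{co}$ is gluing of the annulus $D_\lambda$, with one incoming framed boundary and one free boundary without marked points. Second, $\Phi(D_\lambda)$ is pairing against $\iota_\lambda(1_\lambda)$. This is the natural boundary-state argument and very plausibly what underlies the lemma. As written, however, it has a genuine gap exactly where the smoothness of $\mathcal{C}$ is needed.

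In this paper's conventions the framed boundaries of surfaces in $M^{oc}$ are outgoing. So Costello's $\Phi$ from Theorem~\ref{KanExt} is only available on components with at least one free boundary. On surfaces $\Sigma$ without unframed boundaries, which $\Delta_{co}$ does not annihilate, $\rho_{\mathcal{C},\lambda}$ exists only through the smoothness-based extension to zero inputs. That is, it exists only through a zig-zag of resolutions as in \eqref{smooRes} (compare Remark~\ref{dasistGG}). For such $\Sigma$ there is no $\Phi(\Sigma)$ onto which $D_\lambda$ could be glued, whereas $\Delta_{co}\Sigma$ has a free boundary and is an honest value of $\Phi$.

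On this part the lemma \emph{is} the assertion that the zero-input extension is compatible with $\Phi$ under gluing with $D_\lambda$. Concretely, you would need:
\begin{itemize}
\item that $\Delta_{co}$ and $\nu\langle ch(\lambda),\_\rangle_{res}$ lift to the resolved objects compatibly with the quasi-isomorphisms in the zig-zag;
\item that the chain-level device used to resolve an outgoing framed boundary (e.g.\ a copairing), once contracted with $\Phi(D_\lambda)$, returns $\iota_\lambda(1_\lambda)$.
\end{itemize}
Your Step~2 takes all of this for granted (``extended to the open-closed setting as in \cite{AmTu25}''). Tellingly, smoothness plays no role in your argument, although without it $\rho_{\mathcal{C},\lambda}$ is not even defined on this part.

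The obstacle you single out is in fact the easy one. Rotating the framing is realised by an automorphism of $D_\lambda$, so $\Phi(D_\lambda)\circ B$ is induced by a constant, hence degenerate, $1$-chain, in line with $B\iota_\lambda(1_\lambda)=0$ on normalised chains. Nor does a homology-level identity together with a good representative of $ch(\lambda)$ yield the chain-level intertwining on which Corollary~\ref{snsnsn} relies.

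Smaller points:
\begin{itemize}
\item There is no natural map $CH_*(\mathcal{C})\to CH_*(End(\lambda))$, and the ``Cardy-type identity'' you invoke is just the definition of $ch(\lambda)$.
\item The actual input is that $D_\lambda$ is obtained by gluing the unit disc, the disc inducing $\iota_\lambda$, and the cylinder with two incoming framed boundaries. This gives $\Phi(D_\lambda)=\langle\iota_\lambda(1_\lambda),\_\rangle$ for the pairing defined by that cylinder. That pairing must still be identified, $S^1$-equivariantly, with the higher residue pairing entering $\{\_,\_\}_c$ and $\Delta_c$.
\item Step~1 uses multiplicativity of $\rho_{\mathcal{C},\lambda}$, which Theorem~\ref{ocRep} does not assert.
\item Forgetting a boundary parametrisation and gluing a thin annulus agree on chains only up to homotopy, unless you fix a model in which they coincide.
\end{itemize}
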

Note that lemma \ref{chk} together with theorem \ref{ocRep} explain that assumption $(*)$ from the introduction is satisfied in general:
\begin{corollary}[\cite{AmTu25}]\label{snsnsn}
 Given a smooth cyclic $A_\infty$-category $\mathcal{C}$ and an object ${\lambda\in \mathcal{C}}$ there is an element $S^{oc}\in \mathcal{F}^c(\mathcal{C})\otimes \mathcal{F}^o(\lambda)$ given as
 $$S^{oc}:=\rho_{\mathcal{C},\Lambda}(SV^{oc})$$
 such that it is a Maurer-Cartan element up to a term coming from the Chern character
\begin{equation}
(d_{hoch}+uB+\gamma\Delta_{c}+d+\gamma\delta+\nabla)S^{oc}+\frac{1}{2}\{S^{oc},S^{oc}\}_{o}+\frac{1}{2}\{S^{oc},S^{oc}\}_{c}+\nu\langle ch(\lambda),S^{oc}\rangle_{res}=0
\end{equation}
 and its open tree level part\footnote{That is the projection to $Sym^0(\cdots)\otimes Sym^1(Cyc^*(\lambda)[-1])\gamma^0$.} is the hamiltonian  of the $A_\infty$-algebra, see eg. \eqref{HamfurIn}.
\end{corollary}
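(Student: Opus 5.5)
The plan is to push the open-closed string vertices $SV^{oc}$ through the open-closed representing map of Theorem \ref{ocRep}, and to convert the extra operator $\Delta_{co}$ into the Chern character term using Lemma \ref{chk}. Concretely, set $S^{oc}:=\rho_{\mathcal{C},\lambda}(SV^{oc})$. The differential on $\mathcal{F}^c(\mathcal{C})\otimes\mathcal{F}^o(\lambda)$ given by Lemma \ref{tensorBD} is $d_{hoch}+uB+\gamma\Delta_c+d+\gamma\delta+\nabla$, and its bracket is $\{\_,\_\}_c+\{\_,\_\}_o$.

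First, I would use that $\rho_{\mathcal{C},\lambda}$ is a map of the induced dg shifted Lie algebras. This means it intertwines $\partial+\gamma\Delta_c+\gamma\delta+\nabla$ on $M^{oc}$ with the tensor BD differential, and intertwines $\{\_,\_\}_c+\{\_,\_\}_o$ on both sides. If the map is only an $L_\infty$-morphism, or a homotopy-category map realized by a zig-zag through resolutions as in diagram \eqref{smooRes}, I would instead use the general fact that $L_\infty$-morphisms send Maurer-Cartan elements to Maurer-Cartan elements via $\sum_k \frac{1}{k!}\rho^k(SV^{oc},\dots,SV^{oc})$. In that case I would redefine $S^{oc}$ as this pushforward.

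Second, I would apply $\rho_{\mathcal{C},\lambda}$ to the almost-Maurer-Cartan equation \eqref{QME}. Using Lemma \ref{chk}, the term $\rho(\Delta_{co}SV^{oc})$ becomes $\nu\langle ch(\lambda),S^{oc}\rangle_{res}$, and this gives exactly the displayed equation.

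The main obstacle sits in the $L_\infty$ case. There $\Delta_{co}$ is an extra derivation that is not part of the dg Lie structure, so I would have to check that the higher Taylor components of $\rho$ are compatible with $\Delta_{co}$ and $\nu\langle ch(\lambda),\_\rangle_{res}$. My approach would be to treat $\Delta_{co}$ as a deformation of the differential: I would form the dg Lie algebra $(M^{oc},\partial+\dots+\Delta_{co})$ and the target with $\nu\langle ch(\lambda),\_\rangle_{res}$ added. I would then read Lemma \ref{chk} as the statement that $\rho$ is a morphism between these deformed structures, and the pushforward of Maurer-Cartan elements would apply directly.

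Finally, I would identify the open tree-level part. I would project to $Sym^0\otimes Sym^1(Cyc^*(\lambda)[-1])\gamma^0$, which corresponds to the genus zero, one-boundary, no-closed-boundary disk components of $SV^{oc}$. The disk parts of the string vertices are the fundamental chains of the disks with marked boundary points, and the open TCFT of Theorem \ref{OTCFTcyc} evaluates these to $\langle m^k(\_,\dots,\_),\_\rangle$. This matches Lemma \ref{comp_2}, where $\rho_I(D)=I$, and I would check that the higher Taylor terms do not contribute at this projection.
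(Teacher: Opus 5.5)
Your proposal follows the paper's route. You push $SV^{oc}$ through the open-closed representing map of Theorem \ref{ocRep}, turn the $\Delta_{co}$ term of \eqref{QME} into $\nu\langle ch(\lambda),\_\rangle_{res}$ via Lemma \ref{chk}, and get the open tree-level part from the disk Maurer--Cartan element, which maps to the hamiltonian as in Lemma \ref{comp_2}. Your extra care about the case where $\rho_{\mathcal{C},\lambda}$ is only an $L_\infty$-morphism or a zig-zag goes beyond the paper, which simply applies $\rho_{\mathcal{C},\lambda}$ to \eqref{QME} as a strict map. Note also that redefining $S^{oc}$ as an $L_\infty$ pushforward in that case departs from the statement's definition $S^{oc}:=\rho_{\mathcal{C},\lambda}(SV^{oc})$.
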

The second property should follow from the fact that the map $\rho_{\mathcal{C},\Lambda}$ when restricted to the purely open part coincides with the map \ref{isidhb1} and that the purely open, genus zero and one boundary component part of the open-closed string vertices is described by the disk Maurer-Cartan element from theorem \ref{RS_mce}, which gets mapped to the hamiltonian of the cyclic $A_\infty$-algebra as described in lemma \ref{comp_2}.
\subsection{Quantization of Open String Field Theory}
In this section we explain when we can find a quantization (in the sense of definition \ref{incqcyc}) of the (large $N$) open string field theory of an objects $\lambda$ (or better the objects $\lambda^{\oplus N}$ for all $N\in\mathbb{N}$, assuming that $\mathcal{C}$ is additive) using the theory laid out in the previous section \ref{FOCS} and corollary \ref{maco}. As we will see, this quantization depends on a splitting of the non-commutative Hodge filtration \ref{splitting} and a trivialization of the non-commutative Chern character.

\medskip
Dependent on theorem \ref{ocRep} and lemma \ref{chk} from \cite{AmTu25} in preparation we have:
\begin{prop}\label{ki}
Let $(\mathcal{C},\lambda)$ be a tuple of a smooth cyclic $A_\infty$-category and an object $\lambda$ of $\mathcal{C}$. Assume that $(d_{hoch}+uB)h=ch(\lambda)$, a trivialization of the Chern character of $\lambda$. Then  \begin{equation}\label{gauaw}
S^{oc,q}(\mathcal{C},\lambda,h):=e^{(\nu\langle h,\_\rangle_{res})}\rho(SV^{oc})
\end{equation}is a Maurer-Cartan element of ${\mathcal{F}^c(\mathcal{C})}\otimes \mathcal{F}^o(\lambda).$ Further if we change $h$ by an exact term $S^{oc,q}(\mathcal{C},\lambda,h)$ changes up to homotopy.  
\end{prop}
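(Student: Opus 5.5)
The plan is to treat $X_h:=\nu\langle h,\_\rangle_{res}$ as a derivation on ${\mathcal{F}^c(\mathcal{C})}\otimes \mathcal{F}^o(\lambda)$ and to show that conjugating the BD differential by $e^{X_h}$ removes the Chern character term. Write $D:=d_{hoch}+uB+\gamma\Delta_c+d+\gamma\delta+\nabla$ and $\{\_,\_\}:=\{\_,\_\}_c+\{\_,\_\}_o$. By corollary \ref{snsnsn}, $S^{oc}=\rho(SV^{oc})$ satisfies $DS^{oc}+\frac12\{S^{oc},S^{oc}\}+X_{ch(\lambda)}S^{oc}=0$, where $X_{ch(\lambda)}:=\nu\langle ch(\lambda),\_\rangle_{res}$. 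The goal is $DS'+\frac12\{S',S'\}=0$ for $S'=e^{X_h}S^{oc}$.

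The argument proceeds in three structural steps.

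First, $X_h$ is a derivation of both the product and the bracket. The residue pairing only contracts closed letters, and $\{\_,\_\}_c$ is induced from the same pairing, so $[X_h,\{a,b\}]$ reduces to a Jacobi-type identity for contractions with a fixed element. Since $\nu$ is central and even, $\nu$-weighting commutes with everything. Hence $e^{X_h}$ is an automorphism of the shifted Poisson structure, provided it converges. Convergence follows because each application lowers word length in $CH_*$ by one, so the exponential is locally finite.

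Second, compute the commutator $[D,X_h]$:
\begin{itemize}
\item $[d_{hoch}+uB,X_h]=X_{(d_{hoch}+uB)h}=X_{ch(\lambda)}$, since the higher residue pairing is a chain map for $d_{hoch}+uB$.
\item $[d+\gamma\delta+\nabla,X_h]=0$, because these open operators do not touch closed letters and $\nu$ is $\nabla,\delta$-inert.
\item $[\gamma\Delta_c,X_h]=\gamma\{h^\flat,\_\}$-type term, where $h^\flat$ is the functional dual to $h$. Since $X_h$ is a first-order operator, $\Delta_c X_h-X_h\Delta_c$ is the contraction of the residue pairing inverse with $\langle h,\_\rangle$. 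This is either zero or a constant, because $\langle h,\_\rangle$ is linear and $\Delta_c$ is second order.
\end{itemize}
We then verify that this term vanishes, or can be absorbed, using that $\Delta_c$ is defined from $B$ and the residue pairing, whose value on a linear functional is zero.

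Third, from $[D,X_h]=X_{ch(\lambda)}$ and $[X_{ch(\lambda)},X_h]=0$ (both are contractions of closed letters, and the pairings of $h$ with $ch(\lambda)$ enter only as constants) we obtain
\[
e^{X_h}De^{-X_h}=D-X_{ch(\lambda)}.
\]
Applying $e^{X_h}$ to the almost-Maurer-Cartan equation and using step one gives $DS'+\frac12\{S',S'\}=0$. Here we must check signs and degrees: $h$ has degree one less than $ch(\lambda)$, so $X_h$ has degree $0$ and the exponential is even.

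For the homotopy statement, let $h'=h+(d_{hoch}+uB)k$. Consider the path $h_t=h+t(d_{hoch}+uB)k$. Then
\[
\tfrac{d}{dt}S'_t=X_{(d_{hoch}+uB)k}S'_t=[D,X_k]S'_t .
\]
Because $X_k$ is a derivation of the bracket, this equals $D(X_kS'_t)+\{S'_t,X_kS'_t\}$, up to the correction from $X_{ch}$ already cancelled. This is exactly a gauge flow with generator $X_kS'_t$, so $S'_0$ and $S'_1$ are gauge equivalent, equivalently homotopic via a Maurer-Cartan element over $\Omega^*([0,1])$.

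The main obstacle I expect is the commutator with $\gamma\Delta_c$ and the full derivation property of $X_h$ on the tensor-product BD algebra. There, $\Delta_c$ acting on the dual variables might produce a nonzero $\gamma$-constant or a nontrivial $\{X_h\text{-term},\_\}$. I would first try to show $\Delta_c$ annihilates linear functionals, using the explicit residue-pairing formula of \cite{CaTu24}.
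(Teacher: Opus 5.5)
Your proposal is correct and follows essentially the paper's proof. The paper likewise conjugates by $e^{\nu\langle h,\_\rangle_{res}}$, uses that this operator commutes with the product, the brackets and the BV operators while $[d_{hoch}+uB,\nu\langle h,\_\rangle_{res}]=\nu\langle ch(\lambda),\_\rangle_{res}$ (citing lemma 6.0.3 of \cite{Cos05}), and then applies corollary \ref{snsnsn}; your gauge flow is equivalent to its Maurer--Cartan element $e^{\nu\langle h+t\,dg+dt\,g,\_\rangle_{res}}S^{oc}$ over $\Omega^*([0,1])$.

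The commutator with $\gamma\Delta_c$ that you flag as the main obstacle is simply zero, not ``zero or a constant''. Both $\Delta_c$ and $\nu\langle h,\_\rangle_{res}$ are constant-coefficient operators on the symmetric algebra, of orders two and one. Their commutator is at most second order and vanishes on words of length at most two, so it vanishes identically.
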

\begin{proof}
Let us denote for simplicity by $d_{oc}$ the full BD differential of ${\mathcal{F}^c(\mathcal{C})}\otimes \mathcal{F}^o(\lambda)$ and by $\{\_,\_\}_{oc}$ its shifted Poisson bracket. The proposition follows by computing
\begin{align*}
&d_{oc}\Big(e^{\nu\langle h,-\rangle_{res}}S^{oc}\Big)=e^{\nu\langle h,-\rangle_{res}}d_{oc}S^{oc}+e^{\nu\langle h,-\rangle_{res}}\nu\langle ch(\lambda),-\rangle_{res}S^{oc}\\
=&-\Big(\Delta_{oc}I^{oc}+\frac{1}{2}\{I^{oc},I^{oc}\}_{oc}+\nu\langle ch(\lambda),e^{\nu\langle h,-\rangle_{res}}S^{oc}\rangle_{res}\Big)+\nu\langle ch(\lambda),-\rangle_{res}e^{\nu\langle h,-\rangle_{res}}S^{oc}\\
=&-\big(\Delta_{oc}I^{oc}+\frac{1}{2}\{I^{oc},I^{oc}\}_{oc}\big),\end{align*}
where the first equality uses lemma 6.0.3 (3) of \cite{Cos05} and for the second equality we used corollary \ref{snsnsn} and that $e^{\nu\langle h,-\rangle_{res}}$ commutes with the brackets and BV differentials. Similarly we can compute that
$$(d_{oc}+d_t)\Big(e^{\nu\langle h+tdg+dtg,-\rangle_{res}}S^{oc}\Big)\in MCE\Big({\mathcal{F}^c(\mathcal{C})}\otimes \mathcal{F}^o(\lambda)\otimes \Omega^*([0,1])\Big),$$
where $d_t$ denotes the de Rham differential on $\Omega^*([0,1])$. This explains the independence up to homotopy if we change $I^{oc}(\lambda,h)$ by an exact terms.
\end{proof}
We collect some ingredients to prove theorem \ref{oq} below: We recall that corollary \ref{maco} tells us that given a cyclic $A_\infty$-category $\mathcal{C}$,  an object $\lambda$ and a splitting s of the non-commutative Hodge filtration there is a map of $L_\infty$-algebras
   $$ 
        \Psi^{oc}_s:\ \mathcal{F}^c(\mathcal{C})\otimes \mathcal{F}^o(\lambda) \rightsquigarrow \mathcal{F}^c(\mathcal{C})^{Triv}\otimes \mathcal{F}^o(\lambda). 
    $$
Thus from proposition \ref{ki}, given additionally a trivialization $(d_{hoch}+uB)h=ch(\lambda)$ of the Chern character, and corollary \ref{maco} it follows that
\begin{equation}\label{wbvmb}
\Psi^{oc}_s(S^{oc,q}(\mathcal{C},\lambda,h))\in MCE\Big(\mathcal{F}^c(\mathcal{C})^{Triv}\otimes \mathcal{F}^o(\lambda)\Big).
\end{equation}
Let us introduce following definition in order to reduce the notational clutter a bit.
\begin{df}
    We call a \emph{package} \begin{equation}\label{sssswmi}
\mathcal{P}:=\{\mathcal{C},\lambda,h,s\} \end{equation}
a tuple consisting of a smooth cyclic $A_\infty$ category $\mathcal{C}$, an object in that category $\lambda\in\mathcal{C}$, a trivialization $(d_{hoch}+uB)h=ch(\lambda)$ and a splitting $s$.
\end{df}
 Let us denote by 
$$ \pi:\mathcal{F}^c(\mathcal{C})^{Triv}\otimes \mathcal{F}^o(\lambda)\rightarrow Sym^0(CH_*(\mathcal{C})\llbracket u\rrbracket)\gamma^0\otimes \mathcal{F}^o(\lambda)\cong \mathcal{F}^o(\lambda)$$
the above composite, which is a map of BD algebras.  Given  a package $\mathcal{P}$ we denote
\begin{equation}\label{qoosft}
S^{o,q}(\mathcal{P}):=(\pi\circ\Psi^{oc}_s)\big(S^{oc,q}(\mathcal{C},\lambda,h)\big)\in  \mathcal{F}^o(\lambda),
\end{equation}
recalling definition \ref{gauaw}.
A consequence of the fact that $\pi$ is a map of BD algebras, equation \eqref{wbvmb}, the explicit form of the map $\Psi^{oc}_s$ and the second part of corollary \ref{snsnsn} is:
\begin{theorem}\label{oq}
Given a package $\mathcal{P}$ as in \eqref{sssswmi} the element
$S^{o,q}(\mathcal{P})$ from \eqref{qoosft} is a  quantization of the $A_\infty$-hamiltonian of the cyclic $A_\infty$-algebra $End_\mathcal{C}(\lambda)$, that is it is a solution to the quantum master equation in $\mathcal{F}^o(\lambda)$ and its tree level component is the hamiltonian of the cyclic $A_\infty$-algebra $End_\mathcal{C}(\lambda)$. 
\end{theorem}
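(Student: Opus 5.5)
Two things have to be checked: that $S^{o,q}(\mathcal{P})$ solves the quantum master equation in $\mathcal{F}^o(\lambda)$, and that its tree level part is the hamiltonian $I$ of $End_\mathcal{C}(\lambda)$.

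\textbf{Maurer-Cartan property.} This is formal and goes in three steps.
\begin{enumerate}
\item By proposition \ref{ki}, $S^{oc,q}(\mathcal{C},\lambda,h)$ is a Maurer-Cartan element of $\mathcal{F}^c(\mathcal{C})\otimes\mathcal{F}^o(\lambda)$.
\item An $L_\infty$-morphism sends Maurer-Cartan elements to Maurer-Cartan elements via $x\mapsto\sum_m\frac{1}{m!}\Psi^{oc,m}_s(x^{\odot m})$. Applied to $\Psi^{oc}_s$ from corollary \ref{maco}, this gives \eqref{wbvmb}. Convergence of this sum should hold because $S^{oc,q}$ has no constant term. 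The filtration by powers of $\gamma$, by symmetric word length and by $\nu$ should make the sum finite in each filtration degree. This is the point where the completions must be checked.
\item The projection $\pi$ onto the $Sym^0(\cdots)\gamma^0$ component of the closed factor is a map of BD algebras, hence of dg shifted Lie algebras. Indeed, $\mathcal{F}^c(\mathcal{C})^{Triv}$ has zero bracket and differential $d_{hoch}$, which preserves $Sym$-degree and $\gamma$-degree. Therefore $\pi$ intertwines the bracket \eqref{tenbra}, since only the $m_A\otimes\{\_,\_\}_o$ term survives, and it intertwines the differentials. So $\pi$ maps Maurer-Cartan elements to Maurer-Cartan elements, and $S^{o,q}(\mathcal{P})$ solves the QME in $\mathcal{F}^o(\lambda)$.
\end{enumerate}
One degree issue remains: $S^{o,q}(\mathcal{P})$ must have the degree required in definition \ref{qcat}, once the shifts of remark \ref{dgsmnggw} are taken into account.

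\textbf{Tree level part.} We must show that $p(S^{o,q}(\mathcal{P}))=I$, where $p$ projects onto $Sym^1$, $\gamma^0$, $\nu^0$ of $\mathcal{F}^o(\lambda)$. The plan is to unwind $\pi\circ\Psi^{oc}_s$ through the Taylor components $C(\mathcal{K}_s\otimes m)^n$, which act by $\mathcal{K}^n_s$ on the closed factors and by multiplication on the open factors.

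Write $S^{oc,q}=\sum_\alpha x_\alpha\otimes y_\alpha$. The $Sym^1$ part of the image in $\mathcal{F}^o(\lambda)$ is a sum of terms $\pi_c\mathcal{K}^n_s(x_1\odot\cdots\odot x_n)\cdot y_1\cdots y_n$.

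\emph{Case $n\ge2$.} A product of $n$ elements $y_i$ of positive symmetric length has length at least $n$. So a term contributes to $Sym^1$ only if all but one $y_i$ lie in $Sym^0$, that is, are powers of $\nu$ and $\gamma$. These carry positive $\nu$- or $\gamma$-weight and are killed by $p$, except for $y_i=1$. Components of $S^{oc,q}$ with open part $1$ are purely closed elements.

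\emph{Case $n=1$.} $\mathcal{K}^1_s$ applied to closed $Sym^0\gamma^0$ components is the identity. Hence these terms give exactly the component of $S^{oc,q}$ in $Sym^0\gamma^0\otimes Sym^1\gamma^0\nu^0$.

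The operator $e^{\nu\langle h,-\rangle_{res}}$ only changes terms at positive $\nu$-order. Its $\nu^0$ part is therefore the part of $S^{oc}$ itself, which is $I$ by corollary \ref{snsnsn}.

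\textbf{Main obstacle.} The delicate point is to rule out contributions from $\mathcal{K}^n_s$, $n\ge2$, with several purely closed inputs, which could land in $Sym^0\gamma^0$ after contraction. The plan is to use the Feynman rule \ref{Feynm}. A connected graph with $m\ge2$ vertices and zero output leaves either has positive betti number, so lands at $\gamma^{g}$ with $g>0$, or is a tree whose inputs come from closed tree components of $SV^{oc}$. For the tree case I would try to show that the closed genus-zero components with at most one leaf contribute nothing. This should follow from stability, together with the assumption (as in the remark after definition \ref{qcat}) that constant terms of the quantization vanish. If this fails, I would fall back on showing that such constant, $Sym^0$, contributions only rescale the open component by an invertible scalar that is $1$ modulo $\gamma$ and $\nu$, so $p$ still recovers $I$.
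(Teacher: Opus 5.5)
Your proposal is correct and follows the paper's route. The Maurer--Cartan property comes from proposition \ref{ki}, the $L_\infty$-morphism of corollary \ref{maco}, and the fact that $\pi$ is a map of BD algebras; the tree-level statement comes from the explicit Taylor components of $\Psi^{oc}_s$ together with the second part of corollary \ref{snsnsn}.

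Your handling of the $n\geq 2$ terms is more careful than the paper's one-line appeal to the explicit form of $\Psi^{oc}_s$, and the stability argument you sketch does close it. A zero-leaf tree with at least two vertices has two univalent vertices. Only the one input carrying an open letter can have closed word length one, because genus-zero purely closed components come from $SV^c$ and so have at least three framed boundaries. The rescaling fallback is therefore unnecessary, and it would not work as stated anyway, since those terms multiply the open part of a component with a closed insertion rather than $I$.
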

\begin{df}
Given a package $\mathcal{P}$ we denote the \emph{partition function of the large $N$ open string field theory} by
\begin{equation}\label{fmc}
Z^{o}(\mathcal{P}):=[e^{S^{o,q}(\mathcal{P})}]\in H_*\big(\mathcal{F}^o(\lambda),d+\gamma\delta+\nabla\big).
\end{equation}
\end{df}

\medskip
To justify this name we recall the quantized LQT map, which is a map of BV algebras \cite{GGHZ21}
\begin{equation}\label{lqt}
LQT_N:\mathcal{F}^o(\lambda)\rightarrow C^*\big(\mathfrak{gl}_NEnd_\mathcal{C}(\lambda)\big). \end{equation}
 The right-hand side of \eqref{lqt} describes the observables of the \emph{open string gauge theory on a stack of $N$ branes of $\lambda$} modeled by the cyclic $L_\infty$-algebra \begin{equation}\label{diya}
    End_\mathcal{C}(\lambda^{\oplus N})\cong\mathfrak{gl}_NEnd_\mathcal{C}(\lambda),
    \end{equation}
remarking that above isomorphism \ref{diya} of course is only applicable in additive categories. Summarizing, theorem \ref{oq} tell us when to find a quantization of this $L_\infty$-algebra, and uniformly for every $N\in\mathbb{N}$. See also the discussion around equation \eqref{copex} from the introduction, which justifies the name of $Z^{o}(\mathcal{P})$ as the partition function of the quantized large $N$ open string field theory on a stack of $N$ branes of type $\lambda$, associated to a package $\mathcal{P}$ \eqref{P}.

\begin{remark}
 Another interesting element to consider should be the following. Given a package $\mathcal{P}$
 this element is defined as
$$e^{\Psi^{oc}_s(S^{oc,q}(\mathcal{C},\lambda,h))/\gamma}\in {\mathcal{F}^c(\mathcal{C})}^{Triv}\otimes \mathcal{F}^o(\lambda),$$
recalling equation \eqref{wbvmb}.
From this equation it follows that $$Z^{oc}_{\mathcal{C},\lambda,h,s}:=[e^{\Psi^{oc}_s(S^{oc,q}(\mathcal{C},\lambda,h))/\gamma}]\in H_*\Big({\mathcal{F}^c(\mathcal{C})}^{Triv}\otimes \mathcal{F}^o(\lambda),d_{Hoch}+d+\gamma\delta+\nabla\Big),$$
which may justify calling $Z^{oc}_{\mathcal{C},\lambda,h,s}$ the open-closed GW potential. We don't consider this element in this thesis, however.
 
\end{remark}

 \subsection{Backreacted Closed String Field Theory}\label{mimsss}
In physics, there is the idea that a closed string field theory, which we introduced in section \ref{wsuaderinn}, can be deformed by introducing terms sourced by branes (but no other open strings).  In the universal geometric open-closed BV algebra this picture of a brane $\lambda$ sourcing closed strings should be encoded by those surfaces (possibly with framed boundaries) which additionally have some (unframed) boundaries with no marked points which are colored by an element $\lambda$. We can ask whether the corresponding subset of the open-closed string vertices, which we denote by $SV|^{sourced}_{closed}$, still satisfies the quantum master equation. Upon examination of the quantum master equation \eqref{QME} we note that this is not true. Also for the classical part of \eqref{QME}, which we denote by $SV_{g=0}|^{sourced}_{closed}$, this is not true. However, we can ask that their image under the open-closed representing map from theorem \ref{ocRep} satisfies the master equation, leading us to the following:
\subsubsection{Classical Backreacted Closed String Field Theory}
We try to overcome the problem of $SV_{g=0}|^{sourced}_{closed}\in M^{oc}$ not satisfying the classical master equation by making a definition (which we hope to improve, see remark \ref{hopefullytobeimprov} later).
\begin{df}\label{backreactable}
    We say that a pair of a smooth cyclic category $\mathcal{C}$ and an object $\lambda\in \mathcal{C}$ are \emph{backreactable} if, recalling the open-closed representing map $\rho_{\mathcal{C},\lambda}$ of theorem \ref{ocRep}, we have that 
    \begin{equation}\label{hrmop1}
\rho_{\mathcal{C},\lambda}\big(\{SV^{oc}|^{m=1}_{g=0},SV^{oc}|^{m=1}_{g=0}\}_c\big)=0\end{equation}
    and 
    \begin{equation}\label{hrmop2}
        \rho_{\mathcal{C},\lambda} \big(\nabla SV^{oc}|^{m=2}_{g=0}\big)=0,
        \end{equation}
        referring to this footnote\footnote{Here we denote by $SV^{oc}|^{m=1}_{g=0}$ the genus zero part of the string vertices with any number of framed boundaries and unframed boundaries, of which exactly one unframed boundaries has one marked point. Similarly we denote by $SV^{oc}|^{m=2}_{g=0}$ the genus zero part of the string vertices with any number of framed boundaries and unframed boundaries, of which exactly one unframed boundaries has two marked point.} for the notation.
\end{df}

\medskip
In this section we define `classical sourced closed string field theory' (see definition \ref{brLinfalg}) and the `classical backreacted closed string field theory' (see definition \ref{wzhhsimsl}) in order to shed light on the backreaction phenomena. In short, given a category and an object that are backreactable we can always define the former, while the latter only exists additionally assuming a trivialization of the Chern character of the object. However, given such a trivialization, we can actually identify both structures, see remark \ref{dbegaj}. We further examine carefully the curvature of the underlying $L_\infty$-algebras defining these classical field theories.
\medskip

A central element for these definitions is the element $SV_{g=0}|^{sourced}_{closed}$, the genus zero open-closed string vertices with arbitrary number of framed and unframed boundaries but no marked points; compare equation \eqref{QME}. Assuming that $(\mathcal{C},\lambda)$ is backreactable it follows from the definition and corollary \ref{snsnsn} that
\begin{equation}\label{stfuiwfs}
(d_{Hoch}+uB+\nu\langle ch(\lambda),\_\rangle_{res})\rho\big(SV_{g=0}|^{sourced}_{closed}\big)+\frac{1}{2}\{\rho\big(SV_{g=0}|^{sourced}_{closed}\big),\rho\big(SV_{g=0}|^{sourced}_{closed}\big)\}_{c}\Big)=0, \end{equation}
which implies that the derivation 
\begin{equation}\label{ouoibh}
d_{Hoch}+uB+\nu\langle ch(\lambda),\_\rangle_{res}+\{\rho\big(SV_{g=0}|^{sourced}_{closed}\big),\_\}_c
\end{equation}
defines a differential on $\mathcal{F}^c(\mathcal{C})|_{\gamma=0}[\nu]$. By setting $\nu=N$ the induced differential on the algebra
$$\mathcal{F}^c(\mathcal{C})|_{\gamma=0}=Sym(CH_*(\mathcal{C})[ u^{-1}])$$
dually induces a dg coalgebra structure on the graded vector space
$$Sym\big(CH_*(\mathcal{C})[ u^{-1}]\big)^\vee,$$
that is we recover a possibly \emph{curved $L_\infty$-algebra} structure on the graded vector space $\big(CH_*(\mathcal{C})[ u^{-1}]\big)^\vee$. This is the first piece of data allowing us to define the notion of the `classical sourced closed string field theory at level $N$' (see definition \ref{brLinfalg} below):
\begin{df}\label{scbcsftN}
Given $(\mathcal{C},\lambda)$ a smooth cyclic $A_\infty$-category and an object $\lambda\in\mathcal{C}$ that are backreactable we denote by $L_N(\mathcal{C},\lambda)$ the curved $L_\infty$-algebra on cyclic cochains of $\mathcal{C}$ induced from the differential \eqref{ouoibh} and by setting $\nu=N$.
\end{df}
\begin{lemma}\label{isidfrz}
    
Given $(\mathcal{C},\lambda)$ a smooth cyclic $A_\infty$-category and an object $\lambda\in\mathcal{C}$ that are backreactable  then the curvature of $L_N(\mathcal{C},\lambda)$ is given to leading order in $N$ by 
\begin{equation}\label{focdsl}
N\langle ch(\lambda),\_\rangle_{res}\in \big(CH_*(\mathcal{C})[ u^{-1}]\big)^\vee.\end{equation}
\end{lemma}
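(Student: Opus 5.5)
The curvature of the curved $L_\infty$-algebra $L_N(\mathcal{C},\lambda)$ is, by the dualization described before Definition \ref{scbcsftN}, the component of the differential \eqref{ouoibh} (at $\nu=N$) that maps the unit $1\in Sym^0\big(CH_*(\mathcal{C})[u^{-1}]\big)$ into $Sym^1\big(CH_*(\mathcal{C})[u^{-1}]\big)$. Dually, it is the $Sym^0\to Sym^1$ Taylor component of the codifferential on $Sym\big(CH_*(\mathcal{C})[u^{-1}]\big)^\vee$. So the plan is to evaluate each summand of \eqref{ouoibh} on the constant and on words of length zero or one, and keep only the part landing in $Sym^1$.

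First, $d_{Hoch}+uB$ is extended as a derivation, so it kills $1$ and preserves word length; it contributes nothing to the curvature. Second, the term $\nu\langle ch(\lambda),\_\rangle_{res}$ is the derivation extending the linear functional $x\mapsto \langle ch(\lambda),x\rangle_{res}$. It lowers word length by one. Dually, its $Sym^0\to Sym^1$ component is exactly the element $N\langle ch(\lambda),\_\rangle_{res}\in\big(CH_*(\mathcal{C})[u^{-1}]\big)^\vee$ once $\nu=N$. This produces \eqref{focdsl}.

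Third, the bracket term $\{\rho(SV_{g=0}|^{sourced}_{closed}),\_\}_c$ can contribute to the curvature only through the part of $\rho(SV_{g=0}|^{sourced}_{closed})$ of word length two, i.e.\ through the genus-zero surfaces with exactly two framed boundaries and $k\geq 1$ unframed $\lambda$-coloured boundaries without marked points. Under $\rho_{\mathcal{C},\lambda}$, each unframed boundary with no marked points is weighted by $\nu$, as in the construction of Theorem B, where $\nu$ records empty faces. This part therefore carries a factor $\nu^{k}$ with $k\ge1$. Contracting it once with $\{\_,\_\}_c$ gives a word of length one, which is a further curvature contribution proportional to $N^{k}$ after setting $\nu=N$.

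The main obstacle is making precise the phrase ``to leading order in $N$'' and comparing these $N^k$ terms with $N\langle ch(\lambda),\_\rangle_{res}$. I would set this up by putting all terms on a common footing via Lemma \ref{chk}. The Chern-character term is the image under $\rho_{\mathcal{C},\lambda}$ of $\Delta_{co}$ applied to the string vertices, which turns a framed boundary into an empty unframed one. The bracket contributions, in turn, come from surfaces that already carry at least one empty unframed boundary and are glued along a closed boundary. I would then grade by $\nu$-degree together with the weight $\gamma^{0}$ and the number of framed boundaries.

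In the lowest nontrivial order the following should hold. The annulus with one framed and one empty unframed boundary is the unique genus-zero configuration producing a linear term in $\nu$ with one framed input. Its image is $\langle ch(\lambda),\_\rangle_{res}$, by the open-closed analogue of the closed cylinder computing the residue pairing. The bracket contributions, by contrast, come from surfaces with two framed boundaries, a strictly larger Euler characteristic defect, and therefore a different $\nu$-order or a higher-order correction in the sense of the grading.

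If this Euler-characteristic bookkeeping does not cleanly separate orders, I would fall back on a weaker statement. The curvature equals $N\langle ch(\lambda),\_\rangle_{res}$ modulo terms built from $\{\rho(SV),\_\}_c$, and these terms vanish whenever the two-framed-boundary sourced vertices do, which is the interpretation of ``leading order'' I would adopt. I expect the identification of the annulus term, and the justification that the remaining terms are subleading, to be the delicate step. This is especially so because the shifts between $\mathcal{F}^c(\mathcal{C})$ and $\widetilde{\mathcal{F}^c(\mathcal{C})}$ (Remarks \ref{dgsmnggw2}, \ref{dgsmnggw}) are not yet pinned down.
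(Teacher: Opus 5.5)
Your treatment of $d_{Hoch}+uB$ and of $\nu\langle ch(\lambda),\_\rangle_{res}$ is fine, but your analysis of the bracket term has a genuine gap.

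\textbf{Wrong component.} You look at the wrong component of $\rho(SV_{g=0}|^{sourced}_{closed})$. On the algebra $Sym(CH_*(\mathcal{C})[u^{-1}])$ the curvature is the component of the derivation \eqref{ouoibh} sending $Sym^1$ to $Sym^0$. A derivation kills $1$, so nothing maps $1$ into $Sym^1$; that picture only holds on the dual coalgebra. The bracket $\{X,\_\}_c$ contracts one letter of $X$ against the single letter of its argument. So for $X\in Sym^k$ it sends $Sym^1$ to $Sym^{k-1}$. Hence the curvature receives contributions only from the word-length-one part of $\rho(SV_{g=0}|^{sourced}_{closed})$, i.e.\ from genus-zero vertices with exactly one framed boundary. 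The word-length-two part you single out (two framed boundaries) yields a map $Sym^1\to Sym^1$. That is a correction to the linear term $l_1$ of $L_N(\mathcal{C},\lambda)$, not to its curvature.

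\textbf{Missing idea: stability.} The comparison of orders is decided by the stability condition on the open-closed string vertices. A stable genus-zero surface with one framed boundary and $b$ unframed boundaries carrying no marked points must have $b\geq 2$; the annulus ($b=1$) has Euler characteristic zero and is unstable. Each empty unframed boundary is weighted by $\nu$ under $\rho_{\mathcal{C},\lambda}$. So the $Sym^1$-part of $\rho(SV_{g=0}|^{sourced}_{closed})$ lies in $\nu^{\geq 2}$, and its bracket contribution to the curvature is of order $N^{\geq 2}$. The Chern character term, by contrast, is exactly linear in $N$. This is what ``leading order'' means here: first order in $\nu$. This stability argument is exactly the paper's proof.

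Your bound $\nu^k$ with $k\geq 1$ cannot separate these orders. The Euler-characteristic argument you sketch treats the annulus with one framed and one empty unframed boundary as a genus-zero vertex whose image is $\langle ch(\lambda),\_\rangle_{res}$. But that annulus is precisely what stability excludes from $SV^{oc}$. The Chern character term enters only through $\Delta_{co}$ and Lemma \ref{chk}. Finally, your fallback (equality modulo bracket terms that vanish when certain vertices vanish) weakens the statement rather than proving it.
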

\begin{proof}
    
The fact that $$N\langle ch(\lambda),\_\rangle_{res}\in \big(CH_*(\mathcal{C})[ u^{-1}]\big)^\vee$$ contributes to the leading order in $N$ of the curvature is clear. The projection of $\rho\big(SV_{g=0}|^{sourced}_{closed}\big)$ onto $Sym^1(CH_*(\mathcal{C})[ u^{-1}])[\nu]$ is in general non-zero and these terms do also contribute to the curving of $L_N(\mathcal{C},\lambda)$ under the differential induced from 
$$\{\rho\big(SV_{g=0}|^{sourced}_{closed},\_\}_c.$$
However, because of the stability condition on the open-closed string vertices (see around equation 1 of \cite{HVZ08}) the sub-collection of the open-closed string vertices which have $n=1$ framed boundaries need to have $b\geq 2$ unframed boundaries in the genus zero case. Thus the image of the projection of  $\rho\big(SV_{g=0}|^{sourced}_{closed}\big)$ onto $Sym^1(CH_*(\mathcal{C})[ u^{-1}])[\nu]$ actually lands in the part with higher or equal to 2 powers of $\nu$; thus the contribution to the curvature coming from $\{\rho\big(SV_{g=0}|^{sourced}_{closed},\_\}_c$ is of higher order in $N$.
\end{proof}
By definition \ref{backreactable} if $(\mathcal{C},\lambda)$ is backreactable we have that   $$\rho\big(SV_{g=0}|^{sourced}_{closed}\big)\in MCE\Big(\mathcal{F}^c(\mathcal{C})|_{\gamma=0}[\nu],d_{Hoch}+uB+\nu\langle ch(\lambda),\_\rangle_{res},\{\_,\_\}_c\Big)$$ and by setting $\nu=N$ we get $$\rho\big(SV_{g=0}|^{sourced}_{closed}\big)_N\in MCE\Big(\mathcal{F}^c(\mathcal{C})|_{\gamma=0},d_{Hoch}+uB+N\langle ch(\lambda),\_\rangle_{res},\{\_,\_\}_c\Big).$$ Thus we can twist by $\rho\big(SV_{g=0}|^{sourced}_{closed}\big)_N$ leading to following definition, which we contrast with definition \ref{cisft} of classical interacting closed string field theory:
\begin{df}\label{brLinfalg}
   Given a smooth cyclic $A_\infty$-category  $\mathcal{C}$ together with  $\lambda\in\mathcal{C}$ that is backreactable we call \emph{classical sourced closed string field theory at level $N$} the datum of the shifted Poisson algebra
$$\Big(\mathcal{F}^c(\mathcal{C})|_{\gamma=0},\cdot,d_{Hoch}+uB+N\langle ch(\lambda),\_\rangle_{res}+\{\rho\big(SV_{g=0}|^{sourced}_{closed}\big)_N,\_\}_c,\{\_,\_\}_c\Big).$$
and the curved $L_\infty$-algebra $L_N(\mathcal{C},\lambda)$ from definition \ref{scbcsftN}, whose curvature (to leading order in $N$) is given by the Chern character; see equation \eqref{focdsl}.
\end{df}
\begin{remark}
     In the setting of the preceding definition we can define a shifted Poisson algebra at level $N=\infty$ 
$$\Big(\mathcal{F}^c(\mathcal{C})|_{\gamma=0}[\nu],\cdot,d_{Hoch}+uB+\nu\langle ch(\lambda),\_\rangle_{res}+\{\rho\big(SV_{g=0}|^{sourced}_{closed}\big),\_\}_c,\{\_,\_\}_c\Big),$$
but it is not clear to us what is the good analogue of the $L_\infty$-algebra structure on cyclic cochains.
\end{remark}

\medskip
Now we begin to collect some ideas to define the 'classical backreacted closed string field theory', see definition \ref{wzhhsimsl} below. For that we assume that we are given a trivialization of the Chern character of an object $\lambda\in\mathcal{C}$ of a cyclic $A_\infty$-category, which gives us: 
\begin{lemma}\label{gauawamap}
    Given $(\mathcal{C},\lambda)$ a cyclic $A_\infty$-category and an object $\lambda\in\mathcal{C}$ together with a trivialization $ch(\lambda)=(d_{hoch}+uB)h$ then
    \begin{equation}\label{ishissh}
        e^{\nu\langle h,\_\rangle_{res}}:\Big(\mathcal{F}^c(\mathcal{C})|_{\gamma=0}[\nu],\cdot,d_{Hoch}+uB+\nu\langle ch(\lambda),\_\rangle_{res},\{\_,\_\}_c\Big)\rightarrow \Big(\mathcal{F}^c(\mathcal{C})|_{\gamma=0}[\nu],\cdot,d_{Hoch}+uB,\{\_,\_\}_c\Big)
        \end{equation}
    defines an isomorphism of shifted Poisson algebras with inverse given by the map $$e^{-\nu\langle h,\_\rangle_{res}}:\mathcal{F}^c(\mathcal{C})|_{\gamma=0}[\nu]\rightarrow \mathcal{F}^c(\mathcal{C})|_{\gamma=0}[\nu].$$
\end{lemma}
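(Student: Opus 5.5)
The map in \eqref{ishissh} is the exponential of a derivation, so the plan is to reduce everything to properties of the single operator $\phi:=\nu\langle h,\_\rangle_{res}$. Here $\phi$ is the $k[\nu]$-linear derivation of $Sym(CH_*(\mathcal{C})[u^{-1}])[\nu]$ extending the linear functional $\langle h,\_\rangle_{res}$ on generators (contracting one letter with $h$ via the residue pairing). Since $ch(\lambda)$ has odd degree and $d_{Hoch}+uB$ has degree one, $h$ should be such that $\phi$ has even degree zero, so $e^{\phi}$ is a degree zero map.

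First I would settle well-definedness. On $Sym^n$ the operator $\phi$ lowers word length by one and multiplies by $\nu$, so $\phi^{n+1}=0$ on $Sym^{\le n}$. Hence $e^{\phi}=\sum_k \phi^k/k!$ is a finite sum on each element. Since $\phi$ and $-\phi$ commute, $e^{\phi}e^{-\phi}=\mathrm{id}$, which gives the inverse. Because $\phi$ is an even derivation of the commutative product, $e^{\phi}$ is an algebra automorphism by the usual Leibniz/binomial argument.

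Next, compatibility with the bracket. I would show that $\phi$ is a derivation of $\{\_,\_\}_c$. Both $\phi$ and the bracket are determined by their values on generators, via the Leibniz rule. The bracket of two generators is a scalar (the inverse residue pairing), and $\phi$ kills scalars. Concretely, $\phi\{a,b\}=\{\phi a,b\}+\{a,\phi b\}$ holds on generators, since both sides vanish there. It then extends to all words because the two sides are biderivations that agree on generators. This is analogous to Costello's lemma 6.0.3 of \cite{Cos05}, which is already used in proposition \ref{ki}, and I would cite it there. An even derivation of the bracket exponentiates to a bracket automorphism.

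Finally, the differential: I need $e^{\phi}\circ(D+\nu\langle ch(\lambda),\_\rangle_{res})=D\circ e^{\phi}$, where $D=d_{Hoch}+uB$. Using $e^{\phi}De^{-\phi}=D+[\phi,D]+\tfrac12[\phi,[\phi,D]]+\cdots$, the key computation is $[D,\phi]=\nu\langle Dh,\_\rangle_{res}=\nu\langle ch(\lambda),\_\rangle_{res}$, up to sign. Both sides are derivations, so it suffices to check this on generators. There it follows from $D$ being self-adjoint (or anti-self-adjoint, with the right sign) for the higher residue pairing, which is the compatibility of the pairing with $d_{Hoch}+uB$. Next, $[\phi,\nu\langle ch,\_\rangle]=0$, because two contraction derivations commute, so the series truncates after the linear term. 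This gives $e^{\phi}(D+\nu\langle ch,\_\rangle)e^{-\phi}=D$, up to the sign convention; if the sign comes out opposite, one replaces $h$ by $-h$, consistent with the proof of proposition \ref{ki}.

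The main obstacle I expect is the sign bookkeeping in $[D,\phi]=\nu\langle ch(\lambda),\_\rangle_{res}$. This involves the adjointness of $uB$ for the higher residue pairing with its $u\mapsto -u$ sesquilinearity, together with the grading shifts from remark \ref{dgsmnggw2}. I would try to import this directly from lemma 6.0.3 of \cite{Cos05}, i.e. the identity already invoked in proposition \ref{ki}, rather than recompute it.
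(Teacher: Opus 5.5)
Your proposal is correct and follows essentially the same route as the paper, whose proof just refers back to the argument for Proposition~\ref{ki}. There the paper uses the conjugation identity $D\circ e^{\nu\langle h,\_\rangle_{res}}=e^{\nu\langle h,\_\rangle_{res}}\circ\big(D+\nu\langle ch(\lambda),\_\rangle_{res}\big)$ with $D=d_{Hoch}+uB$, quoted from lemma 6.0.3\,(3) of \cite{Cos05}, together with the fact that $e^{\nu\langle h,\_\rangle_{res}}$ commutes with the bracket; you make explicit what the paper leaves implicit, namely local nilpotence of $\phi$ (giving well-definedness and the inverse), the commutator $[D,\phi]=\pm\nu\langle Dh,\_\rangle_{res}$ with the truncation $[\phi,\nu\langle ch(\lambda),\_\rangle_{res}]=0$, and the generator-level check that $\phi$ is a derivation of $\{\_,\_\}_c$.
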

\begin{proof}
    This follows similarly to the proof of proposition \ref{gauaw}.
\end{proof}
Let us now additionally assume that $(\mathcal{C},\lambda)$ is backreactable. Since the element $\rho\big(SV_{g=0}|^{sourced}_{closed}\big)$ is then a Maurer-Cartan element of the left hand side of map \ref{ishissh} by lemma \ref{gauawamap} its image is also a Maurer-Cartan element. From this it follows that the derivation
\begin{equation}\label{ihhhh}
d_{Hoch}+uB+\{e^{\nu\langle h,\_\rangle_{res}}\rho\big(SV_{g=0}|^{sourced}_{closed}\big),\_\}_c:\mathcal{F}^c(\mathcal{C})|_{\gamma=0}[\nu]\rightarrow \mathcal{F}^c(\mathcal{C})|_{\gamma=0}[\nu]
\end{equation}
squares to zero. In the same way as in the discussion around equation \ref{ouoibh} we conclude that by setting $\nu=N$ the differential \eqref{ihhhh} defines an $L_\infty$-algebra structure on cyclic cochains:
\begin{df}\label{brlinalsb}
   Given $(\mathcal{C},\lambda)$ a cyclic $A_\infty$-category and an object $\lambda\in\mathcal{C}$ which are backreactable together with a trivialization $ch(\lambda)=(d_{hoch}+uB)h$ we denote by  $L_N^{br}(\mathcal{C},\lambda,h)$ the $L_\infty$-algebra structure on the graded vector space $\big(CH_*(\mathcal{C})[ u^{-1}]\big)^\vee$ induced from the differential \eqref{ihhhh} by setting $\nu=N$ and by dualizing.
\end{df}
Lemma \ref{gauawamap} and the preceding discussion imply that following definition is well-defined:
\begin{df}\label{wzhhsimsl}
  Given $(\mathcal{C},\lambda)$ a smooth cyclic $A_\infty$-category and an object $\lambda\in\mathcal{C}$ that are backreactable together with a trivialization $ch(\lambda)=(d_{hoch}+uB)h$. We call
\emph{classical closed string field theory at level $N$ backreacted in the direction $(\lambda,h)$} the datum of the shifted Poisson algebra
$$\Big(\mathcal{F}^c(\mathcal{C})|_{\gamma=0},\cdot,d_{Hoch}+uB+\{e^{\nu\langle h,\_\rangle_{res}}\rho\big(SV_{g=0}|^{sourced}_{closed}\big)_N,\_\}_c,\{\_,\_\}_c\Big).$$
and the $L_\infty$-algebra $L_N^{br}(\mathcal{C},\lambda,h)$ from definition \ref{brlinalsb}.
  \end{df}
\begin{remark}\label{dbegaj}
    The map \ref{gauawamap} induces an
    isomorphism from `the classical sourced closed string field theory at level $N$' (from definition \ref{brLinfalg}) to `the classical closed string field theory at level $N$ backreacted in the direction~$(\lambda,h)$' from definition \ref{wzhhsimsl}; that is we have an isomorphism of the shifted Poisson algebras featuring in these definitions (which follows by twisting the map \ref{gauawamap} with the Maurer-Cartan element $\rho\big(SV_{g=0}|^{sourced}_{closed}\big)$) and an isomorphism of $L_\infty$-algebras
    $$L_N(\mathcal{C},\lambda)\rightsquigarrow L_N^{br}(\mathcal{C},\lambda,h),$$
    (which follows since the map from lemma \ref{gauawamap} twisted by the Maurer-Cartan element $\rho\big(SV_{g=0}|^{sourced}_{closed}\big)$ is still a map of algebras, inducing a map of coalgebras by dualizing). The $L_\infty$-algebra $L_N^{br}(\mathcal{C},\lambda,h)$ is possibly curved, however its curving is of higher than first order in $N$; this follows by arguments as in lemma \ref{isidfrz}. 
 As an upshot, 
    this tells us that if $ch(\lambda)=(d_{hoch}+uB)h$ the curving \eqref{focdsl} of $L_N(\mathcal{C},\lambda)$ is in fact trivial to leading order in $N$ (up to $L_\infty$ isomorphism). 
    \end{remark}
    \begin{remark}

    Assuming additionally the existence of a splitting $s$ the loop zero part of the map from corollary \ref{maco} induces (as it is a map of algebras, which follows from the $\gamma=0$ part of theorem \ref{BVinf}) an $L_\infty$ quasi-isomorphism
    $$L_N(\mathcal{C},\lambda,h)\rightsquigarrow \Big(\big(CH_*(\mathcal{C})[ u^{-1}]\big)^\vee,d_{hoch}+uB\Big) ,$$
    that is it implies, combining this result with the previous remark, that (up to $L_\infty$ quasi-isomorphism) the $L_\infty$-algebra structure $L_N(\mathcal{C},\lambda)$ is abelian and in particular that its curving  \eqref{focdsl}  is (up to $L_\infty$ quasi-isomorphism) trivial.
\end{remark}
Recall also the discussion in the introduction in remark \ref{sihdrdnls} about backreaction for BCOV theory on $\mathbb{C}^3$ for a conjectural (almost) example of these considerations.
\subsubsection{Quantization of Backreacted Closed String Field Theory}
Next we introduce a condition that will allow us to find a quantization of the classical closed backreacted string field theory. Furthermore we phrase ideas for what its partition function encodes via conjecture \ref{diaestf} below.
\begin{df}\label{HolDef}
     We say that a pair $(\mathcal{C},\lambda)$ of a smooth cyclic $A_\infty$-category and an object $\lambda\in\mathcal{C}$ is \emph{holographic} if it is backreactable (recalling definition \ref{backreactable}) and in addition
   \begin{equation}\label{deltaterm}
    \delta \rho(SV^{oc}|_{g=0}^{m=1+1})=0,\end{equation}
    referring to this footnote\footnote{Here we denote by $SV^{oc}|_{g=0}^{m=1+1}$ the part of the genus zero open-closed string vertices with arbitrarily many framed boundaries and arbitrarily many unframed boundaries out of which exactly two have one marked point each.} for the notation.
    \end{df} 

\begin{remark}\label{hopefullytobeimprov}
It seems likely that a tuple of a dimension $d$ Calabi-Yau category $\mathcal{C}$ and an object $\lambda\in\mathcal{C}$ are both `backreactable' and `holographic' if $End_{\mathcal{C}}(\lambda)$ is isomorphic to the $CW$ complex of a $d$-sphere. This is a condition that has been used in the geometric contexts of open Gromov-Witten theory, see
 \cite{Fuk11}.
 
 Roughly speaking, in our set-up this should follow as one can use models for cyclic cohomology where one quotients out by the units; compare section 2.2.13 and 2.2.14 of \cite{Lo13}. However if we can 'set the units to zero' and work under the assumption that $End_{\mathcal{C}}(\lambda)$ is spanned by the unit and its dual (which is the case for the $CW$ complex of a $d$-sphere), then one can reasonably expect that the terms \eqref{hrmop1}, \eqref{hrmop2} and \eqref{deltaterm} are zero. 
 To be a bit more precise, one may need to relax conditions \eqref{hrmop1}, \eqref{hrmop2} and \eqref{deltaterm} suitably for the above argument to work, likely in the sense that the featuring terms are exact but not necessarily zero. But then proving an analogue of equations \eqref{stfuiwfs}  and \eqref{sbqmep13} seems more difficult, which we left for the future.
\end{remark}  
In order to find in theorem \ref{bgc} below a quantization of classical closed string field theory backreacted in the direction~$(\lambda,h)$ we need following higher genus version of lemma \ref{gauawamap}.
\begin{lemma}
    Given $(\mathcal{C},\lambda)$ a cyclic $A_\infty$-category and an object $\lambda\in\mathcal{C}$ together with a trivialization $ch(\lambda)=(d_{hoch}+uB)h$ then
    \begin{equation}\label{ishisshg}
    \hspace{-0.5cm}
        e^{\nu\langle h,\_\rangle_{res}}:\Big(\mathcal{F}^c(\mathcal{C})|[\nu],\cdot,d_{Hoch}+uB+\nu\langle ch(\lambda),\_\rangle_{res}+\gamma\Delta_{c},\{\_,\_\}_c\Big)\rightarrow \Big(\mathcal{F}^c(\mathcal{C})|[\nu],\cdot,d_{Hoch}+uB+\gamma\Delta_{c},\{\_,\_\}_c\Big)
        \end{equation}
    defines an isomorphism of BD algebras.
    \end{lemma}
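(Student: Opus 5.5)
Write $\iota_h:=\nu\langle h,\_\rangle_{res}$, the derivation of $\mathcal{F}^c(\mathcal{C})[\nu]$ that extends the degree $-1$ pairing-with-$h$ operation on generators; it acts on $Sym(\mathcal{H})$ by removing one letter and pairing it with $h$. The plan is to show that conjugation by $e^{\iota_h}$ turns the differential of the source into the differential of the target, while the product and the bracket are left unchanged.

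\textbf{Compatibility with product and bracket.} Since $\iota_h$ is a derivation of the commutative product, $e^{\iota_h}$ is an algebra automorphism. The exponential series is locally finite: $\iota_h$ lowers the symmetric length by one, so on each $Sym^n$ only finitely many terms survive. For the bracket, $\{\_,\_\}_c$ is determined by its values on generators, where it is given by the (constant) residue pairing. Moreover $\iota_h$ kills generators up to constants, and $[\iota_h,\{\_,\_\}_c]$-type terms vanish because $\iota_h$ is contraction with a fixed element against a pairing whose inverse defines the bracket. Concretely, one checks that $\iota_h$ is a derivation of the bracket, i.e.\ $\iota_h\{a,b\}=\{\iota_h a,b\}\pm\{a,\iota_h b\}$. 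Both sides reduce to summing over pairs of letters contracted by the bracket and one further letter contracted with $h$, and the only possible discrepancy, namely contracting the same letter twice, does not occur. Hence $e^{\iota_h}$ preserves $\{\_,\_\}_c$. This is the same reasoning used in lemma \ref{gauawamap}; there one only needed it at $\gamma=0$, but the argument is insensitive to $\gamma$.

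\textbf{Intertwining the differentials.} Here we compute $e^{\iota_h}\circ D\circ e^{-\iota_h}$ via $e^{\iota_h}De^{-\iota_h}=D+[\iota_h,D]+\tfrac12[\iota_h,[\iota_h,D]]+\cdots$. For $D_0=d_{Hoch}+uB$ we get $[\iota_h,D_0]=\nu\langle (d_{Hoch}+uB)h,\_\rangle_{res}=\nu\langle ch(\lambda),\_\rangle_{res}$, using that the residue pairing is a chain map (lemma 6.0.3 of \cite{Cos05}, as in proposition \ref{ki}). The commutator of $\iota_h$ with $\nu\langle ch(\lambda),\_\rangle_{res}$ vanishes, since both are contractions with fixed elements and commute up to the Koszul sign, which here gives zero. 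Thus $e^{\iota_h}(D_0+\nu\langle ch(\lambda),\_\rangle_{res})e^{-\iota_h}=D_0$, up to the sign convention fixing the direction of the map.

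\textbf{Main obstacle: the term $\gamma\Delta_c$.} The new ingredient relative to lemma \ref{gauawamap} is $[\iota_h,\Delta_c]$. By the BD relation, $\Delta_c$ is a second-order operator whose failure to be a derivation is $\{\_,\_\}_c$. Therefore $[\iota_h,\Delta_c]$ is a first-order operator, given by contracting one letter with $h$ and another letter via the pairing inverse; explicitly it is $\{\iota_h(\text{letter}),\_\}$. This operator should vanish, because $\iota_h$ applied to a single generator produces a scalar multiple of $\nu$, and $\nu$ is central for the bracket. More carefully: $[\iota_h,\Delta_c]$ is itself a derivation, so it suffices to evaluate it on $Sym^{\le 2}$. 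On $Sym^2$ it gives $\langle h,x\rangle\Delta_c(\text{const})$-type terms and the contraction $\{x,y\}$ paired with $h$; checking these agree with opposite signs is the delicate sign computation I expect to be the main difficulty. Once $[\iota_h,\Delta_c]=0$ is established, $e^{\iota_h}$ commutes with $\gamma\Delta_c$.

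Finally, the inverse is $e^{-\iota_h}$, which proves the statement.
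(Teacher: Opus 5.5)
Your route is the paper's. Its proof defers to Proposition~\ref{ki}, which conjugates by $e^{\nu\langle h,\_\rangle_{res}}$ and uses two facts: Lemma 6.0.3(3) of \cite{Cos05} (invariance of the residue pairing), which gives the commutator of $d_{Hoch}+uB$ with $\nu\langle h,\_\rangle_{res}$, and the assertion that $e^{\nu\langle h,\_\rangle_{res}}$ commutes with the bracket and with $\Delta_c$.

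\textbf{The gap.} You never prove the step you single out as new, $[\iota_h,\Delta_c]=0$, and the plan you sketch for it is misdirected: there is no cancellation of terms ``with opposite signs'' to check. You have already shown that $\iota_h$ is a derivation of $\{\_,\_\}_c$. Expanding $[\iota_h,\Delta_c](ab)$ with $\Delta_c(ab)=\Delta_c(a)b\pm a\Delta_c(b)+\{a,b\}_c$ then shows that $[\iota_h,\Delta_c]$ is a derivation, since its defect is $\iota_h\{a,b\}_c-\{\iota_h a,b\}_c-\{a,\iota_h b\}_c=0$. So it suffices to evaluate it on a generator $x$:
\begin{itemize}
\item $\iota_h\Delta_c x=0$, because $\Delta_c$ vanishes on $Sym^{\le 1}$;
\item $\Delta_c\iota_h x=0$, because $\iota_h x\in k\nu$.
\end{itemize}
Even on $Sym^2$ the two terms vanish separately: $\Delta_c(xy)$ is a scalar, which $\iota_h$ kills, and $\iota_h(xy)$ is linear, which $\Delta_c$ kills.

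Conceptually, $\iota_h$, $\nu\langle ch(\lambda),\_\rangle_{res}$, $\Delta_c$ and the bivector defining $\{\_,\_\}_c$ are all constant-coefficient contraction operators on the free algebra $Sym(\mathcal{H})[\nu]\llbracket\gamma\rrbracket$, so they graded-commute. This gives all the needed commutations at once. Moreover, once $e^{\iota_h}$ is an algebra map commuting with $\Delta_c$, compatibility with the bracket follows from the BD relation, because $\gamma$ is not a zero divisor.

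\textbf{The sign in the differential step.} You should fix this sign rather than defer it to ``the sign convention fixing the direction of the map.'' With $[\iota_h,d_{Hoch}+uB]=+\nu\langle ch(\lambda),\_\rangle_{res}$ as you wrote it, your series gives $e^{\iota_h}\bigl(d_{Hoch}+uB+\nu\langle ch(\lambda),\_\rangle_{res}\bigr)e^{-\iota_h}=d_{Hoch}+uB+2\nu\langle ch(\lambda),\_\rangle_{res}$.

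What you need is $[d_{Hoch}+uB,\iota_h]=\nu\langle (d_{Hoch}+uB)h,\_\rangle_{res}$. This gives $e^{\iota_h}(d_{Hoch}+uB)e^{-\iota_h}=d_{Hoch}+uB-\nu\langle ch(\lambda),\_\rangle_{res}$, which is the relation the paper uses in the proof of Proposition~\ref{ki}. With the opposite Koszul sign, the map in the stated direction would be $e^{-\iota_h}$, not $e^{\iota_h}$.
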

    \begin{proof}
    This follows similarly to the proof of proposition \ref{gauaw}.
    \end{proof}
Denote by $SV|^{sourced}_{closed}$ the (all genus) open-closed string vertices with arbitrary number of framed and unframed boundaries but no marked points; compare equation \eqref{QME}. Assuming that  $\lambda\in\mathcal{C}$ is holographic a direct consequence is then that 
\begin{equation}\label{sbqmep13}
(d_{Hoch}+uB+\nu\langle ch(\lambda),\_\rangle_{res}+\gamma\Delta_{c})\rho\big(SV|^{sourced}_{closed}\big)+\frac{1}{2}\{\rho\big(SV|^{sourced}_{closed}\big),\rho\big(SV|^{sourced}_{closed}\big)\}_{c}=0, \end{equation}
that is it is a Maurer-Cartan element in the left hand side of map \eqref{ishisshg}; thus its image under that map is as well.

\medskip
Now we can argue that the element \eqref{bcsftaf} below defines a \emph{quantization of the classical closed string field theory backreacted in the direction $(\lambda,h)$}: 
\begin{theorem}\label{bgc}
{Let $(\mathcal{C},\lambda)$ be holographic and pick a trivialization of the Chern character $(d_{hoch}+uB)h=ch(\lambda)$. Then
\begin{equation}\label{bcsftaf}
S^{c,q}_{br}(\mathcal{C},\lambda,h):=e^{\nu\langle h,\_\rangle_{res}}\rho\big(SV|^{sourced}_{closed}\big)
\end{equation}
is a Maurer-Cartan element of  $\mathcal{F}^c(\mathcal{C})[\nu]$
and its genus zero part induces via
$$\{S^{c,q}_{br}(\mathcal{C},\lambda,h)|_{\gamma=0},\_\}$$
 the square zero odd vector field defining the $L_\infty$-algebra of classical closed string field theory backreacted in the direction $(\lambda,h)$ from definition \ref{wzhhsimsl}.}
\end{theorem}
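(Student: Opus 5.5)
The plan has two parts: first the Maurer-Cartan property in $\mathcal{F}^c(\mathcal{C})[\nu]$, then the identification of the genus zero part.

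\emph{Maurer-Cartan property.} I start from equation \eqref{sbqmep13}, which holds by the holographic assumption. To justify it, project the open-closed master equation \eqref{QME}, pushed forward along $\rho_{\mathcal{C},\lambda}$ using theorem \ref{ocRep} and lemma \ref{chk}, onto the summand with no marked points on unframed boundaries, i.e.\ onto the $Sym^0$ part of $\mathcal{F}^o(\lambda)$ with only $\nu$-powers surviving. The only terms that could land in this summand from outside $SV|^{sourced}_{closed}$ are of three kinds:
\begin{itemize}
\item $\{SV^{oc}|^{m=1},SV^{oc}|^{m=1}\}$, gluing two single marked points;
\item $\nabla SV^{oc}|^{m=2}$, self-gluing two points on one boundary;
\item $\gamma\delta SV^{oc}|^{m=1+1}$, gluing points on two different boundaries.
\end{itemize}
The first two vanish by \eqref{hrmop1} and \eqref{hrmop2} (backreactable). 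I would check that the higher-genus versions of these vanish too, or else read definition \ref{backreactable} as imposing them in all genera. The third vanishes by \eqref{deltaterm}. The $\Delta_{co}$ term becomes $\nu\langle ch(\lambda),\_\rangle_{res}$ by lemma \ref{chk}. This yields \eqref{sbqmep13}, i.e.\ $\rho(SV|^{sourced}_{closed})$ is Maurer-Cartan in the left side of \eqref{ishisshg}.

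I then apply the BD isomorphism $e^{\nu\langle h,\_\rangle_{res}}$ of \eqref{ishisshg}. Being a map of BD algebras, it preserves the underlying dg shifted Lie structure, so it sends Maurer-Cartan elements to Maurer-Cartan elements. Hence $S^{c,q}_{br}(\mathcal{C},\lambda,h)$ is Maurer-Cartan for $d_{Hoch}+uB+\gamma\Delta_c$ and $\{\_,\_\}_c$ on $\mathcal{F}^c(\mathcal{C})[\nu]$.

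\emph{Genus zero identification.} Set $\gamma=0$. Since $e^{\nu\langle h,\_\rangle_{res}}$ involves no $\gamma$, it commutes with projection to $\gamma^0$. So $S^{c,q}_{br}|_{\gamma=0}=e^{\nu\langle h,\_\rangle_{res}}\rho(SV_{g=0}|^{sourced}_{closed})$. Then $\{S^{c,q}_{br}|_{\gamma=0},\_\}_c$ added to $d_{Hoch}+uB$ is exactly the differential \eqref{ihhhh}, which at $\nu=N$ defines $L_N^{br}(\mathcal{C},\lambda,h)$ (definition \ref{brlinalsb}) and the shifted Poisson algebra of definition \ref{wzhhsimsl}.

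The main obstacle is the first step: carefully accounting for which components of \eqref{QME} land in the sourced-closed summand after applying $\rho$. In particular I must ensure that $\{\_,\_\}_o$ and $\nabla$ brackets between open-closed vertices with marked points produce no boundary without marked points beyond the genus-zero cases controlled by definitions \ref{backreactable} and \ref{HolDef}. If the higher-genus analogues are not covered, I would strengthen the hypotheses to all genera, as remark \ref{hopefullytobeimprov} suggests.
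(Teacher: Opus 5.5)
Your proposal follows the paper's argument. You obtain \eqref{sbqmep13} by projecting the almost-Maurer--Cartan equation of corollary \ref{snsnsn} onto the summand with no boundary marked points and killing the cross terms with the backreactable and holographic conditions, then transport along the BD isomorphism \eqref{ishisshg}, and finally set $\gamma=0$ to recover the differential \eqref{ihhhh}. The higher-genus caveat you raise is genuine and the paper does not address it: it asserts \eqref{sbqmep13} in all genera, while definitions \ref{backreactable} and \ref{HolDef} impose only genus-zero conditions, so an all-genus version of those conditions, as you propose, is needed for the argument as written.
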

\begin{remark}
 Theorem \ref{bgc} follows easily from corollary \ref{snsnsn} and the definition of being holographic. However, one may hope to generalize what it means to be holographic in the sense of definition \ref{HolDef}, like asking that the term in equation \ref{deltaterm} is exact and not directly zero. It seems that a proof of the analog of theorem \ref{bgc} in this situation is more complicated, which we will study in the future.
\end{remark}

To explain definition \ref{yangning} further below we now additionally assume that we are given a splitting $s$ of the non-commutative Hodge filtration, that is all together a package
\begin{equation}
\mathcal{P}_{\mathcal{C},\lambda,h,s}=\{\mathcal{C},\lambda,h,s\} \end{equation}
of a smooth cyclic $A_\infty$-category $\mathcal{C}$ and an object $\lambda\in\mathcal{C}$ which are holographic, a trivialization ${(d_{hoch}+uB)h=ch(\lambda)}$ and a splitting $s$. Then, recalling the map $\Psi_s^{oc}$ from corollary \ref{maco} it follows that $$\Psi_s^{oc}S^{c,q}_{br}(\mathcal{C},\lambda,h)\in H_*(\mathcal{F}^{c}(\mathcal{C})^{Triv}\llbracket\nu\rrbracket)$$
as is its exponential
\begin{equation}\label{itsexp}
e^{\Psi_s^{oc}S^{c,q}_{br}(\mathcal{C},\lambda,h)/\gamma}\in H_*(\mathcal{F}^{c}(\mathcal{C})^{Triv}\llbracket\nu\rrbracket).
\end{equation}
Let us recall the element 
$$Z_{\mathcal{C},s}^c\in H_*(\mathcal{F}^{c}(\mathcal{C})^{Triv}$$
from definition \ref{cgw}, which  also coincides the summand of the element from \eqref{itsexp} that has zero powers of $\nu$. Then we make following definition   
\begin{df}\label{yangning}
    Given a package
$\mathcal{P}_{\mathcal{C},\lambda,h,s}=\{\mathcal{C},\lambda,h,s\}$
of a smooth cyclic $A_\infty$-category $\mathcal{C}$ and an object $\lambda\in\mathcal{C}$ which are holographic, a trivialization ${(d_{hoch}+uB)h=ch(\lambda)}$ and a splitting~$s$. Then denote the \emph{partition function of the backreacted closed string field theory associated to $\mathcal{C}$ in the direction of $\lambda$ associated to the package $\mathcal{P}$}
\begin{equation}\label{brcsftsnsn}
Z^{c,br}(\mathcal{P}_{\mathcal{C},\lambda,h,s}):=[e^{\Psi_s^{oc}S^{c,q}_{br}(\mathcal{C},\lambda,h)/\gamma}]-Z_{\mathcal{C},s}^c\in H_*\big(\mathcal{F}^{c}(\mathcal{C})^{Triv}\llbracket\nu\rrbracket\big).\end{equation}

\end{df}
\begin{remark}
In definition \ref{yangning} we made the ad-hoc choice of subtracting the term ${Z_{\mathcal{C},s}^c\in H_*(\mathcal{F}^{c}(\mathcal{C})^{Triv}}$
from definition \ref{cgw}. We have no direct mathematical or physical motivation for doing so. However, because of this definition the element $Z^{c,br}(\mathcal{P}_{\mathcal{C},\lambda,h,s})\in H_*(\mathcal{F}^{c}(\mathcal{C})^{Triv}$ lives in the subspace of strictly positive powers of $\nu$. This is a necessary condition for conjecture \ref{conj_ma} below to hold, which is the reason we made that choice in definition \ref{yangning}. We hope to better understand this ad-hoc choice in the future.
\end{remark}
\medskip
Recall that
$$H_*\big(\mathcal{F}^{c}(\mathcal{C})^{Triv}\llbracket\nu\rrbracket\big)=Sym (HH_*(\mathcal{C})[ u^{-1}])\llbracket\nu\rrbracket,$$
thus we can read off the coefficients (as a power series in $\nu$) of the element \eqref{brcsftsnsn} with respect to homology class insertions and powers of $u$, as explained around equation \eqref{ccei} in the introduction. \textbf{What do these coefficients describe?} 
\begin{Conj}\label{diaestf}[Backreacted Calabi-Yau Category]
Given a holography package $\mathcal{P}_{\mathcal{C},\lambda,h,s}$ as in definition \ref{yangning} there is a family of  Calabi-Yau categories $\mathcal{C}^{br}_N$, depending on $N\in \mathbb{N}$, together with a splitting $s^{br}$ such that 
\begin{equation}\label{dbwuszm}
HH_*(\mathcal{C})\cong HH_*(\mathcal{C}^{br}_N)\end{equation}
and 
\begin{equation}\label{wmsaanm}
{Z_{\mathcal{C}^{br}_N,s^{br}}^c}=Z^{c,br}(\mathcal{P}_{\mathcal{C},\lambda,h,s})|_{\nu=N}\in Sym (HH_*(\mathcal{C})[ u^{-1}]).\end{equation}
Here on the left hand side we refer to definition \ref{cgw} applied to the conjectural Calabi-Yau category $\mathcal{C}^{br}_N$ and the splitting $s^{br},$ whereas on the right we refer to element \eqref{brcsftsnsn} evaluated at $\nu=N$; further we need to assume equation \eqref{dbwuszm} to make sense of equation \eqref{wmsaanm}. 
\end{Conj}
Let us explain the motivation for conjecture \ref{diaestf}. The genus zero part of the element \eqref{brcsftsnsn}, more precisely the induced $L_\infty$-algebra(s) $L_N^{br}(\mathcal{C},\lambda,h)$ from definition \ref{wzhhsimsl} should describes some deformed geometry, as compared to the $L_\infty$-algebra $L(\mathcal{C})$ described around equation \eqref{cych}.

For instance\footnote{Note that we cannot apply our methods to the following setting as $\mathbb{C}^3$ is not proper Calabi-Yau; so the following can only serve for motivational purposes.} taking $\mathcal{C}=\mathcal{D}^{b}_{dg}(\mathbb{C}^3)$, its object(s) $\lambda^{\oplus N}$ determined by the stack of $N$ branes of $\mathbb{C}\subset \mathbb{C}^3$ and $h$ given by the Mauer-Cartan element described in lemma 1 of \cite{cg21}, Costello-Gaiotto find a dg-Lie algebra (for every $N\in\mathbb{N}$) describing the (deformations of the complex structure of the) deformed conifold with period $N$ denoted $X_N$, which is obtained by perturbing the dg-Lie algebra describing the (deformations of the complex structure of) $\mathbb{C}^3$ in `the direction' of $\lambda^{\oplus N}$  and using $h$ (section 4 and 4.1 of \cite{cg21}); see also the discussion in the introduction in remark \ref{sihdrdnls}. However the  deformed conifold with period $N$ is again a Calabi-Yau manifold with its associated Calabi-Yau category $\mathcal{D}^{b}_{dg}(X_N)$. Conjecture \ref{diaestf} postulates in particular that such a phenomena should hold in general and at a purely categorical level. 

In a similar vein Gopakumar-Vafa \cite{GoVa99} describe how the cotangent bundle of a three manifold (to which we can assign the Calabi-Yau category of the Fukaya category of the cotangent bundle) gets deformed in the presence of the zero section (which is an object of the Fukaya category) to the resolved conifold. This is again a Calabi-Yau manifold with associated Calabi-Yau category of quasi-coherent sheaves (also here a subtle $N$-dependence is present).
\smallskip
  Questions around conjecture \ref{diaestf} have also recently been considered in the physics literature; see \cite{Gai24}, especially section 11.

\subsection{Holography from a Calabi Yau Category $\mathcal{C}$ and an object $\lambda\in\mathcal{C}$ at the Level of Partition Functions}
In physics, the so called top-down approach to holography proposes a duality between the large $N$ open string field theory on a stack of $N$ branes of a string theory and the backreacted closed string field theory in the direction of that same brane  \cite{Ma99}, \cite{cg21}. This should in particular imply that the partition functions of the two theories can be identified. This and the discussion in the previous sections leads us to formulate:
\begin{Conj}[Conjecture H]\label{conj_ma}
Given a package $\mathcal{P}_{\mathcal{C},\lambda,h,s}=\{\mathcal{C},\lambda,h,s\}$
of a smooth cyclic $A_\infty$-category $\mathcal{C}$ and an object $\lambda\in\mathcal{C}$ which are holographic, a trivialization ${(d_{hoch}+uB)h=ch(\lambda)}$ and a splitting~$s$. Then
the partition function of the backreacted closed SFT \eqref{brcsftsnsn} is homologous to the partition function of large $N$ open SFT \eqref{fmc}. That is
$$\iota_c(Z^{c,br}(\mathcal{P}_{\mathcal{C},\lambda,h,s}))\simeq \iota_o(Z^o(\mathcal{P}_{\mathcal{C},\lambda,h,s})),$$
as elements of $H_*\Big({\mathcal{F}^c(\mathcal{C})}^{triv}\otimes \mathcal{F}^o(\lambda),d_{Hoch}+d+\Delta_o\Big)$
under the natural inclusion maps 
$$\begin{tikzcd}
   & \mathcal{F}^{c}(\mathcal{C})^{Triv}\otimes \mathcal{F}^{o}(\lambda)&\\
   \mathcal{F}^{c}(\mathcal{C})^{Triv}\llbracket\nu\rrbracket\arrow{ur}{\iota_c}&&\mathcal{F}^{o}(\lambda)\arrow{ul}[swap]{\iota_o}.
\end{tikzcd}$$
\end{Conj}
    We refer to the last part of the introduction, section \ref{twhocy}, for further motivation and conjectural examples of conjecture H. There we explain its relevance in trying to give a new and mathematical perspective on known and proposed relations \cite{Kon92a, GoVa99,cg21} between large $N$ gauge theories and enumerative geometry.

\begin{remark}\label{DepOnPar}
In physics, both sides of the conjecture, the partition function of large $N$ open SFT and the partition function of closed backreacted SFT depend on certain parameters, identified under holography. Already for the (non-backreacted) GW potential \eqref{cgw} one expects a  dependence on such parameters. We imagine that in the categorical approach this could be encoded as follows:

Theorem \ref{imdjns} below implies that given a smooth cyclic $A_\infty$-category $\mathcal{C}$ and taking $\Lambda\subseteq\mathcal{C}$ to be $\Lambda=\mathcal{C}$, together with a splitting $s$
there is a map 
\begin{equation}\label{fam}
 B_{\mathcal{C},\lambda,s}: HC^*(\mathcal{C})[-1]\rightarrow MCE\Big(Cyc^*_+(\mathcal{C})[-1],d_{cyc},\{\_,\_\}_o\Big),\end{equation}
 mapping the zero vector to the zero Maurer-Cartan element. It follows similar to lemma \ref{Ham1} that (up to finiteness aspects) we can identify elements of the right hand side of map \eqref{fam} with cyclic $A_\infty$-categories, the zero Maurer-Cartan element corresponding to our initial category $\mathcal{C}$. 
 Thus the map $B_{\mathcal{C},\lambda,s}$ produces a family of cyclic $A_\infty$-categories parametrized by cyclic cohomology, which at zero gives our initial category. We may have to reformulate conjecture \ref{conj_ma} with respect to this family induced by \eqref{fam}. One major caveat is that some elements of this family may be possibly curved, which we do not know how to treat with our formalism. We hope to work on this in the future.
\end{remark}

\medskip
We finish this section by explaining the origin of the map \ref{fam}, which we use to obtain a family version of conjecture H. This map is connected to the cyclic variant of Kontsevich's formality result \cite{kon03}; see remark \ref{iewdalm}.

\medskip
For that recall from definition \ref{cisft} the $L_\infty$-algebra $L(\mathcal{C}),$ depending on $\mathcal{C}$ a smooth cyclic $A_\infty$-category. Further denote by 
$$CC^*(\mathcal{C}):=\Big(CH_*(\mathcal{C})[u^{-1}]^\vee[-1],d_{hoch}+uB\Big),$$
the abelian $L_\infty$-algebra with the same underlying graded vector space as  $L(\mathcal{C})$. Given a collection of object $\Lambda\subseteq\mathcal{C}$, since the full subcategory on $\Lambda$ is a cyclic $A_\infty$-category, we have the dg shifted Lie algebra (see around definition \ref{F_cl}) 
$$O(\Lambda):=\Big(Cyc^*_+(\Lambda)[-1],d_{cyc},\{\_,\_\}_o\Big).$$
\begin{theorem}\label{imdjns}
    Given a smooth cyclic $A_\infty$-category $\mathcal{C}$ and $\Lambda\subseteq \mathcal{C}$ then there exist a (non-trivial) $L_\infty$-morphism $F_{D^{oc}}$ on the right of diagram \ref{sddgnngi}. Assuming further the existence of a splitting $s$ there exists a (non-trivial) $L_\infty$-morphism $F_s$ on the left of diagram \ref{sddgnngi}.
\begin{equation}\label{sddgnngi}
\begin{tikzcd}
CC^*(\mathcal{C}) & L(\mathcal{C}) \arrow{r}{F_{D^{oc}}}\arrow{l}[swap]{F_s} & O(\Lambda).   
   \end{tikzcd}
    \end{equation}
If $\Lambda \cong\mathcal{C}$ these maps are in fact quasi-isomorphisms.   
\end{theorem}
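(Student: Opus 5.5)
We build the two morphisms out of structures already on the table: the closed trivializing morphism of Theorem \ref{splmap} and the genus zero part of the open-closed string vertices from corollary \ref{snsnsn}.

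\textbf{The left map $F_s$.} We take the genus zero ($\gamma=0$) part of the closed trivializing $L_\infty$-morphism $\mathcal{K}_s$ and twist it by the genus zero closed string vertices. Concretely, $\rho(S^c_{g=0})$ is a Maurer--Cartan element of $\widetilde{\mathcal{F}^c(\mathcal{C})}|_{\gamma=0}$ (discussion before definition \ref{cisft}). Its image under $\mathcal{K}_s|_{\gamma=0}$ is a Maurer--Cartan element of the abelian algebra $\mathcal{F}^c(\mathcal{C})^{Triv}|_{\gamma=0}$. By the $\gamma=0$ part of theorem \ref{BVinf}, $\mathcal{K}_s|_{\gamma=0}$ is multiplicative. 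It is therefore dual to a coalgebra map $Sym(CH_*(\mathcal{C})[u^{-1}]^\vee[-1])\to Sym(CH_*(\mathcal{C})[u^{-1}]^\vee[-1])$ intertwining the codifferential defining $L(\mathcal{C})$ with the linear one $d_{hoch}+uB$. This is exactly an $L_\infty$-morphism $F_s:L(\mathcal{C})\rightsquigarrow CC^*(\mathcal{C})$. Its linear part is the identity twisted by the splitting, so it is an $L_\infty$ quasi-isomorphism. Theorem \ref{linojunwu} gives the concrete meaning: in the pushed-forward picture, the image of $S^c_{g=0}$ is the $\psi$-class potential.

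\textbf{The right map $F_{D^{oc}}$.} We use the part $D^{oc}$ of the genus zero open-closed string vertices $SV^{oc}$ with arbitrarily many framed boundaries, exactly one unframed boundary, and any number of marked points on it. Under $\rho_{\mathcal{C},\Lambda}$ of theorem \ref{ocRep} this gives an element
\[
\rho(D^{oc})\in \widehat{Sym}\big(CH_*(\mathcal{C})[u^{-1}]\big)\otimes Cyc^*_+(\Lambda)[-1].
\]
Dualizing the first factor, this is a collection of maps $Sym^k(L(\mathcal{C})[1])\to O(\Lambda)$, which we take as the Taylor coefficients of $F_{D^{oc}}$. The $L_\infty$-relations come from the genus zero, one-boundary projection of the master equation \eqref{QME}. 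The terms $\{\_,\_\}_c$ against closed genus zero vertices give the $L(\mathcal{C})$-side bracket. The terms $\{\_,\_\}_o$ between two one-boundary pieces give the $O(\Lambda)$-bracket on the images. The term $d_{cyc}$ arises from $\{I_\Lambda,\_\}_o$, where $I_\Lambda$ is the disk part (corollary \ref{snsnsn}).

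\textbf{Main obstacle: anomalous terms.} The equation \eqref{QME} also contains terms that must vanish on this projection:
\begin{itemize}
\item $\nabla$ would create two unframed boundaries, so it is absent on the one-boundary part.
\item $\gamma\delta$ and $\gamma\Delta_c$ raise genus, so they are absent at $\gamma^0$.
\item $\Delta_{co}$, equivalently the Chern character term $\nu\langle ch,\_\rangle$, raises the $\nu$-degree, i.e.\ the number of empty boundaries; since $\nu$ is set to $0$ on this projection, it drops out.
\end{itemize}
We expect this careful bookkeeping of which strata survive to be the crux, together with the signs, so that the remaining identity is precisely the $L_\infty$-morphism relation.

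\textbf{Quasi-isomorphism for $\Lambda\cong\mathcal{C}$.} Here we examine the linear term $F^1_{D^{oc}}$, coming from the disk with one framed interior boundary. This should be the cyclic open--closed map $CC^*(\mathcal{C})\to Cyc^*(\mathcal{C})$, dual to the $S^1$-equivariant inclusion of composable Hochschild chains. If $\Lambda$ contains all objects up to equivalence, it is a quasi-isomorphism by the argument of theorem \ref{LQT_conj}, namely that composable and noncomposable chains compute the same homology, combined with invariance of cyclic cohomology under $A_\infty$-equivalence. We also expect non-triviality, in the sense that $F_{D^{oc}}$ has nonzero higher Taylor components, to follow from theorem \ref{linojunwu}-type computations of the higher disk components.
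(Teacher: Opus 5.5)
Your construction of both maps is the paper's. For $F_s$, you take the $\gamma=0$ part of $\mathcal{K}_s$, twist it by $\rho(S^c_{g=0})$, and dualize its linear component, which is multiplicative by the $\gamma=0$ part of theorem \ref{BVinf}. For $F_{D^{oc}}$, you read the $L_\infty$-relations off the genus zero, one-boundary, $\nu^0$ projection of \eqref{QME}. That projection is exactly the paper's equation \eqref{ohaiaiho}, i.e.\ the Maurer--Cartan equation for $\rho(D^{oc})$ in $C(\mathcal{C})\otimes O(\Lambda)$ with $d_{cyc}=d+\{\rho(D),\_\}_o$. So $D^{oc}$ should carry at least one framed boundary, with the disk part absorbed into $d_{cyc}$, as you indicate. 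Your check that the $\nabla$, $\gamma\delta$, $\gamma\Delta_c$ and $\Delta_{co}$ contributions vanish on this projection makes explicit a step the paper only asserts.

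The justification you give for the quasi-isomorphism statement, however, is the wrong one. The argument of theorem \ref{LQT_conj} compares Hochschild chains of $A_{\mathcal{C}}$ (arbitrary words) with those of $\mathcal{C}$ (composable words). No such comparison occurs here, since $CC^*(\mathcal{C})$ and $Cyc^*_+(\Lambda)$ are both built from composable words.

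What is needed is the following:
\begin{itemize}
\item The component of $\rho(D^{oc})$ with a single framed interior boundary must realize the comparison map between the $(d_{hoch}+uB)$-model and Connes' cyclic-word model of cyclic cohomology. This is the input the paper takes from section 7.4 of \cite{Cos07a}.
\item That map is a quasi-isomorphism by the cyclic-words theorem.
\item Invariance of cyclic cohomology under the equivalence $\Lambda\hookrightarrow\mathcal{C}$ then finishes the argument.
\end{itemize}
This linear component is also where the paper locates non-triviality. Theorem \ref{linojunwu} concerns the closed genus zero potential and gives no information about higher disk components, so it cannot support your non-triviality claim.

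Finally, the linear part of $F_s$ is exactly the identity, not merely ``twisted by the splitting''. A tree contributing to the twisted linear component has genus zero and one leaf, one univalent vertex carrying the input, and otherwise vertices of valence $\geq 3$ coming from $\rho(S^c_{g=0})$. Counting half-edges forces it to be a single vertex, so no $H^{sym}_s$-contraction occurs. Hence $F_s$ is a quasi-isomorphism independently of $\Lambda$.
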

\begin{proof}[Sketch of Proof]
Let us focus on the right part of diagram \ref{sddgnngi} first. Recall, eg. from proposition 6.1 of \cite{krsn22}, that we can identify an $L_\infty$-map $$F_{D^{oc}}:L(\mathcal{C}) \rightarrow  O(\Lambda)$$ with a Maurer-Cartan element in the convolution dg Lie algebra (see \cite{krsn22} for details)
\begin{equation}\label{ngdgzg}
Hom\big(Sym(L(\mathcal{C})[1]), O(\Lambda)\big).
\end{equation}
There is an obvious map of graded vector spaces
\begin{equation}\label{qweqweqwe}
Sym(L(\mathcal{C})[1])^\vee\otimes O(\lambda)\rightarrow Hom(Sym(L(\mathcal{C})[1]), O(\Lambda))\end{equation}
On the left hand side of equation \eqref{qweqweqwe} we use that the tensor product of a dg algebra (given as the linear dual of the dg colagebra defining the $L_\infty$-algebra $L(\mathcal{C})$, recalling definition \ref{coalgebra}) and a dg shifted Lie algebra is again dg shifted Lie algebra in a natural way. Further one can directly show that the map \ref{qweqweqwe} is a map of dg Lie algebras.

Next we recall that given a smooth cyclic $A_\infty$-category $\mathcal{C}$ and a collection of objects $\Lambda\subseteq\mathcal{C}$ that by definition \ref{cisft} (but ignoring the closed shifted Poisson bracket) we can define a dg-algebra
$$C(\mathcal{C}):=\Big({\mathcal{F}^c(\mathcal{C})}|_{\gamma=0},\cdot, d_{hoch}+uB+\{\rho(S^c_{g=0}),\_\}_c,\Big).$$
and that there is a natural map (induced from including a vector space into its double dual) 
\begin{equation}\label{werwerwer}
C(\mathcal{C})\otimes O(\Lambda)\rightarrow Sym(L(\mathcal{C})[1])^\vee\otimes O(\lambda).
\end{equation}
Thus if we can show that there is a (non-trivial) Maurer-Cartan element in the left hand side of map \eqref{werwerwer} it follows after applying the (injective) maps \eqref{werwerwer} and \eqref{qweqweqwe} that we find a non-zero element of the dg-Lie algebra \eqref{ngdgzg}, which is equivalent to providing the $L_\infty$-map from the right hand side of diagram \eqref{sddgnngi}.

For that let us denote by $D^{oc}$ the genus zero, one unframed boundary with arbitrary boundary marked points and arbitrary interior framed marked points part of string vertices, compare \eqref{QME}. Further recall that we denoted by $S^c|_{g=0}$ the genus zero part of the closed string vertices and by $D$ the part of the open-closed string vertices corresponding to the disk with arbitrary many boundary marked points but no interior marked points. Then equation \eqref{QME} implies that 
\begin{equation}\label{ohaiaiho}
\partial D^{oc}+\{D^{oc},S^c|_{g=0}\}_c+\frac{1}{2}\{D^{oc},D^{oc}\}_o+\{D^{oc},D\}_o=0 
\end{equation}

Since we have 
$$d_{cyc}=d+\{\rho(D),\_\}_o$$
(compare the comment after corollary \ref{snsnsn}) it follows from theorem \ref{ocRep} that  
\begin{equation}\label{omgfvssb}
\rho_{\mathcal{C},\Lambda}(D^{oc})\in MCE(C(\mathcal{C})\otimes O(\Lambda)),
\end{equation}
which is the Maurer-Cartan element that we asked for.

The fact that this Maurer-Cartan element is non-trivial should follow from the results of section 7.4 of \cite{Cos07a}, which imply that the component of $\rho_{\Lambda,\Lambda}(D^{oc})$ with just one interior marked point exactly induces (as the linear part of the $L_\infty$-morphism induced by \eqref{omgfvssb}) the quasi-isomorphism
$$\big(CH_*(\Lambda)[u^{-1}]^\vee[-1],d_{hoch}+uB\big)\rightarrow \big(Cyc^*_+(\Lambda)[-1],d_{cyc}\big)$$
of two different models computing cyclic cohomology (see section 2.1.11 of \cite{Lo13}). We leave making this more precise to future work.

\medskip
Assuming additionally the existence of a splitting $s$ the loop zero part of the map from theorem \ref{splmap} induces (as it is a map of algebras, which follows from the $\gamma=0$ part of theorem \ref{BVinf}) an $L_\infty$ quasi-isomorphism
    $$L(\mathcal{C})\rightsquigarrow \Big(\big(CH_*(\mathcal{C})[ u^{-1}]\big)^\vee,d_{hoch}+uB\Big),$$
    which explains the left arrow in diagram \ref{sddgnngi}.
    \end{proof}
\begin{remark}\label{iewdalm}
    The element $D^{oc}$ (in fact only its one marked point part) plays, at least heuristically, a crucial part in Costello-Li's analytic approach to quantizing open-closed BCOV theory \cite{CL15}, which we introduced in section 3.1. They further note the relation to the cyclic formality version \cite{wi11} of Kontsevich's formality theorem \cite{kon03}. 
    
   In another direction, the map \eqref{fam} should be related to what is called bulk-boundary deformation in the symplectic context, compare \cite{fooo11}.
\end{remark}

\newpage

\bibliography{refs}
\bibliographystyle{alpha}
\end{document}